\numberwithin{equation}{section}
\newtheorem{theorem}{Theorem}[section]
\newtheorem{prop}[theorem]{Proposition}
\newtheorem{lemma}[theorem]{Lemma}
\newtheorem{thm}[theorem]{Theorem}
\newtheorem{lem}[theorem]{Lemma}
\newtheorem{cor}[theorem]{Corollary}
\newtheorem{conj}[theorem]{Conjecture}
\theoremstyle{definition}
\newtheorem{definition}[theorem]{Definition}
 \newtheorem{question}[theorem]{Question}
 \newtheorem{defn}[theorem]{Definition}
 \newtheorem{expl}[theorem]{Example}
\newtheorem{remark}[theorem]{Remark}
\newtheorem{notation}[theorem]{Notation}
\newtheorem{rem}[theorem]{Remark}
\newcommand{\bC}{\mathbb{C}}
\newcommand{\bQ}{\mathbb{Q}}
\newcommand{\bR}{\mathbb{R}}
\newcommand{\bG}{\mathbb{G}}
\newcommand{\calY}{\mathcal{Y}}
\newcommand{\calL}{\mathcal{L}}
\newcommand{\calF}{\mathcal{F}}
\newcommand{\calI}{\mathcal{I}}
\newcommand{\calX}{\mathcal{X}}
\newcommand{\calE}{\mathcal{E}}
\newcommand{\Hilb}{\mathrm{Hilb}}
\newcommand{\Sym}{\mathrm{Sym}}
\newcommand{\PGL}{\mathrm{PGL}}
\newcommand{\GL}{\mathrm{GL}}
\newcommand{\Hom}{\mathrm{Hom}}
\newcommand{\Proj}{\mathrm{Proj}}
\newcommand{\Pic}{\mathrm{Pic}}
\newcommand{\Aut}{\mathrm{Aut}}
\newcommand{\Spec}{\mathrm{Spec}\;}
\newcommand{\gquot}{/\!\!/}
\newcommand{\PP}{\mathbb{P}}
\newcommand{\bP}{\mathbb{P}}
\newcommand{\bA}{\mathbb{A}}
\newcommand{\SL}{\mathrm{SL}}
\newcommand{\calD}{\mathcal{D}}
\newcommand{\calO}{\mathcal{O}}
\newcommand{\cO}{\mathcal{O}}
\newcommand{\calP}{\mathcal{P}}
\newcommand{\calN}{\mathcal{N}}
\newcommand{\calU}{\mathcal{U}}
\newcommand{\calG}{\mathcal{G}}
\newcommand{\cA}{\mathcal{A}}
\newcommand{\calB}{\mathcal{B}}
\newcommand{\fa}{\mathfrak{a}}
\newcommand{\sslash}{\mathbin{/\mkern-6mu/}}
\newcommand{\Q}{\mathbb{Q}}
\newcommand{\bZ}{\mathbb{Z}}
\newcommand{\Chow}{\operatorname{Chow}}
\newcommand{\lc}{\mathrm{lc}}
\newcommand{\oM}{\overline M}
\newcommand{\cX}{\mathcal X}
\newcommand{\cD}{\mathcal D}
\newcommand{\cU}{\mathcal{U}}
\newcommand{\cC}{\mathcal{C}}
\newcommand{\cW}{\mathcal{W}}
\newcommand{\hcU}{\widehat{\cU}}
\newcommand{\hcW}{\widehat{\cW}}
\newcommand{\of}{\overline{f}}
\newcommand{\quotient}{/\hspace{-1.2mm}/}
\newcommand{\DF}{\mathrm{DF}_{\calD}(\calX, \calL)}
\newcommand{\Supp}{\textrm{Supp}}
\newcommand{\hvol}{\widehat{\mathrm{vol}}}
\newcommand{\ord}{\mathrm{ord}}
\newcommand{\Val}{\mathrm{Val}}
\newcommand{\id}{\mathrm{id}}
\newcommand{\vol}{\mathrm{vol}}
\newcommand{\ind}{\mathrm{ind}}
\newcommand{\GIT}{\mathrm{GIT}}
\newcommand{\CM}{\mathrm{CM}}
\newcommand{\CH}{\mathrm{CH}}
\newcommand{\klt}{\mathrm{klt}}
\newcommand{\kst}{\mathrm{kst}}
\newcommand{\oZ}{\overline{Z}}
\newcommand{\oC}{\overline{C}}
\newcommand{\oQ}{\overline{Q}}
\newcommand{\oW}{\overline{W}}
\newcommand{\lct}{\mathrm{lct}}
\newcommand{\cM}{\mathcal M}
\newcommand{\cY}{\mathcal Y}
\newcommand{\cN}{\mathcal N}
\newcommand{\cZ}{\mathcal Z}
\newcommand{\cL}{\mathcal L}
\newcommand{\cK}{\mathcal{K}}
\newcommand{\hS}{\widehat{S}}
\newcommand{\hE}{\widehat{E}}
\newcommand{\fD}{\mathfrak{D}}
\newcommand{\tM}{\widetilde{M}}
\newcommand{\tN}{\widetilde{N}}
\newcommand{\tfD}{\widetilde{\fD}}
\newcommand{\tsigma}{\tilde{\sigma}}
\newcommand{\tu}{\tilde{u}}
\newcommand{\Fut}{\mathrm{Fut}}
\newcommand{\fm}{\mathfrak{m}}
\newcommand{\bH}{\mathbb{H}}
\newcommand{\ocX}{\overline{\cX}}
\newcommand{\ocD}{\overline{\cD}}
\newcommand{\aut}{\mathfrak{aut}}
\newcommand{\U}{\mathrm{U}}
\newcommand{\tDelta}{\widetilde{\Delta}}
\newcommand{\calC}{\mathcal C}
\newcommand{\rmS}{\mathrm{S}}
\newcommand{\wt}{\mathrm{wt}}
\newcommand{\cE}{\mathcal{E}}
\newcommand{\Hodge}{\mathrm{Hodge}}
\newcommand{\Ric}{\mathrm{Ric}}
\newcommand{\K}{\mathrm{K}}
\newcommand{\oP}{\overline{P}}
\newcommand{\ocP}{\overline{\mathcal{P}}}
\newcommand{\cP}{\mathcal{P}}
\newcommand{\bfP}{\mathbf{P}}
\newcommand{\bfA}{\mathbf{A}}
\newcommand{\oSigma}{\overline{\Sigma}}
\newcommand{\bfV}{\mathbf{V}}
\newcommand{\hU}{\widehat{U}}
\newcommand{\cI}{\mathcal{I}}
\newcommand{\cQ}{\mathcal{Q}}
\newcommand{\cV}{\mathcal{V}}
\newcommand\numberthis{\addtocounter{equation}{1}\tag{\theequation}}
\newcommand{\cG}{\mathcal{G}}
\newcommand{\cF}{\mathcal{F}}
\begin{document}
\title{Wall crossing for K-moduli spaces of plane curves}
\author[Ascher]{Kenneth Ascher}
\address{Department of Mathematics, University of California, Irvine, CA, 92697, USA}
\email{kascher@uci.edu}
\author[DeVleming]{Kristin DeVleming}
\address{Department of Mathematics and Statistics,
University of Massachusetts, Amherst, MA 01003-9305, USA}
\email{kdevleming@umass.edu}
\author[Liu]{Yuchen Liu}
\address{Department of Mathematics, Northwestern University, Evanston, IL 60208, USA}
\email{yuchenl@northwestern.edu}
\date{\today}

\begin{abstract}

We construct proper good moduli spaces parametrizing K-polystable $\bQ$-Gorenstein smoothable log Fano pairs $(X, cD)$, where $X$ is a Fano variety and $D$ is a rational multiple of the anti-canonical divisor. We then establish a wall-crossing framework of these K-moduli spaces as $c$ varies. The main application in this paper is the case of plane curves of degree $d \geq 4$ as boundary divisors of $\bP^2$. In this case, we show that when the coefficient $c$ is small, the K-moduli space of these pairs is isomorphic to the GIT moduli space. We then show that the first wall crossing of these K-moduli spaces are weighted blow-ups of Kirwan type. 
 We also describe all wall crossings for degree $4,5,6$  and relate the final K-moduli spaces to Hacking's compactification and the moduli of K3 surfaces.
\end{abstract}

\pagenumbering{gobble}

\pagenumbering{arabic}

\maketitle{}

\section{Introduction}

Constructing compactifications of moduli spaces of varieties is a fundamental question in algebraic geometry. While it is well known that singular varieties will occur on the boundary of the moduli space, an interesting question to investigate is how different compactifications yield different singularities on the boundary. The classical approach of Deligne and Mumford for genus $\geq 2$ curves only allows stable curves (i.e. nodal singularities) in the boundary. Later on, it was realized that there are meaningful alternate compactifications allowing curves with worse singularities that can be obtained from running the minimal model program (MMP) on the Deligne-Mumford compactification, often going by the name the \emph{Hassett-Keel program}.

If we consider plane curves, there are two well-known compactifications of the moduli space. Classically, Mumford's geometric invariant theory (GIT) yields a projective variety
$\oP_d^{\rm GIT}$ parametrizing $\rmS$-equivalent classes of GIT semistable plane curves of degree $d$. Moreover,
in each $\rmS$-equivalence class, there exists a unique
closed orbit whose representative is a GIT polystable plane curve.
In the philosophy of Alper, this gives a \emph{good} moduli space morphism $\ocP_d^{\GIT}\to \oP_d^{\GIT}$ where $\ocP_d^{\GIT}$
is the GIT quotient stack.  An approach due to Hacking
(following ideas from Koll\'ar-Shepherd-Barron and Alexeev),
is via \emph{stable pairs} and the MMP \cite{Hac01, Hac04}. 
Roughly speaking, each smooth plane curve $C$
of degree $d\geq 4$ can be viewed as a boundary divisor
in $\bP^2$, and so one can study certain pairs of degenerations of the plane with a curve subject to some conditions on positivity and singularities of the pair coming from the MMP. Hacking's moduli stack $\ocP_d^{\rm H}$
is a proper Deligne-Mumford stack whose coarse moduli space
$\oP_d^{\rm H}$ is a projective variety.


One notable feature of these two different approaches is 
that although GIT semistable curves all lie in $\bP^2$,
they can be quite singular; on the other hand, although
the surfaces in the boundary of $\ocP_d^{\rm H}$
can be quite singular (possibly non-normal), the singularities
of the degenerate curve are usually mild. As these two moduli spaces are birational, it is a natural question to ask how to interpolate between them. 

For example, we illustrate the simplest non-trivial case: $\deg d =4$. The $6$-dimensional GIT quotient $\oP_4^{\GIT}$ generically parametrizes curves which are at worst cuspidal in $\bP^2$, has a curve parametrizing tacnodal curves (i.e. locally $y^2 + x^4=0$), and a point parametrizing the double conic. On the other hand, Hacking's space $\oP_4^{\rm H}$ (see also \cite{hassettquartic}) also generically parametrizes cuspidal curves in $\bP^2$, but has a divisor parametrizing curves on $\bP(1,1,4)$ (which are at worst cuspidal away from the singular point, and at worst nodal at the singularity), and has a codimension two locus parametrizing curves on the non-normal surface $\bP(1,1,2) \cup \bP(1,1,2)$. Here, the curves are snc at the double locus and at worst cuspidal elsewhere. In particular, one can see directly the trade-off between having very singular curves that still are in $\bP^2$, and having mildly singular curves on singular surfaces. It is natural to ask how to relate the two spaces in a modular way.

In this article, we investigate a new family of compactifications of the moduli space of smooth plane curves
using K-stability and (conical) K\"ahler-Einstein metrics.
For any smooth plane curve $C$ of degree $d\geq 4$,
the celebrated work \cite{CDS15} and \cite{Tia15} implies that
$(\bP^2,cC)$ admits a conical K\"ahler-Einstein metric
for any $0<c<\frac{3}{d}$ hence is K-stable. Thus it is natural to construct K-stability compactifications of these moduli spaces.
Recently Li, Wang, and Xu \cite{LWX14} showed that there exist proper good moduli spaces parametrizing K-polystable $\bQ$-Gorenstein smoothable Fano varieties (see also \cite{Oda15} and \cite{SSY16}).
Based on \cite{LWX14} and the very recent work by Tian and Wang \cite{TW19} on the solution of the log smooth Yau-Tian-Donaldson Conjecture, we construct K-moduli stacks and spaces for all  $\bQ$-Gorenstein smoothable log Fano pairs (see Definition \ref{defn:qgorsmoothable}). In particular, this implies that the K-stability compactification of the moduli space of log Fano pairs
$(\bP^2, cC)$ with $0<c<\frac{3}{d}$ a rational number exists as a proper good moduli space.

\begin{theorem}[=Theorem \ref{thm:lwxlog}]
 Let $\chi_0$ be the Hilbert polynomial of an anti-canonically polarized Fano manifold. Fix $r\in\bQ_{>0}$ and a rational number $c\in (0,\min\{1,r^{-1}\})$. Then there exists a reduced Artin stack  $\cK\cM_{\chi_0,r,c}$ of finite type over $\bC$ parametrizing all K-semistable $\bQ$-Gorenstein smoothable log Fano pairs $(X,cD)$ with Hilbert polynomial $\chi(X,\cO_X(-mK_X))=\chi_0(m)$ for sufficiently divisible $m$ and $D\sim_{\bQ}-rK_X$. Moreover, the Artin stack $\cK\cM_{\chi_0,r,c}$ admits a good moduli space $KM_{\chi_0,r,c}$ as a proper reduced scheme of finite type over $\bC$ whose closed points parametrize K-polystable pairs.\end{theorem}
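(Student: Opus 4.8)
The plan is to follow the strategy of Li--Wang--Xu \cite{LWX14}, adapting each step to the logarithmic setting with the help of the log Yau--Tian--Donaldson theorem of Tian--Wang \cite{TW19}. First I would fix a sufficiently divisible $m$ so that $-mK_X$ is very ample for all $\bQ$-Gorenstein smoothable Fano $X$ with Hilbert polynomial $\chi_0$, and embed such $X$ into $\bP^{N}$ with $N+1 = \chi_0(m)$. Using boundedness of $\bQ$-Gorenstein smoothable K-semistable Fano varieties (from \cite{LWX14}, via Jiang's effective bound, or alternatively from the openness/properness package) one constructs a locally closed subscheme of the Hilbert scheme parametrizing such embedded Fanos, and over it a relative Cartier divisor parametrizing $D \in |-rmK_X|$; call the resulting parameter scheme $Z$, with its natural $\PGL_{N+1}$-action. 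The substack $\cK\cM_{\chi_0,r,c}$ is then the quotient stack $[Z^{\mathrm{kss}}/\PGL_{N+1}]$, where $Z^{\mathrm{kss}} \subseteq Z$ is the locus of $(X, cD)$ that are K-semistable log Fano pairs.

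The key steps, in order, are: (1) \emph{Openness of K-semistability.} Show that $Z^{\mathrm{kss}}$ is Zariski open in $Z$; this uses lower semicontinuity of the stability threshold $\delta$ for log Fano pairs in families, which follows from the valuative criterion of K-stability (Fujita--Li, and its log version) together with the constructibility/semicontinuity results available for $\delta$. Since $Z$ is of finite type, this gives that $\cK\cM_{\chi_0,r,c}$ is an Artin stack of finite type over $\bC$. (2) \emph{Reducedness.} Since the pairs are $\bQ$-Gorenstein smoothable, each point of $Z^{\mathrm{kss}}$ admits a smoothing; combined with the fact that the generic such pair $(\bP^2, cC)$-type object (here, the smooth Fano with smooth divisor) is K-stable and unobstructed, one deduces $Z^{\mathrm{kss}}$ is reduced. (3) \emph{Existence of a good moduli space.} Apply the Alper--Halpern-Leistner--Heinloth criterion (existence of good moduli spaces for Artin stacks): one must verify $\Theta$-reductivity and S-completeness of $[Z^{\mathrm{kss}}/\PGL_{N+1}]$, which in the K-stability context is exactly the statement that K-semistable degenerations specialize, within a one-parameter family, to a K-polystable pair with finite automorphisms, and that such a K-polystable limit is unique up to isomorphism. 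The uniqueness of K-polystable degenerations for log Fano pairs is the log analogue of the Li--Wang--Xu uniqueness theorem, and it is here that \cite{TW19} enters: the conical Kähler--Einstein metric provides the analytic input forcing uniqueness of the polystable limit and finiteness of $\Aut(X, cD)$. (4) \emph{Properness.} Use the valuative criterion: given a punctured-disk family of K-semistable smoothable log Fano pairs, one produces a K-semistable limit by the log version of the Langton-type semistable reduction argument, then replaces it by its K-polystable representative; separatedness again comes from uniqueness of the polystable limit. Finiteness of type for $KM_{\chi_0,r,c}$ is inherited from $Z$.

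The main obstacle I expect is step (3)–(4): establishing \emph{uniqueness of the K-polystable degeneration} of a $\bQ$-Gorenstein smoothable log Fano pair, together with $\Theta$-reductivity and S-completeness of the stack. In the absolute case this is the technical heart of \cite{LWX14}, combining the Chen--Donaldson--Sun / Tian solution of YTD with a careful analysis of degenerations; in the log case one must feed in \cite{TW19} in place of \cite{CDS15}, \cite{Tia15}, and check that all the intersection-theoretic positivity and lower-semicontinuity inputs (e.g. for the CM line bundle, which will be needed later to show $KM_{\chi_0,r,c}$ is projective rather than merely proper) go through when a boundary divisor of coefficient $c$ is present. A secondary technical point is confirming that $\bQ$-Gorenstein smoothability is a locally closed (indeed, it will be used to cut out $Z$) and deformation-open condition compatible with taking K-semistable limits, so that the stack genuinely parametrizes smoothable pairs; this is routine given the local structure of $\bQ$-Gorenstein smoothings but must be stated carefully.
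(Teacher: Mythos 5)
Your proposal is broadly feasible, but it takes a genuinely different route from the paper, and a few of the steps you sketch do not quite match what is actually needed. The paper does \emph{not} invoke the Alper--Halpern-Leistner--Heinloth existence criterion ($\Theta$-reductivity and S-completeness); it explicitly notes that the purely algebraic works \cite{ABHLX19,Xu19,BLX19} were simultaneous and are not relied on. Instead, the paper applies the earlier criterion of Alper--Fedorchuk--Smyth \cite[Theorem 1.2]{AFS17}, which requires producing, around each closed (K-polystable) point, a local quotient presentation $[U_W/\Aut(X,D)]$ that is stabilizer preserving and sends closed points to closed points, together with a check that closures of $\SL(N+1)$-orbits in $Z_c^{\red}$ admit good moduli spaces. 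The engine that makes these Luna-type local charts go through is the analytic continuity method of Section \ref{sec:continuity}, feeding Tian--Wang's Gromov--Hausdorff compactness (Theorem \ref{thm:lwx4.1}) through Theorem \ref{thm:bddtestK} to conclude, among other things, that K-unstable pairs in the Hilbert scheme are destabilized by a $1$-PS inside $\SL(N+1)$, that K-polystable pairs have reductive automorphism groups, and that K-stability implies uniform K-stability. Your AHLH route would instead demand $\Theta$- and S-completeness of the K-semistable substack, which is what \cite{ABHLX19,BLX19} establish by purely algebraic means; that is a valid alternative but a different technical package.

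Two more substantive mismatches worth flagging. First, on openness: you propose lower semicontinuity of $\delta$ in families. The paper instead proves openness (Theorem \ref{thm:kstfinite}) via a constructibility argument on GIT weights on the Hilbert scheme (Lemma \ref{lem:lwxA.3}, after \cite[App.\ A.1]{LWX14}), which works \emph{because} Theorem \ref{thm:bddtestK} reduces K-(semi)stability to testing $1$-PS in $\SL(N+1)$; the lower semicontinuity of $\delta$ in families was not yet available in general at the time. Second, on ``reducedness'': you suggest deducing reducedness of $Z^{\mathrm{kss}}$ from unobstructedness. The paper does not prove and does not need this in the construction; it simply defines $Z_c^{\red}$ as the reduced scheme structure and works with a pseudo-functor over reduced bases (Remark \ref{rem:pseudo-functor}), precisely because a reasonable notion of $\bQ$-Gorenstein smoothable families over Artinian thickenings is not available. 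This is exactly the kind of technical point your AHLH approach would have to confront directly, since that criterion is naturally phrased for stacks whose moduli functor is well-behaved over all local Artin rings. Finally, the uniqueness of K-polystable degenerations that you attribute to \cite{TW19} is obtained in the paper as an algebraic input from Blum--Xu \cite{BX18}; \cite{TW19} is used instead for producing the weak conical K\"ahler--Einstein limit and the reductivity of automorphisms. And note that one virtue of the AFS17/local-GIT approach the paper follows is that the good moduli space comes out a proper \emph{scheme} (the local charts are affine GIT quotients), whereas the AHLH package a priori yields only a separated algebraic space.
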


It is thus natural to ask how the moduli spaces depend on the coefficient $c$. In this setting, we prove the following wall-crossing type result. 

\begin{theorem}[=Theorem \ref{thm:logFano-wallcrossing}]\label{thm:introwallcrossing}
 There exist rational numbers \[0=c_0<c_1<c_2<\cdots<c_k=\min\{1,r^{-1}\}\] such that $c$-K-(poly/semi)stability conditions do not change for $c\in (c_i,c_{i+1})$. For each $1\leq i\leq k-1$ and $0<\epsilon\ll 1$, we have open immersions
 \[
 \cK\cM_{\chi_0,r,c_i-\epsilon}\xhookrightarrow{\Phi_{i}^-}
 \cK\cM_{\chi_0,r,c_i}\xhookleftarrow{\Phi_{i}^+}\cK\cM_{\chi_0,r,c_i+\epsilon}
 \]
 which induce projective morphisms
 \[
 KM_{\chi_0,r,c_i-\epsilon}\xrightarrow{\phi_{i}^-}
 KM_{\chi_0,r,c_i}\xleftarrow{\phi_{i}^+}KM_{\chi_0,r,c_i+\epsilon}
 \]
 Moreover, all the above wall crossing morphisms have local VGIT presentations as in \cite[(1.2)]{AFS17}, and the CM $\bQ$-line bundles on $KM_{\chi_0, r,c_i\pm\epsilon}$ are $\phi_i^{\pm}$-ample.
\end{theorem}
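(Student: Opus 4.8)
The plan is to establish the wall-crossing structure in three stages: (1) prove that there are only finitely many walls; (2) analyze the local structure of each K-moduli stack near a wall; and (3) identify this local structure with a VGIT wall crossing and deduce the ampleness of the CM line bundle.

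\textbf{Step 1: Finiteness of walls.} First I would show that the $c$-K-semistability condition of a $\bQ$-Gorenstein smoothable log Fano pair $(X, cD)$ is governed by the sign of $\beta_{(X,cD)}(E) = A_{(X,D)}(E) - S_{(X,D)}(E)$ over all divisorial valuations $E$, and that this quantity is \emph{affine linear} in $c$ (indeed $A_{(X,cD)}(E) = A_X(E) - c\cdot\ord_E(D)$ and $S$ depends linearly on $c$ through the volume computation). Using properness of $\cK\cM_{\chi_0,r,c}$ for each fixed $c$ together with boundedness of the family of underlying Fano varieties $X$ (Birkar's boundedness, or the $\bQ$-Gorenstein smoothable hypothesis reducing to a bounded family), one gets finitely many ``types'' of test configurations and hence finitely many linear functions of $c$ whose sign governs (semi)stability; their finitely many sign changes in $(0,\min\{1,r^{-1}\})$ are the walls $c_1 < \cdots < c_{k-1}$. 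On each open chamber $(c_i, c_{i+1})$ the (poly/semi)stability conditions are then constant.

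\textbf{Step 2: Local structure at a wall and the open immersions.} For a fixed wall $c_i$, I would show that any $c$-K-semistable pair for $c \in (c_{i-1}, c_i)$ or $c \in (c_i, c_{i+1})$ is also $c_i$-K-semistable: this is because the relevant $\beta$ functions are nonnegative on an open interval around the chamber and, being affine in $c$, remain nonnegative at the endpoint $c_i$ by continuity. This gives the inclusions of semistable loci and hence the open immersions $\Phi_i^{\pm}$ of stacks; passing to good moduli spaces (using the universal property and properness) yields the projective morphisms $\phi_i^{\pm}$, which are proper since source and target are proper, and have finite fibers on the stable locus. To get the \emph{local VGIT presentation}, I would work locally around a point $[(X_0, c_i D_0)]$ of $KM_{\chi_0, r, c_i}$ corresponding to a $c_i$-K-polystable pair with automorphism group $G = \Aut(X_0, D_0)$ (reductive by the Matsushima-type result implied by K-polystability and \cite{LWX14}-type arguments, now in the log setting via \cite{TW19}). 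Using the Luna-type étale slice theorem for good moduli spaces (Alper-Hall-Rydh), étale-locally the stack $\cK\cM_{\chi_0, r, c_i}$ near this point is $[\Spec R / G]$ for a suitable local model, and the CM line bundle pulls back to a $G$-linearization; the chambers $c_i \pm \epsilon$ correspond to VGIT chambers for this $G$-action with respect to linearizations twisted by the $c$-dependent part of the CM class, yielding precisely the local picture of \cite[(1.2)]{AFS17}.

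\textbf{Step 3: Ampleness of the CM line bundle.} With the VGIT presentation in hand, the relative ampleness of $\lambda_{CM, c_i \pm \epsilon}$ over $KM_{\chi_0, r, c_i}$ follows from the standard fact that in VGIT, the polarization on the quotient at a chamber is ample relative to the morphism to the quotient at the wall (this is exactly what \cite{AFS17} formalizes). To globalize I would invoke the positivity of the CM line bundle on K-moduli spaces (Codogni-Patakfalvi, Xu-Zhuang, or the log version) which shows $\lambda_{CM, c}$ is nef and big on $KM_{\chi_0, r, c}$, and combine with the local computation to upgrade nefness to $\phi_i^{\pm}$-ampleness.

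\textbf{Main obstacle.} I expect the hard part to be Step 2: verifying that the étale-local model of the K-moduli stack near a wall is \emph{literally} a GIT quotient situation to which \cite{AFS17}'s VGIT formalism applies, and that the $c$-dependence of the K-stability condition matches, under this identification, a genuine linear variation of GIT linearization. This requires the Luna slice theorem for the relevant (possibly non-smooth, non-separated-locus-containing) Artin stack together with a careful matching of the CM-degree computation with the Hilbert-Mumford weights of the local $G$-action — the affine-linearity in $c$ is what makes this possible, but checking that the K-semistable replacement (S-equivalence) coincides with GIT S-equivalence in the slice is the delicate point, and it is here that the structure theory from \cite{LWX14} and \cite{TW19} is essential.
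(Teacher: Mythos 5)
Your high-level plan — finitely many walls, open immersions via continuity of the Futaki/$\beta$-invariant in $c$, étale-local VGIT structure, and ampleness from VGIT — mirrors the paper's architecture. But there are two substantive gaps in the details.

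\textbf{Step 1 is incomplete.} The affine-linearity of $\beta_{(X,cD)}(E)$ in $c$ is correct, but boundedness of the family of underlying $X$ does \emph{not} by itself yield "finitely many types of test configurations": even over a single fixed $X$ the collection of divisorial valuations (or test configurations) is a continuum, so there is no finite list of linear functions of $c$ to take sign changes of. What the paper actually does is first establish (Theorem \ref{thm:bddtestK}, via the continuity method and Tian--Wang's YTD for log smooth pairs) that for $\Hilb(X,D)\in Z_m^{\klt}$ with $m$ sufficiently divisible, $c$-K-(semi/poly)stability can be tested using only $1$-parameter subgroups of the \emph{fixed finite-dimensional group} $\SL(N_m+1)$. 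Only then can one invoke the piecewise-linear decomposition of GIT weights on a Hilbert scheme (\cite[Lemma A.3]{LWX14}, applied in Theorem \ref{thm:kstfinite}) to get a finite constructible stratification on which the sign of the weight is governed by finitely many linear functionals of $c$. This reduction to a fixed projective embedding is the crux of the finiteness argument, and it is absent from your proposal. Without it, "finitely many linear functions" is unjustified.

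\textbf{Step 2 takes a genuinely different route, which would require extra work.} You propose invoking the Alper--Hall--Rydh étale slice theorem for good moduli spaces. The paper deliberately does \emph{not} do this (see the remark following Theorem \ref{thm:logFano-wallcrossing}): their construction predates the purely algebraic stack-theoretic structure theorems, and instead they build the Luna slice $U_W$ by hand inside the Hilbert scheme using Tian's embedding via the weak conical K\"ahler--Einstein metric at a K-polystable point (Theorems \ref{thm:localGIT}, \ref{thm:localVGIT}, drawing on the strategy of \cite[Theorem 8.8]{LWX14}). To use AHR instead, you would first need to know that $\cK\cM_{\chi_0,r,c}$ has the required étale-local quotient structure — which is precisely what the alternative line of work (\cite{ABHLX19, Xu19, BLX19}) establishes for Fano varieties but is not free here for log pairs over reduced bases. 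Either way, the delicate point you correctly flag — matching K-(poly/semi)stability with GIT-(poly/semi)stability in the slice and matching the $c$-variation with a linear twist of the linearization — still has to be proved; it does not come for free from an abstract slice theorem, and the paper spends the bulk of Section 3.6 on it. Finally, in Step 3 the appeal to Codogni--Patakfalvi positivity is unnecessary for the \emph{relative} ampleness asserted in the theorem: the local VGIT computation already gives $\phi_i^{\pm}$-ampleness (Theorem \ref{thm:VGIT-CM-ample}), and global nefness/bigness is not what is claimed.
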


While the two above results hold for any K-moduli stack and space of $\bQ$-Gorenstein smoothable log Fano pairs, the remainder of this paper will focus on $\ocP^\K_{d,c}$ and $\oP_{d,c}^{\K}$, that is, the K-moduli stack and space parametrizing K-semistable and K-polystable limits of $(\bP^2, cC)$, respectively, where $C$ is a smooth plane curve of degree $d$ and $c\in (0,\frac{3}{d})$ is a rational number.

Since K-semistable log Fano pairs
are always Kawamata log terminal (klt) by \cite{Oda13b}, as $c$ increases the surface $X$ in $\oP^\K_{d,c}$ must become
more singular, while the divisor $D$ becomes less singular; this is a general version of the phenomenon seen in the degree 4 example. In particular, it is reasonable to expect that these moduli spaces provide the proper framework for interpolating between $\ocP_d^{\GIT}$ and $\ocP_d^{\rm H}$. Our next results characterize
the behavior of the wall crossings for K-moduli spaces of plane curves. 

First, we give a complete understanding of the first wall crossing in all degrees. The K-moduli space corresponding to $0 < c \ll 1$ is isomorphic to the GIT moduli space, and the first wall crossing is a Kirwan type blowup of the GIT quotient. Note that since the K-moduli stacks and spaces for $d\leq 3$ are well-known (see Example \ref{expl:deg123}), we usually assume $d\geq 4$.

\begin{theorem}[First wall crossing]\label{mthm:firstwall}
Let $d \geq 4$ be an integer, $c \in (0, \frac{3}{d})$ be a rational number, let  $Q$ be a smooth conic in $\bP^2$, let $L$ be a line in $\bP^2$ transverse to $Q$, and let $x,y,z$ be coordinates of $\bP(1,1,4)$. Let \[
c_1 = \left\{ \begin{array}{lr} \frac{3}{2d} & d \textrm{ is even} \\ \frac{3}{2d-3} & d \textrm{ is odd }  \end{array}\right. \qquad
Q_d = \left\{ \begin{array}{lr} \frac{d}{2}Q & d \textrm{ is even} \\ \frac{d-1}{2}Q+L & d \textrm{ is odd }\end{array}\right. \quad
Q'_d = \left\{ \begin{array}{lr} z^{d/2} = 0 & d \textrm{ is even} \\ xyz^{(d-1)/2}=0  & d \textrm{ is odd }\end{array}\right. \qquad
\]
\begin{enumerate}
\item For any $0 < c < c_1$, a plane curve $C$ of degree $d$ is GIT (poly/semi)stable if and only if the log Fano pair $(\bP^2, cC)$ is K-(poly/semi)stable. Moreover, there is an isomorphism of Artin stacks $\ocP^{\K}_{d,c} \cong \ocP_d^{\GIT}$. 
\item There is an open immersion $\Phi^-: \ocP_d^{\GIT} = \ocP^\K_{d, c_1 - \epsilon} \hookrightarrow \ocP^\K_{d, c_1}$ which descends to an isomorphism of good moduli spaces. 
\item If $\Phi^+:\ocP_{d,c_1+\epsilon}^{\K}
\to \overline{\calP}^\K_{d,c_1}$ denotes the latter morphism in the first wall crossing, then there exists a stacky weighted blow up morphism $\rho:\ocP_{d,c_1+\epsilon}^{\K}\to \ocP_{d,c_1-\epsilon}^{\K}=\ocP_d^{\GIT}$ along $\{[Q_d]\}$ (see Definition \ref{def:weightedstackyblowup}) such that $\Phi^+=\Phi^-\circ\rho$.  
In particular, we have
\begin{enumerate}
\item The descent morphism  $\varrho=(\phi^{-})^{-1}\circ \phi^+ :\oP_{d,c_1+\epsilon}^{\K}
\to \oP^\K_{d,c_1-\epsilon}=\oP_d^{\GIT}$ of $\rho$ between good moduli spaces is a weighted blowup of the point $[Q_d]$.
\item If $d$ is even, then $\varrho$ is a partial desingularization of Kirwan type.
\end{enumerate}
\end{enumerate}
\end{theorem}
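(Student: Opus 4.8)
The plan is to establish the three parts in order, since each feeds into the next. For part (1), I would use the Paul–Tian / Odaka–Sano type comparison between K-stability and GIT for the pair $(\bP^2, cC)$ when $c$ is small. Concretely, as $c\to 0^+$ the K-stability threshold of $(\bP^2, cC)$ is governed to first order in $c$ by the GIT weight of the curve $C$ inside $|\cO_{\bP^2}(d)|$ under the $\SL_3$-action, because $-K_{\bP^2}$ is rigid and $(\bP^2, 0)$ is K-polystable with automorphism group $\PGL_3$. So for $0<c\ll 1$, $(\bP^2,cC)$ is K-(semi/poly)stable iff $C$ is GIT (semi/poly)stable; one must then check that the bound "$c\ll 1$" can be taken uniformly to be $c<c_1$, which is exactly the content of identifying $c_1$ as the first wall. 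This requires showing no destabilization can occur for $c<c_1$ other than the ones producing $[Q_d]$: I would bound the log canonical threshold / normalized volume type invariants of all GIT-(semi)stable-but-not-polystable curves and check the first value of $c$ at which the S-equivalence class of $Q_d$ acquires a new, non-$\bP^2$ K-polystable representative is precisely $c_1$. The isomorphism of Artin stacks then follows because both stacks are quotient stacks of the same parameter space by $\PGL_3$ with the same (semi)stable locus.

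For part (2), I would show that at $c=c_1$ the only change to the stability condition is that the strictly-semistable point $[Q_d]$ becomes strictly polystable via a new representative on a degeneration of $\bP^2$ (the surface $\bP(1,1,4)$ with boundary $Q'_d$), but no semistable curve that was stable for $c<c_1$ becomes unstable, and no new semistable points appear. Hence $\Phi^-$ is an open immersion of stacks, and on good moduli spaces it is bijective on points (the S-equivalence classes are unchanged as sets), proper, and birational, so by Zariski's main theorem (both spaces being reduced, and $\oP_d^{\GIT}$ normal in the relevant range) it is an isomorphism. The wall-crossing package of Theorem \ref{thm:introwallcrossing} gives that $\phi^-$ is projective, which upgrades "bijective" to "isomorphism" cleanly.

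For part (3), the heart of the matter, I would invoke the local VGIT presentation from Theorem \ref{thm:introwallcrossing}: étale-locally around $[Q_d]$, the morphisms $\phi^{\pm}$ are modeled on variation of GIT for the action of the stabilizer $\Aut(\bP^2, c_1 Q_d)$ on a slice, and as $c$ crosses $c_1$ the linearization changes sign along a one-parameter subgroup. The stabilizer of $Q_d$ in $\PGL_3$ is (the even case) $\mathrm{PO}_3 \cong \PGL_2$ acting on the conic, times the finite choices; in the odd case it is the subgroup fixing $Q\cup L$. I would compute the slice representation — the normal space to the orbit of $[Q_d]$ in $|\cO(d)|$ as an $\Aut$-representation — and identify the one-parameter subgroup $\lambda$ destabilizing on the $c_1+\epsilon$ side; the new stable locus on that side, modulo $\lambda$, produces the exceptional divisor, and its weights along $\lambda$ give the weighted-blow-up structure of Definition \ref{def:weightedstackyblowup}. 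Descending to good moduli spaces gives (3a). For (3b), in the even case the relevant slice at $[Q_d] = [\tfrac d2 Q]$ is the classical Kirwan picture: the double conic is a point with positive-dimensional stabilizer $\PGL_2$, and the VGIT blow-up is exactly Kirwan's partial desingularization step that replaces this point by $\bP(\text{Sym}^4 \bC^2)\sslash \PGL_2$-type data; I would match the weighted blow-up $\varrho$ with Kirwan's blow-up of the unique strictly-polystable point with reductive non-finite stabilizer and verify it reduces stabilizers, hence is a partial desingularization of Kirwan type.

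The main obstacle I expect is part (1) — specifically, proving the \emph{uniform} bound, i.e. that the interval of $c$ for which K-stability equals GIT-stability is exactly $(0,c_1)$ and not smaller. This is not a formal consequence of the small-$c$ asymptotics; it requires ruling out every potential wall below $c_1$, which means controlling the K-stability of all GIT-semistable plane curves of degree $d$ (including the very singular ones on the GIT boundary) and their $\bQ$-Gorenstein degenerations. I would handle this by an Abban–Zhuang / flag type estimate or by a direct $\delta$-invariant computation on $(\bP^2, cC)$ for $C$ ranging over the GIT-semistable locus, showing the first coefficient $c$ at which $\delta\big((\bP^2,cC)\big)\le 1$ for a non-$\bP^2$ central fiber is $c_1$, with the destabilizing valuation being the one producing the degeneration $\bP^2 \rightsquigarrow \bP(1,1,4)$. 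The odd-degree case carries the extra bookkeeping of the auxiliary line $L$, but the structure of the argument is the same.
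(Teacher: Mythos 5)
Your architecture for parts (2) and (3) — Luna slice plus local VGIT for the weighted blow-up, and the Kirwan comparison in the even case — matches the paper's, and your Zariski's-Main-Theorem route for part (2) is a workable alternative to the paper's quicker argument that $\Pic(\oP_d^{\GIT})$ injects into $\Pic(\bfP_d)\cong\bZ$ via equivariant Picard groups, so that any birational morphism out of it between normal proper varieties is an isomorphism.

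Part (1), however, has a real gap. To prove $\ocP^\K_{d,c}\cong\ocP_d^{\GIT}$ for $c<c_1$, it is not enough to control the K-stability of pairs $(\bP^2,cC)$ with $C$ ranging over GIT-semistable plane curves; you must also rule out that the K-moduli stack contains, for some $c<c_1$, a K-semistable pair $(X,cD)$ with $X$ a nontrivial Manetti degeneration of $\bP^2$. Your proposed tool — computing $\delta\big((\bP^2,cC)\big)$ as $C$ ranges over the GIT boundary — only sees pairs on $\bP^2$ and says nothing about which surfaces $X$ can appear. The paper's mechanism is the local Gorenstein-index bound of Theorem \ref{thm:localindex}: the normalized-volume inequality of Theorem \ref{thm:local-vol-global}, combined with the finite degree formula for local volumes in dimension two and a Skoda-type estimate, bounds the local index of any singular point $x\in X$ by $\lfloor(1-cd/3)^{-1}\rfloor$. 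For $c<3/(2d)$ this forces $X$ Gorenstein, hence $X\cong\bP^2$; for odd $d$ with $3/(2d)\le c<3/(2d-3)$ it leaves only $\bP^2$ or $\bP(1,1,4)$, and the latter is then excluded by a direct valuative computation on the $(-4)$-curve over the $\tfrac{1}{4}(1,1)$-point. Your instinct to bring in normalized volume is the right one, but it must be applied to singularities of the candidate surface $X$, not to invariants of the curve $C$; without that reduction the uniform bound $c<c_1$ does not follow and the proof of part (1) does not close.
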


Before preceding, we note that Gallardo, Martinez-Garcia, and Spotti independently showed in \cite[Theorem 1.2]{GMGS} that a similar result to Theorem \ref{mthm:firstwall} (1) holds for all hypersurfaces in $\bP^n$ assuming the Gap Conjecture \cite[Conjecture 5.5]{SS17} which is true when $n\leq 3$ by \cite[Proposition 4.10]{LL16} and \cite[Theorem 1.3]{LX17b}. The following result removes this assumption.

\begin{theorem}[=Theorem \ref{thm:highdim}]\label{mthm:highdim}
Let $n$ and $d\geq 2$ be positive integers. Then there exists a positive rational number $c_1=c_1(n,d)$ such that for any fixed $0<c<c_1$, a hypersurface $S\subset\bP^n$ of degree $d$ is GIT (poly/semi)stable if and only if the log Fano pair $(\bP^n, cS)$ is K-(poly/semi)stable.
\end{theorem}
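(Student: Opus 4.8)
The plan is to show that for $c$ sufficiently small, K-(semi/poly)stability of $(\bP^n, cS)$ is governed entirely by the GIT stability of the hypersurface $S$, following the interpolation philosophy already used in Theorem \ref{mthm:firstwall} but now removing any reliance on the Gap Conjecture. The starting point is that K-semistability of $(\bP^n, cS)$ for small $c$ forces the underlying Fano to be $\bP^n$ itself: indeed, a K-semistable $\bQ$-Gorenstein smoothable log Fano pair $(X, cS)$ with $c\to 0$ degenerates to a K-semistable Fano $X$ with the same Hilbert polynomial as $\bP^n$, and by the solution of the Fano case (or directly by the volume/normalized-volume estimates of \cite{Fuj18, LL16, LX17b}) any such $X$ with $(-K_X)^n = (n+1)^n$ that is K-semistable must be $\bP^n$. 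More precisely, one shows there is a uniform gap: there is $\delta_0>0$ so that any K-semistable Fano with this Hilbert polynomial and $X\not\cong \bP^n$ has $\delta(X) \leq 1$ but in fact the relevant invariant (e.g. $(-K_X)^n$, or the CM degree separation) is bounded away from the $\bP^n$ value. This is where in the low-dimensional case one invoked the Gap Conjecture; here I would instead use the following softer input: the moduli stack of K-semistable $\bQ$-Gorenstein smoothable Fano varieties with fixed Hilbert polynomial is of finite type (from Theorem \ref{thm:lwxlog} with $D = \emptyset$, or its Fano predecessor \cite{LWX14}), so $\{X : X \text{ K-ss}, \chi_X = \chi_{\bP^n}\}$ is bounded, and the locus $X \cong \bP^n$ is open; openness plus boundedness gives a uniform separation in the CM line bundle, which is exactly what one needs.

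The second and main step is an \emph{estimate on $\beta$-invariants (or Futaki invariants of special test configurations) that is uniform as $c\to 0$}. For the pair $(\bP^n, cS)$, any special test configuration $(\calX, c\calD)/\bA^1$ has generalized Futaki invariant that depends affinely on $c$: $\Fut(\calX, c\calD) = \Fut(\calX, 0) + c\cdot A$, where $\Fut(\calX,0)$ is the Futaki invariant of the test configuration of the Fano alone and $A$ is a correction term coming from how $\calD$ degenerates (controlled by intersection numbers, hence bounded in terms of $n$ and $d$ once $\calX$ has central fiber $\bP^n$ — and by the first step we may assume this after a small-$c$ reduction). When $\calX_0 \not\cong \bP^n$, the first step gives $\Fut(\calX, 0) \geq \delta_0 > 0$ uniformly, and boundedness of the second-fiber geometry bounds $|A|$ by a constant $A_0(n,d)$; hence for $c < \delta_0/A_0$ such test configurations cannot destabilize, so the only test configurations that matter have central fiber $(\bP^n, c S_0)$ with $S_0$ a degeneration of $S$ inside $\bP^n$ under a one-parameter subgroup of $\PGL_{n+1}$. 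For those, $\Fut(\calX, c\calD) = c\cdot\Fut_{\mathrm{GIT}}(\lambda, S) + O(c^2)$ — more precisely one computes that, with $\calX = \bP^n\times\bA^1$ and $\calD$ the degeneration of $S$ under $\lambda$, the Futaki invariant is a positive multiple of $c$ times the Hilbert–Mumford weight $\mu(S,\lambda)$ (this is the log version of Mumford's computation; cf. \cite{Tia97, Oda13b}), plus a term of order $c^2$ that is again uniformly bounded. Choosing $c$ small enough that the sign of the whole expression is dictated by the GIT weight yields: $(\bP^n, cS)$ is K-semistable $\iff$ $S$ is GIT semistable, and K-stable (among special TCs, hence K-stable by \cite{LX14}) $\iff$ $S$ is GIT stable. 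The polystable case follows by analyzing the K-polystable degeneration: it is of the form $(\bP^n, cS_0)$ with $S_0$ having a closed $\PGL_{n+1}$-orbit, i.e. $S_0$ GIT polystable, and conversely.

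The main obstacle is making the constant $c_1 = c_1(n,d)$ genuinely uniform, i.e. getting the $O(c^2)$ (and the correction term $A$) bounded \emph{independently of the test configuration}. The issue is that a priori the test configurations $(\calX, c\calD)$ form an infinite family, and one must bound the relevant intersection numbers / weights on them. I would handle this exactly as in the special-test-configuration reduction of \cite{LX14}: it suffices to check K-stability against special test configurations, and among these, boundedness of the total spaces $\calX$ (whose central fibers are K-semistable log Fano, hence live in the finite-type stack $\cK\cM$ of Theorem \ref{thm:lwxlog} for a range of $c$, say $c\in(0, c_1^{(0)})$) bounds all the relevant invariants: the linear coefficient $A$ is $\frac{1}{n!}(-K_{\calX_0})^{n-1}\cdot\calD_0$-type intersection number, uniformly bounded on a bounded family, and the $O(c^2)$ term can be packaged into the continuity/affineness of $\Fut$ in $c$ together with this boundedness. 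Concretely, I would first fix $c_1^{(0)}$ small enough (using Theorem \ref{thm:logFano-wallcrossing}, there are only finitely many walls) that on $(0,c_1^{(0)})$ the stability conditions are constant and the stack is bounded; then run the Futaki estimate above on this fixed bounded family to extract $c_1 \le c_1^{(0)}$. The rest — translating "the sign of $\Fut$ is that of the GIT weight" into the stated equivalence, and matching polystable representatives — is then formal.
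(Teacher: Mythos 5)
Your first step (showing the underlying Fano must be $\bP^n$) is in the right spirit and matches the paper: the paper pins this down by first observing that for $c$ small the surface $X$ is itself K-semistable (using the wall-crossing finiteness of Theorem~\ref{thm:logFano-wallcrossing} to force $c$-K-semistability for all small $c$ simultaneously), and then invoking the sharp result that a K-semistable $\bQ$-Fano of dimension $n$ with $(-K_X)^n=(n+1)^n$ is $\bP^n$. Your ``softer'' boundedness-plus-openness argument is vaguer than this, but your parenthetical ``or directly by the volume/normalized-volume estimates'' is the actual tool the paper uses.

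The real problem is in your second step. Your plan for the direction (GIT semistable $\Rightarrow$ K-semistable) is a direct Futaki estimate: write $\Fut(\calX,c\calD)=\Fut(\calX,0)+cA$, claim a uniform gap $\Fut(\calX,0)\geq\delta_0>0$ over all special test configurations with $\calX_0\not\cong\bP^n$, bound $|A|$, and take $c<\delta_0/A_0$. This gap does not exist. Since $\Fut(\cdot,0)$ and the correction $A$ are both homogeneous of degree one in the $1$-PS, one should normalize to the unit sphere of $1$-PSs; on that compact set $\Fut(\lambda,\bP^n)=0$ exactly on the closed locus of $\lambda$'s landing in $\Aut(\bP^n)$ (i.e.\ product test configurations of the ambient), and on the open complement $\Fut$ is positive but its infimum is $0$, because special test configurations with $\calX_0\to\bP^n$ are dense near that boundary. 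So no uniform $\delta_0$ exists, and your argument collapses precisely in the transition region between ``$\bP^n$-type'' and ``non-$\bP^n$-type'' test configurations — which is exactly where the delicacy lives. Passing to special test configurations via \cite{LX14}, which you invoke, does not help because they still accumulate on the product-TC locus. This is, in substance, the same difficulty that the Gap Conjecture was introduced to handle in \cite{GMGS}; you claim to avoid it but reintroduce an equivalent uniformity statement.

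The paper's proof takes a fundamentally different, \emph{softer} route that never needs any quantitative Futaki estimate for the converse direction. It only proves the easy direction ``K-(poly/semi)stable $\Rightarrow$ GIT (poly/semi)stable'' using the Paul--Tian criterion (Theorem~\ref{thm:paultian}, whose hypothesis — ampleness of the CM $\bQ$-line bundle on the parameter space — is verified by the explicit intersection computation of Proposition~\ref{prop:P^2-paultian}); this yields an open immersion $\varphi:\cK\cM_{\chi_0,r,c}\hookrightarrow \cH_d^n$ of Artin stacks. The converse then follows formally: one verifies the K-moduli stack is nonempty (this is a separate and necessary step that your proposal omits — for $d\geq n+1$ one uses that smooth hypersurfaces give log canonical log CY pairs plus interpolation, and for $d\leq n$ one uses existence of K\"ahler--Einstein metrics on smooth hypersurfaces plus \cite[Proposition~3.3]{LL16} on cones), and then the descent $\varphi'$ on good moduli spaces is injective and proper, hence finite, hence (combined with $\varphi$ being an open immersion and irreducibility) an isomorphism. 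In short, properness of the K-moduli space replaces the uniform Futaki estimate you were trying to prove, and that replacement is not a cosmetic change but the key idea of the paper's argument.
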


It is natural to ask what happens beyond the first wall crossing for plane curves. When $d$ is small ($d \leq 6$), we explicitly determine all K-moduli wall crossings. In fact, if $d$ is 4 or 6, we can relate our moduli spaces to Baily-Borel compactifications of moduli spaces of K3 surfaces (see Section \ref{sec:lowdegree}). Let $\oP^*_4$ denote the Baily-Borel compactification of the moduli space of ADE K3 surfaces of degree 4 with $\mathbb{Z}/4\mathbb{Z}$ symmetry constructed by Kond\={o} \cite{kondok3}, and let $\oP^*_6$ denote the Baily-Borel compactification of the moduli space of K3 surfaces of degree 2.

\begin{theorem}[$d = 4, 6$, see Theorem \ref{thm:quartsext} and Section \ref{sec:K3surface}] If $d = 4$ (resp. 6), there is only one wall crossing for K-moduli spaces given by the weighted blowup $\widehat{P}^\GIT_4 \to \oP^\GIT_4$ (resp. $\widehat{P}^\GIT_6 \to \oP^\GIT_6$) at the double conic (resp. triple conic).  Furthermore, the ample model of the Hodge line bundle on $\widehat{P}^\GIT_4$ (resp. $\widehat{P}^\GIT_6$) is $\oP^*_4$ (resp. $\oP^*_6)$. \end{theorem}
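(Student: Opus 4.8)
The plan is to prove the $d=4$ and $d=6$ statements in parallel, following the same three-part strategy: (i) enumerate all K-moduli walls, (ii) identify the blown-up locus at the (unique) wall and check the wall-crossing morphism is the stated weighted blowup, and (iii) compute the ample model of the Hodge line bundle and match it with the Baily–Borel compactification. For step (i), I would start from Theorem~\ref{mthm:firstwall}, which already gives that $\ocP^\K_{d,c}\cong\ocP^\GIT_d$ for $0<c<c_1$ and describes the first wall at $c_1=\tfrac{3}{2d}$ ($=\tfrac{3}{8}$ for $d=4$, $=\tfrac{1}{4}$ for $d=6$), blowing up the point $[\tfrac{d}{2}Q]$ (double conic for $d=4$, triple conic for $d=6$). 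The task is then to show there are \emph{no further walls} in the range $(c_1,\tfrac{3}{d})$. For this I would run the standard K-stability wall-crossing machinery of Theorem~\ref{thm:introwallcrossing}: a wall occurs only when some log Fano pair $(X,cD)$ that is strictly K-semistable for a particular $c$ becomes polystable via a degeneration. One classifies the possible K-semistable surfaces $X$ (here $\bP^2$, $\bP(1,1,4)$, $\bP(1,1,2)\cup\bP(1,1,2)$ and their $d=6$ analogues) together with the normalized volume / destabilizing-valuation bookkeeping, and checks by direct (but finite) computation on this short list that no interpolating value of $c$ in $(c_1,\tfrac3d)$ produces a new strictly semistable pair. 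The low dimension and explicit nature of degree $4$ and $6$ plane curves make this enumeration tractable — indeed this is exactly the content of Theorem~\ref{thm:quartsext} and the degree-$6$ analysis in Section~\ref{sec:K3surface}, which I would invoke.

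For step (ii), once it is known that $c_1$ is the only wall, Theorem~\ref{mthm:firstwall}(3) already identifies $\Phi^\pm$ and the descent $\varrho:\oP^\K_{d,c_1+\epsilon}\to\oP^\GIT_d$ as the weighted blowup at $[Q_d]$; setting $\widehat P^\GIT_d:=\oP^\K_{d,c_1+\epsilon}=\oP^\K_{d,\tfrac3d-\epsilon}$ gives the asserted map $\widehat P^\GIT_d\to\oP^\GIT_d$. So step (ii) is essentially a repackaging of the first-wall theorem; the only thing to verify is that the K-moduli space "just below $\tfrac3d$" coincides with the one "just above $c_1$", which is immediate from step (i).

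Step (iii) is where the real work lies and where I expect the main obstacle. The goal is to show the ample model of the Hodge $\bQ$-line bundle $\lambda_{\Hodge}$ on $\widehat P^\GIT_d$ is the Baily–Borel compactification $\oP^*_d$ (Kond\=o's ADE quartic K3 moduli with $\bZ/4$-symmetry for $d=4$; degree-$2$ K3 moduli for $d=6$). The strategy: attach to a plane curve $C$ of degree $d=4$ (resp.\ $6$) the K3 surface obtained as the $\mu_4$-cover (resp.\ double cover) of $\bP^2$ branched along $C$; this defines a period map from (an open part of) $\widehat P^\GIT_d$ to the relevant locally symmetric variety $\Gamma\backslash\mathbb{D}$, which extends to the Baily–Borel compactification by Borel extension. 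One must then (a) show this morphism $\widehat P^\GIT_d\to\oP^*_d$ is a \emph{birational} morphism — checking it is an isomorphism on the open locus of smooth curves and surjective — using the Torelli theorem for the relevant K3's and Kond\=o's construction, and (b) identify the pullback of the natural ample (Hodge/automorphic) line bundle on $\oP^*_d$ with $\lambda_{\Hodge}$ on $\widehat P^\GIT_d$, up to scaling. Point (b) is the technical crux: one compares the CM / Hodge line bundle arising from the K-moduli construction with the Hodge bundle of the family of K3 covers, using that $K_{\bP^2}+cC$ and the canonical bundle of the cover are proportional in an appropriate sense, and that the Hodge bundle on a Baily–Borel compactification is ample and computes the automorphic forms. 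The main difficulty is controlling this comparison over the \emph{boundary} strata of $\widehat P^\GIT_d$ — the locus of curves on $\bP(1,1,4)$ (resp.\ $\bP(1,1,2)\cup\bP(1,1,2)$) and the exceptional divisor of $\varrho$ — where one must check that the period map contracts exactly the loci that are contracted in passing to $\oP^*_d$ (the curves parametrizing non-generic K3's, e.g.\ those acquiring extra Picard rank, or the cusps), so that the ample model is $\oP^*_d$ rather than some intermediate contraction. I would handle this by explicitly matching the stratification of $\widehat P^\GIT_d$ from step (i) with the stratification of $\oP^*_d$ by Kond\=o's (and the degree-$2$) arithmetic description of the boundary, verifying stratum by stratum that $\lambda_{\Hodge}$ is trivial exactly along fibers of the contraction to $\oP^*_d$; positivity of $\lambda_{\Hodge}$ elsewhere then forces the ample model to be precisely $\oP^*_d$. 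The finiteness and explicitness of the low-degree geometry is what makes this feasible.
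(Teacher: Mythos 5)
Your three-part outline is reasonable in shape, but steps (i) and (iii) each diverge from the paper's argument in important ways, and step (i) in particular has a genuine gap.

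On step (i): you cannot ``invoke Theorem~\ref{thm:quartsext}'' here, since that is exactly what has to be proved; the last sentence of your step (i) is circular. Setting that aside, the proposed mechanism --- classify K-semistable surfaces and run normalized-volume / valuative-criterion bookkeeping at every $c\in(c_1,\tfrac3d)$ --- is vaguer and harder than what is actually needed, and you have not said what would make it a \emph{finite} check. The paper instead uses the clean criterion of Proposition~\ref{prop:lclastwall}(2): if every K-polystable pair $(X,c_0C)$ in $\oP^\K_{d,c_0}$ satisfies $\lct(X;C)\geq\tfrac3d$, then there are \emph{no} further walls beyond $c_0$. Combined with Theorem~\ref{thm:firstwallps} (which identifies the K-polystable objects at $c_1+\epsilon$ as GIT polystable curves on $\bP^2$ or on $\bP(1,1,4)$), the whole of step (i) reduces to verifying $\lct\geq\tfrac34$ (resp.\ $\geq\tfrac12$) for these two explicit GIT-polystable lists, which is Propositions~\ref{prop:lctquartic} and~\ref{prop:lctsextic}, read off from Mumford's and Shah's classifications. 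Without this lct criterion your step (i) does not close. Also, a small but real slip: $\bP(1,1,2)\cup\bP(1,1,2)$ cannot occur in a K-moduli space (K-semistable log Fano pairs are klt hence normal); that non-normal surface belongs to Hacking's moduli space, not to $\oP^\K_{d,c}$, and the correct surface list here follows from the index bound of Theorem~\ref{thm:localindex}.

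On step (iii): your route --- build a period map, extend it to a morphism $\widehat{P}^\GIT_d\to\oP^*_d$ via Borel/Looijenga, then match $\lambda_{\Hodge}$ with the pullback of $\cO_{\oP^*_d}(1)$ stratum by stratum along the boundary --- could in principle be made to work (it is close in spirit to the Shah/Looijenga and Hyeon--Lee analyses), but it is heavier than necessary and you correctly flag it as where the difficulty concentrates. The paper avoids the boundary comparison entirely: the open locus $M$ of ADE curves sits as a Zariski open subset of codimension $\geq 2$ in both $\widehat P^\GIT_d=\oP^\K_{d,\tfrac3d-\epsilon}$ and $\oP^*_d$. On $M$ the K-moduli Hodge line bundle pulls back from the automorphic $\cO(1)$ on the period domain quotient. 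Since $\lambda_{\Hodge}$ is nef on $\oP^\K_{d,\tfrac3d-\epsilon}$ by Codogni--Patakfalvi, and $M$ is big open in both spaces, the section rings of $\lambda_{\Hodge}$ and of $\cO_{\oP^*_d}(1)$ agree, so $\lambda_{\Hodge}$ is big and semiample and its ample model is $\oP^*_d$ --- no morphism needs to be constructed first, and no boundary stratification needs to be matched. If you want to pursue your approach, you must justify that the period map extends \emph{as a morphism} on all of $\widehat{P}^\GIT_d$ (not only on the smooth-curve locus), which is a nontrivial extension theorem; the codimension-$\geq2$ argument sidesteps this.
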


In fact, in the degree 4 case we can say more using Hyeon and Lee's results on the log minimal model program for moduli of genus three curves \cite{hyeon2010log}; see Section \ref{sec:quartics}.

\begin{theorem}[$d = 5$, see Theorems \ref{thm:firstwallps}, \ref{thm:secondwall} and Section \ref{sec:quintics}]
If $d=5$, then there are five wall crossings for K-moduli spaces
  of plane quintics. Among them, the first two are weighted blow-ups while the last three are flips. 
 \end{theorem}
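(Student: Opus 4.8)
The plan is to locate the five walls $0=c_0<c_1<\cdots<c_5<c_6=\tfrac{3}{5}$ one at a time and, at each, to read off the type of the wall crossing from the dimensions of the loci that are modified. By Theorem~\ref{thm:introwallcrossing} the wall set is finite, each crossing $\oP_{5,c_i-\epsilon}^{\K}\xrightarrow{\phi_i^-}\oP_{5,c_i}^{\K}\xleftarrow{\phi_i^+}\oP_{5,c_i+\epsilon}^{\K}$ is projective birational, and the CM line bundles on $\oP_{5,c_i\pm\epsilon}^{\K}$ are $\phi_i^{\pm}$-ample. Thus a crossing is a flip exactly when both $\phi_i^{\pm}$ are isomorphisms in codimension one, while if instead one of the $\phi_i^{\pm}$ is an isomorphism the crossing is a birational contraction of a divisor in the opposite direction --- which in our cases will turn out to be a weighted blow-up. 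The first wall $c_1=\tfrac{3}{2d-3}=\tfrac{3}{7}$, and the fact that its crossing is the weighted blow-up of $\oP_5^{\GIT}$ at the point $[2Q+L]$, is precisely Theorem~\ref{mthm:firstwall}; so it remains to find and classify $c_2,\dots,c_5$ and to verify that no further wall lies in $(0,\tfrac{3}{5})$.

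Since $c$-K-stability is constant on each open interval $(c_i,c_{i+1})$, the strategy at each candidate wall $c_i$ is: (i) enumerate all strictly $c_i$-K-polystable pairs $(X,c_iD)$ occurring as closed points of $\oP_{5,c_i}^{\K}$; (ii) decide, for $c=c_i\mp\epsilon$, which of these become K-stable, thereby identifying the center $Z_i^-\subset\oP_{5,c_i-\epsilon}^{\K}$ that is modified and the locus $Z_i^+\subset\oP_{5,c_i+\epsilon}^{\K}$ that replaces it; and (iii) compute $\dim Z_i^{\pm}$. For step (i) we use that a $\bQ$-Gorenstein smoothable degeneration of $\bP^2$ is one of $\bP^2$, $\bP(1,1,4)$, or a non-normal surface obtained by gluing two weighted projective planes along a rational curve; on each such surface the boundary divisor lies in a fixed linear system, so the pairs $(X,D)$ form finitely many bounded families, and testing K-(poly/semi)stability of their torus- and finite-group-fixed members through the valuative ($\delta$-invariant) criterion --- equivalently, by producing the optimal special test configuration --- yields the finitely many critical values of $c$, which are exactly $c_1,\dots,c_5$. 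Steps (ii)--(iii) are then a deformation-theoretic count: each $Z_i^{\pm}$ is identified with a finite quotient of a parameter space of plane curves with a prescribed singularity type, or of curves on a prescribed degenerate surface, whose dimension is computed from $h^1$ of the associated log tangent sheaf (or directly as a weighted-GIT quotient dimension).

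Carrying this out produces the list $c_1<c_2<c_3<c_4<c_5$. For $i=1,2$ the explicit analysis shows that $Z_i^-$ is of top codimension, so $\phi_i^-$ is an isomorphism of good moduli spaces while $\phi_i^+$ contracts a divisor; hence each of the first two crossings is a weighted blow-up --- of the point $[2Q+L]$ when $i=1$ by Theorem~\ref{mthm:firstwall}, and of the analogous center when $i=2$. For $i=3,4,5$, by contrast, both $Z_i^-$ and $Z_i^+$ have codimension at least two: they parametrize curves with $A_n$- or $D_n$-type singularities, or curves supported on a degenerate surface, lying in families too small to sweep out a divisor on either side. Hence both $\phi_i^{\pm}$ are small, and since the CM line bundles are $\phi_i^{\pm}$-ample this exhibits each of the last three crossings as a flip for the CM polarization. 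Finally, properness of every $\oP_{5,c}^{\K}$ together with the exhaustiveness of the list in step (i) rules out any additional wall in $(0,\tfrac{3}{5})$, so exactly five wall crossings occur.

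The hard part is steps (i)--(ii) for the interior walls $c_2,\dots,c_5$: showing that each prescribed degenerate pair $(X,c_iD)$ is strictly K-semistable, that it is K-stable on exactly one side of $c_i$, and that no other degeneration inserts a wall in between. This needs a careful two-sided use of the $\delta$-invariant criterion --- bounding $\delta$ from above by a well-chosen divisorial valuation to create each destabilizing wall, and from below, uniformly over the relevant bounded family, to rule out spurious walls --- together with a reduction of the $T$- and finite-group-equivariant K-stability computations on $\bP(1,1,4)$ and on the non-normal surfaces to tractable weighted-GIT problems of the kind already used for Theorem~\ref{mthm:firstwall}. Once the walls and the pairs lying on them are determined, the dimension counts and the resulting classification of wall-crossing types are comparatively routine.
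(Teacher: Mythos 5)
Your overall strategy --- enumerate the strictly polystable pairs at each wall by finding optimal destabilizers, classify each wall crossing by comparing codimensions of the modified loci, and use relative ampleness of the CM line bundle to identify flips --- matches the paper's. But your step~(i) starts from a wrong list of surfaces, and this would derail the argument.

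You assert that a $\bQ$-Gorenstein smoothable degeneration of $\bP^2$ relevant here is ``$\bP^2$, $\bP(1,1,4)$, or a non-normal surface obtained by gluing two weighted projective planes along a rational curve.'' Non-normal surfaces cannot appear as closed points of any K-moduli stack: K-semistable log Fano pairs are klt by Odaka \cite{Oda12}, hence normal. Non-normal gluings belong to the Hacking (KSBA) side only. Meanwhile your list omits the surfaces that actually carry walls~$2$, $4$, $5$ for quintics: the weighted projective plane $\bP(1,4,25)$ and its partial smoothing $X_{26}=(xw=y^{13}+z^2)\subset\bP(1,2,13,25)$. With your list you would locate $c_1$ and $c_3$ (which live on $\bP(1,1,4)$) but miss $c_2$, $c_4$, $c_5$ entirely; you would also waste effort testing non-normal candidates that cannot be K-semistable. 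What the paper actually does is control the allowed local Gorenstein indices of the surface via the normalized-volume inequality (Theorem~\ref{thm:localindex}), obtaining the finite list $\bP^2$, $\bP(1,1,4)$, $X_{26}$, $\bP(1,4,25)$ (Lemma~\ref{lem:quinticindex}); you need some such substitute to make step~(i) run.

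Beyond that, the remainder of the proposal is a program rather than a proof: you defer the explicit wall computations (valuative-criterion estimates, construction of special degenerations, and verification of K-polystability of the central fibers --- the last done in the paper with Ilten--S\"u{\ss} $T$-variety methods) to a remark that they are ``the hard part.'' The codimension argument you use for the flip/weighted-blow-up dichotomy is the right one --- it is the local VGIT presentation of Theorem~\ref{thm:wallcrossingchart} together with \cite{dolgachevhu,thaddeus} --- and your use of CM-ampleness from Theorem~\ref{thm:introwallcrossing} to interpret small modifications as flips is correct. So the high-level scheme is sound, but step~(i) must be repaired before the dimension counting can be made precise.
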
 
  
   In Table \ref{table:quintic}, we summarize the behavior of all wall crossings for plane quintics (see also Figure \ref{fig:quintics} on Page \pageref{fig:quintics}). Here we denote by $E_i^{\pm}$ the exceptional loci of $\phi_i^{\pm}:\oP_{5,c_i\pm\epsilon}^{\K}\to\oP_{5, c_i}^{\K}$ where general pairs parametrized by them are described in the table. The full description of $E_i^{\pm}$ will be presented in Theorems \ref{thm:k=gitp114}, \ref{thm:secondwall}, and Section \ref{sec:quintics}. We use $[x,y,z]$ for projective coordinates of weighted projective planes $\bP(1,1,4)$ and $\bP(1,4,25)$. The surface $X_{26}$ is the degree $26$ weighted hypersurface $(xw=y^{13}+z^2)$ in $\bP(1,2,13,25)$ with projective coordinates $[x,y,z,w]$.

\begin{table}[ht!]
\[
    \begin{tabu}{|c|c|c|c|}\hline
       i & c_i & E_i^- & E_i^+ \\\hline
       1 & \frac{3}{7} & (\bP^2, Q_5) & (\bP(1,1,4), (xyz^2+(ax^6+by^6)z+g(x,y)=0))\\
       2 & \frac{8}{15} & (\bP^2, A_{12}\textrm{-quintic}) & (X_{26}, (w=g(x,y)))\\
       3 & \frac{6}{11} & (\bP^2, A_{11}\textrm{-reducible quintics}) &       (\bP(1,1,4), (x^2 z^2 + y^6 z+ g(x,y)=0))\\
       4 & \frac{63}{115} & (\bP^2, A_{11}\textrm{-irreducible quintics}) &
        (\bP(1,4,25), (z^2+x^2 y^{12}+x^{10} g(x,y)=0))\\
       5 & \frac{54}{95} & (\bP^2, A_{10}\textrm{-quintics}) & (\bP(1,4,25), (z^2+x^6 y^{11}+x^{14} g(x,y)=0))\\
       \hline
    \end{tabu}
\]
    \caption{Wall crossings for K-moduli spaces of plane quintics}
    \label{table:quintic}
\end{table}

In general it is expected that K-moduli spaces are projective with ample CM line bundles (see \cite{CP18, BX18}). Using recent work of Codogni and Patakfalvi \cite{CP18} and Posva \cite{Pos19}, we show the following.

\begin{theorem}[=Theorem \ref{thm:projectivity}]
The K-moduli spaces $\oP_{d,c}^{\K}$ are projective when $d\in \{4,5,6\}$ with ample CM line bundles.
\end{theorem}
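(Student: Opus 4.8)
The plan is to establish projectivity of $\oP_{d,c}^{\K}$ for $d \in \{4,5,6\}$ by verifying that the CM $\bQ$-line bundle is ample on each K-moduli space, using the positivity results of \cite{CP18} together with \cite{Pos19}. First I would recall that for any good moduli space $\oP_{d,c}^{\K}$ of K-semistable log Fano pairs as constructed in Theorem~\ref{thm:lwxlog}, the CM $\bQ$-line bundle $\lambda_{\CM}$ descends from the K-moduli stack $\ocP_{d,c}^{\K}$ to $\oP_{d,c}^{\K}$ (this descent is part of the general machinery; one checks the CM line bundle has trivial weight on closed points with positive-dimensional stabilizers via the vanishing of the Futaki invariant for K-polystable pairs). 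By \cite{CP18} (and its extension to the log setting, using \cite{Pos19} to handle the boundary divisor $D = cC$), the CM line bundle is nef on the K-moduli space, and is ample on the locus where the family is \emph{fiberwise} K-stable with finite automorphisms; more precisely, Codogni--Patakfalvi prove semi-positivity and strict positivity of $\lambda_{\CM}$ along curves that are not isotrivial in an appropriate sense. The key point is therefore to upgrade nefness to ampleness.

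The main step is to run a Nakai--Moishezon type argument: since $\oP_{d,c}^{\K}$ is a proper scheme of finite type over $\bC$, it suffices to show $\lambda_{\CM} \cdot Z > 0$ for every irreducible curve $Z \subset \oP_{d,c}^{\K}$. By \cite{CP18}, $\lambda_{\CM} \cdot Z \geq 0$ always, with equality only if the restricted family over (the normalization of) $Z$ is isotrivial, i.e.\ all fibers are isomorphic K-polystable pairs. But the good moduli space $\oP_{d,c}^{\K}$ separates closed points — distinct closed points parametrize non-isomorphic K-polystable pairs — so a nonconstant map from a curve to $\oP_{d,c}^{\K}$ cannot have isotrivial associated family. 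This forces $\lambda_{\CM} \cdot Z > 0$ for all curves $Z$, hence $\lambda_{\CM}$ is ample by Nakai--Moishezon (using that $\oP_{d,c}^{\K}$ is projective over a field once we know it carries an ample line bundle — or more carefully, one first uses the nef $\bQ$-line bundle to produce a morphism to projective space and argues it is finite, then concludes). The restriction to $d \in \{4,5,6\}$ enters because the detailed wall-crossing analysis in the earlier sections (Theorems on quartics, quintics, sextics) gives an explicit description of all K-polystable degenerations, which is what allows one to verify the hypotheses of \cite{CP18,Pos19} — in particular that the universal families over the K-moduli stacks satisfy the requisite conditions (e.g.\ the boundary divisors are relative Mumford divisors avoiding the generic points and codimension-one singularities of the fibers in the sense needed for the log CM line bundle to be well-behaved), and that no pathological non-reduced or non-separated behavior obstructs the positivity statements.

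The hard part, I expect, is the passage through the log/boundary version of the Codogni--Patakfalvi positivity theorem: \cite{CP18} as originally stated concerns K-moduli of Fano \emph{varieties}, and one needs the analogue with a boundary $\bQ$-divisor $cC \sim_{\bQ} -cK_{\bP^2}/\text{(deg)}$. This requires knowing that the relative log CM line bundle of the universal log Fano family is nef on the base of a K-semistable family and strictly positive away from isotrivial loci — precisely the content one extracts by combining \cite{CP18} with the semipositivity-of-the-Hodge-type-bundle results in \cite{Pos19} for pairs. One must check the coefficient $c$ being rational (and $< 3/d$) keeps us in the range where these results apply, and that the CM line bundle in our normalization agrees (up to positive scaling) with the one for which positivity is proven. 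A secondary technical point is confirming that ampleness on the good moduli space (a possibly non-normal, a priori only proper reduced scheme) follows from positive intersection with all curves; here one can invoke that a proper scheme with a $\bQ$-line bundle that is positive on every curve and nef is projective with that bundle ample, or alternatively pull back to the normalization and descend. Once these points are in place, the argument for all three degrees is uniform, with the explicit wall-crossing descriptions of Sections~\ref{sec:lowdegree}--\ref{sec:quintics} supplying the needed input on the structure of the universal families.
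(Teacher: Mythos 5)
Your high-level strategy — establish ampleness of the CM $\bQ$-line bundle by reducing to Nakai--Moishezon and invoking the positivity results of Codogni--Patakfalvi \cite{CP18} and Posva \cite{Pos19} — is the same starting point as the paper's argument. But there are two genuine gaps.

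First, and most importantly, the isotriviality dichotomy you attribute to \cite{CP18} is not what that paper proves. Codogni--Patakfalvi prove \emph{nefness} of the CM line bundle for K-semistable families, and \emph{bigness} when the general fiber is uniformly K-stable and the variation is maximal; they do not prove that $\lambda_{\CM}\cdot Z = 0$ forces the family over $Z$ to be isotrivial. The relevant strict-positivity results require the general fiber to be uniformly K-stable, hence to have finite automorphism group. This hypothesis fails on the strictly K-polystable locus $S_c\subset\oP_{d,c}^{\K}$: for $d=5$, $S_c$ contains the one-dimensional family $\oSigma_{c,6}$ of pairs with $\bG_m$-automorphisms, and your argument says nothing about why $\Lambda_c\cdot\oSigma_{c,6}>0$. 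The paper has to treat $S_c$ by a separate argument (tracing $\oSigma_{c,6}$ through the wall crossings, starting from the GIT identification where the CM line bundle is manifestly ample, and using the interpolation formula with the nef Hodge $\bQ$-line bundle); without this step, the argument does not close.

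Second, even granting positivity on all irreducible curves, Nakai--Moishezon requires positivity of the top self-intersection on subvarieties of \emph{every} dimension, not just curves; positivity on curves alone does not imply ampleness on a proper scheme not already known to be projective (Kleiman's criterion also needs positivity on the closure of the cone of curves, and closed rays need not be spanned by actual curves). The paper addresses this by showing (ii) that $f^*\Lambda_c$ is \emph{big} on every normal proper $V$ mapping generically finitely to $\oP_{d,c}^{\K}$ with image meeting the K-stable locus, which together with nefness and (i) gives the Nakai--Moishezon hypothesis in every dimension. Your proposal skips this reduction. Finally, the route the paper actually takes for $d=4,6$ does not use Codogni--Patakfalvi at all: since there is only one wall, one starts from ampleness on the GIT side, transports it across the single wall via the relative ampleness of the CM line bundle in Theorem~\ref{thm:VGIT-CM-ample}, and then uses the interpolation formula together with nefness of the Hodge line bundle (which is semiample because of the morphism to the Baily--Borel compactification). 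Only $d=5$ requires the full Nakai--Moishezon machinery, and even then the strictly K-polystable locus must be handled by hand.
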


During the review process of this paper, we learned that the ampleness of CM line bundles on K-moduli spaces of log Fano pairs is proved in \cite{XZ19, LXZ21} using purely algebraic methods (see Remark \ref{rem:ps}). In particular,  the CM line bundle is ample on $\oP_{d,c}^{\K}$ for all degrees and all coefficients (see Theorem \ref{thm:cm-ample}). 

The remainder of our paper is devoted to some further questions which will serve as motivation for our future work. For example, in Section \ref{sec:higherdegree} we discuss the second weighted blow-up for $d \geq 7$ (see Theorem \ref{thm:firstwalls}). Another consequence of our work is that the birational maps $\oP^\K_{d, c'} \dashrightarrow \oP^\K_{d, c}$ are birational contractions for $0 < c < c' < \frac{3}{d}$  whenever $3 \mid d$ or $d < 13$ (see Theorem \ref{thm:contraction}).  If this is true for $3\nmid d$ and  $d \geq 13$, then together with the ampleness of the CM line bundle (Theorem \ref{thm:cm-ample}), this would imply that the wall crossing of K-moduli spaces exhibit similar behavior to the Hassett-Keel program for $\overline{M}_g$ (see Theorem \ref{thm:hassetkeel}). 

We show in Theorem \ref{thm:comparison} that the only difference between the K-moduli space $\oP^{\K}_{d,\frac{3}{d}-\epsilon}$ and $\oP^{\rm H}_d$ are the maximally lc pairs in the K-moduli space, and the non-normal pairs in the Hacking moduli space. We conjecture that there is a proper good moduli space of log Calabi-Yau pairs, which relates to the K-moduli and Hacking moduli spaces via the following wall crossing. 

\begin{conj}[Log Calabi-Yau wall crossings, see Conjecture \ref{conj:logCY}]
 There exists a proper good moduli space $\oP_{d}^{\rm CY}$
 which parametrizes $\rmS$-equivalence classes of
 \emph{semistable} log Calabi-Yau pairs $(X,\frac{3}{d}D)$
 where $X$ admits a $\bQ$-Gorenstein smoothing to $\bP^2$.
 Moreover, we have a log Calabi-Yau wall crossing diagram
\[
 \oP^{\K}_{d,\frac{3}{d}-\epsilon}\xrightarrow{~\phi^{-}_{\rm CY}~}
 \oP_{d}^{\rm CY}\xleftarrow{~\phi^{+}_{\rm CY}~}\oP_{d}^{\rm H}
\]
where $\oP_{d}^{\rm CY}$ is the common ample model of the Hodge line bundles on  $\oP^{\K}_{d,\frac{3}{d}-\epsilon}$ and $\oP_{d}^{\rm H}$.
\end{conj}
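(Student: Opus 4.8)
The plan is to realize $\oP_d^{\rm CY}$ as the good moduli space of an Artin stack $\ocP_d^{\rm CY}$ of \emph{semistable} semi-log-canonical log Calabi--Yau pairs, and then to exhibit both $\ocP^{\K}_{d,\frac{3}{d}-\epsilon}$ and $\ocP_d^{\rm H}$ as morphisms onto it given by $(X,cD)\mapsto(X,\frac{3}{d}D)$, isomorphisms away from the loci identified in Theorem~\ref{thm:comparison}. First I would set up $\ocP_d^{\rm CY}$: a $T$-point is a $\bQ$-Gorenstein flat family $(\cX,\frac{3}{d}\cD)\to T$ of slc log Calabi--Yau pairs, $K_{\cX/T}+\frac{3}{d}\cD\sim_{\bQ,T}0$, each of whose geometric fibers admits a $\bQ$-Gorenstein smoothing to $(\bP^2,\frac{3}{d}C)$ for a smooth plane curve $C$ of degree $d$, subject to a semistability condition. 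Since $K_X+\frac{3}{d}D\equiv0$ there is no polarization to run K-stability with, so the condition should be a $\Theta$-stability condition adapted to the Calabi--Yau class — restricting, on the normal locus, to ``$(X,(\frac{3}{d}-\epsilon)D)$ is K-semistable for all small $\epsilon>0$'' (equivalently $\delta(X,(\frac{3}{d}-\epsilon)D)\geq1$), and extended across the non-normal locus via normalizations and gluing data. Boundedness of $\bQ$-Gorenstein smoothable slc log Calabi--Yau surface pairs makes $\ocP_d^{\rm CY}$ of finite type over $\bC$; $S$-equivalence is generated by isotrivial log Calabi--Yau degenerations to pairs carrying a compatible $\bG_m$-action, and the closed points of the good moduli space are the ``log Calabi--Yau polystable'' representatives.

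Second, I would produce $\oP_d^{\rm CY}$ as a proper algebraic space. Existence as a separated good moduli space follows from the Alper--Halpern-Leistner--Heinloth criterion once $\Theta$-reductivity and $S$-completeness of $\ocP_d^{\rm CY}$ are checked; as in the log Fano wall crossings underlying Theorem~\ref{thm:logFano-wallcrossing}, these reduce to uniqueness statements for limits of one-parameter families and of isotrivial degenerations, now for log Calabi--Yau rather than log Fano pairs, which the same MMP and $\delta$-invariant methods should yield at $c=\frac{3}{d}$. Properness is the valuative criterion: from a punctured-disc family of smooth plane curves, run $c$-K-stable reduction for $c$ just below $\frac{3}{d}$ and pass to a semistable log Calabi--Yau model of the central fiber — the log Calabi--Yau stable reduction — whose $S$-equivalence class is well defined, giving both existence and uniqueness of limits.

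Third, I would build and compare the two wall-crossing morphisms. A $c$-K-polystable pair $(X,cC)$ with $\frac{3}{d}-\epsilon<c<\frac{3}{d}$ gives the slc log Calabi--Yau pair $(X,\frac{3}{d}C)$, semistable by construction, defining $\Phi^-_{\rm CY}\colon\ocP^{\K}_{d,\frac{3}{d}-\epsilon}\to\ocP_d^{\rm CY}$; a Hacking stable pair $(X,cD)$ with $c$ just above $\frac{3}{d}$ gives $(X,\frac{3}{d}D)$, still slc since the non-normal Hacking surfaces are snc along their double loci, defining $\Phi^+_{\rm CY}\colon\ocP_d^{\rm H}\to\ocP_d^{\rm CY}$. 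By Theorem~\ref{thm:comparison}, both morphisms are isomorphisms over a common open substack, and they identify the remaining maximally-lc K-pairs and non-normal Hacking pairs with the same closed locus of log Calabi--Yau $S$-equivalence classes, $\Phi^-_{\rm CY}$ collapsing the former and $\Phi^+_{\rm CY}$ collapsing the latter; passing to good moduli spaces gives $\phi^{\pm}_{\rm CY}$. Finally, to identify $\oP_d^{\rm CY}$ with the common ample model of the Hodge line bundles $\lambda$, I would show $\lambda$ is semiample on each of $\oP^{\K}_{d,\frac{3}{d}-\epsilon}$ and $\oP_d^{\rm H}$ — by Fujino--Koll\'ar-type semipositivity of Hodge sheaves of families of slc log Calabi--Yau pairs, or by the Codogni--Patakfalvi CM positivity \cite{CP18} underlying Theorem~\ref{thm:projectivity} in the limit $c=\frac{3}{d}$ — that $\lambda^{\pm}=(\phi^{\pm}_{\rm CY})^{*}\lambda_{\rm CY}$ for an ample $\lambda_{\rm CY}$ on $\oP_d^{\rm CY}$, so that $\phi^{\pm}_{\rm CY}$ contract exactly the $\lambda^{\pm}$-trivial curves (those inside the maximally-lc, resp. non-normal, loci), and hence $\oP_d^{\rm CY}=\Proj\bigoplus_{m\geq0}H^0(\lambda^{\otimes m})$ computed on either side, the section rings agreeing because the two spaces coincide outside the $\lambda$-trivial loci. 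For $d=4,6$ this should reproduce the Baily--Borel spaces $\oP^{*}_4,\oP^{*}_6$ of Theorem~\ref{thm:quartsext}.

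The hard part will be the foundational input of the first two steps: with no polarization available at the Calabi--Yau wall, one must isolate the correct intrinsic semistability and $S$-equivalence notions for log Calabi--Yau pairs and prove existence — and uniqueness up to $S$-equivalence — of semistable log Calabi--Yau degenerations, an abundance-type statement that goes beyond what the log Fano case requires. A second substantial difficulty is controlling the Hodge line bundle across the wall: proving its semiampleness on both $\oP^{\K}_{d,\frac{3}{d}-\epsilon}$ and $\oP_d^{\rm H}$ and determining its null locus on each, which is precisely what forces the common ample model to be a single space.
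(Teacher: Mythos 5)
This statement is a \emph{conjecture} in the paper (Conjecture~\ref{conj:logCY}), not a theorem: there is no paper proof of it, and the authors explicitly defer a construction of $\oP_d^{\rm CY}$ to forthcoming work. So there is nothing to check your argument against, and indeed your ``proposal'' is, quite rightly, a plan of attack rather than a proof. You have correctly identified both of the obstructions that the paper itself flags as open: (i) at $c=\frac{3}{d}$ the pair is numerically trivial, so there is no polarization with which to run a K-stability-type theory, and one must invent a new notion of semistability, $S$-equivalence, and $\Theta$/S-completeness for slc log Calabi--Yau pairs and prove a log Calabi--Yau stable-reduction statement — none of which is established anywhere in the paper or in the literature it cites; and (ii) one must prove semiampleness of the Hodge line bundle on both sides and pin down its null locus. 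Be aware that (ii) is genuinely delicate even in the degree-$6$ case: the paper notes (end of Section~\ref{sec:logcy}) that the locus contracted by $\oP_6^{\rm H}\to\oP^*_6$ is \emph{divisorial}, so the ``small-codimension restriction'' argument used to prove Theorem~\ref{thm:logcy4}/Theorem~\ref{thm:sextic-Hodge} fails on the Hacking side for $d=6$, and the authors only report that Alexeev has suggested a Kulikov-degeneration argument. So even the evidence cases are not all closed by the methods you sketch.

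Where the paper does go further than your plan, it is in explicit evidence: Theorem~\ref{thm:comparison} identifying the two loci to be collapsed; Theorems~\ref{thm:logcy4}, \ref{thm:logcy4-hacking}, \ref{thm:sextic-Hodge} producing the ample model as a Baily--Borel space for $d=4,6$; and for $d=5$ the explicit common degenerations of the $A_9$ and $D_6$ loci plus the $\bQ$-linear triviality of the Hodge bundle on $\oSigma_6^{\rm H}\sqcup\oSigma_7^{\rm H}$ (Proposition~\ref{prop:quinticshodge}). Your plan would need to reproduce each of these as unconditional consequences of general theory, which the paper does not claim to do. In short: your outline is aligned with the paper's point of view and correctly diagnoses the gaps, but as a proof it has exactly the two gaps you name, and the paper confirms they are genuinely open. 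If you want to turn this into a contribution, the actionable next step along the paper's own lines would be to push the degree-$5$ and degree-$6$ Hodge-triviality computations to full semiampleness on the Hacking side, since that is the concrete piece the authors leave unfinished.
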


We partially verify this conjecture in degree $4,5,6$, and will investigate this space in forthcoming work.

\begin{remark}[Postscript]\label{rem:ps}
Since the first version of this article appeared on the arXiv, there has been much progress on the study of K-stability and K-moduli spaces of log Fano pairs and wall crossings. We list a few related works below. 

\begin{enumerate}
    \item The K-moduli spaces for log Fano pairs are shown to exist as a projective good moduli space where the CM line bundle is ample. This is a combination of many recent works \cite{Jia17, LWX18, CP18, BX18, ABHLX19, BLX19, Xu19, XZ19, XZ20, BHLLX20, LXZ21}. As a result, the construction of our moduli spaces $\cK\cM_{\chi_0, r,c}$ is generalized to all log Fano pairs using purely algebraic methods. 
    \item The Yau-Tian-Donaldson conjecture for all (possibly singular) log Fano pairs is solved as a combination of \cite{BBJ18, LTW19, Li19, LXZ21}.
    \item The wall crossing framework from section \ref{sec:construction}, except the local VGIT presentation from section \ref{sec:VGIT}, is generalized to all log Fano pairs $(X,D)$ satisfying $D\sim_{\bQ} -r K_X$ for some $r\in\bQ_{>0}$ in \cite{Zho21b} using purely algebraic methods.
    \item The wall crossing framework of this paper has been applied to the study of moduli of quartic K3 surfaces in \cite{ADL20, ADL21}. In particular, the paper \cite{ADL21} verifies Laza-O'Grady's conjecture on the Hassett-Keel-Looijenga program for quartic K3 surfaces (see \cite{LO19, LO18b, LO18a} for backgrounds).
    \item For moduli of stable pairs in terms of Koll\'ar-Shepherd-Barron-Alexeev, the wall crossing framework was established in \cite{ABIP21}. The main difference of these two wall crossing frameworks is that wall crossing maps in \cite{ABIP21} are always  morphisms (after normalization), while our wall crossing maps may be flips.
\end{enumerate}

\end{remark} 

\subsection*{Organization}
This paper is organized as follows. In Section \ref{sec:prelim} we collect preliminary materials on K-stability, normalized volumes, CM line bundles, good moduli spaces, and Hacking's moduli spaces. In Section \ref{sec:construction}, we give a detailed construction of K-moduli stacks and spaces of $\bQ$-Gorenstein smoothable log Fano pairs which is largely based on \cite{LWX14} with new inputs from \cite{Jia17, BX18, TW19}. We prove Theorem \ref{thm:lwxlog} which is a generalization of \cite[Theorem 1.3]{LWX14}. Our main new result is Theorem \ref{thm:logFano-wallcrossing} which characterizes fundamental behaviors of K-moduli wall crossings when varying the coefficient. Our construction heavily relies on the solution of  Yau-Tian-Donaldson Conjecture for log smooth log Fano pairs by Tian and Wang \cite{TW19} which is a generalization of \cite{CDS15, Tia15}. Hence our approach is a mixture of algebraic and analytic methods.

In Section \ref{sec:generaldegree}, we study the general properties of K-moduli stacks $\ocP_{d,c}^\K$ and spaces $\oP_{d,c}^{\K}$ of degree $d$ plane curves with coefficient $c$. We describe the well-known K-moduli stacks and spaces for degree at most $3$ in Example \ref{expl:deg123}. Using normalized volumes, we prove a result on bounding local Gorenstein indices of singular surfaces appearing in $\ocP_{d,c}^{\K}$ (see Theorem \ref{thm:localindex}). This is crucial in the detailed study of our K-moduli spaces.

Section \ref{sec:firstwall} is devoted to studying the first wall crossing in all degrees. We prove parts (1) and (2) of Theorem \ref{mthm:firstwall} in Section \ref{sec:firstwall1} by applying the index bound (Theorem \ref{thm:localindex}) and the Paul-Tian criterion (Theorem \ref{thm:paultian}). In Section  \ref{sec:firstwall2}, we show that the K-moduli stack $\ocP_{d,c_1+\epsilon}^{\K}$ is a weighted blow-up of the GIT moduli stack of Kirwan type, hence confirming part (3) of Theorem \ref{mthm:firstwall}. This is done by a careful analysis of GIT of curves on $\bP(1,1,4)$ (see Definition \ref{defn:gitp114} and Theorem \ref{thm:k=gitp114}) and the index bound (Theorem \ref{thm:localindex}). 

In Section \ref{sec:lowdegree}, we show that there is only one log Fano K-moduli wall crossing in degree $d=4$ or $6$ (see Theorem \ref{thm:quartsext}). This is proven by computing the log canonical thresholds of GIT polystable curves on $\bP^2$ and $\bP(1,1,4)$ (see Propositions \ref{prop:lctquartic} and \ref{prop:lctsextic}) and applying an interpolation result on K-stability (see Proposition \ref{prop:k-interpolation}). In Section \ref{sec:K3surface}, we relate the final K-moduli spaces $\oP_{d,\frac{3}{d}-\epsilon}^{\K}$ for $d=4$ or $6$ to the Baily-Borel compactification of moduli spaces of K3 surfaces as cyclic covers (see Theorems \ref{thm:logcy4} and \ref{thm:sextic-Hodge}). 

Sections \ref{sec:2ndwallquintics} and \ref{sec:quintics} are devoted to studying all wall crossings in degree $5$. In Section \ref{sec:2ndwallquintics}, we show that the second wall crossing of plane quintics precisely replaces the plane quintic with a unique $A_{12}$-singularity by curves on $X_{26}$ (see Theorem \ref{thm:secondwall}). Its proof involves a valuative criterion computation (see Proposition \ref{prop:a12onlyif}), an explicit construction of a special degeneration (see Proposition \ref{prop:a12deg}), and verifying the K-polystability of this degeneration using techniques of Ilten and S\"u{\ss} \cite{IS17} on $T$-varieties of complexity one (see Proposition \ref{prop:a12kps}).
In Section \ref{sec:quintics}, we use similar strategy to further study the rest wall crossings of plane quintics where the auxiliary computations are collected in Appendix \ref{sec:calculations}.
In Section \ref{sec:higherdegree}, we apply these results for quintics to get more information on the second weighted blow-up of K-moduli spaces in higher degrees (see Theorem \ref{thm:firstwalls}). 

In Section \ref{sec:questions}, we discuss further questions regarding our K-moduli spaces. In Section \ref{sec:projectivity}, we show that $\oP_{d,c}^{\K}$ is projective for degree $d\in \{4,5,6\}$ by proving the ampleness of CM line bundles using work of Codogni and Patakfalvi \cite{CP18} and Posva \cite{Pos19}. In Section \ref{sec:contraction}, we study the question of whether the birational map $\oP_{d,c'}^{\K}\dashrightarrow\oP_{d,c}^{\K}$ is a birational contraction when $0<c<c'<\frac{3}{d}$ (see Question \ref{question:contraction1}). We give affirmative answers when  $d\leq 13$ or $d$ is divisible by $3$ (see Theorem \ref{thm:contraction}). In Sections \ref{sec:logcy} and \ref{sec:logcyquintics} we provide evidence supporting Conjecture \ref{conj:logCY} on the log Calabi-Yau wall crossing when $d\in \{4,5,6\}$. In degree $4$, we relate our wall crossings to the log minimal model program for $\oM_3$ (see Section \ref{sec:quartics}). We give a set-theoretic description of the conjectural log Calabi-Yau moduli spaces of plane quintics in Section \ref{sec:logcyquintics}. Finally, we prove Theorem \ref{mthm:highdim} in Section \ref{sec:highdim} as an application of our machinery developed in Sections \ref{sec:prelim} and \ref{sec:construction}.


\subsection*{Acknowledgements}
This paper has benefited from many helpful discussions with Jarod Alper, Harold Blum, Brendan Hassett, J\'anos Koll\'ar, S\'andor Kov\'acs, Radu Laza, Chi Li, Zhiyuan Li, Zsolt Patakfalvi, Sam Payne, Chenyang Xu, Qizheng Yin, and Ziquan Zhuang. We wish to thank Valery Alexeev, Yongnam Lee, Yuji Odaka, and Hendrik S\"u{\ss} for many useful comments on a preprint. We are especially grateful to Xiaowei Wang for fruitful discussions regarding technical issues in Section \ref{sec:construction},  Feng Wang for kindly providing a proof of Theorem \ref{thm:lwx4.1}, and Quentin Posva for providing us a draft of \cite{Pos19}. 

We thank Patricio Gallardo for several helpful correspondences regarding GIT for plane quintics and Kirwan desingularizations of GIT quotients during the beginning stages of this project. We also note that Theorem \ref{mthm:firstwall} was proven independently of the related \cite[Theorem 1.2]{GMGS}.

Parts of this paper were completed while the authors were in residence at MSRI in Spring 2019. KA was supported in part by an NSF Postdoctoral Fellowship, and would like to thank the math department of University of Washington for providing wonderful visiting conditions. KD was partially supported by the Gamelin Endowed Postdoctoral Fellowship of the MSRI (NSF No. DMS-1440140).  YL was partially supported by the Della Pietra Endowed Postdoctoral Fellowship of the MSRI (NSF No. DMS-1440140). 

Finally, we thank S\'andor Kov\'acs and Karl Schwede for organizing a special session at the AMS Sectional at Portland State University in Apr. 2018 where this collaboration began.

\tableofcontents

\newcommand{\cT}{\mathcal{T}}
\newcommand{\fB}{\mathfrak{B}}

\newcommand{\dist}{\mathrm{dist}}
\newcommand{\lcm}{\mathrm{lcm}}
\newcommand{\red}{\mathrm{red}}
\newcommand{\tphi}{\tilde{\phi}}
\newcommand{\pr}{\mathrm{pr}}
\newcommand{\BO}{\overline{BO}}
\newcommand{\an}{\mathrm{an}}
\newcommand{\oT}{\overline{T}}
\newcommand{\ocL}{\overline{\cL}}
\newcommand{\sP}{\mathscr{P}}

\section{Preliminaries}\label{sec:prelim}

\subsection{K-stability of log Fano pairs}

In this section, we give a review of K-stability of log Fano pairs.

\begin{defn}
Let $X$ be a normal variety. Let $D$ be an effective $\bQ$-divisor on $X$.
Then $(X,D)$ is called a \emph{log pair}. If in addition $X$ is projective and  $-(K_X+D)$ is $\bQ$-Cartier ample, then we say that $(X,D)$ is a \emph{log Fano pair}. If a log Fano pair $(X,D)$ is klt, then we say that it is a \emph{klt log Fano pair}. We say that $X$ is a \emph{$\bQ$-Fano variety} if $(X,0)$ is a klt log Fano pair.
\end{defn}

We first recall the definition of a test configuration.

\begin{defn}[\cite{Tia97, Don02}]
Let $X$ be a projective variety. Let $L$ be an ample line bundle on $X$. 
\begin{enumerate}[label=(\alph*)]
\item A \emph{test configuration} $(\cX;\cL)/\bA^1$ of $(X;L)$ consists of the following data:
\begin{itemize}
 \item a variety $\cX$ together with a flat projective morphism $\pi:\cX\to \bA^1$;
 \item a $\pi$-ample line bundle $\cL$ on $\cX$;
 \item a $\bG_m$-action on $(\cX;\cL)$ such that $\pi$ is $\bG_m$-equivariant with respect to the standard action of $\bG_m$ on $\bA^1$ via multiplication;
 \item $(\cX\setminus\cX_0;\cL|_{\cX\setminus\cX_0})$
 is $\bG_m$-equivariantly isomorphic to $(X;L)\times(\bA^1\setminus\{0\})$.
\end{itemize}
\item Let $w_m$ be the weight of the $\bG_m$-action on the determinant line $\det H^0(X_0, L_0^{\otimes m})$, and $N_m:=h^0(X,L^{\otimes m})$. Then we have an asymptotic expansion 
\[
\frac{w_m}{mN_m}=F_0+m^{-1} F_1+m^{-2} F_2+\cdots
\]
with $F_i\in \bQ$. The \emph{generalized Futaki invariant} of $(\cX;\cL)/\bA^1$ is defined as $\Fut(\cX;\cL)=-2F_1$. More precisely, if we write
\[
N_m=a_0 m^n + a_1 m^{n-1} + O(m^{n-2}),\quad w_m=b_0 m^{n+1} + b_1 m^n + O(m^{n-1}),
\]
then $\Fut(\cX;\cL)=\frac{2(a_1 b_0-a_0 b_1)}{a_0^2}$.
\end{enumerate}
\end{defn}


\begin{defn}[\cite{Tia97, Don02, Li15, LX14, OS15}]
Let $(X,D=\sum_{i=1}^k c_i D_i)$ be a projective log pair. Let $L$ be an ample line bundle on $X$. 
\begin{enumerate}[label=(\alph*)]
\item A \emph{test configuration} $(\cX,\cD;\cL)/\bA^1$ of $(X,D;L)$ consists of the following data:
\begin{itemize}
\item a test configuration $(\cX;\cL)/\bA^1$ of $(X;L)$;
\item a formal sum $\cD=\sum_{i=1}^k c_i \cD_i$ of codimension one closed integral subschemes $\cD_i$ of $\cX$ such that $\cD_i$ is the Zariski closure of $D_i\times(\bA^1\setminus\{0\})$ under the identification between $\cX\setminus\cX_0$ and $X\times(\bA^1\setminus\{0\})$.
\end{itemize}
It is clear that $(\cD_i;\cL|_{\cD_i})/\bA^1$ is a test configuration of $(D_i; L|_{D_i})$.
\item  For each $1\leq i\leq k$, let $\tilde{w}_{i,m}$ be the weight of the $\bG_m$-action on the determinant line $\det H^0(D_{i,0}, L_{i,0}^{\otimes m})$, and $\tilde{N}_{i,m}:=h^0(D_{i},L_{i}^{\otimes m})$. Then we have an asymptotic expansion 
\[
 \tilde{N}_{i,m}=\tilde{a}_{i,0} m^{n-1}+O(m^{n-2}),\quad
 \tilde{w}_{i,m}=\tilde{b}_{i,0} m^{n}+O(m^{n-1}).
\]
We define $
\tilde{a}_0=\sum_{i=1}^k c_i \tilde{a}_{i,0}$ and $\tilde{b}_0=\sum_{i=1}^k c_i \tilde{b}_{i,0}$.
The \emph{relative Chow weight} of $(\cX,\cD;\cL)/\bA^1$ is defined as $\CH(\cX,\cD;\cL):=\frac{a_0\tilde{b}_0-b_0\tilde{a}_0}{a_0^2}$.
The \emph{generalized Futaki invariant} of $(\cX,\cD;\cL)/\bA^1$ is defined as $\Fut(\cX,\cD;\cL)=\Fut(\cX;\cL)+\CH(\cX,\cD;\cL)$.
\item A test configuration $(\cX,\cD;\cL)/\bA^1$ is called a \emph{normal} test configuration if $\cX$ is normal. 
A normal test configuration is called a \emph{product} test configuration if \[
(\cX,\cD;\cL)\cong(X\times\bA^1,D\times\bA^1;\pr_1^* L\otimes\cO_{\cX}(k\cX_0))
\] for some $k\in\bZ$. A product test configuration is called a \emph{trivial} test configuration if the above isomorphism is $\bG_m$-equivariant with respect to the trivial $\bG_m$-action on $X$ and the standard $\bG_m$-action on $\bA^1$ via multiplication. 

\item Let $(X,D)$ be a log Fano pair. Let $L$ be an ample line bundle on $X$ such that for some $l \in \bQ_{>0}$ we have $L\sim_{\bQ}-l(K_X+D)$. Then the log Fano pair $(X,D)$ is said to be:
\begin{enumerate}[label=(\roman*)]
    \item \emph{K-semistable} if $\Fut(\cX,\cD;\cL)\geq 0$ for any normal test configuration $(\cX,\cD;\cL)/\bA^1$ and any $l\in\bQ_{>0}$ such that $L$ is Cartier; 
    
    \item  \emph{K-stable} if it is K-semistable and $\Fut(\cX,\cD;\cL)=0$ for a normal test configuration $(\cX,\cD;\cL)/\bA^1$ if and only if it is a trivial test configuration; and
 
\item \emph{K-polystable} if it is K-semistable and $\Fut(\cX,\cD;\cL)=0$ for a normal test configuration $(\cX,\cD;\cL)/\bA^1$ if and only if it is a product test configuration.
\end{enumerate}
 
\item 
Let $(X,D)$ be a klt log Fano pair. Let $L$ be an ample line bundle on $X$ such that $L\sim_{\bQ}-l(K_X+D)$ for some $l\in\bQ_{>0}$. Then a normal test configuration $(\cX,\cD;\cL)/\bA^1$ is called a \emph{special test configuration} if $\cL\sim_{\bQ}-l(K_{\cX/\bA^1}+\cD)$ and $(\cX,\cD+\cX_0)$ is plt. In this case, we say that $(X,D)$ \emph{specially degenerates to} $(\cX_0,\cD_0)$ which is necessarily a klt log Fano pair.
\end{enumerate}
\end{defn}

\begin{rem} We give some useful remarks toward the above definition.
\begin{enumerate}
    \item We provide an intersection formula for the generalized Futaki invariant (cf. \cite{Wan12, Oda13a}). Let $(X,D)$ be a log Fano pair. Let $L$ be an ample line bundle on $X$ such that $L\sim_{\bQ}-l(K_X+D)$ for some $l\in\bQ_{>0}$. Assume $\pi:(\cX,\cD;\cL)\to \bA^1$ is a normal test configuration of $(X,D;L)$. Let $\bar{\pi}: (\bar{\cX},\bar{\cD};\bar{\cL})\to\bP^1$ be the natural $\bG_m$-equivariant compactification of $\pi$. Then we have the intersection formula
   \[
    \Fut(\cX,\cD;\cL):=\frac{1}{(-(K_X+D))^n}\left(\frac{n}{n+1}\cdot\frac{(\bar{\cL}^{n+1})}{l^{n+1}}+\frac{(\bar{\cL}^n\cdot (K_{\bar{\cX}/\bP^1}+\bar{\cD}))}{l^n}\right).
   \]
    \item By the work of Odaka \cite{Oda12}, any K-semistable log Fano pair is klt. By the work of Li and Xu \cite{LX14}, we know that to test K-(poly/semi)stability of a klt log Fano pair, it suffices to test only on special test configurations.
    \item A test configuration is called \emph{almost trivial} (resp. \emph{almost product}) if its normalization is trivial (resp. product). By \cite[Proposition 3.15]{BHJ17}, we know that the generalized Futaki invariant never increases under normalization. 
\end{enumerate}
\end{rem}



\begin{defn}
Let $X$ be a $\bQ$-Fano variety. Let $D\sim_{\bQ}-K_X$ be an effective $\bQ$-divisor on $X$. We say that $(X,D)$ is \emph{K-semistable} if $(X,D;L)$ is K-semistable for some Cartier divisor $L\sim_{\bQ} -lK_X$ and some $l\in\bZ_{>0}$. From \cite{Oda13b} this is equivalent to saying that $(X,D)$ is log canonical. 
\end{defn}

\subsection{Valuative criteria for K-stability}
In this section, we recall the \emph{valuative criteria} for K-stability due to \cite{Fuj16, Li17} with a slight improvement from \cite{BX18}. For this, we need to make a few definitions. 

\begin{definition} Let $X$ be a normal variety of dimension $n$. We say that $E$ is a \emph{prime divisor over} $X$ if $E$ is a divisor on a normal variety $Y$ where $f: Y \to X$ is a proper birational morphism.
Let $L$ be a $\bQ$-Cartier $\bQ$-divisor on $X$. Take $m \in \mathbb{Z}_{> 0}$ such that $mL$ is Cartier and let $x \in \bR_{\geq 0}$. If $X$ is projective, we define the \emph{volume} of $L-xE$ on $X$ as
\[\vol_X(L -xE) := \vol_Y(f^*L- xE) = \limsup_{\substack{m \to \infty \\ mL\textrm{ is Cartier}}} \dfrac{h^0(X, \cO_X(mL-\lceil mx \rceil E))}{m^n/n!}.\]\end{definition}

\begin{remark} By \cite[Definition 1.1, Remark 1.2]{Fuj16}, the above $\limsup$ is actually a limit, the function $\vol_X(L - xE)$ is a monotonically decreasing continuous function which vanishes for $x$ sufficiently large, and the definition does not depend on the choice of $f: Y \to X$. \end{remark}

\begin{definition} Let $(X,D)$ be a log pair such that $K_X+D$ is $\bQ$-Cartier. Let $E$ be a prime divisor over $X$. Assume $E$ is a divisor on $Y$ where $f: Y \to X$ is a proper birational morphism from a normal variety $Y$. We define the \emph{log discrepancy} of $E$ with respect to $(X,D)$ as
\[A_{(X,D)}(\ord_E) := 1 + \ord_E(K_Y - f^*(K_X + D)), \]  where $\ord_E$ is the divisorial valuation measuring order of vanishing along $E$. If $(X,D)$ is a log Fano pair, we also define the following functional
\[
S_{(X,D)}(\ord_E):=\frac{1}{\vol_X(-K_X-D)}\int_{0}^{\infty}  \vol_X(-K_X-D-tE)dt.
\]
Sometimes we also use the notation $A_{(X,D)}(E)$ and $S_{(X,D)}(E)$ for $A_{(X,D)}(\ord_E)$ and $S_{(X,D)}(\ord_E)$, respectively.
\end{definition}


The following theorem summarizes the valuative criteria of uniform K-stability \cite{Fuj16},  K-semistability \cite{Fuj16, Li17}, and K-stability \cite{BX18}. We will view part (2) of this theorem as an alternative definition of uniform K-stability of log Fano pairs. 

\begin{theorem}[\cite{Fuj16, Li17, BX18}]\label{thm:valuative}
Let $(X,D)$ be a log Fano pair. 
\begin{enumerate}
 \item $(X,D)$ is K-semistable (resp. K-stable) if and only if for
 any prime divisor $E$ over $X$,
 \[
    A_{(X,D)}(\ord_E)\geq~(\textrm{resp. }>)~S_{(X,D)}(\ord_E).
 \]
 \item $(X,D)$ is uniformly K-stable if and only if
 there exists $\epsilon>0$ such that 
 \[
  A_{(X,D)}(\ord_E)\geq (1+\epsilon) S_{(X,D)}(\ord_E)
 \]
 for any prime divisor $E$ over $X$.
\end{enumerate}

\end{theorem}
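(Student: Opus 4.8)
The plan is to derive both statements from two inputs that are already available: the reduction of K-stability to \emph{special} test configurations due to Li--Xu \cite{LX14} (recalled in the remark above), and the intersection formula for the generalized Futaki invariant recalled above. First I would set up the dictionary between special test configurations and divisorial valuations. A special test configuration $(\cX,\cD;\cL)/\bA^1$ of a klt log Fano pair $(X,D)$ has irreducible normal central fiber $\cX_0$, because $(\cX,\cD+\cX_0)$ is plt; restricting a suitable rational rescaling of $\ord_{\cX_0}$ to the function field $K(X)\subset K(\cX)=K(X)(t)$ gives a divisorial valuation $\ord_E$ for some prime divisor $E$ over $X$. Conversely, every prime divisor $E$ over $X$ whose section ring $\bigoplus_{m,j}H^0(X,-m(K_X+D)-jE)$ is finitely generated (a \emph{dreamy} divisor) is realized this way, with $(\cX_E,\cD_E;\cL_E)$ the $\Proj$ of the Rees algebra of the $\ord_E$-filtration.

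Next I would carry out the key computation: for a dreamy $E$ with associated special test configuration, expand the two intersection numbers in the formula recalled above along the Rees degeneration. The term $\tfrac{1}{n+1}(\bar\cL_E^{\,n+1})$ becomes $\int_0^\infty\vol_X(-K_X-D-tE)\,dt$, i.e. a positive multiple of $S_{(X,D)}(\ord_E)$, while adjunction along $\cX_0$ turns $(\bar\cL_E^{\,n}\cdot(K_{\bar\cX/\bP^1}+\bar\cD))$ into the log-discrepancy contribution $A_{(X,D)}(\ord_E)$; together this gives
\[
\Fut(\cX_E,\cD_E;\cL_E)=c_E\,\bigl(A_{(X,D)}(\ord_E)-S_{(X,D)}(\ord_E)\bigr)
\]
for an explicit $c_E>0$, and the same bookkeeping shows that the non-Archimedean $J$-functional $J^{\mathrm{NA}}(\cX_E,\cD_E;\cL_E)$ (which measures non-triviality of the test configuration) is bounded above and below by fixed positive multiples of $S_{(X,D)}(\ord_E)$. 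This is the content of \cite{Fuj16, Li17}.

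With this in hand the two statements follow formally. If $A_{(X,D)}(\ord_E)\geq S_{(X,D)}(\ord_E)$ for all prime divisors $E$ over $X$, then $\Fut\geq 0$ on every special test configuration (its associated $\ord_{\cX_0}$ being divisorial), so $(X,D)$ is K-semistable by \cite{LX14}; the strict inequality upgrades this to $\Fut>0$ for every nontrivial special test configuration, hence K-stability. Conversely, K-semistability forces $\Fut(\cX_E,\cD_E;\cL_E)\geq 0$ and thus $A_{(X,D)}(\ord_E)\geq S_{(X,D)}(\ord_E)$ for every dreamy $E$; extending this to all prime divisors over $X$, and getting the sharp strict inequality in the K-stable case, is exactly the refinement supplied by \cite{BX18} (using that $\inf_E A/S$ is unchanged when one restricts to dreamy divisors). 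For part (2), $\Fut\geq\epsilon\,J^{\mathrm{NA}}$ on all test configurations translates, through $\Fut(\cX_E,\cdot)=c_E\bigl(A_{(X,D)}(\ord_E)-S_{(X,D)}(\ord_E)\bigr)$ and $J^{\mathrm{NA}}(\cX_E,\cdot)\asymp S_{(X,D)}(\ord_E)$, into $A_{(X,D)}(\ord_E)\geq(1+\epsilon')S_{(X,D)}(\ord_E)$ for all $E$, with $\epsilon$ and $\epsilon'$ controlling one another through the comparison constants; again \cite{LX14} reduces the test-configuration side to special ones, using \cite{BHJ17} to control behaviour under normalization.

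The hard part will be the intersection-theoretic identity $\Fut(\cX_E,\cD_E;\cL_E)=c_E(A_{(X,D)}(\ord_E)-S_{(X,D)}(\ord_E))$ — rewriting $(\bar\cL_E^{\,n+1})$ and $(\bar\cL_E^{\,n}\cdot(K_{\bar\cX/\bP^1}+\bar\cD))$ in terms of the volume integral and the log discrepancy — and, just as delicate, the passage from dreamy divisors to arbitrary prime divisors over $X$: a priori the relevant Rees algebra need not be finitely generated, so one must approximate, and the precise form needed to characterize K-stability (as opposed to merely uniform K-stability) is where \cite{BX18} is indispensable.
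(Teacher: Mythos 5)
The paper does not prove this theorem; it is imported verbatim from the cited references, so there is no internal argument to compare against. Your reconstruction follows the same route that \cite{Fuj16, Li17, BX18} actually take: dreamy divisors $\leftrightarrow$ special test configurations via the Rees algebra, the intersection-theoretic identity turning $\Fut$ into a multiple of $A-S$ and $J^{\mathrm{NA}}$ into something comparable to $S$, reduction to special test configurations via \cite{LX14}, and the passage from dreamy to arbitrary prime divisors.

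One small imprecision worth flagging. The parenthetical explanation you give for the role of \cite{BX18} --- that $\inf_E A/S$ is unchanged when one restricts to dreamy divisors --- is a true fact, but it is the one relevant to computing $\delta$ and hence to the uniform K-stability and K-semistability criteria, not to the characterization of K-stability proper. To close the K-stability case (strict inequality $A(E)>S(E)$ for all $E$ implies K-stable) one needs something sharper: if $(X,D)$ is K-semistable but not K-stable, there is a non-product special test configuration with $\Fut=0$, and the content of the \cite{BX18} refinement is that the valuation induced on $K(X)$ by its special fibre is a \emph{divisorial} valuation $c\cdot\ord_E$, which then satisfies $A(E)=S(E)$ and furnishes the contradiction. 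A density statement about dreamy divisors inside the set of all prime divisors over $X$ does not by itself produce such an $E$, since the infimum being achieved in the limit does not give you a witness with $A=S$. Everything else in the sketch is sound.
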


From Theorem \ref{thm:valuative} we see that uniform K-stability implies K-stability for log Fano pairs. Moreover, it follows from a recent result \cite[Theorem 1.6]{LXZ21} that these two stability notions are indeed equivalent to each other for log Fano pairs.

\begin{defn}[\cite{FO16, BJ17}]\label{def:delta}
The \emph{stability threshold} $\delta(X,D)$ of a klt log Fano pair $(X,D)$ is defined as \[
\delta(X,D):=\inf_{E}\frac{A_{(X,D)}(\ord_E)}{S_{(X,D)}(\ord_E)}
\]
where the infimum is taken over all prime divisors $E$ over $X$.
\end{defn}

\begin{thm}[\cite{FO16, BJ17}]
A klt log Fano pair $(X,D)$ is K-semistable (resp. uniformly K-stable) if and only if $\delta(X,D)\geq 1$ (resp. $>1$).
\end{thm}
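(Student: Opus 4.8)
The plan is to derive this from the valuative criterion of K-stability in Theorem \ref{thm:valuative} together with the definition of the stability threshold $\delta(X,D)$ in Definition \ref{def:delta}, since the latter is essentially a reformulation of the former. First I would unpack the definition: $\delta(X,D) = \inf_E A_{(X,D)}(\ord_E)/S_{(X,D)}(\ord_E)$, where $E$ ranges over all prime divisors over $X$ and $S_{(X,D)}(\ord_E) > 0$ for every such $E$ (this positivity follows since $-K_X-D$ is big, so $\vol_X(-K_X-D-tE) > 0$ for small $t > 0$).

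For the K-semistability equivalence, I would argue as follows. If $\delta(X,D) \geq 1$, then for every prime divisor $E$ over $X$ we have $A_{(X,D)}(\ord_E) \geq S_{(X,D)}(\ord_E)$, which is exactly the criterion in Theorem \ref{thm:valuative}(1) for K-semistability. Conversely, if $(X,D)$ is K-semistable, then $A_{(X,D)}(\ord_E) \geq S_{(X,D)}(\ord_E)$ for all $E$, so each ratio is $\geq 1$ and hence the infimum is $\geq 1$. The only subtlety is that the infimum being $\geq 1$ is genuinely equivalent to each ratio being $\geq 1$ (a non-strict bound passes to infima without loss), so no approximation issue arises in this direction.

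For the uniform K-stability equivalence, the direction $\delta(X,D) > 1 \implies$ uniformly K-stable is immediate: set $\epsilon := \delta(X,D) - 1 > 0$, and then $A_{(X,D)}(\ord_E) \geq \delta(X,D)\, S_{(X,D)}(\ord_E) = (1+\epsilon) S_{(X,D)}(\ord_E)$ for all $E$, which is the criterion of Theorem \ref{thm:valuative}(2). Conversely, if $(X,D)$ is uniformly K-stable, there is $\epsilon > 0$ with $A_{(X,D)}(\ord_E) \geq (1+\epsilon) S_{(X,D)}(\ord_E)$ for all $E$, so every ratio is $\geq 1+\epsilon$, giving $\delta(X,D) \geq 1+\epsilon > 1$.

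The main (and really the only) obstacle is that the statement is almost a tautology once Theorem \ref{thm:valuative} is granted — the content has been pushed into that theorem. So the proof reduces to correctly bookkeeping the quantifiers: one must observe that "$\inf_E (\text{ratio}) \geq 1$" is equivalent to "$(\text{ratio}) \geq 1$ for all $E$" (trivial), while "$\inf_E (\text{ratio}) > 1$" requires the uniform gap $\epsilon$, which is exactly why uniform K-stability (not just K-stability) is the right notion on the right-hand side of the second equivalence. I would also remark that K-stability alone would \emph{not} be equivalent to $\delta \geq 1$ or $\delta > 1$ in general, which is why the theorem is phrased with "uniformly K-stable"; no further input is needed beyond Theorem \ref{thm:valuative} parts (1) and (2).
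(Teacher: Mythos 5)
Your derivation is correct, and since the paper merely cites this theorem (from \cite{FO16, BJ17}) without giving a proof, the natural thing is exactly what you did: read it off from Definition~\ref{def:delta} and Theorem~\ref{thm:valuative}. The quantifier bookkeeping is right — $\inf_E A/S \geq 1$ is equivalent to the pointwise bound in Theorem~\ref{thm:valuative}(1), and $\inf_E A/S > 1$ gives the uniform gap $\epsilon = \delta - 1$ required in Theorem~\ref{thm:valuative}(2), with the converse equally immediate. Your aside that plain K-stability sits strictly between $\delta\geq 1$ and $\delta>1$ (and so is not characterized by either, from the criterion alone) is also accurate as stated; the later fact that K-stability and uniform K-stability coincide for log Fano pairs is a separate, harder theorem not invoked here. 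One small historical caveat worth keeping in mind: in \cite{FO16, BJ17}, $\delta$ was originally defined via basis-type divisors, and the valuative/infimum formula that this paper adopts as the \emph{definition} in Definition~\ref{def:delta} is itself one of the main theorems of \cite{BJ17}. So the result is ``almost a tautology'' only because the paper has folded the real content into its choice of definition and into Theorem~\ref{thm:valuative} — your proof is complete relative to what's stated in the paper, which is all that's asked.
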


\begin{definition}
 Let $X$ be a $\bQ$-Fano variety. Let $D\sim_{\bQ}-rK_X$ be an
effective $\bQ$-divisor. For a rational number $0<c<r^{-1}$, we say 
that $(X,D)$ is \emph{$c$-K-(poly/semi)stable (resp. uniformly
$c$-K-stable)} if $(X,cD)$ is K-(poly/semi)stable
(resp. uniformly K-stable).
\end{definition} 

Next we provide a useful result on interpolation of K-stability  (see e.g. \cite[Lemma 2.6]{Der16}). These kinds of interpolation results were known before in the smooth case via analytic arguments (see e.g. \cite{LS14}).

\begin{prop}\label{prop:k-interpolation}
Let $X$ be a $\bQ$-Fano variety. Let $D$ and $\Delta$ be effective $\bQ$-divisors on $X$ satisfying the following properties:
\begin{itemize}
    \item Both $D$ and $\Delta$ are proportional to $-K_X$ under $\bQ$-linear equivalence.
    \item $-K_X-D$ is ample, and $-K_X-\Delta$ is nef.
    \item The log pairs $(X,D)$ and $(X,\Delta)$ are K-(poly/semi)stable and K-semistable, respectively.
\end{itemize}
Then we have
\begin{enumerate}
    \item If $D\neq 0$, then $(X,tD+(1-t)\Delta)$ is K-(poly/semi)stable for any $t\in (0,1]$.
    \item If $D=0$, then $(X,(1-t)\Delta)$ is K-semistable
    for any $t\in (0,1]$.
    \item If $\Delta\sim_{\bQ}-K_X$ and $(X,\Delta)$ is klt, then $(X,tD+(1-t)\Delta)$ is uniformly K-stable for any $t\in (0,1)$.
\end{enumerate}
\end{prop}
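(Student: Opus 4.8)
The plan is to reduce everything to the valuative criterion (Theorem \ref{thm:valuative}) and exploit the concavity of the volume function in the divisor variable. First I would fix notation: since $D\sim_{\bQ} -aK_X$ and $\Delta\sim_{\bQ}-bK_X$ for some $a,b\in\bQ_{\geq 0}$, the hypothesis that $-K_X-D$ is ample forces $a<1$, and $-K_X-\Delta$ nef forces $b\leq 1$; write $\Delta_t:=tD+(1-t)\Delta$, so $-K_X-\Delta_t\sim_{\bQ}(1-ta-(1-t)b)(-K_X)$ is ample for $t\in(0,1]$ (strictly ample because of the $D$ contribution when $a<1$ and $t>0$). The key computation is that for any prime divisor $E$ over $X$, the log discrepancy is linear in the boundary, $A_{(X,\Delta_t)}(\ord_E)=tA_{(X,D)}(\ord_E)+(1-t)A_{(X,\Delta)}(\ord_E)$, while the relevant $S$-invariant is \emph{superadditive}: since $\vol$ is concave along line segments of $\bQ$-Cartier divisors (e.g. by the Brunn--Minkowski inequality for volumes, \cite{Fuj16}), one gets
\[
\vol_X(-K_X-\Delta_t-sE)\geq \text{(a concavity estimate)},
\]
which after dividing by $\vol_X(-K_X-\Delta_t)$ and integrating yields a comparison between $S_{(X,\Delta_t)}(\ord_E)$ and the corresponding quantities for $(X,D)$ and $(X,\Delta)$.

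The cleanest way to package this is to introduce the normalized functional $\beta_{(X,B)}(E):=A_{(X,B)}(\ord_E)-S_{(X,B)}(\ord_E)$ and to prove the elementary inequality $\beta_{(X,\Delta_t)}(E)\geq t\,\beta_{(X,D)}(E)+(1-t)\,\beta_{(X,\Delta)}(E)$ after renormalizing so all three anticanonical classes have the same volume — concretely, rescale the integration variable so that $S$ is computed against $-K_X-B$ with unit volume, then use that the volume function is homogeneous of degree $n$ and concave. Given this, parts (1) and (2) follow immediately: under the hypotheses $\beta_{(X,D)}(E)\geq 0$ (with $>0$ in the K-stable/K-polystable case for non-special $E$) and $\beta_{(X,\Delta)}(E)\geq 0$, so for $t\in(0,1]$ the right side is $\geq 0$, and strictly $>0$ when $t>0$, $D\neq 0$ and $E$ is not the divisor realizing a product degeneration — here I would invoke the refined valuative criterion for K-polystability from \cite{BX18} rather than working with test configurations directly, or alternatively note that a product test configuration for $(X,\Delta_t)$ induces one for $(X,D)$ with the same vanishing, reducing K-polystability of $\Delta_t$ to that of $D$. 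For part (3), the extra input is that when $\Delta\sim_{\bQ}-K_X$ (so $b=1$) and $(X,\Delta)$ is klt, the pair $(X,\Delta)$ is automatically K-semistable by \cite{Oda13b}, and moreover one has a uniform gap: for $t\in(0,1)$ the coefficient in front of $(-K_X)$ in $-K_X-\Delta_t$ is $t(1-a)>0$, and combining the strict inequality $\beta_{(X,D)}\geq 0$ with the klt hypothesis on $\Delta$ gives, after the concavity estimate, an $\epsilon>0$ (depending on $t$, $a$, and the klt threshold) with $A_{(X,\Delta_t)}(\ord_E)\geq(1+\epsilon)S_{(X,\Delta_t)}(\ord_E)$ for all $E$; the klt-ness is what produces a uniform lower bound on $A_{(X,\Delta)}(\ord_E)/S_{(X,\Delta)}(\ord_E)$ away from $1$ in the relevant range, which is exactly what Theorem \ref{thm:valuative}(2) needs.

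The main obstacle I expect is making the concavity estimate for $S_{(X,\Delta_t)}(\ord_E)$ fully rigorous with the correct normalization — one has to be careful that the three pairs have anticanonical divisors of different sizes, so a naive ``$S$ is linear in the boundary'' is false; the fix is to change variables in the defining integral of $S$ so as to integrate against a fixed multiple of $-K_X$, at which point concavity of $\vol$ (which is a statement about a \emph{fixed} ambient variety $X$, not about varying the pair) applies cleanly. A secondary subtlety is the K-polystable case of part (1): one must check that a product test configuration of $(X,tD+(1-t)\Delta)$ pulls back to a product test configuration of $(X,D)$ — this is true because the underlying test configuration of $X$ is the same and $\cD$ degenerates to $D$ flatly, but it deserves an explicit sentence. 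Everything else — the linearity of $A$, the homogeneity of $\vol$, the ampleness/nefness bookkeeping — is routine.
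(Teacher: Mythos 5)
Your plan to run everything through the valuative criterion is a genuinely different route for parts (1) and (2) from the paper's, which simply observes that for a fixed normal test configuration $(\cX;\cL)$ of $X$ with $\cD_D$, $\cD_\Delta$ the two boundary closures, $\Fut(\cX,t\cD_D+(1-t)\cD_\Delta;\cL)$ is affine in $t$ with nonnegative values at $t=0,1$ (and vanishes exactly on products at $t=1$). The valuative route can reach K-semistability and K-stability, but it cannot reach K-polystability: there is no valuative characterization of K-polystability in \cite{BX18} (that paper contributes only the strict-inequality K-stability criterion to Theorem~\ref{thm:valuative}), and your fallback sentence checks the trivial direction (a product test configuration for $\Delta_t$ induces one for $D$), whereas what you actually need is the converse chain --- a zero-Futaki test configuration for $\Delta_t$ has zero Futaki for $D$ by linearity and K-semistability of both endpoints, hence is a product for $D$ by K-polystability of $(X,D)$, and then one must still argue the corresponding degeneration of $\Delta$ is trivial.

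The more serious technical problem is the concavity/Brunn--Minkowski estimate. Concavity of $\vol^{1/n}$ requires the classes being averaged to be pseudoeffective, and $-K_X-\Delta-sE$ typically is not once $s>0$ (in part (3) it is $-sE$, anti-effective), so the inequality is unavailable; and even where it applies, it gives a \emph{lower} bound on $\vol(-K_X-\Delta_t-sE)$, hence a lower bound on $S_{(X,\Delta_t)}$, which is the wrong direction for showing $A\geq S$. Fortunately nothing needs to be estimated: because every boundary here is proportional to $-K_X$, the rescaling $\vol(\lambda(-K_X)-sE)=\lambda^n\vol(-K_X-(s/\lambda)E)$ gives $S_{(X,B)}(E)=(1-b)S_X(E)$ whenever $B\sim_{\bQ}b(-K_X)$, so $S_{(X,\Delta_t)}(E)$ is \emph{exactly} affine in $t$ (the paper records the $b=1$ case as $S_{(X,tD+(1-t)\Delta)}=t\,S_{(X,D)}$). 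Finally, in part (3) the quantity you propose to bound away from $1$, namely $A_{(X,\Delta)}(\ord_E)/S_{(X,\Delta)}(\ord_E)$, is undefined since $-K_X-\Delta\sim_{\bQ}0$ forces $S_{(X,\Delta)}(\ord_E)=0$ for every $E$; the correct ingredient is the \emph{polarized} threshold $\delta_0=\inf_E A_{(X,\Delta)}(\ord_E)/S_{(X,D)}(\ord_E)$ taken against the fixed ample class $-K_X-D$, whose strict positivity for the klt pair $(X,\Delta)$ is exactly \cite[Theorem A]{BJ17}; together with the exact linearity of $S$ this yields the uniform gap $A_{(X,\Delta_t)}(\ord_E)\geq\left(1+\frac{(1-t)\delta_0}{t}\right)S_{(X,\Delta_t)}(\ord_E)$.
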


\begin{proof}
Parts (1) and (2) follow directly from the linearity of generalized Futaki invariants in terms of the coefficient. For part (3), we use the valuative criterion (Theorem \ref{thm:valuative}). Simple computation shows that 
\[
S_{(X,tD+(1-t)\Delta)}(E)= tS_{(X,D)}(E),\quad
A_{(X,tD+(1-t)\Delta)}(E)= tA_{(X,D)}(E)+(1-t)A_{(X,\Delta)}(E).
\]
By \cite[Theorem A]{BJ17}, we know that $\delta_0:=\delta(X,\Delta;-K_X-D)=\inf_E A_{(X,\Delta)}(E)/S_{(X,D)}(E)$ is strictly positive. Since $(X,D)$ is K-semistable, Theorem \ref{thm:valuative} implies $A_{(X,D)}(E)\geq S_{(X,D)}(E)$. Hence 
\[
A_{(X,tD+(1-t)\Delta)}(E)\geq tS_{(X,D)}(E)+(1-t)\delta_0 S_{(X,D)}(E)= \left(1+\frac{(1-t)\delta_0}{t}\right)S_{(X,tD+(1-t)\Delta)}(E).
\]
This implies the uniform K-stability of $(X,tD+(1-t)\Delta)$ for any $t\in (0,1)$ by Theorem \ref{thm:valuative}.
\end{proof}

\subsection{Normalized volumes}
We give a brief review of normalized volume of valuations introduced by Chi Li \cite{Li18}. See \cite{LLX18} for a survey about recent developments on this subject. 

\begin{defn}
Let $(X,D)$ be a klt log pair of dimension $n$. Let $x\in X$ be a closed point. A \emph{valuation $v$ on $X$ centered at $x$} is a valuation of $\bC(X)$ such that $v\geq 0$ on $\cO_{X,x}$ and $v>0$ on $\fm_x$. The set of such valuations is denoted by $\Val_{X,x}$.
The \emph{volume} is a function $\vol_{X,x}:\Val_{X,x}\to \bR_{\geq 0}$ defined as 
\[
\vol_{X,x}(v):=\lim_{k\to\infty}\frac{\dim_{\bC}\cO_{X,x}/\{f\in\cO_{X,x}\mid v(f)\geq k\}}{k^n/n!}.
\]

The \emph{log discrepancy}  is a function $A_{(X,D)}:\Val_{X,x}\to \bR_{>0}\cup\{+\infty\}$ defined in \cite{JM12, BdFFU15}. Note that if $v=a\cdot\ord_E$ where $a\in\bR_{>0}$ and $E$ is a prime divisor over $X$ centered at $x$, then 
\[
A_{(X,D)}(v)=a(1+\ord_E(K_{Y}-\pi^*(K_X+D))),
\]
where $\pi:Y\to X$ is a birational model of $X$ containing $E$ as a divisor.

The \emph{normalized volume} is a function $\hvol_{(X,D),x}:\Val_{X,x}\to \bR_{>0}\cup\{+\infty\}$ defined as
\[
\hvol_{(X,D),x}(v):=\begin{cases}
A_{(X,D)}(v)^n\cdot\vol_{X,x}(v) & \textrm{ if } A_{(X,D)}(v)<+\infty\\
+\infty & \textrm{ if }A_{(X,D)}(v)=+\infty
\end{cases}
\]

The \emph{local volume} of a klt singularity $x\in (X,D)$ is defined as 
\[
\hvol(x,X):=\min_{v\in\Val_{X,x}}\hvol_{(X,D),x}(v).
\]
Note that the existence of a $\hvol$-minimizer is proven in \cite{Blu18}.
\end{defn}

The following theorem from \cite{LL16} generalizing \cite[Theorem 1.1]{Fuj15} and \cite[Theorem 1.2]{Liu18} is crucial in the study of explicit K-moduli spaces. 

\begin{thm}[{\cite[Proposition 4.6]{LL16}}]\label{thm:local-vol-global}
Let $(X,D)$ be a K-semistable log Fano pair of dimension $n$. Then for any closed point $x\in X$, we have 
\[
(-K_X-D)^n\leq \left(1+\frac{1}{n}\right)^n\hvol(x,X,D).
\]
\end{thm}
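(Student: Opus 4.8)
The plan is to combine the valuative criterion for K\nobreakdash-semistability (Theorem~\ref{thm:valuative}(1)) with a lower bound for the $S$\nobreakdash-invariant of a divisorial valuation centered at $x$, expressed through its local volume and the global anticanonical volume; the latter is a version of Fujita's volume estimate. Write $V:=(-K_X-D)^n$.

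\textbf{Step 1 (a volume estimate).} First I would show that for any prime divisor $E$ over $X$ whose center on $X$ is the point $x$, and any $t\geq 0$,
\[
\vol_X(-K_X-D-tE)\ \geq\ \bigl(V - t^n\,\vol_{X,x}(\ord_E)\bigr)_{+}.
\]
Set $\fa_k:=\{f\in\cO_{X,x}\mid \ord_E(f)\geq k\}$, which is $\fm_x$-primary since the center of $E$ is $x$. For $m$ sufficiently divisible, the structure sequence of the ideal sheaf with stalk $\fa_{\lceil mt\rceil}$, twisted by $\cO_X(-m(K_X+D))$, yields
\[
h^0\bigl(\cO_X(-m(K_X+D))\bigr)-h^0\bigl(\cO_X(-m(K_X+D))\otimes\fa_{\lceil mt\rceil}\bigr)\ \leq\ \dim_{\bC}\bigl(\cO_{X,x}/\fa_{\lceil mt\rceil}\bigr).
\]
Dividing by $m^n/n!$ and letting $m\to\infty$, the left-hand side tends to $V-\vol_X(-K_X-D-tE)$ by the definition of the volume of $-K_X-D-tE$, while the right-hand side tends to $t^n\,\vol_{X,x}(\ord_E)$ by the definition of $\vol_{X,x}$ together with $\lceil mt\rceil/m\to t$; this gives the estimate.

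\textbf{Step 2 (the $S$-estimate).} Integrating the bound of Step~1 over $t\in[0,\infty)$ and substituting $u=t\,\vol_{X,x}(\ord_E)^{1/n}$ gives
\[
\int_0^{\infty}\vol_X(-K_X-D-tE)\,dt\ \geq\ \int_0^{(V/\vol_{X,x}(\ord_E))^{1/n}}\!\!\!\bigl(V-t^n\vol_{X,x}(\ord_E)\bigr)\,dt\ =\ \frac{n}{n+1}\,V\left(\frac{V}{\vol_{X,x}(\ord_E)}\right)^{1/n},
\]
so that
\[
S_{(X,D)}(\ord_E)\ \geq\ \frac{n}{n+1}\left(\frac{(-K_X-D)^n}{\vol_{X,x}(\ord_E)}\right)^{1/n}.
\]
Rearranging, $\vol_{X,x}(\ord_E)\geq\bigl(\tfrac{n}{n+1}\bigr)^n V\,S_{(X,D)}(\ord_E)^{-n}$, and multiplying by $A_{(X,D)}(\ord_E)^n$ gives
\[
\hvol_{(X,D),x}(\ord_E)=A_{(X,D)}(\ord_E)^n\,\vol_{X,x}(\ord_E)\ \geq\ \left(\frac{n}{n+1}\right)^n(-K_X-D)^n\left(\frac{A_{(X,D)}(\ord_E)}{S_{(X,D)}(\ord_E)}\right)^n.
\]
Since $(X,D)$ is K\nobreakdash-semistable, Theorem~\ref{thm:valuative}(1) gives $A_{(X,D)}(\ord_E)\geq S_{(X,D)}(\ord_E)$, hence $\hvol_{(X,D),x}(\ord_E)\geq\bigl(\tfrac{n}{n+1}\bigr)^n(-K_X-D)^n$ for every such $E$. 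Taking the infimum over all prime divisors $E$ over $X$ centered at $x$, and using that divisorial valuations centered at $x$ compute the normalized volume $\hvol(x,X,D)$ (see \cite{LLX18}; alternatively, extend the estimate of Step~2 to all $v\in\Val_{X,x}$ via the valuative description of $\delta(X,D)$ from \cite{BJ17}), we obtain $\hvol(x,X,D)\geq\bigl(\tfrac{n}{n+1}\bigr)^n(-K_X-D)^n$, which is equivalent to the claimed inequality.

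The main obstacle is Step~1: one must justify the two limit computations rigorously, namely that $\vol_X(-K_X-D-tE)=\lim_m \tfrac{n!}{m^n}h^0\bigl(\cO_X(-m(K_X+D))\otimes\fa_{\lceil mt\rceil}\bigr)$ and that $\tfrac{n!}{m^n}\dim_{\bC}(\cO_{X,x}/\fa_{\lceil mt\rceil})\to t^n\vol_{X,x}(\ord_E)$, both of which rest on the theory of graded sequences of ideals and their multiplicities. A secondary point is the reduction of the infimum over $\Val_{X,x}$ to divisorial valuations centered at $x$, for which one appeals to standard results in normalized volume theory.
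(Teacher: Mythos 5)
The paper does not prove Theorem~\ref{thm:local-vol-global} itself — it cites it to \cite{LL16}, which in turn generalizes \cite{Liu18} (and, before that, \cite{Fuj15}) to log Fano pairs. Your reconstruction is correct and is in fact the same chain of ideas used in those references: a local-to-global Hilbert-function estimate, integration to get a lower bound on $S_{(X,D)}(\ord_E)$ in terms of $\vol_{X,x}(\ord_E)$ and $(-K_X-D)^n$, and the valuative criterion $A\geq S$. Steps~1 and~2 are verified as written, provided $m$ is chosen divisible enough that $-m(K_X+D)$ is Cartier so that the skyscraper quotient in the structure sequence has length $\dim_\CC\bigl(\cO_{X,x}/\fa_{\lceil mt\rceil}\bigr)$, and provided one identifies $f_*\cO_Y(-\lceil mt\rceil E)$ with the ideal sheaf whose stalk at $x$ is $\fa_{\lceil mt\rceil}$ (true because $E$ has center $x$, so $\fa_k$ is $\fm_x$-primary); you flag both of these appropriately. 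The one point you should make explicit, rather than footnote, is the reduction from $\inf_{v\in\Val_{X,x}}$ to $\inf_E$ over prime divisors over $X$ centered at $x$: either invoke the known fact that divisorial valuations compute $\hvol(x,X,D)$, or observe that your Steps~1 and~2 carry over verbatim to any real valuation $v\in\Val_{X,x}$ with $A_{(X,D)}(v)<\infty$ (defining $S(v)$ via the concave transform of the filtered linear series), at which point the valuative characterization of $\delta(X,D)$ from \cite{BJ17} gives $A(v)\geq S(v)$ for all such $v$ when $(X,D)$ is K-semistable, and the inequality follows directly for the minimizer. Either fix is a one-line insertion; with it, the argument is complete.
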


The following useful result is proved in \cite[Corollary 4]{BL18a} and independently in \cite[Proposition 2.36]{LX17} as an application of the lower semicontinuity of local volumes.

\begin{thm}\label{thm:Kss-spdeg}
Let $(X,D)$ be a klt log Fano pair. If $(X,D)$ specially degenerates to a K-semistable log Fano pair $(X_0,D_0)$, then $(X,D)$ is also K-semistable.
\end{thm}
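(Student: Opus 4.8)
The plan is to reduce the global question of K-semistability of log Fano pairs to a local statement about normalized volumes of cone singularities, and then to apply the lower semicontinuity of local volumes to the family of affine cones over the given special test configuration. This is the strategy of \cite[Corollary 4]{BL18a} and \cite[Proposition 2.36]{LX17}, which I would follow.

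First I would fix a positive integer $l$ together with a $\pi$-ample line bundle $\cL$ on the special test configuration $\pi\colon(\cX,\cD)\to\bA^1$ realizing the degeneration, chosen so that $\cL\sim_{\bQ}-l(K_{\cX/\bA^1}+\cD)$ and $(\cX,\cD+\cX_0)$ is plt. I would then form the relative affine cone $\cC\to\bA^1$ associated to the graded sheaf $\bigoplus_{m\geq 0}\pi_*\cL^{\otimes m}$, together with its cone boundary $\cD_\cC$ and the section $\sigma\colon\bA^1\to\cC$ given by the vertices (replacing $\cL$ by a Veronese power if it is only $\bQ$-Cartier). Because $(X,D)$ and $(X_0,D_0)$ are klt log Fano pairs and $\cL$ is a positive rational multiple of the relative log anticanonical class, this is a $\bQ$-Gorenstein family of klt singularities over $\bA^1$ (cf.\ the cone constructions in \cite{LX17}): its fiber over $t\neq 0$ is the cone $(C(X,D),C(D))$ over $(X,D)$ with vertex $o$, and its fiber over $0$ is the cone $(C(X_0,D_0),C(D_0))$ over $(X_0,D_0)$ with vertex $o_0$.

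The second step combines three inputs. (i) Lower semicontinuity of local volumes in $\bQ$-Gorenstein families, i.e.\ \cite[Corollary 4]{BL18a} and \cite[Proposition 2.36]{LX17} applied to $\sigma\subset(\cC,\cD_\cC)\to\bA^1$, gives $\hvol(o_0,C(X_0,D_0),C(D_0))\le \hvol(o,C(X,D),C(D))$. (ii) The cone correspondence between global K-semistability and local volume minimization (cf.\ \cite{Li18, LL16, LX17}): for a klt log Fano pair $(V,\Delta)$ coned with respect to $L$ with $-(K_V+\Delta)\sim_{\bQ}\frac1l L$, the canonical cone valuation $v_0=\ord_V$ satisfies $\hvol_{C(V,\Delta),o}(v_0)=\kappa(n,l)\,(-(K_V+\Delta))^n$ for an explicit positive constant $\kappa(n,l)$ depending only on $n$ and $l$; moreover one always has $\hvol(o,C(V,\Delta),C(\Delta))\le \kappa(n,l)\,(-(K_V+\Delta))^n$, with equality if and only if $(V,\Delta)$ is K-semistable. (iii) Since $\cL$ is a line bundle on $\cX$ which is flat over $\bA^1$ with $\cL|_{X_t}\sim_{\bQ}-l(K_{X_t}+D_t)$ for every $t$, the top self-intersection $(\cL|_{X_t})^n$ is constant, so $(-(K_X+D))^n=(-(K_{X_0}+D_0))^n$.

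Putting these together: by (iii), $\kappa(n,l)(-(K_X+D))^n=\kappa(n,l)(-(K_{X_0}+D_0))^n$, which by (ii) and the K-semistability of $(X_0,D_0)$ equals $\hvol(o_0,C(X_0,D_0),C(D_0))$; by (i) this is $\le \hvol(o,C(X,D),C(D))$, which by the general bound in (ii) is $\le \kappa(n,l)(-(K_X+D))^n$. Hence all the inequalities are equalities; in particular $\hvol(o,C(X,D),C(D))$ is attained by the cone valuation $\ord_X$, which by (ii) is equivalent to K-semistability of $(X,D)$. The main obstacle is the first step: verifying that coning over a special test configuration genuinely yields a $\bQ$-Gorenstein family of klt singularities to which the semicontinuity result applies, and cleanly identifying the normalized volume of the cone valuation with the global degree $(-(K_V+\Delta))^n$ through the cone correspondence --- both are by now standard, but require care with the $\bQ$-Cartier bookkeeping and with the precise constant $\kappa(n,l)$.
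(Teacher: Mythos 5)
The paper does not give its own proof of this statement: it cites \cite[Corollary 4]{BL18a} and \cite[Proposition 2.36]{LX17}, both of which prove it ``as an application of the lower semicontinuity of local volumes.'' Your sketch is a faithful rendition of exactly that argument (cone over the special test configuration, semicontinuity of $\hvol$ along the section of vertices, cone correspondence identifying K-semistability with the cone valuation minimizing $\hvol$, and flatness giving $(-(K_X+D))^n=(-(K_{X_0}+D_0))^n$), so you are following the same route as the cited sources.
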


\subsection{CM line bundles}
Let us start with the original definition of CM line bundles due to Paul and Tian \cite{PT06, PT09} (see also \cite{FR06}).

\begin{defn}\label{defn:absCM}
Let $f:\cX\to T$ be a proper flat morphism of schemes of finite type over $\bC$. Let $\cL$ be an $f$-ample line bundle on $\cX$.
We assume that the fibers $(\cX_t,\cL_t)$ of $f$ have constant pure dimension $n\geq 1$ and constant Hilbert polynomial $\chi$.
A result of Mumford-Knudsen \cite{KM76} said that there exists line bundles $\lambda_i=\lambda_{i}(\cX,\cL)$ on $T$ such that for all $k$,
\[
\det f_!(\cL^k)=\lambda_{n+1}^{\binom{k}{n+1}}\otimes\lambda_n^{\binom{k}{n}}\otimes\cdots\otimes\lambda_0.
\]
By flatness, the Hilbert polynomial $\chi(\cX_t,\cL_t^k)=a_0 k^n+a_1 k^{n-1}+ O(k^{n-2})$. We set $\mu=\mu(\cX,\cL):=\frac{2a_1}{a_0}$.
Then the \emph{CM line bundle} is defined as
\[
\lambda_{\CM,f,\cL}:=\lambda_{n+1}^{\mu+n(n+1)}\otimes\lambda_n^{-2(n+1)}.
\]
The \emph{Chow line bundle}\footnote{This can also be defined when $f$ is a well-defined family of $n$-dimensional cycles over a semi-normal base scheme $T$ (see \cite[Section I.3]{Kol96}).} is defined as
\[
\lambda_{\Chow,f,\cL}:=\lambda_{n+1}.
\]
\end{defn}

Next, we recall the definition of log CM line bundles. 

\begin{defn}[log CM line bundle]\label{defn:logCM1}
Assume $f:\cX\to T$ and $\cL$ satisfy the conditions in Definition \ref{defn:absCM}. For $i=1,\cdots,k$, let $\cD_i$ be a closed subscheme of $\cX$ such that $f|_{\cD_i}:\cD_i\to T$ is of pure dimension $n-1$, and either $f|_{\cD_i}$ is flat whose fibers have constant Hilbert polynomial, or $f|_{\cD_i}$ is a well-defined family of cycles over a semi-normal base scheme $T$. Let $c_i\in[0,1]$ be rational numbers. We define the log CM $\bQ$-line bundle of the data $(f:\cX\to T, \cL,\cD:=\sum_{i=1}^k c_i\cD_i)$ to be 
\[
 \lambda_{\CM,f,\cD,\cL}:=\lambda_{\CM,f,\cL}-\frac{n(\cL_t^{n-1}\cdot\cD_t)}{(\cL_t^n)}\lambda_{\Chow,f,\cL}+(n+1)\lambda_{\Chow,f|_{\cD},\cL|_{\cD}},
\]
where \[
(\cL_t^{n-1}\cdot\cD_t):=\sum_{i=1}^k c_i (\cL_t^{n-1}\cdot\cD_{i,t}),\quad
  \lambda_{\Chow,f|_{\cD},\cL|_{\cD}}:=\bigotimes_{i=1}^k\lambda_{\Chow,f|_{\cD_i},\cL|_{\cD_i}}^{\otimes c_i}.
  \]
\end{defn}

For any $\bG_m$-linearized line bundle over $\bA^1$ equipped with the standard $\bG_m$-action, we denote by $\wt(\cdot)$ the corresponding $\bG_m$-weight of the central fiber. The following result from \cite{PT06} is a fundamental property of (log) CM line bundles.

\begin{prop}\label{prop:Fut=CMwt}
 Let $(X,D=\sum_{i=1}^k c_i D_i)$ be an $n$-dimensional projective log pair. Let $L$ be an ample line bundle on $X$.  Let $\pi:(\cX,\cD;\cL)\to\bA^1$ be a test configuration of $(X,D;L)$. Then $\lambda_{\CM,\pi,\cL}$, $\lambda_{\Chow,\pi,\cL}$ and $\lambda_{\Chow,\pi|_{\cD_i},\cL|_{\cD_i}}$ are all $\bG_m$-linearized line bundles over $\bA^1$.
  Then we have
 \begin{align*}
 \Fut(\cX;\cL)& =\frac{1}{(n+1)(L^n)}\wt(\lambda_{\CM,\pi,\cL}),\\
 \CH(\cX,\cD;\cL)& =\frac{1}{(n+1)(L^n)}\left( -\frac{n(L^{n-1}\cdot D)}{(L^n)} \wt(\lambda_{\Chow,\pi,\cL})+(n+1)\wt(\lambda_{\Chow,\pi|_{\cD},\cL|_{\cD}})\right).
 \end{align*}
 In particular,
 \[
  \Fut(\cX,\cD;\cL)=\frac{1}{(n+1)(L^n)}\wt(\lambda_{\CM,\pi,\cD,\cL}).
 \]
\end{prop}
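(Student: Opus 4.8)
The plan is to prove the three weight formulas in order, and then observe that the final identity for $\Fut(\cX,\cD;\cL)$ follows by adding the first two. The essential point is that the generalized Futaki invariant is, by definition, an $O(m^{-1})$-coefficient in an asymptotic expansion, while the equivariant Riemann–Roch/Mumford–Knudsen machinery reinterprets exactly this coefficient as a $\bG_m$-weight of a determinant line on $\bA^1$.

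First I would set up the $\bG_m$-linearizations. The Mumford–Knudsen expansion $\det f_!(\cL^k)=\bigotimes_i \lambda_i^{\binom{k}{i}}$ is functorial, so for the test configuration $\pi:(\cX;\cL)\to\bA^1$ each $\lambda_i(\cX,\cL)$ inherits a canonical $\bG_m$-linearization from the $\bG_m$-action on $(\cX;\cL)$; since $\bA^1$ admits a $\bG_m$-fixed point at $0$, taking $\wt(\cdot)$ of the fiber at $0$ makes sense. The same applies to $\lambda_{\Chow,\pi|_{\cD_i},\cL|_{\cD_i}}$ using that $(\cD_i;\cL|_{\cD_i})/\bA^1$ is itself a test configuration of $(D_i;L|_{D_i})$. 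This gives the first sentence of the Proposition.

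Next, for the weight computations I would compare the two asymptotic expansions. On the one hand, $\det f_!(\cL^k)$ restricted to the fiber over $0$ is (up to sign) the determinant line $\det H^0(\cX_0,\cL_0^{\otimes k})$, so its $\bG_m$-weight is $\pm w_k$ in the notation of the test-configuration definition; on the other hand, from the Mumford–Knudsen formula this weight equals $\sum_i \binom{k}{i}\wt(\lambda_i)$, a polynomial in $k$ of degree $n+1$. Matching the top two coefficients with the expansion $w_k = b_0 k^{n+1}+b_1 k^n + O(k^{n-1})$ (and $N_k = a_0 k^n + a_1 k^{n-1}+O(k^{n-2})$, where $a_0=(L^n)/n!$, $\mu = 2a_1/a_0$) expresses $\wt(\lambda_{n+1})$ and $\wt(\lambda_n)$ in terms of $b_0,b_1,a_0$. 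Feeding these into the definition $\lambda_{\CM,\pi,\cL}=\lambda_{n+1}^{\mu+n(n+1)}\otimes\lambda_n^{-2(n+1)}$ and simplifying, one recovers $\wt(\lambda_{\CM,\pi,\cL}) = (n+1)(L^n)\cdot\frac{2(a_1b_0-a_0b_1)}{a_0^2} = (n+1)(L^n)\,\Fut(\cX;\cL)$, which is the first displayed equation. The identity $\wt(\lambda_{\Chow,\pi,\cL}) = \wt(\lambda_{n+1})$ being proportional to $b_0$, together with the analogous statement for each $\cD_i$ (where $\wt(\lambda_{\Chow,\pi|_{\cD_i},\cL|_{\cD_i}})$ is proportional to $\tilde b_{i,0}$), and the formula $\CH(\cX,\cD;\cL)=\frac{a_0\tilde b_0 - b_0\tilde a_0}{a_0^2}$, yields the second displayed equation after collecting the coefficients $c_i$ and the normalization factor $n(L^{n-1}\cdot D)/(L^n)$; here I would use that $\tilde a_{i,0} = (L^{n-1}\cdot D_i)/(n-1)!$ so that $\frac{n\tilde a_0}{a_0} = \frac{n(L^{n-1}\cdot D)}{(L^n)}$.

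Finally, adding the two equations and using $\Fut(\cX,\cD;\cL)=\Fut(\cX;\cL)+\CH(\cX,\cD;\cL)$ together with the definition of $\lambda_{\CM,\pi,\cD,\cL}$ from Definition \ref{defn:logCM1} gives $\Fut(\cX,\cD;\cL)=\frac{1}{(n+1)(L^n)}\wt(\lambda_{\CM,\pi,\cD,\cL})$, since $\wt$ is additive on tensor products (and $\bQ$-linear after passing to $\bQ$-line bundles, which is needed because the $c_i$ and $\mu$ are rational). The main obstacle is bookkeeping: one must be careful about the sign conventions in identifying $\det f_!(\cL^k)|_0$ with $\det H^0(\cX_0,\cL_0^{\otimes k})$ (the parity of $n$ can flip a sign that is then squared away in the CM combination), about whether $k$ ranges over the integers where $\cL$ is Cartier, and about the fact that $\lambda_{n+1}^{\mu+n(n+1)}$ only makes literal sense as a $\bQ$-line bundle, so the weight identities should first be proved for the integral line bundle $\lambda_{n+1}^{m}$ for suitable $m$ clearing denominators. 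I expect that the cleanest route is to cite \cite{PT06} for the absolute CM case (the first displayed equation) and then only carry out the Chow-weight bookkeeping for the divisorial part by hand, matching it against the defining expansions $\tilde N_{i,m}$, $\tilde w_{i,m}$ in the test-configuration definition.
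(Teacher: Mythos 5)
The paper does not actually prove this proposition; it simply states it and cites Paul--Tian \cite{PT06}. Your reconstruction fills in the argument by matching coefficients in the Mumford--Knudsen expansion, and the strategy is exactly right: read off $\wt(\lambda_{n+1})=(n+1)!\,b_0$ and $\wt(\lambda_n)=n!\bigl(b_1+\tfrac{n(n+1)}{2}b_0\bigr)$ from comparing $\sum_i\binom{k}{i}\wt(\lambda_i)$ with $w_k=b_0k^{n+1}+b_1k^n+O(k^{n-1})$, plug into $\lambda_{\CM}=\lambda_{n+1}^{\mu+n(n+1)}\otimes\lambda_n^{-2(n+1)}$, and the $n(n+1)b_0$ terms cancel to give $\wt(\lambda_{\CM})=(n+1)!\,a_0^{-1}\cdot 2(a_1b_0-a_0b_1)=(n+1)(L^n)\Fut$; analogously $\wt(\lambda_{\Chow,\pi,\cL})=(n+1)!\,b_0$ and $\wt(\lambda_{\Chow,\pi|_{\cD_i},\cL|_{\cD_i}})=n!\,\tilde b_{i,0}$ (one dimension down), and substituting yields $\CH$. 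All of this checks out and is the same computation one would extract from \cite{PT06}.

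One small slip: you wrote ``$\frac{n\tilde a_0}{a_0}=\frac{n(L^{n-1}\cdot D)}{(L^n)}$,'' but with $\tilde a_0=(L^{n-1}\cdot D)/(n-1)!$ and $a_0=(L^n)/n!$ the correct identity is $\frac{\tilde a_0}{a_0}=\frac{n(L^{n-1}\cdot D)}{(L^n)}$ (no extra $n$ on the left). That is the form you actually need in order to rewrite the coefficient $\tfrac{n(L^{n-1}\cdot D)}{(L^n)}$ as $\tilde a_0/a_0$, so the formula $\frac{a_0\tilde b_0-b_0\tilde a_0}{a_0^2}$ falls out cleanly. Your cautionary remarks about $\bQ$-line bundles, clearing denominators, and only working for $k$ in the semigroup where $\cL^k$ is Cartier are appropriate and match the normalizations in the paper's definitions.
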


Next, we will introduce the concept of $\bQ$-Gorenstein flat families of log Fano pairs. In order to adapt this concept to our moduli problems, we need to include cases when the base is a normal Deligne-Mumford stack.

\begin{defn}\label{defn:qgorfamily}\leavevmode
\begin{enumerate}[label=(\alph*)]
\item Let $f:\cX\to T$ be a proper flat morphism between reduced schemes. Let $\cD = \sum_{i=1}^k c_i \cD_i$ be a finite $\bQ_{\geq 0}$-linear combination of reduced closed subschemes of $\cX$. We say $f:(\cX,\cD)\to T$ is a \emph{$\bQ$-Gorenstein flat family of log Fano pairs} if the following conditions hold:
\begin{itemize}
\item $f$ has normal, geometrically connected fibers of the same dimension $n$;
\item $\cD_i$ is a relative Mumford divisor on $\cX$ over $T$ for every $i$ (see \cite[Definition 1]{Kol19});
\item $-(K_{\cX/T}+\cD)$ is $\bQ$-Cartier and $f$-ample.
\end{itemize}
We define the \emph{CM $\bQ$-line bundle} of $f:(\cX,\cD)\to T$ as $\lambda_{\CM,f,\cD}:=l^{-n}\lambda_{\CM,f,\cD,\cL}$ where $\cL:=-l(K_{\cX/T}+\cD)$ is an $f$-ample Cartier divisor on $\cX$ for some $l\in\bZ_{>0}$. 

\item Let $\cX$ and $\cT$ be normal separated Deligne-Mumford stacks that are finite type over $\bC$. Let $\cD$ be an effective $\bQ$-divisor on $\cX$. 
We say $f:(\cX,\cD)\to \cT$ is a \emph{$\bQ$-Gorenstein flat family of log Fano pairs} if for some (or equivalently, any) \'etale cover $u:U\to \cT$ from a normal scheme $U$, the base change $f\times_{\cT} u: (\cX,\cD)\times_{\cT} U\to U$ is a $\bQ$-Gorenstein flat family of log Fano pairs.
\end{enumerate}
\end{defn}

In our moduli problems, we mainly consider the following class of log Fano pairs.

\begin{definition}\label{defn:qgorsmoothable}
 Let $c,r$ be positive rational numbers such that $cr<1$. 
 A log Fano pair $(X,cD)$ is \emph{$\bQ$-Gorenstein smoothable} if there exists a $\bQ$-Gorenstein flat family of log Fano pairs $\pi:(\cX,c\cD)\to B$ over a pointed smooth curve $(0\in B)$
 such that the following holds:
 \begin{itemize}
  \item Both $-K_{\cX/B}$ and $\cD$ are $\bQ$-Cartier, $\pi$-ample and  $\cD\sim_{\bQ,\pi}-rK_{\cX/B}$;
  \item Both $\pi$ and $\pi|_{\cD}$ are smooth morphisms
  over $B\setminus\{0\}$;
  \item $(\cX_0,c\cD_0)\cong (X,cD)$.
 \end{itemize} 
A $\bQ$-Gorenstein flat family of log Fano pairs $f:(\cX,c\cD)\to T$ is called a \emph{$\bQ$-Gorenstein smoothable log Fano family} if 
all fibers are $\bQ$-Gorenstein smoothable log Fano pairs and $\cD$ is $\bQ$-Cartier.
\end{definition}

For application purposes, it is always convenient to work with a smaller family rather than the whole Hilbert scheme. Thus the next criterion is important when checking K-stability in explicit families. It is a partial generalization of \cite[Theorem 1]{PT06} and \cite[Theorem 3.4]{OSS16}.

\begin{thm}\label{thm:paultian}
Let $f:(\cX,\cD)\to\cT$ be a $\bQ$-Gorenstein flat family of log Fano pairs over a normal proper Deligne-Mumford stack $\cT$ that is finite type over $\bC$. Denote by $T$ the coarse moduli space of $\cT$. Let $G$ be a reductive group acting on $\cX$ and $\cT$ such that $\cD$ is $G$-invariant and $f$ is $G$-equivariant. 
Assume in addition that 
\begin{enumerate}[label=(\alph*)]
\item if $\Aut(\cX_t,\cD_t)$ is finite for $t\in T$ then the stabilizer subgroup $G_t$ is also finite;
\item if $(\cX_t,\cD_t)\cong (\cX_{t'}, \cD_{t'})$ for $t,t'\in T$, then $t'\in G\cdot t$;
\item $\lambda_{\CM,f,\cD}$ descends to an ample $\bQ$-line bundle $\Lambda_{\CM,f,\cD}$ on $T$.
\end{enumerate}
Then $t\in T$ is GIT (poly/semi)stable with respect to the $G$-linearized $\bQ$-line bundle $\Lambda_{\CM,f,\cD}$ if $(\cX_t, \cD_t)$ is a K-(poly/semi)stable log Fano pair.
\end{thm}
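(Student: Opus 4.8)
The plan is to reduce the statement to the classical Paul--Tian criterion \cite[Theorem 1]{PT06} (or its algebraic reformulation \cite[Theorem 3.4]{OSS16}) applied to the $G$-action on $T$ linearized by the ample $\bQ$-line bundle $\Lambda_{\CM,f,\cD}$. The key point is the comparison, for each $t\in T$, between the Hilbert--Mumford numerical invariant $\mu^{\Lambda_{\CM,f,\cD}}(t,\lambda)$ of a one-parameter subgroup $\lambda:\bG_m\to G$ and the generalized Futaki invariant of the induced test configuration of $(\cX_t,\cD_t)$. First I would fix $l\in\bZ_{>0}$ so that $\cL:=-l(K_{\cX/\cT}+\cD)$ is an $f$-ample Cartier divisor, and note that by Definition \ref{defn:qgorfamily}(a) we have $\lambda_{\CM,f,\cD}=l^{-n}\lambda_{\CM,f,\cD,\cL}$; pulling back along $t$ gives a $\bG_m$-linearized $\bQ$-line bundle over $\bA^1$ (after the standard degeneration-to-the-normal-cone/test-configuration construction: $\lambda$ and $t$ produce a test configuration $(\cX_t^{\lambda},\cD_t^{\lambda};\cL_t^{\lambda})/\bA^1$ of $(\cX_t,\cD_t;\cL_t)$ as the pullback of the universal family over $\cT$ along the orbit map $\bG_m\to G\cdot t$, completed over $0$).

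The heart of the argument is then Proposition \ref{prop:Fut=CMwt}: the $\bG_m$-weight of $\lambda_{\CM,\pi,\cD,\cL}$ on the central fiber equals $(n+1)(L^n)\cdot\Fut(\cX_t^{\lambda},\cD_t^{\lambda};\cL_t^{\lambda})$, while by the very definition of the Hilbert--Mumford functional, $\mu^{\Lambda_{\CM,f,\cD}}(t,\lambda)$ is (up to the positive constant $l^{-n}(n+1)(L^n)$) exactly minus this weight. Thus GIT semistability of $t$ with respect to $\Lambda_{\CM,f,\cD}$ is equivalent to $\Fut\geq 0$ for all test configurations arising from one-parameter subgroups of $G$ inside the family; since K-semistability of $(\cX_t,\cD_t)$ gives $\Fut\geq 0$ for \emph{all} normal test configurations (after normalizing, the Futaki invariant only decreases by \cite[Proposition 3.15]{BHJ17}, cf. the Remark after the definition of test configurations), in particular for those coming from $G$, we conclude GIT semistability. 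For the polystable case, suppose $(\cX_t,\cD_t)$ is K-polystable and $\mu^{\Lambda_{\CM,f,\cD}}(t,\lambda)=0$ for some $\lambda$; then the associated test configuration has vanishing Futaki invariant, hence (after normalization) is a product test configuration, which forces the central fiber $(\cX_0^{\lambda},\cD_0^{\lambda})$ to be isomorphic to $(\cX_t,\cD_t)$; by hypothesis (b) the point $\lim_{s\to 0}\lambda(s)\cdot t$ lies in $G\cdot t$, so the orbit $G\cdot t$ is closed in the semistable locus, i.e. $t$ is GIT polystable. For the stable case one argues similarly: K-stability gives $\Fut>0$ for every nontrivial normal test configuration; if the stabilizer $G_t$ were positive-dimensional it would yield a product test configuration with $\Fut=0$, contradicting K-stability (alternatively, $\Aut(\cX_t,\cD_t)$ is finite for a K-stable pair, so by hypothesis (a) $G_t$ is finite), and combined with the semistable and polystable cases this yields GIT stability.

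Two technical points require care. The first is that the one-parameter degenerations inside the family $f$ need not be normal, whereas the valuative/variational theory of K-stability is phrased for normal test configurations; this is handled by passing to the normalization and invoking \cite[Proposition 3.15]{BHJ17} together with the fact that the central fiber of a product test configuration is already normal, so no information is lost in the polystable analysis. The second, and I expect the main obstacle, is ensuring that \emph{every} Hilbert--Mumford one-parameter subgroup of $G$ genuinely produces a test configuration of $(\cX_t,\cD_t;\cL_t)$ in the sense of the definition given earlier --- i.e. that the $f$-ampleness of $\cL$ and the $\bQ$-Gorenstein condition on $(\cX,\cD)$ (Definition \ref{defn:qgorfamily}) survive the completion over $\bA^1$, so that $\cL_t^{\lambda}$ is relatively ample and $\cD_t^{\lambda}$ is the correct Zariski closure. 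This is where properness of $\cT$ (hence of $T$) is used: it guarantees the limit point $\lim_{s\to 0}\lambda(s)\cdot t$ exists in $T$, so the test configuration is genuinely defined over all of $\bA^1$, and the relative ampleness of $\cL_t^{\lambda}$ follows from that of $\cL$ on the universal family after possibly twisting by a multiple of the central fiber (which does not change the Futaki invariant). Once these compatibilities are in place, the weight comparison of Proposition \ref{prop:Fut=CMwt} is exactly the bridge needed, and the three stability statements follow formally from the Hilbert--Mumford criterion.
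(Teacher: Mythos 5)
Your proof is correct and follows essentially the same route as the paper: reduce to the Hilbert--Mumford criterion via Proposition \ref{prop:Fut=CMwt} (Futaki invariant of the induced test configuration equals the GIT weight up to a positive constant), use properness of $\cT$ to extend the $\bG_m$-orbit over $\bA^1$, and in the polystable case deduce from vanishing Futaki and K-polystability that the degeneration is a product, then invoke hypothesis (b). The paper's polystable argument is organized slightly more directly (it picks the specific 1-PS $\sigma$ degenerating $t$ to a GIT-polystable representative $t_0$ rather than quantifying over all $\lambda$ with $\mu=0$), but the content is the same; your worry about normality of the induced test configuration is not actually an issue, since a flat family of normal varieties over a smooth curve has normal total space.
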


\begin{proof}
We first show that K-semistability implies GIT-semistability. Denote by $\cL:=-l(K_{\cX/\cT}+\cD)$ a Cartier divisor on $\cX$ for $l\in\bZ_{>0}$. 
Let $t\in T$ be a closed point such that $(\cX_t,\cD_t)$ is K-semistable. Then $t$ induces a morphism $\tau:\Spec\bC\to\cT$ which is unique up to isomorphism. Let $\sigma:\bG_m\to G$ be a $1$-PS of $G$. Then we have a morphism $\rho:\bG_m\to \cT$ as the composition of $\tau\times\sigma:\bG_m=\bG_m\times\Spec\bC\to G\times \cT$ and the $G$-action morphism $G\times\cT\to\cT$. Since $\cT$ is proper, $\rho$ extends to a $\bG_m$-equivariant morphism $\bar{\rho}:\bA^1\to \cT$. Pulling back the morphism $f:(\cX,\cD)\to \cT$ and $\cL$ to $\bA^1$ under $\bar{\rho}$ yields a test configuration $(\cX_{t,\sigma},\cD_{t,\sigma};\cL_{t,\sigma})/\bA^1$ of $(\cX_t,\cD_t)$. By Proposition \ref{prop:Fut=CMwt}, we know that $\Fut(\cX_{t,\sigma},\cD_{t,\sigma};\cL_{t,\sigma})$ is a positive multiple of the GIT weight $\mu^{\Lambda_{\CM,f,\cD}}(t,\sigma)$ (see \cite[Definition 2.2]{MFK94}) which implies that both are non-negative by K-semistability of $(\cX_t,\cD_t)$. Thus the Hilbert-Mumford criterion implies that $t\in T$ is GIT semistable. 

Next, assume that $(\cX_t,\cD_t)$ is K-polystable. From the above discussion we see that $t$ is GIT semistable. Let $\sigma: \bG_m\to G$ be a $1$-PS such that $t_0:=\lim_{s\to 0}\sigma(s)\cdot t$ is GIT polystable. Then the GIT weight $\mu^{\Lambda_{\CM,f,\cD}}(t,\sigma)$ is zero, which implies that $\Fut(\cX_{t,\sigma},\cD_{t,\sigma};\cL_{t,\sigma})$ vanishes as well. Thus we have $(\cX_t,\cD_t)\cong (\cX_{t_0},\cD_{t_0})$ which implies that $t\in G\cdot t_0$ is GIT polystable by assumption (b). The stable part is a consequence of the polystable part and assumption (a).
\end{proof}



The following proposition provides an intersection formula for log CM line bundles. For the case without divisors this was proven by Paul and Tian \cite{PT06}. The current statement is a consequence of {\cite[Proposition 3.7]{CP18}}. We provide a proof here for readers' convenience.

\begin{prop}\label{prop:logCM2}
Let $f:(\cX,\cD)\to T$ be a $\bQ$-Gorenstein flat family of $n$-dimensional log Fano pairs over a normal proper variety $T$. Then
\begin{equation}\label{eq:CM-intersection}
 \mathrm{c}_1(\lambda_{\CM,f,\cD})=-f_*((-K_{\cX/T}-\cD)^{n+1}).
\end{equation}
\end{prop}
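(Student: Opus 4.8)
The plan is to deduce the intersection formula \eqref{eq:CM-intersection} from the weight formula for the log CM line bundle together with the Knudsen--Mumford expansion, essentially by spreading out the test-configuration computation of Proposition \ref{prop:Fut=CMwt} over the base $T$. Concretely, let $\cL := -l(K_{\cX/T}+\cD)$ be an $f$-ample Cartier divisor for suitable $l \in \bZ_{>0}$, and recall $\lambda_{\CM,f,\cD} = l^{-n}\lambda_{\CM,f,\cD,\cL}$. First I would apply the Knudsen--Mumford expansion to the two families $f\colon \cX \to T$ and $f|_{\cD_i}\colon \cD_i \to T$ (the latter interpreted as a family of cycles when it fails to be flat, using \cite[Section I.3]{Kol96}), which expresses the line bundles $\lambda_{n+1}(\cX,\cL)$, $\lambda_n(\cX,\cL)$ and $\lambda_{\Chow, f|_{\cD_i}, \cL|_{\cD_i}}$ in terms of pushforwards of powers of $\cL$. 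The Grothendieck--Riemann--Roch theorem then identifies $\mathrm{c}_1$ of these $\lambda$'s with the appropriate components of $f_*(e^{k\,\mathrm{c}_1(\cL)}\cdot \mathrm{td}(T_{\cX/T}))$: the leading term $f_*(\mathrm{c}_1(\cL)^{n+1})$ computes $\mathrm{c}_1(\lambda_{n+1})$, and the subleading term involving $f_*(\mathrm{c}_1(\cL)^n \cdot K_{\cX/T})$ (together with $f_*(\mathrm{c}_1(\cL)^{n+1})$) computes $\mathrm{c}_1(\lambda_n)$.

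Next I would assemble these into $\mathrm{c}_1(\lambda_{\CM,f,\cL}) = \mathrm{c}_1(\lambda_{n+1}^{\mu+n(n+1)} \otimes \lambda_n^{-2(n+1)})$, using that $\mu = 2a_1/a_0$ where $a_0 = (\cL_t^n)/n!$ and $a_1$ is read off from the Hilbert polynomial of a fiber via Riemann--Roch on $\cX_t$ (so $a_1$ involves $(\cL_t^{n-1} \cdot (-K_{\cX_t}))$). A direct bookkeeping of the GRR output shows
\[
\mathrm{c}_1(\lambda_{\CM,f,\cL}) = f_*\!\left( (n+1)\,\mathrm{c}_1(\cL)^n \cdot K_{\cX/T} + \tfrac{\mu\, n!}{(\cL_t^n)}\,\mathrm{c}_1(\cL)^{n+1} \right),
\]
and then the log correction terms $-\frac{n(\cL_t^{n-1}\cdot \cD_t)}{(\cL_t^n)}\lambda_{\Chow,f,\cL} + (n+1)\lambda_{\Chow,f|_{\cD},\cL|_{\cD}}$ contribute $f_*\big((n+1)\,\mathrm{c}_1(\cL)^n \cdot \cD - \frac{n(\cL_t^{n-1}\cdot\cD_t)}{(\cL_t^n)}\,\mathrm{c}_1(\cL)^{n+1}\big)$ by the same GRR analysis applied to the divisorial families. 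Combining everything and substituting $\cL = -l(K_{\cX/T}+\cD)$, all the ``normalizing'' intersection numbers in the coefficients of $\mathrm{c}_1(\cL)^{n+1}$ telescope, and after dividing by $l^n$ one is left precisely with $\mathrm{c}_1(\lambda_{\CM,f,\cD}) = -f_*((-K_{\cX/T}-\cD)^{n+1})$.

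An alternative, slicker route — and probably the cleanest to write — is to avoid GRR on the total space entirely and instead reduce to the known non-boundary case. Paul and Tian \cite{PT06} proved $\mathrm{c}_1(\lambda_{\CM,f,\cL}) = -f_*(\text{appropriate polynomial in } \mathrm{c}_1(\cL) \text{ and } K_{\cX/T})$ without divisors; one then only needs to compute $\mathrm{c}_1$ of the Chow line bundles of $\cX$ and of each $\cD_i$, which are governed by a single GRR computation (the leading term), and check that the combination defining the log CM line bundle reorganizes into $-f_*((-K_{\cX/T}-\cD)^{n+1})$ by a purely multilinear expansion of $(-K_{\cX/T}-\cD)^{n+1} = \sum_j \binom{n+1}{j}(-K_{\cX/T})^{n+1-j}(-\cD)^j$ and matching $f_*$ of each term against the corresponding $\lambda$-contribution. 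Since $\cite[\text{Proposition } 3.7]{CP18}$ gives exactly this statement, the proof can legitimately be phrased as: invoke \cite{PT06} for the $\cD=0$ part, compute the Chow corrections, and verify the identity of symmetric functions; I would present it in this form.

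The main obstacle will be handling the case where $f|_{\cD_i}\colon \cD_i \to T$ is only a well-defined family of cycles rather than a flat family — there one must make sense of $\lambda_{\Chow,f|_{\cD_i},\cL|_{\cD_i}}$ and its first Chern class via the cycle-theoretic Chow line bundle of \cite[Section I.3]{Kol96}, and argue that GRR (or the Knudsen--Mumford formalism) still yields $\mathrm{c}_1(\lambda_{\Chow,f|_{\cD_i},\cL|_{\cD_i}}) = f_*(\mathrm{c}_1(\cL)^n \cdot \cD_i)$ in $\mathrm{Pic}(T)_\bQ$, at least after restricting to the semi-normal locus where the construction is defined. Once that compatibility is in place, the remaining computation is a formal, if slightly tedious, manipulation of intersection numbers, and I would relegate the routine binomial bookkeeping to a remark or a one-line "a direct computation shows."
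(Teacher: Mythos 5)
Your proposal is correct and takes essentially the same route as the paper: the paper's proof first reduces to $T$ smooth and projective (using functoriality of both sides under pull-backs and passing to a resolution), then invokes \cite[Lemma A.2]{CP18} — which is exactly the Knudsen--Mumford/GRR expansion you describe — to read off $\mathrm{c}_1$ of $\lambda_{\Chow,f,\cL}$, $\lambda_{n,f,\cL}$, and $\lambda_{\Chow,f|_{\cD},\cL|_{\cD}}$, and finishes with the same binomial bookkeeping you outline. The one small point your write-up omits is the explicit reduction to smooth projective $T$ at the outset; this step is worth stating rather than burying, since it is what justifies applying the Knudsen--Mumford expansion (and sidesteps much of the delicacy you flag around the cycle-theoretic Chow line bundle, because after pulling back to a resolution the relevant families and their determinants behave well).
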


\begin{proof}
Since both sides of \eqref{eq:CM-intersection} are functorial under pull-backs,  by passing to a resolution we may assume that $T$ is smooth and projective. Then \cite[Lemma A.2]{CP18} implies that 
\[
\mathrm{c}_1(f_* \cL^{\otimes q})=\frac{f_*(\cL^{n+1})}{(n+1)!}q^{n+1} - \frac{f_*(K_{\cX/T}\cdot\cL^{n})}{2\cdot n!}q^n+O(q^{n-1}),
\]
where $\cL:=-l(K_{\cX/T}+\cD)$ is Cartier and $f$-ample.
Hence we have $ \mathrm{c}_1(\lambda_{\Chow,f,\cL})=f_*(\cL^{n+1})$ and $\mathrm{c}_1(\lambda_{n,f,\cL})=\frac{1}{2}f_*(n(\cL^{n+1})-(K_{\cX/T}\cdot\cL^{n}))$.
It is clear that $\mu(\cX,\cL)=-\frac{n(K_{\cX_t}\cdot \cL_t^{n-1})}{(\cL_t^n)}$. Hence
\[
 \mathrm{c}_1(\lambda_{\CM,f,\cL})=-\frac{n(K_{\cX_t}\cdot \cL_t^{n-1})}{(\cL_t^n)} f_*(\cL^{n+1})+(n+1)f_*(K_{\cX/T}\cdot\cL^n).
\]
We also know that $\mathrm{c}_1(\lambda_{\Chow,f|_{\cD},\cL|_{\cD}})
=f_*(\cL^n\cdot\cD)$. Thus
\begin{align*}
\mathrm{c}_1(\lambda_{\CM,f,\cD,\cL})&=\mathrm{c_1}(\lambda_{\CM,f,\cL})-\frac{n(K_{\cX_t}\cdot \cL_t^{n-1})}{(\cL_t^n)}\mathrm{c}_1(\lambda_{\Chow,f,\cL})+(n+1)\mathrm{c}_1(\lambda_{\Chow,f|_{\cD},\cL|_{\cD}})\\
&=\frac{n((-K_{\cX_t}-D_t)\cdot \cL_t^{n-1})}{(\cL_t^n)}f_*(\cL^{n+1})-(n+1)f_*((-K_{\cX/T}-\cD)\cdot\cL^{n})\\
& =-l^nf_*((-K_{\cX/T}-\cD)^{n+1}).\qedhere
\end{align*}
\end{proof}

Next we recall the definition of Hodge line bundles.

\begin{defn}\label{defn:hodge}
Let $f:\cX\to T$ be a $\bQ$-Gorenstein flat family of $\bQ$-Fano varieties over a normal base. Let $\cD$ be an effective $\bQ$-Cartier $\bQ$-divisor on $\cX$ not containing any fiber of $f$ such that $\cD\sim_{\bQ,f} -rK_{\cX/T}$ for some $r\in\bQ_{>0}$. The Hodge $\bQ$-line bundle $\lambda_{\Hodge,f,r^{-1}\cD}$ is defined as the $\bQ$-linear equivalence class of $\bQ$-Cartier $\bQ$-divisors on $T$ such that
\[
K_{\cX/T}+r^{-1}\cD\sim_{\bQ}f^*\lambda_{\Hodge,f,r^{-1}\cD}.
\]
\end{defn}

\begin{prop}\label{prop:cm-interpolation}
With the notation of Definition \ref{defn:hodge}, for any rational number $0\leq c<r^{-1}$ we have 
\begin{equation}\label{eq:cm-hodge}
(1-cr)^{-n}\lambda_{\CM,f,c\cD}=(1-cr)\lambda_{\CM,f}+ cr(n+1)(-K_{\cX_t})^n\lambda_{\Hodge,f,r^{-1}\cD}.
\end{equation}
\end{prop}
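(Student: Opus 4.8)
The plan is to derive \eqref{eq:cm-hodge} from the intersection formula for the log CM $\bQ$-line bundle in Proposition~\ref{prop:logCM2}. The case $c=0$ is vacuous, since then $\lambda_{\CM,f,c\cD}=\lambda_{\CM,f}$ and the identity is a tautology, so assume $0<c<r^{-1}$. First I would note that $f\colon(\cX,c\cD)\to T$ is again a $\bQ$-Gorenstein flat family of log Fano pairs: $c\cD$ is an effective $\bQ$-Cartier $\bQ$-divisor whose support contains no fiber, and $-(K_{\cX/T}+c\cD)\sim_{\bQ,f}-(1-cr)K_{\cX/T}$ is $\bQ$-Cartier and $f$-ample because $1-cr>0$ and $-K_{\cX/T}$ is $f$-ample; in particular $\lambda_{\CM,f,c\cD}$ is defined. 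Since $\lambda_{\CM,f,c\cD}$, $\lambda_{\CM,f}$ and $\lambda_{\Hodge,f,r^{-1}\cD}$ are all compatible with base change and the fiberwise anticanonical volume $(-K_{\cX_t})^n$ is locally constant on $T$, it is enough to prove the identity after reducing to the case where $T$ is a normal projective variety, which is the setting in which Proposition~\ref{prop:logCM2} is available.

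Writing $\lambda_H:=\lambda_{\Hodge,f,r^{-1}\cD}$, Definition~\ref{defn:hodge} gives $K_{\cX/T}+r^{-1}\cD\sim_{\bQ}f^*\lambda_H$, hence $\cD\sim_{\bQ}-rK_{\cX/T}+f^*(r\lambda_H)$, and therefore
\[
-K_{\cX/T}-c\cD\ \sim_{\bQ}\ (1-cr)(-K_{\cX/T})-cr\,f^*\lambda_H .
\]
Applying Proposition~\ref{prop:logCM2} to $(\cX,c\cD)\to T$ gives $\mathrm{c}_1(\lambda_{\CM,f,c\cD})=-f_*\bigl((-K_{\cX/T}-c\cD)^{n+1}\bigr)$, and the same formula with zero boundary gives $f_*\bigl((-K_{\cX/T})^{n+1}\bigr)=-\mathrm{c}_1(\lambda_{\CM,f})$. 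I would then expand $\bigl((1-cr)(-K_{\cX/T})-cr\,f^*\lambda_H\bigr)^{n+1}$ by the binomial theorem and push forward along $f$. By the projection formula the term carrying $(f^*\lambda_H)^k$ equals $\binom{n+1}{k}(1-cr)^{n+1-k}(-cr)^k\,f_*\bigl((-K_{\cX/T})^{n+1-k}\bigr)\cdot\lambda_H^{\,k}$; since $f$ has $n$-dimensional fibers, $f_*\bigl((-K_{\cX/T})^{n+1-k}\bigr)$ vanishes unless $k\in\{0,1\}$, where it equals $-\mathrm{c}_1(\lambda_{\CM,f})$ and $(-K_{\cX_t})^n[T]$ respectively. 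Collecting these two surviving terms gives
\[
\mathrm{c}_1(\lambda_{\CM,f,c\cD})=(1-cr)^{n+1}\,\mathrm{c}_1(\lambda_{\CM,f})+(n+1)\,cr\,(1-cr)^n\,(-K_{\cX_t})^n\,\lambda_H ,
\]
and dividing by $(1-cr)^n$ is exactly \eqref{eq:cm-hodge}.

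The routine part here is the binomial expansion together with the projection formula; the one step I expect to require actual care is the reduction in the first paragraph to a normal projective base, so that the intersection formula of Proposition~\ref{prop:logCM2} applies. This should be handled purely formally, using that every term appearing in \eqref{eq:cm-hodge} is compatible with base change, and it does not affect the computation above. (One could alternatively avoid Proposition~\ref{prop:logCM2} and argue directly from the Knudsen--Mumford construction and the change-of-polarization formula, but that route is strictly more laborious.)
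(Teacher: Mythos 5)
Your argument is correct, and it reaches \eqref{eq:cm-hodge} by a genuinely different and somewhat more mechanical route than the paper. You substitute $\cD\sim_{\bQ}-rK_{\cX/T}+r\,f^*\lambda_{\Hodge,f,r^{-1}\cD}$ (from Definition~\ref{defn:hodge}) into the anticanonical intersection formula of Proposition~\ref{prop:logCM2} applied to $(\cX,c\cD)$ and expand binomially, noting that only the $(f^*\lambda_{\Hodge,f,r^{-1}\cD})^k$ terms with $k\leq 1$ survive pushforward because the fibers have dimension $n$. The paper instead fixes the $c$-independent polarization $\cL:=-lK_{\cX/T}$, uses invariance of CM line bundles under pull-back twists of the polarization to write $\lambda_{\CM,f,c\cD}=l^{-n}(1-cr)^{n}\lambda_{\CM,f,c\cD,\cL}$, exploits the manifest linearity of $c\mapsto\lambda_{\CM,f,c\cD,\cL}$ coming from Definition~\ref{defn:logCM1}, and evaluates at the log Calabi--Yau coefficient $c=r^{-1}$, where the intersection calculation collapses to $\lambda_{\CM,f,r^{-1}\cD,\cL}=(n+1)(\cL_t^n)\lambda_{\Hodge,f,r^{-1}\cD}$ via $K_{\cX/T}+r^{-1}\cD\sim_{\bQ}f^*\lambda_{\Hodge,f,r^{-1}\cD}$. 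Your route avoids the rescaling bookkeeping; the paper's makes explicit a conceptually useful fact, namely that the log CM line bundle at the CY boundary is a positive multiple of the Hodge line bundle. One caution: the technical concern you flag in your first paragraph is genuine rather than purely formal. The intersection formula needs a proper base, yet $T$ need not be proper and the given family need not extend flatly over a compactification of $T$. The paper addresses this by taking closure of the family in a relative Hilbert scheme, resolving the base, normalizing the total space, and invoking \cite[Lemma~A.2]{CP18}, which still produces the relevant Knudsen--Mumford line bundles for the (not necessarily flat) compactified family and restricts correctly over $T$; your proof would need the same device, not merely an appeal to base-change compatibility.
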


\begin{proof}
For simplicity we denote $\lambda_c:=\lambda_{\CM,f,c\cD}$ and $\lambda_{\Hodge}:=\lambda_{\Hodge,f,r^{-1}\cD}$. Let $\cL:=-lK_{\cX/T}$ be an ample Cartier divisor on $\cX$ for some $l\in \bZ_{>0}$. Since CM line bundles are invariant by twisting pull-back of a line bundle on the base, we know that $\lambda_c=l^{-n}(1-cr)^n \lambda_{\CM,f,c\cD,\cL}$. We also know that 
\[
\lambda_{\CM,f,c\cD,\cL}=\lambda_{\CM,f,\cL}-c\left(\frac{n(\cL_t^{n-1}\cdot\cD_t)}{(\cL_t^n)}\lambda_{\Chow,f,\cL}-(n+1)\lambda_{\Chow,f|_{\cD}, \cL|_{\cD}}\right).
\]
Hence to show \eqref{eq:cm-hodge} it suffices to show that $\lambda_{\CM,f,r^{-1}\cD,\cL}=(n+1)l^n(-K_{\cX_t})^n\lambda_{\Hodge}$. Since both sides are functorial under pull-backs, we may assume that $T$ is smooth and quasi-projective. By taking closure in the relative Hilbert scheme of $(\cX_t,\cD_t;\cL_t)$, passing to a resolution of the base, and taking normalization of the total family, we can find a smooth projective closure $\oT$ of $T$, an extension $\bar{f}:(\ocX,\ocD)\to \oT$ of $f$ and an $\bar{f}$-ample Cartier divisor $\ocL$ on $\ocX$ such that $\ocX$ is normal projective, $\ocD$ is an effective $\bQ$-Cartier $\bQ$-divisor on $\ocX$, $\bar{f}$ and $\bar{f}|_{\ocD}$ are pure dimensional, and $\ocL|_{\cX}=\cL$. Although $\bar{f}$ is not necessarily flat, the CM line bundle $\lambda_{\CM,\bar{f},c\ocD,\ocL}$ can still be defined by \cite[Lemma A.2]{CP18} such that its restriction to $T$ is $\lambda_{\CM,f,c\cD,\cL}$. By similar argument to the proof of Proposition \ref{prop:logCM2}, we have that 
\begin{align*}
\mathrm{c}_1(\lambda_{\CM,\bar{f},r^{-1}\ocD,\ocL})&=\frac{n((-K_{\cX_t}-r^{-1}D_t)\cdot \cL_t^{n-1})}{(\cL_t^n)}\bar{f}_*(\ocL^{n+1})-(n+1)f_*((-K_{\ocX/\oT}-r^{-1}\ocD)\cdot\ocL^{n})\\&=(n+1)f_*((K_{\ocX/\oT}+r^{-1}\ocD)\cdot\ocL^{n}).
\end{align*}
Since $K_{\cX/T}+r^{-1}\cD=f^*\lambda_{\Hodge}$, we know that 
\[
\lambda_{\CM,f,r^{-1}\cD,\cL}=(n+1)(\cL_t^n)\lambda_{\Hodge}=(n+1)l^n(-K_{\cX_t})^n\lambda_{\Hodge}.
\]
The proof is finished.
\end{proof}

\subsection{Good moduli spaces in the sense of Alper} We recall the definition and some notions regarding \emph{good moduli spaces} from \cite{alper}, as these objects naturally appear in the construction of moduli spaces in K-stability. 

\begin{definition}
A quasi-compact morphism $\phi: \mathcal{X} \to Y$ from an Artin stack to an algebraic space is a \emph{good moduli space} if 
\begin{enumerate}
    \item the push-forward functor on quasi-coherent sheaves is exact, and
    \item the induced morphism on sheaves $\calO_Y \to \phi_* \calO_{\mathcal{X}}$ is an isomorphism.
\end{enumerate}
\end{definition}

\begin{definition}
Let $\phi: \calX \to Y$ be a good moduli space. An open substack $\calU \subseteq \calX$ is \emph{saturated} for $\phi$ if $\phi^{-1}(\phi(\calU)) = \calU$.
\end{definition}

This is useful for the following reasons:

\begin{remark}\cite[Remark 6.2, Lemma 6.3]{alper}
\leavevmode
\begin{enumerate}
    \item If $\calU$ is saturated for $\phi$, then $\phi(\calU)$ is open and $\phi\mid_\calU: \calU \to \phi(\calU)$ is a good moduli space.
    \item If $\psi: \calX \to Z$ is a morphism to  a scheme $Z$ and $V \subseteq Z$ is an open subscheme, then $\psi^{-1}(V)$ is saturated for $\phi$. \end{enumerate} \end{remark}
    
    \subsection{Hacking's compact moduli of plane curves}
Hacking constructed a proper moduli stack $\ocP_d^{\rm H}$ of plane curves of degree $d\geq 4$ using tools from the MMP \cite{Hac01, Hac04}. It is a special case of the moduli theory of log canonically polarized pairs developed by Koll\'ar, Shepherd-Barron, and Alexeev. Roughly speaking the parametrized elements are demi-normal pairs $(X,D)$, where $X$ is a $\Q$-Gorenstein deformation of $\bP^2$ and $D$ is a degeneration of plane curves such that the pair satisfies some natural properties that will be reviewed below. First we recall the normal surfaces parametrized by $\ocP_d^{\rm H}$.

\begin{definition}\label{def:manetti}
A \emph{Manetti surface} is a klt projective surface that admits a $\Q$-Gorenstein smoothing to $\bP^2$. \end{definition}

\begin{prop}[{\cite[Theorems 8.2 \& 8.3]{Hac04}}]\label{prop:manetti}
A surface $X$ is a Manetti surface if and only if it is a $\Q$-Gorenstein deformation of the weighted projective plane $\bP(a^2, b^2, c^2)$ where $a^2 + b^2 + c^2 = 3abc$. Moreover, all such $X$ have unobstructed $\Q$-Gorenstein deformations. \end{prop}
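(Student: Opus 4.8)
The statement to prove is Proposition~\ref{prop:manetti}, which characterizes Manetti surfaces (klt projective surfaces $\Q$-Gorenstein smoothable to $\bP^2$) as precisely the $\Q$-Gorenstein deformations of weighted projective planes $\bP(a^2,b^2,c^2)$ with $a^2+b^2+c^2 = 3abc$ (Markov triples), together with the unobstructedness of these deformations. This is attributed to Hacking's thesis work, so the plan is essentially to reduce to the known structure theory of such degenerations.

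The plan is as follows. For the ``if'' direction, suppose $X$ is a $\Q$-Gorenstein deformation of $Y = \bP(a^2,b^2,c^2)$ with $(a,b,c)$ a Markov triple. First I would observe that $\bP(1,1,1) = \bP^2$ is itself such a surface (the triple $(1,1,1)$), and that any two Markov triples are connected by a chain of mutations; the key input is that $\bP(a^2,b^2,c^2)$ for a Markov triple has a $\Q$-Gorenstein smoothing whose general fiber is $\bP^2$ --- this is a classical fact about the "Markov degenerations" of $\bP^2$ studied by Manetti, Hacking--Prokorov, and others. Since $X$ deforms to $Y$ and $Y$ deforms to $\bP^2$, composing these families (after a base change, using properness of an appropriate moduli/Hilbert functor) exhibits $X$ as $\Q$-Gorenstein smoothable to $\bP^2$; klt-ness of $X$ is part of the hypothesis that it is a $\Q$-Gorenstein deformation in the relevant sense (the fibers of a $\Q$-Gorenstein family of the type considered have quotient singularities, which are klt). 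Hence $X$ is a Manetti surface.

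For the ``only if'' direction, suppose $X$ is klt and admits a $\Q$-Gorenstein smoothing $\pi\colon \cX \to B$ with general fiber $\bP^2$. The core of the argument is the classification of $\Q$-Gorenstein one-parameter degenerations of $\bP^2$: by Manetti's work (and Hacking--Prokorov), such a degeneration, after possibly a base change and running an MMP on the total space, has central fiber with only cyclic quotient singularities of type $\frac{1}{n^2}(1, na-1)$ (the so-called $T$-singularities admitting $\Q$-Gorenstein smoothings), and a Noether-type formula / Picard rank count forces the central fiber to be $\bP(a^2,b^2,c^2)$ for a Markov triple when it has quotient singularities and Picard rank one. More precisely: $K_X^2 = K_{\bP^2}^2 = 9$ is preserved, $\rho(X) = 1$ is forced (or one reduces to this case), and the local-to-global analysis of the $T$-singularities plus the relation $\sum (\text{Milnor numbers}) = $ topological constraint yields the Markov equation $a^2+b^2+c^2 = 3abc$. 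Then $X$ itself, being a $\Q$-Gorenstein degeneration of $\bP^2$ with these singularities, is a $\Q$-Gorenstein deformation of such a $\bP(a^2,b^2,c^2)$ (again by deformation theory: the versal $\Q$-Gorenstein deformation space of $\bP(a^2,b^2,c^2)$ contains both $X$ and $\bP^2$ in its fibers). Finally, unobstructedness follows because the $\Q$-Gorenstein deformations of a surface with only $T$-singularities are controlled by $H^2$ of a sheaf that vanishes on a rational surface --- concretely, $T^2_{\Q\text{-Gor}}$ vanishes by the local rigidity at the $T$-singularities together with $H^2(X, T_X^{[0]}) = H^2(X, \calO_X) = 0$ for the rational surface $X$ --- so the functor is smooth.

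The main obstacle is the ``only if'' direction: pinning down that a klt $\Q$-Gorenstein degeneration of $\bP^2$ must have Picard rank one and Markov-triple quotient singularities is genuinely the content of Manetti--Hacking's structure theorem, and I would cite it rather than reprove it (Hacking's thesis, \cite{Hac04}, and Manetti's papers). A secondary subtlety is making precise the ``transitivity of $\Q$-Gorenstein smoothability'' used in the ``if'' direction --- gluing a family $\cX \to B$ with fiber $X$ over $0$ and general fiber $Y$ to a family $\cY \to B'$ with fiber $Y$ over $0$ and general fiber $\bP^2$ requires knowing the deformation space of $Y$ is smooth and that the first family is pulled back from (an étale neighborhood in) it, which is exactly where the unobstructedness statement gets used; so the two parts of the proposition are best proven together, establishing smoothness of the $\Q$-Gorenstein deformation functor of $\bP(a^2,b^2,c^2)$ first and then deducing both the characterization and the closure of the Manetti class under $\Q$-Gorenstein deformation.
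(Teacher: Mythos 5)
The paper itself offers no proof here: Proposition~\ref{prop:manetti} is stated as a quotation of \cite[Theorems 8.2 \& 8.3]{Hac04}, and the authors take both the Markov-triple classification of Manetti surfaces and the unobstructedness for granted. There is therefore no argument in the paper to compare against, and your proposal — which likewise reduces the two hard steps (the Manetti/Hacking structure theorem classifying normal $\Q$-Gorenstein degenerations of $\bP^2$, and the local-to-global vanishing giving smoothness of $\mathrm{Def}^{\QG}$) to citations — is doing essentially the same thing as the paper, just with the internal architecture of Hacking's proof spelled out. The outline you give is accurate in its main lines: the ``if'' direction by composing degenerations using smoothness of the deformation functor; the ``only if'' direction via the classification of $T$-singular degenerations with $K^2=9$ and $\rho=1$; unobstructedness from local rigidity of $T$-singularities plus $H^2(X,\calO_X)=0$ on a rational surface.

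One place where the phrasing is looser than a careful write-up would allow: in the ``only if'' direction you assert that ``the versal $\Q$-Gorenstein deformation space of $\bP(a^2,b^2,c^2)$ contains both $X$ and $\bP^2$ in its fibers,'' and draw the conclusion from that. But the versal deformation is a germ, and the nontrivial content is precisely the construction of a one-parameter $\Q$-Gorenstein degeneration from $X$ down to some $\bP(a^2,b^2,c^2)$, typically by running an MMP on the total space of the smoothing $\cX\to B$ and analyzing the resulting extremal contractions; once that degeneration exists, versality lets you identify it inside the smooth $\mathrm{Def}^{\QG}(\bP(a^2,b^2,c^2))$. So the appeal to versality should come after, not instead of, the birational construction of the degeneration. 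This is not a gap in the sense of something that would fail — it is exactly what \cite{Hac04} does — but it is worth flagging that ``by deformation theory'' is hiding the step that actually carries the weight.
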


We now give the definition of the surface pairs parametrized by $\ocP_d^{\rm H}$. 

\begin{definition} Let $X$ be a demi-normal surface and let $D$ be an effective $\Q$-Cartier divisor on $X$. Let $d \geq 4$ be an integer. The pair $(X,D)$ is a \emph{Hacking stable pair} of degree $d$ if:
\begin{enumerate}
\item The pair $(X,(\frac{3}{d} + \epsilon)D)$ is slc, and the divisor $K_X + (\frac{3}{d} + \epsilon)D$ is ample for any $0 < \epsilon \ll 1$,
\item $dK_X+3D\sim 0$, and
\item there is a $\Q$-Gorenstein deformation of $(X,D)$ to $(\bP^2,C_t)$ where $C_t$ is a family of plane curves of degree $d$. \end{enumerate} \end{definition}

We can now define the stack $\ocP_d^{\rm H}$. 

\begin{definition} Let $d \geq 4$ be an integer. We define the \emph{Hacking moduli stack} $\ocP_d^{\rm H}$ to be the reduced stack representing the following moduli pseudo-functor over reduced base $S$:
\[\ocP_d^{\rm H}(S) = \{ (\calX, \calD)/S \mid (\calX, \calD)/S \textrm{ is a $\Q$-Gorenstein family of Hacking stable pairs of degree }d\}. \]
\end{definition}

\begin{theorem}[{\cite[Theorem 4.4, 7.2]{Hac04} and \cite{Ale96}}] The stack $\ocP_d^{\rm H}$ is a reduced proper Deligne-Mumford stack of finite type over $\bC$. Its coarse moduli space $\oP_d^{\rm H}$ is a reduced projective variety which compactifies the moduli space of smooth plane curves
of degree $d$. Furthermore, if $3 \nmid d$ then 
\begin{enumerate}
\item the stack $\ocP_d^{\rm H}$ is smooth, and
\item the underlying surface of a Hacking stable pair of degree $d$ is either a Manetti surface, or the slt\footnote{Recall that a demi-normal pair $(X,D)$ is semi-log terminal (slt) if it is slc and its normalization is plt.} union of two normal surfaces glued along a smooth rational curve. \end{enumerate}
\end{theorem}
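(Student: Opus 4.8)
The statement collects results of Hacking \cite{Hac04} and Alexeev \cite{Ale96}, so the plan is to indicate how the pieces fit together rather than to reprove them in full. \emph{Stack structure, separatedness and properness.} I would work inside the Koll\'ar--Shepherd-Barron--Alexeev framework for moduli of stable pairs. Boundedness comes first: a Hacking stable pair $(X,(\tfrac3d+\epsilon)D)$ of fixed degree $d$ has $-K_X$ ample (since $D\sim_{\bQ}-\tfrac d3 K_X$ forces $K_X+(\tfrac3d+\epsilon)D\sim_{\bQ}-\tfrac{\epsilon d}{3}K_X$), bounded volume, bounded Gorenstein index, and slc singularities, so Alexeev's boundedness for surface pairs shows these form a bounded family; hence the moduli functor is of finite type. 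Local closedness of the stable locus inside the relevant relative Hilbert scheme together with an \'etale-local quotient presentation yields the Artin stack $\ocP_d^{\rm H}$; since $K_X+(\tfrac3d+\epsilon)D$ is ample the pairs are of log general type, their automorphism groups are finite, and the stack is Deligne--Mumford. Separatedness is the uniqueness of slc (equivalently log canonical) limits in the valuative sense. For properness one needs existence of limits: given a one-parameter degeneration of pairs $(\bP^2,C_t)$, one performs semistable reduction and runs the relative MMP to produce the log canonical model of $(\cX,(\tfrac3d+\epsilon)\cD)$ over the base, and Hacking's analysis identifies its central fiber as a Hacking stable pair --- this step (\cite[Theorem 4.4]{Hac04}) is the technical heart.

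\emph{Projectivity and the dense open locus.} I would obtain projectivity of the coarse space $\oP_d^{\rm H}$ from Koll\'ar's projectivity criterion: the relevant Hodge/CM-type $\bQ$-line bundle --- proportional to pushforwards of powers of $\omega_{\cX/T}\bigl((\tfrac3d+\epsilon)\cD\bigr)$ --- is nef and big by the semipositivity theorems of Fujino and Koll\'ar, and the ampleness lemma produces an ample descent to $\oP_d^{\rm H}$. For the dense open substack: a smooth plane curve $C$ of degree $d\geq4$ gives a pair $(\bP^2,C)$ which is klt, satisfies $K_{\bP^2}+(\tfrac3d+\epsilon)C\sim\cO_{\bP^2}(\epsilon d)$ ample and $dK_{\bP^2}+3C\sim0$, hence is Hacking stable; as degree-$d$ plane curves deform without obstruction, the classifying map from the space of smooth degree-$d$ curves modulo $\PGL_3$ is an open immersion onto a dense substack.

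\emph{The case $3\nmid d$.} I would prove part (2) first, by tracing Hacking's MMP analysis (\cite[Theorem 7.2]{Hac04}): using the classification of two-dimensional semi-log-canonical singularities together with the numerical constraint $\cO_X(dK_X+3D)\cong\cO_X$, one checks that when $\gcd(d,3)=1$ the divisorial and small contractions occurring in the MMP cannot create non-normal surfaces glued along anything worse than a single smooth rational curve, nor normal surfaces with non-Manetti singularities, leaving exactly the two listed cases. For part (1), the local model of $\ocP_d^{\rm H}$ at $[(X,D)]$ is the $\bQ$-Gorenstein deformation space of the pair: when $X$ is Manetti its $\bQ$-Gorenstein deformations are unobstructed by Proposition \ref{prop:manetti}, the divisor class of $D$ extends over these deformations since $H^2(X,\cO_X)=0$ for Manetti surfaces, and the divisor $D$ itself extends since $H^1(X,\cO_X(D))=0$ by Kawamata--Viehweg vanishing applied to $D-K_X\sim_{\bQ}-\tfrac{d+3}{3}K_X$, which is ample; the reducible case is handled analogously through the deformation theory of the two normal components and of the gluing along the smooth rational double curve. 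The hypothesis $3\nmid d$ enters precisely to guarantee, via part (2), that no pair whose surface could have an obstructed deformation space occurs.

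\emph{Main obstacle.} The genuinely hard input is Hacking's classification of the surface pairs that arise as MMP limits --- used both for properness and for part (2) --- which requires a delicate case analysis of two-dimensional semi-log-canonical singularities admitting a $\bQ$-Gorenstein smoothing to $\bP^2$ and compatible with the constraint $dK_X+3D\sim0$. Everything else is standard KSBA machinery plus cohomological vanishing.
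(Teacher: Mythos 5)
This theorem is quoted from Hacking~\cite{Hac04} and Alexeev~\cite{Ale96} rather than proved in the paper, so there is no in-paper argument to compare against. Your sketch is a faithful reconstruction of the cited proofs and follows essentially the same route: Alexeev-style boundedness of slc surface pairs, the KSBA construction of the stack with finite automorphisms giving the Deligne--Mumford property, properness via semistable reduction and relative MMP together with Hacking's classification of the limiting surfaces, projectivity via semipositivity and an ampleness lemma, and smoothness for $3\nmid d$ via the unobstructedness of $\bQ$-Gorenstein deformations of the surfaces appearing in part~(2) combined with the cohomological vanishings $H^2(X,\cO_X)=0$ and $H^1(X,\cO_X(D))=0$.
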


\section{Construction of 
K-moduli spaces of smoothable log Fano pairs}\label{sec:construction}

In this section we construct K-moduli stacks and spaces of $\bQ$-Gorenstein smoothable log Fano pairs (see Definition \ref{defn:qgorsmoothable}). Our construction is largely based on \cite{LWX14} with new input from \cite{Jia17, BX18, TW19}.
Results from this section can be applied to the study of many explicit K-moduli spaces, including the K-moduli spaces of plane curves as the main subject of this paper. We will investigate other applications in forthcoming work.

The following theorems are the main results of this section. The first theorem is a natural generalization of \cite[Theorem 1.3]{LWX14}.

\begin{thm}\label{thm:lwxlog}
 Let $\chi_0$ be the Hilbert polynomial of an anti-canonically polarized Fano manifold. Fix $r\in\bQ_{>0}$ and a rational number $c\in (0,\min\{1,r^{-1}\})$. Then there exists a reduced Artin stack  $\cK\cM_{\chi_0,r,c}$  of finite type over $\bC$ parametrizing all K-semistable $\bQ$-Gorenstein smoothable log Fano pairs $(X,cD)$ with Hilbert polynomial $\chi(X,\cO_X(-mK_X))=\chi_0(m)$ for sufficiently divisible $m$ and $D\sim_{\bQ}-rK_X$. Moreover, the Artin stack $\cK\cM_{\chi_0,r,c}$ admits a good moduli space $KM_{\chi_0,r,c}$ as a proper reduced scheme of finite type over $\bC$.
\end{thm}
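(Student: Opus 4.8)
The plan is to follow the strategy of \cite{LWX14} for Fano varieties, adapting each step to incorporate the boundary divisor $D$ and using the log version of the Yau-Tian-Donaldson correspondence from \cite{TW19}. First I would bound the family: by boundedness of $\bQ$-Gorenstein smoothable Fano varieties with fixed Hilbert polynomial $\chi_0$ (from \cite{LWX14}, ultimately relying on \cite{Jia17} or the effective non-vanishing results), and since $D \sim_{\bQ} -rK_X$ with $r$ fixed, the pairs $(X, D)$ form a bounded family. Thus for sufficiently divisible $m$ one can embed all relevant $X$ into a fixed projective space $\bP^N$ via $|-mK_X|$ and realize $D$ inside a relative Hilbert scheme. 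Let $Z \subset \Hilb \times \Hilb$ be the locally closed subscheme parametrizing such embedded pairs $(X, D)$ that are $\bQ$-Gorenstein smoothable log Fano pairs with the prescribed numerical invariants; this carries a natural $\PGL_{N+1}$-action, and by standard arguments (openness of the $\bQ$-Gorenstein smoothable klt log Fano condition in families, which follows from inversion of adjunction and the deformation theory recalled before Definition~\ref{defn:qgorsmoothable}) the locus $Z^{\rm red}$ is a reduced scheme of finite type.

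Next I would cut out the K-semistable locus $Z^{\rm kss} \subseteq Z^{\rm red}$ and show it is open. This is the key technical input: openness of K-semistability in $\bQ$-Gorenstein smoothable log Fano families. In the absolute case this is \cite[Theorem]{LWX14} (or can be deduced from lower semicontinuity of the stability threshold $\delta$, \cite{BL18a, BLX19}); the log version with $\cD$ a $\bQ$-Cartier divisor follows by the same arguments — the stability threshold $\delta(\cX_t, c\cD_t)$ is constructible and lower semicontinuous in the family, so $\{t : \delta(\cX_t, c\cD_t) \geq 1\}$ is open, and by Theorem~\ref{thm:valuative} (more precisely its $\delta$-version) this is exactly $Z^{\rm kss}$. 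Once $Z^{\rm kss}$ is open, define the K-moduli stack as the quotient stack $\cK\cM_{\chi_0,r,c} := [Z^{\rm kss}/\PGL_{N+1}]$; it is a reduced Artin stack of finite type over $\bC$, and one checks it is independent of the auxiliary choice of $m$ by the usual re-embedding comparison.

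For the good moduli space, I would apply Alper's existence criterion (the Alper--Halpern-Leistner--Heinloth theorem on existence of good moduli spaces, or the direct construction in \cite{LWX14}). Concretely: every K-semistable point specially degenerates to a K-polystable one (by \cite{LWX14} in the log setting, using \cite{BX18} and the log YTD theorem \cite{TW19} to guarantee the polystable degeneration exists and is unique), and the K-polystable pairs have reductive automorphism groups (again \cite{TW19}, or \cite{Alp13}-type arguments plus the uniqueness of the Kähler--Einstein metric, or algebraically via \cite{BX18}). Combined with separatedness of K-polystable limits — which follows from \cite{BX18} in the log case — this gives the hypotheses of Alper's criterion, producing a good moduli space $\cK\cM_{\chi_0,r,c} \to KM_{\chi_0,r,c}$ with $KM_{\chi_0,r,c}$ a reduced algebraic space of finite type whose closed points are in bijection with K-polystable pairs. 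Properness is the last step: one uses the valuative criterion, reducing to showing that a family of K-semistable log Fano pairs over a punctured curve admits a K-semistable (hence after further degeneration K-polystable) limit; this is the log analogue of the properness argument in \cite{LWX14}, using the log version of Odaka's finite generation / boundedness results and the semistable reduction theorem for families of log Fano pairs. That $KM_{\chi_0,r,c}$ is in fact a scheme rather than merely an algebraic space follows since it is a proper algebraic space with quasi-projective-type structure — in the applications one shows the CM line bundle descends and is semiample, though for the bare statement one can invoke \cite{LWX14}'s argument that the good moduli space of the K-semistable locus is a scheme.

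The main obstacle I expect is the \emph{openness of K-semistability} and, relatedly, the existence and uniqueness of the K-polystable degeneration in the log setting — these are exactly the points where one cannot simply cite \cite{LWX14} verbatim and must invoke the log smooth Yau--Tian--Donaldson theorem of Tian--Wang \cite{TW19} together with the valuative/$\delta$-invariant machinery of \cite{BX18, BL18a}. Everything else (boundedness, constructing the quotient stack, verifying Alper's axioms, properness via valuative criterion) is a fairly mechanical transcription of the absolute case, modulo bookkeeping with the coefficient $c$ and the condition $cr < 1$ which ensures $(X, cD)$ is a genuine log Fano pair.
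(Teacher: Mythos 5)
Your proposal correctly identifies the overall architecture (boundedness $\to$ openness of K-semistability $\to$ quotient stack $\to$ good moduli space $\to$ properness) and the two central difficulties (openness, and existence/uniqueness of K-polystable degenerations), but it diverges from the paper at precisely those difficulties. For the openness step you propose the purely algebraic route: constructibility and lower semicontinuity of $\delta(\cX_t, c\cD_t)$, which is essentially the approach of Blum--Liu--Xu. The paper deliberately does \emph{not} take that route --- the authors say explicitly in the remark after Theorem~\ref{thm:logFano-wallcrossing} that those results were contemporaneous and that the present paper does not rely on them. Instead the paper runs the analytic continuity method: one picks an auxiliary divisor $\Delta \in |-qK_X|$ (Proposition~\ref{prop:uK-Delta}) so that $(X, cD + (1-\beta)a\Delta)$ is uniformly K-stable for small $\beta$, varies $\beta$ using the log Yau--Tian--Donaldson theorem of Tian--Wang (Theorem~\ref{thm:lwx4.1}, via Gromov--Hausdorff limits and Tian's embeddings), and deduces Theorem~\ref{thm:bddtestK}, which says K-unstability of a pair in the family can be detected by a $1$-PS in the \emph{fixed} group $\SL(N_m+1)$. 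That boundedness of the test configurations is what makes the constructibility argument of Theorem~\ref{thm:kstfinite} (via the LWX piecewise-linearity of Hilbert--Mumford weights, Lemma~\ref{lem:lwxA.3}) go through; constructibility of $\kst_{\pm,\epsilon_0}$ plus the ``very generic'' result of \cite{BL18a} then gives openness. You cannot simply cite lower semicontinuity of $\delta$ and be done: the statement that $\{\delta \geq 1\}$ is open requires the quantitative input that $\delta$ (suitably truncated) is constructible, which is exactly the hard content, and in the log setting there are further technical obstructions around families over non-reduced bases that the paper sidesteps by working only over reduced schemes.

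Similarly, for properness you gesture at a ``log version of Odaka's finite generation and semistable reduction,'' but the paper's Theorem~\ref{thm:compactness} again runs the continuity method: it fills in the K-polystable limit as a Gromov--Hausdorff limit of conical K\"ahler--Einstein pairs via Theorem~\ref{thm:lwx4.1}, then pulls the limit back into $\overline{Z}_m$ via Lemma~\ref{lem:hilbconv}. And for the good moduli space you invoke the Alper--Halpern-Leistner--Heinloth criterion, whereas the paper uses the earlier Alper--Fedorchuk--Smyth criterion (\cite[Theorem 1.2]{AFS17}), verified via explicit Luna slices (Definition~\ref{defn:LWXLunaGIT}, Theorem~\ref{thm:localGIT}, Lemma~\ref{lem:luna}) that give stabilizer-preserving local quotient presentations; these are different sets of hypotheses to check, and the local GIT chart construction is also what later yields scheme-ness (the good moduli space is covered by affine GIT quotients) and the local VGIT presentation needed for the wall-crossing Theorem~\ref{thm:logFano-wallcrossing}. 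So your proposal is a plausible alternative blueprint built on later algebraic machinery, but as a reconstruction of this paper's proof it substitutes a different engine at every genuinely hard step, and those substitutions are not free --- each one trades the explicit uniformity produced by the continuity method for a citation that, as the paper itself notes, requires further justification in the log smoothable setting.
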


Indeed, in Section \ref{sec:stackstab} we show that the K-moduli stack $\cK\cM_{\chi_0,r,c}$ represents the moduli pseudo-functor of $\bQ$-Gorenstein smoothable K-semistable log Fano families with certain numerical invariants over reduced base schemes. 

The second theorem provides a wall crossing principle for these K-moduli spaces when varying the coefficient $c$.

\begin{theorem}\label{thm:logFano-wallcrossing}
 There exist rational numbers $0=c_0<c_1<c_2<\cdots<c_k=\min\{1,r^{-1}\}$ such that $c$-K-(poly/semi)stability conditions do not change for $c\in (c_i,c_{i+1})$. For each $1\leq i\leq k-1$, we have open immersions
 \[
 \cK\cM_{\chi_0,r,c_i-\epsilon}\xhookrightarrow{\Phi_{i}^-}
 \cK\cM_{\chi_0,r,c_i}\xhookleftarrow{\Phi_{i}^+}\cK\cM_{\chi_0,r,c_i+\epsilon}
 \]
 which induce projective morphisms
 \[
 KM_{\chi_0,r,c_i-\epsilon}\xrightarrow{\phi_{i}^-}
 KM_{\chi_0,r,c_i}\xleftarrow{\phi_{i}^+}KM_{\chi_0,r,c_i+\epsilon}
 \]
 Moreover, all the above wall crossing morphisms have local VGIT presentations as in \cite[(1.2)]{AFS17}, and the CM $\bQ$-line bundles on $KM_{\chi_0, r,c_i\pm\epsilon}$ are $\phi_i^{\pm}$-ample (see Theorems \ref{thm:wallcrossingchart} and \ref{thm:VGIT-CM-ample} for the precise statements).
\end{theorem}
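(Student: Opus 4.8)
The plan is to deduce the wall-crossing structure from the existence of the K-moduli stacks and spaces (Theorem \ref{thm:lwxlog}), together with a boundedness/finiteness argument for the walls and a local VGIT analysis in \'etale charts. First I would establish the finiteness of walls: by boundedness of the family of $\bQ$-Gorenstein smoothable log Fano pairs with the prescribed invariants (which underlies Theorem \ref{thm:lwxlog}), there is a fixed finite-dimensional parameter space (a locally closed subscheme of a suitable Hilbert scheme $H$ with a linearly reductive group $G$ acting) containing all relevant pairs for \emph{all} $c \in (0,\min\{1,r^{-1}\})$. For each such pair and each divisorial valuation $E$, the function $c \mapsto A_{(X,cD)}(\ord_E) - S_{(X,cD)}(\ord_E)$ is affine-linear in $c$ (both $A$ and $S$ are affine-linear in $c$, as already used in the proof of Proposition \ref{prop:k-interpolation}). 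Using the valuative criterion (Theorem \ref{thm:valuative}) and a Noetherianity/constructibility argument — e.g. as in the proof that $\delta$ varies nicely, or via the fact that K-semistability is an open condition in families — one shows that the locus of $c$ where the K-(semi)stable locus in $H$ changes is a finite set of rational numbers $0 = c_0 < c_1 < \cdots < c_k = \min\{1,r^{-1}\}$, and that on each open interval $(c_i,c_{i+1})$ the K-semistable and K-polystable loci are constant. This is the step I expect to be the main obstacle, since it requires controlling infinitely many test configurations/valuations uniformly in $c$; the resolution is to reduce (by Li--Xu, Theorem \ref{thm:valuative}, and properness of the moduli stack) to finitely many relevant degenerations within the bounded family.

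Next I would produce the maps. For $\epsilon$ small, any $c_i - \epsilon$-K-semistable pair is also $c_i$-K-semistable: on the fixed parameter space, the $(c_i-\epsilon)$-K-semistable locus $H^{ss}_{c_i-\epsilon}$ is contained in $H^{ss}_{c_i}$ because the defining inequalities $A - S \geq 0$ degrade only on the walls; similarly $H^{ss}_{c_i+\epsilon} \subseteq H^{ss}_{c_i}$. Both inclusions are $G$-equivariant open immersions of quotient stacks, giving the open immersions $\Phi_i^{\pm}$ on the level of K-moduli stacks (one first identifies $\cK\cM_{\chi_0,r,c}$ with $[H^{ss}_c/G]$ via Theorem \ref{thm:lwxlog} and its construction in Section \ref{sec:construction}). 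Passing to good moduli spaces, by Alper's theory (in particular the saturation remarks recalled in the excerpt) these open immersions of stacks descend to morphisms $\phi_i^{\pm}$ of the good moduli spaces $KM$. Properness of all three spaces and separatedness then force $\phi_i^{\pm}$ to be proper; combined with the local VGIT structure below they are projective.

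Finally, for the local VGIT presentation and CM-ampleness: around a point of $KM_{\chi_0,r,c_i}$ I would use the \'etale-local structure of good moduli spaces (Alper--Hall--Rydh / the Luna-type slice theorem) to write the morphism $\cK\cM_{\chi_0,r,c_i \pm\epsilon} \to \cK\cM_{\chi_0,r,c_i}$, \'etale-locally, as $[\Spec R \sslash_{\theta_\pm} H_0] \to [\Spec R \sslash_0 H_0]$ for a linearly reductive stabilizer $H_0$ and linearization characters $\theta_\pm$ moving across a wall — this is precisely the shape of \cite[(1.2)]{AFS17}. The characters $\theta_\pm$ come from the CM line bundle: by Proposition \ref{prop:cm-interpolation}, $\lambda_{\CM,f,c\cD}$ is (up to positive scaling) an affine-linear combination of $\lambda_{\CM,f}$ and the Hodge line bundle $\lambda_{\Hodge}$ with coefficient linear in $c$, so varying $c$ across $c_i$ moves the polarization linearly and the VGIT wall-crossing at $c_i$ is governed by this one-parameter family of linearizations. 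Then $\phi_i^{\pm}$-ampleness of the CM $\bQ$-line bundles on $KM_{\chi_0,r,c_i\pm\epsilon}$ follows from the standard fact that in a VGIT wall crossing the GIT quotient on either side of the wall is projective over the quotient on the wall with relatively ample polarization given by the chamber character — here $\lambda_{\CM,f,(c_i\pm\epsilon)\cD}$, using Theorem \ref{thm:paultian} to match K-stability with GIT-stability for the CM polarization. I would record the precise statements as Theorems \ref{thm:wallcrossingchart} and \ref{thm:VGIT-CM-ample} and defer their proofs to Section \ref{sec:construction}.
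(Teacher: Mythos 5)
Your high-level decomposition of the proof — finiteness of walls, then open immersions $\Phi_i^\pm$ from nested semistable loci, then descent to good moduli spaces, then a local VGIT/Luna-slice picture governing projectivity and CM-ampleness — does match the paper's structure: the paper's actual proof is a one-line combination of Proposition~\ref{prop:k-wall-finite}, Theorem~\ref{thm:lwxlog}, Theorem~\ref{thm:localVGIT}, and Theorem~\ref{thm:VGIT-CM-ample}. However, there is a genuine gap in the step you yourself flag as the main obstacle, namely the finiteness and constructibility of the walls (Proposition~\ref{prop:k-wall-finite}, built on Theorem~\ref{thm:kstfinite}).

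You propose to get finiteness from the observation that $c\mapsto A_{(X,cD)}(\ord_E)-S_{(X,cD)}(\ord_E)$ is affine-linear in $c$ together with Li--Xu, the valuative criterion, and ``properness of the moduli stack,'' reducing to ``finitely many relevant degenerations within the bounded family.'' This does not close the gap: the valuative criterion still quantifies over infinitely many divisorial valuations $E$, Li--Xu reduces to special test configurations but not to a bounded family of them, and properness of the moduli stack is itself something being established here (invoking it is circular). The paper's actual mechanism is Theorem~\ref{thm:bddtestK}, proved via the continuity method and the analytic input of Tian--Wang (Theorem~\ref{thm:lwx4.1}): for $c$ bounded away from $\lct(X;D)$ and $r^{-1}$, K-instability of $(X,cD)$ is always detected by a $1$-PS of the \emph{fixed} group $\SL(N_m+1)$ acting on a \emph{fixed} Hilbert scheme $\oZ$. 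Only after this reduction to a finite-dimensional GIT setting does the piecewise-linearity of GIT weights on a maximal torus (Lemma~\ref{lem:lwxA.3}) combine with the affine-linearity in $c$ to give constructibility and rationality of $\kst_{\pm,\epsilon_0}$, hence finitely many walls. You also omit the endpoint issue: finiteness of walls near $c=r^{-1}$ requires Theorem~\ref{thm:almostCYstabilize} (stability conditions stabilize on $[(1-\epsilon_0)r^{-1},r^{-1})$, proved via ACC of log canonical thresholds and \cite[Corollary 1]{JMR16}), not just boundedness. Finally, for the local VGIT presentation, the Luna slice $U_W$ in the paper is not the generic \'etale-local slice from Alper--Hall--Rydh; it is constructed analytically via Tian's embedding with respect to the weak conical K\"ahler--Einstein metric of the central K-polystable pair (Theorem~\ref{thm:localGIT}, Lemma~\ref{lem:luna}), which is why one gets the stabilizer-preserving and strongly \'etale properties needed for the VGIT statement. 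Your proposal gestures at all of this but substitutes algebraic tools that were not available in this framework for the analytic ones that do the work; as written, the finiteness step and the slice construction do not go through.
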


\begin{rem}
Recently there has been tremendous progress on constructing K-moduli spaces and stacks using purely algebraic methods. We point the reader to Remark \ref{rem:ps} for a further discussion and citations. Our construction is mostly based on the analytic works \cite{LWX14, Oda15, SSY16} and algebraic works \cite{Jia17, LWX18, BX18}. Since many of the works mentioned in Remark \ref{rem:ps}(1) appeared simultaneously to or after the preparation of this paper, our constructions do not rely on those results, though it is likely some of our arguments can be simplified using those results. 

Meanwhile, a suitable condition for families of log pairs over non-reduced bases was discovered in \cite{Kol19} as the K-flatness condition. Since  we only study families of $\bQ$-Gorenstein smoothable log Fano pairs, in this paper we restrict the bases of such families to be reduced.
\end{rem}

\subsection{Foundations}\label{sec:foundations}

We will fix an arbitrary rational number $\epsilon_0\in (0,1)$. For technical reasons, we will concentrate on constructing the K-moduli space of $\bQ$-Gorenstein smoothable log Fano pairs $(X,cD)$ where $D\sim_{\bQ} -rK_X$ and $c\in (0, \min\{1, (1-\epsilon_0)r^{-1}\})$ a rational number. As we will see in Theorem \ref{thm:almostCYstabilize}, the K-(poly/semi)stability conditions will not change for $c$ sufficiently close to $r^{-1}$. Besides, all numbers except $\beta$, $\beta_\bullet$, and $\fB$ are assumed to be rational.

The first boundedness result is a generalization of work in \cite{LWX14} which was a consequence of \cite{CDS15, Tia15}.

\begin{thm}\label{thm:bdd-bigangle}
Fix $n$ a positive integer, $r$ a positive rational number, and $\epsilon_0\in (0,1)$ a rational number. 
Then the following collection of $\bQ$-Gorenstein smoothable pairs
\[
\{(X,D)\mid \dim X=n, ~(X,cD)\textrm{ is K-semistable for some $c\in (0,\min\{1, (1-\epsilon_0)r^{-1}\})$}\}.
\]
is log bounded. In particular, there exists  $m_1=m_1(n,r,\epsilon_0)\in\bZ_{>0}$  such that $m_1 K_X$ is Cartier  whenever $(X,D)$ belongs to the above collection.
\end{thm}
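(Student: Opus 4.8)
The plan is to bootstrap boundedness of the pairs $(X,D)$ from boundedness of the underlying Fano varieties $X$, using the K-semistability hypothesis together with the normalized volume lower bound (Theorem \ref{thm:local-vol-global}). First I would forget the divisor and observe that if $(X,cD)$ is K-semistable for some $c$ in the stated range, then $X$ itself is a $\bQ$-Gorenstein smoothable $\bQ$-Fano variety which is $K$-semistable after twisting by an effective $\bQ$-divisor proportional to $-K_X$; by the interpolation result (Proposition \ref{prop:k-interpolation}(2), applied with $\Delta = crD$, noting $crD \sim_{\bQ} -cr K_X$ and $cr < 1-\epsilon_0 < 1$) one gets that $(X, sD)$ is K-semistable for all $s\in (0,c]$, and one can also push $c$ down, so in particular the anticanonical volume $(-K_X)^n = (1-cr)^{-n}(-K_X-cD)^n$ is uniformly bounded above in terms of $n, r, \epsilon_0$ by Theorem \ref{thm:local-vol-global} combined with the uniform lower bound on local volumes of $n$-dimensional klt singularities (which is $> 0$ and bounded below by a constant depending only on $n$, e.g.\ via \cite{LLX18} or the normalized-volume lower bound cited there). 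Boundedness of volume, together with the fixed Hilbert polynomial constraint on the smoothable family (all such $X$ deform to a fixed Fano manifold, so $(-K_X)^n$ is actually constant), plus the klt condition, yields boundedness of the family of varieties $\{X\}$ by Jiang's theorem \cite{Jia17} (or alternatively by the general boundedness of K-semistable Fanos of bounded volume). In particular there is $m_1' = m_1'(n,r,\epsilon_0)$ with $m_1' K_X$ Cartier for all such $X$.

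Next I would upgrade boundedness of $\{X\}$ to log boundedness of $\{(X,D)\}$. Since the $X$ form a bounded family, there is a single $m_0$ with $-m_0 K_X$ very ample and with uniformly bounded $h^0(X,-m_0K_X)$; because $D \sim_{\bQ} -rK_X$, a bounded multiple $N D$ is Cartier and $\sim -Nr K_X$ is very ample with bounded sections, so $D$ lies in a bounded family of divisors inside a bounded family of polarized varieties, hence $\{(X,D)\}$ is log bounded. Finally, absorbing this into the Cartier index: log boundedness gives a uniform $m_1 = m_1(n,r,\epsilon_0)$ such that $m_1 K_X$ is Cartier for every $(X,D)$ in the collection (one may simply take a common multiple of $m_1'$ over the finite stratification of a log bounding family).

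The main obstacle I expect is not any single step but making the boundedness input genuinely uniform without already having constructed the K-moduli stack: one must be careful that the range of $c$ is an \emph{open} interval bounded away from $r^{-1}$ (this is exactly why $\epsilon_0$ is introduced), so that $1-cr \geq \epsilon_0 > 0$ and the volume bound $(-K_X)^n \leq \epsilon_0^{-n}(1+\tfrac1n)^n\,\hvol$-type estimate does not degenerate. A secondary technical point is ensuring that the divisor $cD$ in the pair really is $\bQ$-linearly equivalent to a \emph{fixed} rational multiple of $-K_X$ uniformly across the family so that $ND$ is Cartier for a single $N$; this follows from $D\sim_{\bQ}-rK_X$ with $r$ fixed together with the uniform Cartier index $m_1'$ for $K_X$, since then $\operatorname{lcm}(m_1',\text{denominator of }r)\cdot r \cdot K_X$ is Cartier universally. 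Once these uniformities are in hand, the conclusion that $m_1 K_X$ is Cartier is immediate.
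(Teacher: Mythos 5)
Your argument never establishes the hypothesis that makes Jiang's boundedness theorem applicable, and this is exactly the nontrivial point of the statement. Jiang's theorem (\cite{Jia17}) bounds $n$-dimensional $\bQ$-Fano varieties with $(-K_X)^n$ bounded from below \emph{and} with $\alpha(X)$ (equivalently $\delta(X)$) bounded from below by a positive constant. You only arrange control on the volume; you never produce a uniform positive lower bound on $\delta(X)$. Note that $X$ itself need not be K-semistable -- only $(X,cD)$ is -- so ``boundedness of K-semistable Fanos'' does not apply directly. The paper's proof is a one-liner precisely because it quotes \cite[Theorem~7.2]{BL18b}: if $(X,cD)$ is K-semistable with $cD\sim_{\bQ}-cr\,K_X$ and $cr\le 1-\epsilon_0$, then $\delta(X)\ge\epsilon_0$. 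That single inequality is the missing ingredient in your argument; together with $(-K_X)^n\in\bZ_{>0}$ (hence $\ge 1$) it puts $X$ squarely in Jiang's bounded family.

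Separately, the step where you claim ``the uniform lower bound on local volumes of $n$-dimensional klt singularities (which is $>0$ and bounded below by a constant depending only on $n$)'' is false: normalized volumes of klt singularities in fixed dimension are \emph{not} bounded away from zero (e.g.\ $\hvol$ of the cyclic quotient $\tfrac{1}{m}(1,\dots,1)$ tends to $0$ as $m\to\infty$). Moreover, Theorem~\ref{thm:local-vol-global} gives an \emph{upper} bound on $(-K_X-cD)^n$ in terms of $\hvol$ at a point, so to bound the anticanonical volume from above you would want an \emph{upper} bound on $\hvol$ (for instance $\hvol\le n^n$ at a smooth point), not a lower bound; in any case this direction is not needed here, since $(-K_X)^n=(-K_{X_t})^n$ for a smooth Fano fiber $X_t$ and this is bounded in terms of $n$ by boundedness of Fano manifolds. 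Your upgrade from boundedness of $\{X\}$ to log boundedness of $\{(X,D)\}$ via $D\sim_{\bQ}-rK_X$ is fine and fills in a detail the paper leaves implicit, but the core step bounding $X$ is not correct as written.
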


\begin{proof}
Since $D\sim_{\bQ}-rK_X$, it suffices to bound the variety $X$. From the assumption that $(X,cD)$ is K-semistable for some $c<(1-\epsilon_0)r^{-1}$, by \cite[Theorem 7.2]{BL18b} we
conclude that $\delta(X)\geq \epsilon_0$. Since the volume of $X$ is a positive integer, $X$ belongs to a bounded family by \cite{Jia17}.
\end{proof}

\begin{prop}\label{prop:uK-Delta}
Fix $n,m\in\bZ_{>0}$, $r\in\bQ_{>0}$ and $\epsilon_0\in (0,1)$.
Let $X$ be a $\bQ$-Fano variety with $mK_X$ Cartier. 
Let $D\sim_{\bQ}-rK_X$ be a Weil divisor. Let $c<(1-\epsilon_0)r^{-1}$ be a rational number such that $(X,cD)$ is a log Fano pair. Then
\begin{enumerate}
\item  there exists a positive integer $q=q(n,r,\epsilon_0,m)$ and a Cartier divisor $\Delta\in |-qK_X|$ such that $(X,\lct(X;D)\cdot D+\Delta)$ is a log canonical pair.
\item  there exists $\gamma_0=\gamma_0(n,r,\epsilon_0,m)$ such that either $c> \lct(X;D)-\frac{\epsilon_0 r^{-1}}{n+1}$ which implies $\alpha(X,cD)<\frac{1}{n+1}$,  or $(X,cD+\frac{(1-\beta)(1-cr)}{q}\Delta)$ is uniformly K-stable for any $\beta\in (0, \gamma_0]$. 
\end{enumerate}
\end{prop}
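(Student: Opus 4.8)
The plan is to prove both parts by combining boundedness (Theorem~\ref{thm:bdd-bigangle}) with the valuative criterion (Theorem~\ref{thm:valuative}) and an effective bound on log canonical thresholds coming from boundedness. First I would invoke Theorem~\ref{thm:bdd-bigangle} to see that, once we fix $n, m, r, \epsilon_0$, all $\bQ$-Fano varieties $X$ with $mK_X$ Cartier that can carry such a K-semistable log Fano structure lie in a bounded family, and hence so do the pairs $(X,D)$ with $D\sim_{\bQ}-rK_X$ (after possibly enlarging $m$). By standard boundedness of complements / effective base-point-freeness on a bounded family of klt Fanos, there is a uniform $q_0=q_0(n,r,\epsilon_0,m)$ so that $|-q_0 K_X|$ is very ample and, moreover, a general member $\Delta\in|-qK_X|$ for $q$ a suitable multiple of $q_0$ is such that $(X, \lct(X;D)\cdot D+\frac{1}{q}\Delta)$ is log canonical: indeed $(X,\lct(X;D)\cdot D)$ is lc by definition of the log canonical threshold, and a general member of a very ample (hence basepoint-free, after scaling) linear system adds no new non-lc behavior --- this is precisely Bertini for log canonical singularities applied on a bounded family, which gives part (1).

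For part (2), the plan is a dichotomy on the size of $c$ relative to $\lct(X;D)$. If $c$ is within $\frac{\epsilon_0 r^{-1}}{n+1}$ of $\lct(X;D)$ from below, then the divisor $D$ is ``almost as singular as $(X,cD)$ allows,'' and I would extract a prime divisor $E$ computing (or nearly computing) $\lct(X;D)$ — say $A_{(X,0)}(E) = \lct(X;D)\cdot\ord_E(D)$ up to a controlled error — and use it to bound the alpha invariant: since $\ord_E$ essentially saturates the log canonicity of $(X,cD)$, one gets $\alpha(X,cD)<\frac{1}{n+1}$ by a direct estimate comparing $A_{(X,cD)}(E)=1+\ord_E(K_Y-f^*K_X)-c\,\ord_E(D)$ against the pseudoeffective threshold, using $c>\lct(X;D)-\frac{\epsilon_0 r^{-1}}{n+1}$. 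In the complementary case $c\leq \lct(X;D)-\frac{\epsilon_0 r^{-1}}{n+1}$, I would use the $\Delta$ from part (1) together with Proposition~\ref{prop:k-interpolation}(3): take the fixed divisor $\Delta'=\frac{1}{q}\Delta\sim_{\bQ}-K_X$, note $(X,\Delta')$ is klt (we may arrange this by choosing $\Delta$ general and $q$ divisible enough, so that it is even klt, not merely lc, away from $\Supp(D)$, or absorb a small coefficient), and observe that $(X, cD + \frac{(1-\beta)(1-cr)}{q}\Delta)$ is a convex combination of $(X,cD)$-type data and $(X,\Delta')$. The point is that $cr D + (1-cr)\Delta' \sim_{\bQ} -K_X$, so for small $\beta$ the pair $(X,cD+\frac{(1-\beta)(1-cr)}{q}\Delta)$ is an interpolation between the K-semistable $(X,cD)$ (if it is K-semistable; if not, there is nothing to prove since the statement is conditional on $(X,cD)$ being a log Fano pair and we would be in the first alternative anyway — here I would be careful to phrase the dichotomy so the hypothesis is used correctly) and the klt anticanonical $(X,\Delta')$, so Proposition~\ref{prop:k-interpolation}(3) yields uniform K-stability for $\beta\in(0,\gamma_0)$.

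The main obstacle I anticipate is \emph{uniformity} of the constants $q$ and $\gamma_0$ across the bounded family — in particular, making sure $\gamma_0$ can be chosen independent of $X$ and $D$. For $q$ this follows from effective very-ampleness/Bertini on a bounded family, which is standard. For $\gamma_0$, the issue is that Proposition~\ref{prop:k-interpolation}(3) gives uniform K-stability for $t\in(0,1)$ with no lower bound on the uniformity constant depending on the point, but we need the \emph{range} of allowed $\beta$ (equivalently $1-t$) to be uniformly bounded below. Here I would argue that the constraint on $\beta$ comes only from requiring (i) $(X,cD+\frac{(1-\beta)(1-cr)}{q}\Delta)$ to be a log Fano pair (ampleness of $-K_X-cD-\frac{(1-\beta)(1-cr)}{q}\Delta$, which holds for all $\beta\in(0,1)$ since it is a positive rescaling of $-K_X$ in the Fano case... actually one must check $-(K_X+cD+\tfrac{(1-\beta)(1-cr)}{q}\Delta)\sim_{\bQ} \beta(1-cr)(-K_X)$, which is ample for all $\beta\in(0,1]$), and (ii) that the chosen $\Delta$ (with its fixed coefficient) keeps the pair klt, which on the bounded family is an open condition satisfied for $\beta$ in a uniformly bounded interval by the Bertini-on-a-bounded-family argument. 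So in fact any $\gamma_0 \leq 1$ works for (i), and (ii) is automatic once $\Delta$ is chosen general; the real content of $\gamma_0$ is just bookkeeping. I would double-check the edge case analysis around $\lct(X;D)$ versus $c$ to make sure the two alternatives in the statement are genuinely exhaustive and that the first alternative's conclusion $\alpha(X,cD)<\frac{1}{n+1}$ is what the valuative estimate actually delivers, as that is the most delicate inequality in the argument.
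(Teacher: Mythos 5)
Your plan diverges from the paper's argument in part (2) in a way that leaves a genuine gap. The key point you've missed is that Proposition~\ref{prop:uK-Delta} does \emph{not} assume $(X,cD)$ is K-semistable --- only that it is a log Fano pair with $mK_X$ Cartier. Your second alternative hinges on Proposition~\ref{prop:k-interpolation}(3), which requires $(X,cD)$ to be K-semistable in order to conclude uniform K-stability of the interpolated pair. You try to paper over this by saying that ``if not, there is nothing to prove since... we would be in the first alternative anyway,'' but that is a non sequitur: the first alternative is about $c$ being within $\frac{\epsilon_0 r^{-1}}{n+1}$ of $\lct(X;D)$, which has nothing to do with K-instability of $(X,cD)$. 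A log Fano pair $(X,cD)$ can be K-unstable with $D$ very mild (large $\lct(X;D)$, so $c$ far from the threshold), and then your dichotomy gives you neither branch. The paper avoids the interpolation lemma entirely: it proves part (2) via Tian's alpha invariant. The contrapositive of the first alternative is a short calculation: if $\alpha(X,cD)\geq \frac{1}{n+1}$ then $\alpha(X,cD;-K_X)=(1-cr)\alpha(X,cD)\geq \frac{\epsilon_0}{n+1}$ (using $1-cr>\epsilon_0$), so $(X,cD+\frac{\epsilon_0 r^{-1}}{n+1}D)$ is log canonical and hence $c\leq \lct(X;D)-\frac{\epsilon_0 r^{-1}}{n+1}$ --- no divisor extraction needed. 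For the second alternative, the paper uses boundedness plus \cite{BL18b} to get a \emph{uniform} lower bound $\alpha_1>0$ on $\alpha(X,cD+\frac{1-cr}{q}\Delta;-K_X)$, then a convexity estimate in $\beta$ shows that $\alpha(X,cD+\frac{(1-\beta)(1-cr)}{q}\Delta)\geq \frac{\epsilon_0}{n+1}+(\beta^{-1}-1)\alpha_1\geq 1$ for $\beta\leq \gamma_0:=\frac{\alpha_1}{1+\alpha_1}$, and concludes by the criterion of \cite{FO16} that $\alpha\geq 1$ implies uniform K-stability, with no K-stability hypothesis on $(X,cD)$ at all.

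This also exposes a second error: your remark that ``any $\gamma_0\leq 1$ works'' and ``the real content of $\gamma_0$ is just bookkeeping'' is wrong. The content of $\gamma_0$ is exactly the uniform lower bound $\alpha_1$ on the alpha invariant over the bounded family, which is a nontrivial input from \cite{BL18b}; without it there is no reason the uniform K-stability should persist for $\beta$ bounded away from $0$ independently of $(X,D)$. Finally, a smaller point on part (1): you invoke Theorem~\ref{thm:bdd-bigangle} for boundedness of $X$, but that theorem assumes $(X,cD)$ is K-semistable for some $c$. The paper instead cites \cite{HX15}, which gives boundedness directly from $X$ being a klt $\bQ$-Fano with $mK_X$ Cartier (hence $\frac{1}{m}$-lc), and that is the reference needed here.
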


\begin{proof}
(1) By \cite{HX15} such $X$ form a bounded family. This follows from the Bertini theorem for bounded families.

(2) Let us assume that $\alpha(X,cD)\geq \frac{1}{n+1}$. We know that $\alpha(X,cD;-K_X)=(1-cr)\alpha(X,cD)\geq \frac{\epsilon_0}{n+1}$. Thus $(X,cD+\frac{\epsilon_0 r^{-1}}{n+1}D)$ is log canonical which implies
$c\leq \lct(X;D)-\frac{\epsilon_0 r^{-1}}{n+1}$.
It is clear that $(X,(\lct(X;D)-\frac{\epsilon_0 r^{-1}}{n+1})D+\frac{1}{q}\Delta)$ form a bounded family of klt pairs. Hence \cite{BL18b} implies that there exists $\alpha_1=\alpha_1(n,r,\epsilon_0,m)>0$ such that 
\[
\alpha(X,cD+\tfrac{1-cr}{q}\Delta;-K_X)\geq
\alpha(X,(\lct(X;D)-\tfrac{\epsilon_0 r^{-1}}{n+1})D+\tfrac{1}{q}\Delta;-K_X)\geq \alpha_1.
\]
Hence 
\begin{align*}
\alpha(X,cD+\tfrac{(1-\beta)(1-cr)}{q}\Delta)& = \frac{1}{\beta(1-cr)}\alpha(X,cD+\tfrac{(1-\beta)(1-cr)}{q}\Delta;-K_X)\\
& \geq \frac{1}{\beta(1-cr)}\left(\beta\alpha(X,cD;-K_X)+(1-\beta)\alpha(X,cD+\tfrac{1-cr}{q}\Delta;-K_X)\right)\\
& \geq \alpha(X,cD;-K_X)+(\beta^{-1}-1)\alpha(X,cD+\tfrac{1-cr}{q}\Delta;-K_X)\\
& \geq \frac{\epsilon_0}{n+1}+(\beta^{-1}-1)\alpha_1.
\end{align*}
Let us take $\gamma_0:=\frac{\alpha_1}{1+\alpha_1}$, then for any $\beta\in (0,\gamma_0]$ we have
$\alpha(X,cD+\tfrac{(1-\beta)(1-cr)}{q}\Delta)\geq 1$.
Thus \cite{FO16} implies that $(X,cD+\frac{(1-\beta)(1-cr)}{q}\Delta)$ is uniformly K-stable.
\end{proof}

Next we use an important result obtained in the solution of Yau-Tian-Donaldson conjecture for log smooth log Fano pairs \cite{TW19}. It plays a crucial role in proving openness and properness of the K-moduli conjecture in our setting.  We are very grateful to Feng Wang for kindly providing a proof. For the case where $D_i=0$, i.e. the boundary is a single smooth pluricanonical divisor, see \cite{CDS15, Tia15, Ber16} or \cite[Theorem 4.1]{LWX14}. 

\begin{thm}[\cite{TW19}]\label{thm:lwx4.1}
Fix $n, q \in \mathbb{Z}_{>0}$,  $r \in \mathbb{Q}_{>0}$, and $\epsilon_0,\gamma_0\in(0,1)$. Let $X_i$ be a sequence of $n$-dimensional Fano manifolds with a fixed Hilbert polynomial $\chi_0$. Let $D_i\sim_{\bQ}-rK_{X_i}$ be smooth divisors on $X_i$. Let $\Delta_i$ be smooth divisors in $|-qK_{X_i}|$ that are transversal to $D_i$. Let $c_i$ and $\beta_i$ be a sequence converging respectively to $c_\infty$ and $\beta_\infty$ with $c_\infty<\min\{1,(1-\epsilon_0)r^{-1}\}$ and $0<\gamma_0\leq \beta_i\leq 1$. Suppose that each $X_i$ admits a conical K\"ahler-Einstein metric $\omega(\beta_i)$ solving:
\begin{equation}
\Ric(\omega(\beta_i))=\beta_i(1-c_i r)\omega(\beta_i) +c_i[D_i]+\frac{(1-\beta_i)(1-c_i r)}{q}[\Delta_i].
\end{equation}
Then the Gromov-Hausdorff limit of any subsequence of $\{(X_i,\omega(\beta_i))\}_i$ is homeomorphic to a $\bQ$-Fano variety $Y$. Furthermore, there are unique Weil divisors $E,\Gamma\subset Y$ such that
\begin{enumerate}
\item $(Y, c_\infty E+ \frac{(1-\beta_\infty)(1-c_\infty r)}{q}\Gamma)$ is a klt log Fano pair;
\item $Y$ admits a weak conical K\"ahler-Einstein metric $\omega(\beta_\infty)$ solving
\[
\Ric(\omega(\beta_\infty))=\beta_\infty(1-c_\infty r)\omega(\beta_\infty) +c_\infty [E]+\frac{(1-\beta_\infty)(1-c_\infty r)}{q}[\Gamma].
\]
In particular, $\Aut(Y,E+\Gamma)$ is reductive and the pair $(Y,c_\infty E+\frac{(1-\beta_\infty)(1-c_\infty r)}{q}\Gamma)$ is K-polystable;
\item there exists a positive integer $m_2=m_2(\chi_0,r,q,\epsilon_0,  \gamma_0)$, such that possibly after passing to a subsequence, there are Tian's embeddings $T_i:X_i\to\bP^N$ and $T_\infty: Y\to \bP^N$, defined by taking a suitable orthonormal basis of the complete linear system $|-mK_{X_i}|$ and $|-mK_{Y}|$ with respect to  $\omega(\beta_i)$ and $\omega(\beta_\infty)$ respectively, such that for any multiple $m$ of $m_2$ and $N+1=\chi(X_i, \cO_{X_i}(-mK_{X_i}))$, we have that $T_i(X_i)$ converge to $T_\infty(Y)$ as projective varieties, and $T_i(D_i)$ (respectively $T_i(\Delta_i)$) converge to $T_\infty(E)$ (respectively $T_\infty(\Gamma)$) as algebraic cycles.
\end{enumerate}
\end{thm}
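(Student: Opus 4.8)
The plan is to run the Gromov--Hausdorff compactness machinery for conical Kähler--Einstein metrics on log smooth log Fano pairs, exactly as in \cite{TW19}, but book-keeping the two boundary divisors $D_i$ and $\Delta_i$ simultaneously and tracking the dependence of all constants only on the listed data $(\chi_0, r, q, \epsilon_0, \gamma_0)$. First I would normalize the setup: the pairs $(X_i, c_i D_i + \tfrac{(1-\beta_i)(1-c_i r)}{q}\Delta_i)$ are log smooth log Fano, and the coefficients $c_i$, $\tfrac{(1-\beta_i)(1-c_i r)}{q}$ are uniformly bounded away from the ``dangerous'' values by the hypotheses $c_\infty < \min\{1, (1-\epsilon_0)r^{-1}\}$ and $\gamma_0 \le \beta_i \le 1$ (in particular all cone angles stay in a fixed compact subinterval of $(0,1)$, and $\beta_i(1-c_i r) \ge \gamma_0 \epsilon_0$ gives a uniform positive lower bound on the ``Ricci curvature'' coefficient). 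The volume $(-K_{X_i})^n$ is fixed by $\chi_0$. These are precisely the inputs needed to invoke the a priori estimates and partial $C^0$-estimate of \cite{TW19} (the log smooth analogue of \cite{CDS15, Tia15}, cf.\ \cite[Theorem 4.1]{LWX14}): up to subsequence, $(X_i, \omega(\beta_i))$ converges in the Gromov--Hausdorff sense to a compact metric space $Y_\infty$, which by the partial $C^0$-estimate is homeomorphic to a normal projective variety $Y$ embedded in a fixed $\bP^N$ via a uniformly-high power $-mK$ of the anticanonical bundle (with $m_2$ depending only on the stated data through the effective partial $C^0$-estimate).

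Next I would extract the limit divisors. Along Tian's embeddings $T_i$, the subvarieties $T_i(D_i)$ and $T_i(\Delta_i)$ are cycles of bounded degree in $\bP^N$ (degrees determined by $r$, $q$, and $\chi_0$), so after a further subsequence they converge as algebraic cycles to effective cycles on $Y$; call their supports (with multiplicities, noting $D_i, \Delta_i$ are reduced so the limits are reduced as cycles away from possible jumping, which one rules out by degree-counting) $E$ and $\Gamma$. Uniqueness of $E$ and $\Gamma$ as Weil divisors follows because they are pinned down metrically: they are the loci where the limiting conical Kähler--Einstein current $\omega(\beta_\infty)$ has the prescribed Lelong numbers $c_\infty$ and $\tfrac{(1-\beta_\infty)(1-c_\infty r)}{q}$ respectively. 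Then $-K_Y$ is $\bQ$-Cartier and ample (it is the limit polarization), $E \sim_\bQ -r K_Y$ and $\Gamma \sim_\bQ -q K_Y$ by continuity of linear equivalence under flat/cycle-theoretic limits, and one checks $Y$ is a $\bQ$-Fano variety.

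For part (1), that $(Y, c_\infty E + \tfrac{(1-\beta_\infty)(1-c_\infty r)}{q}\Gamma)$ is klt: this is where the uniform cone-angle bounds pay off. Since $c_i r < (1-\epsilon_0)$ and $\beta_i \ge \gamma_0$, the log discrepancy deficit stays uniformly bounded, and kltness of the limit is the content of the convergence theorem of \cite{TW19} (one can also argue via lower semicontinuity of the log canonical threshold, or directly from the fact that a weak conical KE metric with cone angles $< 1$ forces klt singularities by the local analysis of the potential). For part (2), the existence of the weak conical KE metric $\omega(\beta_\infty)$ on $Y$ solving the stated equation is the main output of the compactness theorem; reductivity of $\Aut(Y, E+\Gamma)$ then follows from Matsushima-type arguments in the conical setting (the automorphism group of a pair admitting a conical KE metric is reductive), and K-polystability of $(Y, c_\infty E + \tfrac{(1-\beta_\infty)(1-c_\infty r)}{q}\Gamma)$ follows from the solution of the log Yau--Tian--Donaldson correspondence \cite{TW19} — existence of a weak conical KE metric implies K-polystability. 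Part (3) is then just the statement that the embeddings were constructed via $L^2$-orthonormal bases of $|-mK|$ and the convergence is as projective varieties with their distinguished subcycles, with $m_2$ the effective bound from the partial $C^0$-estimate; one only needs to note $m_2$ does not depend on $i$.

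\medskip

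The main obstacle is the effective, data-uniform version of the partial $C^0$-estimate and the a priori estimates in the presence of \emph{two} boundary divisors with varying coefficients. Concretely, one must ensure: (a) the cone-angle parameters never degenerate to $0$ or $1$ — guaranteed by $\gamma_0 \le \beta_i \le 1$ and $c_\infty < (1-\epsilon_0)r^{-1}$, so $c_i r$ and $\tfrac{(1-\beta_i)(1-c_i r)}{q}$ are bounded away from the thresholds where the metric geometry degenerates; (b) the transversality of $D_i$ and $\Delta_i$ is used to keep the pairs log smooth so that the a priori $C^{2,\alpha}$-estimates of \cite{TW19} apply with constants depending only on the combinatorial/numerical data; and (c) that the limit cycles $E$, $\Gamma$ do not share components in an uncontrolled way — this again follows from transversality passing (in the limit) to at-worst-simple-normal-crossing-in-codimension-one behavior, together with the degree bounds. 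Once \cite{TW19}'s machinery is invoked as a black box in the form stated above, the remaining arguments (uniqueness of $E, \Gamma$, kltness, reductivity, K-polystability, convergence of embeddings) are standard adaptations of \cite[Theorem 4.1]{LWX14} and \cite{CDS15, Tia15} to the pair setting.
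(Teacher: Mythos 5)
Your proposal follows essentially the same route as the paper: invoke \cite{TW19}'s a priori estimates, Gromov--Hausdorff compactness, effective partial $C^0$-estimate, Tian embeddings via orthonormal bases of $H^0(X_i,\cO(-mK_{X_i}))$, and convergence as algebraic cycles. The paper literally reduces the statement to a combination of \cite[Proposition~4.18, Corollary~4.19, Lemma~5.4]{TW19}, noting that the ampleness of the divisors allows Cheeger--Colding--Tian theory to apply, and then argues precisely as you outline. One small inaccuracy worth flagging: you claim uniqueness of $E$ and $\Gamma$ by reading off Lelong numbers of the limit current $\omega(\beta_\infty)$, which would fail to distinguish the two divisors in the (degenerate but not excluded) case $c_\infty = \tfrac{(1-\beta_\infty)(1-c_\infty r)}{q}$. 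The paper sidesteps any such argument by \emph{defining} $E=T_\infty(D_\infty)$ and $\Gamma=T_\infty(\Delta_\infty)$ as the images under the limiting Tian embedding of the Gromov--Hausdorff limits of $D_i$ and $\Delta_i$; the ``uniqueness'' in the statement is then just the well-definedness of these cycle limits as pinned down in part (3), not an a posteriori metric characterization. Likewise, your worry about reducedness of the limit cycles is unnecessary: the theorem only claims $E,\Gamma$ are Weil divisors (not prime or reduced), and this is automatic from cycle convergence; your deduction $E\sim_\bQ -rK_Y$, $\Gamma\sim_\bQ -qK_Y$ via continuity of proportionality under cycle limits is exactly what the paper uses to conclude $Y$ is $\bQ$-Fano and $E,\Gamma$ are $\bQ$-Cartier.
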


\begin{proof}
It is a combination of \cite[Proposition 4.18, Corollary 4.19 and Lemma 5.4]{TW19}. By  taking subsequences, we can assume that $(X_i;\omega(\beta_i))$ converges to a metric space $(X; d)$ in the Gromov-Hausdorff topology. Since the divisors are ample, Cheeger-Colding-Tian's
theory applies. From \cite[Proposition 4.18]{TW19}, there exists a positive integer $m_2$ such that the partial $C^0$ holds for $(X_i;\omega(\beta_i); -m K_{X_i} )$ for any multiple $m$ of $m_2$. Then we get a sequence of Tian's embeddings $T_i: X_i\to \bP^N$ using an orthonormal basis of $H^0(X_i,\cO_{X_i}(-m K_{X_i}))$. By taking subsequences again, we can assume that $T_i(X_i)$ converges to $Y$ as cycles. Since $T_i$ are uniform Lipschitz, we get a map $T_\infty$ from $(X; d)$ to $Y$. By \cite[Corollary 4.19]{TW19}, $T_\infty$ is a homeomorphim and $Y$ is a normal projective variety. Moreover, if we define the
Gromov-Hausdorff limit of $D_i, \Delta_i$ as $D_\infty, \Delta_\infty$, then $E = T_\infty(D_\infty)$ and $\Gamma= T_\infty(\Delta_\infty)$ are divisors on $Y$ such that $(Y, c_\infty E+\frac{(1-\beta_\infty)(1-c_\infty r)}{q}\Gamma)$ is a klt log Fano pair, and it admits a weak conical K\"ahler-Einstein metric $\omega(\beta_\infty)$:
\[
\Ric(\omega(\beta_\infty))=\beta_\infty(1-c_\infty r)\omega(\beta_\infty) +c_\infty [E]+\frac{(1-\beta_\infty)(1-c_\infty r)}{q}[\Gamma].
\]
Since both $D_i$ and $\Delta_i$ are proportional to $-K_{X_i}$, we have that both $E$ and $\Gamma$ are also proportional to $-K_{Y}$ as cycle limits. Thus $Y$ is a $\bQ$-Fano variety, and both $E$ and $\Gamma$ are $\bQ$-Cartier divisors on $Y$.
By \cite[Lemma 5.4]{TW19}, $\Aut_0(Y, c_\infty E+\frac{(1-\beta_\infty)(1-c_\infty r)}{q}\Gamma)$ is reductive.
\end{proof}

We now introduce the relevant Hilbert schemes. 

\begin{defn}\label{def:hilbscheme}
 Let $\bH^{\chi;N}:=\Hilb_{\chi}(\bP^N)$ denote the Hilbert scheme of closed subschemes of $\bP^N$ with Hilbert polynomial $\chi$. 
Given a closed subscheme $X\subset\bP^N$ with Hilbert polynomial $\chi(X,\cO_{\bP^N}(k)|_X)=\chi(k)$, let $\Hilb(X)\in\bH^{\chi;N}$ denote its Hilbert point.
\end{defn}

Let $\chi_0$ be a Hilbert polynomial of an anti-canonically polarized
 Fano manifold. Let $m$ be a positive integer.
Denote $\chi(k):=\chi_0(mk)$, $\tilde{\chi}(k):=\chi_0(mk)-\chi_0(mk-r)$, and $N=\chi_0(m)-1$. 
 Let $\bm{\chi}=(\chi,\tilde{\chi})$ be the Hilbert polynomials of $(X,D)\hookrightarrow\bP^N$. Denote by $\bH^{\bm{\chi};N}=\bH^{\chi;N}\times\bH^{\tilde{\chi};N}$.
We define
\[
 Z:=\left\{\Hilb(X,D)\in\bH^{\bm{\chi};N}\left| \begin{array}{l}
                                           X \textrm{ is a Fano manifold, }D\sim_{\bQ}-rK_X\textrm{ is a smooth divisor,}\\
                                           \cO_{\bP^N}(1)|_{X}\cong\cO_X(-mK_X),
                                         \\  \textrm{and }H^0(\bP^N,\cO_{\bP^N}(1))\xrightarrow{\cong}H^0(X,\cO_X(-mK_X)).
                                                \end{array}
 \right.\right\}.
\]
Then $Z$ is a locally closed subscheme of $\bH^{\bm{\chi};N}$. Let $\overline{Z}$ be the Zariski closure of $Z$. We also define
\[
Z^{\klt}:=\left\{\Hilb(X,D)\in \overline{Z}\left| \begin{array}{l}
                                           X \textrm{ is a $\bQ$-Fano variety, }D\sim_{\bQ}-rK_X\textrm{ is an effective
                                           Weil divisor,}\\-m_1 K_X\textrm{ is Cartier, }
                                           \cO_{\bP^N}(1)|_{X}\cong\cO_X(-mK_X)
                                           ,\\\textrm{and }H^0(\bP^N,\cO_{\bP^N}(1))\xrightarrow{\cong}H^0(X,\cO_X(-mK_X)).
                                                \end{array}
 \right.\right\}.                                                
\]
and
\[
 Z^{\circ}_c:=\{\Hilb(X,D)\in Z^{\klt}\mid (X,cD)\textrm{ is K-semistable}\}.
\]
It is clear that $Z^{\klt}$ is a Zariski open subset of $\overline{Z}$.
We will see in Theorem \ref{thm:kstfinite}  that $Z_c^{\circ}$
is a Zariski open subset of $Z^{\klt}$. 
Let $Z^{\red}$ and $Z_c^{\red}$ be reduced schemes supported on $Z$ and $Z_c^\circ$, respectively. In the cases when we keep track of $m$, we use the notation
$Z_m$, $\overline{Z_m}$, $Z_m^{\klt}$, $Z_{c,m}^{\circ}$,
$Z_m^{\red}$, and $Z_{c,m}^{\red}$ instead of $Z$, $\overline{Z}$, $Z^{\klt}$, $Z_{c}^\circ$, $Z^{\red}$, and $Z_{c}^{\red}$, respectively.

We define the K-moduli stacks and spaces as follows.

\begin{defn}\label{defn:kmoduli}
 Let $\chi_0$ be a Hilbert polynomial of an anti-canonically polarized
 Fano manifold. Fix $r\in\bQ_{>0}$ and $0<\epsilon_0\ll 1$. Let $c\in (0,\min\{1,(1-\epsilon_0)r^{-1}\}) $ be a rational number. We denote by $\chi(k):=\chi_0(mk)$, $\tilde{\chi}(k):=\chi_0(mk)-\chi_0(mk-r)$, and $N_m:=\chi_0(m)-1$.
 As we will see in Theorem \ref{thm:stabilization}, the Artin stacks $[Z_{c,m}^{\red}/\PGL(N_m+1)]$ stabilize for $m$ sufficiently divisible which we simply denote by $\cK\cM_{\chi_0,r,c}$. Moreover, according to Theorem \ref{thm:lwxlog} the Artin stack $\cK\cM_{\chi_0,r,c}$ admits a proper reduced scheme $KM_{\chi_0,r,c}$ as its good moduli space.
 We define
 the \emph{K-moduli stack} (resp. \emph{K-moduli space}) with respect to the triple $(\chi_0,r,c)$ as the reduced Artin stack $\cK\cM_{\chi_0,r,c}$ (resp. reduced proper scheme $KM_{\chi_0,r,c}$).
\end{defn}

\subsection{Continuity method}\label{sec:continuity}

In this section, we will use Theorem \ref{thm:lwx4.1} and the continuity method \cite[Section 4.2 and 6]{LWX14} to prove the following theorem.

\begin{thm}\label{thm:bddtestK}
Fix $n\in\bZ_{>0}$ and $r \in \bQ_{>0}$. Fix $\epsilon_0\in (0,1)$. 
Fix a Hilbert polynomial $\chi_0$ of an
$n$-dimensional anti-canonically polarized Fano manifold.
Then there exists $m_3=m_3(\chi_0,r,\epsilon_0)$ such that for any multiple $m$ of $m_3$, any rational number $c\in (0,\min\{1,(1-\epsilon_0)r^{-1}\})$, and any log pair $(X,D)$ with $\Hilb(X,D)\in Z_m^{\klt}$, the following holds:
\begin{enumerate}
    \item If $(X,cD)$ is K-unstable, then either $c> \lct(X;D)-\frac{\epsilon_0 r^{-1}}{n+1}$ or there exists a $1$-PS in $\SL(N_m+1)$ that destabilizes $(X,cD)$.
    \item If $(X,cD)$ is K-semistable but not K-polystable, then there exists a $1$-PS in $\SL(N_m+1)$ that provides a special degeneration to a K-polystable log Fano pair $(X',cD')$. In addition, $\Hilb(X',D')\in Z_m^{\klt}$.
    \item If $(X,cD)$ is K-polystable, then it admits a weak conical K\"ahler-Einstein metric. In particular, $\Aut(X,D)$ is reductive.
    \item If $(X,cD)$ is K-stable, then it is uniformly K-stable.
\end{enumerate}    
\end{thm}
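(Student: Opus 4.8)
The approach is the continuity method of \cite[\S4.2, \S6]{LWX14}, adapted to pairs, with the log smooth Yau-Tian-Donaldson theorem recorded in Theorem \ref{thm:lwx4.1} as the analytic engine and the boundedness results supplying the uniform integer $m_3$. First I would fix $m_3$: by Theorem \ref{thm:bdd-bigangle} the varieties $X$ occurring (and, since special degenerations of $\bQ$-Gorenstein smoothable klt log Fano pairs are again such, all their special degenerations) form a bounded family, so there is a multiple $m_3'$ of $m_1$ such that for every multiple $m$ of $m_3'$ and every $\Hilb(X,D)\in Z_m^{\klt}$ the divisor $-mK_X$ is very ample with vanishing higher cohomology and the expected Hilbert function (likewise for the special degenerations). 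Then set $m_3:=\lcm(m_3',m_2)$, where $m_2=m_2(\chi_0,r,q,\epsilon_0,\gamma_0)$ is the integer furnished by Theorem \ref{thm:lwx4.1}, applied with the data $q,\gamma_0$ produced by Proposition \ref{prop:uK-Delta}.

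\emph{Parts (1) and (2)} are purely algebraic. If $(X,cD)$ is not klt then $c\ge\lct(X;D)$, which is the first alternative of (1); so assume $(X,cD)$ is klt. If it is K-unstable then, by \cite{LX14} (K-stability of klt log Fano pairs is tested on special test configurations), there is a special test configuration of $(X,cD)$ with negative generalized Futaki invariant; its central fibre $(X_0,cD_0)$ is a klt log Fano pair that is again $\bQ$-Gorenstein smoothable, and by the choice of $m_3$ it embeds into $\bP^{N_m}$ via $|-mK_{X_0}|$, so $\Hilb(X_0,D_0)\in Z_m^{\klt}$ and the test configuration is cut out by a $1$-PS of $\SL(N_m+1)$. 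For (2), if $(X,cD)$ is K-semistable but not K-polystable, the existence of a K-polystable special degeneration of a K-semistable log Fano pair (\cite{LWX14, BX18}) supplies a special test configuration with vanishing Futaki invariant whose central fibre $(X',cD')$ is K-polystable; boundedness again gives $\Hilb(X',D')\in Z_m^{\klt}$ and realizes the degeneration by a $1$-PS of $\SL(N_m+1)$.

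\emph{Part (3).} Assume $(X,cD)$ is K-polystable (hence klt, so $c<\lct(X;D)$). Choose $q$ and $\Delta\in|-qK_X|$ as in Proposition \ref{prop:uK-Delta}. Since $cD\sim_{\bQ}-crK_X$ and $\Delta\sim_{\bQ}-qK_X$, one has $K_X+cD+\tfrac1q\Delta\sim_{\bQ}0$, and by Proposition \ref{prop:uK-Delta}(1) together with monotonicity of log canonicity under shrinking the boundary the pair $(X,cD+\tfrac1q\Delta)$ is log canonical, hence K-semistable by \cite{Oda13b}; then Proposition \ref{prop:k-interpolation}(1) shows $(X,cD+\tfrac{(1-\beta)(1-cr)}{q}\Delta)$ is K-polystable for every $\beta\in(0,1]$. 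Now take a $\bQ$-Gorenstein smoothing $\pi:(\cX,c\cD)\to(0\in B)$ of $(X,cD)$, with an extension $\tDelta$ of $\Delta$ whose general fibre $\Delta_t$ is smooth, lies in $|-qK_{X_t}|$, and is transverse to $D_t$. On the log smooth fibres $(X_t,cD_t+\tfrac{(1-\beta)(1-cr)}{q}\Delta_t)$ one runs the continuity path in the cone angle $\beta$: for all small $\beta$ and all $t$ near $0$ this pair is uniformly K-stable (as $D_t$ is smooth, $\lct(X_t;D_t)=1$, so the $\alpha$-invariant estimate behind Proposition \ref{prop:uK-Delta}(2) applies over the bounded family of smooth fibres), hence carries a conical K\"ahler-Einstein metric by \cite{TW19, CDS15, Tia15}; one continues in $\beta$, using Theorem \ref{thm:lwx4.1} for closedness, and lets $t\to0$ and $\beta\to1$ along suitable sequences to obtain a Gromov-Hausdorff limit which is a $\bQ$-Fano variety $Y$ with divisors $E,\Gamma$ such that $(Y,cE)$ is K-polystable and admits a weak conical K\"ahler-Einstein metric, the Tian embeddings converging so that $\Hilb(Y,E)\in Z_m^{\klt}$ lies in the closure of the $\PGL(N_m+1)$-orbit of the family. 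Finally one identifies $(Y,cE)\cong(X,cD)$ via the uniqueness of the K-polystable degeneration of the smooth fibres (the separatedness argument using reductivity of automorphism groups, \cite{LWX14, BX18}); thus $(X,cD)$ carries a weak conical K\"ahler-Einstein metric, and the reductivity of $\Aut(X,D)$ is part of the conclusion of Theorem \ref{thm:lwx4.1}.

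\emph{Part (4) and the main difficulty.} If $(X,cD)$ is K-stable then $\Aut(X,D)$ contains no nontrivial one-parameter subgroup, hence is finite, and by (3) it carries a weak conical K\"ahler-Einstein metric; properness of the log Mabuchi energy modulo automorphisms --- thus, $\Aut$ being finite, properness outright --- combined with the non-Archimedean characterization of uniform K-stability yields uniform K-stability of $(X,cD)$ (alternatively, use Theorem \ref{thm:valuative}: if $\delta(X,cD)\le1$ it would be computed by a special test configuration with vanishing Futaki invariant, contradicting K-stability, so $\delta(X,cD)>1$). The genuinely hard step is Part (3): organizing the two-parameter limit ($\beta\to1$, $t\to0$) and, above all, matching the Gromov-Hausdorff limit $Y$ with the prescribed algebraic central fibre $X$ --- that is, showing the analytic degeneration of the general fibre coincides with the given $\bQ$-Gorenstein degeneration rather than landing on some other K-polystable pair in the same orbit closure. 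This is the heart of the matter, and transplanting \cite[\S6]{LWX14} to the log setting is where the argument requires the most care.
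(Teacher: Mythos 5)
Your parts~(3) and~(4) follow the paper's own strategy, but your treatment of parts~(1) and~(2) diverges in a way that introduces a genuine gap. You assert that these parts are ``purely algebraic'': take the destabilizing (resp.\ degenerating) special test configuration guaranteed by \cite{LX14} (resp.\ \cite{LWX14,BX18}), observe that its central fibre $(X_0,cD_0)$ is again $\bQ$-Gorenstein smoothable klt log Fano, and conclude from boundedness that $\Hilb(X_0,D_0)\in Z_m^{\klt}$, hence that the test configuration is cut out by a $1$-PS of $\SL(N_m+1)$. The missing step is the bound on the Gorenstein index of $X_0$. Theorem~\ref{thm:bdd-bigangle} bounds only those pairs that are $c$-K-\emph{semistable} for some admissible $c$, because that is what controls $\delta(X)$ from below via \cite{BL18b} and hence invokes \cite{Jia17}. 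The central fibre of an \emph{arbitrary} destabilizing special test configuration of a K-unstable pair need not satisfy any such $\delta$-bound, so nothing in the paper's toolkit (nor in your proposal) guarantees $m_1K_{X_0}$ is Cartier, and a priori $-mK_{X_0}$ may fail even to be a line bundle, let alone very ample with the expected Hilbert polynomial. Without that, you cannot embed the test configuration into $\bH^{\bm\chi;N_m}$ and realize it by a $1$-PS of $\SL(N_m+1)$. (The same concern applies, less severely, to your appeal to $\bQ$-Gorenstein smoothability of $X_0$ itself: this requires a two-parameter argument combining the test configuration with the smoothing, which you should at least flag.) The purely algebraic route you outline does become available with the machinery of \cite{ABHLX19,Xu19,BLX19}, which the paper explicitly notes was being developed simultaneously and which it deliberately avoids.

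The paper's actual proof of parts~(1) and~(2) is \emph{not} algebraic: it too runs the continuity path. In Notation~\ref{not:const} one attaches a boundary $\Delta\in|-qK_X|$ via Proposition~\ref{prop:uK-Delta} and defines $\fB$ as the threshold at which uniform K-stability of $(X,cD+(1-\beta)a\Delta)$ fails; Proposition~\ref{prop:continuity-Kus} then produces, via the Gromov-Hausdorff limit of Tian embeddings at weight $m$ and the Donaldson $1$-PS argument \cite[Prop.~1]{Don12}, a $1$-PS $\lambda$ of $\SL(N_m+1)$ whose Futaki invariant is linear in $\beta$ with root $\fB$; since $\Fut$ is negative for $\beta=1$ (K-unstability), this $\lambda$ destabilizes $(X,cD)$. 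Crucially, the $1$-PS is born already inside $\SL(N_m+1)$, so the Gorenstein-index issue never arises. Part~(2) is the same argument with $\beta_i\nearrow 1$ instead of $\beta_i\nearrow\fB$, and the limit pair $(X',cD'+0\cdot\Delta')$ is K-polystable (so Theorem~\ref{thm:bdd-bigangle} \emph{does} bound it and gives $\Hilb(X',D')\in Z_m^{\klt}$). If you want to salvage your outline, you should replace the algebraic reduction in parts~(1) and~(2) with the same continuity-path argument you already sketch for part~(3).
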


Before presenting the proof of Theorem \ref{thm:bddtestK}, we introduce the following notation and prove several preliminary results. 

\medskip

\begin{notation}\label{not:const} Let us fix $n\in\bZ_{>0}$, $r\in\bQ_{>0}$,  $\epsilon_0\in (0,1)$, and  $\chi_0$ a Hilbert polynomial of an $n$-dimensional Fano manifold. Let $m_1=m_1(n,r,\epsilon_0)\in\bZ_{>0}$ be chosen as in Theorem \ref{thm:bdd-bigangle}. Let $q=q(n,r,\epsilon_0,m_1)\in\bZ_{>0}$ and  $\gamma_0=\gamma_0(n,r,\epsilon_0,m_1)\in (0,1)$ be chosen as in Proposition \ref{prop:uK-Delta}. Let $m_2=m_2(\chi_0,r,q,\epsilon_0,\gamma_0)$ be chosen as in Theorem \ref{thm:lwx4.1}. Let us take $m_3:=\lcm(m_1,m_2)$. For $m\in\bZ_{>0}$ a multiple of $m_3$, let us pick an arbitrary pair $(X,D)$ with $\Hilb(X,D)\in Z_m^{\klt}$. We also fix $c\in (0, \min\{1, (1-\epsilon_0)r^{-1}\})$ such that $c\leq \lct(X;D)-\frac{\epsilon_0 r^{-1}}{n+1}$. To avoid bulky notation, let $a:=\frac{1-cr}{q}$.

 According to Proposition \ref{prop:uK-Delta}, let $\Delta\in |-qK_X|$ be chosen such that $(X,cD+(1-\beta)a\Delta)$ is uniformly K-stable for any $0<\beta\leq \gamma_0$. Let us choose a smoothing $(\cX,\cD+\tDelta)\to B$ over a pointed curve $0\in B$ such that all fibers over $B\setminus\{0\}$  are log smooth,  and $(\cX_0,\cD_0,\tDelta_0)\cong (X,D,\Delta)$. 
Denote by 
\[
\fB:=\sup\{\beta\in (0,1)\mid (X, cD+(1-\beta)a\Delta)\textrm{ is uniformly K-stable}\}.
\]
By \cite{Fuj17} we know that $\gamma_0<\fB\leq 1$. 
Since the pair $(X, cD+(1-\beta)a\Delta)$ is uniformly K-stable for any $\beta\in (0,\fB)$, by \cite{BL18b, TW19} we know that there exists a Zariski neighborhood $B_\beta$ of $0$ in $B$ such that $(\cX_b,c\cD_b+(1-\beta)a\tDelta_b)$ is uniformly K-stable hence admits conical K\"ahler-Einstein metrics for any $b\in B^{\circ}_\beta:=B_{\beta}\setminus\{0\}$. Consider the triple Hilbert scheme $\bH^{\bm{\chi},q;N}$ of $\bP^N$ with the same Hilbert polynomials as $(X,D,\Delta)$.
Let $\Hilb(\cX_b,c\cD_b+(1-\beta)a\tDelta_b)\in \bH^{\bm{\chi},q;N}$ be the Hilbert point of $(\cX_b,c\cD_b+(1-\beta)a\tDelta_b)$ via Tian's embedding. 
\end{notation}

\begin{prop}\label{prop:hilbconverge}
With Notation \ref{not:const}, the log Fano pair $(X, cD+(1-\beta)a\Delta)$ admits a weak conical K\"ahler-Einstein metric for any $\beta\in [\gamma_0,\fB)$. Moreover, for any sequence of points $b_i\to 0$ in $B_\beta^\circ$, there exists a sequence of matrices $g_i\in \U(N+1)$ such that \[
g_i\cdot \Hilb(\cX_{b_i}, c\cD_{b_i}+(1-\beta)a\tDelta_{b_i})\to \Hilb(X,cD+(1-\beta)a\Delta)\in\bH^{\bm{\chi},q;N}\quad \textrm{ as }i\to \infty.
\]
\end{prop}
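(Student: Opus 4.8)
The plan is to apply Theorem \ref{thm:lwx4.1} (the Tian--Wang convergence result) to the family of conical Kähler--Einstein metrics on the nearby fibers, after first handling the boundary value $\beta = \gamma_0$ and the interior values $\beta \in (\gamma_0, \fB)$ uniformly. For a fixed $\beta \in [\gamma_0, \fB)$, by Notation \ref{not:const} the general fiber $(\cX_b, c\cD_b + (1-\beta)a\tDelta_b)$ over $b \in B_\beta^\circ$ is uniformly K-stable and hence carries a conical Kähler--Einstein metric $\omega_b(\beta)$ solving the equation displayed in Theorem \ref{thm:lwx4.1} with $c_i = c$, $\beta_i = \beta$ (constant sequences). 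Since we may assume $B$ is a curve, choosing $b_i \to 0$ gives a sequence $X_i := \cX_{b_i}$ of Fano manifolds with fixed Hilbert polynomial $\chi_0$, smooth divisors $D_i := \cD_{b_i}$ with $D_i \sim_\bQ -rK_{X_i}$, and smooth divisors $\Delta_i := \tDelta_{b_i} \in |-qK_{X_i}|$ transversal to $D_i$ (shrinking $B$ if necessary so that transversality and smoothness hold on $B^\circ$). The constant sequences $c_i \equiv c$ and $\beta_i \equiv \beta$ converge to $c_\infty = c < \min\{1, (1-\epsilon_0)r^{-1}\}$ and $\beta_\infty = \beta$, and $\gamma_0 \leq \beta_i \leq 1$ holds by the choice $\beta \geq \gamma_0$. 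Thus all hypotheses of Theorem \ref{thm:lwx4.1} are met, with the integer $m_2 = m_2(\chi_0, r, q, \epsilon_0, \gamma_0)$ dividing $m_3 \mid m$.

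Theorem \ref{thm:lwx4.1} then produces a Gromov--Hausdorff limit $Y$ of a subsequence of $(X_i, \omega_{b_i}(\beta))$, which is a $\bQ$-Fano variety admitting a weak conical Kähler--Einstein metric $\omega(\beta)$ with divisors $E, \Gamma \subset Y$ satisfying conclusions (1)--(3) there; in particular $(Y, cE + (1-\beta)a\Gamma)$ is a K-polystable klt log Fano pair (using $a = (1-cr)/q$), and the Tian embeddings $T_i \colon X_i \hookrightarrow \bP^N$ and $T_\infty \colon Y \hookrightarrow \bP^N$ satisfy $T_i(X_i) \to T_\infty(Y)$ as projective varieties, $T_i(D_i) \to T_\infty(E)$ and $T_i(\Delta_i) \to T_\infty(\Gamma)$ as cycles. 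The first task is to identify $(Y, E, \Gamma)$ with $(X, D, \Delta)$. This follows because $(\cX, \cD + \tDelta) \to B$ is a flat family with $(\cX_0, \cD_0, \tDelta_0) \cong (X, D, \Delta)$ and separatedness of the relevant Hilbert scheme: the cycle-theoretic limit $T_\infty(Y)$ of the fibers $T_i(X_i) = \cX_{b_i}$ (after matching Tian's $L^2$-normalized bases, which is exactly what composing with the unitary matrices $g_i \in \U(N+1)$ arranges) must agree with the flat limit $\cX_0 = X$; likewise for the divisors. Here the unitary ambiguity arises because Tian's embedding is only canonical up to the choice of $L^2(\omega)$-orthonormal basis, i.e. up to $\U(N+1)$, so after applying suitable $g_i$ we get $g_i \cdot \Hilb(\cX_{b_i}, c\cD_{b_i} + (1-\beta)a\tDelta_{b_i}) \to \Hilb(Y, cE + (1-\beta)a\Gamma) = \Hilb(X, cD + (1-\beta)a\Delta)$ in $\bH^{\bm\chi, q; N}$. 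Since a weak conical Kähler--Einstein metric exists on $(X, cD + (1-\beta)a\Delta)$, the "moreover" and the first sentence both follow.

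The main obstacle is the identification step: a priori the Gromov--Hausdorff / cycle limit $Y$ could differ from the scheme-theoretic central fiber $X = \cX_0$, since Theorem \ref{thm:lwx4.1} only controls the limit up to passing to a subsequence and provides convergence of \emph{images under Tian embeddings}. The resolution is that $X = \cX_0$ is itself uniformly K-stable with respect to the pair $(cD + (1-\beta)a\Delta)$ — indeed $\beta < \fB$ by definition of $\fB$ — hence (by the uniqueness portion of the Yau--Tian--Donaldson correspondence in \cite{TW19}, i.e. uniqueness of the weak conical Kähler--Einstein metric up to automorphism, together with properness of the Hilbert scheme) the only possible limit of the embedded nearby fibers is the embedded central fiber. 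Concretely one argues that both $Y$ and $X$ carry weak conical Kähler--Einstein metrics in the same Kähler class, both are K-polystable, and the flat family forces $Y \cong X$ with $E, \Gamma$ corresponding to $D, \Delta$; then the convergence statement is just unwinding the definition of Tian's embedding and the $\U(N+1)$-ambiguity. A secondary technical point — ensuring that for each $\beta \in [\gamma_0, \fB)$ one can choose $B_\beta$ and the approximating sequence coherently, and that the weak conical KE metric on the central fiber is obtained as a limit rather than merely posited — is handled by the continuity method of \cite[Sections 4.2, 6]{LWX14}: one runs $\beta$ from $\gamma_0$ up toward $\fB$, at each stage using the openness of uniform K-stability in families (\cite{BL18b, TW19}) to guarantee nonempty $B_\beta^\circ$, and the compactness in Theorem \ref{thm:lwx4.1} to pass to the limit fiber.
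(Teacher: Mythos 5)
Your high-level strategy — apply Theorem \ref{thm:lwx4.1} to the sequence of conical K\"ahler--Einstein fibers over $B_\beta^\circ$, then identify the Gromov--Hausdorff limit with the central fiber $(X, cD+(1-\beta)a\Delta)$ — is the same as the paper's. The gap is in the identification step, which you yourself flag as ``the main obstacle'' but do not resolve correctly.

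Your first attempt (``the cycle-theoretic limit $T_\infty(Y)$ of the fibers $T_i(X_i)=\cX_{b_i}$ ... must agree with the flat limit $\cX_0=X$'' by separatedness of the Hilbert scheme) does not work: Tian's embeddings $T_i$ of the nearby fibers differ from the fixed embedding $\cX_{b_i}\subset\bP^N$ coming from the family by elements of $\SL(N+1)$ (or $\GL(N+1)$) which need not converge, so the Tian-embedded limit $T_\infty(Y)$ is a priori a degeneration of $\cX_{b_i}$ under an uncontrolled one-parameter family, not the flat limit. Unitary matrices $g_i$ only absorb the ambiguity in the $L^2(\omega)$-orthonormal basis; they do not relate Tian's embedding to the flat embedding. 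Your second attempt (``uniqueness of the weak conical KE metric up to automorphism ... forces $Y\cong X$'') conflates two different things: uniqueness of the metric on a \emph{fixed} pair versus uniqueness of the K-polystable \emph{limit variety}. Since $Y$ and $X$ are a priori different varieties, this does not immediately apply.

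What is actually needed, and what the paper uses, is a two-step argument. First, Lemma \ref{lem:hilbconv} (a group-action compactification argument) upgrades the pointwise convergence $g_i\cdot z(b_i)\to z_0$ of Hilbert points into the statement that, after a quasi-finite base change $(0'\in B')\to (0\in B)$, the limit $(Y, cE+(1-\beta)a\Gamma)$ arises as the central fiber of a $\bQ$-Gorenstein family extending $(\cX,c\cD+(1-\beta)a\tDelta)\times_B(B'\setminus\{0'\})$; that is, $(Y,\ldots)$ is genuinely a \emph{fill-in} of the punctured family, not just an abstract GH/cycle limit. Second, since both $(Y, cE+(1-\beta)a\Gamma)$ (K-polystable by \cite{Ber16}) and $(X, cD+(1-\beta)a\Delta)$ (uniformly K-stable, hence K-polystable, for $\beta<\fB$) are K-polystable fill-ins of the same punctured family, the Blum--Xu uniqueness of K-polystable fill-ins \cite{BX18} forces $(Y, cE+(1-\beta)a\Gamma)\cong (X, cD+(1-\beta)a\Delta)$. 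Your write-up mentions neither the lemma converting the limit to a fill-in nor the K-stability separatedness theorem, and without them the identification does not go through.
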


\begin{proof}
 By Theorem \ref{thm:lwx4.1}, we know that after choosing suitable $g_i\in \U(N+1)$ the Hilbert points $g_i\cdot \Hilb(\cX_{b_i}, c\cD_{b_i}+(1-\beta)a\tDelta_{b_i})$ converge as $i\to\infty$ to $\Hilb(Y, cE+(1-\beta)a\Gamma)$ which is the Hilbert point of a log Fano pair $(Y, cE+(1-\beta)a\Gamma)$ via Tian's embedding of its weak conical K\"ahler-Einstein metric. Then \cite{Ber16} (see also \cite[Section 3.1]{LL16}) implies that $(Y, cE+(1-\beta)a\Gamma)$ is K-polystable. By Lemma \ref{lem:hilbconv}, after possibly replacing $(0\in B)$ by its quasi-finite cover $(0'\in B')$, the log Fano pair $(Y, cE+(1-\beta)a\Gamma)$ is a K-polystable fill-in of the family $(\cX,c\cD+(1-\beta)a\tDelta)\times_B (B'\setminus\{0'\})$. Since a K-polystable fill-in is always unique by \cite{BX18}, we know that 
 \[
 (Y, cE+(1-\beta)a\Gamma)\cong (X,cD+(1-\beta)a\Delta).
 \]
 The proof is finished. 
 \end{proof}

\begin{lem}\label{lem:hilbconv}
 Let $G$ be an algebraic group acting on $\bP^M$. Let $z:B\to \bP^M$ be a morphism from a smooth pointed curve $(0\in B)$. Denote by $B^\circ:=B\setminus\{0\}$. Suppose $z_0$ is a point in $\bP^M$ satisfying that there exists $g_i\in G$ and $B^\circ \ni b_i\to 0$ for $i\in\bZ_{>0}$ such that $g_i\cdot z(b_i)\to z_0$ as $i\to\infty$.
 Then there exists a quasi-finite morphism $\pi:(0'\in B')\to (0\in B)$ from a smooth pointed curve $(0'\in B')$ with $\{0'\}=\pi^{-1}(\{0\})$ and two morphisms $\tau: B'^\circ:=B'\setminus\{0\}\to G$ and $z':B'\to\bP^M$ such that $z'(b')=\tau(b')\cdot z(\pi(b'))$ for any $b'\in B'^\circ$ and $z'(0')=z_0$. 
\end{lem}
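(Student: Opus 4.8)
The plan is to realize $z_0$ as a limit \emph{along a curve}: I will pass to the closure of the family of $G$-translates of $z$, select a curve through $(z_0,0)$ by a Bertini/curve-selection argument, and then lift that curve to $G$.

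First I would form the constructible set
\[
U:=\{(g\cdot z(b),b)\in\bP^M\times B \mid g\in G,\ b\in B^\circ\},
\]
the image of the morphism $\Phi\colon G\times B^\circ\to\bP^M\times B$, $(g,b)\mapsto(g\cdot z(b),b)$. The hypotheses $b_i\to 0$ and $g_i\cdot z(b_i)\to z_0$ say exactly that $(z_0,0)$ lies in the analytic closure of $U$, hence in its Zariski closure $\overline U$. Writing $U$ as a finite union of irreducible locally closed subsets, I pick one of them, $U_\alpha\subseteq U$, with $(z_0,0)\in W:=\overline{U_\alpha}$; then $W$ is an irreducible variety and $U_\alpha$ is a dense open subset of $W$. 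Note that $W$ dominates $B$: otherwise $\pr_B(W)$ is a single point $b_\ast$, which lies in $B^\circ$ since $U_\alpha\subseteq\bP^M\times B^\circ$, and then $W\subseteq\bP^M\times\{b_\ast\}$ contradicts $(z_0,0)\in W$.

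The core step is the \textbf{curve selection}: \emph{there is an irreducible curve $C\subseteq W$ passing through $(z_0,0)$, dominating $B$, and with generic point in $U_\alpha\subseteq U$.} If $\dim W=1$, take $C=W$. If $\dim W\ge 2$, I would compactify, pass to the projective closure of $W$, and cut down to a curve by intersecting successively with general hypersurface sections through the fixed point $(z_0,0)$ in a sufficiently ample linear system, dropping the dimension by one at each step. Here one uses Bertini: a general such section of an irreducible variety of dimension $\ge 2$ is again irreducible; and --- these being further general-position conditions on the section through $(z_0,0)$ --- one simultaneously keeps the section out of the proper closed subset $W\setminus U_\alpha$ and keeps it dominating $B$ (it meets the positive-dimensional generic fibre of $\pr_B$). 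Checking that all four properties (irreducibility, passing through $(z_0,0)$, domination of $B$, generic point in $U_\alpha$) persist \emph{simultaneously} under the cuts is the one genuinely technical point; the rest is formal.

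Granting $C$, I would finish as follows. Let $\nu\colon\widetilde C\to C$ be the normalization and fix a point over $(z_0,0)$. The generic point $\xi$ of $C$ lies in $U=\mathrm{im}(\Phi)$, so $\Phi^{-1}(\xi)$ is a nonempty scheme of finite type over $\bC(C)$; a closed point of it gives a finite extension $K_1/\bC(C)$ and a pair $(\gamma_0,\beta_0)\in(G\times B^\circ)(K_1)$ with $\Phi(\gamma_0,\beta_0)=\xi_{K_1}$. Let $B'_0$ be the normalization of $\widetilde C$ in $K_1$ (finite over $\widetilde C$, hence quasi-finite over $B$), fix a point $0'\in B'_0$ over $(z_0,0)$, and let $B'\subseteq B'_0$ be an affine open neighbourhood of $0'$, shrunk so that $0'$ is the only point of $B'$ over $0\in B$ and so that the rational map $\gamma_0\colon B'_0\dashrightarrow G$ is a morphism on $B'\setminus\{0'\}$. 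Set $\pi:=\pr_B\circ(B'\to\widetilde C\to C)$, a quasi-finite dominant morphism $B'\to B$ with $\pi^{-1}(0)=\{0'\}$; set $\tau:=\gamma_0|_{B'\setminus\{0'\}}\colon B'^\circ\to G$; and set $z':=\pr_{\bP^M}\circ(B'\to\widetilde C\to C)\colon B'\to\bP^M$. Then $z'(0')=\pr_{\bP^M}(z_0,0)=z_0$ because $C$ passes through $(z_0,0)$. Finally, the morphisms $z'$ and $b'\mapsto\tau(b')\cdot z(\pi(b'))$ from the integral scheme $B'^\circ$ to the separated scheme $\bP^M$ agree at the generic point --- this is precisely the equality $\Phi(\gamma_0,\beta_0)=\xi_{K_1}$ read off in the $\bP^M$-coordinate, together with the fact that the $B$-coordinate of $\xi$ is the generic point of $B$ --- hence agree on all of $B'^\circ$. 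This yields the desired $\pi$, $\tau$, $z'$ and completes the proof.
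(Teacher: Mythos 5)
Your proof is correct, but it takes a genuinely different route from the paper's. The paper introduces a normal compactification $\overline{G}$ of $G$, forms the normalized graph $\Gamma$ of the rational map $\overline{G}\times B\dashrightarrow\bP^M\times B$ induced by $\phi(g,b)=(g\cdot z(b),b)$, notes that the proper projection $\tilde\phi\colon\Gamma\to\bP^M\times B$ has $(z_0,0)$ in its (closed) image, and then selects a finite morphism from a smooth pointed curve $B'\to\Gamma$ hitting a preimage $\tilde z_0$ of $(z_0,0)$ and meeting the dense open $G\times B^\circ\subset\Gamma$. Because $\Gamma$ already carries the $G$-coordinate birationally, the maps $\pi$, $\tau$, $z'$ are simply the three projections composed with $B'\to\Gamma$, and no lifting is needed. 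You instead select your curve \emph{downstairs}, in $\overline{\phi(G\times B^\circ)}\subset\bP^M\times B$, and then recover the $G$-coordinate by pulling back along $\phi$ at the generic point and normalizing in the resulting finite extension of function fields. Both arguments rest on the same curve-selection-through-a-point fact (an irreducible curve through a given point of an irreducible variety whose generic point lies in a prescribed dense open, and in your version also dominating $B$); the paper invokes it on $\Gamma$ without elaboration, while you carry it out by iterated Bertini. Your acknowledgment that the Bertini step is the delicate point is well placed: the linear system must be made sufficiently ample, since hyperplane sections through a fixed point can be reducible (e.g.\ on a cone over a curve with the point at the vertex), so plain hyperplanes do not suffice. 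In sum, the paper's use of the graph closure buys the elimination of the lifting and field-extension step, whereas your argument stays inside $\bP^M\times B$ and never forms the graph, at the cost of an extra finite base change to produce $\tau$.
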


\begin{proof}
 Let $\phi: G\times B^\circ\to \bP^M\times B$ be the morphism defined as $\phi(g,b):=(g\cdot z(b),b)$. Let $\overline{G}$ be a normal proper variety that compactifies $G$. Let $\Gamma$ be the normalized graph of the rational map $\phi: \overline{G}\times B\to \bP^M\times B$. Hence we have a proper birational morphism $\psi:\Gamma\to \overline{G}\times B$ which is an isomorphism over $G\times B^{\circ}$, and a proper $B$-morphism $\tphi:\Gamma\to \bP^M\times B$. From the assumption we know that $(z_0,0)\in \overline{\phi(G\times B^\circ)}=\tphi(\Gamma)$. Let us take a point $\tilde{z}_0\in \tphi^{-1}(z_0,0)\subset\Gamma$. Then we may choose a smooth pointed curve $(0'\in B')$ together with a finite morphism $f:B'\to \Gamma$ such that $f(0')=\tilde{z}_0$ and $f(B')\cap (G\times B^\circ)\neq\emptyset$. After possibly shrinking $(0'\in B')$ we may assume that $f(B'^\circ)\subset G\times B^\circ$. Then by defining $\pi:=\pr_2\circ \psi\circ f$, $\tau:=\pr_1\circ f|_{B'^\circ}$, and $z':=\pr_1\circ \tphi\circ f$, it is easy to check that the conclusion is satisfied. 
\end{proof}

A priori $\fB$ might only be a real number. Nevertheless, the following result shows that $\fB$ has to be rational and we can find a destabilizing test configuration in $\bP^{N_m}$.

\begin{prop}\label{prop:continuity-Kus}
 With Notation \ref{not:const}, if $\fB<1$ then $(X,cD+(1-\fB)a\Delta)$ does not admit a weak conical K\"ahler-Einstein metric. There exists a $1$-PS $\lambda$ of $\SL(N_m+1)$ inducing a non-product special test configuration of $(X,cD+(1-\fB)a\Delta)$ such that the central fiber $(X', cD'+(1-\fB)a\Delta')$ admits a weak conical K\"ahler-Einstein metric.
Moreover, the generalized Futaki invariant of this special test configuration vanishes.
In particular, $\fB$ is a rational number.
\end{prop}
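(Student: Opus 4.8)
The plan is to run the continuity method of \cite[\S 4.2, \S 6]{LWX14}, now using Theorem \ref{thm:lwx4.1} as the closedness input. Keep Notation \ref{not:const} and write $P_\beta:=(X,cD+(1-\beta)a\Delta)$, a klt log Fano pair for $\beta\in(0,1]$ with $-(K_X+cD+(1-\beta)a\Delta)\sim_{\bQ}(1-cr)\beta(-K_X)$; assume $\fB<1$. First I would identify the pair at the threshold. For a prime divisor $F$ over $X$ the homogeneity just recorded gives $S_{P_\beta}(F)=(1-cr)\beta\,S_{(X,0)}(F)$, while $A_{P_\beta}(F)=A_{(X,cD)}(F)-(1-\beta)a\,\ord_F(\Delta)$ is affine in $\beta$ with $A_{(X,cD)}(F)-a\,\ord_F(\Delta)=A_{(X,cD+a\Delta)}(F)>0$ (because $(X,cD+a\Delta)$ is klt, as $c<\lct(X;D)$ and $a<1$). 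Hence each ratio $A_{P_\beta}(F)/S_{P_\beta}(F)$ is strictly decreasing in $\beta$, so $\beta\mapsto\delta(P_\beta)$ is non-increasing, and it is upper semicontinuous, being an infimum of continuous functions. Combined with openness of uniform K-stability in the coefficient (\cite{Fuj17,BL18b}) and the definition of $\fB$, this forces $\delta(P_{\fB})=1$: the pair $P_{\fB}$ is K-semistable but not uniformly K-stable.

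Next I would produce the limiting pair. Choosing $\gamma_0\le\beta_i\uparrow\fB$, Proposition \ref{prop:hilbconverge} realizes the weak conical K\"ahler-Einstein metric on $P_{\beta_i}$ as a Gromov-Hausdorff limit of conical K\"ahler-Einstein metrics $\omega(\beta_i)$ on uniformly K-stable log smooth pairs $(\cX_{b_i},c\cD_{b_i}+(1-\beta_i)a\tDelta_{b_i})$ with $b_i\to 0$. Applying Theorem \ref{thm:lwx4.1} to $\{(\cX_{b_i},\cD_{b_i},\tDelta_{b_i};\omega(\beta_i))\}$ with $\beta_\infty=\fB$ yields a $\bQ$-Fano variety $Y$ with Weil divisors $E,\Gamma$ such that $(Y,cE+(1-\fB)a\Gamma)$ is K-polystable with a weak conical K\"ahler-Einstein metric and reductive automorphism group, and the Tian embeddings satisfy $T_i(\cX_{b_i},\cD_{b_i},\tDelta_{b_i})\to T_\infty(Y,E,\Gamma)$ in $\bH^{\bm{\chi},q;N_m}$. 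Comparing Tian's embedding with the relative $|-mK_{\cX/B}|$-embedding of the family gives $g_i\in\SL(N_m+1)$ with $g_i\cdot z(b_i)\to\Hilb(Y,E,\Gamma)$, where $z\colon B\to\bH^{\bm{\chi},q;N_m}$ is the relative Hilbert morphism and $z(0)=\Hilb(X,D,\Delta)$. Lemma \ref{lem:hilbconv} then supplies a quasi-finite cover $\pi\colon(0'\in B')\to(0\in B)$ with morphisms $\tau$ to $\SL(N_m+1)$ and $z'\colon B'\to\bH^{\bm{\chi},q;N_m}$ satisfying $z'(b')=\tau(b')\cdot z(\pi(b'))$ and $z'(0')=\Hilb(Y,E,\Gamma)$; thus $(Y,cE+(1-\fB)a\Gamma)$ is a $\bQ$-Gorenstein flat limit of a base change of the log smooth family, whose generic fibre $(\cX_\eta,c\cD_\eta+(1-\fB)a\tDelta_\eta)$ is K-semistable by the argument of the first paragraph applied to $\cX_\eta$.

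By uniqueness of K-polystable degenerations \cite{BX18} — equivalently, since the good moduli space of Theorem \ref{thm:lwxlog} separates its closed points — $(Y,cE+(1-\fB)a\Gamma)$ is \emph{the} K-polystable degeneration of the generic fibre, hence (as $P_{\fB}$ is K-semistable and degenerates to the generic fibre through $\cX\to B$) also of $P_{\fB}$; re-embedding $P_{\fB}$ by $|-mK_X|$ and using Theorem \ref{thm:bdd-bigangle} to keep the central fibre in $Z_m^{\klt}$, this special degeneration is induced by a $1$-PS $\lambda$ of $\SL(N_m+1)$ with central fibre $(X',cD'+(1-\fB)a\Delta')\cong(Y,cE+(1-\fB)a\Gamma)$. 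It remains to see that $P_{\fB}$ is not itself K-polystable: if it were, then since any automorphism of $X$ fixing $cD+(1-\fB)a\Delta$ also fixes $D$ and $\Delta$ (their $\bQ$-linear classes and coefficients being distinct, after a generic choice of $\Delta$), we would get $\Aut(P_{\fB})=\Aut(X,D,\Delta)=\Aut(P_\beta)$ for $\beta<\fB$, which is finite since $P_\beta$ is K-stable, so $P_{\fB}$ would be K-stable, hence uniformly K-stable, contradicting $\delta(P_{\fB})=1$. Therefore $P_{\fB}$ admits no weak conical K\"ahler-Einstein metric by \cite{Ber16}, and $\lambda$ is a non-product special test configuration whose central fibre is K-polystable; its generalized Futaki invariant vanishes, since $P_{\fB}$ is K-semistable and is $S$-equivalent to its K-polystable degeneration $Y$. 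Finally, the intersection formula for the generalized Futaki invariant (cf. \cite{Wan12,Oda13a}), together with the fact that $(X,cD+a\Delta)$ is log Calabi-Yau (i.e. $K_X+cD+a\Delta\sim_{\bQ}0$, the value of the virtual boundary at $\beta=0$), shows the generalized Futaki invariant of the test configuration induced by $\lambda$ on $P_\beta$ is affine-linear in $\beta$ with rational coefficients; being positive for $\beta<\fB$ and zero at $\beta=\fB$, it has $\fB$ as a rational root, so $\fB\in\bQ$.

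The crux is the passage in the second paragraph from the analytic Gromov-Hausdorff limit of Theorem \ref{thm:lwx4.1} to an algebraic special test configuration over $\bA^1$: Lemma \ref{lem:hilbconv} only exhibits $Y$ inside the Hilbert scheme as a $\bQ$-Gorenstein degeneration of the \emph{generic} fibre of $\cX\to B$, and it takes the K-semistability of that generic fibre together with uniqueness of K-polystable degenerations to transport the degeneration to the central fibre $P_{\fB}$ and to verify that the resulting total space is normal and plt — i.e. that the test configuration really is special.
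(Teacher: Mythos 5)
Your plan has the right skeleton (continuity method plus Theorem \ref{thm:lwx4.1}/Proposition \ref{prop:hilbconverge}/Lemma \ref{lem:hilbconv}, then uniqueness of K-polystable degenerations), and your $\delta$-monotonicity computation for $P_\beta=(X,cD+(1-\beta)a\Delta)$ is a genuinely different and valid route to the conclusion ``$P_{\fB}$ is K-semistable but not uniformly K-stable'': the paper instead invokes properness of the Mabuchi energy plus \cite{Fuj17}-openness directly. That part buys you a cleaner algebraic formulation of what the threshold means. Two points in the proposal, however, are genuine gaps rather than compression.

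First, the step ``K-polystable with finite automorphisms $\Rightarrow$ K-stable $\Rightarrow$ uniformly K-stable'' is circular in this paper's logical order. The implication ``K-stable $\Rightarrow$ uniformly K-stable'' is precisely Theorem \ref{thm:bddtestK}(4), and Theorem \ref{thm:bddtestK} is proved \emph{using} Proposition \ref{prop:continuity-Kus}; the authors deliberately avoid invoking \cite{ABHLX19, Xu19, BLX19}. You do not actually need the implication: you have already identified $(Y,cE+(1-\fB)a\Gamma)$ as the K-polystable degeneration of $P_{\fB}$, so if $P_{\fB}$ were K-polystable then $P_{\fB}\cong(Y,cE+(1-\fB)a\Gamma)$ by uniqueness and hence $P_{\fB}$ admits a weak conical K\"ahler--Einstein metric; then properness of the Mabuchi energy (\cite{BBEGZ, Dar17, DNG18}) together with $\Aut(X,D,\Delta)$ finite gives uniform K-stability by \cite{BBJ18, BHJ16}, contradicting $\delta(P_{\fB})=1$. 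The paper reverses the order — prove ``no weak K\"ahler--Einstein metric'' first, \emph{then} deduce $(X',\dots)\not\cong P_{\fB}$ — precisely to avoid this detour.

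Second, you never actually produce the $1$-PS $\lambda$ of $\SL(N_m+1)$: Lemma \ref{lem:hilbconv} only furnishes a degeneration over an auxiliary pointed curve $(0'\in B')$, which is not a $\bG_m$-equivariant family over $\bA^1$. You flag this yourself as ``the crux'', but neither re-embedding by $|{-mK_X}|$ nor Theorem \ref{thm:bdd-bigangle} converts the convergence $g_i\cdot z(b_i)\to\Hilb(Y,E,\Gamma)$ into a one-parameter subgroup. What does is the observation that $\Hilb(X,cD+(1-\beta_i)a\Delta)\in\SL(N_m+1)\cdot\Hilb(X,D,\Delta)$ for each $i$ (Tian embeddings via $\omega(\beta_i)$ all lie in the same $\SL$-orbit), so that $\Hilb(X',cD'+(1-\fB)a\Delta')$ lies in the Zariski closure $\overline{\SL(N_m+1)\cdot\Hilb(X,D,\Delta)}$, and then Donaldson's compactness result \cite[Proposition~1]{Don12} produces the $1$-PS inducing a special test configuration. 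Relatedly, the assertion that the generalized Futaki invariant vanishes ``since $P_{\fB}$ is K-semistable and $S$-equivalent to its K-polystable degeneration'' is not by itself an argument: the content is that $\Fut(\cX_\lambda,c\cD_\lambda+(1-\fB)a\Delta_\lambda;\cL_\lambda)+\Fut(\cX'_{\lambda^{-1}},c\cD'_{\lambda^{-1}}+(1-\fB)a\Delta'_{\lambda^{-1}};\cL'_{\lambda^{-1}})=0$, the first summand is $\ge 0$ by K-semistability of $P_{\fB}$, and the second is $\ge 0$ because $(X',cD'+(1-\fB)a\Delta')$ is K-polystable via \cite{Ber16} — which is exactly how the paper proceeds. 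Your rationality argument via linearity of the Futaki invariant in $\beta$ is fine, but it presupposes the $1$-PS exists and that $\Fut>0$ for $\beta<\fB$, both of which rest on the two gaps above.
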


\begin{proof}
 We first show that $(X,cD+(1-\fB)a\Delta)$ does not admit a weak conical K\"ahler-Einstein metric.
 Since $(X,cD+(1-\gamma_0)a\Delta)$ is uniformly K-stable, we know that $(X,D,\Delta)$ has finite automorphism group by \cite[Corollary 3.5]{BX18}. Assume to the contrary that 
 $(X,cD+(1-\fB)a\Delta)$ admits a weak conical K\"ahler-Einstein metric. Then from \cite{BBEGZ, Dar17, DNG18} we know that the Mabuchi energy is proper. This indeed implies that $(X,cD+(1-\fB)a\Delta)$ is uniformly K-stable by \cite{BBJ18, BHJ16}. However, by \cite{Fuj17} we know that $(X,cD+(1-\fB-\epsilon)a\Delta)$ is also uniformly K-stable for $0<\epsilon\ll 1$ which contradicts our definition of $\fB$. 
 
 Next, let us choose $\gamma_0\leq \beta_i\nearrow \fB$ as $i\to \infty$. Then by Proposition \ref{prop:hilbconverge} we may choose 
 $B_{\beta_i}^\circ\ni b_i\to 0$ as $i\to \infty$ such that both $(\cX_{b_i},c\cD_{b_i}+(1-\beta_i)a\tDelta_{b_i})$ and $(X,cD+(1-\beta_i)a\Delta)$ admit a (weak) conical K\"ahler-Einstein metric for any $i$, and
 \[
 \lim_{i\to\infty}\dist_{\bH^{\bm{\chi},q;N}}\left(\Hilb(\cX_{b_i}, c\cD_{b_i}+(1-\beta_i)a\tDelta_{b_i}), \U(N+1)\cdot \Hilb(X,cD+(1-\beta_i)a\Delta)\right)=0.
 \]
 By the results of Theorem \ref{thm:lwx4.1}, there exists a sequence of matrices $g_i\in \U(N+1)$ and a log Fano pair $(X',cD'+(1-\fB)a\Delta')$ admitting a weak conical K\"ahler-Einstein metric such that 
 \[
  g_i\cdot\Hilb(\cX_{b_i},c\cD_{b_i}+(1-\beta_i)a\tDelta_{b_i})\to \Hilb(X',cD'+(1-\fB)a\Delta')\in\bH^{\bm{\chi},q;N}\quad\textrm{ as }i\to\infty.
 \]
 Since $\Hilb(X,cD+(1-\beta_i)a\Delta)\in \SL(N+1)\cdot\Hilb(X,D,\Delta)$, we know that 
 \[
 \Hilb(X',cD'+(1-\fB)a\Delta')\in\overline{ \SL(N+1)\cdot\Hilb(X,D,\Delta)}\subset\bH^{\bm{\chi},q;N}.
 \]
 On the other hand, we know that $(X',cD'+(1-\fB)a\Delta')$ is not isomorphic to $(X,cD+(1-\fB)a\Delta)$ since the latter does not admit a conical K\"ahler-Einstein metric. Since $(X', cD' + (1-\fB) a\Delta')$ admits a weak conincal K\"ahler-Einstein metric, its automorphism group is reductive by \cite[Theorem 5.2]{BBEGZ}. Hence by  \cite[Proposition 1]{Don12} there exists a $1$-PS $\lambda$ of $\SL(N+1)$ which induces a special test configuration $(\cX_\lambda, c\cD_\lambda+(1-\fB)a\Delta_\lambda)$ of $(X,cD+(1-\fB)a\Delta)$ with central fiber $(X',cD'+(1-\fB)a\Delta')$. Let $\cL_{\lambda}$ be a sufficiently divisible multiple of $-(K_{\cX_{\lambda}/\bA^1}+c\cD_\lambda)$.
 It is clear from the definition that $\beta\mapsto \Fut(\cX_\lambda, c\cD_\lambda+(1-\beta)a\Delta_\lambda;\cL_{\lambda})$ is a degree $1$ polynomial in $\beta$ with rational coefficients. Since $(X,cD+(1-\beta)a\Delta)$ is uniformly K-stable when $\beta<\fB$, we know that \[
 \Fut(\cX_\lambda, c\cD_\lambda+(1-\fB)a\Delta_\lambda;\cL_{\lambda})\geq 0.
 \]
 On the other hand, the $1$-PS $\lambda^{-1}$ of $\Aut(X',D',\Delta')\subset \SL(N+1)$ induces a product test configuration $(\cX_{\lambda^{-1}}',c\cD_{\lambda^{-1}}'+(1-\fB)a\Delta_{\lambda^{-1}}')$ of $(X',cD'+(1-\fB)a\Delta')$ such that 
 \[
 \Fut(\cX_{\lambda^{-1}}',c\cD_{\lambda^{-1}}'+(1-\fB)a\Delta_{\lambda^{-1}}';\cL_{\lambda^{-1}}')+\Fut(\cX_\lambda, c\cD_\lambda+(1-\fB)a\Delta_\lambda;\cL_{\lambda})=0.
 \]
 Again by \cite{Ber16} we know that $(X',cD'+(1-\fB)a\Delta')$ is K-polystable. Hence both generalized Futaki invariants in the above equation are zero. 
 Thus $\beta=\fB$ is the solution of  the equation $\Fut(\cX_\lambda, c\cD_\lambda+(1-\beta)a\Delta_\lambda;\cL_{\lambda})=0$ which implies $\fB$ is a rational number.
\end{proof}

Now we are ready to prove Theorem \ref{thm:bddtestK}.

\begin{proof}[Proof of Theorem \ref{thm:bddtestK}]
We follow Notation \ref{not:const}.

(1) Assume $(X,cD)$ is K-unstable and $c\leq \lct(X;D)-\frac{\epsilon_0 r^{-1}}{n+1}$. Thus we have $\fB<1$. Then by Proposition \ref{prop:continuity-Kus}, there exists a $1$-PS $\lambda$ of $\SL(N_m+1)$ which induces a test configuration $(\cX_{\lambda},c\cD_{\lambda}+(1-\beta)a\Delta_{\lambda})$ of $(X,cD+(1-\beta)a\Delta)$ such that 
\[
\Fut(\cX_{\lambda},c\cD_{\lambda}+(1-\beta)a\Delta_{\lambda};\cL_{\lambda})\geq 0 \quad\textrm{ if and only if }\beta \leq \fB.
\] 
Therefore, $\Fut(\cX_{\lambda},c\cD_\lambda;\cL_{\lambda})<0$ which implies that $(X,cD)$ is destabilized by $\lambda$.

(2) Assume that $(X,cD)$ is K-semistable but not K-polystable.
Let us choose $\gamma_0\leq \beta_i\nearrow 1$ as $i\to\infty$.
Following the proof of Proposition \ref{prop:continuity-Kus}, there exists a log Fano pair $(X',cD'+0\cdot\Delta')\subset\bP^N$ admitting a weak conical K\"ahler-Einstein metric such that
\[
\Hilb(X',cD'+0\cdot\Delta')\in \overline{\SL(N+1)\cdot\Hilb(X,D,\Delta)}\subset\bH^{\bm{\chi},q;N}.
\]
In particular, $(X',cD')$ is K-polystable by \cite{Ber16} hence is not isomorphic to $(X,cD)$. Then by \cite[Proposition 1]{Don12} we obtain a special degeneration from $(X,cD)$ to $(X',cD')$ induced by a $1$-PS in $\SL(N+1)$. In addition, since $\Hilb(\cX_{b_i}, c\cD_{b_i}+(1-\beta_i)a\tDelta_{b_i})$ converges to $\Hilb(X',cD'+0\cdot\Delta')$ in $\bH^{\bm{\chi},q;N}$ as $i\to \infty$, we know that $\Hilb(\cX_{b_i},\cD_{b_i})$ converges to $\Hilb(X',D')$ in $\bH^{\bm{\chi};N}$ for suitable embeddings. Hence $\Hilb(X',D')\in Z_m^{\klt}$.

(3) Assume $(X,cD)$ is K-polystable. Similar to part (2), there exists a log Fano pair $(X',c\cD'+0\cdot\Delta')$ in $\bP^N$ admitting a weak conical K\"ahler-Einstein metric such that
\[
\Hilb(X',cD'+0\cdot\Delta')\in \overline{\SL(N+1)\cdot\Hilb(X,D,\Delta)}\subset\bH^{\bm{\chi},q;N}.
\]
If $\Hilb(X',cD'+0\cdot\Delta')\in \SL(N+1)\cdot\Hilb(X,D,\Delta)$ then we are done. So we may assume $\Hilb(X',cD'+0\cdot\Delta')\not\in\SL(N+1)\cdot\Hilb(X,D,\Delta)$. Then again by \cite[Proposition 1]{Don12} we get a special degeneration from $(X,cD)$ to $(X',cD')$ induced by a $1$-PS in $\SL(N_m+1)$. By a similar argument as the proof of Proposition \ref{prop:continuity-Kus}, the generalized Futaki invariant of this special test configuration vanishes since both $(X,cD)$ and $(X',cD')$ are K-polystable. Hence they are isomorphic.

(4) Assume that $(X,cD)$ is K-stable. Then it admits a weak conical K\"ahler-Einstein metric and $\Aut(X,D)$ is reductive by (3). Hence K-stability of $(X,cD)$ implies that $\Aut(X,D)$ is a finite group, which implies uniform K-stability of $(X,cD)$ by \cite{BBEGZ, Dar17, DNG18, BBJ18, BHJ16}. The proof is finished. 
\end{proof}

\subsection{K-semistable thresholds are constructible}\label{sec:kstconst}

In this section, we show that the K-semistable thresholds are constructible functions satisfying certain semi-continuity conditions. In particular, this implies the openness of K-semistability in our setting. Our approach is based on \cite[Section 7 and A.1]{LWX14}.

\begin{defn}
Fix a rational number $\epsilon_0\in (0,1)$.
For any $(X,D)$ with $\Hilb(X,D)\in Z^{\klt}$, we define the  \emph{upper and lower K-semistable thresholds} as follows:
\begin{align*}
\kst_{+,\epsilon_0}(X,D) & :=\sup\{c\in (0,\min\{1,(1-\epsilon_0)r^{-1}\})\mid (X,cD)\textrm{ is K-semistable}\};\\
\kst_{-,\epsilon_0}(X,D) & :=\inf\{c\in (0,\min\{1,(1-\epsilon_0)r^{-1}\})\mid (X,cD)\textrm{ is K-semistable}\}.
\end{align*}
\end{defn}

Next, we will start the construction of the
K-moduli stack $\cK\cM_{\chi_0,r,c}$. In this section, we will focus on 
showing the openness of $c$-K-semistability
and constructibility of K-semistable thresholds.

\begin{thm}\label{thm:kstfinite}
The functions $\kst_{\pm,\epsilon_0}$ on $Z^{\klt}$ are constructible with rational values.
Moreover, $\kst_{+,\epsilon_0}$ (resp. $\kst_{-,\epsilon_0}$) is lower (resp. upper) semicontinuous
on $Z^{\klt}$. In particular, $Z_c^{\circ}$ are Zariski open subsets of $Z^{\klt}$ whenever $c\in (0,\min\{1,(1-\epsilon_0)r^{-1}\})$.
\end{thm}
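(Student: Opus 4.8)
The plan is to adapt the strategy of \cite[\S 7 and Appendix A]{LWX14} to the logarithmic, varying-coefficient setting, the essential new inputs being that the boundedness result Theorem \ref{thm:bdd-bigangle} and the ``destabilizations live in a fixed $\bP^{N_m}$'' result Theorem \ref{thm:bddtestK} hold with a boundary divisor and uniformly in $c$. First I would fix once and for all an integer $m$ that is a sufficiently divisible multiple of $m_3=m_3(\chi_0,r,\epsilon_0)$ from Theorem \ref{thm:bddtestK}, so that $Z^{\klt}=Z^{\klt}_m$ carries its universal $\bQ$-Gorenstein family $(\cX,\cD)\to Z^{\klt}$ with $\cX\hookrightarrow Z^{\klt}\times\bP^{N_m}$, and so that for every $z\in Z^{\klt}$ and every rational $c$ in the ``good range'' $(0,\, \lct(X_z;D_z)-\tfrac{\epsilon_0 r^{-1}}{n+1}]$, the pair $(X_z,cD_z)$ is K-semistable if and only if it is not destabilized by any $1$-PS of the \emph{fixed} reductive group $\SL(N_m+1)$. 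Since $Z^{\klt}$ parametrizes a bounded family of klt pairs, $z\mapsto\lct(X_z;D_z)$ is constructible with rational values; and since a K-semistable log Fano pair is klt by \cite{Oda13b}, $c$-K-semistability forces $c<\lct(X_z;D_z)$. So, after a Noetherian stratification of $Z^{\klt}$ along which $\lct$ is constant, everything reduces to a $\GIT$-type analysis for $1$-PS of a fixed group, with only the narrow interval $c\in(\lct(X_z;D_z)-\tfrac{\epsilon_0 r^{-1}}{n+1},\, \lct(X_z;D_z))$ requiring separate treatment.

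Next I would analyze how the generalized Futaki invariant varies. For a $1$-PS $\lambda$ of $\SL(N_m+1)$ and $z\in Z^{\klt}$, write $z_0:=\lim_{t\to 0}\lambda(t)\cdot z$ for the resulting central-fiber Hilbert point and $(\cX_\lambda,\cD_\lambda)/\bA^1$ for the induced test configuration of $(X_z,D_z)$. By Proposition \ref{prop:Fut=CMwt}, $\Fut(\cX_\lambda,c\cD_\lambda;\cL)$ is a positive multiple of $\wt(\lambda_{\CM,\pi,c\cD,\cL})$, and by Proposition \ref{prop:cm-interpolation} this weight, after dividing by $(1-cr)^n$, is an affine-linear function of $c$ whose two coefficients are the $\bG_m$-weights attached respectively to the absolute CM line bundle of $(\cX_\lambda;\cL)$ and to the Hodge line bundle $\lambda_{\Hodge}$ of $(\cX_\lambda,\cD_\lambda)$. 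Each of these is a fixed polynomial expression in the $\bG_m$-weights on the section spaces $H^0$ of powers of $\cO_{\bP^{N_m}}(1)|_{X_{z_0}}$ and $\cO_{\bP^{N_m}}(1)|_{D_{z_0}}$, hence locally constant as $z_0$ ranges over a $\bG_m$-fixed locally closed stratum of the fixed Hilbert scheme. Combining this with the classical fact that $1$-PS of $\SL(N_m+1)$ fall, up to conjugation, into finitely many combinatorial types (rational cones of weight data), the Hilbert--Mumford criterion becomes a finite computation over a stratification of $\overline{Z}$, exactly as in \cite[Appendix A]{LWX14}. The conclusion of this step is that
\[
\{(z,c)\in Z^{\klt}\times\bQ_{>0}\ :\ c\ \text{lies in the good range and}\ (X_z,cD_z)\ \text{is K-semistable}\}
\]
is cut out by finitely many affine-linear-in-$c$ inequalities whose coefficients are constructible functions of $z$; in particular it is constructible, and for each $z$ the set of good-range $c$ with $(X_z,cD_z)$ K-semistable is a finite union of intervals with rational endpoints.

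To finish, I would use Proposition \ref{prop:k-interpolation}(1) to upgrade this to a single interval: if $0<c<c'$ and both $(X_z,cD_z)$ and $(X_z,c'D_z)$ are K-semistable, then applying that proposition with $(X_z,c'D_z)$ as the K-semistable pair (whose $-K_{X_z}-c'D_z$ is ample) and $cD_z$ as the second boundary (whose $-K_{X_z}-cD_z$ is nef) shows $(X_z,c''D_z)$ is K-semistable for all $c''\in[c,c']$. Hence the K-semistable locus in $c$ is an interval with rational endpoints, which are precisely $\kst_{-,\epsilon_0}(X_z,D_z)\leq\kst_{+,\epsilon_0}(X_z,D_z)$ (intersected with the admissible range), and combined with the previous step this shows $\kst_{\pm,\epsilon_0}$ are constructible with rational values. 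For the semicontinuity I would show that the $c$-K-unstable locus in $Z^{\klt}$ is Zariski closed for each fixed admissible $c$ --- equivalently $Z^\circ_c$ is Zariski open --- following \cite[\S 7]{LWX14}: the finiteness of $1$-PS types and the constructibility of the Futaki data above reduce this to a Kempf-type stability statement, the point being that a destabilizing $1$-PS of $\SL(N_m+1)$ for points $z'$ in a punctured neighbourhood of $z$ can be conjugated into a fixed maximal torus and taken to a limit, producing --- after the standard care when the limiting Futaki invariant is $0$ --- a destabilization of $(X_z,cD_z)$. Given openness of $Z^\circ_c$ for all $c$, lower semicontinuity of $\kst_{+,\epsilon_0}$ and upper semicontinuity of $\kst_{-,\epsilon_0}$ follow because $\{\kst_{+,\epsilon_0}>a\}=\bigcup_{a'>a}Z^\circ_{a'}$ and $\{\kst_{-,\epsilon_0}<a\}=\bigcup_{a'<a}Z^\circ_{a'}$; and conversely, since the threshold functions take only finitely many values on any Noetherian stratification, one recovers openness of $Z^\circ_c=\{z:\kst_{-,\epsilon_0}(z)\le c\le\kst_{+,\epsilon_0}(z)\}$ formally, as stated. \textbf{Expected main obstacle.} The technical heart is twofold: carrying out the relative Hilbert--Mumford / stratification argument of \cite[Appendix A]{LWX14} \emph{uniformly in $c$} so that the Futaki data is genuinely constructible, and proving closedness of the $c$-K-unstable locus --- which, because the CM $\bQ$-line bundle need not be ample on $\overline{Z}$, does not follow from classical GIT and requires the Kempf-type argument of \cite[\S 7]{LWX14} --- together with the separate bookkeeping for $c$ in the small interval below $\lct(X_z;D_z)$.
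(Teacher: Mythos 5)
Your constructibility argument matches the paper's: fix $m$ so that Theorem \ref{thm:bddtestK} lets you test $c$-K-semistability by $1$-PS of the fixed group $\SL(N_m+1)$, observe that the relevant CM weight is affine-linear in $c$, and invoke the finite piecewise-linear stratification of \cite[Lemma A.3]{LWX14} to cut out the $c$-K-semistable locus by constructible data. The paper makes the linearity visible directly as $\lambda_{\CM,\pi,c\cD,\cL}=M_1-cM_2$ straight from Definition \ref{defn:logCM1}, whereas you route through the CM--Hodge interpolation of Proposition \ref{prop:cm-interpolation}; these are the same thing in a different normalization, and your use of Proposition \ref{prop:k-interpolation} to collapse the semistable $c$'s to a single interval is correct, if slightly more than is needed for the constructibility claim.

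Where you genuinely diverge is the semicontinuity step, and this is worth flagging. You propose to prove openness of $Z_c^\circ$ via a Kempf-type limiting argument in the style of \cite[\S 7]{LWX14} --- degenerating destabilizing $1$-PS and dealing by hand with the borderline case when the limiting Futaki invariant vanishes --- and you correctly identify this as the expected heavy lifting. The paper avoids it entirely: once $Z_c^\circ$ is constructible (which is exactly what the stratification gives), openness follows immediately from the fact that $c$-K-semistability of $\bQ$-Gorenstein smoothable families is preserved under generization over a DVR, i.e.\ the ``very genericity'' theorem \cite[Theorem 3]{BL18a}. That single citation collapses your main obstacle; your route reverts to the pre-\cite{BL18a} technology and would need the delicate Gromov--Hausdorff/Kempf analysis you allude to. One additional overcaution: the ``separate bookkeeping'' for $c$ in the narrow interval just below $\lct(X_z;D_z)$ is unnecessary, since K-semistability already forces $\alpha(X_z,cD_z)\geq\tfrac{1}{n+1}$, and the computation inside the proof of Proposition \ref{prop:uK-Delta} then yields $c\leq\lct(X_z;D_z)-\tfrac{\epsilon_0 r^{-1}}{n+1}$ automatically; so the truncation condition never bites on the locus of interest.
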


Note that the semicontinuity  properties of these types
of functions (in relation to existence of conical KE metrics) were observed earlier in \cite{SSY16, LWX14}.

Before presenting the proof of Theorem \ref{thm:kstfinite}, we  recall some results from
\cite[Section A.1]{LWX14} (see also \cite[Chapter 2, Proposition 2.14]{MFK94} and \cite[Proof of Lemma 2.11]{Oda13c}).

\begin{lem}[{\cite[Lemma A.3]{LWX14}}]\label{lem:lwxA.3}
 Let $Z$ be a projective variety. Let $L,M$
be two $G$-linearized ample line bundles over $Z$.
Let $T\subset G$ be a maximal torus.
Then there is a finite set of linear functionals
$l_{1}^L,\cdots,l_{r_L}^L,l_{1}^M,\cdots,l_{r_M}^M$ which are rational on $\Hom_{\bQ}(\bG_m,T)$
with the following property:

For any $z\in Z$, there exist $I(z,L)\subset\{1,\cdots,r_L\}$, $I(z,M)\subset\{1,\cdots,r_M\}$ such that the $\lambda$-weight of $z\in Z$ with respect to the linearization of $G$ on $L\otimes M^{-1}$ is given by 
\[
\mu^L(z,\lambda)-\mu^M(z,\lambda)=\max\{l_{i}^{L}(\lambda)\mid 
i\in I(z,L)\}-\max\{l_{i}^{M}(\lambda)\mid i\in I(z,M)\}
\]
for all $1$-PS $\lambda$ of $T$. Moreover, the function
\begin{align*}
\psi^{L,M}:Z & \to 2^{\{1,\cdots,r_L\}}\times 2^{\{1,\cdots,r_M\}}\\
z&\mapsto (I(z,L), I(z,M))
\end{align*}
is constructible.

\end{lem}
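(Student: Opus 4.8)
The plan is to reduce the statement to the elementary description of Hilbert--Mumford weights in terms of torus weight spaces. First I would choose positive integers $a,b$ so that $L^{\otimes a}$ and $M^{\otimes b}$ are very ample; since $L$ and $M$ are $G$-linearized, so are these powers, and they produce $G$-equivariant closed immersions $Z\hookrightarrow\bP(W_L)$ and $Z\hookrightarrow\bP(W_M)$ with $W_L:=H^0(Z,L^{\otimes a})^\vee$ and $W_M:=H^0(Z,M^{\otimes b})^\vee$ finite-dimensional $G$-representations, satisfying $\cO_{\bP(W_L)}(1)|_Z\cong L^{\otimes a}$ and $\cO_{\bP(W_M)}(1)|_Z\cong M^{\otimes b}$. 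Restricting the action to $T$, decompose $W_L=\bigoplus_{i=1}^{r_L}W_L^{(\chi_i^L)}$ and $W_M=\bigoplus_{j=1}^{r_M}W_M^{(\chi_j^M)}$ into $T$-weight spaces, where $\chi_1^L,\dots,\chi_{r_L}^L$ (resp. $\chi_1^M,\dots,\chi_{r_M}^M$) are the finitely many characters of $T$ occurring. Set $l_i^L:=-\tfrac1a\langle\chi_i^L,\,\cdot\,\rangle$ and $l_j^M:=-\tfrac1b\langle\chi_j^M,\,\cdot\,\rangle$, regarded as $\bQ$-linear functionals on $\Hom_\bQ(\bG_m,T)$; they are rational because a character pairs integrally with cocharacters, so passing to the very ample powers only introduces the denominators $a,b$.

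Next I would attach to each $z\in Z$ the combinatorial data controlling its weights. Pick any lift $\hat z\in W_L\setminus\{0\}$ of $z$, write $\hat z=\sum_i\hat z_i$ with $\hat z_i\in W_L^{(\chi_i^L)}$, and put $I(z,L):=\{\,i:\hat z_i\neq0\,\}$; this is independent of the chosen lift since rescaling $\hat z$ rescales every $\hat z_i$ simultaneously, and $I(z,M)$ is defined in the same way. By the standard formula for the numerical function of a $1$-PS acting on a projective space (see \cite[Chapter 2]{MFK94}, with the sign convention under which semistability means $\mu\geq0$), for any $1$-PS $\lambda$ of $T$ one has $\mu^{L^{\otimes a}}(z,\lambda)=\max\{-\langle\chi_i^L,\lambda\rangle:i\in I(z,L)\}$; since the numerical function is $\bQ$-homogeneous in the linearization, dividing by $a$ gives $\mu^{L}(z,\lambda)=\max\{l_i^L(\lambda):i\in I(z,L)\}$, and likewise $\mu^{M}(z,\lambda)=\max\{l_j^M(\lambda):j\in I(z,M)\}$. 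Because the numerical function is additive in the linearization and $\mu^{M^{-1}}=-\mu^{M}$, the $\lambda$-weight of $z$ for the $G$-linearization on $L\otimes M^{-1}$ equals $\mu^{L}(z,\lambda)-\mu^{M}(z,\lambda)$, which is exactly the asserted expression.

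Finally, for constructibility of $\psi^{L,M}$ I would observe that for each fixed $i$ the $T$-equivariant projection $\pi_i^L\colon W_L\to W_L^{(\chi_i^L)}$ is linear, so $\{\,z\in Z:i\notin I(z,L)\,\}=Z\cap\bP(\ker\pi_i^L)$ is Zariski closed in $Z$, and similarly for $M$. Hence $Z$ is a finite disjoint union of locally closed subsets cut out by intersecting these closed sets with the complements of the remaining ones, and on each such stratum the pair $(I(z,L),I(z,M))$ is constant; therefore $\psi^{L,M}$ is constructible. I do not expect a genuine obstacle: the content is classical GIT bookkeeping, and the only point requiring care is keeping the sign and normalization conventions for $\mu^{\bullet}(z,\lambda)$ consistent --- in particular that replacing $L$ by the very ample power $L^{\otimes a}$ merely rescales the functionals by $1/a$, so that rationality (rather than integrality) of the $l_i^{\bullet}$ is the sharp conclusion.
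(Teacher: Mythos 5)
The paper does not prove this lemma; it simply cites \cite[Lemma A.3]{LWX14}, so there is no in-paper proof to compare against. Your argument is correct and is the standard one used in that reference: pass to very ample powers, decompose the corresponding representations into $T$-weight spaces, use Mumford's formula $\mu^{L^{\otimes a}}(z,\lambda)=\max\{-\langle\chi_i^L,\lambda\rangle : \hat z_i\neq0\}$ together with additivity and $\bQ$-homogeneity of $\mu$ in the linearization, and observe that the weight-support sets $I(z,\cdot)$ are constant on the locally closed strata cut out by vanishing/non-vanishing of the $T$-weight components of a lift of $z$.
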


We present the proof of Theorem \ref{thm:kstfinite} below which essentially follows from \cite[Proposition A.4]{LWX14}.

\begin{proof}[Proof of Theorem \ref{thm:kstfinite}]
Let us consider the $c$-K-stability on $Z^{\klt}$
for $c\in (0,\min\{1,(1-\epsilon_0)r^{-1}\})$.
Denote by $\pi:(\cX,\cD)\to \oZ$ the universal family with $\cL$ representing the pull back of the line bundle $\cO_{\bP^N}(1)$. According to Definition \ref{defn:logCM1}, denote by
\begin{equation}\label{eq:M_i}
M_1:=\lambda_{\CM,\pi,\cL}\quad\textrm{and}\quad 
M_2:=\frac{n(\cL_z^{n-1}\cdot\cD_z)}{(\cL_z^n)}\lambda_{\Chow,\pi,\cL}-(n+1)\lambda_{\Chow,\pi|_{\cD},\cL|_{\cD}}.
\end{equation}
Hence from Definition \ref{defn:logCM1} we know that 
$\lambda_{\CM,\pi,c\cD,\cL}=M_1-cM_2$.
Notice that by flatness of the universal family, this function $z\in\oZ\mapsto n(\cL_z^{n-1}\cdot\cD_z)/(\cL_z^n)$ does not depend on the choice of $z$.
Let $(X,D)$ be a log pair with $z:=\Hilb(X,D)\in Z^{\klt}$. For simplicity, denote by $G:=\SL(N+1)$. Then every $1$-PS $\lambda:\bG_m\to G$ naturally induces a test configuration $(\cX_\lambda,c\cD_\lambda;\cL_\lambda)$ of $(X,cD;L)$.
Moreover, Proposition \ref{prop:Fut=CMwt} implies that 
\[
\Fut(\cX_\lambda,\cD_\lambda;\cL_\lambda)=\frac{1}{(n+1)(L^n)}
\left(\mu^{M_1}(z,\lambda)-c\mu^{M_2}(z,\lambda)\right).
\]
Thus Theorem \ref{thm:bddtestK} implies that $(X,D)$ is $c$-K-semistable if and only if $c\leq  \lct(X;D)-\frac{\epsilon_0}{n+1}$ and
\[
\mu^{M_1}(z,\lambda)-c\mu^{M_2}(z,\lambda)\geq 0\quad\textrm{ for any $1$-PS $\lambda$ of $G$}.
\]
Pick a sufficiently divisible positive integer $k$ such that $M_1^{\otimes k}$ and $M_2^{\otimes k}$ are line bundles over $\oZ$. Let $M$ be a sufficiently ample $\SL(N+1)$-line bundle on $\oZ$ such that $L_1:=M\otimes M_1^{\otimes k}$ and $L_2:=M\otimes M_2^{\otimes k}$ are both ample line bundles. Then we have
\[
k\left(\mu^{M_1}(z,\lambda)-c\mu^{M_2}(z,\lambda)\right)=\left(\mu^{L_1}(z,\lambda)-\mu^{M}(z,\lambda)\right)-c\left(\mu^{L_2}(z,\lambda)-\mu^{M}(z,\lambda)\right).
\]
We fix a maximal torus $T\subset G$.
Hence using Lemma \ref{lem:lwxA.3}, we know that there exists a decomposition $Z^{\klt}=\sqcup_I S_I^T$ to constructible subsets $S_I^T$ where $I$ belongs to some finite index set, such that for any $z\in S_I^T$ the functions $\mu^{M_1}(z,\cdot)$ and $\mu^{M_2}(z,\cdot)$ are rational piecewise linear functions on $\Hom_{\bQ}(\bG_m, T)$ that are independent of the choice of $z$. We denote these two functions by $\mu_{1,I}(\cdot)$ and $\mu_{2,I}(\cdot)$, respectively. On the other hand, since any $1$-PS $\lambda$ of $G$ is conjugate via $g\in G$ to  a $1$-PS $g\lambda g^{-1}$ of $T$, and $\mu^{M_i}(z, \lambda)=\mu^{M_i}(g\cdot z, g\lambda g^{-1})$ for $i=1,2$, we know that $(X,D)$ is $c$-K-semistable if and only if $c\leq \lct(X;D)-\frac{\epsilon_0}{n+1}$ and 
\[
\mu_{1,I}(\lambda)\geq c\mu_{2,I}(\lambda)\quad
\textrm{for any $1$-PS $\lambda$ of $T$ and any $I$ with $z\in S_I^G$}.
\]
Here $S_I^G:=G\cdot S_I^T$ is a constructible subset of $Z^{\klt}$ by Chevalley's Lemma \cite[Exercise II.3.19]{Har77}. 
Since $\mu_{i,I}$ is a rational piecewise linear function on $\Hom_{\bQ}(\bG_m,T)$, the union $\cup_I S_I^G=Z^{\klt}$, and $z\in Z^{\klt}\mapsto\lct(X;D)$ is a constructible function with rational values (see e.g. \cite[Corollary 2.10]{Amb16}), we know that $\kst_{\pm,\epsilon_0}$ 
are constructible with rational values as well. Their semi-continuity follows from 
the very genericity of K-semistability \cite[Theorem 3]{BL18a}.
\end{proof}

\begin{cor}\label{cor:openness}
 Let $\pi:(\cX,c\cD)\to T$ be a $\bQ$-Gorenstein smoothable log Fano family over a normal base $T$ where $\cD\sim_{\pi}-rK_{\cX/T}$. Then for any $c\in (0,\min\{1,r^{-1}\})$, the set \[\{t\in T\mid (\cX_t,c\cD_t)\textrm{ is K-semistable}\}\] is a Zariski open subset of $T$.
\end{cor}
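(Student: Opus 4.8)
The plan is to reduce the assertion to the openness statement of Theorem~\ref{thm:kstfinite}, namely that $Z_c^{\circ}$ is Zariski open in $Z^{\klt}$, by spreading out the family $\pi$ into the relevant Hilbert scheme. Since openness can be checked on connected components and is local on $T$, we may assume $T$ is connected, and it suffices to produce, around each $t_0\in T$ with $(\cX_{t_0},c\cD_{t_0})$ K-semistable, an open neighbourhood $V$ such that $\{t\in V\mid (\cX_t,c\cD_t)\textrm{ is K-semistable}\}$ is open in $V$. Fix a rational number $\epsilon_0\in(0,1-cr)$; then $c\in(0,\min\{1,(1-\epsilon_0)r^{-1}\})$, so all constructions of Section~\ref{sec:construction} apply with this $\epsilon_0$ and the resulting constant $m_1=m_1(n,r,\epsilon_0)$ of Theorem~\ref{thm:bdd-bigangle}, and we let $\chi_0$ be the common Hilbert polynomial of the fibres (so the schemes $Z=Z_m$, $\overline{Z}$, $Z^{\klt}=Z_m^{\klt}$, $Z_c^{\circ}=Z_{c,m}^{\circ}$ and the space $\bH^{\bm{\chi};N}$ are defined).

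Since $(\cX_{t_0},c\cD_{t_0})$ is K-semistable it is klt, so $\cX_{t_0}$ is a $\bQ$-Fano variety and $m_1K_{\cX_{t_0}}$ is Cartier by Theorem~\ref{thm:bdd-bigangle}. The locus in $\cX$ where $\cO_{\cX}(m_1K_{\cX/T})$ is invertible is open and contains the whole fibre $\cX_{t_0}$; since $\pi$ is projective (hence proper), its closed complement maps to a closed subset of $T$ not containing $t_0$, so after replacing $T$ by an open neighbourhood $V$ of $t_0$ we may assume $m_1K_{\cX/T}$ is Cartier over $V$. Using that ``klt'' is an open condition in a $\bQ$-Gorenstein family of pairs, we may further shrink $V$ so that every fibre $\cX_t$ ($t\in V$) is a $\bQ$-Fano variety with $m_1K_{\cX_t}$ Cartier. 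Now choose $m$ a sufficiently divisible multiple of $m_1$; then $-mK_{\cX_V/V}$ is $\pi$-very ample, and by Kawamata--Viehweg vanishing on the klt Fano fibres $\pi_*\cO_{\cX_V}(-mK_{\cX_V/V})$ is locally free of rank $\chi_0(m)$ and compatible with base change (with the analogous statements for $\cD_V$), so setting $N:=\chi_0(m)-1$ and shrinking $V$ to an affine open over which this bundle is trivial, a choice of basis gives a relative embedding $\cX_V\hookrightarrow\bP^N\times V$ together with $\cD_V$, hence a morphism $\iota\colon V\to\bH^{\bm{\chi};N}$ with $\iota(t)=\Hilb(\cX_t,\cD_t)$, where $\cO_{\bP^N}(1)|_{\cX_t}\cong\cO_{\cX_t}(-mK_{\cX_t})$ and $H^0(\bP^N,\cO(1))\xrightarrow{\sim}H^0(\cX_t,-mK_{\cX_t})$. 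Because each $(\cX_t,\cD_t)$ is $\bQ$-Gorenstein smoothable, embedding a $\bQ$-Gorenstein smoothing of it over a curve by $-mK$ exhibits $\Hilb(\cX_t,\cD_t)$ as a limit of Hilbert points of log smooth log Fano pairs, so $\Hilb(\cX_t,\cD_t)\in\overline{Z}$; combined with the conditions just arranged, $\iota$ factors through $Z^{\klt}$.

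K-semistability of $(\cX_t,c\cD_t)$ depends only on the pair and not on the chosen basis, so $\iota^{-1}(Z_c^{\circ})=\{t\in V\mid (\cX_t,c\cD_t)\textrm{ is K-semistable}\}$ regardless of the construction of $\iota$. By Theorem~\ref{thm:kstfinite} the set $Z_c^{\circ}$ is Zariski open in $Z^{\klt}$, hence $\iota^{-1}(Z_c^{\circ})$ is Zariski open in $V$, and therefore in $T$; as $t_0$ was arbitrary, $\{t\in T\mid (\cX_t,c\cD_t)\textrm{ is K-semistable}\}$ is open. The one step requiring genuine care is the passage from the abstract family to $Z^{\klt}$: one must control the Gorenstein index of the fibres to obtain a single $m$, verify that nearby fibres remain klt $\bQ$-Fano, and---most importantly---invoke $\bQ$-Gorenstein smoothability to land in the closure $\overline{Z}$ of the smooth locus (not merely in the Hilbert scheme of all klt log Fano pairs), which is exactly what makes Theorem~\ref{thm:kstfinite} applicable; everything after that is formal.
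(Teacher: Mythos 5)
Your proposal is correct and is essentially the same approach as the paper: the paper's own proof is a two-line statement that one should pick $\epsilon_0$ so that $c < (1-\epsilon_0)r^{-1}$ and then invoke Theorems~\ref{thm:bdd-bigangle} and~\ref{thm:kstfinite}; what you have done is spell out the implicit spreading-out step (trivializing $\pi_*\cO(-mK_{\cX/T})$ over a shrunken $V$, producing the classifying morphism $\iota\colon V\to Z^{\klt}$, noting that $\bQ$-Gorenstein smoothability of each fiber forces $\iota(t)\in\overline Z$, and pulling back the open set $Z_c^\circ$). The one place to be slightly more careful in the write-up is the boundedness input: you invoke Theorem~\ref{thm:bdd-bigangle} for the fixed K-semistable fiber $\cX_{t_0}$ to get that $m_1K_{\cX_{t_0}}$ is Cartier, and then pass to a neighbourhood by the open condition on the locus where $\cO_{\cX}(m_1K_{\cX/T})$ is invertible — this is exactly right and is what makes the choice of a single $m$ possible even for nearby $t$ that are not yet known to be K-semistable.
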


\begin{proof}
 For each $c\in (0,\min\{1,r^{-1}\})$ we may choose $0<\epsilon_0\ll 1$ such that $c<(1-\epsilon_0)r^{-1}$. Then the result follows from Theorem \ref{thm:bdd-bigangle} and  \ref{thm:kstfinite}.
\end{proof}

We finish this section with a useful result on K-polystability. See \cite[Theorem 1.1]{LS14} for a related result in the smooth case.

\begin{prop}\label{prop:openkps}
 Let $(X,c_0 D)$ be a $\bQ$-Gorenstein smoothable
 K-polystable log Fano pair. Then the set
 \[
  \{c\in(0,\min\{1,r^{-1}\})\colon (X,cD)\textrm{ is K-polystable}\}
 \]
 is either $\{c_0\}$ or an open interval containing $c_0$.
\end{prop}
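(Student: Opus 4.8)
The plan is to show the set $I:=\{c\in(0,\min\{1,r^{-1}\})\colon (X,cD)\text{ is K-polystable}\}$ is an interval, and then that when $|I|>1$ its endpoints cannot lie in $I$. First I would check $I$ is an interval: if $c_1<c_2$ both lie in $I$, Proposition \ref{prop:k-interpolation}(1) applied to the divisors $c_2D$ and $c_1D$ — both proportional to $-K_X$, with $-K_X-c_2D$ ample (as $c_2<r^{-1}$), $-K_X-c_1D$ nef, and both $(X,c_iD)$ K-semistable — gives K-polystability of $(X,cD)$ for $c\in(c_1,c_2]$, so $[c_1,c_2]\subseteq I$. Next I would record that for a prime divisor $E$ over $X$, $A_{(X,cD)}(\ord_E)=A_X(\ord_E)-c\,\ord_E(D)$ and, using $-K_X-cD\sim_\bQ(1-cr)(-K_X)$ and homogeneity of volumes, $S_{(X,cD)}(\ord_E)=(1-cr)\,S_{(X,0)}(\ord_E)$; hence $\beta_c(E):=A_{(X,cD)}(\ord_E)-S_{(X,cD)}(\ord_E)$ is affine in $c$, and by the valuative criterion (Theorem \ref{thm:valuative}(1)) the K-semistable locus $J=\{c\colon\beta_c(E)\ge0\ \forall E\}$ is a closed interval $[\kst_-,\kst_+]$ (notation of Theorem \ref{thm:kstfinite}). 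Since $I\subseteq J$ and $c_0\in I$, if $\kst_-=\kst_+$ then $I=\{c_0\}$ and we are done; so assume $\kst_-<\kst_+$. As K-semistable log Fano pairs are klt, $\kst_+<\lct(X;D)<\infty$, and I would fix $\epsilon_0\in(0,1)$ small enough that $\kst_+<(1-\epsilon_0)r^{-1}$ and $\kst_+<\lct(X;D)-\tfrac{\epsilon_0 r^{-1}}{n+1}$, so that the results of Section \ref{sec:construction} apply in a neighbourhood of $J$.

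The next step is to rule out $\kst_\pm\in I$. Suppose $\kst_-\in I$. For $c$ just below $\kst_-$ the klt log Fano $(X,cD)$ is K-unstable, so Theorem \ref{thm:valuative}(1) yields a nontrivial divisorial valuation $\ord_w$ over $X$ with $\beta_c(w)<0$; since $\beta_\bullet(w)$ is affine and $\beta_{\kst_-}(w)\ge0$ (K-semistability at $\kst_-$), continuity forces $\beta_{\kst_-}(w)=0$. Then $\ord_w$ induces a special test configuration of $(X,\kst_-D)$ whose generalized Futaki invariant is a positive multiple of $\beta_{\kst_-}(w)=0$ and whose central fibre is not isomorphic to $(X,\kst_-D)$ (an isomorphic central fibre would force a product test configuration, and product test configurations correspond to the trivial valuation, not to $w$); this contradicts K-polystability. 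Symmetrically $\kst_+\notin I$, so $I\subseteq(\kst_-,\kst_+)$.

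Finally I would rule out an endpoint $\gamma$ of $I$ lying in $(\kst_-,\kst_+)$ — say $\gamma=\sup I$, the other case being symmetric — by passing to a finite-dimensional GIT problem. Fix $m$ a sufficiently divisible multiple of the $m_3=m_3(\chi_0,r,\epsilon_0)$ of Theorem \ref{thm:bddtestK}, let $z=\Hilb(X,D)\in\oZ\subseteq\bH^{\bm{\chi};N_m}$ be the Hilbert point from $|-mK_X|$, and let $M_1-cM_2$ be the $\SL(N_m+1)$-linearized CM $\bQ$-line bundle of \eqref{eq:M_i}, which varies affinely with $c$. Theorem \ref{thm:bddtestK} together with Proposition \ref{prop:Fut=CMwt}, and the fact that $m_3$ is independent of $c$, should give: for all $c\in J$, $(X,cD)$ is K-semistable (resp.\ K-polystable) iff $z$ is GIT-semistable (resp.\ GIT-polystable) for $M_1-cM_2$. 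Choosing $\eta>0$ with $(\gamma,\gamma+\eta)\subseteq\mathrm{int}\,J$, the point $z$ is $(M_1-cM_2)$-semistable there; if it failed to be $(M_1-c^*M_2)$-polystable at some $c^*$ in this range, a one-parameter subgroup $\lambda$ of $\SL(N_m+1)$ with $\mu^{M_1-c^*M_2}(z,\lambda)=0$ would degenerate $z$ to some $z'\notin\SL(N_m+1)\cdot z$ that is still semistable; writing $\mu^{M_1-cM_2}(z,\lambda)=(c-c^*)\kappa$ with $\kappa$ constant, the cases $\kappa>0$ and $\kappa<0$ contradict semistability on $\mathrm{int}\,J$ (just below, resp.\ just above, $c^*$), while $\kappa=0$ gives $\mu^{M_1-\gamma M_2}(z,\lambda)=0$, so $(M_1-\gamma M_2)$-polystability of $z$ (recall $\gamma\in I$) would force $z'\in\SL(N_m+1)\cdot z$. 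Each case is absurd, so $(\gamma,\gamma+\eta)\subseteq I$, contradicting $\gamma=\sup I$. Hence $I$ is open (the degenerate possibilities in which an endpoint of $I$ equals $0$ or $r^{-1}$ lie outside the allowed range and are trivial).

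The hard part will be the dictionary in the last paragraph: identifying K-(semi/poly)stability of $(X,cD)$ with GIT-(semi/poly)stability of the fixed point $z$ for the affinely varying linearization $M_1-cM_2$. This relies on the continuity-method results of Theorem \ref{thm:bddtestK} — so that destabilizing, and polystable-degenerating, one-parameter subgroups can be located inside a single group $\SL(N_m+1)$ uniformly in $c$ — together with the identification of CM weights with generalized Futaki invariants (Proposition \ref{prop:Fut=CMwt}) and the affine dependence of the CM line bundle on $c$ (cf.\ \eqref{eq:M_i} and Proposition \ref{prop:cm-interpolation}); one must also verify that the degenerate point $z'$ parametrizes a K-semistable pair. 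Once this is granted, the convexity argument and the variation-of-GIT bookkeeping above are formal.
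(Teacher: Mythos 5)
Your decomposition differs from the paper's: you split into the cases where an endpoint of the K-polystable set $I$ coincides with an endpoint of the K-semistable set $J=[\kst_-,\kst_+]$ (treated via the valuative criterion) or lies in the interior of $J$ (treated via GIT), whereas the paper handles both possibilities uniformly through a single constructibility argument. The first case contains a genuine error. You fix $c$ just below $\kst_-$, find $w$ with $\beta_c(w)<0$, and conclude that ``continuity forces $\beta_{\kst_-}(w)=0$.'' This does not follow: the affine function $c'\mapsto\beta_{c'}(w)$ is negative at $c$ and nonnegative at $\kst_-$, which is perfectly consistent with $\beta_{\kst_-}(w)>0$. Letting $c\to\kst_-^-$ does not help either, since the witness $w=w(c)$ may vary with $c$ and escape to infinity. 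What the argument needs is a \emph{uniform} destabilizing datum valid for all $c$ just below the threshold. The paper supplies exactly this through Lemma \ref{lem:lwxA.3}, as used in the proof of Theorem \ref{thm:kstfinite}: the index set is finite and the weight functions $\mu_{i,I}$ are rational piecewise linear, so one can pick a single $1$-PS $\lambda$ in $\SL(N_m+1)$, independent of $\epsilon'$, with $\mu_{1,I}(\lambda)<(c_0-\epsilon')\mu_{2,I}(\lambda)$ for all small $\epsilon'$. The valuative criterion alone offers no such compactness, so your step cannot close.

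Your second case also rests on an unestablished ingredient, which you acknowledge: a global equivalence between K-(poly/semi)stability of $(X,cD)$ and GIT-(poly/semi)stability of the Hilbert point $z$ for the linearization $M_1-cM_2$. The $\bQ$-line bundle $M_1-cM_2$ is not shown to be ample on $\oZ$, so the Hilbert--Mumford criterion, the Kempf-type degeneration to a closed orbit, and the openness of the semistable locus all need separate justification. The paper never needs such a dictionary. Once the uniform $\lambda$ is in hand, it observes that $\mu^{M_1}(z_0,g^{-1}\lambda g)=c_0\,\mu^{M_2}(z_0,g^{-1}\lambda g)\neq 0$; K-polystability at $c_0$ then forces $g^{-1}\lambda g$ to induce an almost product test configuration, whose generalized Futaki invariant vanishes at \emph{every} coefficient, so $\mu^{M_1}=(c_0+\epsilon)\mu^{M_2}$ as well, hence $\mu^{M_2}=0$ --- a contradiction. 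This uses only Proposition \ref{prop:Fut=CMwt}, Theorem \ref{thm:bddtestK}, and the definition of K-polystability; no GIT-polystability statement for $M_1-cM_2$ is ever invoked.
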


\begin{proof}
By choosing $0<\epsilon_0\ll 1$ we may assume that $c_0<\min\{1,(1-\epsilon_0)r^{-1},\lct(X;D)-\frac{\epsilon_0}{n+1}\}$. 
By Proposition \ref{prop:k-interpolation}, we know that the set
 \[
 J:=\{c\in(0,\min\{1,r^{-1}\})\colon (X,cD)\textrm{ is K-polystable}\}
 \]
 is either $\{c_0\}$ or an interval containing $c_0$. Hence it suffices to show that $J$ contains an open neighborhood of $c_0$. Denote by $z_0=\Hilb(X,D)\in Z^{\klt}$.
 Let $G:=\SL(N+1)$ and $T$ be a maximal torus of $G$.
 
Assume to the contrary that $J$ does not contain any neighborhood of $c_0$ and $J\neq\{c_0\}$. Then there exists $0<|\epsilon|\ll 1$ such that 
$(X,(c_0+\epsilon)D)$ is K-polystable, and $(X,(c_0-\epsilon')D)$ is K-unstable whenever $0<\epsilon'/\epsilon\leq 1$. From the proof of Theorem \ref{thm:kstfinite}, we know that 
\[
\mu_{1,I}(\lambda)< (c_0-\epsilon')\mu_{2,I}(\lambda) \quad\textrm{for some $1$-PS $\lambda$ of $T$ and some $I$ with $z\in S_I^G$}.
\]
A priori $\lambda$ and $I$ may depend on the choice of $\epsilon'$. Nevertheless, since $I$ belongs to a finite index set, and $\mu_{i,I}$ is a rational piecewise linear function on $\Hom_{\bQ}(\bG_m,T)$ for $i=1,2$, there exist $\lambda$ and $I$ that are independent of the choice of $\epsilon'$ satisfying
\begin{equation}\label{eq:open-K-ps1}
    \mu_{1,I}(\lambda)<(c_0-\epsilon')\mu_{2,I}(\lambda) \quad\textrm{ whenever } 0<\epsilon'/\epsilon\leq 1.
\end{equation}
In particular, we know that $\mu_{1,I}(\lambda)\leq c_0\mu_{2,I}(\lambda)$. Since $z_0\in S_I^G=G\cdot S_I^T$, we  choose $g\in G$ such that $g \cdot z_0\in S_I^T$.
Then since $(X,c_0 D)$ is K-polystable, we have 
\begin{equation}\label{eq:open-K-ps2}
    \mu_{1,I}(\lambda)=\mu^{M_1}(z_0, g^{-1}\lambda g)\geq c_0\mu^{M_2}(z_0, g^{-1}\lambda g)= c_0\mu_{2,I}(\lambda). 
\end{equation}
Combining \eqref{eq:open-K-ps1} and \eqref{eq:open-K-ps2}, we have that $\mu^{M_1}(z_0, g^{-1}\lambda g)= c_0\mu^{M_2}(z_0, g^{-1}\lambda g)\neq 0$. This together with the K-polystability of $(X,c_0 D)$ implies that $g^{-1}\lambda g$ induces an almost product test configuration of $(X,c_0 D)$. Since $(X,(c_0+\epsilon)D)$ is also K-polystable, we have 
\[
\mu^{M_1}(z_0, g^{-1}\lambda g)= (c_0+\epsilon) \mu^{M_2}(z_0, g^{-1}\lambda g)
\]
which implies $\mu^{M_i}(z_0, g^{-1}\lambda g)=0$ for $i=1,2$. However, this contradicts to \eqref{eq:open-K-ps1}. Thus the proof is finished. 
\end{proof}

\subsection{Properness}

In this section, we prove the valuative criterion of properness of K-moduli spaces. Recall that Blum and Xu \cite{BX18} proved separatedness of K-moduli spaces (if they exist) for log Fano pairs. Hence we only need to show compactness of K-moduli spaces, i.e. the existence of a K-semistable filling for a K-semistable family over a punctured smooth curve.

\begin{thm}\label{thm:compactness}
 Let $0\in B$ be a smooth pointed curve. 
 Let $\pi^\circ:(\cX^\circ,c\cD^\circ)\to B^\circ$ be a $\bQ$-Gorenstein smoothable log Fano family over  $B^\circ:=B\setminus\{0\}$  where $\cD^{\circ}\sim_{\pi^\circ}-rK_{\cX^{\circ}/B^{\circ}}$ and $c\in (0,\min\{1,r^{-1}\})$. If all fibers of $\pi^\circ$ are K-semistable, then there exists a quasi-finite morphism $(0'\in B')\to (0\in B)$ from a smooth pointed curve $0'\in B'$ and a $\bQ$-Gorenstein smoothable log Fano family $\pi':(\cX',c\cD')\to B'$ such that
 $(\cX',\cD')\times_{B'}B'^\circ\cong (\cX^\circ,\cD^\circ)\times_{B^\circ}B'^\circ$ where $B'^\circ:=B'\setminus\{0'\}$ and $(\cX'_{0'},c\cD'_{0'})$ is K-semistable (even K-polystable). 
\end{thm}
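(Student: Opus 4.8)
The plan is to reduce the problem to the continuity method developed in Section~\ref{sec:continuity}, combined with the separatedness result of \cite{BX18}. First I would use \cite{Oda12} (or rather the klt property of K-semistable log Fano pairs) together with boundedness (Theorem~\ref{thm:bdd-bigangle}) to embed the generic fiber: after possibly shrinking $B$ and passing to a finite cover, I can assume $m_1 K_{\cX^\circ/B^\circ}$ is relatively Cartier, and that the family $\pi^\circ$ extends to a $\bQ$-Gorenstein flat family $\bar\pi:(\bar\cX,c\bar\cD)\to B$ of log Fano pairs whose central fiber $(\bar\cX_0,\bar\cD_0)$ is klt log Fano but possibly not K-semistable. (To obtain such an extension one takes the closure in an appropriate relative Hilbert scheme $\bH^{\bm\chi;N}_{m}$ with $m$ a sufficiently divisible multiple of $m_3$, uses properness of the Hilbert scheme to fill in the point $0$, and then—after a base change—runs a relative MMP / uses the abundance-type arguments of \cite{LX14} to make the filling a $\bQ$-Gorenstein smoothable klt log Fano, noting that $\bQ$-Gorenstein smoothability is inherited by the limit.)

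Next, I would apply Theorem~\ref{thm:bddtestK} to the central fiber $(\bar\cX_0, c\bar\cD_0)$, which lies in $Z^{\klt}_m$. If it is already K-semistable, we are done after possibly replacing it by its unique K-polystable degeneration using part (2) of that theorem (this degeneration stays inside $Z^{\klt}_m$ and can be realized as a $\bG_m$-equivariant modification of the family over $B$, shrinking $B$ again). If $(\bar\cX_0, c\bar\cD_0)$ is K-unstable, then part (1) of Theorem~\ref{thm:bddtestK} gives either $c > \lct(\bar\cX_0;\bar\cD_0)-\tfrac{\epsilon_0 r^{-1}}{n+1}$—which I would rule out by choosing $\epsilon_0$ small enough at the outset and using semicontinuity of the log canonical threshold in families together with the fact that the generic fiber is K-semistable hence has large lct—or a destabilizing $1$-PS $\lambda$ in $\SL(N_m+1)$. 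In the latter case the standard trick is to run the continuity method ``in the family'': use the auxiliary divisor $\Delta$ and the interpolation $\fB$ from Notation~\ref{not:const} and Proposition~\ref{prop:continuity-Kus}, together with Proposition~\ref{prop:hilbconverge} (via Theorem~\ref{thm:lwx4.1}) to produce, after one more finite base change supplied by Lemma~\ref{lem:hilbconv}, a new $\bQ$-Gorenstein flat filling whose central fiber is the Gromov--Hausdorff limit $Y$ of the nearby conical Kähler--Einstein fibers, hence a K-polystable log Fano pair. Since $Y$ is a $\bQ$-Gorenstein smoothable (being a limit of the smooth nearby fibers) and the filling agrees with $(\cX^\circ,\cD^\circ)$ over $B'^\circ$, this is the desired $\pi':(\cX',c\cD')\to B'$.

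The main obstacle is the step of extending the $B^\circ$-family to a $\bQ$-Gorenstein \emph{smoothable} family over all of $B$ with klt central fiber: naively filling in via the Hilbert scheme produces a flat limit that need not be normal, need not have $\bQ$-Cartier canonical class, and need not remain $\bQ$-Gorenstein smoothable. Handling this requires the MMP machinery of \cite{LX14} (running a relative MMP over $B$ to extract a special fiber with the right singularities) and a careful argument—essentially the content of \cite[Section~6]{LWX14} adapted to the log setting—that $\bQ$-Gorenstein smoothability is preserved under the relevant birational modifications and under passing to the central fiber. Once this is in place, everything else is an application of the already-established Theorems~\ref{thm:bddtestK} and \ref{thm:lwx4.1}, the uniqueness of K-polystable limits from \cite{BX18}, and routine base changes. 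Finally, combining Theorem~\ref{thm:compactness} with the separatedness of \cite{BX18} and the existence of the good moduli space (Theorem~\ref{thm:lwxlog}, proved using \cite{alper}) yields properness of $KM_{\chi_0,r,c}$ via the valuative criterion.
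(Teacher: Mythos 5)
Your proposal takes a genuinely different route from the paper, and the route has a serious gap in its first step.

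The paper's proof of Theorem~\ref{thm:compactness} never constructs an a priori filling of the family over $0\in B$. Instead, it works entirely on the punctured side: it chooses a sequence $b_i\to 0$, uses Proposition~\ref{prop:uK-Delta} to attach an auxiliary divisor $\Delta_{b_i}$ making $(\cX_{b_i},c\cD_{b_i}+(1-\beta_i)a\Delta_{b_i})$ uniformly K-stable, invokes Proposition~\ref{prop:hilbconverge} to equip those pairs with weak conical K\"ahler--Einstein metrics and to approximate their Hilbert points by those of nearby \emph{log smooth} pairs $(Y_i,E_i,\Gamma_i)$ in $Z_m$, and then applies the Gromov--Hausdorff compactness of Theorem~\ref{thm:lwx4.1} to extract a limit $(Y,cE)$ together with matrices $g_i\in \U(N+1)$ so that (via a triangle inequality) $g_i'\cdot\Hilb(\cX_{b_i},\cD_{b_i})\to\Hilb(Y,E)$ in $\oZ_m$ for suitable $g_i'\in\SL(N+1)$. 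Lemma~\ref{lem:hilbconv} then realizes $(Y,cE)$ as the central fiber after a quasi-finite base change. The entire point of this design is to avoid ever having to produce a $\bQ$-Gorenstein klt filling by elementary means.

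Your proposal, by contrast, asks to first produce a $\bQ$-Gorenstein flat family over all of $B$ with klt, $\bQ$-Gorenstein-smoothable central fiber, and only then to massage the central fiber using Theorem~\ref{thm:bddtestK}. You correctly flag this extension step as the ``main obstacle,'' but the references you point to do not close it. The flat limit in the relative Hilbert scheme need not be normal, its canonical class need not be $\bQ$-Cartier, and there is no straightforward ``relative MMP over $B$'' that produces such a filling; \cite{LX14} modifies an already-given \emph{test configuration}, not an arbitrary punctured family. Constructing a klt $\bQ$-Gorenstein filling in the generality needed here is essentially the problem the paper (and \cite{LWX14}) solves by the analytic/Gromov--Hausdorff route. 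Moreover, even granting such a filling, the second half of your outline overreaches: Theorem~\ref{thm:bddtestK}(1) only produces a \emph{destabilizing} $1$-PS for a K-unstable central fiber—it does not assert that this $1$-PS degenerates the family to a K-polystable filling, and parts (2)--(3) apply to K-semistable pairs, not to the K-unstable central fiber you would be starting from. To turn a single destabilizing $1$-PS into a K-polystable filling you would need an additional iteration or minimization argument, which is again precisely the role played by the continuity method and Gromov--Hausdorff compactness in the paper's actual proof. (A smaller bookkeeping point: your semicontinuity argument for ruling out $c>\lct(\bar\cX_0;\bar\cD_0)-\tfrac{\epsilon_0 r^{-1}}{n+1}$ is circular as stated, since $\epsilon_0$—and with it $m_3$, $q$, $\gamma_0$—is chosen before you know the central fiber; this can be repaired by re-choosing $\epsilon_0$ a posteriori and invoking Theorem~\ref{thm:almostCYstabilize}, but it needs to be said.)
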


\begin{proof}
 By choosing $0<\epsilon_0\ll 1$ we may assume that $c<\min\{1,(1-\epsilon_0)r^{-1}\}$. Let $b_i\to 0$ be a sequence of points in $B^{\circ}$. Denote by $(\cX_{b_i},\cD_{b_i})$ the fiber of $\pi^{\circ}$ over $b_i$. Let $\chi_0$ be the Hilbert polynomial of a smoothing of each fiber $\cX_{b_i}$ which certainly does not depend on the choice of $i$ or smoothing.
 Let us choose $m_3$, $m$, $\gamma_0$, $q$ and $a$ as in Notation \ref{not:const}. Since $(\cX_{b_i},c\cD_{b_i})$ is K-semistable, we have $c\leq \lct(\cX_{b_i};\cD_{b_i})$. Hence Proposition \ref{prop:uK-Delta} implies that there exists $\Delta_{b_i}\in |-qK_{\cX_{b_i}}|$ such that $(\cX_{b_i},c\cD_{b_i}+(1-\gamma_0)a\Delta_{b_i})$ is uniformly K-stable. Let us choose $\gamma_0\leq \beta_i\nearrow 1$. Then Proposition \ref{prop:k-interpolation} and \ref{prop:hilbconverge} implies that $(\cX_{b_i},c\cD_{b_i}+(1-\beta_i)a\Delta_{b_i})$ admits a weak conical K\"ahler-Einstein metric whose Hilbert point in $\bH^{\bm{\chi},q;N}$ is the limit of Hilbert points of conical K\"ahler-Einstein log smooth log Fano pairs. In particular, there exists a conical K\"ahler-Einstein log smooth log Fano pair $(Y_i, cE_i+(1-\beta_i)a\Gamma_i)$ as a smoothing of $(\cX_{b_i},c\cD_{b_i}+(1-\beta_i)a\Delta_{b_i})$ such that $\Hilb(Y_i,E_i)\in Z_m$, $\Gamma_i\in |-qK_{Y_i}|$, and 
\begin{equation}\label{eq:compact-dist}
 \lim_{i\to\infty} \dist_{\bH^{\bm{\chi},q;N}}\left(\Hilb(Y_i, cE_i+(1-\beta_i)a\Gamma_i),\Hilb(\cX_{b_i},c\cD_{b_i}+(1-\beta_i)a\Delta_{b_i})\right)=0.
\end{equation}
 By Theorem \ref{thm:lwx4.1}, there exists a sequence of matrices $g_i\in \U(N+1)$ and a log Fano pair $(Y,cE+0\cdot\Gamma)$ in $\bP^N$ admitting a weak conical K\"ahler-Einstein metric such that
 \[
 g_i\cdot\Hilb(Y_i, cE_i+(1-\beta_i)a\Gamma_i)\to\Hilb(Y,cE+0\cdot\Gamma)\in\bH^{\bm{\chi},q;N}\quad\textrm{ as }i\to\infty.
 \]
 This together with \eqref{eq:compact-dist} implies that
 \[
  g_i\cdot\Hilb(\cX_{b_i},c\cD_{b_i}+(1-\beta_i)a\Delta_{b_i})\to\Hilb(Y,cE+0\cdot\Gamma)\in\bH^{\bm{\chi},q;N}\quad\textrm{ as }i\to\infty.
 \]
 Thus there exists $g_i'\in\SL(N+1)$ such that $g_i'\cdot\Hilb(\cX_{b_i},\cD_{b_i})$ converges to $\Hilb(Y,E)$ in $\overline{Z}_m$. Thus by Lemma \ref{lem:hilbconv} after a quasi-finite base change of $\pi^\circ$ we may fill in $(Y,cE)$ as the K-polystable central fiber. The proof is finished. 
\end{proof}

\subsection{Almost log Calabi-Yau cases}

Notice that Definition \ref{defn:kmoduli} depends on the choice of $\epsilon_0$. Indeed, if $r<1$ then it suffices to choose $\epsilon_0=1-r$. When $r\geq 1$ we will show that there exists $\epsilon_0=\epsilon_0(n,r)\in (0,1)$ such that the K-moduli spaces/stacks are the same for any $c\in [(1-\epsilon_0)r^{-1},r^{-1})$.

\begin{thm}\label{thm:almostCYstabilize}
 For any $n\in\bZ_{>0}$ and any rational number $r\geq 1$, there exists $\epsilon_0=\epsilon_0(n,r)\in (0,1)$ such that for any $\bQ$-Gorenstein smoothable log Fano pair $(X,cD)$ with $c\in [(1-\epsilon_0)r^{-1},r^{-1})$, it is K-(poly/semi)stable if and only if $(X,(1-\epsilon_0)r^{-1}D)$ is K-(poly/semi)stable.
\end{thm}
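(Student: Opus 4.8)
The plan is to anchor the analysis at the log Calabi--Yau pair $(X,r^{-1}D)$ and propagate K-stability using the interpolation Proposition~\ref{prop:k-interpolation}, after reducing to a bounded family so that only finitely many ``walls'' can occur. First I would record that $\bQ$-Gorenstein smoothable Fano $n$-folds form a bounded family (they degenerate inside the finitely many finite-type Hilbert schemes attached to the finitely many deformation families of smooth Fano $n$-folds; cf.\ \cite{Jia17}), so only finitely many Hilbert polynomials $\chi_0$ occur and, after fixing $m$ sufficiently divisible, all relevant pairs $(X,D)$ have Hilbert point in a fixed finite-type scheme $Z^{\klt}_m$. On $Z^{\klt}_m$ the function $(X,D)\mapsto\lct(X;D)$ is constructible with rational values, hence takes finitely many values; and by Theorem~\ref{thm:kstfinite} together with Proposition~\ref{prop:openkps}, the values of $c$ at which the $c$-K-semistability or $c$-K-polystability of a member of the family changes, together with the $\lct$-values, form a finite set $W\subset\bQ\cap(0,r^{-1}]$. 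One then sets $w_0:=\max\{w\in W: w<r^{-1}\}<r^{-1}$ and chooses $\epsilon_0\in(0,1)$ with $(1-\epsilon_0)r^{-1}>w_0$, arranging also that $m$ is a multiple of the corresponding $m_3(\chi_0,r,\epsilon_0)$ from Theorem~\ref{thm:bddtestK} (possible, since $W$ and the $\lct$-values do not depend on $m$); finally one takes the minimum of the finitely many $\epsilon_0$'s over the finitely many $\chi_0$.

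Now fix such a pair $(X,cD)$ with $c\in[(1-\epsilon_0)r^{-1},r^{-1})$. If $\lct(X;D)<r^{-1}$ then $\lct(X;D)\le w_0<(1-\epsilon_0)r^{-1}\le c$, so neither $(X,cD)$ nor $(X,(1-\epsilon_0)r^{-1}D)$ is klt, hence neither is K-semistable, and the biconditional holds vacuously. If $\lct(X;D)\ge r^{-1}$, then $(X,r^{-1}D)$ is log canonical, i.e.\ a K-semistable log Calabi--Yau pair by \cite{Oda13b}, with $-K_X-r^{-1}D\sim_{\bQ}0$ nef. For the ``upward'' direction, if $(X,(1-\epsilon_0)r^{-1}D)$ is K-(poly/semi)stable I would apply Proposition~\ref{prop:k-interpolation}(1) with the ample-anchored divisor $(1-\epsilon_0)r^{-1}D$ (note $-K_X-(1-\epsilon_0)r^{-1}D\sim_{\bQ}-\epsilon_0K_X$ is ample and this divisor is nonzero) and the nef-anchored divisor $r^{-1}D$; this yields that $(X,c'D)$ is K-(poly/semi)stable for all $c'\in[(1-\epsilon_0)r^{-1},r^{-1})$, in particular for $c'=c$.

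For the ``downward'' direction I would rewrite K-semistability through the valuative criterion (Theorem~\ref{thm:valuative}): since $A_{(X,cD)}(E)=A_X(E)-c\,\ord_E(D)$ and, using $D\sim_{\bQ}-rK_X$ and homogeneity of volumes, $S_{(X,cD)}(E)=(1-cr)\,S_{(X,0)}(E)$, the K-semistability of $(X,cD)$ is equivalent to $A_X(E)-S_{(X,0)}(E)\ge c\,(\ord_E(D)-r\,S_{(X,0)}(E))$ for every prime divisor $E$ over $X$. Log canonicity of $(X,r^{-1}D)$ gives $\ord_E(D)\le r\,A_X(E)$, which makes this inequality automatic whenever $\ord_E(D)-r\,S_{(X,0)}(E)\ge 0$; consequently the set of $c\in(0,r^{-1})$ for which $(X,cD)$ is K-semistable is a half-open interval $[c_*(X,D),r^{-1})$ (possibly empty). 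Since $(X,cD)$ is K-semistable we have $c_*(X,D)\le c<r^{-1}$, so $c_*(X,D)\in W$, hence $c_*(X,D)\le w_0<(1-\epsilon_0)r^{-1}$, and therefore $(X,(1-\epsilon_0)r^{-1}D)$ is K-semistable. The K-polystable case follows identically, using that by Proposition~\ref{prop:openkps} the $c$-K-polystable locus of a fixed pair is an open interval or a point whose endpoints lie in $W$, combined with the upward direction.

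The main obstacle is precisely the downward direction, namely ensuring that the $c$-K-(semi/poly)stability walls of the bounded family do not accumulate at $r^{-1}$. The delicate point is that the constructibility statements of Section~\ref{sec:kstconst} are formulated with a fixed cutoff $(1-\epsilon_0)r^{-1}<r^{-1}$; one must check that the underlying finite set of candidate walls — the ratios of the CM/Chow weight functions appearing in the proof of Theorem~\ref{thm:kstfinite}, together with the finitely many log canonical thresholds — depends only on $\chi_0$ and $r$ (and not on the cutoff, nor, after stabilization, on $m$), so that it stays finite, and hence bounded away from $r^{-1}$, as the cutoff tends to $r^{-1}$.
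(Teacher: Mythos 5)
The boundedness claim at the start of your argument is false, and this is a fatal gap rather than the technicality you flag at the end. It is \emph{not} true that $\bQ$-Gorenstein smoothable Fano $n$-folds form a bounded family: already in dimension $2$, the Manetti surfaces $\bP(a^2,b^2,c^2)$ with $a^2+b^2+c^2=3abc$ (Proposition~\ref{prop:manetti}) are $\bQ$-Gorenstein degenerations of $\bP^2$ with \emph{unbounded} Gorenstein index, so there is no single $m$ for which all of them embed anticanonically into a fixed projective space, and hence no fixed finite-type parameter scheme $Z^{\klt}_m$ capturing all pairs at once. Jiang's theorem \cite{Jia17} requires a positive lower bound on $\delta(X)$; in the paper this is supplied by Theorem~\ref{thm:bdd-bigangle} via \cite[Theorem 7.2]{BL18b}, but only for $c<(1-\epsilon_0)r^{-1}$ — that is, \emph{after} fixing a cutoff. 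Corollary~\ref{cor:bdd}, which extends boundedness up to $c\to r^{-1}$, is explicitly deduced \emph{from} Theorem~\ref{thm:almostCYstabilize}. So the finite wall set $W$ you need does not exist a priori, and the ``delicate point'' you identify in your closing paragraph is not a verification to carry out but a genuine obstruction: without boundedness there is no finite set of CM/Chow-weight ratios, no constructible $\lct$ function on a fixed scheme, and your $w_0$ is undefined. The purely algebraic finiteness-of-walls strategy is circular here.

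The paper's actual proof escapes the circularity by running the argument \emph{on the smoothing} rather than on $X$ itself, and by invoking the analytic compactness input. For the ``upward'' direction it uses ACC of log canonical thresholds \cite{HMX14} — a boundedness-free statement — to find $\epsilon_1(n,r)$ such that $(X,(1-\epsilon_1)r^{-1}D)$ klt forces $(X,r^{-1}D)$ lc, and then interpolates as you do. For the ``downward'' direction, given $(X,cD)$ with $c$ near $r^{-1}$, it picks a $\bQ$-Gorenstein smoothing $(\cX,\cD)\to B$; Lemma~\ref{lem:almostlogCY} gives a uniform $\epsilon_2(n,r)$ such that the \emph{smooth} nearby fibers $(\cX_b,c'\cD_b)$ are K-polystable for all $c'\in[(1-\epsilon_2)r^{-1},r^{-1})$; Theorem~\ref{thm:compactness} (which rests on the Gromov--Hausdorff/continuity-method Theorem~\ref{thm:lwx4.1}) then produces a K-polystable filling $(X',(1-\epsilon_2)r^{-1}D')$; and uniqueness of the K-polystable degeneration \cite{BX18} plus Theorem~\ref{thm:Kss-spdeg} transport K-(poly/semi)stability of $(X,cD)$ down to $c=(1-\epsilon_0)r^{-1}$ with $\epsilon_0=\epsilon_2/2$. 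In particular the needed control near the log Calabi--Yau wall is extracted from the analytic existence theorem, not from a finiteness-of-walls statement on a bounded family; that finiteness (and the boundedness behind it, Corollary~\ref{cor:bdd}) is a \emph{consequence} of the theorem, not an ingredient of its proof. Your valuative-criterion reduction of K-semistability to the inequality $A_X(E)-S_{(X,0)}(E)\ge c(\ord_E(D)-rS_{(X,0)}(E))$, and the deduction that the K-semistable locus in $c$ is an interval abutting $r^{-1}$, are correct and potentially useful, but they do not substitute for the missing boundedness.
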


\begin{proof}
Firstly, let us assume that $(X,(1-\epsilon_0)r^{-1}D)$ is K-(poly/semi)stable hence klt. 
By ACC of log canonical thresholds \cite{HMX14}, there exists $\epsilon_1=\epsilon_1(n,r)$ such that $(X,r^{-1}D)$ is log canonical whenever $(X,(1-\epsilon_1)r^{-1}D)$ is log canonical. This is guaranteed for any $\epsilon_0\in (0,\epsilon_1]$ since $(X,(1-\epsilon_0)r^{-1}D)$ is klt. Thus $(X,cD)$ is K-(poly/semi)stable for any $c\in [(1-\epsilon_0)r^{-1},r^{-1})$ provided $\epsilon_0\in (0, \epsilon_1]$.

Next, let $(X,cD)$ be a $\bQ$-Gorenstein smoothable log Fano pair  for some $c\in (0,1)$. We may choose a smoothing $\pi:(\cX,\cD)\to B$ over a smooth pointed curve $(0\in B)$ such that $\pi$ is smooth over $B\setminus\{0\}$ and $(\cX_0,\cD_0)\cong (X,D)$. 
By Lemma \ref{lem:almostlogCY} we may choose $\epsilon_2=\epsilon_2(n,r)\in (0,1)$ such that $(\cX_b,c'\cD_b)$ is K-polystable for any $c'\in [(1-\epsilon_2)r^{-1},r^{-1})$ and any $b\in B\setminus\{0\}$. For simplicity let us assume $\epsilon_2\leq \epsilon_1$. Then by Theorem \ref{thm:compactness} we know that there exists a K-polystable limit $(X',(1-\epsilon_2)r^{-1} D')$ of $(\cX_b,(1-\epsilon_2)r^{-1}\cD_b)$ after possibly passing to a finite cover of $B$. Since $\epsilon_2\leq \epsilon_1$, we know that $(X', r^{-1}D')$ is log canonical. Then by Proposition \ref{prop:k-interpolation} we know that $(X',c'D')$ is the K-polystable limit of $(\cX_b,c'\cD_b)$ whenever $c'\in [(1-\epsilon_2)r^{-1},r^{-1})$. 
Let us choose $\epsilon_0:=\frac{\epsilon_2}{2}$. Assume that $(X,cD)$ is K-(poly/semi)stable for some $c\in [(1-\epsilon_0)r^{-1},r^{-1})$. Then by \cite{BX18} we know that $(X,cD)$ specially degenerates to the K-polystable pair $(X',cD')$. Hence $(X,(1-\epsilon_2)r^{-1}D)$ specially degenerates to the K-polystable pair $(X,(1-\epsilon_2)r^{-1}D')$. In particular, $(X,(1-\epsilon_2)r^{-1}D)$ is K-semistable by Theorem \ref{thm:Kss-spdeg}. Again by Proposition \ref{prop:k-interpolation}, we know that $(X,(1-\epsilon_0)r^{-1}D)$ is K-(poly/semi)stable. The proof is finished.
\end{proof}

\begin{lem}\label{lem:almostlogCY}
 Let $n\in\bZ_{>0}$ and $r\geq 1$ be a rational number. Then there exists $\epsilon_2:=\epsilon_2(n,r)\in (0,1)$ such that for any pair $(X,D)$ where $X$ is an $n$-dimensional Fano manifold, $D$ is a smooth prime divisor on $X$ and $D\sim_{\bQ}-rK_X$, we have that $(X,cD)$ is K-polystable whenever $c\in [(1-\epsilon_2)r^{-1},r^{-1})$.
\end{lem}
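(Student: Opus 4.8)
The plan is to combine the valuative criterion for K-stability (Theorem \ref{thm:valuative}) with the simple observation that, since $D\sim_{\bQ}-rK_X$, the divisor $-(K_X+cD)\sim_{\bQ}(1-cr)(-K_X)$ is just a rescaling of $-K_X$, so estimates for $(X,cD)$ reduce to estimates for $X$ and for the log canonical pair $(X,r^{-1}D)$. First I would pass to a log bounded family: $n$-dimensional Fano manifolds form a bounded family, and $D$ ranges over the (bounded) linear systems $|-rK_X|$, so the pairs $(X,D)$ form a log bounded family. Hence there are constants depending only on $n$ (and $r$): a uniform lower bound $\delta(X)\ge\delta_0(n)>0$, and the inequality $S_X(E)\le\tau_X(E)\le(n+1)A_X(E)$ for every prime divisor $E$ over $X$ (using $\mathrm{vol}(-K_X-tE)\le(-K_X)^n$ and Fujita's bound on the pseudoeffective threshold). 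Moreover, since $X$ and $D$ are smooth the pair $(X,r^{-1}D)$ is log smooth, hence log canonical, so $A_{(X,r^{-1}D)}(E)\ge0$ for all $E$.

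Next I would record the scaling identities. Substituting $t\mapsto(1-cr)t$ in the defining integral gives
\[
S_{(X,cD)}(E)=(1-cr)\,S_X(E),
\]
while writing $\mathrm{ord}_E(D)=r\big(A_X(E)-A_{(X,r^{-1}D)}(E)\big)$ gives
\[
A_{(X,cD)}(E)=(1-cr)\,A_X(E)+cr\,A_{(X,r^{-1}D)}(E).
\]
By Theorem \ref{thm:valuative}, $(X,cD)$ is uniformly K-stable as soon as $A_{(X,cD)}(E)-S_{(X,cD)}(E)\ge\epsilon\,S_{(X,cD)}(E)$ for all $E$ for some fixed $\epsilon>0$; by the identities above this is a lower bound on $(1-cr)\big(A_X(E)-S_X(E)\big)+cr\,A_{(X,r^{-1}D)}(E)$. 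When $r>1$ this is straightforward: log smoothness gives $A_{(X,r^{-1}D)}(E)=(1-r^{-1})A_X(E)+r^{-1}A_{(X,D)}(E)\ge(1-r^{-1})A_X(E)$, so using $S_X(E)\le(n+1)A_X(E)$ the quantity above is at least $\big(c(r-1)-n(1-cr)\big)A_X(E)$, and the coefficient tends to $1-r^{-1}>0$ as $c\nearrow r^{-1}$. Thus there is $\epsilon_2=\epsilon_2(n,r)\in(0,1)$ so that for all $c\in[(1-\epsilon_2)r^{-1},r^{-1})$ one has $A_{(X,cD)}(E)-S_{(X,cD)}(E)\ge\tfrac12(1-r^{-1})A_X(E)$ while $S_{(X,cD)}(E)\le\epsilon_2(n+1)A_X(E)$, which is a uniform multiplicative gap, hence $(X,cD)$ is uniformly K-stable and in particular K-polystable.

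The remaining and main difficulty is the boundary case $r=1$, where $A_{(X,D)}(E)$ can vanish (namely at $E=D$, the unique log canonical place of the plt pair $(X,D)$), so the estimate above degenerates. Here I would instead bound Tian's alpha invariant: since $-(K_X+cD)\sim_{\bQ}(1-c)(-K_X)$ one has $\alpha(X,cD)=(1-c)^{-1}\alpha(X,cD;-K_X)$, and using log boundedness together with the log smoothness of $(X,D)$ one shows $\alpha(X,cD;-K_X)\ge\tfrac{n}{n+1}(1-c)$ for $c$ close to $1$ — the worst effective anticanonical divisor being controlled by $D$ itself, for which $\mathrm{lct}(X,cD;D)=1-c$. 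This gives $\alpha(X,cD)\ge\tfrac{n}{n+1}$, hence $\delta(X,cD)\ge\tfrac{n+1}{n}\alpha(X,cD)\ge1$ by Blum--Jonsson, so $(X,cD)$ is K-semistable. To upgrade to K-polystability one argues by contradiction: if $(X,cD)$ were K-semistable but not K-polystable, then by Theorem \ref{thm:bddtestK}(2) it would specially degenerate to a strictly different K-polystable pair $(X',cD')$; this cannot happen for $c$ arbitrarily close to $1$ by combining the openness and constructibility of the K-(semi/poly)stable locus (Theorems \ref{thm:bddtestK}, \ref{thm:kstfinite} and Proposition \ref{prop:openkps}) with the separatedness of the K-moduli functor \cite{BX18}. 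The crux of the whole argument is precisely this uniformity in the $r=1$ case: one must show that the threshold coefficient past which K-semistability first fails is bounded away from $r^{-1}=1$ by an amount depending only on $n$, rather than on the individual pair $(X,D)$; alternatively, one may obtain both the boundary case and the uniformity analytically, via the existence of conical Calabi--Yau metrics on the log smooth log Calabi--Yau pair $(X,r^{-1}D)$ and the openness of the conical Kähler--Einstein condition near $c=r^{-1}$, together with Gromov--Hausdorff compactness as in Theorem \ref{thm:lwx4.1}.
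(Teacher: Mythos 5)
Your proposal splits into the two cases $r>1$ and $r=1$, and these have very different statuses when compared with the paper's proof.

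\textbf{The case $r>1$.} The paper simply cites \cite[Theorem 5.2]{LWX14}, which rests on the ACC for log canonical thresholds \cite{HMX14}. Your direct valuative-criterion computation is a genuinely different and more elementary route, and the scaling identities $S_{(X,cD)}(E)=(1-cr)S_X(E)$ and $A_{(X,cD)}(E)=(1-cr)A_X(E)+cr\,A_{(X,r^{-1}D)}(E)$, together with $A_{(X,r^{-1}D)}(E)\ge(1-r^{-1})A_X(E)$ from log smoothness, are all correct. The one slip is the appeal to Fujita's bound $S_X(E)\le\tau_X(E)\le(n+1)A_X(E)$: this inequality requires $X$ to be K-semistable, which a general $n$-dimensional Fano manifold need not be. The fix is already present in your own setup: boundedness of Fano manifolds gives $\alpha(X)\ge\alpha_0(n)>0$, hence $\delta(X)\ge\tfrac{n+1}{n}\alpha_0(n)=:\delta_0(n)>0$ by Blum--Jonsson, and one then uses $S_X(E)\le\delta_0(n)^{-1}A_X(E)$ in place of the factor $n+1$. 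The coefficient in your lower bound becomes $(1-cr)\big(1-\delta_0^{-1}\big)+c(r-1)$, which still converges to $1-r^{-1}>0$ as $c\nearrow r^{-1}$, and the conclusion that $(X,cD)$ is uniformly K-stable (hence K-polystable) for $c\in[(1-\epsilon_2)r^{-1},r^{-1})$ goes through with $\epsilon_2=\epsilon_2(n,r)$ as desired. This is a valid alternative that avoids ACC.

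\textbf{The case $r=1$.} Here there is a genuine gap. The estimate $\alpha(X,cD;-K_X)\ge\tfrac{n}{n+1}(1-c)$ is asserted rather than proved; it is not a formal consequence of log boundedness, since one must control auxiliary divisors $D'\sim_{\bQ}-K_X$ which are highly tangent to $D$, and the assertion that $D$ itself realizes the worst case needs a real argument. More importantly, even if the $\alpha$-bound were established, Blum--Jonsson only gives $\delta(X,cD)\ge 1$, i.e.\ K-\emph{semi}stability; it cannot give K-polystability. Your proposed upgrade, ``argue by contradiction using openness, constructibility, and separatedness,'' is precisely the content of the lemma and is left at the level of a gesture: you still have to exclude, uniformly over all pairs, that $(X,cD)$ is strictly semistable for $c$ arbitrarily close to $1$. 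The paper does this by an entirely different mechanism: it parametrizes all pairs $(X,D)$ over a base $T$ of finite type, observes that the nested open K-semistable loci $T_\epsilon$ stabilize by the noetherian property, appeals to the analytic existence theorem \cite[Corollary~1]{JMR16} to know that for \emph{each} $t$ some $(\cX_t,(1-\beta_t)\cD_t)$ is K-polystable (forcing $T_\epsilon=T$ for $\epsilon\ll1$), and finally uses interpolation (Proposition~\ref{prop:k-interpolation}) to push K-polystability down to a uniform $\epsilon_2$. Your closing remark about conical K\"ahler--Einstein metrics and Gromov--Hausdorff compactness is in the right spirit --- the paper's use of \cite{JMR16} is exactly such an analytic input --- but without the noetherian-stabilization device (or an equivalent compactness argument) your outline does not deliver the required uniformity, and does not deliver K-polystability at all from the $\alpha$-bound alone.
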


\begin{proof}
 When $r>1$, this is a consequence of \cite[Theorem 5.2]{LWX14} based on ACC of log canonical thresholds \cite{HMX14}. When $r=1$, we know that $D\sim -K_X$ since $X$ is a Fano manifold. By boundedness of Fano manifolds, there exists a smooth proper morphism $\pi:(\cX,\cD)\to T$ over a (possibly disconnected) normal base scheme $T$ which parametrizes all pairs $(X,D)$ where $X$ is a Fano manifold and $D$ is a smooth anti-canonical divisor on $X$. For each $\epsilon\in (0,1)$, let us consider the subset
 \[
 T_{\epsilon}:=\{t\in T\mid (\cX_t,(1-\epsilon)\cD_t)\textrm{ is K-semistable}\}.
 \]
 By Corollary \ref{cor:openness} we know that $T_{\epsilon}$ is an open subset of $T$. Since $(\cX_t,\cD_t)$ is log canonical, by  Proposition \ref{prop:k-interpolation} we know that $T_{\epsilon}\subset T_{\epsilon'}$ whenever $0<\epsilon'<\epsilon<1$. Therefore, the noetherian property implies that $(T_\epsilon)$ stabilizes as $0<\epsilon\ll 1$. By \cite[Corollary 1]{JMR16}, we know that for each $t\in T$ there exists $\beta_t\in (0,1)$ such that $(\cX_t,(1-\beta_t)\cD_t)$ is K-polystable. In particular, we have $T_\epsilon=B$ for any  $0<\epsilon \ll 1$. Then again by interpolation we may choose $0<\epsilon_2\ll 1$ such that $(\cX_t,(1-\epsilon_2)\cD_t)$ is K-polystable for any $t\in T$. The proof is finished.
\end{proof}

The following result on boundedness is an easy consequence of Theorem \ref{thm:bdd-bigangle} and  \ref{thm:almostCYstabilize}.

\begin{cor}\label{cor:bdd}
Fix $r>1$ a positive rational number and $n$ a positive integer. Then the following collection of $\bQ$-Gorenstein smoothable pairs
\[
\{(X,D)\mid \dim X=n, ~(X,cD)\textrm{ is K-semistable for some $c\in [0,r^{-1})$}\}
\]
is log bounded.
\end{cor}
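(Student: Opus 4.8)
The plan is to exhibit the collection as a finite union of log bounded families: one family handling all coefficients $c$ that stay away from $r^{-1}$, the same family handling coefficients close to $r^{-1}$ by first invoking the stabilization statement, and a direct boundedness argument for the single value $c=0$. Concretely, I would fix $\epsilon_0=\epsilon_0(n,r)\in(0,1)$ as produced by Theorem \ref{thm:almostCYstabilize}, set $\epsilon_0':=\epsilon_0/2$, and record that since $r>1$ we have $\min\{1,(1-\epsilon_0')r^{-1}\}=(1-\epsilon_0')r^{-1}$ and, because $\epsilon_0'<\epsilon_0$, that $(1-\epsilon_0)r^{-1}\in(0,(1-\epsilon_0')r^{-1})$.

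Next I would run the following case analysis on a pair $(X,D)$ in the collection, so that $(X,cD)$ is K-semistable for some $c\in[0,r^{-1})$. If $c\in(0,(1-\epsilon_0')r^{-1})$, then Theorem \ref{thm:bdd-bigangle} applied with $\epsilon_0'$ places $(X,D)$ in a fixed log bounded family $\mathcal F_1$ depending only on $n,r,\epsilon_0'$. If instead $c\in[(1-\epsilon_0')r^{-1},r^{-1})$, then since $(1-\epsilon_0')r^{-1}>(1-\epsilon_0)r^{-1}$ we also have $c\in[(1-\epsilon_0)r^{-1},r^{-1})$, so Theorem \ref{thm:almostCYstabilize} shows that $(X,(1-\epsilon_0)r^{-1}D)$ is K-semistable; as $(1-\epsilon_0)r^{-1}\in(0,(1-\epsilon_0')r^{-1})$, Theorem \ref{thm:bdd-bigangle} with $\epsilon_0'$ again puts $(X,D)$ into the same family $\mathcal F_1$. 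These two cases exhaust $c\in(0,r^{-1})$. For the remaining value $c=0$, the hypothesis says $X$ is a K-semistable $\bQ$-Fano variety (which is $\bQ$-Gorenstein smoothable of dimension $n$) and $D\sim_{\bQ}-rK_X$ is an effective Weil divisor; K-semistability gives $\delta(X)\geq 1$, and $(-K_X)^n$ equals the anticanonical volume of a smooth Fano $n$-fold by constancy in the $\bQ$-Gorenstein smoothing, hence is a positive integer bounded in terms of $n$, so \cite{Jia17} bounds the possible $X$. Over a bounding family for such $X$ the effective Weil divisors $D$ with $D\sim_{\bQ}-rK_X$ form a bounded family (pass to a uniform multiple $\ell$ with $\ell r\in\bZ$ and $\ell K_X$ Cartier, and use boundedness of the linear systems $|-\ell r K_X|$), which gives a log bounded family $\mathcal F_0$.

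Since a finite union of log bounded collections is log bounded, $\mathcal F_0\cup\mathcal F_1$ contains the entire collection and the corollary follows. I do not expect a serious obstacle — the statement really is an assembly of Theorems \ref{thm:bdd-bigangle} and \ref{thm:almostCYstabilize}, as the word ``easy'' in its statement suggests — so the only points that need care are the two bits of bookkeeping just used: Theorem \ref{thm:bdd-bigangle} only bounds pairs whose coefficient lies in an \emph{open} interval stopping short of $(1-\epsilon_0)r^{-1}$, whereas Theorem \ref{thm:almostCYstabilize} outputs precisely the endpoint coefficient $(1-\epsilon_0)r^{-1}$, which is what forces the passage from $\epsilon_0$ to $\epsilon_0'$; and the coefficient $c=0$ is excluded from the very notion of a $\bQ$-Gorenstein smoothable log Fano \emph{pair} (Definition \ref{defn:qgorsmoothable} requires $c>0$), so it must be bounded separately via boundedness of K-semistable Fano varieties.
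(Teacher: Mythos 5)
Your proof is correct and takes precisely the route the paper intends, namely assembling Theorems \ref{thm:bdd-bigangle} and \ref{thm:almostCYstabilize}; the two pieces of bookkeeping you isolate (stepping down from $\epsilon_0$ to $\epsilon_0'=\epsilon_0/2$ so that the stabilized coefficient $(1-\epsilon_0)r^{-1}$ lands strictly inside the open interval Theorem \ref{thm:bdd-bigangle} covers, and handling $c=0$ directly from $\delta(X)\geq 1$ plus \cite{Jia17}) are exactly the details that make the ``easy consequence'' rigorous.
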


Next we prove finiteness of K-moduli walls.

\begin{prop}\label{prop:k-wall-finite}
 There exist rational numbers $0=c_0<c_1<c_2<\cdots<c_k=\min\{1,r^{-1}\}$ such that for any $c, c'\in (c_i, c_{i+1})$ and any $0\leq i\leq {k-1}$ we have $Z_c^{\red}=Z_{c'}^{\red}$. Moreover,  $Z_{c_i\pm \epsilon}^{\red}$ are Zariski open subsets of $Z_{c_i}^{\red}$ for each $1\leq i\leq k-1$. 
\end{prop}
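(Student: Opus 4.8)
The plan is to combine the constructibility of the K-semistable thresholds (Theorem \ref{thm:kstfinite}) with a Noetherianity/compactness argument to produce the finitely many walls, and then deduce the openness statements from the semicontinuity of $\kst_{\pm,\epsilon_0}$. First I would fix $\epsilon_0 = \epsilon_0(n,r)$ as in Theorem \ref{thm:almostCYstabilize} (or simply $\epsilon_0 = 1-r$ when $r<1$), so that $c$-K-semistability is unchanged for $c$ in a left-neighbourhood of $\min\{1,r^{-1}\}$; this guarantees the top wall $c_k = \min\{1,r^{-1}\}$ behaves correctly. By Theorem \ref{thm:bdd-bigangle} (and Corollary \ref{cor:bdd} when $r>1$) the relevant pairs form a log bounded family, so everything takes place inside a single finite-type scheme $Z^{\klt}$ (for $m$ sufficiently divisible).

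The key step is the following. By Theorem \ref{thm:kstfinite}, the functions $\kst_{+,\epsilon_0}$ and $\kst_{-,\epsilon_0}$ on $Z^{\klt}$ are constructible with values in $\bQ$; in particular each takes only finitely many values, say all lying in a finite set $W \subset \bQ \cap [0,\min\{1,r^{-1}\}]$. More precisely, from the proof of Theorem \ref{thm:kstfinite}, $Z^{\klt}$ admits a finite constructible decomposition $Z^{\klt}=\sqcup_I S_I^G$ such that on each $S_I^G$ the rational piecewise-linear functions $\mu_{1,I}, \mu_{2,I}$ on $\Hom_{\bQ}(\bG_m,T)$ govern $c$-K-semistability via $\mu_{1,I}(\lambda)\geq c\,\mu_{2,I}(\lambda)$ together with $c\leq \lct(X;D)-\tfrac{\epsilon_0}{n+1}$. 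For each piece $I$ the set of $c$ for which the sign of $\mu_{1,I}(\lambda)-c\,\mu_{2,I}(\lambda)$ (minimized over the finitely many linear pieces of $\mu_{i,I}$) changes is a finite set of rationals, namely ratios $\mu_{1,I}/\mu_{2,I}$ evaluated at the finitely many relevant rays; and the log canonical threshold contributes finitely many more rational values (it too is constructible with rational values). Taking the union over all $I$ of these rational numbers, together with $0$ and $\min\{1,r^{-1}\}$, and ordering them, yields $0=c_0<c_1<\cdots<c_k=\min\{1,r^{-1}\}$. By construction, for $c,c'$ in a common open interval $(c_i,c_{i+1})$, the condition defining $Z_c^{\circ}$ is the same on every piece $S_I^G$, hence $Z_c^{\circ}=Z_{c'}^{\circ}$ as subsets of $Z^{\klt}$, and therefore $Z_c^{\red}=Z_{c'}^{\red}$.

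For the openness statement, I would argue that $Z_{c_i+\epsilon}^{\red}$ and $Z_{c_i-\epsilon}^{\red}$ (for $0<\epsilon\ll 1$, which by the previous paragraph makes sense independently of the choice of small $\epsilon$) are open in $Z_{c_i}^{\red}$. Indeed, $Z_{c_i}^{\circ}=\{\kst_{-,\epsilon_0}\leq c_i\leq \kst_{+,\epsilon_0}\}$ while $Z_{c_i+\epsilon}^{\circ}=Z_{c_i}^{\circ}\cap\{\kst_{+,\epsilon_0}>c_i\}=Z_{c_i}^{\circ}\cap\{\kst_{+,\epsilon_0}\geq c_{i+1}\}$, using that $\kst_{+,\epsilon_0}$ takes no values strictly between $c_i$ and $c_{i+1}$. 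Since $\kst_{+,\epsilon_0}$ is lower semicontinuous by Theorem \ref{thm:kstfinite}, the set $\{\kst_{+,\epsilon_0}\geq c_{i+1}\}$ is open, so $Z_{c_i+\epsilon}^{\circ}$ is open in $Z_{c_i}^{\circ}$, hence $Z_{c_i+\epsilon}^{\red}$ is open in $Z_{c_i}^{\red}$. The argument for $Z_{c_i-\epsilon}^{\red}$ is symmetric, using upper semicontinuity of $\kst_{-,\epsilon_0}$ and that it takes no value strictly between $c_{i-1}$ and $c_i$.

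The main obstacle is really bookkeeping rather than a deep point: one must be careful that the finitely many walls can be chosen uniformly, i.e. independent of which $m$ (sufficiently divisible) is used to present $Z^{\klt}$ as a finite-type scheme, and that the stabilization of the stacks $[Z_{c,m}^{\red}/\PGL(N_m+1)]$ (Theorem \ref{thm:stabilization}, invoked later) is compatible with this wall structure; this is handled by the log boundedness in Theorem \ref{thm:bdd-bigangle}, which makes all the constructible data live in one bounded family. A secondary subtlety is ensuring that the intervals are genuinely \emph{open} — that no pair changes stability status strictly inside $(c_i,c_{i+1})$ — which is exactly what the rational-piecewise-linearity of the $\mu_{i,I}$ and the constructibility of $\lct$ give us, since between consecutive critical ratios the relevant inequalities have locally constant truth value.
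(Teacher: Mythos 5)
Your proposal is correct and takes essentially the same route as the paper: the paper's proof is exactly the two-line statement that the first part combines Theorems \ref{thm:kstfinite} and \ref{thm:almostCYstabilize}, and the second part follows from the coefficient-continuity of Futaki invariants (which manifests as the constructibility and semicontinuity of $\kst_{\pm,\epsilon_0}$ that you invoke). Your write-up just unfolds the bookkeeping that the paper leaves implicit.
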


\begin{proof}
The first statement follows from combining Theorems \ref{thm:kstfinite} and \ref{thm:almostCYstabilize}. The second statement follows from the continuity of generalized Futaki invariants with respect to coefficients.
\end{proof}

\subsection{Stabilization of quotient stacks}\label{sec:stackstab}
Next we study the stabilization problem for
the stacks $[Z_{c,m}^{\red}/\PGL(N_m+1)]$.

\begin{thm}\label{thm:stabilization}
 Assume $m$ is sufficiently divisible. Then for each $k\in \bZ_{>0}$ there exists a canonical isomorphism 
 \[
  \Theta_k: [Z_{c,m}^{\red}/\PGL(N_m+1)]
 \to [Z_{c,km}^{\red}/\PGL(N_{km}+1)]  
 \] between reduced Artin stacks.
\end{thm}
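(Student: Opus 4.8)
The plan is to construct $\Theta_k$ via the standard re-embedding correspondence between Hilbert points at levels $m$ and $km$, and then to check it descends to an isomorphism of quotient stacks. First I would recall that for $(X,D)$ with $\mathrm{Hilb}(X,D)\in Z_{c,m}^{\red}$, the polarization $L=\cO_{\bP^{N_m}}(1)|_X\cong\cO_X(-mK_X)$ is very ample and projectively normal (this is part of the definition of $Z_m$, and may be arranged for $m$ sufficiently divisible by a standard Matsusaka-type bound applied to the log bounded family of Theorem \ref{thm:bdd-bigangle}, noting $-mK_X$ is Cartier once $m_1\mid m$). Hence $H^0(\bP^{N_m},\cO(k))\twoheadrightarrow H^0(X,\cO_X(-kmK_X))$ is surjective, so the $k$-th Veronese embedding $v_k\colon \bP^{N_m}\hookrightarrow\bP^{N_{km}}$ composed with the inclusion of $X$ gives a point $\mathrm{Hilb}(X,D)\in Z_{c,km}^{\red}$ (the K-semistability of $(X,cD)$ and the klt/Fano conditions are intrinsic to the pair and unchanged). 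This defines a $\PGL(N_m+1)\to\PGL(N_{km}+1)$-equivariant morphism $\iota_k\colon Z_{c,m}^{\red}\to Z_{c,km}^{\red}$, where the group homomorphism is the one induced by functoriality of $\Sym^k$ on $H^0(\cO(1))$. The induced map on quotient stacks is $\Theta_k\colon [Z_{c,m}^{\red}/\PGL(N_m+1)]\to[Z_{c,km}^{\red}/\PGL(N_{km}+1)]$.

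Next I would verify $\Theta_k$ is an isomorphism by exhibiting an inverse on the level of groupoids. Given any $(X',D')$ with $\mathrm{Hilb}(X',D')\in Z_{c,km}^{\red}$, projective normality at level $km$ forces $\cO_{\bP^{N_{km}}}(1)|_{X'}\cong\cO_{X'}(-kmK_{X'})$ and the restriction map to be an isomorphism; since $-mK_{X'}$ is Cartier (the index bound $m_1\mid m$ again) and $|-mK_{X'}|$ has the expected dimension $N_m+1$ on a $\bQ$-Fano variety in $\overline{Z_m}$ (this is a constructible/closed condition that holds after shrinking $m$, using that it holds on the dense open locus $Z_m$ of smooth Fanos and is preserved under the flat degenerations we allow), one recovers the level-$m$ embedding up to $\PGL(N_m+1)$: the sub-linear-system of $H^0(X',\cO(-kmK_{X'}))$ cut out by $\Sym^k$ of the level-$m$ sections determines a unique point of $Z_{c,m}^{\red}$ modulo the group. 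Concretely, the morphism $Z_{c,km}^{\red}\to[Z_{c,m}^{\red}/\PGL(N_m+1)]$ sending $\mathrm{Hilb}(X',D')$ to this datum is $\PGL(N_{km}+1)$-invariant in the appropriate sense, hence factors through $[Z_{c,km}^{\red}/\PGL(N_{km}+1)]$, giving $\Theta_k^{-1}$. One then checks $\Theta_k\circ\Theta_k^{-1}$ and $\Theta_k^{-1}\circ\Theta_k$ are the identity, which amounts to the tautology that re-embedding by $v_k$ and then extracting the degree-one piece returns the original embedding.

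The main obstacle I anticipate is not the construction of $\iota_k$ but the precise choice of "sufficiently divisible $m$" making all the uniform statements simultaneously valid: one needs (i) $m_1\mid m$ so that $-mK_X$ is Cartier for every member of the log bounded family (Theorem \ref{thm:bdd-bigangle}); (ii) a uniform Matsusaka/Mumford-regularity bound so that $\cO_X(-mK_X)$ is projectively normal and $h^0(X,\cO_X(-mK_X))=\chi_0(m)=N_m+1$ holds with no higher cohomology, and similarly for all Veronese powers $km$; (iii) the equality $h^0(X,\cO_X(-mK_X))=\chi_0(m)$ together with $\cO_{\bP^{N_m}}(1)|_X\cong\cO_X(-mK_X)$ being \emph{closed} conditions on $\overline{Z_m}$, i.e. automatically inherited by the boundary points of $Z_m^{\klt}$ — this requires the standard semicontinuity/flatness argument (cf. \cite[Section 7]{LWX14}) and is where a careful appeal to boundedness of the whole degenerate family, via Corollary \ref{cor:bdd} and \cite{Jia17}, is essential. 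Once these uniformity inputs are in place, the identification of groupoids is formal, and canonicity of $\Theta_k$ follows because every choice made (the Veronese map, the extraction of the linear piece) is functorial.

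Finally I would remark that $\Theta_k$ is compatible with the good moduli space morphisms: since $\lambda_{\CM,f,c\cD}$ is intrinsic to the family $(\cX,c\cD)\to T$ and independent of the projective embedding used to present $T$ (CM line bundles are invariant under twisting by pullbacks of line bundles on the base, as used in Proposition \ref{prop:cm-interpolation}), $\Theta_k$ carries the universal family at level $m$ to that at level $km$ with matching CM $\bQ$-line bundles, so the induced map on good moduli spaces $Z_{c,m}^{\red}\sslash\PGL(N_m+1)\to Z_{c,km}^{\red}\sslash\PGL(N_{km}+1)$ is an isomorphism as well; this is what licenses the notation $\cK\cM_{\chi_0,r,c}$ and $KM_{\chi_0,r,c}$ in Definition \ref{defn:kmoduli}.
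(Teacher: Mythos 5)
Your construction of $\Theta_k$ by Veronese re-embedding and its putative inverse by ``extraction of the degree-one piece'' is conceptually the same as the paper's proof, but you have elided exactly the one genuinely non-formal point, and that point is in the inverse direction. On a single fiber $(X',D')$ with $\mathrm{Hilb}(X',D')\in Z_{c,km}^{\red}$, it is true that $-mK_{X'}$ is Cartier (index bound) and that $H^0(X',\cO_{X'}(-mK_{X'}))$ has the expected dimension $N_m+1$, so you can recover a level-$m$ embedding up to $\PGL(N_m+1)$. But $\Theta_k^{-1}$ must be a morphism of \emph{stacks}, i.e.\ functorial in families: given a family $(\cX'_S,\cD'_S)\hookrightarrow\bP^{N_{km}}\times S$ pulled back from the universal family over $Z_{c,km}^{\red}$, you need a $\PGL(N_m+1)$-torsor over $S$ with an equivariant map to $Z_{c,m}^{\red}$, and the natural candidate is the projectivized frame bundle of $\pi'_{S,*}\,\omega_{\cX'_S/S}^{[m]}$. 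For that to even make sense you need this sheaf to be a vector bundle of rank $N_m+1$, which in turn requires $\omega_{\cX'_S/S}^{[m]}$ to be a line bundle on the total space, i.e.\ that $K_{\cX'_S/S}$ is $\bQ$-Cartier of index dividing $m$ \emph{as a family}. This is not a semicontinuity statement; it is exactly Koll\'ar's local stability theorem for families of klt Fano pairs with constant volume (\cite[Theorem 5.4]{Kol17}), which the paper invokes explicitly at this step. Your appeals to ``standard semicontinuity/flatness'' and ``boundedness via \cite{Jia17}'' are adjacent but do not supply this input; without it the inverse only exists pointwise.

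Two secondary remarks. First, you rely on projective normality of the level-$m$ embedding to know that the Veronese $v_k$ sends a level-$m$ Hilbert point to a bona fide level-$km$ Hilbert point (linear normality at level $km$). This is plausibly true for $m$ sufficiently divisible on the bounded family, but the paper sidesteps it entirely by working with the vector bundle $\pi_*\cO_\cX(k)$ and its $\PGL(N_{km}+1)$-torsor of frames rather than the fixed Veronese; that formulation avoids needing surjectivity of $\Sym^k H^0(\cO(1))\to H^0(\cO(k))$. Second, the paper's torsor construction ($P\to Z_{c,m}^{\red}$ a $\PGL(N_{km}+1)$-torsor, $f\colon P\to Z_{c,km}^{\red}$ equivariant and $\PGL(N_m+1)$-invariant) is the precise groupoid-theoretic packaging of what you describe, and it makes the verification that $\Theta_k$ is an isomorphism reduce cleanly to $f$ being a $\PGL(N_m+1)$-torsor; your direct $\iota_k$ along the non-surjective inclusion $\PGL(N_m+1)\hookrightarrow\PGL(N_{km}+1)$ forces you to produce $\Theta_k^{-1}$ by hand, which circles back to the family-level issue above.
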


\begin{proof}
 We first construct $\Theta_k$ as a morphism.
 For simplicity, denote by $T:=Z_{c,m}^{\red}$, $T':=Z_{c,km}^{\red}$,
 $N:=N_m$, and $N':=N_{km}$.
 Let $(\cX,\cD)\subset\bP^{N}\times T$
 be the pull-back family of the universal family over the Hilbert
 scheme. Denote by $\pi: (\cX,\cD)\to T$ the projection
 morphism. Then $\pi_*\cO_{\cX}(k)$ is a rank $N'+1$ vector
 bundle over $T$. Let $p:\sP\to T$ be the $\PGL(N'+1)$-torsor
 corresponding to projectivized basis of the vector bundle
 $\pi_*\cO_{\cX}(k)$. Then we will define a $\PGL(N'+1)$-equivariant
 morphism $f:\sP\to T'$ as follows. Since $T'$ is a locally
 closed subscheme of the Hilbert scheme $\bH^{\bm{\chi}_k;N'}$, we will first construct 
 $f:\sP\to \bH^{\bm{\chi}_k;N'}$. Consider $\pi_{\sP}:(\cX_{\sP},\cD_{\sP})\to {\sP}$
 where $(\cX_{\sP},\cD_{\sP}):=(\cX,\cD)\times_T \sP$. Then we have a closed embedding
 $(\cX_{\sP},\cD_{\sP})\hookrightarrow\bP^{N'}\times \sP$ given by the projectivized basis
 information encoded in $\sP$. This gives a morphism
 $f:\sP\to \bH^{\bm{\chi}_k;N'}$. Since $T$ contains a Zariski dense
 open subset $T\cap Z_m$ parametrizing smooth log Fano pairs,
 we know that the restriction of $f$ on  $p^{-1}(T\cap Z_m)$ 
 factors through $Z_{km}$. Thus $f$ factors through
 the scheme theoretic closure $\overline{Z}_{km}$. It is clear that
 the image of $f$ lies inside $\Supp(T')$, so $f$ factors as
 $\sP\xrightarrow{f}T'\hookrightarrow \bH^{\bm{\chi}_k;N'}$
 where the latter map is a locally closed embedding. It is clear that 
 $f$ is $\PGL(N'+1)$-equivariant. Thus $f$ descends to  a morphism $g:T\to [T'/\PGL(N'+1)]$. On the other hand, we may lift the $\PGL(N+1)$-action on $T$ to $\sP$ via push forward sections. It is clear that $f$ is $\PGL(N+1)$-invariant which implies $g$ is also $\PGL(N+1)$-invariant. Thus we obtain $\Theta_k$ as the descent of $g$. 

From the above arguments it is clear that the actions of $\PGL(N+1)$ and $\PGL(N'+1)$ on $\sP$ commutes. Hence to show $\Theta_k$ is an isomorphism it suffices to show that $f:\sP\to T'$ is a $\PGL(N+1)$-torsor, from which $\Theta_k^{-1}$ can be constructed easily.
Let us consider the pull back of the  universal family $(\cX',\cD')\subset \bP^{N'}\times T'$ with $\pi':(\cX',\cD')\to T'$. Since all fibers of $\pi'$ are klt with the same volume, by \cite[Theorem 5.4]{Kol17} we know that $\cX'\to T'$ is a locally stable family, in particular $ K_{\cX'/T'}$ is $\bQ$-Cartier whose Cartier index is divisible by $km$. Since $-mK_{\cX'_{t'}}$ is Cartier for any fiber $\cX'_{t'}=\pi^{-1}(t')$, we know that $-mK_{\cX'/T'}$ is also Cartier. It is also clear from the construction that $t'\mapsto h^0(\cX'_{t'},\omega_{\cX'_{t'}}^{[m]})$ is a constant function on $T'$. Hence the coherent sheaf $\pi'_*\omega_{\cX'/T'}^{[m]}$ is a vector bundle of rank $N+1$ on $T'$. Thus we may cover $T'$ by Zariski open subsets $T_i'$ which trivialize $\pi'_*\omega_{\cX'/T'}^{[m]}$. Then over $T_i'$, a basis of sections of $\pi'_*\omega_{\cX'/T'}^{[m]}$ gives us a Zariski local section $T_i'\to \sP$ of $f$. These sections enable us to trivialize the map $f:\sP\to T'$ over $T_i'$.
\end{proof}

\begin{rem}\label{rem:pseudo-functor}
As a consequence of Theorem \ref{thm:stabilization}, we know that the K-moduli stack $\cK\cM_{\chi_0,r,c}$ represents the following moduli pseudo-functor over reduced base $S$:
\[
\cK\cM_{\chi_0,r,c}(S)=\left\{(\cX,\cD)/S\left| \begin{array}{l}(\cX,c\cD)/S\textrm{ is a $\bQ$-Gorenstein smoothable log Fano family,}\\ \cD\sim_{S,\bQ}-rK_{\cX/S},~\textrm{each fiber $(\cX_s,c\cD_s)$ is K-semistable,}\\ \textrm{and $\chi(\cX_s,\cO_{\cX_s}(-kK_{\cX_s}))=\chi_0(k)$ for $k$ sufficiently divisible.}\end{array}\right.\right\}.
\]
\end{rem}
\subsection{Existence of good moduli spaces and local VGIT}\label{sec:VGIT}

In this section, we will show that the K-moduli stack $\cK\cM_{\chi_0,r,c}$ admits a proper good moduli space $KM_{\chi_0,r,c}$ generalizing \cite[Section 8]{LWX14}. Moreover, there are finitely many wall crossings when $c$ varies in the interval $(0, \min\{1,r^{-1}\})$, and each wall crossing has a local VGIT presentation in the sense of \cite[(1.2)]{AFS17}. 

We follow Notation \ref{not:const}. 
Throughout this section, we will assume that $c\leq \min\{1, (1-\epsilon_0)r^{-1}\}$ thanks to Theorem \ref{thm:almostCYstabilize}.
 Let us fix two Pl\"ucker embeddings
$\bH^{\chi;N}\hookrightarrow\bP^M$ and $\bH^{\tilde{\chi};N}
\hookrightarrow\bP^{\tilde{M}}$.
Then we have an embedding $\bH^{\bm{\chi};N}\hookrightarrow
\bP^{\bm{M}}:=\bP^{M}\times\bP^{\tilde{M}}$.
Let $(X,cD)$ be a K-polystable log Fano pair parametrized
by a point in $Z_{c}^{\red}$. Then by Theorem \ref{thm:bddtestK}
it admits a weak conical K\"ahler-Einstein metric and $\Aut(X,D)\subset\SL(N+1)$ is reductive. (Note here that in order to obtain a natural linearization on $\cO_{\bP^{M}}(1,1)$, we always treat the automorphism group as a subgroup of $\SL(N+1)$.)
Let us pick a $\U(N+1)$-invariant metric on $\bP^{\bm{M}}$
coming from product of $\U(N+1)$-invariant Fubini-Study metrics on $\bP^{M}$
and $\bP^{\tilde{M}}$. Let $z_0=(z_{0,1}, z_{0,2}):=\Hilb(X,cD)\in \bP^{\bm{M}}$ be the Hilbert point of $(X,cD)$ via Tian's embedding with respect to the weak conical K\"ahler-Einstein metric.
Then we may decompose the tangent space as $\Aut(X,D)$-invariant
subspaces
\[
 T_{z_0}\bP^{\bm{M}}=W\oplus\aut(X,D)^\perp.
\]
Similarly, we have $\bP^M=\bP(W_1\oplus \bC\cdot z_{0,1}\oplus
\aut_1(X,D)^{\perp})$ and $\bP^{\tilde{M}}=\bP(W_2\oplus
\bC\cdot z_{0,2}\oplus\aut_2(X,D)^{\perp})$.
Let us take $z_0^*:=(z_{0,1}^*,z_{0,2}^*)\in (\bP^{M})^*
\times(\bP^{\tilde{M}})^*$ be the dual point of $z_0$.
Then the locus $(z_0^*\neq 0)$ gives an open immersion
$ \bA^{\bm{M}}:=\bA^{M}\times\bA^{\tilde{M}}\hookrightarrow
 \bP^{\bm{M}}$ which maps the origin to $z_0$.
Hence the image of $W$ under the
exponential map is a vector subspace of $\bA^{\bm{M}}$
which we also denote by $W$.
Let $\oW$ be the Zariski closure of $W$.

It is clear that $z_0^*$ is an $\Aut(X,D)$-fixed point.
Hence $\bA^{\bm{M}}$ and $W$ are both $\Aut(X,D)$-invariant.
We have two induced representations \[\rho_1:\Aut(X,D)\to\SL(W_1\oplus \bC\cdot z_{0,1}) \quad \textrm{ and }  
\rho_2:\Aut(X,D)\to\SL(W_2\oplus\bC\cdot z_{0,2}).\]
Let $\rho:=\rho_1\boxtimes\rho_2$ be the product
representation which induces a linearization of 
$\cO_{\oW}(1,1)$. 
Denote by $\rho_{(X,D)}:\Aut(X)\to\bG_m$ the character
corresponding to the $\Aut(X,D)$-linearization of 
$\cO_{\bP^{\bm{M}}}(1,1)|_{z_0}$.
Since the universal family $(\cX,\cD)\to\oZ$ is $\Aut(X,D)$-equivariant,
it induces an $\Aut(X,D)$-linearization on $M_2$
which we denote by $\rho_{M_2}$ (see \eqref{eq:M_i} for the definition of $M_2$).

\begin{defn}\label{defn:LWXLunaGIT} Let $z\in \oW\cap\oZ$ be a point.
\begin{enumerate}
 \item We say $z$ is $c$-GIT (poly/semi)stable if
 it is GIT (poly/semi)-stable with respect to 
 the $\Aut(X,D)$-action on $\oW$ with linearization
 $\rho\otimes\rho_{(X,D)}^{-1}$ of $\cO_{\oW}(1,1)$.
 \item We say $z$ is $(c+\epsilon)$-GIT (poly/semi)stable
 for some $0<|\epsilon|\ll 1$ if it is GIT (poly/semi)-stable with respect
 to the $\Aut(X,D)$-action on $\oW\cap\oZ$
 with linearization $\rho\otimes\rho_{X,D}^{-1}\otimes
 \rho_{M_2}^{\otimes-\epsilon}$ of $\cO_{\oW\cap\oZ}(1,1)
 \otimes M_2|_{\oW\cap\oZ}^{\otimes -\epsilon}$.
\end{enumerate}

\end{defn}

The next theorem on local GIT chart is a direct generalization of \cite[Theorem 8.8]{LWX14}. 

\begin{thm}\label{thm:localGIT}
 There is an $\Aut(X,D)$-invariant saturated
 affine Zariski open neighborhood $U_W$ of $z_0=\Hilb(X,cD)$
 in $\oW\cap \oZ$ such that every point in $U_W$ is $c$-GIT semistable whose corresponding log pair is $c$-K-semistable, and for any $\Hilb(Y,E)\in U_W$,
 $(Y,cE)$ is K-polystable if and only if $\Hilb(Y,E)$ is
 $c$-GIT polystable.
 
 Moreover, for all $c$-GIT polystable point $\Hilb(Y,E)\in U_W$, we have $\Aut(Y,E)<\Aut(X,D)$, i.e. the local GIT presentation $U_W\sslash \Aut(X,D)$ is stabilizer preserving in the sense of \cite[Definition 2.5]{AFS17}.
\end{thm}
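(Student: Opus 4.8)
The strategy is to imitate the proof of \cite[Theorem 8.8]{LWX14} and its refinements, with the new input being the log-smooth Yau--Tian--Donaldson theorem (Theorem \ref{thm:lwx4.1}) and the continuity method results (Theorem \ref{thm:bddtestK}) developed above. The main point is the classical Luna slice philosophy: near the Hilbert point $z_0 = \Hilb(X,cD)$ of a K-polystable pair, the local structure of the Hilbert scheme with its $\SL(N+1)$-action is modeled, up to étale maps, by the linear representation of the reductive group $\Aut(X,D)$ on the slice $\oW$. What must be proven is (i) such a $c$-GIT semistable saturated affine open $U_W$ exists, (ii) every point of $U_W$ has K-semistable fiber and the K-polystable ones are exactly the $c$-GIT polystable ones, and (iii) the stabilizer-preserving property.

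\textbf{First steps.} I would begin by recalling the Luna étale slice theorem for the $\SL(N+1)$-action on a neighborhood of $z_0$ in $\overline{Z}$ (or rather in $\overline{Z}\times$ the flag bundle to rigidify), producing the $\Aut(X,D)$-invariant locally closed slice $\oW\cap\oZ$ through $z_0$; this is exactly the construction in the excerpt preceding the statement, so I can take $\oW$ as given. Next, the key analytic input: because $(X,cD)$ admits a weak conical Kähler--Einstein metric (Theorem \ref{thm:bddtestK}(3)), and K-semistability is an open condition on $Z^{\klt}$ (Theorem \ref{thm:kstfinite}), there is a Zariski open $\Aut(X,D)$-invariant neighborhood $V$ of $z_0$ in $\oZ$ on which every fiber is $c$-K-semistable. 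I would then invoke the Kempf--Ness / Hilbert--Mumford type argument of Li--Wang--Xu: using the CM line bundle description $\lambda_{\CM,\pi,c\cD,\cL}=M_1-cM_2$ from the proof of Theorem \ref{thm:kstfinite} together with Proposition \ref{prop:Fut=CMwt}, the GIT weight of a $1$-PS $\sigma$ of $\Aut(X,D)$ acting on $z$ in the slice with the linearization $\rho\otimes\rho_{(X,D)}^{-1}$ of $\cO_{\oW}(1,1)$ is, up to a positive constant, the generalized Futaki invariant of the induced test configuration of $(X_z, cD_z)$. Since every such pair is $c$-K-semistable near $z_0$, all these weights are nonnegative, so by the Hilbert--Mumford criterion $z_0$ (and all nearby points of the slice) are $c$-GIT semistable; intersecting $V$ with the $c$-GIT semistable locus of $\oW\cap\oZ$ and then taking a saturated affine neighborhood (which exists by \cite[Remark 6.2, Lemma 6.3]{alper} applied to the affine GIT quotient $(\oW\cap\oZ)^{ss}\sslash\Aut(X,D)$) yields $U_W$.

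\textbf{Polystability matching and stabilizer preservation.} For the equivalence ``$(Y,cE)$ K-polystable $\iff$ $\Hilb(Y,E)$ is $c$-GIT polystable'' for $\Hilb(Y,E)\in U_W$: one direction uses that a $c$-GIT polystable point has closed $\Aut(X,D)$-orbit, hence any special degeneration of $(Y,cE)$ coming from a $1$-PS of $\Aut(X,D)$ with vanishing Futaki invariant must already be an isomorphism, which by Theorem \ref{thm:bddtestK}(2) (every K-semistable pair specially degenerates to a K-polystable one, and that degeneration can be realized by a $1$-PS of $\SL(N+1)$ and, after the slice reduction, of $\Aut(X,D)$) forces $(Y,cE)$ to be K-polystable. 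The converse is symmetric: if $(Y,cE)$ is K-polystable then it admits a weak conical KE metric (Theorem \ref{thm:bddtestK}(3)), so its orbit closure in the slice is the unique closed orbit in its fiber of the GIT quotient, i.e. $\Hilb(Y,E)$ is $c$-GIT polystable. Finally, for the stabilizer-preserving assertion: for $c$-GIT polystable $\Hilb(Y,E)\in U_W$, the GIT stabilizer of $\Hilb(Y,E)$ inside $\Aut(X,D)$ is $\Aut(Y,E)$ itself --- indeed an element of $\Aut(X,D)$ fixing $\Hilb(Y,E)$ induces an automorphism of $(Y,E)$ compatible with the embedding, and conversely any automorphism of $(Y,E)$ acts on $H^0(Y,-mK_Y)$ and hence lies in $\SL(N+1)$; since it also fixes the slice point it lies in $\Aut(X,D)$ (using that the slice is $\Aut(X,D)$-invariant and transverse). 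Thus $\Aut(Y,E)<\Aut(X,D)$, which is precisely stabilizer-preservation in the sense of \cite[Definition 2.5]{AFS17}.

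\textbf{Main obstacle.} The subtle point is not the formal GIT bookkeeping but ensuring that every special degeneration relevant to K-(semi/poly)stability of a nearby fiber $(Y,cE)$ can be realized \emph{within} the reductive group $\Aut(X,D)$ acting on the slice, rather than requiring a larger $1$-PS of $\SL(N+1)$ that leaves the slice. This is where Theorem \ref{thm:bddtestK} (in particular parts (2) and (3), built on Theorem \ref{thm:lwx4.1}) combined with the uniqueness of K-polystable degenerations \cite{BX18} is essential: it guarantees the K-polystable limit of $(Y,cE)$ is unique, lies in $U_W$, and the degeneration is induced by a $1$-PS of $\SL(N+1)$ which, after conjugating into the torus and using $\Aut(X,D)$-equivariance of the Luna slice, can be taken inside $\Aut(X,D)$. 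Making this reduction rigorous --- tracking the slice through the degeneration --- is the technical heart, and closely parallels \cite[Section 8]{LWX14}.
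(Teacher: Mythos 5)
Your overall strategy is aligned with the paper's: the paper likewise handles the semistable equivalence via openness of GIT semistability together with openness of K-semistability (Corollary \ref{cor:openness}), and for the polystable equivalence it refers to the proof of \cite[Theorem 8.8]{LWX14} with \cite[Lemma 8.10]{LWX14} replaced by Lemma \ref{lem:kpslimit}. You do not explicitly identify Lemma \ref{lem:kpslimit} (which encodes the compactness of the $\U(N+1)$-translated Tian embeddings of K-polystable limits) as the replacement ingredient, but the skeleton of the argument is the same.

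The genuine gap is in your stabilizer-preservation step. You argue: any automorphism of $(Y,E)$ lies in $\SL(N+1)$ (correct), it fixes the Hilbert point $\Hilb(Y,E)$ (correct), and then conclude that it lies in $\Aut(X,D)$ ``using that the slice is $\Aut(X,D)$-invariant and transverse.'' That last inference does not follow. The slice $\oW$ being $\Aut(X,D)$-invariant and transverse to the orbit $\SL(N+1)\cdot z_0$ at $z_0$ is a statement about tangent spaces at $z_0$; it puts no constraint on the $\SL(N+1)$-stabilizer of a \emph{different} point $\Hilb(Y,E)\in U_W$. A priori an element of $\SL(N+1)\setminus\Aut(X,D)$ could fix $\Hilb(Y,E)$. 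The containment $\Aut(Y,E)\subset\Aut(X,D)$ is precisely the nontrivial conclusion of the Luna étale slice theorem, and the reason LWX14's Section 8 (and this paper) cannot simply invoke Luna is that the $W$ here is a fixed Zariski-locally-closed subvariety built from the Kähler--Einstein embedding, not the abstract formal slice; one must \emph{verify} the slice axioms for it, which requires the Gromov--Hausdorff/partial-$C^0$ machinery. The paper does this by constructing the $\U(N+1)$-invariant slice $\Sigma$ of Tian-embedded K-polystable pairs, showing $\Sigma$ satisfies \cite[Assumption A.9]{LWX14} (this is where Lemma \ref{lem:kpslimit} enters), and then applying \cite[Theorem A.10]{LWX14}. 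Your ``Main obstacle'' paragraph correctly identifies that the technical heart lies here, but the argument you actually write down for stabilizer preservation skips it rather than resolving it, so as stated that step is incorrect.
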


\begin{proof}
By definition, $z_0$ is $c$-GIT polystable. Since saturated affine open neighborhoods of $z_0$ form a basis of all Zariski open neighborhoods of $z_0$, we just need to find one $U_W$ that is an $\Aut(X,D)$-invariant Zariski open neighborhood of $z_0$.
The semistable equivalence part of the statement follows from the openness of GIT semistability and openness of K-semistability in our setting (see Corollary \ref{cor:openness}). 
For the polystable equivalence part, the proof is the same as the proof of \cite[Theorem 8.8]{LWX14}, except that we replace \cite[Lemma 8.10]{LWX14} by Lemma \ref{lem:kpslimit}.

Next we prove the stabilizer preserving for polystable points. First, we recall the $\U(N+1)$-invariant slice $\Sigma$ constructed in \cite[Summary 8.6]{LWX14}. Let $\Sigma$ be the subset of $\bH^{\bm{\chi};N}$ consisting of Hilbert points of K-polystable log Fano pairs in $Z_{c}^{\red}$ via Tian's embedding with respect to their weak conical K\"ahler-Einstein metrics. By Lemma \ref{lem:kpslimit}, we know that $\Sigma$ satisfies \cite[Assumption A.9]{LWX14}. Hence we obtain stabilizer preserving for polystable points by \cite[Theorem A.10]{LWX14}. This finishes the proof.
\end{proof}

\begin{lem}\label{lem:kpslimit} Let $z_i=\Hilb(X_i,D_i)\in Z_c^{\red}$ be a sequence of Hilbert points of $c$-K-semistable log pairs converging to $z_0=\Hilb(X,cD)$. Then each $(X_i,cD_i)$ specially degenerates to a K-polystable log Fano pair $(Y_i, cE_i)\in Z_c^{\red}$, such that 
\[
\lim_{i\to\infty}\dist_{\bH^{\bm{\chi};N}} (\Hilb(Y_i, cE_i), \U(N+1)\cdot z_0)=0,
\]
where $\Hilb(Y_i, cE_i)$ is the Hilbert point corresponding to Tian's embedding of $(Y_i, cE_i)$ with respect to the weak K\"ahler-Einstein metric.
\end{lem}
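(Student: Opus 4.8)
The plan is to adapt the argument of \cite[Lemma 8.10]{LWX14} to the log setting, with the analytic inputs of \emph{loc.\ cit.} replaced by the continuity-method results of this section. Morally, the lemma asserts that the operation ``pass to the $c$-K-polystable degeneration and re-embed by Tian's embedding'' is continuous near the K-polystable point $z_0=\Hilb(X,cD)$; this is precisely what is needed to verify \cite[Assumption A.9]{LWX14} in the proof of Theorem \ref{thm:localGIT}.

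First I would produce the degenerations. For each $i$, the pair $(X_i,cD_i)$ is $c$-K-semistable with $\Hilb(X_i,D_i)\in Z_m^{\klt}$, so Theorem \ref{thm:bddtestK}(2) yields a special degeneration, induced by a $1$-PS of $\SL(N+1)$, to a $c$-K-polystable log Fano pair $(Y_i,cE_i)$ with $\Hilb(Y_i,E_i)\in Z_m^{\klt}$ (one takes the trivial degeneration if $(X_i,cD_i)$ is already K-polystable); being K-polystable, $(Y_i,cE_i)$ lies in $Z_c^{\red}$, and by Theorem \ref{thm:bddtestK}(3) it carries a weak conical K\"ahler--Einstein metric $\omega_i$. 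Let $w_i:=\Hilb(Y_i,cE_i)\in\bH^{\bm{\chi};N}$ be the associated Tian embedding Hilbert point; since $(Y_i,E_i)$ is an $\SL(N+1)$-degeneration of $(X_i,D_i)$ and any two admissible embeddings of a fixed pair differ by $\PGL(N+1)$, we have $w_i\in\overline{\SL(N+1)\cdot z_i}$. Because Tian embeddings are normalized against $L^2$-orthonormal bases, the $w_i$ all lie in a fixed $\U(N+1)$-invariant compact subset of $\bH^{\bm{\chi};N}$, so after passing to a subsequence $w_i\to w_\infty$. Running a diagonal argument over $i$, using $\bQ$-Gorenstein smoothings of the $(Y_i,cE_i)$ and letting the cone-angle parameter $\beta\to 1$ exactly as in the proofs of Proposition \ref{prop:hilbconverge} and Theorem \ref{thm:compactness}, and then invoking Theorem \ref{thm:lwx4.1}, one concludes (after a further subsequence) that $w_\infty=\Hilb(Y_\infty,cE_\infty)$ is the Tian embedding of a $c$-K-polystable pair $(Y_\infty,cE_\infty)\in Z_c^{\red}$ which is the Gromov--Hausdorff limit of the $(Y_i,\omega_i)$.

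The remaining --- and decisive --- step is to show $(Y_\infty,cE_\infty)\cong(X,cD)$. Granting this, the uniqueness of the weak conical K\"ahler--Einstein metric up to $\Aut(Y_\infty,E_\infty)$ \cite{Ber16,BBEGZ} and the uniqueness of Tian's embedding up to $\U(N+1)$ give $w_\infty\in\U(N+1)\cdot z_0$, and then $\dist_{\bH^{\bm{\chi};N}}(\Hilb(Y_i,cE_i),\U(N+1)\cdot z_0)\to 0$ along the full sequence follows by the standard subsequence argument together with compactness of $\U(N+1)\cdot z_0$. To identify the limit, I would extract (again diagonally) elements $g_i\in\SL(N+1)$ with $g_i\cdot z_i\to w_\infty$, which exhibits $(Y_\infty,E_\infty)$ as a flat limit of $\SL(N+1)$-translates of the pairs $(X_i,D_i)$, while the $(X_i,D_i)$ themselves converge to $(X,D)$; combined with the K-polystability of both $(Y_\infty,cE_\infty)$ and $(X,cD)$ and the uniqueness of the K-polystable object in a fixed $S$-equivalence class (from \cite{LX14,BX18} and vanishing of the generalized Futaki invariant, cf.\ the argument in Proposition \ref{prop:continuity-Kus}), this should force $(Y_\infty,cE_\infty)\cong(X,cD)$.

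The hard part will be exactly this last identification. The hypotheses provide only a sequential limit $z_i\to z_0$, not a $\bQ$-Gorenstein family over a curve, so the separatedness of K-moduli \cite{BX18} cannot be applied directly; one genuinely needs the analytic compactness (convergence of Tian embeddings in Theorem \ref{thm:lwx4.1}) together with the fact that $z_0$ is a \emph{balanced} Hilbert point of the K-polystable pair $(X,cD)$ --- in effect a continuity-of-Kempf--Ness input --- to pin the Gromov--Hausdorff limit to $(X,cD)$ itself rather than to some other pair in its $S$-equivalence class. This is the technical core of \cite[Lemma 8.10]{LWX14}; in our situation the log modifications should be confined to replacing the analytic citations there by Theorem \ref{thm:lwx4.1}, Theorem \ref{thm:bddtestK}, and the smoothing arguments of Section \ref{sec:continuity}.
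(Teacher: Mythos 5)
Your overall structure matches the paper's: produce $c$-K-polystable degenerations $(Y_i,cE_i)$ via Theorem~\ref{thm:bddtestK}(2), equip them with weak conical K\"ahler--Einstein metrics via Theorem~\ref{thm:bddtestK}(3), use Theorem~\ref{thm:lwx4.1} to extract a convergent subsequence of Tian embeddings, and identify the limit with $(X,cD)$. The paper in fact packages this more cleanly as an argument by contradiction, but that is a cosmetic difference.

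Where you diverge — and where the gap is — is the ``decisive step'' of identifying the limit pair $(Y_\infty,cE_\infty)$ with $(X,cD)$. You correctly flag that the hypothesis gives only sequential convergence $z_i\to z_0$, not a $\bQ$-Gorenstein family over a curve, and that therefore the separatedness of K-moduli from \cite{BX18} does not apply ``directly.'' But your proposed resolution via ``balanced Hilbert points / continuity of Kempf--Ness'' is not what closes this gap. The paper's actual bridge is the notion of the \emph{broken orbit} $\BO_{z_0}$ from \cite[Section 3]{LWX14}: once you have $g_i\in\SL(N+1)$ with $g_i\cdot z_i\to z'$ and $z_i\to z_0$, the limit $z'$ lies in $\BO_{z_0}$ by definition, and the broken-orbit machinery of LWX14 is precisely what reduces such a sequential relation to one over a curve so that the uniqueness of K-polystable limits \cite{BX18} can be invoked. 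Since $z'$ also lies in $Z_c^{\red}$ (indeed it is the Tian embedding of a K-polystable pair), this uniqueness forces $(Y,cE)\cong(X,cD)$, contradicting the assumed failure of convergence to $\U(N+1)\cdot z_0$. The Kempf--Ness/balanced-point argument you sketch is a different flavor of input (closer to what is needed elsewhere in Section 8 of LWX14), and is not needed once the broken-orbit framework is invoked; in particular it obscures the fact that the correct reference is $\BO_{z_0}\cap Z_c^{\red}$ together with BX18 uniqueness, which is a short algebraic step. So: your plan is morally right and you correctly locate the hard point, but you should replace the Kempf--Ness heuristic by the broken-orbit lemma to actually complete the proof.
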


\begin{proof}
 Assume to the contrary. After passing to a subsequence we know that $z_i':=\Hilb(Y_i,cE_i)$ converges to $z':=\Hilb(Y,cE)$ by taking Gromov-Hausdorff limits as Theorem \ref{thm:lwx4.1}, and $(Y,cE)$ is not isomorphic to $(X,cD)$. Since $z_i'\in \overline{\SL(N+1)\cdot z_i}$, there exists a sequence $g_i\in\SL(N+1)$ such that $g_i\cdot z_i\to z'$ as $i\to\infty$. Thus $z'\in\BO_{z_0}\cap Z_c^{\red}$ where $\BO_{z_0}$ is the broken orbit of $z_0$ with respect to the action of $\SL(N+1)$ on $\overline{Z}$ (see \cite[Section 3]{LWX14} for the definition). By \cite{BX18} on the uniqueness of K-polystable limit, we know that $(Y,cE)\cong (X,cD)$, a contradiction.
\end{proof}

The next result gives stabilizer preserving property of semistable points near $z_0$ as a straightforward consequence of \cite[Lemma 8.12]{LWX14}.

\begin{lem}\label{lem:stabpreserve} After possibly shrinking $U_W$, we have that $\Aut(Y,E)<\Aut(X,D)$ for any point  $\Hilb(Y,E)\in U_W$. 
\end{lem}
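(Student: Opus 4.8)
The plan is to exploit the Luna-slice structure already built into the construction of $U_W$. Recall that $W=\aut(X,D)^\perp$ is an $\Aut(X,D)$-invariant linear complement to the tangent space of the orbit $\SL(N+1)\cdot z_0$ at $z_0=\Hilb(X,cD)$, embedded into $\bA^{\bm{M}}\hookrightarrow\bP^{\bm{M}}$ via the exponential map, and $\oW$ is its Zariski closure. This is exactly the input to Luna's \'etale slice theorem, and its output in our setting is \cite[Lemma 8.12]{LWX14}: after replacing $U_W$ by a smaller $\Aut(X,D)$-saturated affine open neighborhood of $z_0$ in $\oW\cap\oZ$, the natural $\SL(N+1)$-equivariant morphism
\[
\SL(N+1)\times^{\Aut(X,D)} U_W\longrightarrow \oZ
\]
is \'etale onto a saturated open neighborhood of the orbit $\SL(N+1)\cdot z_0$. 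The presence of the boundary divisor causes no difficulty, since the universal family $(\cX,\cD)\to\oZ$, the Pl\"ucker embeddings $\bH^{\bm{\chi};N}\hookrightarrow\bP^{\bm{M}}$, and all the chosen linearizations are $\Aut(X,D)$-equivariant.

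Granting this, the rest is formal stabilizer bookkeeping, which I would carry out as follows. Fix $\Hilb(Y,E)=w\in U_W$. Since $w$ is obtained from Tian's embedding we have $\cO_{\bP^N}(1)|_Y\cong\cO_Y(-mK_Y)$, so every element of $\Aut(Y,E)$ acts linearly on $\bP^N$ and lifts uniquely to $\SL(N+1)$; hence $\Aut(Y,E)=\mathrm{Stab}_{\SL(N+1)}(w)$ for the $\SL(N+1)$-action on $\oZ$. An $\SL(N+1)$-equivariant \'etale morphism identifies the stabilizer group scheme of a point with that of its image, and the $\SL(N+1)$-stabilizer of the point $[e,w]$ in $\SL(N+1)\times^{\Aut(X,D)}U_W$ is precisely $\mathrm{Stab}_{\Aut(X,D)}(w)$. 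Therefore
\[
\Aut(Y,E)=\mathrm{Stab}_{\SL(N+1)}(w)=\mathrm{Stab}_{\Aut(X,D)}(w)\ \leq\ \Aut(X,D),
\]
which is the assertion of the lemma.

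The step I expect to be the real obstacle is checking that Luna's slice theorem, i.e. \cite[Lemma 8.12]{LWX14}, genuinely applies here even though $\oZ$ (hence $\oW\cap\oZ$) need not be smooth: one needs the hypotheses of \cite[Lemma 8.12]{LWX14} to be met, namely reductivity of $\Aut(X,D)$ — supplied by Theorem \ref{thm:bddtestK}(3) — together with the appropriate local structure of $\oZ^{\red}$ near $z_0$ (so that $\oW\cap\oZ$ is a transverse locally closed subscheme through $z_0$ in the sense needed for the \'etale-slice statement). Once these inputs are in place, the argument above goes through verbatim, exactly as in \cite[Section 8]{LWX14}; the only novelty compared to \emph{loc.\ cit.} is the bookkeeping involving the divisor, which is harmless by the equivariance noted above.
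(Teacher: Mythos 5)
There is a genuine gap, and it sits precisely in the step you call ``formal stabilizer bookkeeping.'' You assert that a $G$-equivariant \'etale morphism identifies the stabilizer of a point with the stabilizer of its image. This is false in general: take $G=\bZ/2$ acting on $X'=\Spec(k\times k)$ (two points swapped, trivial stabilizers) and on $X=\Spec k$ (one fixed point, stabilizer $G$); the projection is a $G$-equivariant \'etale morphism but the stabilizer strictly grows. For a $G$-equivariant \'etale $\psi$ one only gets the containment $\mathrm{Stab}_G(y)\subseteq\mathrm{Stab}_G(\psi(y))$; the reverse containment requires $\psi$ to be \emph{strongly} \'etale (Cartesian descent of quotients, i.e.\ injectivity on orbits in the relevant sense). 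In the framework of \cite{AFS17} that the paper is using, strong \'etaleness of the local quotient presentation is deduced \emph{from} stabilizer preservation together with the closed-to-closed property; so inferring stabilizer preservation from an appeal to a strongly \'etale slice map is essentially circular. (Note also that Lemma \ref{lem:luna}, the slice/\'etale statement you want to invoke, appears after Lemma \ref{lem:stabpreserve} in the paper, and the paper carefully keeps \'etaleness, stabilizer preservation, and the closed-to-closed property as three separate inputs to \cite[Theorem 1.2]{AFS17}.)

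The paper's actual proof is quite different and essentially analytic. It first observes, via \cite[Theorem 3.1]{LWX14}, that the identity component $\Aut_0(Y,E)$ already lies inside $\Aut(X,D)$, reducing the question to a finite subgroup $H<\Aut(Y,E)$ meeting every component. By constructibility of the locus $\{\Hilb(Y,E)\in\oW\cap\oZ\mid \Aut(Y,E)<\Aut(X,D)\}$, one may work in an analytic neighborhood $U_W^{\an}$ where one can assume $z$ degenerates via a $1$-PS of $\Aut(X,D)$ to a $c$-GIT polystable $z'$ in $U_W^{\an}$, whose pair is K-polystable. One then produces an $H$-invariant auxiliary divisor $\Gamma\in|-q_1 K_Y|$ so that $(Y,cE+(1-\beta)a\Gamma)$ is uniformly K-stable for $\beta\in(0,1)$, hence admits an $H$-invariant weak conical K\"ahler--Einstein metric $\omega_{Y,cE}(\beta)$, and tracks the convergence of the corresponding Tian embeddings $\Hilb(Y,cE,\omega_{Y,cE}(\beta))$ to $\U(N+1)\cdot\Hilb(Y',cE')$ as $\beta\to 1$; from there the argument follows \cite[Proof of Lemma 8.12]{LWX14}. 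None of this reduces to a formal consequence of \'etaleness of the slice map, and your proposal does not replace these steps with a correct alternative.
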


\begin{proof}
  We following the proof of \cite[Lemma 8.12]{LWX14}. By the proof of \cite[Theorem 3.1]{LWX14}, we know that the connected component $\Aut_0(Y,E)$ is a subgroup of $\Aut(X,D)$. Let us pick a finite subgroup $H<\Aut(Y,E)$ that meets every connected component of $\Aut(Y,E)$. Hence it suffices to show that $H<\Aut(X,D)$. We also know from \cite[Proof of Theorem 8.8]{LWX14} that the set 
 \[
 \{\Hilb(Y,E)\in \oW\cap\oZ\mid \Aut(Y,E)<\Aut(X,D)\}
 \]
 is constructible. Hence it suffices to show the statement for $z$ lies inside an analytic open neighborhood $U_W^{\an}$ of $z_0$ in $\oW\cap\oZ$. We may assume that $z$ degenerates via a $1$-PS $\lambda$ of $\Aut(X)$ to a $c$-GIT polystable point $z'=\Hilb(Y',E')$ in $U_W^{\an}$. Hence Theorem \ref{thm:localGIT} implies that $(Y',cE')$ is K-polystable. 
 
  Since $(Y,E)$ belongs to a bounded family, we may assume that $|H|$ is uniformly bounded from above.    Since $(Y,cE)$ is klt and belongs to a bounded family, there exists a positive integer $q_1=q_1(n,r,\epsilon_0)>1$ and an $H$-invariant divisor $\Gamma\in |-q_1 K_Y|$ such that $(Y,cE+\Gamma)$ is log canonical. Hence by Proposition \ref{prop:k-interpolation} we know that $(Y, cE+(1-\beta)a\Gamma)$ is uniformly K-stable where $a=\frac{1-c r}{q_1}$ and $\beta\in (0,1)$. Let us take $m_4:=\lcm(m_2(\chi_0,r, q_1,\epsilon_0,\gamma_0), m_3)$ where $m_2(\chi_0,r,q_1,\epsilon_0,\gamma_0)$ is chosen as in Theorem \ref{thm:lwx4.1}. From now on we assume that $m$ is a multiple of $m_4$. Then by Proposition \ref{prop:hilbconverge}, for each $\beta\in [\gamma_0,1)$ there exists an $H$-invariant weak conical K\"ahler-Einstein metric $\omega_{Y,cE}(\beta)$  together with a Tian embedding $\Hilb(Y,cE,\omega_{Y,cE}(\beta))\in\bH^{\bm{\chi};N} $ via $\omega_{Y,cE}(\beta)$. Moreover, from the uniqueness of K-polystable limits \cite{BX18} we have
  \[
  \Hilb(Y,cE,\omega_{Y,cE}(\beta))\xrightarrow{\beta\to 1} \U(N+1)\cdot \Hilb(Y',cE')\subset\bH^{\bm{\chi};N}.
  \]
  Hence the proof proceeds the same as \cite[Proof of Lemma 8.12]{LWX14}.
\end{proof}

The following Luna slice type result is also a straightforward consequence of \cite[Lemmas 8.15 and A.15]{LWX14}. We omit the proof here because it is identical to the proof therein on verifying the finite distance property.

\begin{lem}\label{lem:luna}
 After possibly shrinking $U_W$, the morphism
 \[
  \psi: \SL(N+1)\times_{\Aut(X,D)}U_W\to U  
 \]
 is a finite strongly \'etale $\SL(N+1)$-morphism onto a Zariski open subset $U$ of $Z_c^{\red}$.
\end{lem}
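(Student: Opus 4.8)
The plan is to verify the hypotheses of Luna's étale slice theorem in the form adapted in \cite[Section 8, Appendix A]{LWX14}. Since $z_0=\Hilb(X,cD)$ parametrizes a $c$-K-polystable pair, Theorem \ref{thm:bddtestK} gives that $\Aut(X,D)\subset\SL(N+1)$ is reductive and $(X,cD)$ admits a weak conical K\"ahler--Einstein metric; moreover by the uniqueness of K-polystable limits \cite{BX18} the orbit $\SL(N+1)\cdot z_0$ is closed inside a suitable $\SL(N+1)$-invariant neighborhood of $z_0$ in $\oZ$. First I would check that $\psi$ is étale at $[(e,z_0)]$: by construction $T_{z_0}\bP^{\bm{M}}=W\oplus\aut(X,D)^{\perp}$ as $\Aut(X,D)$-representations, and the orbit directions $\mathrm{Lie}(\SL(N+1))\cdot z_0$ lie in $\aut(X,D)^{\perp}$ with complement the Lie algebra of $\Aut(X,D)$; intersecting (on reduced structures, as in \cite{LWX14}) with $\oZ$ yields a direct sum decomposition of $T_{z_0}Z_c^{\red}$ into the orbit tangent space and $T_{z_0}(\oW\cap\oZ)$, so $d\psi$ is an isomorphism at the origin. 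By openness of the étale locus, together with $\Aut(X,D)$- and $\SL(N+1)$-equivariance, I can then shrink $U_W$ to an $\Aut(X,D)$-invariant saturated affine neighborhood on which $\psi$ is everywhere étale, retaining the conclusions of Theorem \ref{thm:localGIT} and Lemma \ref{lem:stabpreserve}.

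To upgrade ``étale'' to ``finite strongly étale onto an open subset'' I would check the two remaining conditions. The first is injectivity on closed orbits: if two closed $\SL(N+1)$-orbits through points of $U_W$ have the same image, then degenerating each to its $c$-K-polystable (equivalently $c$-GIT polystable) representative inside $U_W$ via Theorem \ref{thm:localGIT}, both representatives lie in the broken orbit $\BO_{z_0}$, hence are isomorphic by \cite{BX18}, exactly as in the proof of Lemma \ref{lem:kpslimit}. The second, and substantive, point is that one cannot invoke Luna's theorem verbatim because $\oZ$ sits in the projective space $\bP^{\bm{M}}$ rather than in an affine $\SL(N+1)$-scheme with a unique closed orbit; instead one reruns the argument of \cite[Lemmas 8.15 and A.15]{LWX14}, whose content is the \emph{finite distance property}: using the Kempf--Ness function and the fact that Tian's embeddings coming from the weak conical K\"ahler--Einstein metrics keep the relevant Hilbert points in $\bH^{\bm{\chi};N}$ at uniformly bounded distance from $\U(N+1)\cdot z_0$ (this is where Theorem \ref{thm:lwx4.1} and \cite{TW19} enter), one deduces that $\psi$ has finite fibres over its image and that $U_W\sslash\Aut(X,D)\to Z_c^{\red}\sslash\SL(N+1)$ is étale with cartesian defining square. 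Finally, since $\psi$ is étale it is open, so its image $U$ is an $\SL(N+1)$-invariant Zariski open subset, which lies in $Z_c^{\red}$ because every point of $U_W$ parametrizes a $c$-K-semistable pair by Theorem \ref{thm:localGIT}.

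The main obstacle is precisely this finite-distance input that replaces the affineness hypothesis of Luna's slice theorem; the tangent space computation, the injectivity on closed orbits (via Blum--Xu separatedness), and the openness of the image are either formal or already in place. Since the verification of the finite-distance property is essentially word-for-word identical to \cite[Lemmas 8.15 and A.15]{LWX14}, the proof in the paper is omitted.
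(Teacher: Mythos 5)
Your proposal matches the paper's approach: the paper cites \cite[Lemmas 8.15 and A.15]{LWX14} and states that the proof is identical to the verification of the finite-distance property there, which is exactly the key step you single out as the substantive input replacing the affineness hypothesis in Luna's slice theorem. The surrounding scaffolding in your outline (the tangent-space decomposition giving étaleness at the origin, injectivity on closed orbits via Theorem \ref{thm:localGIT} and the Blum--Xu uniqueness of K-polystable limits, and openness of the image) is exactly what the paper takes as formal and omits.
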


Now we are ready to prove Theorem \ref{thm:lwxlog}, the first main result of our construction of K-moduli stacks and spaces.

\begin{proof}[Proof of Theorem \ref{thm:lwxlog}]
We first show that the Artin stack $\cK\cM_{\chi_0,r,c}$ admits a good moduli space $KM_{\chi_0,r,c}$ as a proper reduced algebraic space. 
By \cite[Theorem 1.2]{AFS17}, this boils down to proving the following: for any closed point $[z_0]\in [Z_c^{\red}/\SL(N+1)]$, there is a saturated affine neighborhood $z_0:=\Hilb(X,cD)\in U_W\subset\oW\cap\oZ$ (as in Lemma \ref{lem:luna}) such that 
\begin{enumerate}
    \item The morphism $f:[U_W/\Aut(X,D)]\to [Z_c^{\red}/\SL(N+1)]$ is a local quotient presentation in the sense of \cite[Definition 2.1]{AFS17}. Moreover, $f$ is stabilizer preserving and sends closed point to closed point, and
    \item For any $\bC$-point $z\in Z_c^{\red}$ specializing to $z_0$ under the $\SL(N+1)$-action, the closure $\overline{\{[z]\}}$ admits a good moduli space.
\end{enumerate}

We have shown the $\Aut(X,D)$ is reductive, and $z_0$ is a $c$-GIT polystable point with stabilizer $\Aut(X,D)$. Since $\SL(N+1)\times_{\Aut(X,D)} U_W$ is the quotient of the affine scheme $\SL(N+1)\times U_W$ by the free action of the reductive group $\Aut(X,D)$, we know that $\SL(N+1)\times_{\Aut(X,D)} U_W$ is also affine. Hence Lemma \ref{lem:luna} implies that $f$ is \'etale and affine, in particular $U=\SL(N+1)\cdot U_W$ is affine. Thus $f$ is a local quotient presentation according to \cite[Definition 2.1]{AFS17}. In Lemma \ref{lem:stabpreserve} we showed that $f$ is stabilizer preserving. By Theorem \ref{thm:bddtestK}.2, we know that a closed point in $[Z_c^{\red}/\SL(N+1)]$ corresponds to a $c$-K-polystable pair. Thus $f$ sends closed point to closed point by Theorem \ref{thm:localGIT}. Hence  part (1) is proved. For part (2), notice that the closure $\overline{\SL(N+1)\cdot z}$ in $Z_c^{\red}$ is a closed subset of $U$ since $\overline{\SL(N+1)\cdot z}=\SL(N+1)\cdot \overline{\Aut(X,D)\cdot z}$ and $\psi$ is finite onto $U$. Since $U$ is affine, we know that $\overline{\SL(N+1)\cdot z}$ is also affine. Thus we finish the proof of part (2).

Indeed, we have shown that $U\sslash \SL(N+1)$ is affine. Hence the good moduli space $KM_{\chi_0,r,c}$ is a reduced scheme. Its properness follows from \cite{BX18} and Theorem \ref{thm:compactness}.
\end{proof}

The next theorem provides a local VGIT chart of K-moduli wall crossing in the slice $W$. Before stating the theorem, we recall a lemma we will need. 

\begin{lem}\label{lem:zerofut}\leavevmode
\begin{enumerate}
  \item (\cite{Kem78}) Let $G$ be a reductive group acting 
  on a polarized projective scheme $(Z,L)$.
  Let $z\in Z$ be a closed point. Let $\lambda:\bG_m\to G$
  be a 1-PS. Denote by $z'=\lim_{t\to 0}\lambda(t)\cdot z$.
  If $z$ is GIT semistable and $\mu^{L}(z,\lambda)=0$,
  then $z'$ is also GIT semistable.
  \item (\cite[Lemma 3.1]{LWX18})
  Let $(X,\Delta)$ be a log Fano pair. Let $(\cX,\tDelta;\cL)/\bA^1$
  be a normal test configuration of $(X,\Delta)$.
  If $(X,\Delta)$ is K-semistable and $\Fut(\cX,\tDelta;\cL)=0$,
  then $(\cX,\tDelta;\cL)/\bA^1$ is a special test configuration
  and $(X_0,\Delta_0)$ is also K-semistable.
 \end{enumerate}

\end{lem}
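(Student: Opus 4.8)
The plan is to dispatch the two parts separately; both are standard facts recorded for later use, and I would keep the arguments short. For part~(1) I would work on the affine cone. After replacing $L$ by a sufficiently divisible power — which affects the GIT semistability of neither $z$ nor $z'$ — I may assume $L$ very ample and form $\widehat Z:=\Spec\bigl(\bigoplus_{m\geq 0}H^0(Z,L^{\otimes m})\bigr)$, equipped with the lifted $G$-action and the commuting scaling $\bG_m$. Choosing a nonzero lift $\widehat z\in\widehat Z$ of $z$, the numerical criterion says that $z$ is GIT semistable exactly when $0\notin\overline{G\cdot\widehat z}$. The hypothesis $\mu^L(z,\lambda)=0$ is precisely the statement that $\widehat{z'}:=\lim_{t\to 0}\lambda(t)\cdot\widehat z$ exists in $\widehat Z$ and is \emph{nonzero} over $z'$ (if $\mu^L>0$ the lift fails to converge, and if $\mu^L<0$ it converges to the vertex). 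Hence $\widehat{z'}\in\overline{G\cdot\widehat z}$, so $\overline{G\cdot\widehat{z'}}\subseteq\overline{G\cdot\widehat z}$ because the right-hand side is $G$-invariant and closed; were $z'$ unstable I would obtain $0\in\overline{G\cdot\widehat{z'}}\subseteq\overline{G\cdot\widehat z}$, contradicting the semistability of $z$. Apart from pinning down the sign convention for $\mu^L$, there is nothing hard here.

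For part~(2) I would proceed in two stages. First, to see that the test configuration is special, I would run the Li--Xu procedure \cite{LX14} on the normal test configuration $(\cX,\tDelta;\cL)/\bA^1$: a finite sequence of modifications (passing to the log canonical modification and then running a $(K_{\cX/\bA^1}+\tDelta)$-MMP to reach the anticanonical model) which terminates at a special test configuration $(\cX^{s},\tDelta^{s};\cL^{s})$ with $\cL^{s}\sim_{\bQ}-l(K_{\cX^{s}/\bA^1}+\tDelta^{s})$, and along which the generalized Futaki invariant does not increase. Thus $\Fut(\cX^{s},\tDelta^{s};\cL^{s})\leq\Fut(\cX,\tDelta;\cL)=0$, while K-semistability of $(X,\Delta)$ together with Theorem~\ref{thm:valuative} gives $\Fut(\cX^{s},\tDelta^{s};\cL^{s})\geq 0$, so it equals $0$. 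Since each step of the procedure strictly decreases the Futaki invariant unless it is an isomorphism along the central fibre, I would conclude that $(\cX,\tDelta;\cL)$ — already normal by hypothesis — was special to begin with.

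Second, to see that the central fibre $(\cX_0,\tDelta_0)$ is K-semistable, I would use the valuative criterion: given any divisorial valuation $v_0$ over $X_0$, one degenerates it through $\cX$ to a valuation $v$ over $X$ and compares log discrepancies and $S$-invariants, the two differing only by a correction proportional to $\Fut(\cX,\tDelta;\cL)=0$; feeding the inequality $A_{(X,\Delta)}\geq S_{(X,\Delta)}$ into this comparison yields $A_{(X_0,\Delta_0)}(v_0)\geq S_{(X_0,\Delta_0)}(v_0)$ for all $v_0$. I expect this last step — making the degeneration of valuations and the precise relation between the corresponding $A$ and $S$ invariants rigorous — to be the main obstacle; it is exactly the content of \cite[Lemma 3.1]{LWX18} (building on \cite{LX14}), which I would invoke rather than reprove.
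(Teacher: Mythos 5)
The paper states this lemma by citation only --- \cite{Kem78} for part~(1) and \cite[Lemma 3.1]{LWX18} for part~(2) --- and offers no internal proof, so your sketch can only be compared against the cited sources.

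Your argument for part~(1) is correct and is the standard one. With Mumford's sign convention, $\mu^L(z,\lambda)=0$ is exactly the condition for the lift $\lambda(t)\cdot\widehat z$ in the affine cone to converge to a nonzero vector $\widehat{z'}$ lying over $z'$; then $\widehat{z'}\in\overline{G\cdot\widehat z}$, hence $\overline{G\cdot\widehat{z'}}\subseteq\overline{G\cdot\widehat z}$, and the vertex-avoidance characterization of semistability transports from $z$ to $z'$. Nothing to add.

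For part~(2) your outline follows \cite[Lemma 3.1]{LWX18}, as you note. One caution: the claim that ``each step of the Li--Xu procedure strictly decreases the Futaki invariant unless it is an isomorphism along the central fibre'' oversimplifies the equality analysis. The procedure of \cite{LX14} ends with a change of polarization to $-l(K_{\cX/\bA^1}+\tDelta)$, which is not a birational modification of the central fibre, and the equality condition there is that $\cL$ was already proportional to the log anticanonical class; the earlier birational steps also require a more careful discrepancy argument to rule out Futaki-preserving but non-isomorphic modifications. Likewise, the degeneration-of-valuations step you invoke to get K-semistability of $(X_0,\Delta_0)$ is genuinely delicate --- it is precisely the content of the cited lemma. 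Since you explicitly defer to \cite{LWX18} rather than reprove, the proposal has no gap; it reduces to the same citation the paper relies on, while supplying a correct self-contained argument for part~(1) that the paper does not bother to write out.
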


\begin{thm}\label{thm:localVGIT}
 There is an $\Aut(X,D)$-invariant saturated affine 
 Zariski open neighborhood $U_W$ (as in Lemma \ref{lem:luna}) of $z_0=\Hilb(X,cD)$
 such that for any $\Hilb(Y,E)\in U_W$ and
 any $|\epsilon|\ll 1$, the log Fano pair $(Y,(c+\epsilon)E)$
 is K-(poly/semi)stable if and only if $\Hilb(Y,E)$
 is $(c+\epsilon)$-GIT (poly/semi)stable.
\end{thm}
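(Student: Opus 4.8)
The plan is to combine the local GIT chart for the $c$-stability condition (Theorem \ref{thm:localGIT}) with the bounded-destabilization criterion (Theorem \ref{thm:bddtestK}), and to match the two perturbed stability notions via the CM weight formula. The starting point is to note that the representation-theoretic data appearing in Definition \ref{defn:LWXLunaGIT}(2) is exactly a twist of that in Definition \ref{defn:LWXLunaGIT}(1) by the $\Aut(X,D)$-linearization $\rho_{M_2}^{\otimes -\epsilon}$, where $M_2$ is the divisorial part of the CM line bundle from \eqref{eq:M_i}. By Proposition \ref{prop:Fut=CMwt}, for any $1$-PS $\lambda$ of $\Aut(X,D)$ inducing a test configuration of $(Y,E)$, the quantity $\mu^{\rho\otimes\rho_{(X,D)}^{-1}}(z,\lambda) - \epsilon\,\mu^{\rho_{M_2}}(z,\lambda)$ is a positive multiple of $\Fut(\mathcal{Y}_\lambda, (c+\epsilon)\mathcal{E}_\lambda;\mathcal{L}_\lambda) = \Fut(\mathcal{Y}_\lambda, c\mathcal{E}_\lambda;\mathcal{L}_\lambda) + \epsilon\,\mathrm{CH}(\mathcal{Y}_\lambda,\mathcal{E}_\lambda;\mathcal{L}_\lambda)$, by linearity of the generalized Futaki invariant in the coefficient. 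So the Hilbert–Mumford numerical criterion for $(c+\epsilon)$-GIT stability of $\Hilb(Y,E)$ with respect to the $\Aut(X,D)$-action is, \emph{for $1$-PS of $\Aut(X,D)$}, literally the same inequality as the one-parameter $(c+\epsilon)$-K-stability test.

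The key steps, in order, are as follows. First, shrink $U_W$ so that it satisfies the conclusions of Theorem \ref{thm:localGIT} and Lemma \ref{lem:luna}: every point of $U_W$ is $c$-GIT semistable and $c$-K-semistable, and the Luna-type map $\psi\colon \SL(N+1)\times_{\Aut(X,D)} U_W \to U$ is finite strongly étale onto a saturated open subset $U$ of $Z_c^{\red}$. Second, by Theorem \ref{thm:bddtestK}(1) applied with the coefficient $c+\epsilon$ (which lies in the allowed range for $|\epsilon|\ll 1$ since $c < \lct$ and we stay below $(1-\epsilon_0)r^{-1}$), any destabilizing test configuration of a pair $(Y,cE+\epsilon E)$ parametrized in $U_W$ comes from a $1$-PS of $\SL(N+1)$; using the étale slice and conjugating, one reduces testing to $1$-PS of $\Aut(X,D)$ acting on $U_W$, exactly as in the proof of \cite[Theorem 8.8]{LWX14} — this is the mechanism that lets a \emph{global} K-stability statement be detected on the \emph{local} slice. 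Third, apply the weight comparison above: $(c+\epsilon)$-K-semistability of $(Y,E)$ holds iff $\mu^{\rho\otimes\rho_{(X,D)}^{-1}\otimes\rho_{M_2}^{\otimes -\epsilon}}(\Hilb(Y,E),\lambda)\ge 0$ for all $1$-PS $\lambda$ of $\Aut(X,D)$, i.e.\ iff $\Hilb(Y,E)$ is $(c+\epsilon)$-GIT semistable. For the polystable statement, use Theorem \ref{thm:bddtestK}(2): a $(c+\epsilon)$-K-semistable but not K-polystable pair in $U_W$ specially degenerates via a $1$-PS in $\SL(N+1)$, which after slicing is a $1$-PS of $\Aut(X,D)$ with vanishing $(c+\epsilon)$-Futaki invariant — and by Lemma \ref{lem:zerofut} the limit stays $(c+\epsilon)$-K-semistable and its Hilbert point stays in $U_W$ (after possibly shrinking so $U_W$ is closed under such degenerations, using Lemma \ref{lem:kpslimit}), matching the GIT polystable-orbit criterion exactly; conversely a $(c+\epsilon)$-GIT-destabilizing or orbit-degenerating $1$-PS of $\Aut(X,D)$ produces the corresponding K-theoretic degeneration by the same weight identity. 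The stable case then follows formally from polystable plus finiteness of stabilizers.

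The main obstacle I expect is the bookkeeping needed to make the slice argument work uniformly for all small $\epsilon$ of both signs \emph{simultaneously} — i.e.\ producing a single $U_W$ that works. This requires: (a) knowing that destabilizing or degenerating $1$-PS can be taken with bounded weights / within a fixed conjugacy neighborhood, so that Theorem \ref{thm:bddtestK} (which is stated for fixed $m$) applies uniformly as $\epsilon\to 0$; and (b) ensuring $U_W$ is invariant under the finitely many special degenerations that can occur at the wall, which is where Lemma \ref{lem:kpslimit} and the constructibility of K-semistable thresholds (Theorem \ref{thm:kstfinite}) enter to guarantee only finitely many "$(c\pm\epsilon)$-types" appear. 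Once $U_W$ is chosen to be saturated and small enough to absorb these finitely many degenerations, the weight comparison is purely formal and the proof closes; I would handle (a) and (b) by the same strategy used for Theorem \ref{thm:localGIT}, invoking Lemma \ref{lem:zerofut}(1) to propagate semistability along degenerations with zero $(c+\epsilon)$-weight.
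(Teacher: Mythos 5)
Your central claim --- that the $(c+\epsilon)$-GIT weight on the slice is ``literally the same inequality'' as the $(c+\epsilon)$-K-stability test --- does not hold as stated, and this is the crux of the argument. The linearization in Definition \ref{defn:LWXLunaGIT}(2) lives on $\cO_{\oW}(1,1)\otimes M_2|_{\oW\cap\oZ}^{\otimes-\epsilon}$, where $\cO_{\oW}(1,1)$ is the restriction of the Hilbert scheme polarization, \emph{not} the CM $\bQ$-line bundle $M_1-cM_2$. So the GIT weight is $\mu^{\cO(1,1)}(z,\lambda)-\epsilon\,\mu^{M_2}(z,\lambda)$, whereas the $(c+\epsilon)$-Futaki invariant is proportional to $\mu^{M_1}(z,\lambda)-(c+\epsilon)\mu^{M_2}(z,\lambda)$. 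These differ by $\mu^{\cO(1,1)}(z,\lambda)-\bigl(\mu^{M_1}(z,\lambda)-c\,\mu^{M_2}(z,\lambda)\bigr)$, and a priori there is no reason this vanishes. The paper bridges the gap by a detour you omit: choose the destabilizing $1$-PS $\lambda$ so that $\mu^{\cO(1,1)}(z,\lambda)=0$ and $\epsilon\mu^{M_2}(z,\lambda)>0$; then Lemma \ref{lem:zerofut}(1) and saturatedness of $U_W$ show the limit $z'=\lim_{t\to 0}\lambda(t)\cdot z$ stays $c$-GIT semistable in $U_W$, hence by Theorem \ref{thm:localGIT} the corresponding pair is $c$-K-semistable, and only \emph{then} can one conclude $\mu^{M_1}(z,\lambda)-c\mu^{M_2}(z,\lambda)=0$ (since the family over $\bA^1$ is $c$-K-semistable), which makes the $(c+\epsilon)$-Futaki invariant equal to $-\epsilon\mu^{M_2}(z,\lambda)<0$. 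Without this step the weight comparison is not formal.

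The converse direction (GIT semistable implies K-semistable) has a second gap. You say ``using the \'etale slice and conjugating, one reduces testing to $1$-PS of $\Aut(X,D)$,'' but a K-destabilizing $1$-PS of $\SL(N+1)$ does not automatically conjugate into $\Aut(X,D)$, and its limit point could leave $U=\SL(N+1)\cdot U_W$. The paper handles this by taking $\lambda$ with $\mu^{M_1}-c\mu^{M_2}=0$ and $\epsilon\mu^{M_2}>0$, invoking Lemma \ref{lem:zerofut}(2) to show the limit pair $(Y',cE')$ is $c$-K-semistable, and then crucially using uniqueness of K-polystable degenerations of S-equivalent K-semistable pairs (\cite{LWX18}) to identify the K-polystable degeneration of $(Y',cE')$ with that of $(Y,cE)$, which \emph{does} lie in $U$. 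Only once the whole degeneration chain is shown to live inside $U$ can the $\SL(N+1)$-GIT instability be transferred back across the strongly \'etale slice map $\psi$ to $(c+\epsilon)$-GIT instability on $U_W$. Your proposed shortcut skips this entirely, and I do not see how to make the conjugation argument rigorous without it.
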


\begin{proof}
 Let $(Y,E)$ be a pair with $z:=\Hilb(Y,E)\in U_W$.
 Suppose $(Y,(c+\epsilon)E)$ is K-semistable. We will show that $\Hilb(Y,E)$ 
 is $(c+\epsilon)$-GIT semistable. Assume to the contrary
 that $\Hilb(Y,E)$ is $(c+\epsilon)$-GIT unstable,
 then there exists a 1-PS $\lambda:\bG_m\to \Aut(X,D)$ 
 such that 
 \[
 \mu^{\cO(1,1)}(z,\lambda)-\epsilon\mu^{M_2}(z,\lambda)<0.
 \]
 From the discussion in Section \ref{sec:kstconst}, we can choose $\lambda$
 so that $\mu^{\cO(1,1)}(z,\lambda)=0$
 and $\epsilon\mu^{M_2}(z,\lambda)>0$. 
 Since $z\in\oW\cap\oZ$ is $c$-GIT semistable,
 by Lemma \ref{lem:zerofut}.1
 we know that $z':=\lim_{t\to 0}\lambda(t)\cdot z$
 is also $c$-GIT semistable. Since $U_W$ is saturated,
 we know that $z'\in U_W$. Let $(Y',E')$ be the pair
 such that $\Hilb(Y',E')=z'$. Then Theorem \ref{thm:localGIT}
 implies that $(Y',cE')$ is also K-semistable.
 Hence $\mu^{M_1}(z,\lambda)-c\mu^{M_2}(z,\lambda)=0$
 since $\lambda$ induces a $c$-K-semistable family over $\bA^1$.
 Let $(\cY_\lambda,(c+\epsilon)\cE_\lambda;\cL_\lambda)$ be the test configuration of $(Y,(c+\epsilon)E)$ induced by $\lambda$.
 Thus 
 \[
  \Fut(\cY_\lambda,(c+\epsilon)\cE_\lambda;\cL_\lambda)=\mu^{M_1}(z,\lambda)-(c+\epsilon)\mu^{M_2}(z,\lambda)=-\epsilon\mu^{M_2}(z,\lambda)<0.
 \]
 This contradicts the assumption that $(Y,(c+\epsilon)E)$ is K-semistable.
 
 Next we want to show $(c+\epsilon)$-GIT semistability implies $(c+\epsilon)$-K-semistability in $U_W$.
 Suppose $z=\Hilb(Y,E)\in U_W$ is $(c+\epsilon)$-GIT semistable.
 Assume to the contrary that $(Y,(c+\epsilon)E)$ is K-unstable.
 From Theorem \ref{thm:bddtestK}, we know that to test
 K-(poly/semi)stablility of $(Y,(c+\epsilon)E)$
 it suffices to test all 1-PS in $\SL(N+1)$. 
 Hence there exists a 1-PS $\lambda:\bG_m\to \SL(N+1)$
 such that $\mu^{M_1}(z,\lambda)-(c+\epsilon)\mu^{M_2}(z,\lambda)<0$.
 Again from the discussion in Section \ref{sec:kstconst}, we may choose $\lambda$
 so that $\mu^{M_1}(z,\lambda)-c\mu^{M_2}(z,\lambda)=0$
 and $\epsilon\mu^{M_2}(z,\lambda)>0$.  Since $(Y,cE)$ is K-semistable, by \cite{LX14} we know that $\cY_\lambda$ is normal in codimension $1$ where $(\cY_\lambda,c\cE_\lambda;\cL_\lambda)$ is the test configuration of $(Y,cE)$ induced by $\lambda$. If $\cY_\lambda$ is not normal, then we may take its normalization and reembed it into $\bP^N$ without changing the generalized Futaki invariants. By doing so we may assume that $\lambda$ induces a normal test configuration of $(Y,cE)$.
 Denote by $z'=\Hilb(Y',E'):=\lim_{t\to 0}\lambda(t)\cdot z\in \oZ$.
 Since $(Y,cE)$ is K-semistable, Lemma \ref{lem:zerofut}.2 implies that
 $(Y',cE')$ is also a K-semistable log Fano pair.
 Let $z_1:=\Hilb(Y_1,E_1)$ be the $c$-GIT polystable degeneration of $z$
 in $U_W$. Hence $(Y_1, cE_1)$ is the K-polystable degeneration of $(Y,cE)$ by Theorem \ref{thm:localGIT}. From Theorem \ref{thm:bddtestK} we know that $(Y',cE')$ specially degenerates to a K-polystable log Fano pair $(Y_1',cE_1')$ in $\bP^N$ via a $1$-PS $\lambda'$ of $\SL(N+1)$. Since K-polystable degenerations of S-equivalent K-semistable log Fano pairs are isomorphic by \cite{LWX18}, we know that $(Y_1,cE_1)\cong (Y_1',cE_1')$. Hence there exists $g\in \SL(N+1)$ such that 
 \[
 \lim_{t\to 0}\lambda'(t)\cdot z'=g\cdot z_1=\Hilb(Y_1',cE_1').
 \]
 By Lemma \ref{lem:luna}, we know that $U=\SL(N+1)\cdot U_W$ is a Zariski open subset of $Z_c^{\red}$. Hence $z,z_1,g\cdot z_1,$ and $z'$ all belong to $U$. Hence 
 \[
  \epsilon\mu^{\psi^* M_2}((\id,z),\lambda)=
  \epsilon\mu^{M_2}(z,\lambda)>0.
 \]
 This implies that $(\id,z)$ is GIT unstable on $\SL(N+1)\times_{\Aut(X,D)}U_W$
 with respect to the $\SL(N+1)$-linearized $\bQ$-line bundle
 $\psi^*M_2^{\otimes -\epsilon}$. Since $\psi^*M_2$ is induced
 from the $\Aut(X,D)$-linearized $\bQ$-line bundle $M_2|_{U_W}$
 on $U_W$, we know that $z$ is $(c+\epsilon)$-GIT unstable in $U_W$.
 Thus we reach a contradiction. 
 
 Finally we show that the polystability conditions coincide. Denote by $U_{W,\epsilon}^{\mathrm{ss}}$ the $(c+\epsilon)$-GIT semistable locus in $U_W$, and $U_{\epsilon}^{\mathrm{ss}}$ the $(c+\epsilon)$-K-semistable locus in $U$. From Lemma \ref{lem:luna} and the discussion above, we know that $\psi: \SL(N+1)\times_{\Aut(X,D)}U_{W,\epsilon}^{\mathrm{ss}} \to U_{\epsilon}^{\mathrm{ss}}$ is a finite surjective strongly \'etale $\SL(N+1)$-morphism. We also see that $U$ is a saturated affine open subset of $Z_c^{\red}$ from the proof of Theorem \ref{thm:lwxlog}, hence $U_\epsilon^{\mathrm{ss}}$ is a saturated open subset of $Z_{c+\epsilon}^{\red}$. Thus a point $z=\Hilb(Y,E)\in U_{W,\epsilon}^{\mathrm{ss}}$ is $(c+\epsilon)$-GIT polystable if and only if its $\Aut(X,D)$-orbit is closed in $U_{W,\epsilon}^{\mathrm{ss}}$. This is equivalent to $\SL(N+1)\cdot z$ is closed in $U_{\epsilon}^{\mathrm{ss}}$ which is the same as saying $(Y,(c+\epsilon)E)$ is K-polystable by saturatedness of $U_\epsilon^{\mathrm{ss}}$. The proof is finished.
\end{proof}

\begin{thm}\label{thm:wallcrossingchart}
 Given any closed point $[z_0]$ in $\cK\cM_{\chi_0,r,c}$ and $0<\epsilon\ll 1$, after possibly shrinking $U_W$ there exist a local quotient representation $f:[U_W/G_{z_0}]\to \cK\cM_{\chi_0,r,c}$, a $G_{z_0}$-linearized line bundle $L_{z_0}$ on $U_W$, and a Cartesian diagram
 \[   
    \begin{tikzcd}
    {[U_W^-/G_{z_0}]}\arrow[hookrightarrow]{r}{}\arrow{d}{f^-} &
    {[U_W/G_{z_0}]}\arrow{d}{f}\arrow[hookleftarrow]{r}{} & {[U_W^+/G_{z_0}]}\arrow{d}{f^+}\\
    \cK\cM_{\chi_0,r,c-\epsilon}\arrow[hookrightarrow]{r}{\Phi^-}&
    \cK\cM_{\chi_0,r,c}\arrow[hookleftarrow]{r}{\Phi^+}& \cK\cM_{\chi_0,r,c+\epsilon}
    \end{tikzcd}
 \]
 such that the following are true:
 \begin{enumerate}
     \item The quotient stacks $[U_W^\pm/G_{z_0}]$ are the \emph{VGIT chambers} of $[U_W/G_{z_0}]$ with respect to $L_{z_0}$ (see \cite[Definition 2.4]{AFS17} for a definition). 
     \item All vertical arrows are finite strongly \'etale morphisms onto saturated open substacks of K-moduli stacks.
 \end{enumerate}
 In particular, we have a Cartesian diagram 
 \begin{equation} \label{eq:localVGIT}
    \begin{tikzcd}
    U_W^-\sslash G_{z_0}\arrow{r}{}\arrow{d}{f^-} &
    U_W\sslash G_{z_0}\arrow{d}{f} & \arrow{l}{} U_W^+\sslash G_{z_0}\arrow{d}{f^+}\\
    KM_{\chi_0,r,c-\epsilon}\arrow{r}{\phi^-}&
    KM_{\chi_0,r,c}& \arrow{l}[swap]{\phi^+} KM_{\chi_0,r,c+\epsilon}
    \end{tikzcd}
 \end{equation}
 where all vertical arrows are finite \'etale morphisms onto Zariski open subsets of K-moduli spaces, and all horizontal morphisms are projective.
\end{thm}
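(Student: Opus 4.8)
The plan is to assemble the slice-level statements already proved — the local GIT chart (Theorem~\ref{thm:localGIT}), the local VGIT statement (Theorem~\ref{thm:localVGIT}), stabilizer preservation (Lemma~\ref{lem:stabpreserve}), and the Luna slice (Lemma~\ref{lem:luna}) — into the variation-of-GIT format of \cite[Section~2]{AFS17}. Let $[z_0]$ be a closed point of $\cK\cM_{\chi_0,r,c}$; by Theorem~\ref{thm:bddtestK} it is represented by a $c$-K-polystable pair $(X,cD)$, the stabilizer $G_{z_0}=\Aut(X,D)$ is reductive, and we have the $G_{z_0}$-invariant affine slice $\oW\cap\oZ\ni z_0=\Hilb(X,cD)$ carrying the two $G_{z_0}$-linearized $\bQ$-line bundles $\cO_{\oW\cap\oZ}(1,1)$ (with linearization $\rho\otimes\rho_{(X,D)}^{-1}$) and $M_2|_{\oW\cap\oZ}$ (with linearization $\rho_{M_2}$). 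First I would shrink $U_W$ so that, for the given $\epsilon$, the following hold at once: $\psi\colon\SL(N+1)\times_{G_{z_0}}U_W\to U$ is finite strongly \'etale onto a saturated affine open $U\subseteq Z_c^{\red}$ (Lemma~\ref{lem:luna}), $\Aut(Y,E)<G_{z_0}$ for every $\Hilb(Y,E)\in U_W$ (Lemma~\ref{lem:stabpreserve}), and on $U_W$ the equivalences ``$(Y,(c\pm\epsilon)E)$ is K-(poly/semi)stable iff $\Hilb(Y,E)$ is $(c\pm\epsilon)$-GIT-(poly/semi)stable'' hold (Theorems~\ref{thm:localGIT} and~\ref{thm:localVGIT}). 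I then set $L_{z_0}:=M_2|_{U_W}$ with its $G_{z_0}$-linearization; this is the wall-crossing direction, since on the slice the CM $\bQ$-line bundle at coefficient $c\pm\epsilon$ differs from the one at $c$ precisely by $M_2^{\otimes\mp\epsilon}$ by the identities of Section~\ref{sec:kstconst}.

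Next I would identify the VGIT chambers. Because $U_W$ is saturated for the $c$-quotient, the $\cO(1,1)$-factor of the linearization in Definition~\ref{defn:LWXLunaGIT}(2) does not affect semistability on $U_W$ (this is the $1$-PS reduction carried out in the proof of Theorem~\ref{thm:localVGIT}), so the $(c\pm\epsilon)$-GIT-semistable loci in $U_W$ coincide with the affine-VGIT semistable loci $U_W^{+}:=U_W^{\mathrm{ss}}(L_{z_0}^{\otimes-1})$ and $U_W^{-}:=U_W^{\mathrm{ss}}(L_{z_0})$; these are the two VGIT chambers of $[U_W/G_{z_0}]$ with respect to $L_{z_0}$ in the sense of \cite[Definition~2.4]{AFS17}, and the induced morphisms $U_W^{\pm}\sslash G_{z_0}\to U_W\sslash G_{z_0}$ are projective, being relative $\Proj$ of finitely generated graded algebras of invariants over the affine ring $\cO(U_W)^{G_{z_0}}$. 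By the equivalences above, $U_W^{\pm}$ is exactly the set of $\Hilb(Y,E)\in U_W$ with $(Y,(c\pm\epsilon)E)$ K-semistable; and since $\Phi^{\pm}$ identifies $\cK\cM_{\chi_0,r,c\pm\epsilon}$ with the open substack of $(c\pm\epsilon)$-K-semistable objects of $\cK\cM_{\chi_0,r,c}$ (Theorem~\ref{thm:logFano-wallcrossing}), base-changing the open immersion $\Phi^{\pm}$ along the local quotient presentation $f\colon[U_W/G_{z_0}]\to\cK\cM_{\chi_0,r,c}$ (from the proof of Theorem~\ref{thm:lwxlog}) produces exactly $[U_W^{\pm}/G_{z_0}]$, so the two outer squares are Cartesian. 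Moreover $f^{\pm}$ is the morphism $[U_W^{\pm}/G_{z_0}]\to[U_{\pm}/\SL(N+1)]\subseteq\cK\cM_{\chi_0,r,c\pm\epsilon}$ induced by the finite surjective strongly \'etale $\SL(N+1)$-morphism $\psi$ from the proof of Theorem~\ref{thm:localVGIT} (with $U_{\pm}$ the $(c\pm\epsilon)$-K-semistable locus inside $U$), hence is finite strongly \'etale onto a saturated open substack. This establishes (1) and (2).

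Finally I would descend to good moduli spaces. Since forming good moduli spaces commutes with strongly \'etale morphisms and with open immersions (\cite{alper}, \cite[Section~2]{AFS17}), the diagram~\eqref{eq:localVGIT} is Cartesian, $f$ and $f^{\pm}$ descend to finite \'etale morphisms onto Zariski open subsets of the respective K-moduli spaces, and the top horizontal arrows $U_W^{\pm}\sslash G_{z_0}\to U_W\sslash G_{z_0}$ are projective by the previous paragraph. For the bottom arrows, $\phi^{\pm}$ is already proper (separatedness of the $KM$'s by \cite{BX18} together with properness via Theorem~\ref{thm:compactness}), and~\eqref{eq:localVGIT} shows $\phi^{\pm}$ becomes projective after the finite \'etale surjective base change $U_W\sslash G_{z_0}\to\mathrm{im}(f)$; as the charts $\mathrm{im}(f)$ cover $KM_{\chi_0,r,c}$ and the CM $\bQ$-line bundle on $KM_{\chi_0,r,c\pm\epsilon}$ is a globally defined $\bQ$-line bundle whose pullback under $f^{\pm}$ is the descent of $\cO_{\oW\cap\oZ}(1,1)\otimes L_{z_0}^{\otimes\mp1}$, which is relatively ample over each chart, it is $\phi^{\pm}$-ample, so $\phi^{\pm}$ is projective; this is the content cross-referenced as Theorem~\ref{thm:VGIT-CM-ample}. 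The step I expect to be the main obstacle is precisely this descent together with the chamber identification of the second paragraph: one must check that the ``$(c\pm\epsilon)$-GIT-stability'' of Definition~\ref{defn:LWXLunaGIT}, which lives on the projective slice $\oW\cap\oZ$, genuinely matches the abstract VGIT chambers on the affine chart $U_W$, and that the resulting open substacks are saturated in the target K-moduli stacks so that~\eqref{eq:localVGIT} is Cartesian with projective horizontal maps. The remaining geometric input — reductivity of $G_{z_0}$, the K-stability/GIT-stability dictionary, stabilizer preservation, and strong \'etaleness of the Luna slice — is supplied by the results cited above.
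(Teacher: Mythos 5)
Your plan mirrors the paper's own proof: on the slice, combine Theorem~\ref{thm:localGIT}, Theorem~\ref{thm:localVGIT}, Lemma~\ref{lem:stabpreserve}, and Lemma~\ref{lem:luna} to verify the two outer squares are Cartesian with $f$, $f^\pm$ finite strongly \'etale onto saturated opens, identify the chambers via the $M_2$-perturbation, and then descend everything to good moduli spaces. The one place you diverge is in packaging the VGIT data: you set $L_{z_0}:=M_2|_{U_W}$, whereas the paper takes $L_{z_0}:=\cO_{\oW}(1,1)|_{U_W}$ with the linearization $\rho\otimes\rho_{(X,D)}^{-1}$ and, after shrinking $U_W$ so that $M_2|_{U_W}$ is trivial, names a separate character $\chi_{L_{z_0}}$ coming from $\rho_{M_2}^{-1}$. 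Since $U_W$ is saturated for the $c$-linearization, the $\cO(1,1)$ part gives full semistability, so both packagings produce the same chamber decomposition; your choice has the virtue of fitting more literally the statement of the theorem (a single $G_{z_0}$-linearized line bundle determining the chambers), while the paper's presentation is tailored to its Definition~\ref{defn:LWXLunaGIT}, where $\cO(1,1)$ and the $M_2$-twist appear as separate pieces of data. You also spell out the projectivity of the horizontal maps (relative $\Proj$ over the affine invariant ring, then \'etale descent) a bit more explicitly than the paper, which defers most of this to Theorem~\ref{thm:VGIT-CM-ample}; that is a reasonable completion of a step the paper leaves compressed.
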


\begin{proof}
We first look at the K-moduli stack parts.
Let $U_W^{\pm}$ be the $(c\pm\epsilon)$-GIT semistable locus of $U_W$. By Lemma \ref{lem:luna}, we know that $f$ is a finite strongly \'etale morphism onto $[U/\SL(N+1)]$ which is a saturated open substack of $\cK\cM_{\chi_0,r,c}=[Z_c^{\red}/\SL(N+1)]$. The diagram is Cartesian by Theorem \ref{thm:localVGIT}. Hence $f^\pm$ are finite \'etale. From the proof of Theorem \ref{thm:localVGIT}, we know that $U^{\pm}:=\SL(N+1)\cdot U_W^{\pm}=Z_{c\pm\epsilon}^{\red}\cap U$. Hence their saturatedness follows from saturatedness of $U$.  The stabilizer preserving property of $f^\pm$ follows from \ref{lem:stabpreserve}. The morphisms $f^{\pm}$ also send closed point to closed point by Theorem \ref{thm:localVGIT}. Hence we finish proving part (2).
For part (1), we take $L_{z_0}:=\cO_{\oW}(1,1)|_{U_W}$ together with the $G_{z_0}$-linearization $\rho\otimes\rho_{(X,D)}^{-1}$ (see Definition \ref{defn:LWXLunaGIT}). After shrinking $U_W$ if necessary we may assume that $M_2|_{U_W}$ is trivial. Hence the $G_{z_0}$-representation $\rho_{M_2}^{-1}$ on $M_2^{-1}$ corresponds to a $G_{z_0}$-character $\chi_{L_{z_0}}:G_{z_0}\to \bG_m$. Then the VGIT chamber statement follows from Theorem \ref{thm:localVGIT}. For the K-moduli spaces statements, the Zariski open part is clear by definition of saturatedness, the finite part follows from the definition of good moduli spaces, and the strongly \'etale part follows from descent property (see \cite[Proposition 2.7]{AFS17}). 
\end{proof}

\begin{defn}
Let $\pi:(\cX,\cD)\to \oZ$ be the universal family over $\oZ$. Let $\pi_c:(\cX_c, \cD_c)\to Z_c^{\red}$ be the base change of $\pi$ to $Z_c^{\red}$. 
Let $0\leq c'< r^{-1}$ be another rational number.
Then the CM $\bQ$-line bundle $\lambda_{\CM,\pi_c,c'\cD_c}$ on $Z_c^{\red}$ descends to a $\bQ$-line bundle $\lambda_{c,c'}$ on $\cK\cM_{\chi_0,r,c}=[Z_c^{\red}/\PGL(N+1)]$. We call $\lambda_{c,c'}$ the \emph{CM $\bQ$-line bundle} on $\cK\cM_{\chi_0,r,c}$ with coefficient $c'$. We simply denote $\lambda_c:=\lambda_{c,c}$. We also denote $\lambda_{c,\Hodge}:=\lambda_{\Hodge, \pi_c, r^{-1} \cD}$. 
\end{defn}

\begin{prop}\label{prop:k-cm-interpolation}
With the above notation, the CM $\bQ$-line bundle $\lambda_c$ on  $\cK\cM_{\chi_0,r,c}$ descends to a $\bQ$-line bundle $\Lambda_{c}$ on the K-moduli space $KM_{\chi_0,r,c}$. If in addition $Z_{c}^{\red}=Z_{c\pm\epsilon}^{\red}$ for any $0<\epsilon\ll 1$, then the Hodge $\bQ$-line bundle $\lambda_{c,\Hodge}$ and the CM $\bQ$-line bundle $\lambda_{c,c'}$ on $\cK\cM_{\chi_0,r,c}$ descends to $\bQ$-line bundles $\Lambda_{c,\Hodge}$ and $\Lambda_{c,c'}$ on $KM_{\chi_0,r,c}$ for any $0\leq c'< r^{-1}$, respectively. Moreover, we have the following interpolation formula
\begin{equation}\label{eq:k-cm-interpolation}
(1-c'r)^{-n}\Lambda_{c,c'}=(1-c'r)\Lambda_{c,0}+c'r(n+1)(-K_X)^n \Lambda_{c,\Hodge}.
\end{equation}
\end{prop}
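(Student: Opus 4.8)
The plan is to reduce the whole statement to two ingredients: the vanishing of CM weights at K\nobreakdash-polystable points, which yields descent to the good moduli space via Kempf's descent lemma, and the stack\nobreakdash-level interpolation identity of Proposition \ref{prop:cm-interpolation}, which then propagates descent to every coefficient and forces \eqref{eq:k-cm-interpolation}. Throughout, write $\phi\colon\cK\cM_{\chi_0,r,c}\to KM_{\chi_0,r,c}$ for the good moduli space morphism. Since $\phi_*\cO_{\cK\cM_{\chi_0,r,c}}=\cO_{KM_{\chi_0,r,c}}$, the projection formula gives $\phi_*\phi^*M\cong M$ for $M$ a $\bQ$-line bundle, so $\phi^*$ is injective on $\bQ$-Picard groups; consequently any $\bQ$-line bundle on the stack descends to at most one $\bQ$-line bundle on $KM_{\chi_0,r,c}$, and identities among descending bundles may be checked on the stack.

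First I would show that $\lambda_c$ descends. Let $[z_0]=\Hilb(X,cD)$ be a closed point of $\cK\cM_{\chi_0,r,c}$; by Theorem \ref{thm:bddtestK} the pair $(X,cD)$ is K\nobreakdash-polystable with reductive automorphism group $G_{z_0}=\Aut(X,D)\subset\SL(N_m+1)$. A $1$-PS $\lambda$ of $G_{z_0}$ induces a \emph{product} test configuration of $(X,cD;L)$ with $L=-l(K_X+cD)$ Cartier, so K\nobreakdash-polystability gives $\Fut(\cX_\lambda,c\cD_\lambda;\cL_\lambda)=0$, and Proposition \ref{prop:Fut=CMwt} then shows that the $\lambda$-weight of the $G_{z_0}$-action on the fibre of $\lambda_c$ at $z_0$ vanishes. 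As this holds for every $1$-PS of $G_{z_0}$, the character through which $G_{z_0}$ acts on $(\lambda_c)_{z_0}$ is trivial on the identity component, hence of finite order; since the underlying pairs form a bounded family (Theorem \ref{thm:bdd-bigangle}) the component groups of $\Aut(X,D)$ have uniformly bounded order, so a fixed power of $\lambda_c$ acts trivially at every closed point, and Kempf's descent lemma produces $\Lambda_c$ with $\phi^*\Lambda_c=\lambda_c$.

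Now assume $Z_c^{\red}=Z_{c\pm\epsilon}^{\red}$ for $0<\epsilon\ll1$. Then $\cK\cM_{\chi_0,r,c\pm\epsilon}$, their good moduli spaces, and their universal families coincide with those for $c$; in particular $\lambda_{c,c+\epsilon}$ equals the diagonal CM $\bQ$-line bundle $\lambda_{c+\epsilon}$, which descends by the previous paragraph applied with $c+\epsilon$ in place of $c$. Applying Proposition \ref{prop:cm-interpolation} to $\pi_c\colon(\cX_c,\cD_c)\to Z_c^{\red}$ (the formula \eqref{eq:cm-hodge} is an identity of Knudsen--Mumford line bundles, defined over any Noetherian base) gives, on $\cK\cM_{\chi_0,r,c}$,
\[
\mu(c'):=(1-c'r)^{-n}\lambda_{c,c'}=(1-c'r)\,\lambda_{c,0}+c'r(n+1)(-K_X)^n\,\lambda_{c,\Hodge},
\]
which is affine-linear in $c'$ with values in $\Pic_{\bQ}(\cK\cM_{\chi_0,r,c})$. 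Its values at $c'=c$ and $c'=c+\epsilon$ lie in the $\bQ$-subspace $V:=\operatorname{im}\phi^*$ of bundles that descend, hence so does $\mu(c')$ for every $c'$, and therefore $\lambda_{c,c'}=(1-c'r)^n\mu(c')\in V$ for all $0\le c'<r^{-1}$; taking $c'=0$ gives $\lambda_{c,0}\in V$, and then the displayed identity with $c'=c\neq0$ forces $\lambda_{c,\Hodge}\in V$. Writing $\Lambda_{c,c'}$ and $\Lambda_{c,\Hodge}$ for the unique descents and pulling the displayed identity back along $\phi$, injectivity of $\phi^*$ yields \eqref{eq:k-cm-interpolation}.

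The main obstacle is the first step: everything downstream is formal bookkeeping with $\phi^*$ and Proposition \ref{prop:cm-interpolation}, but the descent of $\lambda_c$ rests on correctly identifying the stabilizer weight on the CM fibre with the generalized Futaki invariant and killing it using K\nobreakdash-polystability, together with the (standard but worth checking) boundedness input that makes the descent honest rather than merely rational. A secondary technical point is that Proposition \ref{prop:cm-interpolation} is stated over a normal base while $Z_c^{\red}$ need not be normal; this is harmless because its proof only uses the Knudsen--Mumford expansion \cite[Lemma A.2]{CP18}, but it should be flagged explicitly.
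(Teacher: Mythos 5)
Your proof is correct and takes essentially the same route as the paper: K-polystability together with Proposition \ref{prop:Fut=CMwt} kills the stabilizer weight of $\lambda_c$ at closed points, reductivity and Alper's descent criterion give $\Lambda_c$, and the affine-linearity in $c'$ of the interpolation formula from Proposition \ref{prop:cm-interpolation} propagates the descent to every $\lambda_{c,c'}$ and to $\lambda_{c,\Hodge}$. The only differences are cosmetic or mild strengthenings — you package the linearity step through the $\bQ$-subspace $\mathrm{im}\,\phi^*$ where the paper checks $\Fut(\cX_\sigma,c'\cD_\sigma;\cL_\sigma)=0$ pointwise for all $c'$ using K-semistability at $c\pm\epsilon$, and you explicitly supply the boundedness input that gives a uniform power killing the finite component groups as well as the flag that Proposition \ref{prop:cm-interpolation} is stated over normal bases while $Z_c^{\red}$ need not be normal.
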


\begin{proof}
First we show the statements on descents.  Let $[z_0]=[\Hilb(X,D)]$ be any closed point of $\cK\cM_{\chi_0,r,c}$. By \cite[Theorem 10.3]{alper}, to show that $\lambda_{c,c'}$ on $\cK\cM_{\chi_0,r,c}$ can descend to $KM_{\chi_0,r,c}$ it suffices to show that the group of stabilizers $\Aut(X,D)$ acts trivially on $\lambda_{c,c'}^{\otimes k}$ for $k$ sufficiently divisible. Since $\Aut(X,D)$ is reductive, it suffices to show that  any $1$-PS $\sigma$ in $\Aut(X,D)$ acting on $\lambda_{c,c'}|_{[z_0]}$ has weight zero. By Proposition \ref{prop:Fut=CMwt} this weight is a non-zero multiple of the generalized Futaki invariant of the product test configuration $(\cX_{\sigma},c'\cD_{\sigma};\cL_{\sigma})$ of $(X,c'D)$ induced by $\sigma$. Since $(X,cD)$ is K-polystable, we know that the $\sigma$-weight of $\lambda_c$ at $[z_0]$ always vanishes. If in addition that $(X,(c\pm \epsilon)D)$ is K-semistable, then by the linearity of generalized Futaki invariants in terms of coefficients we know that $\Fut(\cX_{\sigma},c'\cD_{\sigma};\cL_{\sigma})=0$ for any $0\leq c'<r^{-1}$. By Proposition \ref{prop:cm-interpolation} we know that 
\[
(1-c'r)^{-n}\lambda_{c,c'}=(1-c'r)\lambda_{c,0}+c'r(n+1)(-K_X)^n \lambda_{c,\Hodge}.
\]
Hence $\lambda_{c,\Hodge}$ descends to $KM_{\chi_0,r,c}$ if $\lambda_{c,c'}$ descends for all $c'$. Thus the interpolation formula \eqref{eq:k-cm-interpolation} is just the descent of the above equation.
\end{proof}

\begin{thm}\label{thm:VGIT-CM-ample}
With the above notation, the $\bQ$-line bundles $\Lambda_{ c\pm \epsilon}$ on $KM_{\chi_0,r,c\pm \epsilon}$ are $\phi^{\pm}$-ample. In addition, we have $\lim_{\epsilon\to 0}\Lambda_{c\pm\epsilon}=\Lambda_{c\pm\epsilon,c}=(\phi^{\pm})^*\Lambda_c$.
\end{thm}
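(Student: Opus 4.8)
The plan is to transport both assertions to the local \'etale charts furnished by Theorem~\ref{thm:wallcrossingchart}, where the wall crossing becomes an ordinary variation-of-GIT wall crossing, and there invoke the classical behaviour of the natural polarizations. Concretely, I would first cover $KM_{\chi_0,r,c}$ by the finitely many charts $f\colon U_W\sslash G_{z_0}\to KM_{\chi_0,r,c}$ appearing in the Cartesian square \eqref{eq:localVGIT}, one for each closed point $[z_0]=[\Hilb(X,cD)]$ of $\cK\cM_{\chi_0,r,c}$; recall that $f$ and $f^{\pm}$ are finite \'etale onto Zariski open subsets and that the images of $f$ cover $KM_{\chi_0,r,c}$. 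Since relative ampleness of a $\bQ$-line bundle is Zariski-local on the target and descends along finite \'etale surjections, $\phi^{\pm}$-ampleness of $\Lambda_{c\pm\epsilon}$ is equivalent, chart by chart, to relative ampleness of $(f^{\pm})^{*}\Lambda_{c\pm\epsilon}$ over the top-row morphism $U_W^{\pm}\sslash G_{z_0}\to U_W\sslash G_{z_0}$ of \eqref{eq:localVGIT}; likewise the two stated identities of $\bQ$-line bundles can be checked after pulling back along $f$ and $f^{\pm}$, though the identity $\Lambda_{c\pm\epsilon,c}=(\phi^{\pm})^{*}\Lambda_c$ can also be seen globally.

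For that identity, I would argue as follows. By Proposition~\ref{prop:k-wall-finite} the scheme $Z^{\red}_{c\pm\epsilon}$ is a Zariski open subscheme of $Z^{\red}_c$ whose universal family is the restriction of $(\cX_c,\cD_c)$; since the log CM $\bQ$-line bundle is compatible with base change this gives $\lambda_{c\pm\epsilon,c}=(\Phi^{\pm})^{*}\lambda_c$ for the open immersion of stacks $\Phi^{\pm}$ of Theorem~\ref{thm:logFano-wallcrossing}. Writing $p_{\bullet}$ for the good moduli space morphisms, recall $\lambda_c$ descends to $\Lambda_c$ (Proposition~\ref{prop:k-cm-interpolation}), i.e.\ $\lambda_c\cong p_c^{*}\Lambda_c$, and that the square relating $\Phi^{\pm}$ to $\phi^{\pm}$ commutes; hence $\lambda_{c\pm\epsilon,c}\cong p_{c\pm\epsilon}^{*}(\phi^{\pm})^{*}\Lambda_c$. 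As $\lambda_{c\pm\epsilon,c}$ admits a descent by Proposition~\ref{prop:k-cm-interpolation} and the descent is unique (it is $p_{c\pm\epsilon,*}$ of the bundle), this forces $\Lambda_{c\pm\epsilon,c}=(\phi^{\pm})^{*}\Lambda_c$. For the limit, I would note that for $0<\epsilon\ll1$ the scheme $Z^{\red}_{c\pm\epsilon}$, hence $KM_{\chi_0,r,c\pm\epsilon}$, is independent of $\epsilon$, so all the $\Lambda_{c\pm\epsilon,c'}$ with $0\le c'<r^{-1}$ live on one fixed space; then the interpolation formula \eqref{eq:k-cm-interpolation} exhibits $c'\mapsto(1-c'r)^{-n}\Lambda_{c\pm\epsilon,c'}$, hence $c'\mapsto\Lambda_{c\pm\epsilon,c'}$, as a polynomial in $c'$, and specializing at $c'=c\pm\epsilon$ gives $\Lambda_{c\pm\epsilon}=\Lambda_{c\pm\epsilon,c\pm\epsilon}\to\Lambda_{c\pm\epsilon,c}$ as $\epsilon\to 0$.

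It remains to prove the relative ampleness, which I would do on a fixed chart. Proposition~\ref{prop:Fut=CMwt} identifies the $\bG_m$-weights of the log CM $\bQ$-line bundle with a positive multiple of the generalized Futaki invariant, i.e.\ with the GIT weight $\mu^{M_1}-c'\mu^{M_2}$ of the bundle $M_1-c'M_2$ from \eqref{eq:M_i}. Combining this with the construction of the charts (cf.\ the proof of Theorem~\ref{thm:kstfinite} and Theorems~\ref{thm:localGIT}, \ref{thm:localVGIT}), the restriction of $(f^{\pm})^{*}\lambda_{c\pm\epsilon}$ to the affine slice $U_W$ agrees, up to a positive rational multiple and a twist that is $\bQ$-linearly trivial because $U_W$ is affine, with the $G_{z_0}$-linearized $\bQ$-line bundle $\mathcal O_{\oW\cap\oZ}(1,1)\otimes M_2^{\otimes\mp\epsilon}$ of Definition~\ref{defn:LWXLunaGIT}; hence $(f^{\pm})^{*}\Lambda_{c\pm\epsilon}$ is a positive multiple of the tautological ample $\bQ$-polarization $\mathcal O_{U_W^{\pm}\sslash G_{z_0}}(1)$ of the corresponding GIT quotient. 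By the standard theory of variation of GIT quotients across a wall — the local VGIT picture of \cite[(1.2)]{AFS17} — $U_W^{\pm}\sslash G_{z_0}$ is projective over the wall quotient $U_W\sslash G_{z_0}$ and $\mathcal O_{U_W^{\pm}\sslash G_{z_0}}(1)$ is relatively ample over it: its class is the pullback of an ample class on $U_W\sslash G_{z_0}$ plus $\mp\epsilon$ times the flip class of the wall, and the sign is exactly the one making this perturbation relatively ample on the $(c\pm\epsilon)$-side (if the chart is unchanged across the wall the top-row morphism is an isomorphism and there is nothing to check). Descending relative ampleness along $f^{\pm}$ and $f$ then yields $\phi^{\pm}$-ampleness of $\Lambda_{c\pm\epsilon}$.

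I expect the main obstacle to be the identification used in the previous paragraph: upgrading the numerical statement of Proposition~\ref{prop:Fut=CMwt} to an honest equality of $G_{z_0}$-linearized $\bQ$-line bundles on $U_W$, so that the descended CM $\bQ$-line bundle literally becomes the GIT polarization $\mathcal O(1)$, while carefully tracking the normalization factors ($l^{-n}$, $(1-c'r)^{n}$) and the sign of the character $\rho_{M_2}^{\otimes\mp\epsilon}$ so that relative ampleness lands on the correct side of the wall. The relative ampleness of $\mathcal O(1)$ across a GIT wall is classical; the genuine work lies in wiring it through the CM formalism.
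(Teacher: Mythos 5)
Your proposal is in essence the paper's proof: reduce to the local chart $U_W\sslash G_{z_0}$ of Theorem \ref{thm:wallcrossingchart}, compare $\lambda_{c\pm\epsilon}$ with the $\bQ$-line bundles $M_1-c'M_2$ of \eqref{eq:M_i}, and invoke the VGIT polarization of Definition \ref{defn:LWXLunaGIT}; the treatment of the limit via openness of $Z^{\red}_{c\pm\epsilon}\subset Z^{\red}_c$, base change, descent, and the interpolation formula \eqref{eq:k-cm-interpolation} is exactly what ``follows directly from the above computations'' unpacks to.

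Two points are off, though. First, the step you flag as ``the main obstacle'' is not an obstacle at all: the equality $\lambda_{\CM,\pi,c'\cD,\cL}=M_1-c'M_2$ is \emph{definitional} (Definition \ref{defn:logCM1} together with \eqref{eq:M_i}), and combining this with $\cL_c\sim_{\bQ,\pi_c}m(1-cr)^{-1}(-K_{\cX_c/Z_c^{\red}}-c\cD_c)$ and Definition \ref{defn:qgorfamily} gives the honest identity of $\SL(N+1)$-linearized $\bQ$-line bundles
\[
m^{n}(1-cr)^{-n}\lambda_{\CM,\pi_c,c\cD_c}=(M_1-cM_2)|_{Z_c^{\red}}
\]
with no upgrade from Proposition \ref{prop:Fut=CMwt} needed. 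Second, you do not actually need to identify the descended CM line bundle with the GIT polarization $\cO_{\oW\cap\oZ}(1,1)\otimes M_2^{\otimes\mp\epsilon}$, and your justification that the discrepancy is ``$\bQ$-linearly trivial because $U_W$ is affine'' is not a valid inference (affine, even smooth affine, varieties can have nontrivial Picard group; the paper trivializes $M_2|_{U_W}$ by shrinking $U_W$, not by affineness, and does so for a different purpose in Theorem \ref{thm:wallcrossingchart}). The cleaner route, which the paper takes, is to write the descent of $(M_1-(c\pm\epsilon)M_2)|_{U_W^{\pm}}$ as the descent of $(M_1-cM_2)|_{U_W^{\pm}}$ tensored with the descent of $\mp\epsilon M_2|_{U_W^{\pm}}$: the first factor is the pullback of $\Lambda_c|_{U_W\sslash G_{z_0}}$ along $U_W^{\pm}\sslash G_{z_0}\to U_W\sslash G_{z_0}$ and hence is irrelevant for relative ampleness, while the second is relatively ample by the VGIT picture in Definition \ref{defn:LWXLunaGIT}. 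This sidesteps entirely the question of whether the CM bundle equals $\cO(1,1)$ on the slice, which in general it need not.
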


\begin{proof}
By Theorem \ref{thm:localVGIT}, it suffices to verify the statements over the local GIT chart $U_W\sslash G_{z_0}$ for each closed point $[z_0]\in \cK\cM_{\chi_0,r,c}$. 
Recall from \eqref{eq:M_i} that for the universal family $\pi:(\cX,\cD)\to \oZ$, we have a line bundle $\cL$ on $\cX$ as the pull back of $\cO_{\bP^N}(1)$ and $\bQ$-line bundles $M_{i}$ on $\oZ$ such that $\lambda_{\CM,\pi,c'\cD,\cL}=M_1-cM_2$. Let $\cL_c$ be the pull back of $\cL$ to $\cX_c$. Since 
\[
\cL_c\sim_{\bQ,\pi_c}-m K_{\cX_c/Z_c^{\red}}\sim_{\bQ,\pi_c} m(1-cr)^{-1} (-K_{\cX_c/Z_c^{\red}}-c\cD_c),
\]
we know that 
\[
m^{n}(1-cr)^{-n}\lambda_{\CM,\pi_c,c\cD_c}=\lambda_{\CM,\pi,c\cD,\cL}|_{Z_c^{\red}}=(M_1-cM_2)|_{Z_c^{\red}}.
\]
Thus $m^{n}(1-cr)^{-n}(f^\pm)^*\lambda_{c\pm\epsilon}$ on $[U_W^{\pm}/G_{z_0}]$ is the descent of $(M_1-(c\pm\epsilon)M_2)|_{U_{W}^{\pm}}$. By Definition \ref{defn:LWXLunaGIT} we know that the descent of $\pm M_2$ on $U_{W}^{\pm}\sslash G_{z_0}$ is anti-ample over $U_{W}\sslash G_{z_0}$, hence $\Lambda_{c\pm\epsilon}$ is $\phi^{\pm}$-ample. The second statement follows directly from the above computations.
\end{proof}

Finally, we are able to prove Theorem \ref{thm:logFano-wallcrossing} using the above results.

\begin{proof}[Proof of Theorem \ref{thm:logFano-wallcrossing}]
The statements follow from combining Proposition \ref{prop:k-wall-finite}, Theorems \ref{thm:lwxlog}, \ref{thm:localVGIT}, and \ref{thm:VGIT-CM-ample}.
\end{proof}

\section{General properties of K-moduli
of plane curves}\label{sec:generaldegree}

In this section we use results from Section \ref{sec:construction} to construct K-moduli stacks (resp. K-moduli spaces) of plane curves parametrizing K-semistable (resp. K-polystable) log Fano pairs $(X, cD)$ where $X$ is a $\bQ$-Gorenstein degeneration of $\bP^2$, the curve $D$ is a degeneration of smooth plane curves of degree $d$, and $0 < c < 3/d$. We also study properties of these K-moduli stacks and spaces.  We note that similar computations and comparisons of K-moduli to GIT were carried out in \cite{OSS16} in the case of low degree del Pezzo surfaces.

\subsection{Definition and properties}\label{sec:defmodulicurves}

We first recall the GIT moduli stacks and spaces of plane curves.

\begin{defn}
Let $d$ be a positive integer. Let $\bfP_d:=\bP\big(H^0(\bP^2,\cO_{\bP^2}(d))\big)$ be the projective space of dimension $\binom{d+2}{2}-1$ parametrizing all plane curves of degree $d$. It is clear that the natural $\PGL(3)$-action on $\bP^2$ lifts up to an action on $\bfP_d$.
\begin{enumerate}
    \item The line bundle $\cO_{\bfP_d}(1)$ has a unique $\SL(3)$-linearization. Let $\bfP_d^{\rm ss}$ be the GIT semistable locus of $\bfP_d$ with respect to the $\SL(3)$-linearized line bundle $\cO_{\bfP_d}(1)$. We define the \emph{GIT moduli stack} $\ocP_d^{\GIT}$ of  plane curves of degree $d$ to be $\ocP_d^{\GIT}:=[\bfP_d^{\rm ss}/\PGL(3)]$. We define the \emph{GIT moduli space} of  plane curves of degree $d$ to be $\oP_d^{\GIT}:=\bfP_d^{\rm ss}\sslash\SL(3)$.
    \item Let $\bfP_{d}^{\rm sm}$ be the Zariski open subset of $\bfP_d$ parametrizing smooth plane curves. Then the \emph{moduli stack} $\cP_d$ of smooth plane curves of degree $d$ is defined as $\cP_d:=[\bfP_{d}^{\rm sm}/\PGL(3)]$. When $d\geq 2$, it is clear from \cite[Chapter 4 \S 2]{MFK94} that $\bfP_d^{\rm sm}$ is a saturated Zariski open subset of $\bfP_d^{\rm ss}$. Hence $\cP_d$ admits a good moduli space $P_d$ as a Zariski open subset of $\oP_d^{\GIT}$. We call $P_d$ the \emph{moduli space} of smooth plane curves of degree $d$.
    Notice that when $d=1$, the GIT moduli space $\ocP_{d}^{\GIT}$ is empty, and $\cP_d$ does not admit a good moduli space due to the non-reductivity of stabilizers; when $d\geq 3$, the stack $\cP_d$ is Deligne-Mumford.
\end{enumerate}
\end{defn}

Now we begin with the definition of K-moduli stacks and spaces of plane curves. 
Recall that in Definition \ref{defn:kmoduli} we define K-moduli stacks and spaces of $\bQ$-Gorenstein smoothable log Fano pairs. In what follows, we adapt this definition to define K-moduli stacks and spaces of plane curves.

\begin{defn}
Let $d$ and $m$ be positive integers. Let $c\in (0, \min\{1,\frac{3}{d}\})$ be a rational number.  Denote by  $\chi(k):=\chi(\bP^2,\cO_{\bP^2}(3mk))$, $\tilde{\chi}(k)=\chi(\bP^2,\cO_{\bP^2}(3mk))-\chi(\bP^2, \cO_{\bP^2}(3mk-d))$, $\bm{\chi}:=(\chi,\tilde{\chi})$, and $N=h^0(\bP^2,\cO_{\bP^2}(3m))-1$. Let $\bH^{\bm{\chi};N}$ be the Hilbert schemes of pairs $(X,D)\hookrightarrow\bP^N$ of Hilbert polynomial $(\chi,\tilde{\chi})$. 

We define
\[
 Z:=\left\{\Hilb(X,D)\in\bH^{\bm{\chi};N}\left| \begin{array}{l} 
                           (X,D) \cong (\bP^2, C) \textrm{ where $C$ is a smooth plane curve of degree $d$,}\\     \cO_{\bP^N}(1)|_{X}\cong\cO_X(-mK_X),
                                         \\  \textrm{and }H^0(\bP^N,\cO_{\bP^N}(1))\xrightarrow{\cong}H^0(X,\cO_X(-mK_X)).
                                                \end{array}
 \right.\right\}.
\]
In other words, $Z$ parametrizes Hilbert points of $(3m)$-th Veronese embedding of $(\bP^2, C)$ into $\bP^N$. Then $Z$ is a locally closed subscheme of $\bH^{\bm{\chi};N}$. Let $\oZ$ be the Zariski closure of $Z$. We also define
\[
Z_c^{\circ}:=\left\{\Hilb(X,D)\in \overline{Z}\left| \begin{array}{l}
                                           X \textrm{ is a Manetti surface, }D\sim_{\bQ}-\frac{d}{3}K_X\textrm{ is an effective
                                           Weil divisor,}\\  (X,cD)\textrm{ is K-semistable},           \cO_{\bP^N}(1)|_{X}\cong\cO_X(-mK_X),
                            \\\textrm{and }H^0(\bP^N,\cO_{\bP^N}(1))\xrightarrow{\cong}H^0(X,\cO_X(-mK_X)).
                            \end{array}
 \right.\right\}.      
\]
By Corollary \ref{cor:openness}, we know that $Z^{\circ}_c$ is a Zariski open subset of $\oZ$. We denote by $Z_c^{\red}$ the reduced scheme supported on $Z_c^\circ$.

Assume $m$ is sufficiently divisible.
We define the \emph{K-moduli stack} $\ocP_{d,c}^{\K}$ of plane curves of degree $d$ with coefficient $c$ as the quotient stack
\[
\ocP_{d,c}^{\K}:=[Z_c^{\red}/\PGL(N+1)].
\]
By Theorem \ref{thm:stabilization}, we know that $\ocP_{d,c}^{\K}$ does not depend on the choice of sufficiently divisible $m$.
By Theorem \ref{thm:lwxlog}, we know that $\ocP_{d,c}^{\K}$ admits a good moduli space $\oP_{d,c}^{\K}$ as a reduced proper scheme of finite type over $\bC$. We call $\oP_{d,c}^{\K}$ the \emph{K-moduli space} of plane curves of degree $d$ with coefficient $c$.
\end{defn}

Indeed, if we denote by $\chi_0(k):=\chi(\bP^2,\cO(3k))$,  then $\ocP_{d,c}^{\K}=\cK\cM_{\chi_0, d/3, c}$ and $\oP_{d,c}^{\K}=KM_{\chi_0, d/3, c}$ as in Definition \ref{defn:kmoduli} since $\bP^2$ is the only smooth del Pezzo surface of degree $9$. From the definition, we also know that a pair $(X,cD)$ is parametrized by $\ocP_{d,c}^{\K}$ (resp. $\oP_{d,c}^{\K}$) if and only if $(X,cD)$ is K-semistable (resp. K-polystable), and it admits a $\bQ$-Gorenstein smoothing to $(\bP^2, cC_t)$ with $C_t$ a smooth plane curve of degree $d$.

The following useful proposition is an easy consequence of the Paul-Tian criterion Theorem \ref{thm:paultian}.

\begin{prop}\label{prop:P^2-paultian}
Let $C$ be a plane curve of degree $d$. Let $c\in (0, \min\{1,3/d\})$ be a rational number. 
If $(\bP^2, cC)$ is K-(poly/semi)stable, then $C$ is GIT (poly/semi)stable.
\end{prop}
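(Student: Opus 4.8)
The plan is to apply the Paul--Tian criterion (Theorem \ref{thm:paultian}) to the tautological family of pairs over the parameter space $\bfP_d$ of all degree-$d$ plane curves. Concretely, I would take $\cT = T = \bfP_d$, the projection $f\colon (\cX,\cD) = (\bP^2\times \bfP_d,\, c\,\mathcal{C}) \to \bfP_d$, where $\mathcal{C}\subset \bP^2\times\bfP_d$ is the universal curve, and $G = \SL(3)$ acting diagonally on $\bP^2$, on $\bfP_d = \bP\big(H^0(\bP^2,\cO_{\bP^2}(d))\big)$, and on $\mathcal{C}$, with $f$ equivariant. First I would check that this is a $\bQ$-Gorenstein flat family of log Fano pairs: the fibers $\bP^2$ are smooth and connected, $\mathrm{Supp}(\mathcal{C})$ contains no fiber since a nonzero section of $\cO_{\bP^2}(d)$ is never all of $\bP^2$, and $-(K_{\cX/T}+c\mathcal{C})$ restricts to $\cO_{\bP^2}(3-cd)$ on each fiber, which is $f$-ample because $c<\tfrac{3}{d}$. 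Thus every fiber $(\bP^2,cC)$ is a log Fano pair and the Paul--Tian setup is available.

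Next I would verify hypotheses (a)--(c) of Theorem \ref{thm:paultian}. For (a) and (b): since $\Aut(\bP^2)=\PGL(3)$, for any degree-$d$ curve we have $\Aut(\bP^2,cC)=\Aut(\bP^2,C)=\PGL(3)_C$, and the $\SL(3)$-stabilizer of $[C]\in\bfP_d$ is the preimage of $\PGL(3)_C$ under $\SL(3)\to\PGL(3)$, hence an extension of $\PGL(3)_C$ by $\mu_3$; so finiteness of $\Aut(\bP^2,C)$ forces finiteness of $\SL(3)_{[C]}$, giving (a), and any isomorphism $(\bP^2,cC)\cong(\bP^2,cC')$ is induced by an element of $\PGL(3)=\SL(3)/\mu_3$, so $[C']\in \SL(3)\cdot[C]$, giving (b). For (c) I would compute the CM $\bQ$-line bundle via the intersection formula of Proposition \ref{prop:logCM2}. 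Writing $h$ and $H$ for the pullbacks of the hyperplane classes from $\bP^2$ and $\bfP_d$, one has $-K_{\cX/T}=3h$ and $[\mathcal{C}]=dh+H$, so with $n=2$,
\[
\mathrm{c}_1(\lambda_{\CM,f,c\mathcal{C}}) = -f_*\big((\,(3-cd)h - cH\,)^3\big) = -f_*\big(-3c(3-cd)^2\, h^2 H + \cdots\big) = 3c(3-cd)^2\, H,
\]
using $h^3=0$, $f_*(h^2H)=H$, and $f_*(hH^k)=0$. Since $0<c$ and $0<3-cd$, the coefficient $3c(3-cd)^2$ is positive, so $\lambda_{\CM,f,c\mathcal{C}}$ is an ample $\bQ$-line bundle on $\bfP_d$; as $\bfP_d$ is already a scheme there is nothing to descend, so (c) holds.

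Then Theorem \ref{thm:paultian} gives that $[C]\in\bfP_d$ is GIT (poly/semi)stable with respect to the $\SL(3)$-linearized $\bQ$-line bundle $\lambda_{\CM,f,c\mathcal{C}}$ whenever $(\bP^2,cC)$ is K-(poly/semi)stable. To finish, I would observe that $\lambda_{\CM,f,c\mathcal{C}}$ is a positive rational multiple of $\cO_{\bfP_d}(1)$, and that since $\SL(3)$ has no nontrivial characters, the $\SL(3)$-linearization on any power of $\cO_{\bfP_d}(1)$ is unique; hence GIT (poly/semi)stability with respect to $\lambda_{\CM,f,c\mathcal{C}}$ coincides with GIT (poly/semi)stability with respect to the standard $\SL(3)$-linearized $\cO_{\bfP_d}(1)$, which is precisely the notion defining $\ocP_d^{\GIT}$. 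This proves the proposition. I expect the only point requiring genuine care --- the ``main obstacle'' --- to be this last bookkeeping in step (c): confirming that the CM line bundle is not merely ample but is on the nose a positive multiple of $\cO_{\bfP_d}(1)$ carrying the standard linearization, so that the stability notion it produces is literally the one used to define the GIT moduli space and stack of plane curves; everything else is immediate because the surface is the fixed smooth $\bP^2$.
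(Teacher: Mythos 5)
Your proposal is correct and follows the same route as the paper: apply the Paul--Tian criterion (Theorem~\ref{thm:paultian}) to the universal family over $\bfP_d$ and compute $\lambda_{\CM,\pi,c\cC}=3c(3-cd)^2\cO_{\bfP_d}(1)$ via Proposition~\ref{prop:logCM2}, deducing ampleness. The paper's proof only carries out the CM line bundle computation and leaves hypotheses (a), (b), and the identification of the linearization implicit; your explicit verification of (a)--(b) and the remark that $\SL(3)$ having no nontrivial characters forces the linearization to be the standard one are correct and fill in that bookkeeping.
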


\begin{proof}
Consider the universal family $\pi: (\bP^2\times\bfP_d, c\calC)\to \bfP_d$ of plane curves of degree $d$. From Theorem \ref{thm:paultian}, it suffices to show that the CM $\bQ$-line bundle $\lambda_{\CM, \pi, c\calC}$ is ample on $\bfP_d$. Here we use the intersection formula as in Proposition \ref{prop:logCM2}. It is clear that $-K_{\bP^2\times\bfP_d/\bfP_d}\sim \cO_{\bP^2\times\bfP_d}(3, 0)$ and $\calC\sim \cO_{\bP^2\times\bfP_d}(d, 1)$. Denote by $p:\bP^2\times\bfP_d\to\bP^2$ the projection to the first component. By computation,
\begin{align*}
    \lambda_{\CM, \pi, c\calC}& = -\pi_*\big((-K_{\bP^2\times\bfP_d/\bfP_d}-c\calC)^3\big)
    \qquad =-\pi_*\big(\cO_{\bP^2\times\bfP_d}(3-cd,-c)^3)\\
    & =-\pi_*\big( p^*\cO(3-cd)^3+ 3p^*\cO(3-cd)^2\cdot \pi^*\cO(-c)+ 3 p^*\cO(3-cd)\cdot \pi^*\cO(-c)^2+\pi^*\cO(-c)^3\big)\\
    & = 3(3-cd)^2 c\cO_{\bfP_d}(1).
\end{align*}
Hence $\lambda_{\CM, \pi, c\calC}$ is ample whenever $c\in (0, \frac{3}{d})$. The proof is finished.
\end{proof}

The following corollary was proved by Hacking \cite[Propositions 10.2 and 10.4]{Hac04} and Kim and Lee \cite[Theorem 2.3]{KL04}. We give a proof using K-stability and CM line bundles.

\begin{cor}
Let $C$ be a plane curve of degree $d$. If $\lct(\bP^2;C)\geq \frac{3}{d}$ (resp. $>\frac{3}{d}$), then $C$ is GIT semistable (resp. GIT stable).
\end{cor}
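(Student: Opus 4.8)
The plan is to deduce the corollary from Proposition~\ref{prop:P^2-paultian} together with the interpolation result on K-stability (Proposition~\ref{prop:k-interpolation}). First I would set $c=\frac{3}{d}$ and recall that $-(K_{\bP^2}+cC)\sim_{\bQ}(3-cd)H$ is trivial (here $H$ is the hyperplane class), so $(\bP^2,cC)$ is a log Calabi--Yau pair rather than a log Fano pair; the hypothesis $\lct(\bP^2;C)\geq \frac{3}{d}$ says precisely that $(\bP^2,\frac{3}{d}C)$ is log canonical, equivalently (by \cite{Oda13b}, as recorded after the definition of K-semistability for $\bQ$-Fano varieties) that $(\bP^2,\frac{3}{d}C)$ is a K-semistable log Calabi--Yau pair. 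Similarly, $\lct(\bP^2;C)>\frac{3}{d}$ gives that $(\bP^2,\frac{3}{d}C)$ is klt, hence it is in fact K-stable (there are no product test configurations since $\Aut(\bP^2,C)$ is finite once $C$ is not too degenerate — more robustly one argues via the valuative criterion, since klt log Calabi--Yau pairs over $\bP^2$ are automatically uniformly K-stable, cf.\ Proposition~\ref{prop:k-interpolation}(3)).

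Next I would apply Proposition~\ref{prop:k-interpolation} with $D=0$, $\Delta=C$, and $X=\bP^2$: since $-K_{\bP^2}$ is ample and $-K_{\bP^2}-\Delta=(3-d)H$ is \ldots{} here one must be careful, as $-K_{\bP^2}-C$ is anti-ample for $d>3$, so part (2) of that proposition does not apply directly. Instead the correct route is to use a genuine log Fano pair $(\bP^2, cC)$ with $0<c<\frac{3}{d}$: for such $c$ the pair $(\bP^2,\frac{3}{d}C)$ being klt (resp.\ lc) forces, by Proposition~\ref{prop:k-interpolation} with $\Delta=\frac{3}{d}C$ and $D$ any small ample boundary, or more simply by the valuative criterion Theorem~\ref{thm:valuative} applied directly, that $(\bP^2, cC)$ is K-stable (resp.\ K-semistable); indeed one computes $S_{(\bP^2,cC)}(E)=\frac{c}{3/d}\cdot\frac{3}{3-cd}\cdot(\text{something bounded by }A_{(\bP^2,\frac{3}{d}C)}(E))$, and $A_{(\bP^2,cC)}(E)=A_{(\bP^2,0)}(E)-c\,\ord_E(C)\geq (3-cd)/(3)\cdot(\ldots)$ rearranges, using $A_{(\bP^2,\frac{3}{d}C)}(E)\geq 0$ (resp.\ $>0$), to $A_{(\bP^2,cC)}(E)\geq S_{(\bP^2,cC)}(E)$ (resp.\ strict). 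Thus for every rational $c\in(0,\frac{3}{d})$, $\lct(\bP^2;C)\geq\frac{3}{d}$ implies $(\bP^2,cC)$ is K-semistable and $\lct(\bP^2;C)>\frac{3}{d}$ implies $(\bP^2,cC)$ is K-stable.

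Finally I would invoke Proposition~\ref{prop:P^2-paultian}: K-semistability of $(\bP^2,cC)$ gives GIT semistability of $C$, and K-stability gives GIT stability (the polystable-to-polystable implication of Theorem~\ref{thm:paultian} also yields GIT polystability, but we only need stability here). This completes the proof.

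\textbf{Main obstacle.} The delicate point is precisely the interpolation step: because $(\bP^2,\frac{3}{d}C)$ sits on the Calabi--Yau boundary $c=\frac{3}{d}$ rather than in the log Fano range, one cannot blindly cite Proposition~\ref{prop:k-interpolation}(1)--(2) (whose hypotheses require $-K_X-D$ ample and $-K_X-\Delta$ nef, both of which fail or are vacuous here). The cleanest fix is the direct valuative computation sketched above — verifying that $A/S\geq 1$ for $(\bP^2,cC)$ at every prime divisor $E$ over $\bP^2$ follows from log canonicity (resp.\ kltness) of $(\bP^2,\frac{3}{d}C)$ by a one-line rescaling of discrepancies and volumes — or alternatively one first passes to a slightly smaller coefficient and applies Proposition~\ref{prop:k-interpolation}(3) with an auxiliary klt boundary $\Delta\sim_{\bQ}-K_{\bP^2}$ whose support avoids $C$, obtained from Bertini. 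Either way the core inequality $A_{(\bP^2,cC)}(E)\geq S_{(\bP^2,cC)}(E)$ is elementary once set up correctly; the only real care needed is keeping track of which pairs are Fano versus Calabi--Yau.
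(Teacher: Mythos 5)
Your overall route (pass from $\lct$ information about $(\bP^2,\frac{3}{d}C)$ to K-(semi)stability of $(\bP^2,cC)$ via interpolation, then apply the Paul--Tian criterion Proposition~\ref{prop:P^2-paultian}) is exactly the paper's. But your worry about Proposition~\ref{prop:k-interpolation} is a false alarm caused by a bookkeeping slip: you set $\Delta=C$, whereas you should set $\Delta=\frac{3}{d}C$. With $D=0$ and $\Delta=\frac{3}{d}C$ all hypotheses of the proposition hold on the nose: both are proportional to $-K_{\bP^2}$, $-K_{\bP^2}-D=-K_{\bP^2}$ is ample, $-K_{\bP^2}-\Delta\sim_{\bQ}0$ is nef (trivial, not anti-ample), $\bP^2$ is K-polystable, and $(\bP^2,\frac{3}{d}C)$ is K-semistable precisely because it is a log canonical log Calabi--Yau pair (by Odaka's theorem recorded after the definition). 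Then part (2) gives K-semistability of $(\bP^2,(1-t)\frac{3}{d}C)$ for $t\in(0,1]$, which is K-semistability of $(\bP^2,cC)$ for all $c\in[0,\frac{3}{d})$; when $\lct>\frac{3}{d}$, i.e.\ $(\bP^2,\frac{3}{d}C)$ is klt, part (3) upgrades this to uniform K-stability for $c\in(0,\frac{3}{d})$. This is precisely the paper's proof; there is no obstruction to cite the proposition directly, and no auxiliary ample boundary $D$ is needed.

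Your fallback via the valuative criterion is a legitimate alternative (and arguably more self-contained): from $A_{(\bP^2,\frac{3}{d}C)}(E)\geq 0$ one gets $\ord_E(C)\leq \frac{d}{3}A_{\bP^2}(E)$, hence
\[
A_{(\bP^2,cC)}(E)=A_{\bP^2}(E)-c\,\ord_E(C)\geq \frac{3-cd}{3}A_{\bP^2}(E)\geq \frac{3-cd}{3}S_{\bP^2}(E)=S_{(\bP^2,cC)}(E),
\]
using that $\bP^2$ is K-semistable and $-K_{\bP^2}-cC\sim_{\bQ}\frac{3-cd}{3}(-K_{\bP^2})$; the inequality is strict when $(\bP^2,\frac{3}{d}C)$ is klt. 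But the intermediate algebra you wrote is not this clean chain, and the remark about $\Aut(\bP^2,C)$ being finite is both unnecessary and not always true. So: the approach is the right one, the only real gap is the misidentification $\Delta=C$ versus $\Delta=\frac{3}{d}C$, and once corrected the proposition applies directly.
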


\begin{proof}
If $\lct(\bP^2;C)\geq\frac{3}{d}$, then the log Calabi-Yau pair $(\bP^2,\frac{3}{d}C)$ is K-semistable. Hence $(\bP^2, cC)$ is K-semistable for any $c\in (0,\frac{3}{d})$ by Proposition \ref{prop:k-interpolation}. Thus Proposition \ref{prop:P^2-paultian} implies that $C$ is GIT semistable. If $\lct(\bP^2;C)>\frac{3}{d}$, then again by Proposition \ref{prop:k-interpolation} we know that $(\bP^2, cC)$ is uniformly K-stable for any $c\in (0,\frac{3}{d})$. Hence $C$ is GIT stable by Proposition \ref{prop:P^2-paultian}.
\end{proof}

\begin{expl}\label{expl:deg123}
We summarize the description of K-moduli stacks and spaces for $d\leq 3$.
\begin{enumerate}
    \item $d=1$. In this case we know that $(\bP^2, cC)$ is K-unstable for $C$ a line and any $c\in (0, 1)$ by \cite[Example 3.16]{LS14}. Hence $Z\cap Z_c^{\circ}$ is empty. Since K-semistability is an open property by Corollary \ref{cor:openness}, we know $Z_c^{\circ}=\emptyset$. Hence both $\ocP_{1,c}^{\K}$ and  $\oP_{1,c}^{\K}$ are empty for any $c\in (0,1)$.
    \item $d=2$. Denote by $C$ a smooth plane conic curve.
    \begin{enumerate}
        \item If $c\in (0, \frac{3}{4})$, by \cite[Theorem 1.5]{LS14} we know that $(\bP^2,cC)$ is K-polystable. By Proposition \ref{prop:P^2-paultian}, we know that $(\bP^2,cC')$ is K-unstable for any singular plane conic curve $C'$. Thus the only K-semistable point in $\ocP_{2,c}^{\K}$ is $[(\bP^2, cC)]$ which is indeed K-polystable. Hence $\ocP_{2,c}^{\K}\cong\ocP_2^{\GIT}\cong [\Spec\bC/\PGL(2)]$ and $\oP_{2,c}^{\K}\cong\oP_2^{\GIT}\cong \Spec\bC$.
        \item If $c=\frac{3}{4}$, then by \cite[Proof of Theorem 1.5]{LS14} we know that $(\bP^2, \frac{3}{4}C)$ is K-semistable and admits a special degeneration to the K-polystable pair $(\bP(1,1,4), \frac{3}{4}C_0)$ where $C_0=(z=0)$ with $[x,y,z]$ the projective coordinates of $\bP(1,1,4)$. If $[(X,\frac{3}{4}D)]$ is a K-semistable point in $\ocP_{2,3/4}^{\K}$ with $X$ non-smooth, then by \cite{LWX18} it admits a special degeneration to $(\bP(1,1,4), \frac{3}{4}C_0)$. Hence the Gorenstein index of $X$ is $2$ and $D$ is smooth which implies that $(X,D)\cong (\bP(1,1,4),C_0)$ by \cite[Theorem 8.3]{Hac04}. Thus there are only two K-semistable points: $[(\bP^2, \frac{3}{4}C)]$ and $[(\bP(1,1,4),\frac{3}{4}C_0)]$ where the latter one is the only K-polystable point.
        \item If $c>\frac{3}{4}$, then by \cite[Example 3.16]{LS14} we know $(\bP^2, cC)$ is K-unstable. Hence similarly as in (1) both $\ocP_{2,c}^{\K}$ and  $\oP_{2,c}^{\K}$ are empty for any $c\in (\frac{3}{4},1)$. 
    \end{enumerate}
    \item $d=3$. We will show $\ocP_{3,c}^{\K}\cong\ocP_3^{\GIT}$ for any $c\in (0,1)$. From \cite[Page 80]{MFK94} we know that a plane cubic curve $C$ is GIT semistable (resp. stable) if and only if it has at worst nodal singularities (resp. smooth). Thus we know $(\bP^2, C)$ is a log canonical log Calabi-Yau pair whenever $C$ is GIT semistable. Then we know by Proposition \ref{prop:k-interpolation} that $(\bP^2, cC)$ is K-semistable for any $c\in (0,1)$. If $C$ is GIT stable i.e. smooth, then \cite{JMR16} implies $(\bP^2,(1-\epsilon)C)$ is K-polystable. Hence $(\bP^2, cC)$ is K-stable for any $c\in (0,1)$ by Proposition \ref{prop:k-interpolation}.
    It is well known that $(xyz=0)$ is the unique GIT polystable plane cubic curve up to a projective transformation. By \cite{Fuj17b}, we know that $(\bP^2, c(xyz=0))$ is K-polystable for any $c\in (0,1)$. 
    If $[(X,cD)]$ is a point in $\ocP_{3,c}^{\K}$, then it is a K-semistable limit of K-semistable log Fano pairs $(\bP^2, cC_t)$ where $\{C_t\}_{t\in T\setminus\{0\}}$ is a family of cubic curves over a punctured smooth curve $T\setminus\{0\}$. Since $C_t$ is GIT semistable by Proposition \ref{prop:P^2-paultian}, we know that (possibly after a finite base change of $T$) there exists an algebraic family $g_t\in \PGL(3)$ and a GIT polystable plane cubic curve $C_0$ such that $g_t\cdot C_t\to C_0$ in $\bfP_3$. Therefore, $(\bP^2, cC_0)$ is the K-polystable limit of $(\bP^2,cC_t)$. By \cite{BX18} we know that $(X,cD)$ admits a special degeneration to $(\bP^2, cC_0)$ which implies $X\cong \bP^2$ and $D$ is GIT semistable. Hence by similar arguments to the last paragraph in the proof of Theorem \ref{thm:firstwallbefore}, we have $\ocP_{3,c}^{\K}\cong\ocP_3^{\GIT}$ and hence $\oP_{3,c}^{\K}\cong\oP_3^{\GIT}$.
\end{enumerate}
\end{expl}

From now on, we will always assume $d\geq 4$. We now mention some basic properties satisfied by the loci $Z, Z_c^\circ$ as well as the K-moduli stacks and spaces we just defined.

\begin{prop}\label{prop:Zplanecurve}
With notation as above, the following properties hold for any $c\in (0, \frac{3}{d})$:
\begin{enumerate}
 \item $\Hilb(X,D)\in Z$ if and only if 
 $(X,D)$ is isomorphic to $(\bP^2,C)$ where $C$ is a smooth plane curve
 of degree $d$. Moreover, the locus $Z$ is a saturated open subset of $Z_c^{\circ}$.
 \item The locus $Z_c^{\circ}$ is smooth for any $c\in (0, \frac{3}{d})$. In particular, $Z_c^\circ= Z_c^\red$, and $\ocP_{d,c}^{\K}$ is a smooth Artin stack. 
 \item  The open immersion $\cP_{d}\hookrightarrow\ocP_{d,c}^{\K}$
 induces an open immersion between their good moduli spaces
 $P_{d}\hookrightarrow\oP_{d,c}^{\K}$.
 Furthermore $\oP^{\K}_{d,c}$ is a normal proper variety, and $P_{d}$ has only quotient singularities.
\end{enumerate} 
\end{prop}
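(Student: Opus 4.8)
The plan is to prove the three assertions in turn, the deformation-theoretic input for (2) being the real point; (1) and (3) are then bookkeeping with the results of Section~\ref{sec:construction}.

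\emph{Part (1).} By construction $Z$ consists exactly of the Hilbert points of the $(3m)$-th Veronese re-embeddings of $(\bP^2,C)$ with $C$ a smooth plane curve of degree $d$ (using $-mK_{\bP^2}\cong\cO_{\bP^2}(3m)$), which gives the stated characterization immediately. For saturatedness I would argue: for $d\geq 4$ and $c\in(0,\tfrac3d)$ the pair $(\bP^2,cC)$ admits a conical K\"ahler--Einstein metric by \cite{CDS15,Tia15}, hence is K-polystable, and since $\Aut(\bP^2,C)$ is finite it is K-stable, hence uniformly K-stable by Theorem~\ref{thm:bddtestK}(4). Thus $[(\bP^2,cC)]$ is a \emph{stable} point of $Z_c^\circ$ (closed $\PGL(N+1)$-orbit, finite stabilizer), so the fibre over it of the good moduli morphism $Z_c^\circ\to\oP_{d,c}^\K$ is precisely the single orbit $\PGL(N+1)\cdot\Hilb(\bP^2,C)$; taking the union over all smooth $C$ shows that the preimage of $P_d$ inside $Z_c^\circ$ is exactly $Z$. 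Since $Z$ is also open in $Z_c^\circ$ (Part~(2)), it is a saturated open subset.

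\emph{Part (2).} I would show $\overline Z$ is smooth at every $\Hilb(X,D)\in Z_c^\circ$ of dimension $\dim\overline Z$. The inputs are: (i) $X$ is a Manetti surface, so its $\bQ$-Gorenstein deformations are unobstructed by Proposition~\ref{prop:manetti}; (ii) with $D\equiv-\tfrac d3K_X$ and $K_X^2=9$ one computes $D^2=d^2$ and $2p_a(D)-2=(K_X+D)\cdot D=d(d-3)$, so $N_{D/X}$ has degree $d^2>2p_a(D)-2$ and hence $H^1(D,N_{D/X})=0$; (iii) $-mK_X$ and $-(m+1)K_X$ are ample, so $H^i(X,\cO_X(-mK_X))=0$ for $i>0$ by Kawamata--Viehweg vanishing. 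From (i)--(ii), the standard exact sequence relating $\bQ$-Gorenstein deformations of the pair $(X,D)$ to those of $X$ and to $H^\bullet(D,N_{D/X})$ shows the $\bQ$-Gorenstein deformation functor of $(X,D)$ is unobstructed; combined with (iii) this realizes a smooth chart of $\overline Z$ near $\Hilb(X,D)$ as (a $\PGL$-bundle over) this deformation space. Hence $Z_c^\circ$ is smooth, in particular reduced, so $Z_c^\circ=Z_c^\red$.

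\emph{Part (3).} Via the $(3m)$-Veronese re-embedding $\cP_d\cong[Z/\PGL(N+1)]$, which by Part~(1) is a saturated open substack of $\ocP_{d,c}^\K=[Z_c^\red/\PGL(N+1)]$; hence by \cite[Remark 6.2]{alper} the open immersion descends to an open immersion of good moduli spaces $P_d\hookrightarrow\oP_{d,c}^\K$. Properness of $\oP_{d,c}^\K$ is part of Theorem~\ref{thm:lwxlog}; it is reduced by construction and irreducible because $Z_c^\circ$ is a nonempty open subset of the irreducible scheme $\overline Z$ (the closure of the irreducible $Z$); it is normal because by Part~(2) $\ocP_{d,c}^\K$ is a smooth Artin stack, whose good moduli space is therefore normal. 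Finally, for $d\geq 4$ the stack $\cP_d=[\bfP_d^{\rm sm}/\PGL(3)]$ is a smooth Deligne--Mumford stack (smooth plane curves of degree $\geq 3$ have finite automorphism group), so its coarse/good moduli space $P_d$ has only finite quotient singularities. The main obstacle is Part~(2): one must work with the $\bQ$-Gorenstein deformation theory of the \emph{pair} $(X,D)$ throughout, taking care that $X$ carries $T$-singularities and that $D$ is only $\bQ$-Cartier and possibly non-reduced, so the exact sequence and the vanishing statements in (ii)--(iii) must be phrased in that framework (as in Hacking's treatment \cite{Hac04}), and one must check that this deformation space is genuinely a smooth local model for $\overline Z$ inside the Hilbert scheme.
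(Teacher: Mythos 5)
Your plan follows essentially the same route as the paper's proof. Parts (1) and (3) match closely: for K-stability of $(\bP^2,cC)$ the paper uses the interpolation result (Proposition~\ref{prop:k-interpolation}) from the klt pair $(\bP^2,\tfrac3d C)$ rather than invoking \cite{CDS15,Tia15} directly, and for saturatedness it cites uniqueness of K-polystable degenerations \cite{LWX18}, but your fibre-is-a-single-orbit phrasing is the same underlying mechanism; in (3) you should cite \cite[Theorem~4.16(viii)]{alper} to pass from smoothness of the stack to normality of the good moduli space, which you assert without a reference. The genuine difference, and the place where your draft has a gap, is Part~(2). Both of you invoke Hacking's unobstructedness theorem \cite[Theorem~3.12]{Hac04}, but the hypotheses are verified along different roads. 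The paper shows $dK_X+3D\sim 0$ (by taking Zariski closure of the relation $dK_{\cX^\circ}+3\cD^\circ\sim_\pi 0$ in a $\bQ$-Gorenstein smoothing and applying adjunction to the central fibre) and $H^1(X,\cO_X(D))=0$ (Kawamata--Viehweg on $X$, since $D-K_X\sim_{\bQ}-\tfrac{d-3}{3}K_X$ is ample), matching exactly the hypotheses of \cite[Lemmas~3.13, 3.14]{Hac04}. You instead attempt a curve-level degree count: $\deg N_{D/X}=D^2=d^2>d(d-3)=2p_a(D)-2$, hence $H^1(D,N_{D/X})=0$ by Serre duality. That step presupposes $D$ Gorenstein and $N_{D/X}$ an honest line bundle, which fails precisely when $D$ meets the $T$-singularities of $X$—which is unavoidable here—and you correctly flag this yourself as the unresolved obstacle. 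The paper's verification stays on the surface and so avoids this entirely; moreover the linear equivalence $dK_X+3D\sim 0$, which you never check, is the actual input Hacking's Lemma~3.13 requires, not a vanishing on $D$. So: same overall architecture, but your specific verification in (2) would need to be replaced by the paper's (cleaner) one to close the gap you yourself identified.
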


\begin{proof} For part (1), it suffices to show that $(\bP^2,cC)$ is K-stable for any $c\in (0, 3/d)$ and any smooth plane curve $C$ of degree $d\geq 4$. This follows from Proposition \ref{prop:k-interpolation} since $(\bP^2, \frac{3}{d}C)$ is klt. Since $Z$ lies inside the K-stable locus of $Z_c^\circ$, we know that $Z$ is saturated in $Z_c^\circ$ by the uniqueness of K-polystable degeneration \cite{LWX18}.

For part (2), recall that for any point $\Hilb(X,D)\in Z_c^{\circ}$ the surface $X$ is a Manetti surface. Hence $X$ has unobstructed $\Q$-Gorenstein deformations by Proposition \ref{prop:manetti}.
Since K-semistability is an open condition by Corollary \ref{cor:openness}, it suffices to show that 
the $\Q$-Gorenstein deformations of the pair $(X,D)$ are also unobstructed.
Let $\pi:(\cX,\cD)\to T$ be a $\bQ$-Gorenstein smoothing of $(X,D)$ over a smooth pointed curve $0\in T$, i.e. $(\cX_0,\cD_0)\cong (X,D)$ and $\pi$ is smooth over $T^\circ:=T\setminus\{0\}$. Denote by $(\cX^\circ,\cD^\circ):=\pi^{-1}(T^\circ)$. Then it is clear that $dK_{\cX^\circ}+3\cD^{\circ}\sim_\pi 0$. By taking Zariski closure, we know that $dK_{\cX}+3\cD\sim_\pi 0$ which implies $dK_X+3D\sim 0$ by adjunction. In addition, if $3\mid d$ then we get $\frac{d}{3}K_X+D\sim 0$. Hence the statement of \cite[Lemma 3.13]{Hac04} holds for $(X,D)$. Since $X$ is klt and $D-K_X\sim_{\bQ}-\frac{d-3}{3}K_X$ is ample, Kawamata-Viehweg vanishing implies $H^1(X, \calO_X(D)) = 0$. Hence the statement of \cite[Lemma 3.14]{Hac04} also holds for $(X,D)$. Therefore, we may apply \cite[Theorem 3.12]{Hac04} to deduce that $(X,D)$ has unobstructed $\bQ$-Gorenstein deformations.

For part (3), the first statement follows from part (1) and \cite[Remark 6.2]{alper}. The normality of $\oP_{d,c}^{\K}$ follows from part (2) and a result of Alper \cite[Theorem 4.16 (viii)]{alper}. Since any smooth plane curve of degree $d\geq 4$ has finite automorphism group, we know that $\cP_d$ is a smooth Deligne-Mumford stack. Hence $P_d$ has only quotient singularities.
\end{proof}




There are certain open
subsets of $\oP^{\K}_{d,c}$ that remain
unchanged under subsequential wall crossings.
Let $P_{d,c}^{\klt}$ and $P_{d,c}^{\lc}$ be the subsets
of $\oP^{\K}_{d,c}$ parametrizing $c$-K-polystable curves
with $\lct>\frac{3}{d}$ and $\lct\geq \frac{3}{d}$, respectively. By the constructibility and lower semi-continuity of log canonical thresholds in bounded families, we know that both $P_{d,c}^{\klt}$ and $P_{d,c}^{\lc}$ are Zariski open subsets of $\oP_{d,c}^{\K}$. Denote by $\cP_{d,c}^{\klt}$ and $\cP_{d,c}^{\lc}$ the preimage of $P_{d,c}^{\klt}$ and $P_{d,c}^{\lc}$ under the quotient map $\ocP_{d,c}^{\K}\to \oP_{d,c}^{\K}$, respectively. 

\begin{prop}\label{prop:lclastwall}\leavevmode
\begin{enumerate}
\item There exist open immersions
 $\cP_{d,c}^{\klt}\hookrightarrow
 \cP_{d,c}^{\lc}\hookrightarrow \ocP^{\K}_{d,c'}$ which descend to open immersions
$P_{d,c}^{\klt}\hookrightarrow
 P_{d,c}^{\lc}\hookrightarrow \oP^{\K}_{d,c'}$ for any $0<c\leq c'<3/d$.
  Moreover, there exists an open immersion $P_{d,c}^{\klt}\hookrightarrow
 \oP_d^{\rm H}$ for any $c\in(0,3/d)$.
\item  Assume $c_0\in (0, 3/d)$ satisfies the following: for any K-polystable point $[(X,c_0 C)]\in
 \ocP^\K_{d,c_0}$  we have $\lct(X;C)\geq 3/d$ (or equivalently,
 $P_{d,c_0}^{\lc}=\oP^\K_{d,c_0}$). Then 
 $\ocP^\K_{d,c}\cong \ocP^\K_{d,c_0}$  for any $c_0<c<3/d$. In other
 words, there are no wall crossings among K-moduli spaces
 in the region $c\in (c_0-\epsilon,3/d)$ for $0<\epsilon\ll 1$.
 \end{enumerate}
\end{prop}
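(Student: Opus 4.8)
The plan is to reduce both parts to the interpolation result (Proposition~\ref{prop:k-interpolation}), lower semicontinuity of log canonical thresholds in families, and uniqueness of K-polystable degenerations. For part~(1), the key observation is that a $c$-K-semistable pair $(X,cD)$ with $\lct(X;D)\geq\frac{3}{d}$ stays K-semistable — and K-polystable, if it was so at $c$ — for every coefficient in $[c,\frac{3}{d})$: indeed $(X,\frac{3}{d}D)$ is then a log canonical log Calabi--Yau pair, hence K-semistable by \cite{Oda13b}, so since $-K_X-cD$ is ample while $-K_X-\frac{3}{d}D\sim_{\bQ}0$ is nef, Proposition~\ref{prop:k-interpolation}(1) applies. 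First I would note that every point of $\cP^{\lc}_{d,c}$ satisfies $\lct(X;D)\geq\frac{3}{d}$: if $(X,cD)$ has $c$-K-polystable degeneration $(X_0,cD_0)$, lower semicontinuity of lct along the defining special test configuration gives $\lct(X;D)\geq\lct(X_0;D_0)\geq\frac{3}{d}$. Hence the $\PGL$-invariant open locus $Z^{\red}_{c,\lc}\subseteq Z^{\red}_{c}$ underlying $\cP^{\lc}_{d,c}$ is contained in $Z^{\red}_{c'}$ (the Veronese normalizations persist since it lies in $\overline Z$), which yields open immersions of quotient stacks $\cP^{\klt}_{d,c}\hookrightarrow\cP^{\lc}_{d,c}\hookrightarrow\ocP^{\K}_{d,c'}$. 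To descend to good moduli spaces via \cite[Remark~6.2]{alper} one checks that $\cP^{\lc}_{d,c}$ is saturated in $\ocP^{\K}_{d,c'}$: for $\Hilb(X,D)\in Z^{\red}_{c,\lc}$, the $c$-K-polystable degeneration $(X_0,cD_0)$ has $\lct\geq\frac{3}{d}$, so $(X_0,c'D_0)$ is K-polystable and lies in the orbit closure of $\Hilb(X,D)$; by uniqueness of K-polystable degenerations \cite{BX18,LWX18} it is the $c'$-K-polystable representative, hence stays in $Z^{\red}_{c,\lc}$. For the map to $\oP^{\rm H}_d$: when $\lct(X;D)>\frac{3}{d}$ the pair $(X,(\frac{3}{d}+\epsilon)D)$ is klt with ample log canonical class, and (using $dK_X+3D\sim0$ and $\bQ$-Gorenstein smoothability to $(\bP^2,C_t)$, both automatic on $\overline Z$) it is a Hacking stable pair with finite automorphisms; the resulting classifying morphism $P^{\klt}_{d,c}\to\oP^{\rm H}_d$ is a monomorphism, and étale by unobstructedness of $\bQ$-Gorenstein deformations of $(X,D)$ (Proposition~\ref{prop:Zplanecurve}(2) and \cite{Hac04}), hence an open immersion.

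For part~(2), apply part~(1) with the hypothesis $\cP^{\lc}_{d,c_0}=\ocP^{\K}_{d,c_0}$: for each $c\in[c_0,\frac{3}{d})$ we get a saturated open immersion $\ocP^{\K}_{d,c_0}\hookrightarrow\ocP^{\K}_{d,c}$. On good moduli spaces this is an open immersion from the proper scheme $\oP^{\K}_{d,c_0}$ into the irreducible variety $\oP^{\K}_{d,c}$ (irreducible since $\overline Z$ is), so its image is open, closed and nonempty, hence all of $\oP^{\K}_{d,c}$; a surjective open immersion is an isomorphism. As the stack immersion is saturated and surjects on good moduli spaces, it too is an isomorphism, so $Z^{\red}_{c}=Z^{\red}_{c_0}$ for all $c\in[c_0,\frac{3}{d})$. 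It remains to exclude a wall at $c_0$, i.e. to show $Z^{\red}_{c_0-\epsilon}=Z^{\red}_{c_0}$ for $0<\epsilon\ll1$. If this failed, choose (using finiteness of walls, Proposition~\ref{prop:k-wall-finite}) $\epsilon$ so that $(c_0-\epsilon,c_0)$ and a neighborhood of $c_0-\epsilon$ contain no wall; then $Z^{\red}_{c_0}\setminus Z^{\red}_{c_0-\epsilon}$ is closed and $\PGL$-invariant, hence contains a closed orbit, i.e. a $c_0$-K-polystable $(X,c_0D)$ with $(X,(c_0-\epsilon)D)$ K-unstable. By Proposition~\ref{prop:openkps} the K-polystable range of $(X,\bullet\,D)$ is either an open interval around $c_0$ — impossible, since it would make $(X,(c_0-\epsilon')D)$ K-polystable for $0<\epsilon'\ll1$, so that $\Hilb(X,D)$ lies in $Z^{\red}_{c_0-\epsilon}$ by constancy, a contradiction — or exactly $\{c_0\}$; but $Z^{\red}_{c_0}=Z^{\red}_{c_0+\epsilon}$ as $\PGL$-schemes, so the orbit of $\Hilb(X,D)$ is also closed there, forcing $(X,(c_0+\epsilon)D)$ to be K-polystable and contradicting the second alternative. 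Hence $c_0$ is not a wall, and together with constancy on $[c_0,\frac{3}{d})$ we get $Z^{\red}_c=Z^{\red}_{c_0}$, so $\ocP^{\K}_{d,c}\cong\ocP^{\K}_{d,c_0}$, for all $c\in(c_0-\epsilon,\frac{3}{d})$.

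The two delicate points I expect to be the main obstacles are: the saturation statement in part~(1) — one must identify the $c'$-K-polystable degeneration of a pair for which only $\lct\geq\frac{3}{d}$ is known, which forces careful use of uniqueness of K-polystable degenerations and of the fact that the degenerating one-parameter subgroup is independent of the coefficient — and excluding a genuine wall at $c_0$ in part~(2), since K-semistability is \emph{not} in general preserved when $c$ decreases (a valuation destabilizing $X$ alone can persist for small $c$); the argument above sidesteps this only by passing to K-polystable representatives and exploiting that $Z^{\red}_{c_0}$ and $Z^{\red}_{c_0+\epsilon}$ coincide on the nose as $\PGL$-schemes.
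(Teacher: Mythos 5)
Your proposal is correct and follows the same broad strategy as the paper: interpolation (Proposition~\ref{prop:k-interpolation}), saturatedness via K-polystable degenerations and their uniqueness, and Proposition~\ref{prop:openkps} to handle the $c<c_0$ side. There are two places where your route genuinely diverges, both in acceptable ways.

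For the Hacking immersion in part~(1), the paper argues via \cite{BL18b}: it shows that a Hacking stable pair $(X,D)$ lies in $P_{d,c}^{\klt}$ if and only if $(X,cD)$ is uniformly K-stable, and then cites openness of uniform K-stability in families to conclude the image is open. You instead build a classifying morphism $P^{\klt}_{d,c}\to\oP^{\rm H}_d$ and argue it is an \'etale monomorphism (hence open immersion) by matching $\bQ$-Gorenstein deformation theories. Both work; the paper's route is shorter because \cite{BL18b} supplies the openness in one step, whereas your approach would need the \'etaleness argument spelled out at the level of stacks first and then descended to coarse spaces, a few extra lines.

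For the $c<c_0$ direction of part~(2), the paper's ``follows from Proposition~\ref{prop:openkps}'' unpacks to the following one-liner: each $c_0$-K-polystable pair $(X,c_0D)$ has $\lct(X;D)\geq 3/d$ by hypothesis, hence $(X,(c_0+\epsilon)D)$ is K-polystable by interpolation, so the K-polystable range of $(X,\bullet D)$ is not $\{c_0\}$ and is therefore an open interval by Proposition~\ref{prop:openkps}; in particular $(X,(c_0-\epsilon')D)$ is K-polystable, and Theorem~\ref{thm:Kss-spdeg} lifts this to the K-semistable locus. Your argument by contradiction — picking a closed orbit in $Z^{\red}_{c_0}\setminus Z^{\red}_{c_0-\epsilon}$, analyzing its K-polystable range via Proposition~\ref{prop:openkps}, and eliminating both alternatives using $Z^{\red}_{c_0}=Z^{\red}_{c_0+\epsilon}$ — is correct but is a longer detour to the same place, since you recover the K-polystability at $c_0+\epsilon$ indirectly (from the constancy of $Z^{\red}$) rather than from interpolation directly. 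Your filling-in of the lower semicontinuity of lct in the definition of $\cP^{\lc}_{d,c}$ is a good observation that the paper leaves implicit.
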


\begin{proof}
 For part (1), if $(X,cD)$ is log K-semistable and $\lct(X;D)\geq \frac{3}{d}$, then $(X,c'D)$ is K-semistable for any $c\leq c'<\frac{3}{d}$ by Proposition \ref{prop:k-interpolation}. Hence we have open immersions $\cP_{d,c}^{\klt}\hookrightarrow
 \cP_{d,c}^{\lc}\hookrightarrow \ocP^{\K}_{d,c'}$. To show that they descend to open immersions among the good moduli spaces, it suffices to show that the larger open substack $\cP_{d,c}^{\lc}$ is a saturated open substack of $\ocP_{d,c'}^{\K}$. Let $[(X,cD)]\in \cP_{d,c}^{\lc}$ be a point. Then $(X,cD)$ admits a K-polystable degeneration $(X_0,c D_0)$ in $\cP_{d,c}^{\lc}$ such that $\lct(X_0,D_0)\geq \frac{3}{d}$. Thus by Proposition \ref{prop:k-interpolation} we know that $(X_0,c'D_0)$ is K-polystable. Hence $\cP_{d,c}^{\lc}$ is saturated in $\ocP_{d,c'}^{\K}$. 
 
 By definition we know that $P_{d,c}^{\klt}$ admits an injective map to $\oP_{d}^{\rm H}$. To show that $P_{d,c}^{\klt}$ admits an open immersion to $\oP_d^{\rm H}$, by \cite{BL18b} it suffices to show that a Hacking stable pair $(X,D)$ belongs to $P_{d,c}^{\klt}$ if and only if it is uniformly $c$-K-stable. The ``if'' part is clear from the definition. For the ``only if'' part, if  $(X,D)$ is both Hacking stable and $c$-K-polystable, then by Theorem \ref{thm:bddtestK} it admits a weak conical K\"ahler-Einstein metric, and its automorphism group is finite. Hence $(X,cD)$ is uniformly K-stable by Theorem \ref{thm:bddtestK}(4). This finishes the proof of part (1).
 
 For part (2), notice from part (1) that $\cP_{d,c_0}^{\lc}$ is a saturated open substack of $\ocP_{d,c}^{\K}$ for $c\in (c_0, \frac{3}{d})$ which induces an open immersion $\varphi:\oP_{d,c_0}^{\K}=P_{d,c_0}^{\lc}\hookrightarrow\oP_{d,c}^{\K}$. Since the K-moduli spaces are normal proper varieties by Proposition \ref{prop:Zplanecurve}, we know that $\varphi$ is an isomorphism by \cite[Proposition 6.4]{alper}. Hence $\ocP_{d,c_0}^{\K}=\cP_{d,c_0}^{\lc}\cong \ocP_{d,c}^{\K}$ whenever $c\in (c_0,\frac{3}{d})$. The $c\in (c_0-\epsilon,c_0)$ part follows from Proposition \ref{prop:openkps}.
\end{proof}

\subsection{Index bounds}
In this section we prove the following theorem on bounding local Gorenstein indices of singular surfaces appearing in the boundary of K-moduli spaces. It is a K-stability analogue of Hacking's result \cite[Theorem 4.5]{Hac04} and \cite[Theorem 2.22]{Hac01}. As in Hacking's work, it is crucial in the study of singular objects in K-moduli spaces of plane curves.
\begin{theorem}\label{thm:localindex} Let $(X,cD)
 $ be a K-semistable log Fano pair that admits a $\bQ$-Gorenstein smoothing
 to $(\bP^2, cC_t)$ with $c\in (0,3/d)$ and $\deg C_t=d$.
 Let $x\in X$ be any singular point with Gorenstein index $\ind(x,K_X)$, then 
 \[
  \ind(x,K_X)\leq\begin{cases}
                  \min\{\lfloor\frac{3}{3-cd}\rfloor,d\} & \textrm{ if }3\nmid d,\\
                  \min\{\lfloor\frac{3}{3-cd}\rfloor,\frac{2d}{3}\} & \textrm{ if }3\mid d. 
                 \end{cases}
 \]
\end{theorem}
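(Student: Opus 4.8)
The plan is to combine the local-volume inequality from K-semistability (Theorem \ref{thm:local-vol-global}) with a lower bound on the normalized volume of a singularity of index $r := \ind(x,K_X)$ that admits a $\bQ$-Gorenstein smoothing to a smooth surface point. First I would observe that $(X,cD)$ K-semistable gives, via Theorem \ref{thm:local-vol-global} with $n=2$,
\[
(-K_X-cD)^2 \;\leq\; \tfrac{9}{4}\,\hvol(x,X,cD) \;\leq\; \tfrac{9}{4}\,\hvol(x,X,0),
\]
where the second inequality is monotonicity of $\hvol$ in the boundary (adding an effective divisor only decreases the log discrepancy of any valuation, hence the normalized volume). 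The left-hand side is computed from $-K_X-cD \sim_{\bQ} (1-\tfrac{cd}{3})(-K_X)$ and $(-K_X)^2 = K_{\bP^2}^2 = 9$ (degree is preserved under $\bQ$-Gorenstein degeneration), so it equals $9(1-\tfrac{cd}{3})^2$. Thus
\[
\hvol(x,X) \;\geq\; 4\left(1-\tfrac{cd}{3}\right)^2.
\]

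Next I would produce the upper bound on $\hvol(x,X)$ forced by $x$ being $\bQ$-Gorenstein smoothable. Such a singularity is a cyclic quotient singularity of type $\tfrac{1}{r^2}(1,rq-1)$ with $\gcd(q,r)=1$ (the T-singularities of Koll\'ar--Shepherd-Barron), and its Gorenstein index is exactly $r$. For the standard monomial valuation one computes $A(v)$ and $\vol(v)$ explicitly, giving $\hvol(x,X) \leq \hvol_{\text{quotient}} = \tfrac{4}{r^2}\cdot(\text{something}\le 1)$; more precisely, for any cyclic quotient singularity of order $N$ the orbifold pullback shows $\hvol(x,X)\leq \tfrac{4}{N}$ (the smooth point contributes $4$, divided by the group order), and here $N=r^2$ unless $r=1$, so $\hvol(x,X)\leq 4/r^2$. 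Combining the two bounds,
\[
\tfrac{4}{r^2}\;\geq\;\hvol(x,X)\;\geq\;4\left(1-\tfrac{cd}{3}\right)^2
\quad\Longrightarrow\quad
r \;\leq\; \frac{1}{1-\tfrac{cd}{3}} \;=\;\frac{3}{3-cd},
\]
which yields $\ind(x,K_X)=r\leq \lfloor \tfrac{3}{3-cd}\rfloor$.

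The remaining bound — $r\leq d$ when $3\nmid d$ and $r\leq \tfrac{2d}{3}$ when $3\mid d$ — is of a different, more global nature and is where I expect the real work to be. Here I would argue as in Hacking \cite[Theorem 4.5]{Hac04}: since $dK_X + 3D \sim 0$ (shown in the proof of Proposition \ref{prop:Zplanecurve}(2) by taking Zariski closure of $dK_{\cX^\circ}+3\cD^\circ\sim_\pi 0$ and using adjunction), the divisor class group near $x$ is constrained. At a cyclic quotient singularity of index $r$, the Weil divisor class group is $\bZ/r$ (or a quotient thereof), and $K_X$ has order exactly $r$ there; the relation $dK_X \sim -3D$ forces $r \mid 3 \cdot(\text{something involving }d)$, so that $r$ divides $\gcd$-type combinations of $d$ and $3$. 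Specifically, if $3\nmid d$ then $D$ being $\bQ$-Cartier of the appropriate index combined with $dK_X\sim -3D$ locally near $x$ gives $r\mid d$ (after clearing the coprime factor $3$), hence $r\leq d$; if $3\mid d$, writing $d=3e$ one gets $\tfrac{d}{3}K_X+D\sim 0$ and a finer analysis of how $D$ passes through $x$ gives $r\leq \tfrac{2d}{3}$, the factor $2/3$ coming from the fact that $D$ can pass through $x$ with multiplicity at most $2$ in the relevant sense (as in Hacking's argument bounding the intersection of the boundary with the singular locus). The main obstacle is making this local class-group/adjunction bookkeeping precise for the T-singularities in play and correctly extracting the $2/3$ improvement in the divisible case; I would follow Hacking's argument closely, translating his stable-pair hypotheses into the consequence $dK_X+3D\sim 0$ that we have already established, and then take the minimum of the two bounds to conclude.
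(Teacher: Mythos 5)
Your derivation of the bound $r \le \lfloor 3/(3-cd)\rfloor$ is correct, and is a clean variant of the paper's: where you use monotonicity of $\hvol$ in the boundary to reduce to $\hvol(x,X,0) \le 4/r^2$, the paper instead passes to the smooth $r^2$-cover via the finite degree formula, keeps the pair structure, and tests with $\ord_{\tilde x}$ to get the sharper estimate $r \le \frac{2-c\,\ord_{\tilde x}\tilde D}{2\beta}$ (their inequality \eqref{eq_hvolestimate}). Both routes give $r < \beta^{-1}$ when $x\in D$. Keeping track of $\ord_{\tilde x}\tilde D$ looks like an unnecessary complication at this stage, but it is precisely what the paper needs later, so your simplification costs you the second half.

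The second half of your proposal has a real gap. The divisibility argument you sketch ($dK_X\sim -3D$ locally in the class group, hence $r\mid d$) only works when $x\notin D$, because then $D$ is trivial in the local class group and $dK_X\sim 0$ directly gives $r\mid d$ (and $3r\mid d$ when $3\mid d$, since $3\nmid r$ for T-singularities smoothing to $\bP^2$). When $x\in D$ this collapses: the relation $dK_X + 3D\sim 0$ only constrains the combination, and $D$ can have any class in $\bZ/r^2$. This is the hard case and it is not handled by Hacking-style class-group bookkeeping. The paper's mechanism there is different and genuinely uses K-semistability: it combines (i) the refined local-volume inequality with the $\ord_{\tilde x}\tilde D$ correction, (ii) Skoda's bound $\lct_{\tilde x}(\tilde X;\tilde D) \le 2/\ord_{\tilde x}\tilde D$ together with $c < \lct$, and (iii) a congruence analysis of the leading monomial $u^iv^j$ of $\tilde D$ coming from $3(i+(ra-1)j)\equiv dra \pmod{r^2}$, which when $i+j<r$ forces $i=j$ and then $3i\equiv d\pmod r$. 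Your guess that the factor $2/3$ in the $3\mid d$ case comes from ``$D$ passing through $x$ with multiplicity at most $2$'' is not the right mechanism; rather, the case analysis splits on whether $\beta$ is large or small relative to $1/(d+1)$ (resp. $3/(2d+3)$), using $r<\beta^{-1}$ directly when $\beta$ is large and the monomial congruence argument when $\beta$ is small. Because this step depends on the specific K-semistability inputs, simply porting Hacking's stable-pair argument (whose hypotheses are $K_X + (3/d+\epsilon)D$ ample and slc, not K-semistable log Fano) will not go through.
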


\begin{proof}
 Let $\beta:=1-cd/3\in (0,1)$.
 By \cite[Propositions 6.1, 6.2, \& Theorem 7.1]{Hac04} we know that a Gorenstein index $n$ point $x\in X$ is a  cyclic quotient singularity of type $\frac{1}{n^2}(1,an-1)$
 where $\gcd(a,n)=1$ and $3\nmid n$.
 
 We first show that $n \leq \lfloor\frac{3}{3-cd} \rfloor$. By Theorem \ref{thm:local-vol-global}, we know that 
 \[
 \hvol(x, X, cD)\geq \frac{4}{9} (-K_X-cD)^2 = 4 \beta^2.
 \]
 On the other hand, we have $\hvol(x,X, cD) \leq \hvol(x, X) = \frac{4}{n^2}$  by \cite[Proposition 4.10]{LL16}. Combining these two inequalities, we get $n\leq \lfloor \beta^{-1}\rfloor =   \lfloor\frac{3}{3-cd} \rfloor$.
 
 Next, we show the inequality $n\leq d$ or $\frac{2d}{3}$ depending on divisibility of $d$ by $3$.
 We know that $dK_X+3D\sim 0$, so if $x\not\in D$ then
 $n\mid d$ hence $n\leq d$ (in fact $n\leq d/3$ if
 $3\mid d$). From now on let us assume
 $x\in D$. Let $(\tilde{x}\in \tilde{X})$ be the 
 smooth cover of $(x\in X)$, with $\tilde{D}$ being the 
 preimage of $D$. Since the finite degree formula for local volumes is true in dimension $2$ by \cite[Theorem 2.7(3)]{LX17b} \cite[Theorem 4.15]{LLX18}, we have 
 \[
  \hvol(\tilde{x},\tilde{X},c\tilde{D})
  =n^2\cdot\hvol(x,X,c D).
  \]
  On the other hand, Theorem \ref{thm:local-vol-global} implies that
  $\hvol(x,X,c D)\geq 4\beta^2$,
  so we have
  \begin{equation}\label{eq_hvolestimate}
   n\leq \frac{\sqrt{  \hvol(\tilde{x},\tilde{X},c\tilde{D})}}{2\beta}
   \leq \frac{2-c\ord_{\tilde{x}}\tilde{D}}{2\beta}.
  \end{equation}
  In particular we have $n<\beta^{-1}$.
  We know that $\lct_{\tilde{x}}(\tilde{X};\tilde{D})>
  c$, and Skoda's estimate \cite{skoda} implies $\lct_{\tilde{x}}(\tilde{X};\tilde{D})\leq
  \frac{2}{\ord_{\tilde{x}}\tilde{D}}$, so we have
  $  \ord_{\tilde{x}}\tilde{D}<\frac{2}{c}$.
  Assume $\tilde{x}\in\tilde{X}$ has local coordinates
  $(u,v)$ where the cyclic group action is scaling on 
  each coordinate. Let $u^i v^j$ be a monomial appeared
  in the equation on $\tilde{D}$ with minimal $i+j=\ord_{\tilde{x}}\tilde{D}$.
  Then $dK_X+3D\sim 0$ implies $3(i+(na-1)j)\equiv dna\mod n^2$,
  in particular $i\equiv j\mod n$.
  
  \textbf{Case 1.} Assume $3\nmid d$. If $\beta\geq \frac{1}{d+1}$
  then $n<\beta^{-1}\leq d+1$. Thus we may assume
  $\beta<\frac{1}{d+1}$. Then 
  $ i+j=  \ord_{\tilde{x}}\tilde{D}<\frac{2}{c}<\frac{2(d+1)}{3}$.
  Assume to the contrary that $n\geq d+1$. Then $i\equiv j\mod
  n$ and $i+j<n$ implies that $i=j$. Hence $3(i+(na-1)j)\equiv dna\mod n^2$
  implies $3i\equiv d\mod n$. But since $i<\frac{d+1}{3}$,
  we know that $3i=d$ which is a contradiction.
  
  \textbf{Case 2.} Assume $3\mid d$. If $\beta\geq \frac{3}{2d+3}$
  then $n<\beta^{-1}\leq \frac{2d}{3}+1$. Thus we may
  assume $\beta<\frac{3}{2d+3}$. Then $      i+j=  \ord_{\tilde{x}}\tilde{D}<\frac{2}{c}
      <\frac{2d}{3}+1$.
 Assume to the contrary that $n\geq \frac{2d}{3}+1$.
Then $i\equiv j\mod n$ and $i+j<n$ implies $i=j$.
Hence $3(i+(na-1)j)\equiv dna\mod n^2$
  implies $3i\equiv d\mod n$. Hence $i=j=\frac{d}{3}$
  and $\ord_{\tilde{x}}\tilde{D}=\frac{2d}{3}$.
  Then \eqref{eq_hvolestimate} implies
  \[
   n\leq \frac{2-c\cdot\frac{2d}{3}}{2\beta}=\frac{2-2(1-\beta)}{2\beta}=1,
  \]
a contradiction!
\end{proof}

\section{The first wall crossing}\label{sec:firstwall}
 The goal of this section is to prove Theorem \ref{mthm:firstwall}, which completely describes the first wall crossing of K-moduli spaces of plane curves for all degrees. We show that K-moduli and GIT coincide for small weights (see Theorems \ref{thm:firstwallbefore} and \ref{thm:firstwallon}) and describe the explicit birational modification on the GIT moduli space occurring while crossing the first wall (see Theorem \ref{thm:firstwallafter}).

\subsection{Before the first wall}\label{sec:firstwall1}
 In this section,  we will show that the K-moduli space for small coefficient is isomorphic to the GIT moduli space. We prove two results, Theorems \ref{thm:firstwallbefore} and \ref{thm:firstwallon}, which correspond to parts (1) and (2) of Theorem \ref{mthm:firstwall}. 
 
 Before we start, let us fix some notation for the discussion of the first wall crossing.
 
\begin{notation}\label{not:firstwall} Let $d\geq 4$ be an integer. Let $c\in (0, \frac{3}{d})$ be a rational number. Let  $Q$ be a smooth conic in $\bP^2$, let $L$ be a line in $\bP^2$ transverse to $Q$, and let $x,y,z$ be coordinates of $\bP(1,1,4)$. Let \[
c_1 = \left\{ \begin{array}{lr} \frac{3}{2d} & d \textrm{ is even} \\ \frac{3}{2d-3} & d \textrm{ is odd }  \end{array}\right. \qquad
Q_d = \left\{ \begin{array}{lr} \frac{d}{2}Q & d \textrm{ is even} \\ \frac{d-1}{2}Q+L & d \textrm{ is odd }\end{array}\right. \quad
Q'_d = \left\{ \begin{array}{lr} z^{d/2} = 0 & d \textrm{ is even} \\ xyz^{(d-1)/2}=0  & d \textrm{ is odd }\end{array}\right. \qquad
\]
\end{notation}

We are ready to prove part (1) of Theorem \ref{mthm:firstwall}. 

\begin{theorem}[First wall crossing 1]\label{thm:firstwallbefore} We follow Notation \ref{not:firstwall}.
For any $0 < c < c_1$,  a plane curve $C$ of degree $d$ is GIT (poly/semi)stable if and only if the log Fano pair $(\bP^2, cC)$ is K-(poly/semi)stable. Moreover, there is an isomorphism of Artin stacks $\ocP^\K_{d,c} \cong \ocP_d^{\GIT}$. 
\end{theorem}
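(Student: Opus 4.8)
The plan is to establish the biconditional between GIT (poly/semi)stability of a degree $d$ plane curve $C$ and K-(poly/semi)stability of $(\bP^2, cC)$ for $0 < c < c_1$, and then upgrade this to an isomorphism of Artin stacks. I would organize the argument into three parts. First, for the ``GIT semistable $\Rightarrow$ K-semistable'' direction, let $C$ be a GIT semistable plane curve of degree $d$. The key input is the log canonical threshold estimate: I would show $\lct(\bP^2; C) \geq c_1$ by analyzing the worst singularities allowed on GIT semistable curves of degree $d$ and computing (or bounding) their log canonical thresholds using Skoda-type estimates and the classification of GIT semistable singularities. Once $\lct(\bP^2; C) \geq c_1 > c$, the pair $(\bP^2, cC)$ has klt (indeed log canonical) singularities, and since $-K_{\bP^2} - cC \sim_{\bQ} (1 - cd/3)(-K_{\bP^2})$ is ample, one wants K-semistability. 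Here I would invoke Proposition \ref{prop:k-interpolation}: the pair $(\bP^2, c_1 C)$ being log canonical (hence K-semistable as a log Calabi-Yau-ish pair when $\lct \geq c_1$, using $c_1 = 3/(2d)$ or $3/(2d-3)$ as appropriate) interpolates with the K-semistable pair $(\bP^2, 0)$ to give K-semistability of $(\bP^2, cC)$ for all $c \in (0, c_1)$. Actually, more carefully, I would directly use that $(\bP^2, \lct \cdot C)$ is K-semistable when log canonical and apply interpolation between $0$ and that coefficient.

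Second, for the converse ``K-semistable $\Rightarrow$ GIT semistable'', I would apply Proposition \ref{prop:P^2-paultian} directly: if $(\bP^2, cC)$ is K-(poly/semi)stable then $C$ is GIT (poly/semi)stable, which is already proved in the excerpt via the Paul-Tian criterion and ampleness of the CM line bundle. This direction is essentially free. To pin down the polystable correspondence, I would additionally need that a GIT polystable curve $C$ gives a K-polystable pair $(\bP^2, cC)$: this follows because any GIT polystable $C$ that is not GIT stable has a positive-dimensional stabilizer and one can check (using interpolation and the explicit list of polystable curves, e.g. $Q_d$ in Notation \ref{not:firstwall}, together with the fact that $(\bP^2, cQ_d)$-type pairs are K-polystable by an automorphism/Futaki argument) that these remain K-polystable below $c_1$; combined with the semistable equivalence and uniqueness of K-polystable degenerations \cite{LWX18}, this gives the polystable statement.

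Third, to get the stack isomorphism $\ocP^\K_{d,c} \cong \ocP_d^{\GIT}$, I would argue that the two open immersions/identifications of the respective semistable loci match up. Concretely, $\ocP^\K_{d,c} = [Z_c^{\red}/\PGL(N+1)]$ and I need to show $Z_c^{\red}$ parametrizes exactly the $(3m)$-Veronese re-embeddings of $(\bP^2, C)$ with $C$ GIT semistable — i.e., no genuinely singular surface $X$ appears. This is where the index bound Theorem \ref{thm:localindex} does the work: for $c < c_1$, the bound $\ind(x, K_X) \leq \lfloor 3/(3 - cd) \rfloor$ forces $\lfloor 3/(3-cd)\rfloor = 1$ (since $cd < 3/2$ when $d$ even gives $3/(3-cd) < 2$, and similarly $cd < 3d/(2d-3) \leq 2$ for $d$ odd, $d \geq 4$... one checks the arithmetic), so every $X$ in the K-moduli is Gorenstein, hence (being a Manetti surface that is a $\bQ$-Gorenstein degeneration of $\bP^2$ with Gorenstein canonical class) must be $\bP^2$ itself. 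Then $Z_c^{\red}$ is identified with the GIT semistable locus $\bfP_d^{\rm ss}$ (up to the Veronese re-embedding and the $\PGL$-equivariant change of linearization), giving the stack isomorphism.

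\textbf{Main obstacle.} I expect the hardest step to be the log canonical threshold lower bound $\lct(\bP^2; C) \geq c_1$ for GIT semistable curves, because it requires understanding precisely which singularities occur on degree $d$ GIT semistable plane curves and showing none of them is bad enough to drop the lct below $3/(2d)$ (resp. $3/(2d-3)$). The extremal cases — the double conic $dQ/2$ when $d$ is even (whose lct along the non-reduced locus is exactly the reciprocal of the multiplicity), and the $(d-1)Q/2 + L$ configuration when $d$ is odd — should be the ones achieving equality, which is consistent with $c_1$ being a wall, but verifying that nothing worse appears requires either citing a GIT stability analysis for plane curves (Hacking's or Kim-Lee's work, referenced in the excerpt) or a self-contained destabilization argument: any curve with a point of multiplicity $> d/2$ is GIT unstable, and multiplicity $\leq d/2$ singularities have lct $\geq 2/(d/2) = 4/d \geq c_1$ by Skoda, with the non-reduced cases handled separately. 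Packaging this cleanly is the crux.
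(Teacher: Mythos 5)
There is a genuine gap in the central step of your first part. You propose to deduce K-semistability of $(\bP^2, cC)$ for $0<c<c_1$ from the implication ``$\lct(\bP^2;C)\geq c_1 \Rightarrow (\bP^2, c_1 C)$ is log canonical $\Rightarrow$ K-semistable,'' and then interpolate with $(\bP^2,0)$. But a log canonical log \emph{Fano} pair is not automatically K-semistable; the equivalence of ``log canonical'' and ``K-semistable'' only holds for log Calabi--Yau pairs (this is the content of \cite{Oda13b} quoted in the preliminaries, where $D\sim_{\bQ}-K_X$). Since $c_1 < 3/d$, the pair $(\bP^2, c_1 C)$ (or $(\bP^2,\lct\cdot C)$) is still log Fano, and being log canonical tells you nothing about its K-semistability. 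The correct interpolation endpoint for Proposition \ref{prop:k-interpolation} is the Calabi--Yau coefficient $3/d$, and it is simply false that every GIT semistable plane curve satisfies $\lct(\bP^2;C)\geq 3/d$ — these are exactly the curves parametrized by $P_\epsilon^{\lc}\subsetneq\oP_d^{\GIT}$, and the curves outside it (e.g.\ $Q_d$ itself, the $A_{12}$-quintic, etc.) are precisely what drive the later wall crossings. So this route cannot establish K-semistability of $(\bP^2,cC)$ for all GIT semistable $C$. The paper's argument instead goes through Theorem \ref{thm:compactness}: starting from a family $C_t$ smoothing $C$, it produces a K-\emph{poly}stable filling $(X,cD)$, uses the index bound to force $X\cong\bP^2$, uses Proposition \ref{prop:P^2-paultian} plus separatedness of GIT to identify $D$ with a GIT polystable degeneration of $C$, and then invokes Theorem \ref{thm:Kss-spdeg} (openness of K-semistability under special degeneration) to conclude $(\bP^2,cC)$ is K-semistable. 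You never reach the situation where log-canonicity alone has to give you K-semistability.

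There is a second, smaller gap in your index-bound arithmetic. You claim $\lfloor 3/(3-cd)\rfloor=1$ for all $c<c_1$, but this is only true for $d$ even (where $cd<3/2$). For odd $d\geq 5$ and $c\in[\tfrac{3}{2d},c_1)$, the bound only gives $\lfloor\beta^{-1}\rfloor\leq 3$ (when $d=5$) or $\leq 2$ (when $d\geq 7$), so the index estimate alone leaves open the possibility $X\cong\bP(1,1,4)$. The paper closes this with an explicit valuative-criterion computation on $\bP(1,1,4)$ using the exceptional $(-4)$-curve $E$ over the singular point: since $d$ is odd, every degree-$2d$ curve $D$ on $\bP(1,1,4)$ satisfies $\ord_E(D)\geq\tfrac12$, and one checks $A_{(X,cD)}(\ord_E)\leq\tfrac{1-c}{2}<1-\tfrac{dc}{3}=S_{(X,cD)}(\ord_E)$ whenever $c<\tfrac{3}{2d-3}$, so $(\bP(1,1,4),cD)$ is K-unstable. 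Your write-up is missing this step, and without it the stack identification in part three does not follow from the index bound alone.
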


\begin{proof}
 We first show that if $(X,cD)$ is a K-semistable point in  $\ocP_{d,c}^{\K}$ for $0<c<c_1$ then $X\cong\bP^2$.  
 From Theorem \ref{thm:localindex}, we know that the local Gorenstein indices of $X$ are at most $\lfloor\beta^{-1}\rfloor$ where $\beta=1-\frac{cd}{3}$.
 If $c<\frac{3}{2d}$, then we have $\beta>\frac{1}{2}$. This implies 
 $X$ is a Gorenstein Manetti surface so $X\cong\bP^2$. Hence we may assume that $d\geq 5$ is odd and $\frac{3}{2d}\leq c<\frac{3}{2d-3}$. By the same argument as above,
 the local Gorenstein indices of $X$ are at most
 $\lfloor\beta^{-1}\rfloor<\frac{2d-3}{d-3}\leq \frac{7}{2}$.
 Hence $X$ has local Gorenstein index at most $3$, which implies
 $X\cong \bP^2$ or $\bP(1,1,4)$. We shall show that the $\bP(1,1,4)$
 case is impossible under the assumption $c<\frac{3}{2d-3}$.
 
 Assume to the contrary that $(X=\bP(1,1,4),cD)$ is a K-semistable
 point in $\ocP^\K_{d,c}$. Then $D$ is of degree $2d$
 in $\bP(1,1,4)$. Write $d=2l+1$, then the equation
 of $D$ is \[z^l f_2(x,y)+z^{l-1}f_6(x,y)+\cdots +f_{4l+2}(x,y)=0,\]
 where $f_i$ is a homogeneous polynomial of degree $i$ in $(x,y)$.
 Let $E$ be the $(-4)$-curve over the singular point
 $[0,0,1]$ of type $\frac{1}{4}(1,1)$. Then from the defining equation of $D$ we see that $\ord_E(D)\geq \frac{1}{2}$. Thus 
 \[
  A_{(X,cD)}(\ord_E)=A_X(\ord_E)-c~\ord_E(D)\leq\frac{1-c}{2}.
 \]
 On the other hand, $-K_X-cD\sim_{\bQ}\cO(6-2dc)$, 
 and $\vol_X(\cO(1)-tE)=\max\{\frac{1}{4}-4t^2,0\}$.
 Hence
 \[
  S_{(X,cD)}(\ord_E)=\frac{(6-2dc)}{\vol_X(\cO(1))}\int_0^{\infty}
  \vol_X(\cO(1)-tE)dt=1-\frac{dc}{3}.
 \]
 Since $c<\frac{3}{2d-3}$, we know that $A_{(X,cD)}(\ord_E)\leq \frac{1-c}{2}<1-\frac{dc}{3}=S_{(X,cD)}(\ord_E)$.  Hence  $(X,cD)$ is K-unstable by the valuative criterion (Theorem \ref{thm:valuative}).
 
 So far we have shown that any K-semistable point $(X,cD)$ in $\ocP_{d,c}^{\K}$ is isomorphic to $(\bP^2,cC)$ where $C$ is a plane curve of degree $d$. By Proposition \ref{prop:P^2-paultian} we know that K-(poly/semi)stability of $(\bP^2, cC)$ implies GIT (poly/semi)stability of $C$. Hence we just need to show the converse to deduce the equivalence between K-stability and GIT stability. Suppose $C$ is a GIT semistable plane curve. Take $\{C_t\}_{t\in T}$ a family of plane curves over a smooth pointed curve $(0\in T)$ such that $C_0=C$ and $C_t$ is smooth for $t\in T\setminus\{0\}$. Then by properness of K-moduli spaces (Theorem \ref{thm:compactness}) we have a K-polystable limit $(X,cD)$ of $(\bP^2, cC_t)$ as $t\to 0$ after a possible finite base change of $T$. Hence $(X,cD)\cong (\bP^2, C_0')$ where $C_0'$ is a GIT polystable plane curve. By the separatedness of GIT quotients, we know that $C$ specially degenerates to $g\cdot C_0$ for some $g\in\PGL(3)$. Thus $(\bP^2,cC)$ is K-semistable by Theorem \ref{thm:Kss-spdeg}. If in addition that $C$ is GIT polystable, then $(\bP^2,cC)$ has a K-polystable limit $(\bP^2, cC_0')$. In particular, by Proposition \ref{prop:P^2-paultian} we know that $C_0'$ is a GIT polystable point S-equivalent to $C$. Hence $C=g\cdot C_0$ for some $g\in\PGL(3)$ and $(\bP^2,cC)$ is K-polystable.
 
 From the equivalence between $c$-K-semistability and GIT semistability, we obtain a morphism of Artin stacks $\varphi:\ocP_d^{\GIT}\to \ocP_{d,c}^{\K}$. It suffices to show that $\varphi$ is an isomorphism of stacks. 
 Consider the morphism of Artin stacks $\psi:\ocP_{d,c}^{\K} \to B \PGL(3)$ sending $[(\cX, \cD) \to S]$ to  $[\cX\to S]$, where $B \PGL(3)$ is the classifying stack of $\bP^2$-bundles. Clearly, $\psi$ is representable as the group homomorphism $\Aut(X,D)\to \Aut(X)$ is injective for $[(X,D)]\in \ocP_{d,c}^{\K}$. We look at the base change of $\varphi$ under the natural quotient map $\Spec\bC \to B \PGL(3)$, in which we obtain a $\PGL(3)$-equivariant morphism of algebraic spaces $\tilde{\varphi}: \bfP_d^{\rm ss} \to Z$ where $Z = \ocP_{d,c}^{\K} \times_{B \PGL(3)} \Spec \bC$. Thus for any reduced scheme $S$ of finite type over $\bC$, the set $Z(S)$ is given by $\{(\cX,\cD;f)\}/\cong$ where $[(\cX, c\cD) \to S]\in \ocP_{d,c}^{\rm K}$ and $f:\cX\to \bP^2_S$ is an isomorphism. From the equivalence between K-semistability and GIT semistability, we know that $\tilde{\varphi}(S)$ is a bijection for every reduced $S$, which implies that $\varphi$ is an isomorphism between reduced algebraic spaces. Hence $\varphi$ is an isomorphism between Artin stacks.
 This finishes the proof.
\end{proof}

Next we discuss the K-moduli stack and space when $c=c_1$.

\begin{lem}\label{lem:Q_d'Kps}
 We follow Notation \ref{not:firstwall}. Then the log Fano pair $(\bP^2, c_1 Q_d)$ is K-semistable with K-polystable degeneration $(\bP(1,1,4),c_1 Q_d')$. Moreover, the only $c_1$-K-polystable curve on $\bP(1,1,4)$ of degree $2d$ is $Q_d'$. 
\end{lem}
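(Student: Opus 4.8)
The plan is to establish the lemma in three stages: first produce an explicit $\bG_m$-degeneration of $(\bP^2, c_1 Q_d)$ to $(\bP(1,1,4), c_1 Q_d')$, then verify that the central fiber is $c_1$-K-polystable, and finally prove uniqueness of the $c_1$-K-polystable degree $2d$ curve on $\bP(1,1,4)$. For the degeneration, I would use the standard test configuration realizing $\bP(1,1,4)$ as a degeneration of $\bP^2$ (the one appearing in Example \ref{expl:deg123}(2) for conics): the $(-4)$-curve $E$ over the singular point of $\bP(1,1,4)$ corresponds to blowing up a point and contracting the proper transform of a line, and the conic $Q$ specializes to the divisor class $\cO(2)$ on $\bP(1,1,4)$, which after a suitable coordinate change is $z^? \cdots$; more precisely $\frac d2 Q$ (resp. $\frac{d-1}2 Q + L$) degenerates to $z^{d/2}=0$ (resp. $xyz^{(d-1)/2}=0$). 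I would check $c_1$ lies in the admissible range $(0,3/d)$ (indeed $c_1 = 3/(2d)$ or $3/(2d-3)$, both $< 3/d$), and that $(\bP(1,1,4), c_1 Q_d')$ is a klt log Fano pair by an explicit local log-canonical-threshold computation at $[0,0,1]$ — the coefficient $c_1$ is chosen exactly so that the pair sits at the K-stability wall.

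For K-polystability of $(\bP(1,1,4), c_1 Q_d')$, the surface $\bP(1,1,4)$ has a two-torus action of complexity one on the pair (the curve $Q_d'$ is a monomial divisor), so I would invoke the Ilten--Süß criterion for $T$-varieties of complexity one (the same machinery cited in the paper for the $A_{12}$ case, \cite{IS17}), reducing K-polystability to a finite combinatorial check on the associated polytopes, or alternatively compute directly via the valuative criterion (Theorem \ref{thm:valuative}) that $A_{(X,c_1 Q_d')}(\ord_F) \geq S_{(X,c_1 Q_d')}(\ord_F)$ for all $\bG_m$-invariant divisorial valuations $F$, with equality only for those $F$ inducing product configurations. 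The computation for $E$ itself was essentially done inside the proof of Theorem \ref{thm:firstwallbefore}: there $A_{(X,cD)}(\ord_E) = (1-c\cdot\ord_E(D))/?$ and $S_{(X,cD)}(\ord_E) = 1 - dc/3$, and at $c = c_1$ these coincide, confirming $E$ gives a product test configuration (consistent with $Q_d'$ being $E$-invariant). Since $(\bP^2, c_1 Q_d)$ specially degenerates to this K-polystable pair, Theorem \ref{thm:Kss-spdeg} gives K-semistability of $(\bP^2, c_1 Q_d)$, and by uniqueness of K-polystable degenerations \cite{LWX18} the claimed K-polystable representative is correct.

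For the uniqueness statement, suppose $(\bP(1,1,4), c_1 D)$ is $c_1$-K-polystable with $D$ of degree $2d$. Writing the equation of $D$ in coordinates, the $(-4)$-curve $E$ over $[0,0,1]$ has $\ord_E(D) = \lfloor \text{mult}\rfloor/2$ depending on the lowest power of $z$ appearing; the computation above shows that K-semistability forces $A_{(X,c_1 D)}(\ord_E) \geq S_{(X,c_1 D)}(\ord_E) = 1 - dc_1/3$, which at $c = c_1$ pins down $\ord_E(D)$ exactly, hence pins down which monomials can appear with nonzero coefficient in the leading part of the equation near $[0,0,1]$. The residual $\bG_m$-action on $\bP(1,1,4)$ then lets one further degenerate $D$ to a monomial divisor, and K-polystability forces $D$ to already be in that closed orbit, i.e. $D = Q_d'$ up to the automorphism group $\Aut(\bP(1,1,4))$. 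The main obstacle I anticipate is the last step — cleanly ruling out all the other near-monomial curves and showing the degeneration to $Q_d'$ is within a single $\Aut(\bP(1,1,4))$-orbit rather than merely $\bG_m$-equivalent; this is where a careful GIT analysis of degree $2d$ curves on $\bP(1,1,4)$ (the "GIT of curves on $\bP(1,1,4)$" developed later in Section \ref{sec:firstwall2}, Definition \ref{defn:gitp114} and Theorem \ref{thm:k=gitp114}) will be needed, combined with the index bound of Theorem \ref{thm:localindex} to control which singular points of such curves are allowed.
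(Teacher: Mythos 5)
Your construction of the degeneration $(\bP^2, c_1 Q_d) \rightsquigarrow (\bP(1,1,4), c_1 Q_d')$ via the normal cone of $Q$, and the subsequent use of Theorem \ref{thm:Kss-spdeg} plus uniqueness of K-polystable degenerations, matches the paper. For K-polystability of the central fiber the paper takes a different and quicker route than the Ilten--S\"u{\ss} machinery you propose: when $d$ is even it is immediate from \cite[Proof of Theorem 1.5]{LS14} since $c_1 Q_d' = \tfrac34(z=0)$, and when $d$ is odd the paper identifies $(\bP(1,1,4), c_1 Q_d')$ with a projective cone over the one-dimensional K-polystable pair $(\bP^1, \tfrac{3}{2d-3}([0]+[\infty]))$ with the right polarization, and invokes the cone criterion of \cite[Proposition 3.3]{LL16}. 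Your plan is viable in principle (via $\bG_m$-equivariant K-polystability and \cite[Theorem 1.4]{LWX18}), but the cone argument is shorter; also be careful with your parenthetical ``these coincide, confirming $E$ gives a product test configuration'' --- equality $A=S$ for one divisor is consistent with, but does not by itself certify, K-polystability.

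The genuine gap is in the uniqueness step. First, invoking Definition \ref{defn:gitp114} and Theorem \ref{thm:k=gitp114} here is circular: the ``only if'' direction of Theorem \ref{thm:k=gitp114} is proved in the paper by using the K-polystability of $(\bP(1,1,4), c_1 Q_d')$ --- i.e., the very conclusion of this lemma --- to establish ampleness of the CM line bundle on $\bfP_d'$. Second, and more importantly, for odd $d$ the Fujita--Li valuative criterion applied to $\ord_E$ only pins down $\ord_E(D) = \tfrac12$, equivalently $f_2 \neq 0$; it does not rule out a degenerate leading form such as $f_2 = x^2$. The paper closes this gap with a different tool: it applies the local-to-global normalized volume inequality (Theorem \ref{thm:local-vol-global}) at the $\tfrac14(1,1)$ point, observes that K-semistability forces \emph{equality} there, and then invokes \cite[Lemma 33]{Liu18} and \cite[Theorem 1.2]{LX16} to conclude that $E$ is a $\hvol$-minimizing Koll\'ar component, hence $(E,\Delta_E) \cong (\bP^1, c_1(f_2=0))$ is a K-semistable pair. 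This forces $f_2$ to have two distinct roots, i.e.\ $f_2 = xy$ after a coordinate change, and the $1$-PS $[x,y,z]\mapsto[x,y,tz]$ then degenerates $D$ to $Q_d'$. Your proposal of ``pinning down which monomials appear'' via $\ord_E$ alone is not sharp enough; you would need either the Koll\'ar-component argument or an explicit destabilizing one-parameter subgroup in the case $f_2$ is a perfect square, neither of which you supply.
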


\begin{proof}
 By taking the degeneration of $\bP^2$ to the normal cone of $Q$, the pair $(\bP^2, Q)$ specially degenerates to $(\bP(1,1,4), (z=0))$. Since $L$ intersects $Q$ transversally, it degenerates to the union of two distinct rulings of $\bP(1,1,4)$. Hence a suitable choice of projective coordinates of $\bP(1,1,4)$ yields that $(\bP^2,Q,L)$ specially degenerates to $(\bP(1,1,4), (z=0), (xy=0))$. 
 
 Next we show that $(\bP(1,1,4), c_1 Q_d')$ is K-polystable. When $d$ is even, we have $c_1 Q_d'=\frac{3}{4} (z=0)$. Hence by \cite[Proof of Theorem 1.5]{LS14} we know that $(\bP(1,1,4),\frac{3}{4} (z=0))$ is K-polystable. When $d$ is odd, we have $c_1 Q_d'=\frac{3}{2d-3}(xy=0)+\frac{3(d-1)}{2(2d-3)}(z=0)$.  Hence $(\bP(1,1,4), c_1 Q_d')$ is a projective cone over $(\bP^1, \frac{3}{2d-3}([0]+[\infty]))$ with polarization $\cO_{\bP^1}(4)\sim_{\bQ}\frac{2d-3}{d-3}(-K_{\bP^1}-\frac{3}{2d-3}([0]+[\infty]))$. Since $\frac{3(d-1)}{2(2d-3)}=1- \frac{1}{2}\cdot (\frac{2d-3}{d-3})^{-1}$ and $(\bP^1, \frac{3}{2d-3}([0]+[\infty]))$ admits a conical K\"ahler-Einstein metric, by \cite[Proposition 3.3]{LL16} we know that 
 $(\bP(1,1,4),c_1 Q_d')$ is conical K\"ahler-Einstein hence K-polystable. 
 
 Finally, we show that $Q_d'$ is the only $c_1$-K-polystable curve on $\bP(1,1,4)$ of degree $2d$. Suppose $(X:=\bP(1,1,4),c_1 D)$ is K-polystable with $\deg D=2d$. 
 Let $E$ be the $(-4)$-curve over the singular point $x:=[0,0,1]$ of type $\frac{1}{4}(1,1)$. Then by Theorem \ref{thm:local-vol-global} we have
 \begin{equation}\label{eq:p114Kps}
 4(K_{X}+c_1 D)^2\leq 9A_{(X,c_1D)}(\ord_E)^2\vol_{X,x}(\ord_E).
 \end{equation}
 By computation we know that 
 \[
 A_{(X,c_1D)}(\ord_E)=A_X(\ord_E)-c_1 \ord_E(D)=\tfrac{1}{2}-c_1 \ord_E(D), \quad \vol_{X,x}(\ord_E)=4.
 \]
 When $d$ is even, we know that $(K_{X}+c_1 D)^2=\frac{9}{4}$. Hence \eqref{eq:p114Kps} implies that $9\leq 9 (1-2c_1\ord_E (D))^2$, i.e. $D$ does not pass through $x$. Thus the equation of $D$ is given by 
 \[
 z^{d/2}+f_4(x,y)z^{(d-2)/2}+\cdots+f_{2d}(x,y)=0. 
 \]
 By taking the $1$-PS $\lambda:\bG_m\to\Aut(X)$ as $\lambda(t)([x,y,z])=[x,y,tz]$ for $t\in\bG_m$, we see that $\lim_{t\to 0}\lambda(t)\cdot D=Q_d'$. Thus $(X,c_1 D)\cong (\bP(1,1,4), c_1 Q_d')$ since they are both K-polystable. When $d$ is odd, we know that $(K_{X}+c_1 D)^2=\frac{9(d-3)^2}{(2d-3)^2}$. Hence \eqref{eq:p114Kps} implies that $\frac{36(d-3)^2}{(2d-3)^2}\leq 9(1-\frac{6}{2d-3}\ord_E(D))^2$, i.e. $\ord_E(D)\leq \frac{1}{2}$. Since the equation of $D$ is given by 
 \[
 z^l f_2(x,y)+z^{l-1}f_6(x,y)+\cdots +f_{4l+2}(x,y)=0
 \]
 where $l:=\frac{d-1}{2}$, we have $\ord_E(D)\geq \frac{1}{2}$ with equality holds if and only if $f_2\neq 0$. Hence $\ord_E(D)=\frac{1}{2}$, $f_2\neq 0$ and the equality of \eqref{eq:p114Kps} holds. Then by \cite[Lemma 33]{Liu18} we know that $E$ minimizes the normalized volume function at the singularity $x\in (X,c_1 D)$. So \cite[Theorem 1.2]{LX16} implies that $(E,\Delta_E)\cong (\bP_{[x,y]}^1, c_1(f_2(x,y)=0))$ is a K-semistable Koll\'ar component. Thus $f_2$ is a non-degenerate quadratic form in $(x,y)$, and after a suitable choice of projective coordinates of $\bP(1,1,4)$ we may assume that $f_2(x,y)=xy$. By taking the same $1$-PS $\lambda$ as before, we see $(X,c_1D)$ specially degenerates to $(\bP(1,1,4), c_1 Q_d')$. Thus $(X,c_1 D)\cong (\bP(1,1,4), c_1 Q_d')$ since they are both K-polystable. We finish the proof.
\end{proof}

\begin{cor}\label{cor:Q_dps}
The plane curve $Q_d$ is GIT polystable.
\end{cor}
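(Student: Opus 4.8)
The plan is to derive the corollary from Lemma \ref{lem:Q_d'Kps} together with the Paul--Tian criterion already established in Proposition \ref{prop:P^2-paultian}. Recall from Lemma \ref{lem:Q_d'Kps} that the log Fano pair $(\bP^2, c_1 Q_d)$ is K-semistable, with K-polystable degeneration $(\bP(1,1,4), c_1 Q_d')$. By Proposition \ref{prop:P^2-paultian}, since $(\bP^2, c_1 Q_d)$ is K-semistable, the plane curve $Q_d$ is GIT semistable. So the only thing left to show is that $Q_d$ lies in a \emph{closed} orbit inside the GIT semistable locus $\bfP_d^{\mathrm{ss}}$, i.e.\ that $Q_d$ is GIT polystable and not merely GIT semistable.

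First I would recall the classical structure of GIT for plane curves: the $\mathrm{S}$-equivalence class of a GIT semistable curve contains a unique closed orbit, whose representative is GIT polystable. So it suffices to identify the GIT polystable curve $C_0$ in the $\mathrm{S}$-equivalence class of $Q_d$ and show $C_0 = g\cdot Q_d$ for some $g\in\PGL(3)$. The natural candidate is obtained as follows: take a one-parameter family $\{C_t\}_{t\in T}$ of smooth plane curves of degree $d$ degenerating to $Q_d$ as $t\to 0$ (such a family exists since $Q_d$ is a limit of smooth curves — it is reduced when $d$ is odd, and when $d$ is even one perturbs the double conic). By properness of the K-moduli space (Theorem \ref{thm:compactness}), after a finite base change $(\bP^2, c_1 C_t)$ admits a K-polystable limit, which by Lemma \ref{lem:Q_d'Kps} and the uniqueness of K-polystable degeneration \cite{LWX18} must be $(\bP(1,1,4), c_1 Q_d')$. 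On the other hand, running GIT semistable reduction on the same family $\{C_t\}$ gives a GIT polystable limit $C_0$ in $\bfP_d$; by separatedness of the GIT quotient $C_0$ is $\mathrm{S}$-equivalent to $Q_d$, and by Theorem \ref{thm:Kss-spdeg} (K-semistability descends under special degeneration) the pair $(\bP^2, c_1 C_0)$ is K-semistable as well.

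The key point then is to rule out the possibility that the GIT polystable representative $C_0$ is a \emph{strict} degeneration of $Q_d$ — that is, that $C_0$ is not projectively equivalent to $Q_d$ itself. Suppose for contradiction that $C_0 \ne g\cdot Q_d$ for all $g$. Then there is a non-trivial $1$-PS $\lambda$ of $\PGL(3)$ with $\lim_{t\to 0}\lambda(t)\cdot Q_d = C_0$, inducing a non-product test configuration of $(\bP^2, c_1 Q_d)$ with central fiber $(\bP^2, c_1 C_0)$ of vanishing generalized Futaki invariant (the Futaki invariant is a positive multiple of the GIT weight by Proposition \ref{prop:Fut=CMwt}, and it is zero because $Q_d$ and $C_0$ are GIT semistable with $C_0$ in the $\mathrm{S}$-equivalence class). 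By Theorem \ref{thm:Kss-spdeg} again this forces $(\bP^2, c_1 C_0)$ to be K-semistable; since it is not K-polystable (its orbit is not closed in the K-moduli sense — its K-polystable degeneration is the strictly-better-degenerated $(\bP(1,1,4), c_1 Q_d')$, but a \emph{further} degeneration of a non-product t.c.\ to the same point would contradict uniqueness of the K-polystable limit), this is the desired contradiction. More directly: both $Q_d$ and $C_0$ have the same K-polystable degeneration $(\bP(1,1,4), c_1 Q_d')$, so the broken-orbit / uniqueness argument as in the proof of Theorem \ref{thm:firstwallbefore} shows $C_0$ and $Q_d$ lie in the same $\PGL(3)$-orbit, whence $Q_d$ is GIT polystable.

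The main obstacle I anticipate is making the last step fully rigorous: one must be careful that "K-polystable degeneration of $(\bP^2, c_1 Q_d)$ equals K-polystable degeneration of $(\bP^2, c_1 C_0)$" really does imply $Q_d$ and $C_0$ are projectively equivalent. This is not automatic — a priori two non-isomorphic GIT-semistable curves could have the same K-polystable limit, which would be fine for the K-moduli space but not settle GIT polystability. The resolution is that if $Q_d$ is \emph{not} GIT polystable, then its GIT polystable limit $C_0$ is a \emph{smooth conic configuration of strictly smaller symmetry type or the same type}; a short explicit check of the possible GIT polystable degenerations of $\frac{d}{2}Q$ (resp.\ $\frac{d-1}{2}Q + L$) — using that any $1$-PS degeneration of a smooth conic $Q$ yields either $Q$ again, a double line, or a pair of lines, and tracking multiplicities — shows the only GIT polystable curve in the closure of the orbit of $Q_d$ with the correct multiplicity data is $Q_d$ itself. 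This is a finite, low-dimensional GIT computation entirely parallel to the monomial/weight analysis already carried out in Lemma \ref{lem:Q_d'Kps}, and I would carry it out by diagonalizing an arbitrary destabilizing $1$-PS and noting that the support must remain a smooth conic (plus a line, when $d$ is odd) transverse as before.
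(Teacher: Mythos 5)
Your proposal takes a genuinely different route from the paper, but the central step does not go through as written, and the fallback you propose is left as a sketch with some imprecise claims.

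Your first attempt at the contradiction is the weak link. You assume a $1$-PS $\lambda$ degenerates $Q_d$ to a properly degenerate $C_0$ with vanishing Futaki invariant, deduce (correctly, via Lemma \ref{lem:zerofut}.2) that $(\bP^2,c_1C_0)$ is K-semistable, and then try to extract a contradiction from uniqueness of the K-polystable limit. But there is no contradiction: having a chain $(\bP^2,c_1Q_d)\rightsquigarrow(\bP^2,c_1C_0)\rightsquigarrow(\bP(1,1,4),c_1Q_d')$ is perfectly consistent with the single-step degeneration $(\bP^2,c_1Q_d)\rightsquigarrow(\bP(1,1,4),c_1Q_d')$; uniqueness concerns the endpoint, not the path. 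Your ``more directly'' version fails for the reason you yourself identify --- two non-isomorphic K-semistable curves can share the same K-polystable degeneration, so equality of K-polystable limits does not force $Q_d$ and $C_0$ into one $\PGL(3)$-orbit. Those K-theoretic attempts therefore establish nothing beyond GIT semistability.

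The fallback you outline --- an explicit analysis of $1$-PS degenerations of $Q_d$ showing every strictly degenerate limit is GIT unstable (or at least not polystable) --- is a legitimate route and is essentially what is needed, but it is only sketched and the sketch has a slip: your final sentence asserts the support ``must remain a smooth conic (plus a transverse line for $d$ odd),'' yet you have already noted that $Q$ can collapse to a double line or a line pair. For the line-support limits you'd have to invoke a GIT instability criterion (e.g.\ via multiplicity), and for $d$ odd you additionally have to dispose of the tangent-line degeneration $\frac{d-1}{2}Q+L'$ with $L'$ tangent to $Q$, which has smooth-conic support of the ``right'' multiplicity type but is nevertheless not polystable because its stabilizer $\Aut(\bP^1,[0])$ is non-reductive; your sketch does not mention this case. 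For contrast, the paper splits on parity: when $d$ is even, $(\bP^2,\epsilon Q_d)$ is K-polystable by \cite[Theorem 1.5]{LS14}, so Proposition \ref{prop:P^2-paultian} immediately gives GIT polystability with no case analysis; when $d$ is odd it argues by contradiction, ruling out the tangent case via non-reductivity and the line-support cases via Theorem \ref{thm:firstwallbefore} and the log canonical bound of \cite{Fuj17b} against the multiplicity $\frac{3}{d}\cdot\frac{d-1}{2}>1$. You miss the short $d$-even argument entirely. So: right destination, but the K-theoretic detour is a dead end, and the GIT sketch needs the tangent-line case and a genuine instability criterion before it is a proof.
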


\begin{proof}
If $d$ is even, we know that $(\bP^2,\epsilon Q_d=\frac{d\epsilon}{2} Q)$ is K-polystable by \cite[Theorem 1.5]{LS14}. Thus $Q_d$ is GIT polystable by Proposition \ref{prop:P^2-paultian}. If $d$ is odd, we know that $(\bP^2, c_1 Q_d)$ is K-semistable by Lemma \ref{lem:Q_d'Kps}. Hence $Q_d$ is GIT semistable by Proposition \ref{prop:P^2-paultian}. Assume to the contrary that $Q_d$ is not GIT polystable. Denote by $C_0$ the GIT polystable plane curve that is S-equivalent to $Q_d$.  If $\Supp(C_0)$ contains a smooth conic $Q$, then $C_0=\frac{d-1}{2}Q+L'$ where $L'$ is a tangent line of $Q$. But then $\Aut(\bP^2,C_0)\cong\Aut(\bP^1,[0])$ is non-reductive, a contradiction. Hence $\Supp(C_0)$ is a union of  lines. By Theorem \ref{thm:firstwallbefore}, we know that $(\bP^2,\epsilon C_0)$ is K-polystable. This implies that $(\bP^2,\frac{3}{d}C_0)$ is log canonical by \cite[Theorem 1.5]{Fuj17b}. But $\frac{3}{d}\cdot\frac{d-1}{2}>1$ since $d\geq 5$, so $(\bP^2,\frac{3}{d}C_0)$ cannot be log canonical, a contradiction. The proof is finished.
\end{proof}

We present the proof of part (2) of Theorem \ref{mthm:firstwall} as follows.

\begin{thm}[First wall crossing 2]\label{thm:firstwallon}
 We follow Notation \ref{not:firstwall}.
 A log Fano pair $(X,c_1 D)$ is a K-polystable point of $\oP_{d,c_1}^{\K}$ if and only if either $X\cong\bP^2$ and $D$ is a GIT polystable plane curve not projectively equivalent to $Q_d$, or $(X,D)\cong (\bP(1,1,4), Q_d')$. Moreover, there is an open immersion $\Phi^-:\ocP_d^{\GIT} = \ocP^\K_{d, c_1-\epsilon}\hookrightarrow \ocP_{d,c_1}^{\K}$ which descends to an isomorphism of good moduli spaces $\phi^-: \oP_d^{\GIT} = \oP^\K_{d, c_1-\epsilon} \xrightarrow{\cong} \oP^\K_{d,c_1}$.
\end{thm}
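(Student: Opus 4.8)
The plan is to combine the structural results of Section \ref{sec:construction} (especially Theorems \ref{thm:logFano-wallcrossing} and \ref{thm:localVGIT}) with the explicit index bound of Theorem \ref{thm:localindex} and the $\bP(1,1,4)$ computations of Lemma \ref{lem:Q_d'Kps} and Corollary \ref{cor:Q_dps}. First I would classify the K-polystable points of $\oP_{d,c_1}^{\K}$. If $(X,c_1 D)$ is such a point, then Theorem \ref{thm:localindex} bounds the local Gorenstein indices of $X$ by $\lfloor \beta^{-1}\rfloor$ with $\beta = 1-c_1 d/3$; a direct computation (as in the proof of Theorem \ref{thm:firstwallbefore}, now allowing equality in the estimate at $c = c_1$) shows that this index is at most $2$ when $d$ is even and at most $3$ when $d$ is odd, so $X \cong \bP^2$ or $X \cong \bP(1,1,4)$. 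When $X\cong\bP^2$, Proposition \ref{prop:P^2-paultian} forces $D$ to be GIT polystable; and $D$ cannot be projectively equivalent to $Q_d$, since by Lemma \ref{lem:Q_d'Kps} the pair $(\bP^2, c_1 Q_d)$ is strictly K-semistable with K-polystable degeneration $(\bP(1,1,4), c_1 Q_d')\not\cong (\bP^2, c_1 Q_d)$. When $X \cong \bP(1,1,4)$, Lemma \ref{lem:Q_d'Kps} says $(X,D) \cong (\bP(1,1,4), Q_d')$. Conversely, each of these pairs lies in $\oP_{d,c_1}^{\K}$: a GIT polystable $C$ not equivalent to $Q_d$ is K-polystable for $c_1$ by the argument of Theorem \ref{thm:firstwallbefore} combined with the fact that the only GIT polystable curve whose K-polystable replacement leaves $\bP^2$ is $Q_d$ (this is exactly what the index computation above rules out for the others), and $(\bP(1,1,4), c_1 Q_d')$ is K-polystable by Lemma \ref{lem:Q_d'Kps}. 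This proves the first sentence.

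For the morphism $\Phi^-$, I would invoke Theorem \ref{thm:logFano-wallcrossing}: taking $i=1$ with $c_1$ the first wall, we obtain canonically the open immersion $\Phi_1^- : \cK\cM_{\chi_0,d/3,c_1-\epsilon}\hookrightarrow \cK\cM_{\chi_0,d/3,c_1}$, which under the identification $\ocP^{\K}_{d,c_1-\epsilon}\cong\ocP_d^{\GIT}$ of Theorem \ref{thm:firstwallbefore} is the desired $\Phi^-$. (One must check that $c_1$ is genuinely a wall, i.e.\ that $\ocP^{\K}_{d,c}$ for $c<c_1$ differs from $\ocP^{\K}_{d,c_1+\epsilon}$; this follows since $\bP(1,1,4)$ appears in the latter but not the former, but in any case it is harmless if $c_1$ is not a wall, as then $\Phi^-$ is an isomorphism directly.) It remains to show that $\phi^-$ is an isomorphism of good moduli spaces. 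Since $\ocP_d^{\GIT}$ and $\ocP^{\K}_{d,c_1}$ have isomorphic smooth loci (both contain $\cP_d$ as a dense open, and by Proposition \ref{prop:Zplanecurve} the K-moduli space is a normal proper variety), $\phi^-$ is a birational morphism between normal proper varieties which is an open immersion on the image of $\Phi^-$; by \cite[Proposition 6.4]{alper} it suffices to show $\Phi^-$ is saturated, equivalently that every K-polystable point of $\ocP^{\K}_{d,c_1}$ lies in the image. But the only K-polystable point not of the form $(\bP^2,\text{GIT polystable }C)$ is $(\bP(1,1,4), c_1 Q_d')$, and this is the K-polystable degeneration of $(\bP^2, c_1 Q_d)$ by Lemma \ref{lem:Q_d'Kps}, where $Q_d$ is GIT polystable by Corollary \ref{cor:Q_dps}; hence $[Q_d]\in\oP_d^{\GIT}$ and $[(\bP(1,1,4),c_1 Q_d')]$ have the same image under the good moduli space maps, so $\phi^-$ is surjective and therefore an isomorphism.

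\textbf{Main obstacle.} The delicate point is the saturatedness of $\Phi^-$, i.e.\ ensuring that the open substack $\ocP^{\K}_{d,c_1-\epsilon}\cong\ocP_d^{\GIT}$ inside $\ocP^{\K}_{d,c_1}$ contains the whole fiber of the good moduli space map over each of its points — which amounts precisely to the statement that $[Q_d]$, though it parametrizes a strictly semistable $\bP^2$-curve for the coefficient $c_1$, still has its $S$-equivalence class (now including the $\bP(1,1,4)$-pair) captured on the good moduli space level. This hinges entirely on identifying the K-polystable degeneration of $(\bP^2, c_1 Q_d)$ explicitly as $(\bP(1,1,4), c_1 Q_d')$ and on knowing $Q_d$ is GIT polystable, both of which are provided by Lemma \ref{lem:Q_d'Kps} and Corollary \ref{cor:Q_dps}; once these are in hand the rest is formal, using normality and properness of the K-moduli spaces together with \cite[Proposition 6.4]{alper}. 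A secondary point to be careful about is handling the odd-$d$ case of the index bound, where $\lfloor\beta^{-1}\rfloor = 3$ is allowed and one must separately exclude all degree-$2d$ curves on $\bP(1,1,4)$ other than $Q_d'$ — this is exactly the content of the last paragraph of the proof of Lemma \ref{lem:Q_d'Kps}, which I would cite rather than reprove.
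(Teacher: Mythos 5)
There are two gaps in your argument, both stemming from an attempt to establish the classification of K-polystable points before establishing the isomorphism $\phi^-$, whereas the logical dependence runs the other way.

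\textbf{Gap 1: the converse direction of the classification.} You assert that a GIT polystable $C$ not projectively equivalent to $Q_d$ yields a K-polystable pair $(\bP^2, c_1 C)$, ``by the argument of Theorem~\ref{thm:firstwallbefore} combined with the fact that the only GIT polystable curve whose K-polystable replacement leaves $\bP^2$ is $Q_d$ (this is exactly what the index computation above rules out for the others).'' But the index bound (Theorem~\ref{thm:localindex}) only constrains which \emph{surfaces} can appear; it says nothing about which GIT polystable curve degenerates onto $\bP(1,1,4)$. A priori, some $C'\not\cong Q_d$ could be strictly K-semistable with K-polystable replacement $(\bP(1,1,4), c_1 Q_d')$, the same as $Q_d$'s. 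Ruling this out requires the injectivity of $\phi^-$, which you have not yet established at this point. The paper's proof avoids this by establishing the isomorphism first and then deriving the classification: if $(\bP^2,c_1 C)$ had K-polystable degeneration on $\bP(1,1,4)$ it would necessarily equal $(\bP(1,1,4),c_1 Q_d')$ by Lemma~\ref{lem:Q_d'Kps}, whence $\phi^-([C])=\phi^-([Q_d])$, contradicting injectivity.

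\textbf{Gap 2: the saturatedness claim is false, and surjectivity alone does not suffice.} You state that $\phi^-$ being an isomorphism reduces to $\Phi^-$ being saturated, ``equivalently that every K-polystable point of $\ocP^{\K}_{d,c_1}$ lies in the image.'' But the image of $\Phi^-$ is the open substack parametrizing pairs $(\bP^2, c_1 C)$, and the closed point $(\bP(1,1,4), c_1 Q_d')$ lies in the fiber of the good moduli space map over $\phi^-([Q_d])$ yet is \emph{not} in this image. So $\Phi^-$ is not saturated; in fact the failure of saturatedness is exactly what produces the wall at $c_1$. What you actually conclude at the end of that paragraph is surjectivity of $\phi^-$, which is automatic (a morphism from a proper variety that is birational is surjective) and does not give injectivity. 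The paper instead proves $\phi^-$ is an isomorphism by a direct argument: since $\SL(3)$ has no non-trivial characters, the chain of injections $\Pic(\bfP_d^{\rm ss}\sslash\SL(3))\hookrightarrow\Pic_{\SL(3)}(\bfP_d^{\rm ss})\hookrightarrow\Pic(\bfP_d^{\rm ss})$ together with $\Pic(\bfP_d)\twoheadrightarrow\Pic(\bfP_d^{\rm ss})$ gives $\rho(\oP_d^{\GIT})=1$, and a birational morphism from a normal projective variety of Picard number one cannot contract anything. This is the key ingredient your argument is missing; once $\phi^-$ is known to be an isomorphism, both directions of the classification follow formally from Lemma~\ref{lem:Q_d'Kps} and Corollary~\ref{cor:Q_dps} as you describe.
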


\begin{proof}
We first show that $\phi^-$ is an isomorphism. It is clear that $\phi^-$ is a birational morphism between normal proper varieties since $P_d$ is a common open subset of  $\oP_d^{\GIT}$ and $\oP_{d,c_1}^{\K}$ by Proposition \ref{prop:Zplanecurve}. Indeed, we will show the Picard number $\rho(\oP_d^{\GIT})$ is one. Since $\SL(3)$ has no non-trivial characters, there are injections 
\[
\Pic(\bfP_d^{\rm ss} \sslash \SL(3)) \hookrightarrow \Pic_{\SL(3)}(\bfP_d^{\rm ss})\hookrightarrow\Pic(\bfP_d^{\rm ss})
\]
by \cite[Proposition 4.2 and \S 2.1]{picGIT}). Since we have a surjection from $\Pic(\bfP_d)\cong\bZ$ to $\Pic(\bfP_d^{\rm ss})$, we know that $\rho(\oP_d^{\GIT})=1$. Thus $\phi^-$ is an isomorphism between good moduli spaces.

Let $C$ be a GIT polystable plane curve not projectively equivalent to $Q_d$. Let $(X,cD)$ be the K-polystable degeneration of $(\bP^2, cC)$. From the index estimate in the proof of Theorem \ref{thm:firstwallbefore}, we know that $X$ is isomorphic to either $\bP^2$ or $\bP(1,1,4)$.
If $X$ is isomorphic to $\bP(1,1,4)$, then $D$ has to be $Q_d'$ by Lemma \ref{lem:Q_d'Kps}. Thus $\phi^-([C])=\phi^-([Q_d])$ which contradicts to the injectivity of $\phi^-$. Thus $X\cong\bP^2$ hence $(X,cD)\cong (\bP^2,cC)$ by GIT polystability of $C$. The proof is finished by Lemma \ref{lem:Q_d'Kps}.
\end{proof}



\subsection{After the first wall}\label{sec:firstwall2}

In this section, we will show that the K-moduli stack $\ocP_{d,c_1+\epsilon}^{\K}$ is isomorphic to a  Kirwan type weighted blow up of the GIT moduli stack.

\begin{thm}[First wall crossing 3]\label{thm:firstwallafter}
Let $\Phi^+:\ocP_{d,c_1+\epsilon}^{\K}
\to \overline{\calP}^\K_{d,c_1}$ be the latter morphism in the first wall crossing. Then there exists a stacky weighted blow up morphism $\rho:\ocP_{d,c_1+\epsilon}^{\K}\to \ocP_{d,c_1-\epsilon}^{\K}=\ocP_d^{\GIT}$ along $\{[Q_d]\}$ (see Definition \ref{def:weightedstackyblowup}) such that $\Phi^+=\Phi^-\circ\rho$.  
In particular, we have
\begin{enumerate}
\item The descent morphism  $\varrho=(\phi^{-})^{-1}\circ \phi^+ :\oP_{d,c_1+\epsilon}^{\K}
\to \oP^\K_{d,c_1-\epsilon}=\oP_d^{\GIT}$ of $\rho$ between good moduli spaces is a weighted blowup of the point $[Q_d]$.
\item If $d$ is even, then $\varrho$ is a partial desingularization of Kirwan type.
\end{enumerate}
\end{thm}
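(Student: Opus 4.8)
\textbf{Proof strategy for Theorem \ref{thm:firstwallafter}.} The plan is to leverage the local VGIT description of the wall crossing (Theorem \ref{thm:wallcrossingchart}) together with the fact, already established in Theorems \ref{thm:firstwallbefore} and \ref{thm:firstwallon}, that the only K-semistable point in $\ocP_{d,c_1}^{\K}$ whose surface is not $\bP^2$ is the pair $(\bP(1,1,4), c_1 Q_d')$, and this pair is the K-polystable degeneration of $(\bP^2, c_1 Q_d)$. Thus the entire wall-crossing action is concentrated over the single point $[Q_d]\in \oP_d^{\GIT}$: away from this point, $\Phi^-$ and $\Phi^+$ are both isomorphisms onto the same open substack. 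So it remains to analyze the local picture near $z_0 = \Hilb(\bP(1,1,4), c_1 Q_d')$. First I would identify the reductive stabilizer $G_{z_0} = \Aut(\bP(1,1,4), Q_d')$: when $d$ is even this is (up to finite central extension) $\GL(2)$ acting on the $x,y$-coordinates together with the torus scaling $z$; when $d$ is odd the conic-plus-line structure breaks the symmetry to a torus times a finite group. Then I would describe the slice $W = W_1 \oplus W_2$ at $z_0$ as a $G_{z_0}$-representation, using the deformation theory of $\bP(1,1,4)$ (one-parameter $\bQ$-Gorenstein smoothing to $\bP^2$, cf. Proposition \ref{prop:manetti}) for the surface directions and the normal bundle of $Q_d'$ for the divisor directions.

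The key computation is the following: on the $c_1 - \epsilon$ side, Theorem \ref{thm:localVGIT} says the GIT with linearization twisted by $\rho_{M_2}^{-\epsilon}$ produces exactly the GIT quotient chart near $[Q_d]$, which by Theorem \ref{thm:firstwallon} is just a neighborhood of the point $[Q_d]$ in $\oP_d^{\GIT}$; on the $c_1 + \epsilon$ side, the oppositely-twisted linearization produces the chart of $\oP_{d,c_1+\epsilon}^{\K}$. The ``semistable-but-not-polystable'' locus in $U_W$ on the minus side is the locus of pairs specially degenerating to $(\bP(1,1,4), c_1 Q_d')$ via the smoothing direction, i.e. the orbit closure of the smoothing deformation; crossing the wall removes this locus and replaces it with the exceptional locus on which the weighted blow up is supported. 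Concretely I would show that the VGIT wall crossing $U_W^- \sslash G_{z_0} \leftarrow U_W \sslash G_{z_0} \rightarrow U_W^+ \sslash G_{z_0}$ is, after identifying $U_W^-\sslash G_{z_0}$ with an affine chart of $\oP_d^{\GIT}$ at $[Q_d]$, precisely the weighted blow up of that chart along $[Q_d]$ with weights determined by the $G_{z_0}$-weights of $M_2$ on the smoothing direction versus the divisor directions. This gives the stacky weighted blow up morphism $\rho$ of Definition \ref{def:weightedstackyblowup}, and the factorization $\Phi^+ = \Phi^-\circ\rho$ holds because away from $[Q_d]$ all three stacks agree and $\rho$ is an isomorphism there, while over $[Q_d]$ the factorization is the content of the local VGIT Cartesian diagram \eqref{eq:localVGIT}. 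Part (1) is then the descent of this picture to good moduli spaces via that same diagram.

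For part (2), when $d$ is even the stabilizer $G_{z_0}$ contains the semisimple part $\SL(2)$ (acting on $x,y$), and I would check that the slice representation $W$ has the property that the generic point of $U_W^+$ has finite stabilizer — i.e. the VGIT on the plus side stabilizes the orbits that were strictly semistable with positive-dimensional stabilizer on the minus side. This is exactly the defining feature of a partial desingularization of Kirwan type: one blows up the deepest stratum (here the orbit of $[Q_d]$, with stabilizer containing $\bG_m$ or larger, destabilized by the smoothing $1$-PS) and replaces it by a quotient in which the stabilizers drop. I would verify the stabilizer-drop by the explicit computation that for a general curve of the form $(xyz^{(d-2)/2}\cdots)$... more precisely for the pairs $(\bP(1,1,4), D)$ parametrized by the exceptional divisor $E^+$ of $\varrho$ (whose general member is described in the tables, e.g. the degree-$2d$ curve with nonzero $f_4$-type term), the automorphism group is finite. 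The odd case does \emph{not} give a Kirwan-type resolution precisely because $G_{z_0}$ is already a torus and the blow up need not drop stabilizer dimension; this is why the statement restricts to $d$ even.

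\textbf{Main obstacle.} The hard part will be making the slice representation $W$ at $z_0 = \Hilb(\bP(1,1,4), c_1 Q_d')$ fully explicit as a $G_{z_0}$-module and reading off the VGIT weights, in particular pinning down the weights of the CM-related line bundle $M_2$ (equivalently the Hodge/$M_2$ character $\rho_{M_2}$) along the one-dimensional $\bQ$-Gorenstein smoothing direction versus along the divisor-moving directions — these weights determine the weights of the weighted blow up, and getting them wrong would give the wrong exceptional locus. A secondary subtlety is the passage from the \emph{stacky} weighted blow up on quotient stacks to the \emph{weighted blow up of a point} on the coarse good moduli spaces: one must check the saturatedness and stabilizer-preserving statements (Lemmas \ref{lem:stabpreserve}, \ref{lem:luna}) apply uniformly in a neighborhood of $[Q_d]$, and that the descent of the exceptional divisor is irreducible of the expected weighted-projective type (which is where the analysis of GIT for curves on $\bP(1,1,4)$, Definition \ref{defn:gitp114} and Theorem \ref{thm:k=gitp114}, feeds in to identify $U_W^+\sslash G_{z_0}$ with a weighted projective cone chart).
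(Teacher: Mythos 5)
Your proposal takes a genuinely different route from the paper, and there is a real gap in it. You propose to extract the weighted blow-up structure from the \emph{local VGIT} chart of Theorem \ref{thm:wallcrossingchart}, i.e.\ from the Luna slice $U_W$ at $z_0 = \Hilb(\bP(1,1,4),Q_d')$ inside $Z_{c_1}^{\red}$ with stabilizer $G_{z_0}=\Aut(\bP(1,1,4),Q_d')$. The paper instead takes the Luna slice at $[Q_d]\in\bfP_d^{\rm ss}$ \emph{inside the GIT parameter space}, with stabilizer $\Aut(\bP^2,Q_d)$, and uses the explicit $\SL(2)$-decomposition of $\Sym^d\bfV$ (Lemmas \ref{lem:firstwallluna1}, \ref{lem:firstwallluna2}) to read off the weights. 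These are different slices, with different stabilizers (for $d$ even, $\Aut(\bP^2,Q_d)$ is three-dimensional while $\Aut(\bP(1,1,4),Q_d')$ is four-dimensional), so the identification "$U_W^-\sslash G_{z_0}$ is an affine chart of $\oP_d^{\GIT}$ at $[Q_d]$" that your argument hinges on is a nontrivial Luna-comparison you do not supply, and it is not the comparison the paper uses.

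More importantly, the statement you are proving is a statement about Artin \emph{stacks}: the morphism $\rho:\ocP^{\K}_{d,c_1+\epsilon}\to\ocP_d^{\GIT}$ is required to be a stacky weighted blow up in the sense of Definition \ref{def:weightedstackyblowup}, and to satisfy $\Phi^+ = \Phi^-\circ\rho$ as morphisms of stacks. Local VGIT on good moduli spaces can at best control the coarse picture near $[Q_d]$; it does not, by itself, produce the global $\SL(3)$-equivariant blow-up on $\bfP_d^{\rm ss}$ nor the family of log Fano pairs over the blow-up that is needed to induce a map into $\ocP^{\K}_{d,c_1+\epsilon}$ via the moduli functor. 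This is precisely what Theorem \ref{thm:firstwall1} supplies: it blows up the universal family along the locus over $\cE$, contracts the strict transform via the relative base point free theorem to get a family with fibers $\bP(1,1,4)$ over $\cE$ and $\bP^2$ off $\cE$, and checks the restriction to the Luna slice matches the universal family of Definition \ref{defn:gitp114}. You need an analogue of this construction (or some other way to make $\rho$ a morphism of stacks respecting the universal families) before part (1) and the factorization make sense; your VGIT-local argument would at most give the underlying map of coarse spaces. The paper then closes the loop via Theorem \ref{thm:firstwall2}, using Theorem \ref{thm:k=gitp114} to identify closed points, Zariski's Main Theorem, and Alper's finiteness criterion to show $[\hcU^{\rm ss}/\PGL(3)]\cong\ocP^{\K}_{d,c_1+\epsilon}$; you gesture at Theorem \ref{thm:k=gitp114} only in the final sentence, without a substitute for this step. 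The local-VGIT heuristic you outline is a good way to \emph{predict} the weights and the Kirwan-type behavior, but as written it is not a proof of the theorem at the level of stacks.
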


The proof of this theorem will be split up into a few parts. Before we analyze the stack structure of $\ocP_{d,c_1+\epsilon}^{\K}$, we first give a complete description of its closed points. 

\begin{defn}\label{defn:gitp114} We define GIT stability for certain curves on $\bP(1,1,4)$.
\begin{enumerate}
\item Assume $d$ is even. Given a curve $D$ in $\bP(1,1,4)$ of degree $2d$ with equation
\begin{equation}\label{eq:even-p114}
z^{d/2}+f_8(x,y) z^{(d-4)/2}+f_{12}(x,y) z^{(d-6)/2}+\cdots+f_{2d}(x,y)=0,
\end{equation}
we identify $D$ to a point $(f_8, f_{12},\cdots, f_{2d})$ in the vector space $\bfA_d'\cong \oplus_{j=2}^{d/2} H^0(\bP^1, \cO_{\bP^1}(4j))$.
Consider the $\bG_m$-action $\sigma$ on $\bfA_d'$ with weight $j$ in each direct summand $H^0(\bP^1,\cO_{\bP^1}(4j))$. 
Let $\bfP_d'$ be the weighted projective space as the coarse moduli space of the quotient stack $[(\bfA_d'\setminus\{0\})/\bG_m]$. There is a natural $\SL(2)$-action on $\bfA_d'$ induced by the usual $\SL(2)$-action on $H^0(\bP^1,\cO_{\bP^1}(1))=\bC x\oplus\bC y$. Since this $\SL(2)$-action commutes with the previous $\bG_m$-action, it descends to an $\SL(2)$-action on $(\bfP_d',\cO_{\bfP_d'}(1))$. We say $[D]\in \bfP_d'$ is \emph{GIT (poly/semi)stable} if it is GIT (poly/semi)stable with respect to this $\SL(2)$-action on $(\bfP_d',\cO_{\bfP_d'}(1))$.
\item Assume $d$ is odd. Suppose $D$ is a curve in $\bP(1,1,4)$ of degree $2d$ with equation
\[
xyz^{(d-1)/2}+f_6(x,y)z^{(d-3)/2}+f_{10}(x,y)z^{(d-5)/2}+\cdots+f_{2d}(x,y)=0,
\]
where $f_6(x,y)$ contains no monomial divisible by $xy$. Then we identify $D$ to a point $(f_6,f_{10},\cdots, f_{2d})$ in the vector space \[\bfA_d':= V_1\oplus \oplus_{j=2}^{(d-1)/2}H^0(\bP^1,\cO_{\bP^1}(4j+2)),\] where $V_1:=\bC x^6\oplus\bC y^6$ is a sub vector space of $H^0(\bP^1, \cO_{\bP^1}(6))$. Consider the $\bG_m$-action $\sigma$ on $\bfA_d'$ with weight $1$ on $V_1$ and weight $j$ on each direct summand $H^0(\bP^1, \cO_{\bP^1}(4j+2))$. Let $\bfP_d'$ be the weighted projective space which is the coarse moduli space of the quotient stack $[(\bfA_d'\setminus\{0\})/\bG_m]$. Consider another $\bG_m$-action $\sigma'$ on $\bfA_d'$ induced by \[\sigma'(t)\cdot f_{4j+2}(x,y)=f_{4j+2}(tx,t^{-1}y)\] for $t\in\bG_m$ and $1\leq j\leq \frac{d-1}{2}$. Since  $\sigma'$ commutes with $\sigma$, it descends to a $\bG_m$-action on $(\bfP_d', \cO_{\bfP_d'}(1))$ which we also denote by $\sigma'$. We say $[D]\in \bfP_d'$ is \emph{GIT (poly/semi)stable} if it is GIT (poly/semi)stable with respect to the $\bG_m$-action $\sigma'$ on $(\bfP_d',\cO_{\bfP_d'}(1))$. 
\end{enumerate}
\end{defn}

    Care is taken in the above definition to define GIT stability for curves on $\PP(1,1,4)$ because the automorphism groups of weighted projective spaces are in general non-reductive.  Recently, the theory of non-reductive GIT and variation of non-reductive GIT has been developed (see e.g. \cite{DoranKirwan, BJK}) which may be useful in similar endeavors.

\begin{thm}\label{thm:k=gitp114}
Let $D$ be a curve on $\bP(1,1,4)$ of degree $2d$ such that $[D]\in\bfP_d'$. Then the pair $(\bP(1,1,4), (c_1+\epsilon)D)$ is K-(poly/semi)stable if and only if $[D]$ is GIT (poly/semi)stable in the sense of Definition \ref{defn:gitp114}.
\end{thm}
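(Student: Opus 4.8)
\textbf{Proof proposal for Theorem \ref{thm:k=gitp114}.}

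The plan is to exploit the wall-crossing machinery of Theorem \ref{thm:logFano-wallcrossing} together with the explicit Luna-slice VGIT presentation at the point $[Q_d']$ (equivalently at the $c_1$-K-polystable pair $(\bP(1,1,4), Q_d')$ coming from Lemma \ref{lem:Q_d'Kps}). The key observation is that, by Theorem \ref{thm:firstwallon}, the unique new point appearing in $\oP^\K_{d,c_1}$ beyond the GIT quotient is $(\bP(1,1,4), c_1 Q_d')$, and its automorphism group is explicitly computable: when $d$ is even it is a reductive group containing $\SL(2)$ (acting on the $[x,y]$ ruling) and the one-parameter group $\lambda(t)([x,y,z]) = [x,y,tz]$, while when $d$ is odd it is the torus generated by $\sigma'$ together with $\lambda$. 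The VGIT chart of Theorem \ref{thm:wallcrossingchart} says that a saturated \'etale neighborhood of $[(\bP(1,1,4), c_1 Q_d')]$ in $\ocP^\K_{d,c_1}$ is of the form $[U_W / G_{z_0}]$ with $G_{z_0} = \Aut(\bP(1,1,4), Q_d')$, and that crossing the wall replaces $U_W\sslash G_{z_0}$ by the VGIT chamber $U_W^+ \sslash G_{z_0}$ obtained by perturbing the linearization by $-\epsilon M_2$. So the strategy is: (i) identify the slice $W$ explicitly with (an open neighborhood of $0$ in) the affine space $\bfA_d'$ of Definition \ref{defn:gitp114}; (ii) identify the $G_{z_0}$-action on $W$ with the action described there; (iii) identify the perturbed linearization with $\cO_{\bfP_d'}(1)$ with its $\SL(2)$- (resp. $\bG_m$-via-$\sigma'$-) linearization; and (iv) conclude via Theorem \ref{thm:localVGIT}.

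First I would set up the slice. The $\bQ$-Gorenstein deformation space of $\bP(1,1,4)$ is, by Proposition \ref{prop:manetti}, unobstructed, and its versal deformation contains the degeneration-to-$\bP^2$ direction (smoothing the $\frac14(1,1)$ singularity) as well as the locus fixing $\bP(1,1,4)$; by Proposition \ref{prop:Zplanecurve}(2) the pair deformations are also unobstructed. The $\Aut(\bP(1,1,4), Q_d')$-fixed part of the deformations of the pair that keep $X = \bP(1,1,4)$ but vary $D$ is precisely the space of degree-$2d$ curves of the normal form written in Definition \ref{defn:gitp114} (the monomials $z^j f_i(x,y)$ with $i \geq 8$, resp. $i \geq 6$ with the $xy$-divisibility condition removed), since the remaining monomials ($z^{d/2}$, $xyz^{(d-1)/2}$, and those producing $f_2$, $f_4$ / $xy$-divisible $f_6$) are either fixed or absorbed into the $\Aut$-orbit using $\lambda$ and $\SL(2)$ — this is exactly the coordinate-slicing argument already carried out in the proof of Lemma \ref{lem:Q_d'Kps}. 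The "smoothing" directions transverse to this, which degenerate $\bP(1,1,4)$ back to $\bP^2$, correspond to the $M_2$-positive-weight directions, and in the VGIT the chamber $U_W^+$ is the one where these get discarded (this is why on the $+$ side only curves genuinely on $\bP(1,1,4)$ survive, matching Theorem \ref{mthm:firstwall}(3)); restricting attention to $\oW \cap \oZ$ and to curves with $[D]\in\bfP_d'$ is precisely passing to this chamber. The weights of $\lambda$ on the monomials $z^{(d-2j)/2}f_{4j}$ (resp. with the odd modifications) are computed directly and match the weights defining the $\bG_m$-action $\sigma$ in Definition \ref{defn:gitp114}, so the weighted-projective structure of $\bfP_d'$ is exactly the GIT quotient by the residual $\bG_m = \langle\lambda\rangle$ after the VGIT chamber is fixed; the residual reductive group acting on $\bfP_d'$ is then $\SL(2)$ (for $d$ even) or $\bG_m$ via $\sigma'$ (for $d$ odd). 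Finally, by Theorem \ref{thm:VGIT-CM-ample} the $(c_1+\epsilon)$-GIT linearization on $U_W^+$ differs from the $c_1$-one by a negative multiple of $M_2$; tracing through the identification of $M_2$ with the Chow/CM bundle and the weighted-projective polarization shows the induced linearization on $\bfP_d'$ is $\cO_{\bfP_d'}(1)$, which is the linearization used in Definition \ref{defn:gitp114}.

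With these identifications in hand, Theorem \ref{thm:localVGIT} gives directly that for $\Hilb(\bP(1,1,4),D) \in U_W^+$ — i.e. for $[D]\in\bfP_d'$ — the pair $(\bP(1,1,4),(c_1+\epsilon)D)$ is K-(poly/semi)stable if and only if $[D]$ is $(c_1+\epsilon)$-GIT (poly/semi)stable in the chart, which by the above is the same as GIT (poly/semi)stability in $\bfP_d'$. One should also check that every curve $D$ with $[D]\in\bfP_d'$ whose pair is K-semistable actually lies in the neighborhood $U_W$: this follows because, by the index bound Theorem \ref{thm:localindex} (as used in the proof of Theorem \ref{thm:firstwallbefore}), any K-semistable limit on the $+$ side with non-smooth surface must have surface $\bP(1,1,4)$, and by properness of K-moduli its K-polystable degeneration is $S$-equivalent into the single slice at $(\bP(1,1,4),c_1 Q_d')$, hence lands in $U_W$ after an element of $\PGL(N+1)$ — combined with saturatedness of $U_W$ this covers all such $D$.

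The main obstacle I expect is step (iii): pinning down precisely which multiple/twist of the Chow and CM line bundles $M_1, M_2$ restricts to $\cO_{\bfP_d'}(1)$ on the weighted projective space, including getting the weighted-$\bG_m$ character $\rho_{(X,D)}$ and $\rho_{M_2}$ right so that the VGIT-perturbed linearization is exactly the one in Definition \ref{defn:gitp114} and not some other fractional twist (which would change which boundary strata are semistable). This is a concrete but delicate weight computation on $H^0(\bP(1,1,4), \cO(2d))$, analogous to the CM-line-bundle intersection computation in Proposition \ref{prop:P^2-paultian} but on the singular surface; it is essentially bookkeeping once one fixes conventions, but it is where all the content of matching "K-moduli after the wall" with "the specific GIT problem on $\bfP_d'$" is concentrated. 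A secondary subtlety is the odd-$d$ case, where one must verify that the $xy$-divisibility normalization on $f_6$ is exactly what is needed to rigidify the $\SL(2)$-action down to the torus $\sigma'$ (the stabilizer of $Q_d' = (xyz^{(d-1)/2}=0)$ inside $\SL(2)$ is the diagonal torus, and the normal form is the Luna slice for that torus) — this is a short orbit/slice computation but needs to be stated carefully.
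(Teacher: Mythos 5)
Your proposal takes a genuinely different route from the paper's. The paper proves the ``only if'' direction by applying the Paul--Tian criterion (Theorem~\ref{thm:paultian}) to the explicit family over $\bfA_d'$, reducing ampleness of the descended CM $\bQ$-line bundle on $\bfP_d'$ to the observation that the generalized Futaki invariant of the product test configuration induced by $\sigma$ is linear in $c$, negative at $c=0$, and zero at $c=c_1$ (by K-polystability of $(\bP(1,1,4),c_1 Q_d')$). It proves the ``if'' direction by degenerating a GIT polystable $[D_0]$ to a K-polystable limit using properness (Theorem~\ref{thm:compactness}), forcing the limiting surface to be $\bP(1,1,4)$ via the index bound, and invoking separatedness of GIT. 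You instead want to invoke the local VGIT machinery (Theorems~\ref{thm:wallcrossingchart}, \ref{thm:localVGIT}) at the closed point $z_0=\Hilb(\bP(1,1,4),c_1 Q_d')$ and identify that abstract GIT problem with Definition~\ref{defn:gitp114}. This is a legitimate alternative strategy, and it has the conceptual appeal of handling both directions at once via the VGIT equivalence; but as written it has two real gaps.

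First, step (i) --- identifying the K-moduli Luna slice $U_W$ with $\bfA_d'$ --- is not actually carried out, and the reference you give for the ``coordinate-slicing argument'' is misplaced: Lemma~\ref{lem:Q_d'Kps} proves K-polystability of $(\bP(1,1,4),c_1 Q_d')$ via degeneration to the normal cone and normalized volumes; it contains no Luna-slice computation. The explicit slices the paper does compute (Lemmas~\ref{lem:firstwallluna1} and~\ref{lem:firstwallluna2}) are Luna slices in $\bfP_d^{\rm ss}$ at $[Q_d]$, for the group $\SL(3)$ with stabilizer $\Aut(\bP^2,Q_d)$. That is a \emph{different} slice from the one Theorem~\ref{thm:wallcrossingchart} produces: the VGIT chart lives in the Hilbert scheme $\bH^{\bm\chi;N}$ of $m$-th Veronese embeddings of pairs, the base point is $z_0=\Hilb(\bP(1,1,4),Q_d')$ rather than $[Q_d]$, and the stabilizer $G_{z_0}=\Aut(\bP(1,1,4),Q_d')$ is strictly larger than $\Aut(\bP^2,Q_d)$ (it contains the extra $\bG_m$ scaling $z$). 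Transporting the slice from one picture to the other is the content of the Kirwan partial desingularization (Theorems~\ref{thm:firstwall1} and~\ref{thm:firstwall2}) --- which the paper establishes \emph{after} and \emph{using} Theorem~\ref{thm:k=gitp114} and Theorem~\ref{thm:firstwallps}. So you cannot simply borrow it here without checking that your chain of dependencies does not loop back through the statement you are trying to prove. A direct $\bQ$-Gorenstein deformation computation of the slice at $z_0$ is certainly doable, but it is not a ``restatement'' of anything already in the paper and would need to be done from scratch.

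Second, step (iii) --- matching the perturbed linearization $\rho\otimes\rho_{(X,D)}^{-1}\otimes\rho_{M_2}^{\otimes-\epsilon}$ with the $\SL(2)$- (resp.\ $\sigma'$-) linearization of $\cO_{\bfP_d'}(1)$ --- is exactly the weight/Futaki computation that constitutes the whole of the paper's ``only if'' argument, and you acknowledge not carrying it out. This means the VGIT route does not save the key computation: you would still need to compute $\sigma$-weights on $\lambda_{\CM}$ and verify the sign change at $c_1$, just packaged differently. Given that, the paper's approach is arguably more economical, because its ``if'' direction (properness plus index bound plus uniqueness of K-polystable limits) avoids having to establish the full stack-level identification $[U_W^+/G_{z_0}]\hookrightarrow\ocP^\K_{d,c_1+\epsilon}$ that your route would require. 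The saturatedness/covering remark at the end of your proposal is sound in spirit (and is also a step the paper has to take, implicitly, in Theorem~\ref{thm:firstwall2}), but it sits downstream of the unverified slice identification.

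In short: the strategy is coherent and one could probably make it work, but as presented it conflates two different Luna slices and defers the critical linearization computation, which is precisely where the theorem's content lives.
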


\begin{proof}
We first prove the ``only if'' part. Let $\pi:(\bP(1,1,4)\times \bfA_d',\cD)\to \bfA_d'$ be the universal family of pairs over $\bfA_d'$ where the fiber of $\pi$ over each point $D$ of $\bfA_d'$ is $(\bP(1,1,4),D)$. Then the $\bG_m$-action $\sigma$ on $\bfA_d'$ has a natural lifting to the universal family, which we also denote by $\sigma$, namely $\sigma (t)\cdot ([x,y,z],D)=([x,y,tz],\sigma(t)\cdot D)$. Hence by quotienting out $\sigma$ we obtain a $\bQ$-Gorenstein family of log Fano pairs over the Deligne-Mumford stack  $[(\bfA_d'\setminus\{0\})/\bG_m]$. The CM $\bQ$-line bundle $\lambda_{\CM,\pi, c\cD}$ on $\bfA_d'$ also descends to a $\bQ$-line bundle on $\bfP_d'$ which we denote by $\Lambda_{c}$. By Theorem \ref{thm:paultian}, it suffices to show that $\Lambda_{c_1+\epsilon}$ is ample. Since $\lambda_{\CM,\pi,c\cD}$ is a trivial $\bQ$-line bundle over $\bfA_d'$, the degree of $\Lambda_{c}$ is equal to the $\sigma$-weight of the central fiber $\lambda_{\CM,\pi,c\cD}\otimes\bC(0)$. By Proposition \ref{prop:Fut=CMwt}, we know that 
\[
\deg\Lambda_c=\Fut((\bP(1,1,4), cQ_d;\cO_{\bP(1,1,4)}(4))\times\bA^1)
\]
where the product test configuration $(\bP(1,1,4), cQ_d;\cO_{\bP(1,1,4)}(4))\times\bA^1$ is induced from the $\bG_m$-action $\sigma$. From the definition, easy computation, and K-polystability of $(\bP(1,1,4), c_1 Q_d)$ we know that $\Fut((\bP(1,1,4), cQ_d;\cO_{\bP(1,1,4)}(4))\times\bA^1)$ is linear in $c$, is negative when $c=0$, and zero when $c=c_1$. Hence it is positive when $c=c_1+\epsilon$. As a result, the CM $\bQ$-line bundle $\Lambda_{c_1+\epsilon}$ is ample on $\bfP_d'$. Note that when $d$ is odd, the action $\sigma'$ on $\bfA_d'$ has zero weight on the central fiber $\lambda_{\CM,\pi,c\cD}\otimes\bC(0)$ by a straightforward computation of the generalized Futaki invariant. Thus the $\bG_m$-linearization of a suitable positive power of  $\Lambda_{c_1+\epsilon}$ coincides with the $\bG_m$-linearization on $\cO_{\bfP_d'}(1)$ of $\sigma'$. This completes the proof of the ``only if'' part.

Next we prove the ``if'' part. Since each curve $[D]\in\bfP_d'$ admits a special degeneration to $Q_d$, we know $(\bP(1,1,4), c_1 D)$ is K-semistable by Lemma \ref{lem:Q_d'Kps} and Theorem \ref{thm:Kss-spdeg}. By Bertini's theorem, it is clear that a general curve $D$ in $\bfP_d'$ has at worst a nodal point at the unique singularity of $\bP(1,1,4)$ and smooth elsewhere. Thus for a general curve $D$ we know that $(\bP(1,1,4),\frac{3}{d}D)$ is klt. This implies that $(\bP(1,1,4),(c_1+\epsilon)D)$ is K-stable by Proposition \ref{prop:k-interpolation}. Let $[D_0]\in \bfP_d'$ be a GIT polystable point. From the above argument, we can find a family of curves $[D_t]\in \bfP_d'$ parametrized by a punctured smooth curve $t\in T\setminus\{0\}$ such that $(\bP(1,1,4),(c_1+\epsilon)D_t)$ is K-stable and $\lim_{t\to 0} [D_t]=[D_0]$. Then by properness of K-moduli spaces (i.e. Theorem \ref{thm:compactness}), after a possible finite base change of $T$ we obtain a K-polystable limit $(X,(c_1+\epsilon)D')$ of $(\bP(1,1,4),(c_1+\epsilon)D_t)$ as $t$ goes to $0$. By the index estimate Theorem \ref{thm:localindex}, we know that $X\cong\bP(1,1,4)$. From the continuity of generalized Futaki invariants in $c$, we know $(\bP(1,1,4), c_1 D')$ is K-semistable. Thus $D'$ specially degenerates to $Q_d$ by Lemma \ref{lem:Q_d'Kps}. After a suitable change of coordinates of $\bP(1,1,4)$, we may assume $[D']\in\bfP_{d}'$. Thus the ``only if'' part implies that $[D']$ is a GIT polystable limit of $g_t\cdot [D_t]$ for $g_t\in \SL(2)$ (for even $d$) or $\bG_m$ (for odd $d$). By separatedness of the GIT quotient, we know that $[D']$ and $[D_0]$ lie in the same orbit. Thus $(\bP(1,1,4),(c_1+\epsilon)D_0)$ is K-polystable. The proof regarding K-semistability and K-stability follows from similar arguments as in the proof of Theorem \ref{thm:firstwallbefore}.
\end{proof}

\begin{thm}\label{thm:firstwallps}
Let $[(X, (c_1+\epsilon)D)]$ be a K-polystable point in $\oP_{d,c_1+\epsilon}^{\K}$. Then 
 either $(X,D)\cong (\bP^2, C)$ where $C$ is a GIT polystable plane curve not projectively equivalent to $Q_d$,
 or $(X,D)\cong (\bP(1,1,4), D')$ where $[D']\in\bfP_d'$ is GIT polystable. Conversely, any such pair $(\bP^2, C)$ or $(\bP(1,1,4),D')$ is $(c_1+\epsilon)$-K-polystable.
\end{thm}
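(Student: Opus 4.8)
\textbf{Proof plan for Theorem \ref{thm:firstwallps}.} The plan is to combine the index bound (Theorem \ref{thm:localindex}) with the two local descriptions already established — the $X\cong \bP^2$ case (handled by GIT, i.e.\ Theorem \ref{thm:firstwallbefore} together with Proposition \ref{prop:P^2-paultian} and Proposition \ref{prop:k-interpolation}), and the $X\cong \bP(1,1,4)$ case (handled by Theorem \ref{thm:k=gitp114}) — and to show these two cases exhaust all K-polystable points just above the first wall.

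First I would run the index estimate from the proof of Theorem \ref{thm:firstwallbefore} at the coefficient $c = c_1+\epsilon$. Since $\beta = 1 - c d/3$ is still $>\tfrac12$ when $d$ is even (and, for odd $d$, the sharper analysis in that proof still gives local Gorenstein index $\le 3$), Theorem \ref{thm:localindex} forces the underlying Manetti surface $X$ to be either $\bP^2$ or $\bP(1,1,4)$; the only additional point over Theorem \ref{thm:firstwallbefore} is that now the $\bP(1,1,4)$ case is genuinely allowed because the inequality $A_{(X,cD)}(\ord_E) < S_{(X,cD)}(\ord_E)$ used there to exclude $\bP(1,1,4)$ reverses for $c>c_1$. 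So the dichotomy in the statement is the complete list of possibilities for $X$.

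Next, in the $X\cong\bP^2$ case: if $(\bP^2,(c_1+\epsilon)C)$ is K-polystable then $C$ is GIT polystable by Proposition \ref{prop:P^2-paultian}, and $C$ cannot be projectively equivalent to $Q_d$ because $(\bP^2,(c_1+\epsilon)Q_d)$ degenerates to $(\bP(1,1,4),(c_1+\epsilon)Q_d')$ (this is how $\varrho$ contracts in Theorem \ref{thm:firstwallon}), so $(\bP^2,(c_1+\epsilon)Q_d)$ is K-semistable but \emph{not} K-polystable. For the $X\cong\bP(1,1,4)$ case: any such K-polystable $D$ is $c_1$-K-semistable by continuity of the generalized Futaki invariant in $c$, hence by Lemma \ref{lem:Q_d'Kps} it specially degenerates to $Q_d'$; choosing coordinates accordingly puts $[D]$ in $\bfP_d'$, and Theorem \ref{thm:k=gitp114} then says $[D]$ is GIT polystable in the sense of Definition \ref{defn:gitp114}. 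The converse direction is exactly the ``if'' parts: for $\bP^2$ this is Theorem \ref{thm:firstwallbefore} (at the coefficient $c_1+\epsilon$, using that GIT stability is independent of the small coefficient below $3/d$), and one checks that a GIT polystable $C\not\sim Q_d$ remains K-polystable at $c_1+\epsilon$ by the argument of Theorem \ref{thm:firstwallon} — its K-polystable degeneration cannot be on $\bP(1,1,4)$ since the only curve there $c_1$-K-polystable is $Q_d'$, which is $\varrho$-identified with $[Q_d]$. For $\bP(1,1,4)$ the converse is the ``if'' part of Theorem \ref{thm:k=gitp114}.

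The main obstacle is bookkeeping rather than a new idea: one must be careful that the GIT-polystable-implies-K-polystable direction on $\bP^2$ at $c_1+\epsilon$ really does produce $\bP^2$ and not a $\bP(1,1,4)$ degeneration, and conversely that no GIT-polystable curve on $\bP^2$ other than $Q_d$ acquires a $\bP(1,1,4)$ limit; both reduce to Lemma \ref{lem:Q_d'Kps}'s uniqueness statement ($Q_d'$ is the only $c_1$-K-polystable curve on $\bP(1,1,4)$) plus the separatedness/uniqueness of K-polystable degenerations from \cite{BX18, LWX18}. I would phrase the whole argument as: ``By the index estimate in the proof of Theorem \ref{thm:firstwallbefore}, $X\cong\bP^2$ or $\bP(1,1,4)$. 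In the first case apply Proposition \ref{prop:P^2-paultian}, Theorem \ref{thm:firstwallbefore} and Theorem \ref{thm:firstwallon}; in the second case apply Lemma \ref{lem:Q_d'Kps} and Theorem \ref{thm:k=gitp114}.''
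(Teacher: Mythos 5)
Your overall structure (index estimate to reduce to $X\cong\bP^2$ or $\bP(1,1,4)$, then dispatch each case to the ingredients already in hand) is the right one and matches the paper's proof. However there are two concrete errors in the way you carry it out.

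First, for the $\bP^2$ case of the forward direction you assert that $(\bP^2,(c_1+\epsilon)Q_d)$ ``is K-semistable but \emph{not} K-polystable'' because it degenerates to $(\bP(1,1,4),(c_1+\epsilon)Q_d')$. This is both unjustified and false: the central fiber $(\bP(1,1,4),(c_1+\epsilon)Q_d')$ is itself K-\emph{unstable} when $\epsilon>0$ (the Futaki weight of the $\bG_m$-action $\sigma$ of Definition \ref{defn:gitp114} changes sign as $c$ crosses $c_1$, as the proof of Theorem \ref{thm:k=gitp114} records), so the degeneration you exhibit gives no information about semistability. In fact $(\bP^2,(c_1+\epsilon)Q_d)$ is K-\emph{unstable}, which is what the paper proves: by Proposition \ref{prop:k-interpolation}, if $(\bP^2,(c_1+\epsilon)Q_d)$ were K-semistable then together with K-polystability of $(\bP^2,\epsilon Q_d)$ (Theorem \ref{thm:firstwallbefore}) this would force $(\bP^2,c_1 Q_d)$ to be K-polystable, contradicting Lemma \ref{lem:Q_d'Kps}. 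Your conclusion ($C\not\cong Q_d$) is still reachable, but the reasoning you give for it does not work.

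Second, in the converse direction on $\bP^2$ you try to deduce K-polystability of $(\bP^2,(c_1+\epsilon)C)$ by tracking the possible K-polystable degeneration, and you cite Theorem \ref{thm:firstwallbefore} at coefficient $c_1+\epsilon$ (outside its stated range). The missing ingredient — which makes this step a one-liner — is Proposition \ref{prop:openkps}: the set of $c$ for which $(\bP^2,cC)$ is K-polystable is an open interval, so Theorem \ref{thm:firstwallon} (K-polystability on $(0,c_1]$) immediately gives K-polystability at $c_1+\epsilon$. Relatedly, your justification that a K-polystable $(\bP(1,1,4),(c_1+\epsilon)D)$ must satisfy $[D]\in\bfP_d'$ via ``continuity of the generalized Futaki invariant in $c$'' is not a valid implication as stated: linearity of $\Fut$ in $c$ does not let you pass from non-negativity at $c_1+\epsilon$ to non-negativity at $c_1$ without a sign constraint on the slope. (The clean way is to use that $\Phi^+:\ocP_{d,c_1+\epsilon}^{\K}\hookrightarrow\ocP_{d,c_1}^{\K}$ is an open immersion from Theorem \ref{thm:logFano-wallcrossing}, or to run a valuative criterion directly on $D$ to rule out curves not in $\bfP_d'$.) The paper actually leaves the verification of $[D]\in\bfP_d'$ implicit when invoking Theorem \ref{thm:k=gitp114}, so you are right to flag that something needs saying there — just not the argument you supplied.
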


\begin{proof}
We first prove that K-polystability implies GIT polystability. Let $[(X,(c_1+\epsilon)D)]$ be a point in $\oP_{d,c_1+\epsilon}^{\K}$. Then from the index estimate in the proof of Theorem \ref{thm:firstwallbefore}, we know that $X$ is isomorphic to either $\bP^2$ or $\bP(1,1,4)$. If $(X,D)\cong(\bP^2,C)$, then $C$ is GIT polystable by Proposition \ref{prop:P^2-paultian}. It suffices to show that $(\bP^2, (c_1+\epsilon)Q_d)$ is K-unstable. In fact, if $(\bP^2, (c_1+\epsilon)Q_d)$ were K-semistable, then Proposition \ref{prop:k-interpolation} together with K-polystability of $(\bP^2,\epsilon Q_d)$ (see Theorem \ref{thm:firstwallbefore}) implies that $(\bP^2,c_1 Q_d)$ is K-polystable as well. But this contradicts Lemma \ref{lem:Q_d'Kps}, so $(\bP^2, (c_1+\epsilon)Q_d)$ is K-unstable. If $X\cong\bP(1,1,4)$, then the statement follows from Theorem \ref{thm:k=gitp114}. 

Next we prove that GIT polystability implies K-polystability. If $C\subset\bP^2$ is a GIT polystable plane curve not projectively equivalent to $Q_d$, then Theorem \ref{thm:firstwallon} implies that $(\bP^2, c C)$ is K-polystable for any $c\in (0,c_1]$. Hence $(\bP^2, (c_1+\epsilon)C)$ is also K-polystable by Proposition \ref{prop:openkps}. The $\bP(1,1,4)$ case follows from Theorem \ref{thm:k=gitp114}.
\end{proof}

So far we have shown that $\oP_{d}^{\GIT}\setminus\{[Q_d]\}\hookrightarrow
\oP^\K_{d,c_1+\epsilon}$ is an open immersion. Before proving Theorem \ref{thm:firstwallafter}, we set up some notation.

\begin{definition}\label{def:weightedstackyblowup}Let $Z \subset X$ be a smooth closed subvariety of a smooth quasi-projective variety $X$.  We define $\calB l_{\mathbf{w},Z}X$, the \emph{stacky weighted blowup} of $Z$ in $X$ with weight $\mathbf{w}$ (see also \cite[Section 3]{functorialresolution}).  The standard weighted blow up $Bl_{\mathbf{w},Z} X$  will be the coarse space of $\calB l_{\mathbf{w},Z}X$. 

Let $\cN_{Z/X}$ be the normal bundle of $Z \subset X$ and consider a group $G$ acting on $X$.  Suppose $\cN_{Z/X}$ has a decomposition with respect to representations of $G$ \[\cN_{Z/X} = \cN_1 \oplus \cN_2 \oplus \dots \oplus \cN_k .\]  Let $\mathbf{w} = ( w_1, w_2, \dots , w_k )$ be a weight vector with $w_i \in \mathbb{Z}_{>0}$.  This gives a monomial valuation $v$ of $\bC(X)$ centered at $Z$ of weights $( w_1, w_2, \dots , w_k )$ with respect to the decomposition of the normal bundle $\cN_{Z/X}$.  

Define $R : = \oplus_{m = 0}^\infty \fa_m(v) t^m$, where $\fa_m(v) = \{ s \in \cO_X \colon v(s) \ge m\}$.  The \emph{standard weighted blow up} of $Z$ in $X$ is defined by $Bl_{\mathbf{w},Z}X:=\Proj_X R$.  Define $Y := \mathrm{Spec}_X R$. Then we have the zero section $X \hookrightarrow Y$ whose defining ideal is given by $I = \oplus_{m = 1}^\infty \fa_m(v) t^m$.  We define the \emph{stacky weighted blowup} of $Z$ in $X$ to be \[ \calB l_{\mathbf{w},Z}X = [ (Y \setminus X ) / \mathbb{G}_m ] \]
where $\mathbb{G}_m$ acts as $t \mapsto \lambda t$, for $\lambda \in \mathbb{G}_m$.
\end{definition}

\begin{remark}The stack $\calB l_{\mathbf{w},Z}X$ is a smooth Deligne-Mumford stack, its coarse space is indeed $Bl_{\mathbf{w},Z} X$, and the exceptional divisor $E$ has weighted projective stacks as fibers over $Z$.  \end{remark}

We now proceed with our construction of the partial desingularization of Kirwan type. First, let us recall some representation theory of $\SL(2)$. Consider the standard action of $\SL(2)$ on $\bP^1=\bP(V^\vee)$. Then we have the dual $\SL(2)$-action on $V=H^0(\bP^1,\cO_{\bP^1}(1))$. We have a natural $\SL(2)$-action on $\bfV:=H^0(\bP^1, \cO_{\bP^1}(2))=\Sym^2 V$ so that the second Veronese embedding $\bP^1\hookrightarrow\bP^2=\bP(\bfV^\vee)$ is $\SL(2)$-equivariant. Denote by $Q=(q=0)$ the image of this Veronese embedding. Then we have 
\begin{equation}\label{eq:sl_2-1}
\Sym^d \bfV=\Sym^d(\Sym^2 V)\cong\begin{cases}
\oplus_{i=0}^{d/2}\Sym^{4i} V & \textrm{ if $d$ is even,}\\
\oplus_{i=0}^{(d-1)/2}\Sym^{4i+2} V &\textrm{ if $d$ is odd.}
\end{cases}
\end{equation}
Since $q$ is $\SL(2)$-invariant, we have an injection between $\SL(2)$-representations $\Sym^{d-2}\bfV\hookrightarrow\Sym^d\bfV$ by multiplying with $q$. Let $\bfV_d$ be the cokernel of this injection. Then from \eqref{eq:sl_2-1} we know that the restriction map $H^0(\bP^2,\cO_{\bP^2}(d))\to H^0(Q,\cO_Q(d))$ induces an isomorphism between $\bfV_d$ and $\Sym^{2d} V$. As a summary, we have
\begin{equation}\label{eq:sl_2-2}
 H^0(\bP^2, \cO_{\bP^2}(d))=\oplus_{j=0}^{\lfloor d/2\rfloor} q^{j}\cdot\bfV_{d-2j}\cong \oplus_{j=0}^{\lfloor d/2\rfloor} \Sym^{2d-4j}V.
\end{equation}

\begin{lemma}\label{lem:firstwallluna1} Let $d$ be even and let $Q_d$ denote the nonreduced curve defined by $(q^{d/2} = 0)$ where $(q=0)$ is a smooth plane conic. Then a Luna slice to $\SL(3)\cdot [Q_d] \subset \bfP^{\rm ss}_d$ at $[Q_d]$ is given by the locally closed subset
\[ W := \{ (q^{d/2} + f_4q^{d/2 -2} + f_6 q^{d/2-3} + \dots + f_d=0) \},\] where $f_{2i} \in \bfV_{2i}\subset H^0(\bP^2, \calO_{\bP^2}(2i))$ for $2\leq i\leq d/2$. \end{lemma}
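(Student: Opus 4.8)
The plan is to construct the Luna slice explicitly by first identifying the tangent space to the orbit $\SL(3)\cdot[Q_d]$ inside $T_{[Q_d]}\bfP_d$, and then exhibiting a complementary $\Aut(\bP^2,Q_d)$-invariant affine subspace which, by Luna's \'etale slice theorem, serves as the slice. First I would compute the stabilizer: since $Q_d = (q^{d/2}=0)$ is supported on the smooth conic $Q=(q=0)$, we have $\Aut(\bP^2,Q_d)=\Aut(\bP^2,Q)\cong\mathrm{PGL}(2)$ (realized as $\SO(3)$, the image of $\SL(2)$ under the Veronese action on $\bfV=\Sym^2 V$). The orbit map $\mathfrak{sl}_3 \to H^0(\bP^2,\cO_{\bP^2}(d))/\bC\cdot q^{d/2}$ sends a vector field $\xi$ to its derivative acting on $q^{d/2}$; since $\xi(q^{d/2}) = \tfrac{d}{2}q^{d/2-1}\xi(q)$ and $\xi(q)$ ranges over all of $H^0(\bP^2,\cO_{\bP^2}(2))$ as $\xi$ ranges over $\mathfrak{sl}_3$ (the conic being non-degenerate), the orbit tangent space is exactly the image of $q^{d/2-1}\cdot H^0(\bP^2,\cO_{\bP^2}(2))$ modulo $q^{d/2}$.

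The key step is then a decomposition argument using the $\SL(2)$-representation theory recalled in \eqref{eq:sl_2-1} and \eqref{eq:sl_2-2}. Writing $H^0(\bP^2,\cO_{\bP^2}(d)) = \oplus_{j=0}^{d/2} q^j\cdot\bfV_{d-2j}$ as $\mathrm{PGL}(2)$-modules, and similarly $q^{d/2-1}\cdot H^0(\bP^2,\cO_{\bP^2}(2)) = q^{d/2-1}\cdot(\bfV_2 \oplus q\cdot\bfV_0) = q^{d/2-1}\bfV_2 \oplus q^{d/2}\bfV_0$, I would observe that the orbit tangent space modulo $\bC q^{d/2}$ is precisely $q^{d/2-1}\bfV_2$ (the summand $q^{d/2}\bfV_0 = \bC q^{d/2}$ dies). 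The $\mathrm{PGL}(2)$-invariant complement inside $T_{[Q_d]}\bfP_d = H^0(\bP^2,\cO_{\bP^2}(d))/\bC q^{d/2}$ is therefore $\bigoplus_{j: d-2j \neq 2, d}q^j\bfV_{d-2j} = \bfV_d \oplus \bigoplus_{j=2}^{d/2}q^j\bfV_{d-2j}$; but the leading summand $\bfV_d$ is the linear part deformed by moving the conic, which is absorbed into the orbit after normalizing the leading coefficient to $q^{d/2}$ — so the genuine slice directions are $\bigoplus_{j=2}^{d/2}q^j\bfV_{d-2j}$. Reindexing $i = d/2 - j$, the exponent of $q$ is $d/2 - i$ and the coefficient lies in $\bfV_{d-2j} = \bfV_{2i}$, and one checks the resulting degrees of $f_{2i}$ match: writing the affine chart where the $q^{d/2}$-coefficient is normalized to $1$, a general member is $q^{d/2} + f_4 q^{d/2-2} + f_6 q^{d/2-3} + \cdots + f_d$ with $f_{2i}\in\bfV_{2i}\subset H^0(\bP^2,\cO_{\bP^2}(2i))$. (Note the pattern of $q$-exponents: $f_{2i}$ multiplies $q^{(d-2i)/2} = q^{d/2-i}$, so $f_4$ goes with $q^{d/2-2}$, $f_6$ with $q^{d/2-3}$, etc., which is exactly the displayed $W$.) Finally, Luna's \'etale slice theorem applies since $\mathrm{PGL}(2)$ is reductive and $[Q_d]$ has a closed orbit in $\bfP_d^{\mathrm{ss}}$ (it is GIT polystable by Corollary \ref{cor:Q_dps}), giving that $W$ is an \'etale slice.

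The main obstacle I anticipate is bookkeeping the identification of which irreducible $\mathrm{PGL}(2)$-summands of $T_{[Q_d]}\bfP_d$ lie in the orbit tangent space versus the slice — in particular verifying carefully that exactly one copy of the "$\bfV_2$-type" summand (namely $q^{d/2-1}\bfV_2$) is tangent to the orbit and no $q^j\bfV_{d-2j}$ with $j\geq 2$ is, so that the complement is genuinely the direct sum written in $W$. This requires checking that the orbit map $\mathfrak{sl}_3 \to H^0(\bP^2,\cO_{\bP^2}(d))/\bC q^{d/2}$ has image of the expected dimension $8 - \dim\Aut(\bP^2,Q_d) = 8-3 = 5 = \dim\bfV_2$, which follows once one knows the stabilizer is $\mathrm{PGL}(2)$; a clean way to see the stabilizer computation is that any $g\in\mathrm{PGL}(3)$ fixing the divisor $Q_d$ must fix its support $Q$, and conversely $\mathrm{Aut}(\bP^2,Q)$ fixes every power of $q$ up to scalar. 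A secondary subtlety is matching conventions for the degrees/subscripts of the $f_{2i}$ and the exact powers of $q$; I would handle this by writing everything in the affine chart $(q^{d/2}\text{-coefficient}=1)$ and reading off the slice directions directly from the graded decomposition \eqref{eq:sl_2-2}.
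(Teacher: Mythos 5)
Your overall strategy matches the paper's: compute the orbit tangent space (the paper deforms $q_t^{d/2}$ directly; you use the Lie algebra action, which is equivalent since the $\SL(3)$-orbit of $[q]$ is open dense in $\bP(\Sym^2\bfV)$), decompose $\Sym^d\bfV$ via \eqref{eq:sl_2-2} into irreducible $\PGL(2)$-summands $q^j\bfV_{d-2j}$, identify the normal space, and invoke Luna's slice theorem using the polystability of $Q_d$ from Corollary \ref{cor:Q_dps}. The conclusion is right, but the middle bookkeeping has genuine indexing errors that happen to cancel.

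Concretely: the direction quotiented out of $T_{[Q_d]}\bfP_d$ is $\bC q^{d/2} = q^{d/2}\bfV_0$ (the summand with $d-2j = 0$, i.e.\ $j = d/2$), and the orbit tangent is $q^{d/2-1}\bfV_2$ (the summand with $d - 2j = 2$). So the slice is the sum of the summands with $d-2j \neq 0,2$, namely $\bigoplus_{j=0}^{d/2-2} q^j\bfV_{d-2j}$. Your displayed condition ``$d-2j\neq 2,d$'' is wrong (it should be ``$\neq 0, 2$''), and the claimed equality with $\bfV_d\oplus\bigoplus_{j=2}^{d/2}q^j\bfV_{d-2j}$ is internally inconsistent. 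Worse, the subsequent assertion that ``the leading summand $\bfV_d$ \emph{is} the linear part deformed by moving the conic, which is absorbed into the orbit'' is false: $\bfV_d$ (the $j=0$ summand, giving the $f_d$ term of $W$) is a \emph{bona fide} slice direction, whereas the orbit tangent is the entirely different summand $q^{d/2-1}\bfV_2$. The resulting claim that the slice is $\bigoplus_{j=2}^{d/2}q^j\bfV_{d-2j}$ is incorrect; that set mistakenly includes $q^{d/2}\bfV_0 = \bC q^{d/2}$ (already modded out) and omits $\bfV_d$ and $q\bfV_{d-2}$. When you then reindex by $i = d/2 - j$ you silently switch to the correct range $i = 2,\dots,d/2$, apparently by matching the target rather than following your own computation. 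A smaller nit: $\xi(q)$ does not range over all of $H^0(\bP^2,\cO(2))$ as $\xi$ ranges over $\mathfrak{sl}_3$; the image is a $5$-dimensional subspace (kernel $\mathfrak{so}(q)$ has dimension $3$), which only surjects onto $H^0(\cO(2))/\bC q$---but that weaker statement is all you need, so this does not invalidate the conclusion. Once the indexing is fixed so that the slice is $\bigoplus_{j=0}^{d/2-2}q^j\bfV_{d-2j}$, the rest of your argument goes through and reproduces the paper's proof.
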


\begin{proof}
Denote by $G:=\SL(3)$. Since $Q_d$ is GIT polystable by Corollary \ref{cor:Q_dps}, we know that the orbit $G[Q_d]$ is closed in $\bfP_d^{\rm ss}$. The tangent space of $\bfP_d^{\rm ss}$ at $[Q_d]$ is given by 
$\cT_{\bfP_d^{\rm ss}, [Q_d]}=(\Sym^d \bfV)/\bC q^{d/2}$. If $q_t$ is a small deformation of $q_0=q$ in $\Sym^2\bfV$, then we have 
$\frac{d}{dt}(q_t^{d/2})|_{t=0}=\frac{d}{2}q^{d/2 -1}\frac{dq_t}{dt}|_{t=0}$. Thus the tangent space of $G[Q_d]$ at $[Q_d]$ is given by $T_{G[Q_d], [Q_d]}=(q^{d/2-1}\cdot\Sym^2\bfV)/\bC q^{d/2}$. Hence by \eqref{eq:sl_2-2} the normal space of $G[Q_d]/\bfP_d^{\rm ss}$ at the point $[Q_d]$ satisfies
\begin{equation}\label{eq:normal-bundle-even} 
\calN_{G[Q_d]/\bfP_d^{\rm ss},[Q_d]} = \cT_{\bfP_d^{\rm ss}, [Q_d]}/\cT_{G[Q_d], [Q_d]} \cong \Sym^d\bfV/(q^{d/2-1}\cdot\Sym^2\bfV)\cong
\oplus_{i=2}^{d/2}\bfV_{2i}.
\end{equation}
Therefore, taking the exponential map of the normal space yields a Luna slice  \[ W := \{ (q^{d/2} + f_4q^{d/2 -2} + f_6 q^{d/2-3} + \dots + f_d=0) \},\] where  $f_{2i} \in \bfV_{2i}$ for $2\leq i\leq d/2$.
\end{proof}

\begin{lemma}\label{lem:firstwallluna2} Let $d$ be odd and let $Q_d$ denote the curve 
$(lq^{(d-1)/2} = 0)$, where $(q=0)$ is a smooth plane conic and $(l=0)$ is a line transverse to $q$. Then a Luna slice to $\SL(3)\cdot [Q_d] \in \bfP^{\rm ss}_d$ at $[Q_d]$ is given by the locally closed set
\[ W := \{ (lq^{(d-1)/2} + f_3 q^{(d-3)/2} + f_5 q^{(d-5)/2} + \dots + f_d =0)\},\] 
where $f_3\in\bfV_3^{\circ}$ and $f_{2i+1} \in \bfV_{2i+1}$ for any $2\leq i\leq (d-1)/2$. Here $\bfV_3^{\circ}$ is a subspace of $\bfV_3$ such that it pulls back to $\bC u^6+\bC v^6\subset H^0(\bP_{[u,v]}^1,\cO(6))$ under the isomorphism from $(\bP_{[u,v]}^1,(uv=0))$ to $((q=0),  (l=q=0))$. 
\end{lemma}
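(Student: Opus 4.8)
The strategy mirrors the even case handled in Lemma \ref{lem:firstwallluna1}, but now the stabilizer of $[Q_d]$ is larger, so the bookkeeping of $\SL(3)$-representations must be done with care. First I would establish that the orbit $G\cdot[Q_d]$ is closed in $\bfP_d^{\rm ss}$, where $G:=\SL(3)$: this is immediate from Corollary \ref{cor:Q_dps}, which says $Q_d=(lq^{(d-1)/2}=0)$ is GIT polystable. By Luna's \'etale slice theorem, it then suffices to identify the normal space $\calN_{G[Q_d]/\bfP_d^{\rm ss},[Q_d]}$ as a representation of the (reductive) stabilizer of $[Q_d]$, and take the exponential map. The tangent space of $\bfP_d^{\rm ss}$ at $[Q_d]$ is $\cT_{\bfP_d^{\rm ss},[Q_d]}=H^0(\bP^2,\cO_{\bP^2}(d))/\bC\cdot lq^{(d-1)/2}$, and I would compute the tangent space of the orbit by differentiating the map $(l,q)\mapsto lq^{(d-1)/2}$: if $l_t,q_t$ are small deformations with $l_0=l$, $q_0=q$, then $\frac{d}{dt}(l_tq_t^{(d-1)/2})|_{t=0}=q^{(d-1)/2}\frac{dl_t}{dt}|_{t=0}+\frac{d-1}{2}lq^{(d-3)/2}\frac{dq_t}{dt}|_{t=0}$, so $\cT_{G[Q_d],[Q_d]}=\big(q^{(d-1)/2}\cdot H^0(\bP^2,\cO(1))+lq^{(d-3)/2}\cdot\Sym^2\bfV\big)/\bC\cdot lq^{(d-1)/2}$.

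The main work is then to decompose the quotient $\calN_{G[Q_d]/\bfP_d^{\rm ss},[Q_d]}$ using the $\SL(2)$-representation theory recalled in \eqref{eq:sl_2-1} and \eqref{eq:sl_2-2}, where $\SL(2)$ is the subgroup preserving $(q=0)$ via its Veronese action, together with the torus fixing the two points $(l=q=0)$ on the conic. From \eqref{eq:sl_2-2}, $H^0(\bP^2,\cO_{\bP^2}(d))=\oplus_{j=0}^{(d-1)/2}q^j\cdot\bfV_{d-2j}$, and I would argue that modulo the subspace $q^{(d-1)/2}H^0(\bP^2,\cO(1))+lq^{(d-3)/2}\Sym^2\bfV$ the only surviving directions are: one copy of a two-dimensional subspace $\bfV_3^{\circ}\subset\bfV_3$ in the $j=(d-3)/2$ summand (the $q^{(d-3)/2}\cdot f_3$ terms), and the full $\bfV_{2i+1}$ in the $q^{(d-2i-1)/2}$ summand for $2\le i\le(d-1)/2$. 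The identification of $\bfV_3^{\circ}$ is the delicate point: under the isomorphism $(\bP^1_{[u,v]},(uv=0))\xrightarrow{\sim}((q=0),(l=q=0))$ coming from the Veronese parametrization, $H^0(Q,\cO_Q(3))\cong\Sym^6 V=H^0(\bP^1,\cO(6))$, and one must check that the $f_3$-directions not already accounted for by the orbit correspond exactly to $\bC u^6+\bC v^6$ inside $H^0(\bP^1,\cO(6))$ --- i.e. after subtracting the orbit directions coming from $q^{(d-1)/2}\cdot(\text{linear forms})$ and $lq^{(d-3)/2}\cdot(\text{conics})$, what is left in degree $3$ on $Q$ is the span of the two "extreme weight" monomials with respect to the torus fixing $l\cap Q$. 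This is why, in Definition \ref{defn:gitp114}(2), the normal form has $f_6(x,y)$ with no monomial divisible by $xy$ --- under the pullback, the condition "$f_3$ contains neither $u^6$ nor $v^6$" is precisely "lies in the complement of $\bfV_3^\circ$," hence these are the orbit directions to be quotiented out.

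I expect the hardest step to be making the identification of $\bfV_3^{\circ}$ rigorous: one must carefully track that multiplication by $l$ (a linear form) and by $q$ interact correctly with the graded pieces, and that the torus action on $(q=0)$ fixing the two marked points sends the pullback of $\bfV_3$ to $\oplus_{a+b=6}\bC u^av^b$ with the monomials $u^6,v^6$ being exactly the weights not hit by the orbit map $q^{(d-1)/2}\cdot(\bC u\oplus\bC v)$ restricted to $Q$. Once the normal space is identified as $\bfV_3^\circ\oplus\big(\oplus_{i=2}^{(d-1)/2}\bfV_{2i+1}\big)$, taking the exponential map yields the Luna slice
\[
W=\{(lq^{(d-1)/2}+f_3q^{(d-3)/2}+f_5q^{(d-5)/2}+\cdots+f_d=0)\}
\]
with $f_3\in\bfV_3^\circ$ and $f_{2i+1}\in\bfV_{2i+1}$ for $2\le i\le(d-1)/2$, completing the proof. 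Throughout I would use that $Q_d$ is GIT polystable (Corollary \ref{cor:Q_dps}) so that Luna's slice theorem applies with a reductive stabilizer, and the isomorphism \eqref{eq:sl_2-2} to control the graded structure.
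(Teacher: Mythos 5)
Your proposal is correct and follows essentially the same path as the paper's proof: differentiating the orbit map to obtain $\cT_{G[Q_d],[Q_d]}=\big(lq^{(d-3)/2}\Sym^2\bfV+q^{(d-1)/2}\bfV\big)/\bC\, lq^{(d-1)/2}$, applying the decomposition \eqref{eq:sl_2-2} to compute the normal space as $\bfV_3^\circ\oplus\bigoplus_{i=2}^{(d-1)/2}\bfV_{2i+1}$, and identifying $\bfV_3^\circ$ via the pullback of $l$ to $uv$ on $\bP^1$ so that it is complementary to $uv\cdot H^0(\bP^1,\cO(4))$ inside $H^0(\bP^1,\cO(6))$. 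The only cosmetic difference is that the paper names the stabilizer subgroup $G_1\cong\bG_m\rtimes\bZ/2$ of $\SL(2)$ preserving both $l$ and $q$ and takes $\bfV_3^\circ$ as the $G_1$-invariant complement, whereas you phrase the same thing in terms of the torus fixing the two marked points of $l\cap Q$.
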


\begin{proof}
Again we consider the orbit $G[Q_d]$ under the action of $G:=\SL(3)$. It is clear that $\cT_{\bfP_d^{\rm ss},[Q_d]}=\Sym^d \bfV/\bC l q^{(d-1)/2}$. If $l_t$ and $q_t$ are small deformations of $l_0=l$ and $q_0=q$ in $\bfV$ and $\Sym^2\bfV$ respectively, then we have $\frac{d}{dt}(l_t q_t^{(d-1)/2})|_{t=0} =  \frac{d-1}{2}lq^{(d-3)/2} \frac{dq_t}{dt}|_{t=0} + q^{(d-1)/2} \frac{dl_t}{dt}|_{t=0}$. Thus the tangent space of $G[Q_d]$ at $[Q_d]$ is given by $T_{G[Q_d],[Q_d]}=(lq^{(d-3)/2}\cdot\Sym^2\bfV+ q^{(d-1)/2}\cdot \bfV)/\bC lq^{(d-1)/2}$. Hence the normal space of $G[Q_d]/\bfP_d^{\rm ss}$ at the point $[Q_d]$ satisfies
\[ \calN_{G[Q_d]/\bfP_d^{\rm ss},[Q_d]} = \cT_{\bfP_d^{\rm ss}, [Q_d]}/\cT_{G[Q_d], [Q_d]} \cong \Sym^d\bfV/(lq^{(d-3)/2}\cdot\Sym^2\bfV+ q^{(d-1)/2}\cdot \bfV).\]  
It is clear that $l\cdot\Sym^2\bfV$ and $q\cdot \bfV$ are both contained in $\Sym^3\bfV$. Let $G_1\cong \bG_m\rtimes \bZ/2$ be the  subgroup of $\SL(2)$ preserving both $l$ and $q$. Then both $l\cdot\Sym^2\bfV$ and $q\cdot \bfV$ are sub $G_1$-representation of $\Sym^3\bfV$. Since  $\Sym^3\bfV/(q\cdot\bfV)\cong\bfV_{3}$ by \eqref{eq:sl_2-2}, we denote by $\bfV_3^{\circ}$ the sub $G_1$-representation of $\bfV_{3}$  complementary to $(l\cdot\Sym^2\bfV+q\cdot \bfV)/(q\cdot\bfV)$. 

By \eqref{eq:sl_2-2} we have
\begin{align*}
\calN_{G[Q_d]/\bfP_d^{\rm ss},[Q_d]} & \cong  \Sym^3\bfV/(l\cdot\Sym^2\bfV + q\cdot\bfV) \oplus \Sym^d\bfV/(q^{(d-3)/2}\Sym^3\bfV)\\& \cong \bfV_{3}^\circ\oplus\oplus_{i=2}^{(d-1)/2}\bfV_{2i+1}.\label{eq:normal-bundle-odd}\numberthis
\end{align*}
Therefore, taking the exponential map of the normal space yields a Luna slice  
\[ W := \{ (lq^{(d-1)/2} + f_3 q^{(d-3)/2} + f_5 q^{(d-5)/2} + \dots + f_d=0) \},\] where  $f_3\in\bfV_3^{\circ}$ and $f_{2i+1} \in \bfV_{2i}$ for $2\leq i\leq (d-1)/2$.






Finally, we verify that $\bfV_3^\circ$ corresponds to $\bC u^6+\bC v^6$ under the pull back to $\bP^1$. Since the pull-back of $l$ on $\bP^1$ is $uv$, we know that $\bfV_3^\circ$ is complementary to $uv\cdot H^0(\bP^1, \cO(4))$ in $H^0(\bP^1, \cO(6))$. Since the identity component of $G_1$  acts on $H^0(\bP^1,\cO(1))=\bC u +\bC v$ diagonally, we know that $\bfV_3^\circ$ corresponds to $\bC u^6+\bC v^6$ under the pull back to $\bP^1$. This finishes the proof.
\end{proof}

We now proceed to the proof of Theorem \ref{thm:firstwallafter}.

\begin{thm}\label{thm:firstwall1}
Let $U = \bfP^{\rm ss}_d$, and consider the universal family $(\PP^2_U, \calC_U) \to U$, where each fiber is $c$-K-polystable for $c\in (0,c_1)$. After base change to a stacky weighted blowup $\widehat{\calU} \to U$ along the orbit $G[Q_d]$ with stacky exceptional divisor $\calE$ where $G=\SL(3)$, a blow-up along the conic component over $\cE$, and a divisorial contraction, there exists a  family $(\cX, \calC_{\cX}) \to \widehat{\calU}$ which: 
\begin{enumerate}
    \item is isomorphic to $(\PP^2_{U\setminus G[Q_d]}, \calC_{U\setminus G[Q_d]}) \to U\setminus G[Q_d]$ over the complement of $\calE$, 
    \item and whose fibers over $\calE$ are curves on $\bP(1,1,4)$. 
    \item Let $\cE_W$ be the exceptional divisor of the stacky weighted blow up $\hcW:=\hcU\times_U W\to W$ over the Luna slice $W$ as in Lemmas \ref{lem:firstwallluna1} and \ref{lem:firstwallluna2}. Then the  family $(\cY,\cC_{\cY})\times_{\hcU}\cE_W$ over $\cE_W$ is isomorphic to the universal family over $[(\bfA_d'\setminus\{0\})/\bG_m]$ as in Definition \ref{defn:gitp114}.
\end{enumerate}
\end{thm}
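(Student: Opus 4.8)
\textbf{Proof proposal for Theorem \ref{thm:firstwall1}.}

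The plan is to construct the modified family $(\cY,\cC_{\cY})\to\hcU$ in two stages: first perform the stacky weighted blow-up $\hcU\to U$ along $G[Q_d]$ with a judiciously chosen weight vector $\mathbf{w}$, then build the new total space by a relative MMP argument (degeneration to the normal cone). The guiding principle is that the weight vector must be exactly calibrated so that the generic point of the exceptional divisor $\cE$ records the test configuration degenerating $(\bP^2,Q_d)$ to $(\bP(1,1,4),Q'_d)$ used in Lemma \ref{lem:Q_d'Kps}: that degeneration is the degeneration of $\bP^2$ to the normal cone of the conic $Q$, with the curve $Q_d$ degenerating so that the ``$z$-direction'' (the direction normal to $Q$, of weight $1$) and the higher-order terms $f_{2i}$ (resp.\ $f_{2i+1}$) acquire weights $i$ (resp.\ $i$ suitably normalized). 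Concretely, using the $\SL(2)$-equivariant decomposition of the normal bundle $\calN_{G[Q_d]/U}$ computed in Lemma \ref{lem:firstwallluna1} (even case) as $\oplus_{i=2}^{d/2}\bfV_{2i}$ and in Lemma \ref{lem:firstwallluna2} (odd case) as $\bfV_3^\circ\oplus\oplus_{i=2}^{(d-1)/2}\bfV_{2i+1}$, we set $\mathbf{w}$ to assign weight $i$ to the $\bfV_{2i}$-summand (even case), and weight $1$ to $\bfV_3^\circ$ and weight $i$ to $\bfV_{2i+1}$ (odd case). This is precisely the weight vector appearing in Definition \ref{defn:gitp114}, so assertion (3) will follow essentially by unwinding the construction of the stacky weighted blow-up (Definition \ref{def:weightedstackyblowup}) restricted to the Luna slice $W$: by Luna's slice theorem the étale-local picture of $\hcU\to U$ near $G[Q_d]$ is $[\hcW/G_{Q_d}]\to[W/G_{Q_d}]$, and the exceptional divisor $\cE_W$ of $\hcW\to W$ is a weighted projective bundle over the point whose weights match those defining $\bfP_d'$, with $G_{Q_d}=\SL(2)$ (even) or $G_1\cong\bG_m\rtimes\bZ/2$ (odd) acting as in Definition \ref{defn:gitp114}.

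For the construction of the new total space I would proceed as follows. Over $\hcU$ we first have the pullback of the old family $(\PP^2_U,\calC_U)$, which is an isomorphism away from $\cE$ but not what we want over $\cE$. To correct this, pull back to $\hcU$ the family over $W\times\bA^1$ obtained by degeneration to the normal cone of the universal conic, i.e.\ the family realizing, fiberwise over $W$, the test configuration degenerating the plane curve (as a point of $W$) to a curve on $\bP(1,1,4)$; the weighted-blow-up chart structure of $\hcU$ is exactly the base over which these simultaneous test configurations glue into a single flat family. More precisely: the blow-up $\mathrm{Spec}_U R$ in Definition \ref{def:weightedstackyblowup} carries a tautological $\bG_m$-action, and by equivariance the normal-cone degeneration of $(\PP^2_U,\calC_U)$ along $G[Q_d]$ descends to a family over $[(\mathrm{Spec}_U R\setminus U)/\bG_m]=\hcU$. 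One then checks (a) this family is $\bQ$-Gorenstein flat with connected fibers, (b) away from $\cE$ it agrees with the old family, and (c) over the generic point of $\cE$ the central fiber of the normal cone degeneration is $\bP(1,1,4)$ — this is the ``and a contraction'' step, because the raw blow-up of the normal cone produces a reducible or non-normal surface ($\bP^2$ glued to a $\bP^1$-bundle over $Q$) whose $\bQ$-Gorenstein MMP contracts the $\bP^2$-component down, leaving $\bP(1,1,4)$. This contraction exists and is unique because $\bP(1,1,4)$ is the K-polystable (equivalently, canonical model) central fiber identified in Lemma \ref{lem:Q_d'Kps}; alternatively one invokes the separatedness of K-moduli \cite{BX18} to pin it down. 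The resulting family is $(\cY,\cC_\cY)\to\hcU$, giving (1) and (2), and restricting to the Luna chart gives (3) by the weight-matching above.

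The hard part will be step (c): rigorously producing the contracted family $(\cY,\cC_\cY)$ as a flat $\bQ$-Gorenstein family over all of $\hcU$ (not just over the generic point of $\cE$ and the complement), and verifying that the fibers over every point of $\cE$ are exactly the curves on $\bP(1,1,4)$ parametrized by $\bfP_d'$ rather than some further degeneration. The subtlety is that as one moves within $\cE$ the conic $Q$ can itself degenerate in $\bfP_d$, so the normal-cone construction must be carried out relative to the \emph{family} of conics over $U$, and one must check the weighted blow-up is compatible with these degenerations — this is where the precise choice of $\mathbf{w}$ and the decomposition of $\calN_{G[Q_d]/U}$ into $\SL(2)$-isotypic pieces is essential, since it guarantees the $\bG_m$-weights of all the relevant sections are non-negative so that $R$ is a finitely generated sheaf of algebras and $\mathrm{Proj}_U R$ behaves well. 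I expect that once the étale-local model over $W$ is established (which is essentially bookkeeping with Lemmas \ref{lem:firstwallluna1} and \ref{lem:firstwallluna2} and the explicit test configuration of Lemma \ref{lem:Q_d'Kps}), the global statement follows by descent along the $\PGL(3)$-action, since all constructions are $\PGL(3)$-equivariant; but making the gluing/flatness precise, and checking the $\bQ$-Gorenstein condition survives the contraction, is the technical core.
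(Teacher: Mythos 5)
Your approach is essentially the paper's: the same stacky weighted blow-up with the same weights read off from the $\SL(2)$-isotypic decomposition of $\cN_{G[Q_d]/U}$, the same degeneration-to-the-normal-cone construction (the paper performs it by blowing up the universal conic over $\cE$ inside $\bP^2_{\hcU}$, producing fibers $\bP^2\cup\bF_4$ over $\cE$), and the same plan of contracting the $\bP^2$-component to produce $\bP(1,1,4)$. Part (3) is indeed, as you say, a matter of unwinding the definitions on the Luna slice. So the strategy is correctly identified.

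The gap is in the contraction step, and your proposed workaround does not close it. To produce $(\cY,\cC_{\cY})\to\hcU$ one must exhibit an actual morphism of the total space over $\hcU$ that is flat, proper and induces fiberwise the map $\bP^2\cup\bF_4\to\bP(1,1,4)$. Invoking K-polystability of $(\bP(1,1,4),c_1 Q_d')$ or separatedness of K-moduli in the sense of \cite{BX18} does not do this: separatedness gives uniqueness of a K-semistable filling of a family over a punctured curve after possible finite base change, not existence of a contraction of a given birational model, and the fact that each fiber $\bP^2\cup\bF_4$ has its own contraction to $\bP(1,1,4)$ does not automatically glue these into a family contraction without a cohomology-and-base-change argument. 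What the paper supplies here is precisely the missing ingredient: the explicit $\SL(3)$-linearized line bundle $\cL=\phi^*\cO(2)\otimes\cO(-F)$ on the blown-up total space, which is relatively nef, numerically trivial on the $\bP^2$-component $\widetilde{\cD}$ over $\cE$ (because $\phi^*H_2\cdot\ell=2=F\cdot\ell$ for a line $\ell$), and restricts to $4f+s$ on $\bF_4$; since $a\cL-K$ is relatively ample for $a\gg 0$, the relative base-point-free theorem \cite[Theorem 3.3]{KM98} produces the contraction over $\hcU$, and the vanishing $H^i(\bP^2\cup\bF_4,\cL)=0$ for $i>0$ (checked via the normalization sequence) together with cohomology-and-base-change ensures the contraction commutes with restriction to fibers. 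Without something like this your construction only produces the desired family pointwise on $\cE$, not as a flat family over $\hcU$ — exactly the technical core you flag but leave open.
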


 \begin{proof}
  By \eqref{eq:normal-bundle-even} and \eqref{eq:normal-bundle-odd}, we have an $\Aut(Q_d)$-equivariant decomposition of the normal space \[
  \cN_{G[Q_d]/U,[Q_d]}\cong\begin{cases}
  \oplus_{i=2}^{d/2}\bfV_{2i} & \textrm{ if $d$ is even,}\\
  \bfV_3^\circ\oplus\oplus_{i=2}^{(d-1)/2}\bfV_{2i+1}& \textrm{ if $d$ is odd.}
  \end{cases}
  \]
  Thus by translating the above decomposition via the $G$-action, we obtain a $G$-equivariant decomposition of the normal bundle $\cN_{G[Q_d]/U}$ which we denote by $\oplus_{i=2}^{d/2}\cN_i$ and $\oplus_{i=1}^{(d-1)/2}\cN_i$ when $d$ is even and odd respectively.
  Let $\widehat{\calU} \to U$ be the weighted stacky blowup along the $G$-orbit of $Q_d$ as in Definition \ref{def:weightedstackyblowup} given by weight $i$ on $\cN_i$.  Let $\calE$ denote the corresponding stacky exceptional divisor, and let $\widehat{U}$ and $E$ denote the weighted projective blow up coarse space and corresponding exceptional divisor respectively. Note that $\cE$ is a smooth Cartier divisor in $\widehat{\calU}$ as we are doing a stacky weighted blowup.
  
  We note that this blowup is $\SL(3)$-equivariant. Let $(\PP^2_{\widehat{U}}, \calC_{\widehat{U}}) \to {\widehat{U}}$ denote the pullback of this universal family via the blowup, and denote by the pullback of the universal family to the stack by $(\bP^2_{\widehat{\calU}}, \calC_{\widehat{\calU}}) \to \widehat{\calU}$.  
  
  Consider the blowup $\phi: (\calY, \calC_\calY) \to (\bP^2_{\widehat{\calU}}, \calC_{\widehat{\calU}})$ of the universal family at the conic component $\cQ$ of $\cC_{\hcU}|_{\bP^2_{\cE}}$ over the exceptional divisor $\calE \subset \widehat{\calU}$, where $\cC_{\cY}$ is the strict transform of $\cC_{\hcU}$. Let $\calF$ denote the $\phi$-exceptional divisor. Since $\cQ$ is smooth of codimension $2$ in $\bP^2_{\hcU}$, and $\cE$ is a smooth Cartier divisor of $\hcU$, we know that $\pi=\pr_2\circ\phi:\cY \to \hcU$ is a projective equidimensional morphism between smooth Deligne-Mumford stacks, which implies that $\cY$ is flat over $\hcU$ by miracle flatness. 
  Furthermore, the codimension two locus $\cQ$ we blow up is $\SL(3)$-equivariant. In particular, we obtain the following diagram
 
 \[
 \begin{tikzcd}
 \calF \arrow[r, hook] \arrow[rrdd, bend right] & (\calY, \calC_\calY) \ar[r]\ar[rd] & (\bP^2_{\widehat{\calU}}, \calC_{\widehat{\calU}}) \ar[r] \ar[d] & (\bP^2_{\widehat{U}}, \calC_{\widehat{U}}) \ar[r] \ar[d] & (\bP^2_U, \calC_U) \ar[d]\\
 & & \widehat{\calU} \ar[r] & \widehat{U} \ar[r] & U\\
 & & \calE \ar[u, hook] \ar[r] & E \arrow[u, hook] \ar[r] & G[Q_d] \arrow[u, hook]\\
 \end{tikzcd}
 \]

 Note that fibers of $(\calY, \calC_\calY) \to \widehat{\calU}$ over $\widehat{\calU} \setminus \calE$ are unchanged, and the fibers over $\calE$ are simple normal crossing surfaces of the form $\bP^2 \cup \mathbb{F}_4$, where $\mathbb{F}_4$ denotes the fourth Hirzebruch surface, and they are glued along the conic component in $\bP^2$ and the negative section in $\mathbb{F}_4$.  Let $\calD$ denote the strict transform of $\bP^2_{\cE}$ under $\phi$. 
 
 We claim that there is a divisorial contraction $\psi:\cY \to \cX$ over ${\widehat{\calU}}$ that contracts $\cD$ to a section over $\cE$. Indeed, let $\cL:=\phi^* \cO_{\bP^2_{\hcU}}(1) + \frac{1}{2}\cD$ be a $\bQ$-Cartier $\bQ$-divisor on $\cY$. Then we know that $2\cL = \phi^* \cO_{\bP^2_{\hcU}}(2) + \cD$ is a Cartier divisor on $\cY$. First, we show that $2\cL$ is nef and big over $\hcU$. Clearly, $2\cL$ is ample over $\hcU\setminus\cE$ where $\phi$ is isomorphic. Now, let us restrict to a fiber $\cY_{e}$ over a geometric point $e\in |\cE|$. Since $\cF +\cD = \pi^* \cE$, we have $2\cL \sim_{\hcU} \phi^* \cO_{\bP^2_{\hcU}}(2) - \cF$. Denote by $\cY_e = \bP^2_e \cup \mathbb{F}_{4,e}$.
 Let $H_2$ be an element of $\calO_{\bP^2}(2)$. Then  $ \phi^*H_2 -F$ is 0 over $\bP^2_e$ and big and nef on $\mathbb{F}_{4,e}$, where $F$ is the restriction of $\calF$ to the fiber $\cY_e$. Indeed, if $l$ is a line in $\PP^2_e$, then $(\phi^*H_2\cdot l) = 2 = (F\cdot l)$, since $F$ came from blowing up a conic. Therefore, $((\phi^*H_2-F)\cdot l) = 0$ for any line in $\PP^2_e$.  An adjunction calculation shows that $(\phi^*H_2 - F)|_{\mathbb{F}_{4,e}} = 4f + s$, where $f$ is a fiber of the ruled surface $\mathbb{F}_{4,e}$ and $s$ is the negative section.  Therefore, $((\phi^*H_2 - F)\cdot f) = 1$ and $((\phi^*H_2 - F)\cdot s) = 0$ and  $\phi^*H_2 - F$ is big and nef on $\mathbb{F}_{4,e}$. 

In particular, we see that $2\calL$ is a big and nef line bundle over $\hcU$ that is 0 on $\calD$ over $\calE$. In order to show that $2\cL$ defines a divisorial contraction, we use cohomology and base change. In fact, since $\cY\to \hcU$ is a $\bQ$-Gorenstein flat family of slc Fano varieties (this can be checked fiberwise, as both $\bP^2$ and $\bP^2\cup \mathbb{F}_4$ are slc with ample anti-canonical divisors), by Fujino's vanishing theorem \cite[Theorem 1.7]{Fuj14} we know that $R^i \pi_* \cO_{\cY} (2m\cL) = 0$ and $\pi_* \cO_{\cY}(2m\cL)$ is locally free for every $i>0$ and $m>0$. Moreover, both sheaves commute with base change. Note that this argument can be applied over the smooth Deligne-Mumford stack $\hcU$ as we can first pass to an \'etale cover of $\hcU$ by smooth schemes, apply Fujino's vanishing theorem there, then descend to $\hcU$. See also \cite{Brochard} for cohomology and base change for Deligne-Mumford stacks. Therefore, since $2\cL$ is basepoint free on every geometric fiber, we know that $2\cL$ is relatively basepoint free, and we obtain a divisorial contraction $\psi: \cY \to \cX$ where 
\[
\cX: =\Proj_{\hcU} \oplus_{m=0}^\infty \pi_* \cO_{\cY}(2m\cL). 
\]
Let $\cC_{\cX}:=\psi_* \cC_{\cY}$. Since the line bundle $2\calL$ has a natural $\SL(3)$ linearization, this contraction is also $\SL(3)$-equivariant. Thus we obtain an $\SL(3)$-equivariant $\bQ$-Gorenstein flat family $(\cX, \cC_{\cX})\to \hcU$. 

Next, we verify conditions (1) and (2). Since $2\cL$ is ample over $\hcU\setminus \cE$, (1) follows directly from the construction. For (2), note that $2\cL|_{\cY_e}$ is trivial on $\bP^2_e$ and has class $4f+s$ on $\mathbb{F}_{4,e}$ for $e\in |\cE|$. Therefore, the linear system $|2\cL|_{\cY_e}|$ is basepoint free and contracts  $\bP^2_e\cup \mathbb{F}_{4,e}$ to $\bP(1,1,4)$. In particular, we know that the algebra $\oplus_{m=0}^\infty H^0(\cY_e, 2m\cL|_{\cY_e})$ is generated in degree $1$. Thus by cohomology and base change we know that the fiber $\cX_e$ is isomorphic to $\bP(1,1,4)$.

Finally, we discuss what happens to the family of curves over the Luna slice $W$. 
Let $\rho_W: \hcW \to W$ be the base change of the stacky weighted blow-up $\hcU\to U$. 
Denote by $(\cY_W, \cC_{\cY_W}):=(\cY, \cC_{\cY})\times_{\hcU} \hcW$ and $(\cX_W, \cC_{\cX_W}): = (\cX, \cC_{\cX})\times_{\hcU} \hcW$. Let $\phi_W: \cY_W \to \bP^2_{\hcW}$ be the pullback of $\phi$ under the base change $\hcW \to \hcU$. Let $\pi_W: \cY_W \to \hcW$ be the projection. Then we know that $\phi_W$ is the blow-up of the smooth conic component $\cQ_{W}\subset \bP^2_{\cE_{W}}$. Let $\cD_W:=(\phi_W)_*^{-1}\bP^2_{\cE_{W}}$. Denote by $\cL_W:=\phi_W^*\cO_{ \bP^2_{\hcW}}(1) + \frac{1}{2}\cD_W$. Then from the above discussion we know that $2\cL_W$ is relatively basepoint free over $\hcW$. We shall define a morphism from $\cY_W$ to a $\bP(1^3,2)$-bundle $\bP\cV$ over $\hcW$ that induces a closed embedding $\cX_W\hookrightarrow \bP\cV$. 

Since $\lfloor \cL_W\rfloor = \phi_W^*\cO_{ \bP^2_{\hcW}}(1)$, we know that $(\pi_W)_* \cO_{\cY_W} (\lfloor \cL_W\rfloor) = \bfV\otimes \cO_{\hcW} =: \cV_1$ where $\bfV = H^0(\bP^2, \cO(1))$. Moreover, we have 
\[
(\pi_W)_* \cO_{\cY_W} (2\cL_W) = (\pi_W)_* (\phi_W^* \cO_{ \bP^2_{\hcW}}(2) \otimes \cO_{\cY_W} ( \cD_W)).
\]
Recall that $\cF$ denotes the exceptional divisor of $\phi$. We know that $Q= (q=0)$ where $q\in H^0(\bP^2, \cO(2))$ defines a divisor $(\phi_W)_*^{-1}(Q\times \hcW) - \cF_{W}$ which corresponds to a section \[q_W\in H^0(\hcW, (\pi_W)_*(\phi_W^* \cO_{ \bP^2_{\hcW}}(2) \otimes \cO_{\cY_W} (- \cF_W))).\] Let $\cV_2:= \cO_{\hcW}(\cE_W)$. Then there is a surjection
\[
(\Sym^2 \cV_1 )\oplus \cV_2 \twoheadrightarrow (\pi_W)_* \cO_{\cY_W} (2\cL_W),
\]
whose first component is given by the usual embedding $\phi_W^* \cO_{ \bP^2_{\hcW}}(2) \hookrightarrow \phi_W^* \cO_{ \bP^2_{\hcW}}(2) \otimes \cO_{\cY_W} ( \cD_W)$, and whose second component is given by multiplying with $q_W$ as $\cD_W = \pi_W^*\cE_W  - \cF_W$. We define $\cV:= \cV_1\oplus \cV_2$ and define $\bP\cV := \Proj_{\hcW} \oplus_{m=0}^\infty \Sym^m \cV$ where $\cV_i$ has degree $i$. Since the ample model of $2\cL$ is defined by $|2\cL|$ on each fiber, by cohomology and base change from earlier we know that the above surjection gives a closed embedding $\cX_{W} \hookrightarrow \bP\cV$.

Next we work out the defining equations of $\cX_W$ and its family of curves $\cC_{\cX_W}$. We focus on the $d$ even case as the $d$ odd case is similar. Let $x_0, x_1, x_2$ be a basis of $\cV_1$ induced by the standard basis of $\bfV = H^0(\bP^2, \cO(1))$. Let $x_3$ be a non-zero constant section of $\cV_2 \otimes \cO_{\hcW}(-\cE_{W}) = \cO_{\hcW}$. Then the equation of $\cX_W$ is given by 
\[
\cX_W = (s  x_3 - q(x_0, x_1, x_2) = 0) \subset \bP\cV,
\]
where $s\in H^0(\hcW, \cO_{\hcW}(\cE_W))$ is some defining section of $\cE_W$. Since $d$ is even,  $W$ has affine coordinates $(f_4, f_6, \cdots, f_{d-2}, f_d)$ where each $f_{2i}\in \bfV_{2i} \subset H^0(\bP^2, \cO_{\bP^2}(2i))$. Since each $f_{2i}$ is the coordinate of $\cN_i$ with weight $i$ under the stacky weighted blow-up $\rho_W$, we  know that $\of_{2i} := s^{-i} \rho_W^* f_{2i}$ is a global section of  $\cO_{\hcW}(-i\cE_W)$. Therefore, we have 
\[
\cC_{\cX_W} = (x_3^{d/2} + \sum_{i=2}^ {d/2} \of_{2i}(x_0, x_1, x_2) x_3^{d/2-i} =0)|_{\cX_W}. 
\]
Next, we pullback $\cX_W\hookrightarrow \bP\cV$ under the quotient map $\bfA_d'\setminus \{0\}\to \cE_W = [(\bfA_d'\setminus \{0\})/\bG_m]$. Since $\cV_1$ is a trivial vector bundle and $\cV_2 = \cO_{\cW}(\cE_W)$, we know that the pull-back $\bP\cV\times_{\hcW} (\bfA_d'\setminus \{0\}$ is $\bG_m$-equivariantly isomorphic to the product $\bP(1^3,2)_{[x_0, x_1, x_2, x_3]}\times (\bfA_d'\setminus \{0\})$ where $\bG_m$ acts on $\bP(1^3,2)$ as $t\cdot [x_0, x_1, x_2, x_3] = [x_0, x_1, x_2, tx_3]$. Under an isomorphism $\bP^1 \to Q = (q=0)\subset \bP^2$, we may identify $\cX_W\times_{\hcW} (\bfA_d'\setminus \{0\})$ with $\bP(1,1,4)_{[x,y,z]}\times (\bfA_d'\setminus \{0\})$ where $x_0,x_1, x_2$ are a basis of quadratic forms in $x,y$ such that $q(x_0, x_1, x_2) = 0$, and $x_3 = z$. Clearly, the $\bG_m$-action on $\bP(1,1,4)\times (\bfA_d'\setminus \{0\})$ is $t\cdot [x,y,z] = [x,y,tz]$. Then the equation of $\cC_{\cX_W}\times_{\hcW} (\bfA_d'\setminus \{0\})$ becomes
\[
(z^{d/2} + \sum_{i=2}^ {d/2} g_{4i}(x,y) z^{d/2-i} =0)\subset \bP(1,1,4)\times (\bfA_d'\setminus \{0\}),
\]
where $g_{4i}(x,y): = \of_{2i}(x_0, x_1, x_2)\in \bfV_{2i} = H^0(\bP^1, \cO(4i))$.  Thus the pullback of the family $(\cX, \cC_{\cX})\to \hcU$ to $\cE_W$ is isomorphic to the universal family over $[(\bfA_d'\setminus \{0\})/\bG_m]$. This verifies (3).
\end{proof}

Let us choose an ideal sheaf $\cI\subset \cO_U$ such that $\hU\cong Bl_{\cI} U$. Let $\overline{\cI}\subset\cO_{\bfP_d}$ be an $\SL(3)$-equivariant extension ideal sheaf of $\cI$ that is cosupported on the Zariski closure of $G[Q_d]$ in $\bfP_d$. Let $\widehat{\bfP}_d$ be the normalization of $Bl_{\overline{\cI}}\bfP_d$ with $\pi_{\bfP_d}:\widehat{\bfP}_d\to \bfP_d$ the projection morphism. Let $\overline{E}$ be the $\pi_{\bfP_d}$-exceptional divisor on $\widehat{\bfP}_d$ such that $\cO_{\widehat{\bfP}_d}(-\overline{E})=\overline{\cI}\cdot\cO_{\widehat{\bfP}_d}$. Then for $k\gg 1$ the line bundle $L_k:=\pi_{\bfP_d}^*\cO_{\bfP_d}(k)\otimes \cO_{\widehat{\bfP}_d}(-\overline{E})$ is an $\SL(3)$-linearized ample line bundle on $\widehat{\bfP}_d$. By \cite{Kir85} we know that the GIT stability of $(\widehat{\bfP}_d, L_k)$ is independent of the choice of $k\gg 1$, and the GIT semistable locus $\widehat{\bfP}_d^{\rm ss}$ is contained in $\hU=\pi_{\bfP_d}^{-1}(\bfP_d^{\rm ss})$. Denote by  $\hU^{\rm ss}:=\widehat{\bfP}_d^{\rm ss}$ and  $\hcU^{\rm ss}:=\hcU\times_{\hU} \hU^{\rm ss}$.

\begin{thm}\label{thm:firstwall2}
There is an isomorphism $\psi: [\widehat{\calU}^{\rm ss}/\PGL(3)] \to \overline{\calP}^{\K}_{d,c_1+\epsilon}$.
\end{thm}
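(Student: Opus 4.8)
The plan is to show that the family $(\calX,\calC_\calX)\to\widehat{\calU}$ constructed in Theorem \ref{thm:firstwall1} (there called $(\calY,\calC_\calY)$) induces, after GIT stabilization, an isomorphism between the Kirwan-type blowup stack $[\widehat{\calU}^{\rm ss}/\PGL(3)]$ and the K-moduli stack $\ocP^{\K}_{d,c_1+\epsilon}$. The morphism $\psi$ will be constructed from the modular interpretation of $\widehat{\calU}^{\rm ss}$; the main content is that it is well-defined (lands in the K-moduli stack), bijective on points, and an isomorphism of stacks.

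First I would assemble the global family. By Theorem \ref{thm:firstwall1}, over $\widehat{\calU}$ we have a $\bQ$-Gorenstein flat family $(\calX,\calC_\calX)\to\widehat{\calU}$ which is the pullback of the universal family over $\bfP_d$ away from $\calE$ and whose fibers over $\calE$ are curves on $\bP(1,1,4)$ of the shape described in Definition \ref{defn:gitp114}. The construction is $\SL(3)$-equivariant, so it descends to a family over $[\widehat{\calU}/\PGL(3)]$, and restricting to $\widehat{\calU}^{\rm ss}$ we get a family over $[\widehat{\calU}^{\rm ss}/\PGL(3)]$. To see that this family lies in $\ocP^{\K}_{d,c_1+\epsilon}$, i.e. that every fiber $(\calX_u,(c_1+\epsilon)\calC_{\calX,u})$ is a K-semistable $\bQ$-Gorenstein smoothable log Fano pair, I would argue: the fibers over $\widehat{\calU}^{\rm ss}\setminus\calE$ are $(\bP^2,(c_1+\epsilon)C)$ with $C$ GIT-semistable in the Kirwan sense (these are exactly the points with $c_1$-K-semistable but $(c_1+\epsilon)$-still-semistable, handled by Theorem \ref{thm:firstwallps}, combined with Kirwan's theorem \cite{Kir85} identifying $\widehat{\bfP}_d^{\rm ss}$ away from $\overline E$ with the $G$-stable locus of $\bfP_d^{\rm ss}$ plus the part of the properly semistable locus not equivalent to $[Q_d]$); the fibers over $\calE\cap\widehat{\calU}^{\rm ss}$ are curves on $\bP(1,1,4)$ with $[D]\in\bfP_d'$ GIT-semistable in the sense of Definition \ref{defn:gitp114}, which are $(c_1+\epsilon)$-K-semistable by Theorem \ref{thm:k=gitp114}. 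Smoothability to $(\bP^2,cC_t)$ holds because the whole family is a modification of the universal family of plane curves. Thus the family is an object of $\ocP^{\K}_{d,c_1+\epsilon}(\widehat{\calU}^{\rm ss})$, and by the universal property of the K-moduli stack (Remark \ref{rem:pseudo-functor}) it induces a morphism $\widehat{\calU}^{\rm ss}\to\ocP^{\K}_{d,c_1+\epsilon}$, which is $\PGL(3)$-invariant and hence factors through $\psi:[\widehat{\calU}^{\rm ss}/\PGL(3)]\to\ocP^{\K}_{d,c_1+\epsilon}$.

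Next I would prove $\psi$ is an isomorphism. For bijectivity on closed points: by Theorem \ref{thm:firstwallps} every K-polystable point of $\oP^{\K}_{d,c_1+\epsilon}$ is either $(\bP^2,(c_1+\epsilon)C)$ with $C$ GIT-polystable and not $\cong Q_d$, or $(\bP(1,1,4),(c_1+\epsilon)D')$ with $[D']\in\bfP_d'$ GIT-polystable; on the other side, the closed points of $[\widehat{\calU}^{\rm ss}/\PGL(3)]$ are exactly the $\PGL(3)$-polystable orbits in $\widehat{\calU}^{\rm ss}$, which by Kirwan's description of the partial desingularization and by Theorem \ref{thm:firstwall1}(3) correspond bijectively to these same two families — the non-exceptional part matches GIT-polystable plane curves $\ne Q_d$, and the exceptional divisor $\calE^{\rm ss}\sslash\PGL(3)$ matches the $\bfP_d'$-GIT quotient (using Lemmas \ref{lem:firstwallluna1}, \ref{lem:firstwallluna2} to identify the normal-bundle weights with the $\bG_m$-action $\sigma$). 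So $\psi$ is bijective on closed points and, analyzing stabilizers, stabilizer-preserving (automorphism groups of $(\bP(1,1,4),D')$ and of the corresponding polystable pairs agree by Theorem \ref{thm:k=gitp114}'s identification with GIT). Since both sides are normal (the source because $\widehat{\calU}$ is a weighted blowup of a smooth variety, hence the stack quotient is normal; the target by Proposition \ref{prop:Zplanecurve}(2)–(3) and Proposition \ref{prop:lclastwall}-type normality), and $\psi$ is a bijective morphism between normal Artin stacks of finite type that is stabilizer-preserving and sends closed points to closed points, it is an isomorphism — exactly as in the argument used to prove Theorem \ref{thm:stabilization} and Theorem \ref{thm:firstwallbefore}. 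Concretely, $\psi$ is an isomorphism over the common open substack $\cP_d$, it is quasi-finite and proper (both stacks are proper over $\bC$: the target by Theorem \ref{thm:lwxlog}, the source because $\widehat{\bfP}_d^{\rm ss}\sslash\SL(3)$ is projective), and degree-one onto a normal target, hence an isomorphism.

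\textbf{Main obstacle.} The hard part will be the precise matching of the GIT chamber structure near the exceptional divisor with the K-moduli structure: one must check that Kirwan's $L_k$-GIT semistable locus $\widehat{\bfP}_d^{\rm ss}$ (for $k\gg1$) restricted to the exceptional divisor is $\SL(3)$-equivariantly identified, fiber by fiber, with the $\SL(2)$- (resp. $\bG_m$-) GIT quotient of $\bfP_d'$ from Definition \ref{defn:gitp114}, so that the line bundle $L_k$ and the CM line bundle $\Lambda_{c_1+\epsilon}$ induce the same stability. This requires comparing the Kirwan weight polynomial on the normal bundle $\cN_{G[Q_d]/\bfP_d}$ with the weight of the central fiber of the CM line bundle computed via Proposition \ref{prop:Fut=CMwt} — i.e. checking that the Futaki-invariant computation at the end of the proof of Theorem \ref{thm:k=gitp114} produces precisely Kirwan's linearization weights. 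Once this weight comparison is in place, together with the local VGIT picture of Theorem \ref{thm:localVGIT} identifying $\ocP^{\K}_{d,c_1+\epsilon}$ near $[Q_d']$ with a VGIT chamber, the isomorphism $\psi$ follows formally. I would also need the routine check, via cohomology and base change as at the end of Theorem \ref{thm:firstwall1}, that the contracted family is genuinely flat and $\bQ$-Gorenstein over $\widehat{\calU}^{\rm ss}$ so that it is a legitimate object of the moduli pseudo-functor.
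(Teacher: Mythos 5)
Your proposal follows essentially the same route as the paper: build the family $(\calX,\calC_\calX)\to\widehat{\calU}^{\rm ss}$ from Theorem~\ref{thm:firstwall1}, verify via Theorems~\ref{thm:k=gitp114} and \ref{thm:firstwallps} (plus Theorem~\ref{thm:Kss-spdeg} for the semistable-but-not-polystable fibers) that every fiber is $(c_1+\epsilon)$-K-semistable, obtain $\psi$ from the universal property of Remark~\ref{rem:pseudo-functor}, and conclude $\psi$ is an isomorphism using birationality, the closed-point matching from Theorem~\ref{thm:firstwallps}, stabilizer preservation, and normality of the target.

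The one place where your sketch goes astray is the parenthetical ``it is quasi-finite and proper (both stacks are proper over $\bC\dots$)''. Neither $[\widehat{\calU}^{\rm ss}/\PGL(3)]$ nor $\ocP^{\K}_{d,c_1+\epsilon}$ is proper over $\bC$: these are Artin stacks with positive-dimensional stabilizers at the strictly polystable points, so properness fails (only their \emph{good moduli spaces} are proper). The correct route, which is what the paper does, is to invoke Zariski's Main Theorem and reduce to showing $\psi$ is \emph{finite}; finiteness is then established by \cite[Proposition 6.4]{alper}, which requires checking that (a) the induced morphism of good moduli spaces is finite (here an isomorphism, by the polystable matching), (b) $\psi$ sends closed points to closed points, (c) $\psi$ is separated, (d) $\psi$ is quasi-finite, and (e) $\psi$ is representable — the last reducing to injectivity on stabilizer groups at closed points, which is the stabilizer-preserving property you already identify. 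Your appeal to ``the argument used to prove Theorem~\ref{thm:stabilization}'' gestures at this, but that theorem's proof is a torsor construction giving an explicit inverse, which does not directly adapt here; what you actually need is the Alper criterion. Once that is substituted for the ``quasi-finite and proper'' step, the rest of your plan — including the weight comparison at the exceptional divisor, which is precisely the content of the CM-line-bundle/Futaki computation in Theorem~\ref{thm:k=gitp114} — goes through.
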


\begin{proof}
We start from the construction of a morphism $\psi:[\hcU^{\rm ss}/\PGL(3)]\to \overline{\calP}^{\K}_{d,c_1+\epsilon}$. Denote by $\hU^{\rm ps}$ and $U^{\rm ps}$ the GIT polystable locus in $\widehat{\bfP}_d$ and $\bfP_d$ respectively. Let $\hcU^{\rm ps}$  be the preimage of $\hU^{\rm ps}$ under the coarse moduli space morphism $\hcU\to \hU$. For simplicity denote $G:=\SL(3)$.  Then by \cite{Kir85} we know that 
\[
\hU^{\rm ps} = \pi_{\bfP_d}^{-1} (U^{\rm ps}\setminus G[Q_d]) \cup G\cdot E_W^{\rm ps}
\]
where $E_W^{\rm ps}$ is the GIT polystable locus in the exceptional divisor $E_W$ of the weighted blowup $\widehat{W}\to W$. Then by Theorems \ref{thm:k=gitp114}, \ref{thm:firstwallps}, and \ref{thm:firstwall1} we know that the fibers of $(\cY,(c_1+\epsilon)\cC_{\cY})\to \hcU$ over $\hcU^{\rm ps}$ are all K-polystable. Hence by Theorem \ref{thm:Kss-spdeg} we know that the fibers over $\hcU^{\rm ss}$ are all K-semistable. Then $\psi$ is constructed by the universality of the K-moduli stacks (see Section \ref{sec:stackstab}). 


Next we show that $\psi$ is an isomorphism between Artin stacks. By Theorem \ref{thm:firstwallbefore} it is clear that $\psi$ is birational. Since $\overline{\cP}^\K_{d, c_1+\epsilon}$ is normal, by Zariski's Main Theorem it suffices to show the above morphism is finite. To do so we use \cite[Proposition 6.4]{alper}. To use Alper's result, we must check that the morphism on the level of good moduli spaces is finite, and that the morphism $\psi$ is representable, separated, quasi-finite, and sends closed points to closed points. 

First, we note that the good moduli spaces are isomorphic since GIT-polystability and K-polystability coincide by the above argument and Theorem \ref{thm:firstwallps}. In particular, we also see that the morphism $\psi$ sends closed points to closed points, as the closed points are the same. 
Hence it suffices to show that $\psi$ is representable, separated, and quasi-finite. 

\begin{lem}\label{lem:k-to-git}
There exists a morphism $\varphi: \ocP_{d,c_1+\epsilon}^{\K}\to [U/\PGL(3)]$ such that the composition $\varphi\circ \psi: [\hcU^{\rm ss}/\PGL(3)]\to [U/\PGL(3)]$ is induced from the stacky weighted blow-up  $\hcU\to U$. 
\end{lem}

\begin{proof}[Proof of the lemma]
Recall from Section \ref{sec:foundations} that $Z_{m, c_1+\epsilon}^{\circ}$ is a locally closed subscheme of the relative Hilbert scheme $\bH^{\chi;N}\times \bH^{\tilde{\chi};N}$. For simplicity, denote by $T:=Z_{m, c_1+\epsilon}^{\circ}$. Let $\pi:(\cX,\cD)\to T$ be the universal family.  Let $T':=\pr_1(T)$ be the projection in the Hilbert scheme $\bH^{\chi;N}$. Let $\pi':\cX'\to T'$ be the universal family such that $\pi=\pi'\times_{T'} T$.
Let $H'\subset T'$ (resp. $H\subset T$) be the divisor parametrizing $\bP(1,1,4)$. By the proof of Proposition \ref{prop:Zplanecurve}(2) we know that $T$ and $T'$ are both smooth, and $\pr_1: T\to T'$ is a smooth morphism. Moreover, since $H'$ is a $\PGL(N+1)$-orbit in $\bH^{\chi;N}$, we know that $H'$ and $H$ are smooth prime divisors in $T'$ and $T$, respectively. 

Since $\pi'$ is a $\bP^2$-fibration over $T'\setminus H'$, there exists a dominant \'etale morphism $\widetilde{T}^\circ \to T'\setminus H'$ such that $\pi'\times_{T'} \widetilde{T}^{\circ}$ is a trivial $\bP^2$-bundle over $\widetilde{T}^{\circ}$. By Zariski's main theorem, there exists an open immersion $\widetilde{T}^{\circ}\hookrightarrow \widetilde{T}$ to a smooth variety $\widetilde{T}$ together with a quasi-finite morphism $\widetilde{T}\to T'$ \'etale away from $H'$ whose image contains the generic point of $H'$. In particular, $\widetilde{T}$ is flat over $T'$ by miracle flatness. Since both $T'\setminus H'$ and $H'$ are $\PGL(N+1)$-orbits, there exists $T_i'=g_i\cdot \widetilde{T}$ where $g_i\in\PGL(N+1)$ such that $\sqcup_{i} T_i'\to T'$ is a fppf covering. Denote by $H_i'$ the preimage of $H'$ in $T_i'$. Then from the above discussion we see that $\pi'\times_{T'} (T_i'\setminus H_i'):\cX'_{T_i'\setminus H_i'}\to T_i'\setminus H_i'$ is a trivial $\bP^2$-bundle. Let $\cL_i'$ be the Weil divisorial sheaf on $\cX'_{T_i'}$ as 
the Zariski closure of $\cO(1)$ on $\cX'_{T_i'\setminus H_i'}$. Since the families here are $\bQ$-Gorenstein with integral fibers, we know that $\cL_i'^{[-3]}$ is the same as $\omega_{\cX'_{T_i'}/T_i'}$ twisted by the pull-back of some line bundle on the base $T_i'$. After replacing $T_i'$ by its Zariski covering, we may assume that $\cL_i'^{[-3]}\cong \omega_{\cX'_{T_i'}/T_i'}$. By Kawamata-Viehweg vanishing, we know that $(\pi'_{T_i'})_* \cL_i'$ is a rank $3$ vector bundle over $T_i'$.

We claim that the $\PGL(3)$-torsors $\{\sP_i'/T_i'\}_i$, by taking a projectivized basis of $(\pi'_{T_i'})_* \cL_i'$, is a descent datum. Indeed, from the above construction we know that the cocycle condition of $\cL_i'$ is off by a third root of unity. Hence the cocycle condition of $(\pi'_{T_i'})_* \cL_i'$ is off by a scalar, which implies that a projectivized basis of $(\pi'_{T_i'})_* \cL_i'$  satisfies the cocycle condition. Hence by the fppf descent of $G$-torsors \cite[Tag 04U1]{stacksproject}, the $\PGL(3)$-torsors $\{\sP_i'/T_i'\}_i$ descend to a $\PGL(3)$-torsor $\sP'/T'$. Pulling back to $T$ we get a $\PGL(3)$-torsor $\sP/T$ where over $T\setminus H$ it is obtained by taking projectivized basis of $\pi\times_T (T\setminus H)$. It is clear that $\sP\to T$ is $\PGL(N+1)$-equivariant. Hence the morphism $\varphi$ is induced by the $\PGL(3)$-equivariant morphism $\sP\to U$ where $(t,[s_0,s_1,s_2])\mapsto [s_0,s_1,s_2](\cD_t)\subset \bP^2$. 
\end{proof}

From Lemma \ref{lem:k-to-git} and the separatedness of $\hcU^{\rm ss}\to U$, we know that to check $\psi$ is representable, separated, and quasi-finite, it suffices to show that the restriction of $\psi$ on $[\cE^{\rm ss}/\PGL(3)]$ maps isomorphically onto $[H/\PGL(N+1)]$ where $\cE^{\rm ss}:=\cE\cap \hcU^{\rm ss}$. To prove this, we will construct an inverse morphism $\psi^{-1}: [H/\PGL(N+1)]\to [\cE^{\rm ss}/\PGL(3)]$. We will focus on the case when $d$ is even, as the strategy for $d$ odd is similar. By Theorem \ref{thm:firstwall1}, we know that $\cE\cong \cE_W\times_{\PGL(2)}\PGL(3)$ where $\PGL(2)$ is identified with $\Aut(\bP^2, Q_d)$ as a subgroup of $\PGL(3)$. Hence we know that $[\cE^{\rm ss}/\PGL(3)]\cong [\cE_W^{\rm ss}/\PGL(2)]$. From Theorem \ref{thm:firstwall1} we know that $\cE_W\cong[(\bfA_d'\setminus\{0\})/\bG_m]$ is the weighted projective stack. 
Let us consider the action of $\GL(2)\times \bG_m$ on $\bfA_d'$ where $\GL(2)$ acts on $H^0(\bP^1,\cO_{\bP^1}(4j))$ as the symmetric power of the standard $\GL(2)$-action on $(\bP^1,\cO(1))$, and $\bG_m$ acts as $\sigma$. Consider a $1$-PS $\tau:\bG_m\to \GL(2)\times \bG_m$ defined as $\tau(t)=(\mathrm{diag}(t,t), t^4)$, then it is clear that $\tau$ acts as identity on $\bfA_d'$. Consider the group $\cG:=\GL(2)\times\bG_m/_{\tau}\bG_m$. It is clear that the quotient $\bar{\sigma}$ of $\sigma$ gives a $1$-PS in $\cG$ and $\cG/_{\bar{\sigma}}\bG_m\cong \PGL(2)$. Hence we know that $[\cE_W/\PGL(2)]\cong [(\bfA_d'\setminus\{0\})/\cG]$. 

Next, we will construct a morphism $[H/\PGL(N+1)]\to [(\bfA_d'\setminus\{0\})/\cG]$ which directly induces $\psi^{-1}$. Indeed, this reduces to construct a $\PGL(N+1)$-equivariant $\cG$-torsor $\sP_H/H$ and a $\PGL(N+1)$-invariant morphism $\sP_H\to \bfA_d'\setminus\{0\}$. Let $\pi_H:(\cX_H,\cD_H)\to H$ be the universal family where each fiber is isomorphic to $\bP(1,1,4)$ with a degree $2d$ curve. By similar argument to the proof of Lemma \ref{lem:k-to-git}, there exists an \'etale covering $\sqcup_i H_i\to H$ and a Weil divisorial sheaf $\cL_{H_i}$  on $\cX_{H_i}$ (as fiberwise $\cO(1)$ on $\bP(1,1,4)$) such that $\cL_{H_i}^{[-6]}\cong \omega_{\cX_{H_i}/H_i}$. Let $\cF_i:= (\pi_{H_i})_*\cL_i$ as a rank $2$ vector bundle on $H_i$. Define $\cF_i':= (\pi_{H_i})_*\cL_i^{[4]}/\Sym^4\cF_i$ as a line bundle on $H_i$. Then taking basis $(s_0,s_1)$ and $s_2$ of $\cF_i$ and $\cF_i'$ resp. gives a $\GL(2)\times\bG_m$-torsor $\widetilde{\sP}_{H_i}/H_i$. By projectivizing $(s_0,s_1,s_2)\mapsto (ts_0,ts_1,t^4 s_2)$ under $\tau$, we get a $\cG$-torsor $\sP_{H_i}/H_i$. Since the descent datum of $\{\cL_i\}$ is off by a sixth root of unity, it is easy to see that $\{\sP_{H_i}/H_i\}_i$ form an \'etale descent datum hence descend to a $\cG$-torsor $\sP_H/H$. As the $\sqcup_i H_i$ and $\cL_{H_i}$ can be chosen $\PGL(N+1)$-equivariantly, we know that $\sP_H/H$ is also $\PGL(N+1)$-equivariant. 
Notice that a  projectivized basis $[s_0,s_1,s_2]$ in $\sP_H$ is the same as an equivalent class of projective coordinates $[x,y,z]$ for $\bP(1,1,4)$ under the equivalence relation $z\sim z+g(x,y)$. For a point $t\in H$ and a projectivized basis $[s_0,s_1,s_2]$ lying over $t$, there is a unique projective coordinates $[x,y,z]$ in the equivalent class such that $\cD_t$ has the form \eqref{eq:even-p114}. This gives the $\PGL(N+1)$-invariant morphism $\sP_H\to \bfA_d'\setminus\{0\}$ whose image is contained in the GIT semistable locus by Theorem \ref{thm:k=gitp114}. Thus the proof is finished.
\end{proof}

\begin{proof}[Proof of Theorem \ref{thm:firstwallafter}]
The theorem follows from Theorems \ref{thm:k=gitp114}, \ref{thm:firstwall1}, and \ref{thm:firstwall2}.
\end{proof}

We have thus completed the proof of Theorem \ref{mthm:firstwall}.

\begin{proof}[Proof of Theorem \ref{mthm:firstwall}]
The proof follows from Theorems \ref{thm:firstwallbefore}, \ref{thm:firstwallon}, and \ref{thm:firstwallafter}.
\end{proof}

\section{K-moduli spaces of plane quartics and sextics and K3 surfaces}\label{sec:lowdegree}

In the first section we show that the wall crossing discussed in Section \ref{sec:firstwall} is the only wall crossing in the log Fano region for $d = 4$ and $6$. Using that, we relate the K-moduli spaces to certain moduli spaces of K3 surfaces.

\subsection{K-moduli wall crossings}
The goal of this section is to prove the following theorem.

\begin{theorem}\label{thm:quartsext} Assume $d=4$ or $6$. Then for any $\frac{3}{2d}<c<\frac{3}{d}$,
we have $\ocP^\K_{d,c}\cong \ocP^\K_{d,\frac{3}{2d}+\epsilon}$
as Artin stacks. In other words, there is only one wall crossing in the log Fano region.
\end{theorem}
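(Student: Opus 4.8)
The key tool is Proposition~\ref{prop:lclastwall}(2): to show there are no further walls after $c_1 = \frac{3}{2d}$, it suffices to prove that for some $c_0 \in (\frac{3}{2d}, \frac{3}{d})$ (equivalently, for $c_0 = \frac{3}{2d}+\epsilon$), every K-polystable pair $[(X, c_0 D)] \in \ocP^\K_{d,c_0}$ satisfies $\lct(X; D) \geq \frac{3}{d}$. Once this is established, Proposition~\ref{prop:lclastwall}(2) immediately gives $\ocP^\K_{d,c} \cong \ocP^\K_{d,c_0}$ for all $c_0 < c < \frac{3}{d}$, and combined with Proposition~\ref{prop:openkps} (the open-interval statement for K-polystability) this extends to all $c \in (\frac{3}{2d}+\epsilon, \frac{3}{d})$, i.e. all $c > \frac{3}{2d}$ in the range where the wall $c_1$ is crossed. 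So the whole theorem reduces to a log canonical threshold computation on the (finitely many) K-polystable pairs appearing just after the first wall.

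By Theorem~\ref{mthm:firstwall}(3) / Theorem~\ref{thm:firstwallps}, the K-polystable pairs $[(X, (c_1+\epsilon)D)]$ are precisely: (a) $(\bP^2, C)$ with $C$ a GIT-polystable plane curve of degree $d$ not projectively equivalent to $Q_d$, and (b) $(\bP(1,1,4), D')$ with $[D'] \in \bfP_d'$ GIT-polystable in the sense of Definition~\ref{defn:gitp114}. For $d = 4$ the GIT-polystable plane quartics are classically known (smooth quartics, the double conic---excluded as it is $Q_4$---and a short explicit list of polystable curves such as unions of conics and the curve with maximal symmetry), and similarly for $d=6$. So the plan is: first enumerate the GIT-polystable curves on $\bP^2$ of degree $d = 4, 6$ and on $\bP(1,1,4)$ of degree $2d$ via Definition~\ref{defn:gitp114}, then for each one compute $\lct$ of the pair $(X; D)$ and check it is $\geq \frac{3}{d}$. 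For $\bP^2$ with $d=4$ this means checking $\lct(\bP^2; C) \geq \frac{3}{4}$ for each GIT-polystable quartic $C$, which is Proposition~\ref{prop:lctquartic} referenced in the introduction; for $d=6$, checking $\lct(\bP^2; C) \geq \frac{1}{2}$, which is Proposition~\ref{prop:lctsextic}. The surfaces $\bP(1,1,4)$ contribute their own finite list, and one computes $\lct$ there as well (the GIT-polystable curves on $\bP(1,1,4)$ are constrained enough---by the weight structure in Definition~\ref{defn:gitp114}---that one can list them and bound their singularities; the worst that can appear at the quotient singularity is controlled by Theorem~\ref{thm:localindex} and the explicit normal forms).

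Concretely I would proceed as follows. \emph{Step 1:} Invoke Propositions~\ref{prop:lclastwall}(2) and~\ref{prop:openkps} to reduce to: show $P^{\lc}_{d,c_1+\epsilon} = \oP^\K_{d,c_1+\epsilon}$, i.e. every $(c_1+\epsilon)$-K-polystable pair has $\lct \geq \frac{3}{d}$. \emph{Step 2:} Apply Theorem~\ref{thm:firstwallps} to get the explicit list of K-polystable pairs: plane curves (GIT-polystable, excluding $Q_d$) on $\bP^2$, and curves $[D']\in \bfP_d'$ GIT-polystable on $\bP(1,1,4)$. \emph{Step 3:} For $\bP^2$, enumerate GIT-polystable degree-$d$ curves for $d=4,6$ from the classical GIT analysis and compute their log canonical thresholds (Propositions~\ref{prop:lctquartic}, \ref{prop:lctsextic}); verify $\lct \geq \frac{3}{d}$ in each case. \emph{Step 4:} For $\bP(1,1,4)$, use Definition~\ref{defn:gitp114} to classify GIT-polystable $[D']$; for each, determine the singularities of $D'$ (both at the smooth locus and at the $\frac{1}{4}(1,1)$ point), and check $\lct(\bP(1,1,4); D') \geq \frac{3}{d}$. \emph{Step 5:} Conclude via Proposition~\ref{prop:lclastwall}(2).

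\textbf{Main obstacle.} The hard part is Step~4 together with the bookkeeping in Steps~3--4: one must have a complete and correct list of GIT-polystable objects in both moduli problems and then carry out genuine (if elementary) lct computations---these involve resolving the singularities of each specific curve, including curves passing through the orbifold point of $\bP(1,1,4)$, and comparing to the threshold $\frac{3}{d}$. For $d=4$ the threshold $\frac34$ is fairly tight (e.g. an $A_k$ singularity of a plane quartic has $\lct = \frac{1}{2}+\frac{1}{k+1}$, so one needs $k \leq 3$, i.e. at worst tacnodes), so one must verify that GIT-polystability genuinely forbids worse singularities; the double conic (which would have $\lct = \frac12 < \frac34$) is exactly the orbit that was blown up, consistent with the picture. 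The analogous tightness for $d=6$ ($\lct \geq \frac12$) is more forgiving but the list of polystable sextics is longer. I would also need the interpolation result Proposition~\ref{prop:k-interpolation} to know that $\lct \geq \frac{3}{d}$ propagates K-(poly)stability across the whole remaining interval, which is exactly what Proposition~\ref{prop:lclastwall} packages.
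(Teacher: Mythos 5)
Your proposal is correct and follows the paper's own argument closely: reduce via Proposition~\ref{prop:lclastwall}(2) to showing every $(c_1+\epsilon)$-K-polystable pair has $\lct \geq \frac{3}{d}$, use Theorem~\ref{thm:firstwallps} to enumerate those pairs (GIT polystable curves on $\bP^2$ away from $Q_d$, and GIT polystable curves on $\bP(1,1,4)$ in the sense of Definition~\ref{defn:gitp114}), and then verify the lct bound case by case. One small remark: what you flag as the ``main obstacle'' in Step~4 (the $\bP(1,1,4)$ curves) is in fact already handled by Propositions~\ref{prop:lctquartic}(2) and~\ref{prop:lctsextic}(2), which you cite only for the $\bP^2$ case in Step~3 --- each of those propositions has two parts, the second covering precisely GIT polystable curves of degree $2d$ on $\bP(1,1,4)$ (via the normal form $z^2 = f(x,y)$ for $d=4$ and Shah's classification for $d=6$), so Steps~3 and~4 collapse into a single citation and no further lct computations are needed.
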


\begin{rem}
 Note that the K-moduli space $\oP_{4,\frac{1}{2}}^{\K}$ was previously described by Odaka, Spotti, and Sun \cite{OSS16} in their study of K-moduli spaces of del Pezzo surfaces of degree $2$. 
\end{rem}

Recall in Theorem \ref{mthm:firstwall}, we constructed $\ocP^\K_{d, c_1 + \epsilon}$ as the partial Kirwan blow-up of $\ocP_d^{\GIT}$. Therefore, by Proposition \ref{prop:lclastwall},  to prove the above theorem it suffices to show that $\lct(X;C) \geq 3/d$ for any K-polystable point $[(X,(c_1+\epsilon)C)]\in \oP_{d,c_1+\epsilon}^{\K}$
for $d=4$ or $6$. Using an explicit description
of such GIT polystable points, we will verify the lct inequality as follows. Here we consider GIT of curves on $\bP(1,1,4)$ in the sense of Definition \ref{defn:gitp114}.

\begin{prop}[Degree $d = 4$]\label{prop:lctquartic}\leavevmode
 \begin{enumerate}
  \item Any GIT polystable plane curve of degree 4 that is not the double conic
  has $\lct \geq 3/4$.
  \item Any GIT polystable curve of degree $8$ in 
  $\bP(1,1,4)$ has $\lct \geq 3/4$.
 \end{enumerate}
\end{prop}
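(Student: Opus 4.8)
The plan is to reduce both statements to a finite explicit check. First I would invoke classical GIT for plane quartics (see Mukai or Dolgachev, or the description recalled in the introduction of this paper): the GIT polystable plane quartics other than the double conic are either smooth, or have the minimal degenerations occurring on closed orbits — namely the cuspidal quartic-limit configurations whose closed orbits are represented by specific equations such as $x^4+y^4+z^4=0$, the four-line configuration, the tacnodal curve $y^2z^2=x^4$ type limit, two-conics type $Q_1Q_2$, or more precisely the finitely many projective classes of polystable quartics listed in the GIT analysis. For each such class one computes $\lct(\bP^2;C)$ directly: for a smooth quartic the lct is $1>3/4$; for a quartic with at worst an $A_k$ ($k\le 2$) singularity the lct is controlled by the worst singular point, and a node gives lct $1$, a cusp gives lct $\frac56 > \frac34$, a tacnode gives $\frac34$; for reducible polystable curves such as two distinct conics or a conic plus two lines in general position, one checks that the non-normal-crossing points still have lct $\ge \frac34$ by Skoda-type bounds (multiplicity $\le 2$ at any point forces $\mathrm{lct}\ge 1$, and along a double line we must stay away from that case since the double conic is excluded). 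The key point is simply that excluding the double conic removes exactly the pairs where a divisor of coefficient $\frac12$ would push $(\bP^2,\frac34 C)$ outside the log canonical threshold, and every remaining polystable class has at most a tacnode-type worst singularity, hence lct exactly $\frac34$ or better.

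For part (2), the curves of degree $8$ on $\bP(1,1,4)$ parametrized by $\bfP_4'$ in the sense of Definition \ref{defn:gitp114} have equation $z^2 + f_8(x,y)=0$ (in the notation of the first wall construction, since $d=4$ means $d/2=2$ and the only $f_{2i}$ term is $f_8$). Modulo the $\SL(2)$-action on $(x,y)$, the GIT polystable binary octics $f_8(x,y)$ are the well-understood ones: smooth (eight distinct roots), or with roots of multiplicity at most $4$ in a balanced configuration, with the closed-orbit degenerations being $x^4y^4$ and $x^8+y^8$-type (actually $x^ay^b$ with $a=b=4$ being the unique strictly polystable non-smooth one up to the finite stabilizer cases). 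For each such $f_8$, the curve $C=(z^2=f_8(x,y))$ is a double cover of $\bP^1$ branched at the roots; away from the singular point $[0,0,1]$ of $\bP(1,1,4)$ the pair $(\bP(1,1,4),C)$ has the same singularities as a plane-curve double cover, so the worst is an $A_3$ (tacnode) when $f_8$ has a root of multiplicity $4$, giving lct $\frac34$; and at the singular point $[0,0,1]$ one computes on the smooth cover $\tilde X$ (the index-$2$ cyclic cover), where $C$ pulls back to a divisor with $\mathrm{ord}=1$ in suitable coordinates, so $\mathrm{lct}_{[0,0,1]} \ge \frac12 \cdot \frac{2}{1} = 1 > \frac34$, or more carefully one runs the explicit local computation as in the proof of Theorem \ref{thm:localindex}. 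Combining, $\lct(X;C)\ge \frac34$ in all cases.

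\textbf{Main obstacle.} The genuinely delicate step is not the lct computation at smooth points or at the quotient singularity — those are routine — but rather assembling the \emph{complete and correct} list of GIT polystable objects in both settings and confirming that none of them was inadvertently omitted; in particular one must make sure that the only polystable plane quartic with lct exactly $\frac34$ (the tacnodal one, i.e.\ the curve on the boundary of the lc locus) is indeed lc with threshold $\frac34$ and not strictly less, which requires checking that a tacnode $y^2=x^4$ contributes exactly $\frac12+\frac14 = \frac34$ via the log resolution. Likewise for the octic with an $x^4$-factor one must verify the tacnodal contribution is exactly $\frac34$. After that, the proof of Theorem \ref{thm:quartsext} follows formally: Proposition \ref{prop:lclastwall}(2) with $c_0 = c_1+\epsilon$ (noting $P^{\lc}_{d,c_1+\epsilon} = \oP^\K_{d,c_1+\epsilon}$ by the lct bounds just established, using Theorem \ref{thm:firstwallps} to enumerate the K-polystable points as exactly the GIT polystable plane quartics/octics) gives $\ocP^\K_{d,c}\cong\ocP^\K_{d,c_1+\epsilon}$ for all $c\in(c_1,\frac3d)$, and $c_1 = \frac{3}{2d}$ since $d$ is even.
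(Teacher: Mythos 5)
Your proof follows essentially the same route as the paper: both rely on the classical GIT classification of plane quartics (MFK, Hyeon--Lee) and of binary octics, identify the tacnode ($A_3$, i.e.\ $y^2=x^4$, lct $=\tfrac12+\tfrac14=\tfrac34$) as the worst singularity appearing on a polystable object once the double conic is excluded, and conclude. The paper's write-up is simply more compressed: it cites the classification, notes GIT stable quartics have at worst $A_1$/$A_2$ and the strictly polystable reduced ones (cat-eye, ox) have at worst $A_3$, and for part (2) that roots of $f_8$ having multiplicity $\le 4$ forces at worst $A_3$.

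One small correction: your parenthetical ``multiplicity $\le 2$ at any point forces $\mathrm{lct}\ge 1$'' is false — Skoda gives the \emph{upper} bound $\mathrm{lct}\le n/\mathrm{mult}$, and a tacnode or cusp has multiplicity $2$ with lct $\tfrac34$ or $\tfrac56$ respectively. The true statement you need there is that a reduced nodal point has lct $1$ (two smooth transverse branches). This does not affect your conclusion since you correctly identify the tacnodal case with lct exactly $\tfrac34$ as the worst polystable degeneration, but the Skoda reasoning as written does not establish the bound for the reducible cases; you should instead check directly that the polystable reducible quartics (cat-eye, ox) have only $A_3$ and $A_1$ singularities.
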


\begin{proof}
 (1) Recall that a plane quartic curve
 is GIT stable if and only if it has type $A_1$ or $A_2$ singularities.
 Moreover, the only reduced strictly GIT polystable quartic curves
 are the ``cat-eye'' and ``ox'' which have type singularities of type $A_3$ and possibly $A_1$  (see \cite[Table on Page 80]{MFK94} and \cite[Proposition 7]{hyeon2010log}).
 
 (2) Any GIT semistable curve $C$ of degree $8$ in $\bP(1,1,4)$
 is given by the equation $z^2=f(x,y)$ where $f\neq 0$ is a degree $8$ polynomial whose roots have multiplicities at most $4$.
 Therefore $C$ has at worst singularities of type $A_3$.
\end{proof}

\begin{prop}[Degree $d=6$]\label{prop:lctsextic} \leavevmode
\begin{enumerate}
   \item Any GIT polystable plane curve of degree 6 that is 
   not the triple conic  has $\lct \geq 1/2$.
  \item Any GIT polystable curve of degree $12$ in 
  $\bP(1,1,4)$ has $\lct \geq 1/2$.
  \end{enumerate}
  
\end{prop}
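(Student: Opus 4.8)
\textbf{Proof proposal for Proposition \ref{prop:lctsextic}.}

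The plan is to mirror the strategy of Proposition \ref{prop:lctquartic}: classify the relevant GIT polystable curves explicitly and then bound their singularities. For part (1), I would invoke the classical GIT analysis of plane sextics (see e.g. \cite{Sha80} or the treatment in \cite{Laza}): a plane sextic is GIT semistable if and only if it is not "too singular" in a precise sense, and the strictly polystable ones with positive-dimensional stabilizer are the triple conic and certain highly symmetric configurations. The key point is that any GIT polystable sextic $C$ other than the triple conic has only singularities with $\lct_x(\bP^2;C)\geq \tfrac12$; concretely, $C$ cannot have a point of multiplicity $\geq 3$ (a triple point forces instability unless $C$ degenerates toward the triple conic) and its singularities of multiplicity $2$ are at worst of a type whose log canonical threshold is at least $\tfrac12$. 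Since for a reduced plane curve through a point $x$ of multiplicity $2$ one always has $\lct_x\geq \tfrac12$, and at smooth points $\lct=1$, the global bound $\lct(\bP^2;C)\geq\tfrac12$ follows once one rules out triple points and worse on polystable $C\neq 3Q$. I would also need to handle non-reduced polystable sextics: $2Q+L$-type or $2L_1+2L_2+\cdots$ configurations must be checked directly, but a curve with a component of multiplicity $\geq 3$ other than $3Q$ is not polystable (its stabilizer is non-reductive, by the same reasoning as in Corollary \ref{cor:Q_dps}), so only multiplicity $\leq 2$ components occur and $\lct\geq\tfrac12$ again.

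For part (2), any GIT semistable curve $C$ of degree $12$ on $\bP(1,1,4)$ is, by Definition \ref{defn:gitp114} with $d=6$ even, cut out by an equation of the form $z^3 + f_8(x,y)z + f_{12}(x,y)=0$, and GIT semistability with respect to the induced $\SL(2)$-action bounds the orders of vanishing of $f_8,f_{12}$ along the rulings. The idea is that away from the cyclic quotient singularity $[0,0,1]$ of type $\tfrac14(1,1)$ the surface is smooth, and the curve there is a triple cover of $\bP^1$ branched appropriately; GIT semistability prevents $C$ from acquiring a singularity worse than, roughly, an ordinary triple point or a $D$-type singularity, all of which have $\lct\geq\tfrac12$ on the smooth locus. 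At the singular point $[0,0,1]$ I would pass to the smooth $\tfrac14(1,1)$-cover $\widetilde{X}\cong\bA^2$ with coordinates $(u,v)$, pull back $C$ to $\widetilde{C}$, and compute: since $\ord_{\tilde x}\widetilde{C}\leq 2$ (the equation $z^3+\cdots$ shows the multiplicity of $\widetilde C$ at $\tilde x$ is controlled by the lowest-degree term, which for a semistable configuration is at most quadratic in $(u,v)$), Skoda-type estimates give $\lct_{\tilde x}(\widetilde X;\widetilde C)\geq\tfrac12$, hence $\lct_x(X;C)\geq\tfrac12$ by the finite degree formula for log canonical thresholds under quotients.

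Having established both parts, Theorem \ref{thm:quartsext} for $d=6$ follows exactly as outlined after its statement: every K-polystable point $[(X,(c_1+\epsilon)C)]\in\oP_{6,c_1+\epsilon}^{\K}$ has $X\cong\bP^2$ with $C$ a GIT polystable sextic $\neq 3Q$, or $X\cong\bP(1,1,4)$ with $[C]\in\bfP_6'$ GIT polystable, and in either case Propositions \ref{prop:lctsextic}(1) or (2) give $\lct(X;C)\geq\tfrac12=\tfrac3d$, so $P^{\lc}_{6,c_1+\epsilon}=\oP^{\K}_{6,c_1+\epsilon}$ and Proposition \ref{prop:lclastwall}(2) finishes the argument.

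\textbf{Main obstacle.} The delicate step is the explicit GIT classification in both parts — pinning down precisely which singularity types appear on GIT polystable sextics in $\bP^2$ and on GIT polystable degree-$12$ curves in $\bP(1,1,4)$, including the non-reduced and strictly-semistable boundary cases, and verifying in each case that no singularity worse than $\lct=\tfrac12$ occurs. For plane sextics this is classical but requires care at the strictly polystable locus; for curves on $\bP(1,1,4)$ one must carefully translate the $\SL(2)$-stability conditions of Definition \ref{defn:gitp114} into vanishing-order bounds on $f_8,f_{12}$ and then into a singularity classification, which is where most of the real work lies.
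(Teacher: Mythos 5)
Your proposal has a genuine gap in the singularity analysis for both parts, and it is precisely in the place you identified as "where most of the real work lies."

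For part (1), you claim that a GIT polystable sextic $C\neq 3Q$ "cannot have a point of multiplicity $\geq 3$" and then derive $\lct\geq\tfrac12$ from the multiplicity-$2$ estimate. This characterization of the polystable locus is false. GIT polystable plane sextics can and do have triple and quadruple points. In Shah's classification (which is what the paper invokes), Group I sextics have at worst ADE singularities and $\lct>\tfrac12$, but the strictly polystable ones in Groups II and III carry simple elliptic singularities and cusps of type $T_{p,q,r}$ — these are multiplicity-$3$ and $4$ points (e.g.\ an ordinary quadruple point or a cusp with tangent cone $z^3$), and they have $\lct$ exactly $\tfrac12$. Your reduction to double points would, if correct, give the strict inequality $\lct>\tfrac12$ for $C\neq 3Q$, which is not what happens: equality is achieved on a positive-dimensional locus. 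So the argument as stated cannot be repaired simply by tightening constants — you need Shah's explicit classification and a case check, which is exactly what the paper does (citing \cite[Theorem 2.4]{Sha80} and reading off the $\lct$ for each group).

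For part (2), the same issue recurs. You argue that the multiplicity of $\widetilde{C}$ at the $\tfrac14(1,1)$ point is at most $2$ and conclude via Skoda; but for $d=6$ even the defining form $z^3+f_8 z+f_{12}$ has nonzero constant term, so $C$ does not pass through $[0,0,1]$ at all — that step is vacuous. The real boundary cases live at smooth points of $\bP(1,1,4)$: the generic strictly polystable curve is $z^3+a(xy)^4 z+b(xy)^6=0$, which has a quasi-homogeneous triple point with tangent cone $z^3$ at $[1,0,0]$ and $[0,1,0]$, and the monomial-weight computation gives $\lct$ exactly $\tfrac12$. Your claim that GIT semistability forces nothing worse than "an ordinary triple point or a $D$-type singularity, all of which have $\lct\geq\tfrac12$" is too vague to carry the argument; the point is that the borderline singularities are worse than $D$-type but happen to sit exactly at $\lct=\tfrac12$. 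The paper proves this by citing \cite[Theorem 4.3]{Sha80} for the GIT classification of degree-$12$ curves on $\bP(1,1,4)$ and checking each case.

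In short, the global plan (reduce Theorem \ref{thm:quartsext} for $d=6$ to these two $\lct$ bounds via Proposition \ref{prop:lclastwall}) is correct and matches the paper, but the proposed proofs of the two $\lct$ bounds rely on a false structural claim about GIT polystable curves; replacing that claim with Shah's explicit classification is unavoidable.
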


\begin{proof}
 (1) The GIT polystable plane sextics are classified in
 \cite[Theorem 2.4]{Sha80}. In Shah's terminology, 
 we only need to check that all curves in Group I, II, or
 III have $\lct\geq 1/2$. Group I are ADE singularities,
 so they have $\lct>1/2$. Group II  and III both have $\lct=1/2$.
 
 (2) This follows from \cite[Theorem 4.3]{Sha80}.
 In Shah's terminology, Case 1(i) corresponds to ADE singularities
 so $\lct>1/2$. Case 1(ii) has equations
 $(z^3+a(xy)^4 z+b(xy)^6=0)$ where $a,b$ are not simultaneously zero, and the $\lct=1/2$ in this case.
 Case 2 is similar.
\end{proof}

\begin{proof}[Proof of Theorem \ref{thm:quartsext}] As mentioned above, the proof follows from Theorem \ref{mthm:firstwall}, Proposition \ref{prop:lclastwall}, Theorem \ref{thm:firstwallps}, and the two above propositions explicitly calculating the lct of the GIT polystable curves. \end{proof}

\subsection{Relating degree 4 and 6 plane curves to K3 surfaces}\label{sec:K3surface} In this section, we describe a relation between K-moduli spaces of plane curves of degree $4$ and $6$ and certain Baily-Borel compactifications of moduli spaces of K3 surfaces which already appear in the literature. In the case of quartics, we use work of Hyeon and Lee \cite{hyeon2010log} and Kond\=o \cite{kondok3}. In the case of sextics, we use work of Shah \cite{Sha80} and Looijenga \cite{Loo03} (see also Laza \cite{laza2012ksba}). 

\subsubsection{Plane quartics}
We recall that the GIT quotient $\oP_4^{\GIT}$ generically parametrizes curves in $\bP^2$ with at worst cuspidal singularities. There is a curve parametrizing plane curves with a tacnode (locally $(x^2 + y^4=0)$), and there is a point on this curve parametrizing the double conic. 
In \cite{hyeon2010log}, the authors construct two GIT moduli spaces which do not coincide with the standard GIT quotient $\oP^\GIT_4$. In particular, they construct $\oM_3^{\rm hs} := \Hilb_{3,2} \quotient \SL(6)$ and $\oM_3^{\rm cs} := \Chow_{3,2} \quotient \SL(6)$, where $\Hilb_{3,2}$ (resp. $\Chow_{3,2}$) denotes the closure of the locus of bicanonical curves of genus three in the Hilbert scheme (resp. Chow scheme). They then show the existence of the following diagram (see \cite[Theorem 1 and Page 4]{hyeon2010log}):
\[
\oM_3^{\rm cs}\xleftarrow{~~\Psi^+~~}\oM_3^{\rm hs} \xrightarrow{~~\Theta~~} \oP^{\GIT}_4
\]
where $\Theta$ is a divisorial contraction corresponding to the blowup of the point parametrizing the double conic in $\oP^{\GIT}_4$ and $\Psi^+$ is a small contraction identifying all tacnodal curves in $\oM_3^{\rm hs}$ to the same point in $\oM_3^{\rm cs}$. By Theorem \ref{mthm:firstwall} and Theorem \ref{thm:quartsext}, we have $\overline{M}^{\rm hs}_3 \cong \oP^\K_{4,\frac{3}{4}-\epsilon}$.
By construction, the space $\oP^{\rm K}_{4, \frac{3}{4}-\epsilon}$ has a divisor parametrizing curves on $\bP(1,1,4)$ (the exceptional divisor of the weighted blowup of the double conic), along with a curve which still parametrizes the tacnodal curves. This is the curve which is contracted via $\Psi^+$ to a point in $\oM_3^{\rm cs}$. 

In \cite{kondok3}, Kond\=o constructs a moduli space of K3 surfaces by considering $\bZ/4\bZ$-cover of $\bP^2$ branched along a quartic curve. Kond\=o's moduli space $\oP^*_4$ is a Baily-Borel compactification of the moduli space $M$ of ADE K3 surfaces of degree $4$ with $\bZ/4\bZ$-symmetry. The boundary $\oP_4^*\setminus M$ is a single point. Hyeon and Lee prove (see \cite[Proposition 21]{hyeon2010log}) that $\oP^*_4 \cong \oM_3^{\rm cs}$ and identify the point in the boundary of $\oP^*_4$ with the locus of tacnodal curves (i.e. the image of the tacnodal curves in $\oM_3^{\rm hs}$ under the small contraction $\psi^+$). We now prove that the moduli space $\oP^*_4$ of K3 surfaces is the ample model of the Hodge line bundle (see Proposition \ref{prop:k-cm-interpolation}) on $\oP^\K_{4, 3/4 - \epsilon}$, thus relating our K-moduli to a moduli space of K3 surfaces. 
 
 \begin{theorem}\label{thm:logcy4}The moduli space $\oP^*_4$ is the ample model of the Hodge line bundle on $\oP_{4,3/4-\epsilon}^\K$. \end{theorem}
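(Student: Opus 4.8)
The plan is to identify the ample model of the Hodge $\bQ$-line bundle on $\oP_{4,3/4-\epsilon}^{\K}$ with $\oP_4^*$ by comparing both sides with the known picture $\oM_3^{\rm hs}\xrightarrow{\Psi^+}\oM_3^{\rm cs}\cong\oP_4^*$. Recall from Theorem \ref{mthm:firstwall} and Theorem \ref{thm:quartsext} that $\oP_{4,3/4-\epsilon}^{\K}\cong\oM_3^{\rm hs}$, so we must show that the Hodge $\bQ$-line bundle $\Lambda_{3/4-\epsilon,\Hodge}$ (in the notation of Proposition \ref{prop:k-cm-interpolation}) has $\oM_3^{\rm cs}$ as its ample model via $\Psi^+$. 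Since $\oM_3^{\rm hs}$ has Picard rank $2$ (it is a weighted blowup of $\oP_4^{\GIT}$, which has Picard rank one by the argument in Theorem \ref{thm:firstwallon}), its nef cone is spanned by two rays: one giving the divisorial contraction $\Theta:\oM_3^{\rm hs}\to\oP_4^{\GIT}$, the other giving the small contraction $\Psi^+:\oM_3^{\rm hs}\to\oM_3^{\rm cs}$. Thus it suffices to show that $\Lambda_{3/4-\epsilon,\Hodge}$ lies on the $\Psi^+$-ray: that is, it is nef, it is trivial on the curve $T$ of tacnodal pairs contracted by $\Psi^+$, and it is \emph{not} a multiple of the CM line bundle $\Lambda_{3/4-\epsilon}$ (which by Theorem \ref{thm:VGIT-CM-ample} is $\phi^-$-ample, hence on the other extremal ray, being the pullback of the ample generator on $\oP_4^{\GIT}$).

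\textbf{Key steps.} First I would compute $\Lambda_{3/4-\epsilon,\Hodge}\cdot T=0$. On the curve $T$, the underlying surface is $\bP^2$ and the divisor $C_t$ is a tacnodal quartic moving in a one-parameter family all of whose members are projectively equivalent (the tacnodal quartic has positive-dimensional but the stable locus in $T$ is a single $\PGL(3)$-orbit up to the modulus of the tacnode — more precisely by the classification in Proposition \ref{prop:lctquartic}(1) the strictly polystable tacnodal curves form the ``cat-eye''/``ox'' orbits, and the curve $T$ is the image of the exceptional curve over them). By the defining property of the Hodge line bundle, $K_{\cX/T}+\tfrac{3}{4}\cD\sim_{\bQ}f^*\lambda_{\Hodge}$, and along this family $(\bP^2,\tfrac{3}{4}C_t)$ is a log Calabi–Yau pair (since $\lct=\tfrac{3}{4}$ by Proposition \ref{prop:lctquartic}), so $K_{\cX_t}+\tfrac{3}{4}\cD_t\sim_{\bQ}0$ fiberwise; combined with the fact that the family is isotrivial over the interior of $T$, a direct intersection computation (as in Proposition \ref{prop:logCM2}) gives degree zero. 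Second, I would verify nefness of $\Lambda_{3/4-\epsilon,\Hodge}$ on $\oP_{4,3/4-\epsilon}^{\K}$: this follows because the Hodge line bundle is the descent of the Hodge bundle of the family of $\bZ/4\bZ$-covers, which by the Borel–Baily–type construction of \cite{kondok3} is semiample with ample model exactly the period map image $\oP_4^*$ — equivalently, one can use the interpolation formula \eqref{eq:k-cm-interpolation} together with the ampleness of $\Lambda_{3/4-\epsilon,c'}$ for $c'$ just below $3/4$ (which is $\phi^-$-ample by Theorem \ref{thm:VGIT-CM-ample}) to write $\Lambda_{3/4-\epsilon,\Hodge}$ as a positive combination of a $\phi^-$-ample class and the descent of a nef class, forcing nefness. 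Third, since $\Lambda_{3/4-\epsilon,\Hodge}$ is nef, zero on $T$, and positive on curves contracted by $\Theta$ (again by \eqref{eq:k-cm-interpolation} since the CM part dominates near $c=3/4$ in the $\Theta$-direction), it must be proportional to the pullback of an ample class under $\Psi^+$; its ample model is therefore $\oM_3^{\rm cs}\cong\oP_4^*$, the last isomorphism being \cite[Proposition 21]{hyeon2010log}.

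\textbf{Main obstacle.} The delicate point is Step 1: showing the Hodge line bundle is genuinely trivial — not merely nef of degree zero for numerical reasons — on the locus of tacnodal curves, and more generally establishing that $\Lambda_{3/4-\epsilon,\Hodge}$ is semiample with the \emph{correct} ample model rather than some further contraction. The cleanest way is to invoke the period map: the family of ADE K3 surfaces obtained as $\bZ/4\bZ$-covers of $(\bP^2,C)$ over $\oP_{4,3/4-\epsilon}^{\K}$ (minus the locus where the cover acquires worse-than-ADE singularities, which is precisely the exceptional behavior at the tacnodal curves, as in \cite{kondok3}) induces a morphism to $\oP_4^*$ pulling back the Baily–Borel ample class to a multiple of the Hodge line bundle; one then checks this morphism is the Stein factorization, i.e.\ that its fibers are exactly the $\Psi^+$-fibers. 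Reconciling the GIT-stability picture on the plane-quartic side with the Hodge-theoretic picture on the K3 side — in particular matching the single boundary point of $\oP_4^*$ with the tacnodal curve $T$ — is where we lean most heavily on \cite{hyeon2010log, kondok3}, and making the identification of line bundles precise (rather than just the identification of spaces) is the real content.
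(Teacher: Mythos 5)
Your proposal takes a genuinely different route from the paper's. The paper bypasses the Mori cone of $\oM_3^{\rm hs}$ entirely: it identifies the open subset $M\subset\oP_4^*$ parametrizing ADE K3 surfaces as a big (codimension $\geq 2$ complement) open subset of \emph{both} $\oP_4^*$ and $\oP_{4,3/4-\epsilon}^{\K}$, observes that $\lambda_{\Hodge}|_M$ is pulled back from the ample generator on the period domain, notes that $\lambda_{\Hodge}$ is nef globally by \cite{CP18}, and then concludes directly that the section rings on the two spaces agree, so the ample model is $\oP_4^*$. Your approach instead works in the Picard-rank-$2$ variety $\oM_3^{\rm hs}$ and tries to pin $\Lambda_{\Hodge}$ to the $\Psi^+$-extremal ray of the nef cone. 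What the paper's argument buys is that it never needs to compute any intersection number, characterize the nef cone, or verify that $\Psi^+$ is a Mori contraction of an extremal face; the big-open-subset comparison does all the work.

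There is, however, a genuine gap in your Step 1. You assert that along the curve $T$ of tacnodal pairs the family is ``isotrivial over the interior of $T$'' because the members are ``all projectively equivalent,'' and use that to conclude $\Lambda_{\Hodge}\cdot T=0$. This is false: the strictly polystable tacnodal (``ox'') quartics $\bigl((xz-y^2)(xz-ay^2)=0\bigr)$ for varying $a$ are \emph{not} projectively equivalent to one another (the cross-ratio parameter $a$, or rather $a+a^{-1}$, is a genuine modulus invariant under $\PGL(3)$), and the cat-eye is yet another orbit. Having $K_{\cX_t}+\tfrac34\cD_t\sim_\bQ 0$ on each fiber does not force the Hodge line bundle to have degree zero on the base — consider the Hodge bundle on $\oM_{1,1}$, which is ample despite each fiber having trivial canonical. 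To repair this you would need to show the period points of the $\bZ/4\bZ$-covers are constant along $T$ (they all go to the single cusp of $\oP_4^*$), which is exactly the period-map input you defer to your ``main obstacle'' paragraph; once you invoke that, you are effectively running the paper's argument, and the Mori-cone scaffolding becomes redundant. Also, a smaller imprecision: $\Lambda_{3/4-\epsilon}$ being $\phi$-ample over $\oP_4^{\GIT}$ does not place it on the $\Theta$-extremal ray of the nef cone — relatively ample over the target is a different condition from being (a multiple of) the pullback of an ample class from the target.
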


\begin{proof}
For simplicity, denote by $\lambda_{\Hodge}$ the Hodge line bundle on $\oP_{4,3/4-\epsilon}^\K$.
Let $M$ be the open subset of $\oP_4^*$ parametrizing ADE K3 surfaces. Then by taking $\bZ/4\bZ$-quotient it is clear that $M$ also parametrizes quartic curves on $\bP^2$ and degree $8$ curves on $\bP(1,1,4)$ with at worst $A_2$-singularities. Indeed, the moduli stack $\cM$ associated to $M$ is a $(\bZ/4\bZ)$-gerbe over $\cP_{4, \frac{3}{4} - \epsilon}^{\klt}$. Thus $M$ can be identified with the open subset $P_{4, \frac{3}{4} - \epsilon}^{\klt}$ of $\oP^{\K}_{4, \frac{3}{4} - \epsilon}$ whose complement has codimension $\geq 2$. By \cite[Section 6.2]{huybrechts}, $\lambda_{\Hodge}|_M$ is pulled back from the Hodge line bundle on the relevant period domain $\mathbb{D}/\Gamma$, which is the descent of $\calO(-1)$, and thus ample. Since $\oP_4^*$ is the Baily-Borel compactification of $M$, we know that $\lambda_{\Hodge}|_M$ extends to an ample $\bQ$-line bundle on $\oP_4^*$.
By \cite{CP18} we know that $\lambda_{\Hodge}$ is nef on $\oP^{\K}_{4,\frac{3}{4}-\epsilon}$. Since $M$ are big open subsets in both $\oP^{\K}_{4,\frac{3}{4}-\epsilon}$ and $\oP_4^*$, we know that $\lambda_{\Hodge}$ is big and semiample, and $\oP^*_4 \cong  \Proj \big( \bigoplus_{k=0}^\infty H^0(\oP^{\K}_{4,\frac{3}{4}-\epsilon}, \lambda_{\Hodge}^{\otimes k})\big)$ is the ample model of $\lambda_{\Hodge}$. This finishes the proof.

Finally, we remark here that an alternative proof can be obtained using \cite[Theorem 1.2]{fujino2003} and  functoriality of the Hodge line bundle. 
\end{proof}

We note that Hyeon-Lee's paper also studies the spaces $\oM_3^{\rm cs}, \oM_3^{\rm hs}, \oP^*_4,$ and $\oP^{\GIT}_4$ in the context of the log minimal model program on $\oM_3$ (i.e. the Deligne-Mumford compactification) as well as their relations with Hacking's $\oP^{\rm H}_4$. We postpone discussing this viewpoint until Section \ref{sec:logcy}.

\subsubsection{Plane sextic curves}
For sextic curves, we recall that Shah constructed a partial Kirwan desingularization of the GIT quotient of plane sextic curves \cite{Sha80}. In particular as above there is a divisorial contraction $\widehat{P}^{\GIT}_6 \to \oP^{\GIT}_6$ corresponding to a weighted blowup of the triple conic. Shah also constructed a set-theoretic morphism $\widehat{P}^{\GIT}_6 \to \oP^*_6$, where $\oP^*_6$ denotes the Baily-Borel compactification of the space of polarized K3 surfaces of degree two. This map was shown to be algebraic by Looijenga \cite{Loo86, Loo03} (see also \cite[Theorem 1.9]{laza2012ksba}). In particular, we have a similar diagram in the sextic case. 
\[
 \oP^*_6\longleftarrow{}\widehat{P}^{\GIT}_6\longrightarrow{}\oP^{\GIT}_6
\]
Again using Theorems \ref{mthm:firstwall} and \ref{thm:quartsext}, we can identify $\widehat{P}^{\GIT}_6 \cong \oP^{\K}_{6, \frac{1}{2}-\epsilon}$. The proof of Theorem \ref{thm:logcy4} gives the following, noting that the codimension of the klt locus inside $\oP^\K_{6, \frac{1}{2}-\epsilon}$ is $\geq 2$. 

\begin{theorem}\label{thm:sextic-Hodge}
The moduli space $\oP^*_6$ is the ample model of the Hodge line bundle on  $\oP_{6, \frac{1}{2}-\epsilon}^{\K}$. \end{theorem}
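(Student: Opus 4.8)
The plan is to mimic the proof of Theorem \ref{thm:logcy4} almost verbatim, since the structural ingredients are identical. First I would identify a suitable big open subset $M\subset\oP^*_6$ on which the Hodge line bundle is genuinely a pullback of the ample generator from the period domain. Here $M$ should be the locus of degree two K3 surfaces with at worst ADE singularities; via the double cover construction of Shah, $M$ is naturally identified with the locus inside $\oP^\K_{6,\frac12-\epsilon}$ parametrizing pairs $(X,C)$ with $X$ either $\bP^2$ or $\bP(1,1,4)$ and $\lct(X;C)>\frac12$ (equivalently $C$ having at worst ADE singularities on $X$). The key point, which I would record explicitly, is that the complement of $M$ inside $\oP^\K_{6,\frac12-\epsilon}$ has codimension $\geq 2$: the boundary strata added by the partial Kirwan blow-up (curves on $\bP(1,1,4)$, and the non-ADE GIT polystable curves classified by Shah in \cite{Sha80}, namely the $\lct=\frac12$ cases) form loci of positive codimension, and one checks dimension counts against the $6$-dimensional total space. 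This is where I would invoke Proposition \ref{prop:lctsextic} to know exactly which curves occur.

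Next I would run the three-step argument from the proof of Theorem \ref{thm:logcy4}. Step one: on $M$, the Hodge $\bQ$-line bundle $\lambda_{6,\Hodge}$ (from Definition \ref{defn:hodge} and Proposition \ref{prop:k-cm-interpolation}) restricts to the descent of $\calO(1)$ on the arithmetic quotient $\mathbb{D}/\Gamma$ parametrizing degree two K3 surfaces, using the standard identification of the Hodge bundle with the automorphic line bundle (as in \cite[Section 6.2]{huybrechts} — here one uses that the degree two polarization and the quotient construction match Shah's picture). Since $\oP^*_6$ is by definition the Baily-Borel compactification of $M$, that line bundle extends to the ample $\calO_{\oP^*_6}(1)$. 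Step two: by \cite{CP18} the Hodge line bundle $\lambda_{6,\Hodge}$ is nef on all of $\oP^\K_{6,\frac12-\epsilon}$. Step three: because $M$ is a big open subset of both $\oP^\K_{6,\frac12-\epsilon}$ and $\oP^*_6$, the section ring of $\lambda_{6,\Hodge}$ agrees with that of $\calO_{\oP^*_6}(1)$, so $\lambda_{6,\Hodge}$ is big and semiample and
\[
\oP^*_6\cong\Proj\Big(\bigoplus_{k=0}^{\infty}H^0\big(\oP^\K_{6,\frac12-\epsilon},\lambda_{6,\Hodge}^{\otimes k}\big)\Big)
\]
is exactly the ample model.

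The main obstacle — and the only real content beyond copying Theorem \ref{thm:logcy4} — is the codimension $\geq 2$ claim for the complement of the ADE (klt) locus. For quartics this was essentially immediate because the extra locus is a single exceptional divisor minus its generic point plus a codimension-one tacnodal curve whose generic member is still klt; for sextics the GIT polystable non-ADE curves (Shah's Groups II and III on $\bP^2$, and the $(z^3+a(xy)^4z+b(xy)^6=0)$-type families on $\bP(1,1,4)$) need a genuine dimension count to confirm they sit in codimension at least two inside the $6$-dimensional $\oP^\K_{6,\frac12-\epsilon}$, and one must also check that the exceptional divisor of the Kirwan blow-up, though a divisor, has its generic point still in the klt locus $M$. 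I would handle this by combining Proposition \ref{prop:lctsextic} with the explicit description of $\oP^\K_{6,\frac12-\epsilon}=\widehat P^\GIT_6$ from Theorem \ref{mthm:firstwall} and Theorem \ref{thm:quartsext}, and with the parameter counts in \cite{Sha80, Loo03, laza2012ksba}. As an alternative I would mention, exactly as in the quartic case, that one can instead deduce the statement from \cite[Theorem 1.2]{fujino2003} together with functoriality of the Hodge line bundle, avoiding the codimension estimate entirely.
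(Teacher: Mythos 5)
Your proposal is correct and follows the same route the paper takes: the paper's proof of this theorem is literally the sentence ``The proof of Theorem \ref{thm:logcy4} gives the following, noting that the codimension of the [complement of the] klt locus inside $\oP^\K_{6,\frac12-\epsilon}$ is $\geq 2$,'' which is exactly your three-step argument (identify the big ADE/klt open subset $M$, use \cite[Section 6.2]{huybrechts} to recognize $\lambda_{\Hodge}|_M$ as the automorphic line bundle, combine nefness from \cite{CP18} with bigness from the big open to get the ample model), and you also correctly flag the alternative via \cite[Theorem 1.2]{fujino2003}. One small slip: you write ``the $6$-dimensional total space,'' which is the dimension in the quartic case; $\oP^\K_{6,\frac12-\epsilon}$ has dimension $\binom{8}{2}-1-8=19$, though this does not affect the codimension argument.
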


As in the case of quartics, we will discuss the relation of the above picture with Hacking's $\oP^{\rm H}_6$ and $\oP^{\ast}_6$ in Section \ref{sec:logcy}.

\section{The second wall crossing for plane quintics}\label{sec:2ndwallquintics}

 In this section we discuss the second wall crossing for K-moduli spaces of plane quintics. For simplicity, we abbreviate $\ocP_{5,c}^{\K}$ and $\oP_{5,c}^{\K}$ to $\ocP_{c}^{\K}$ and $\oP_{c}^{\K}$, respectively. The main result goes as follows.
 
\begin{thm}[Second wall crossing for plane quintics]\label{thm:secondwall}
 Let $C_0$ be a plane quintic curve with a singular point of type $A_{12}$. Denote by $X_{26}:=(xw-y^{13}-z^2=0)\subset\bP(1,2,13,25)$. Let $C_0'$ be the curve $(w=0)$ on $X_{26}$.
 \begin{enumerate}
     \item There is no wall crossing for K-moduli stacks $\ocP_{c}^{\K}$ when $c\in (\frac{3}{7},\frac{8}{15})$.
     \item There is an isomorphism of good moduli spaces $\phi_2^-:\oP_{\frac{8}{15}-\epsilon}^{\K}\to\oP_{\frac{8}{15}}^{\K}$ which only replaces $[(\bP^2, C_0)]$ by $[(X_{26}, C_0')]$.
     \item There is a weighted blow-up morphism $\phi_2^+:\oP_{\frac{8}{15}+\epsilon}^{\K}\to\oP_{\frac{8}{15}}^{\K}$ at the point $[(X_{26}, C_0')]$. The exceptional divisor of $\phi_2^+$ parametrizes curves on $X_{26}$ of the form $(w=g(x,y))$ where $g\neq 0$ and $g$ does not contain the term $x y^{12}$.
 \end{enumerate}
 In particular, the second wall for K-moduli spaces of plane quintics is $c_2=\frac{8}{15}$.
\end{thm}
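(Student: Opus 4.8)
\textbf{Proof strategy for Theorem \ref{thm:secondwall}.} The plan is to follow the same three-part template established in Section \ref{sec:firstwall}: first pin down \emph{which} pairs can appear at the wall using a valuative/volume estimate together with the index bound (Theorem \ref{thm:localindex}), then exhibit the relevant special degeneration explicitly, and finally verify K-polystability of the limit by the Ilten--S\"u\ss{} machinery for complexity-one $T$-varieties. Concretely, for part (1) I would argue that for $c\in(\frac{3}{7},\frac{8}{15})$ every K-semistable pair $(X,cD)$ with a $\bQ$-Gorenstein smoothing to $(\bP^2,cC_t)$ of degree $5$ must have $X\cong\bP^2$: plug $d=5$ into Theorem \ref{thm:localindex}, so the local Gorenstein index of any singular point of $X$ is at most $\min\{\lfloor \frac{3}{3-5c}\rfloor, 5\}$, and check that for $c<\frac{8}{15}$ this forces index $\le 3$, while a $\bP(1,1,4)$-summand is excluded by the computation already used in Theorem \ref{thm:firstwallbefore} (the $(-4)$-curve $E$ over the $\frac14(1,1)$ point violates $A_{(X,cD)}(E)\ge S_{(X,cD)}(E)$ once $c>c_1=\frac37$). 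Combined with Proposition \ref{prop:P^2-paultian} this identifies $\ocP_c^\K$ with an open substack of $\ocP_5^{\GIT}$ that is independent of $c$ on the interval, and Proposition \ref{prop:openkps} handles the endpoints; this gives (1) and the ``only if'' direction of (2).

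For the ``only replaces'' part of (2) and all of (3), the key is Proposition \ref{prop:a12onlyif} (a valuative criterion computation) telling us that the \emph{only} GIT-polystable plane quintic whose log pair $(\bP^2,cC)$ can fail to stay K-(semi)stable as $c$ increases past $\frac{8}{15}$ is the $A_{12}$-quintic $C_0$, and that the destabilizing valuation is a specific weighted blow-up. I would then run the degeneration-to-the-weighted-tangent-cone construction (Proposition \ref{prop:a12deg}): degenerating $\bP^2$ along the monomial valuation attached to the $A_{12}$ singularity produces the weighted hypersurface $X_{26}=(xw=y^{13}+z^2)\subset\bP(1,2,13,25)$ with the limit curve $C_0'=(w=0)$, and I would check $-K_{X_{26}}$ is ample, that $(X_{26},\tfrac{3}{5}C_0')$ is klt, and that the special test configuration has the correct central fibre. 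K-polystability of $(X_{26},\tfrac{8}{15}C_0')$ is then established by Proposition \ref{prop:a12kps} via Ilten--S\"u\ss{} on the natural complexity-one torus action on $X_{26}$. Uniqueness of the K-polystable limit (\cite{BX18}, \cite{LWX18}) then forces $[(\bP^2,C_0)]$ and $[(X_{26},C_0')]$ to be identified under $\phi_2^-$ and shows no other point changes, giving (2); the local VGIT picture of Theorem \ref{thm:wallcrossingchart} applied at $[z_0]=\Hilb(X_{26},C_0')$, together with the explicit description of the normal space to the orbit (curves $(w=g(x,y))$ on $X_{26}$, modulo the one-parameter family of reparametrizations absorbing the $xy^{12}$-term), gives the weighted-blow-up description in (3). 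Combining (1)--(3) shows the wall is exactly $c_2=\frac{8}{15}$.

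The main obstacle I expect is the K-polystability verification of $(X_{26},\tfrac{8}{15}C_0')$ in Proposition \ref{prop:a12kps}: $X_{26}$ is a genuinely singular weighted hypersurface carrying only a complexity-one torus action, so one must correctly set up the Altmann--Hausen / polyhedral-divisor data, compute the relevant Futaki-type invariants and the $\beta$-invariants for all $T$-invariant divisors (including divisorial valuations coming from the boundary of the momentum polytope), and check the combinatorial K-stability criterion of \cite{IS17}. A secondary subtlety is making the special degeneration in Proposition \ref{prop:a12deg} genuinely \emph{special} (central fibre klt log Fano, $\cL\sim_{\bQ}-l(K_{\cX/\bA^1}+c\cD)$), which requires care with the grading weights $(1,2,13,25)$ and the normalization of the total space; once these two analytic/combinatorial inputs are in place, the remaining steps are formal consequences of the general wall-crossing machinery of Section \ref{sec:construction}.
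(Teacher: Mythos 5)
Your high-level template — index bound to constrain the surface, explicit special degeneration to $X_{26}$, Ilten--S\"u\ss{} to verify K-polystability, then local VGIT to read off the weighted blow-up — matches the paper's strategy. But there are two genuine errors in your execution.

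\textbf{Part (1) is incorrect as stated.} You claim that for $c\in(\frac37,\frac{8}{15})$ every K-semistable pair $(X,cD)$ must have $X\cong\bP^2$, and that the index bound forces local index $\le 3$. Neither holds. Plugging $d=5$ into Theorem \ref{thm:localindex}, for $c$ just below $\frac{8}{15}$ we get $3/(3-5c)$ close to $9$, so $\lfloor\cdot\rfloor=8$ and $\min\{8,5\}=5$: the bound allows local Gorenstein index up to $5$, not $3$. Lemma \ref{lem:quinticindex} therefore only narrows the surface to $\bP^2$, $\bP(1,1,4)$, $X_{26}$, or $\bP(1,4,25)$. Moreover, after the first wall at $c_1=\frac37$ pairs on $\bP(1,1,4)$ genuinely \emph{are} K-polystable (that is exactly the content of Theorem \ref{thm:k=gitp114}), so the claim that $\ocP_c^\K$ is an open substack of $\ocP_5^{\GIT}$ for $c>\frac37$ is false, and the $(-4)$-curve computation from Theorem \ref{thm:firstwallbefore} destabilizes $(\bP(1,1,4),cD)$ only when $c<\frac37$, the opposite of what you invoke. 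The correct argument must handle a hypothetical \emph{new} K-polystable $(X,c_2D)$ with $\frac37<c_2<\frac{8}{15}$ case by case: $X\cong\bP^2$ is killed by interpolation (Proposition \ref{prop:k-interpolation}), $X\cong X_{26}$ and $\bP(1,4,25)$ by the valuative lower bounds $c\ge\frac{8}{15}$ from Propositions \ref{prop:valcrit-X_26} and \ref{prop:valcrit-p1425}, and $X\cong\bP(1,1,4)$ by splitting into $f$ nondegenerate (degenerates to $Q_5'$, hence is already $\frac37$-K-semistable by Lemma \ref{lem:Q_d'Kps} and Theorem \ref{thm:Kss-spdeg}, contradiction) vs.~$f$ degenerate (Proposition \ref{prop:valcrit-p114} forces $c\ge\frac{6}{11}>\frac{8}{15}$).

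\textbf{Part (2) has a gap.} You appeal to uniqueness of K-polystable limits to conclude that $\phi_2^-$ ``only replaces $[(\bP^2,C_0)]$ by $[(X_{26},C_0')]$ and shows no other point changes''. Uniqueness goes the wrong way: it says one K-semistable pair has a unique K-polystable degeneration, not that two distinct K-semistable pairs cannot degenerate to the same K-polystable limit, so it does not show that the fibre $E_2^-=(\phi_2^-)^{-1}([(X_{26},C_0')])$ is a single point. The paper closes this via the local VGIT identity $\dim E_2^- + \dim E_2^+ + 1 = \dim\oP_{\frac{8}{15}}^{\K}=12$ at the point $z_0=\Hilb(X_{26},\frac{8}{15}C_0')$ (using that $\Aut_0(X_{26},C_0')\cong\bG_m$ and smoothness of $Z_c^\circ$), and then exhibits an $11$-dimensional family inside $E_2^+$ — namely the genus-six hyperelliptic curves $(w=g(x,y))$ on $X_{26}$ — forcing $\dim E_2^-\le 0$. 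Without this dimension count the isomorphism claim in (2) is unproved. You should also note that Proposition \ref{prop:a12onlyif} alone does not establish that the $A_{12}$ quintic is the \emph{only} curve replaced; you need Proposition \ref{prop:onlykps-X_26} (that $(X_{26},\frac{8}{15}C_0')$ is the unique new K-polystable pair at the wall) together with the case analysis above.
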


We will split the proof of Theorem \ref{thm:secondwall} into several steps.  

\subsection{K-polystable replacement of $A_{12}$-quintic curve}

Let $C_0$ be a plane quintic curve with a singular point of type $A_{12}$. Then by \cite{Yoshihara, Wal96} we know that
up to a projective transformation, the equation of $C_0$ is 
\[
C_0 = \big((y^2-xz)^2\left(\tfrac{1}{4}x+y+z\right)-x^2(y^2-xz)(x+2y)+x^5=0\big).
\]
In the affine patch $[x,y,1]$, there
is a unique $6$-jet $x'=x-y^2+y^5-\frac{1}{2}y^6$ so that
the equation of $C$ in the coordinates $(x',y)$ becomes 
\[
x'^2=ay^{13}+\textrm{higher order terms, where }a\neq 0. 
\]
Here we assign weights $13$ and $2$ to $x'$ and $y$, respectively. Since $C_0$ has only one singularity which is a double point, we know that it is a GIT stable plane quintic curve by \cite[Table on Page 80]{MFK94}.

In this section, we will show that $\frac{8}{15}$ is the upper K-semistable threshold of $(\bP^2, C_0)$ by constructing its K-polystable degeneration. The goal is to prove the following. 

\begin{theorem}\label{thm:a12} The log Fano pair $(\bP^2, cC_0)$ is  K-semistable if and only if $0 < c \leq \frac8{15}$. Moreover, $(X_{26}, \frac{8}{15}C_0')$ is the K-polystable degeneration of $(\bP^2, \frac{8}{15}C_0)$.
\end{theorem}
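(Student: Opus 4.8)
\textbf{Plan of proof of Theorem \ref{thm:a12}.} The strategy is the valuative-criterion-plus-degeneration approach that recurs throughout the paper: first locate the ``only if'' bound $c\le \frac{8}{15}$ using a single cleverly chosen valuation over $\bP^2$, then exhibit an explicit special test configuration degenerating $(\bP^2,\tfrac{8}{15}C_0)$ to $(X_{26},\tfrac{8}{15}C_0')$, and finally prove that the central fiber is K-polystable so that Theorem \ref{thm:Kss-spdeg} and separatedness of K-moduli (Theorem \ref{thm:compactness} combined with \cite{BX18}) force it to be \emph{the} K-polystable degeneration.

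First I would prove the upper bound. Working in the weighted coordinates $(x',y)$ adapted to the $A_{12}$-singularity, consider the weighted-blowup divisorial valuation $v$ with $\wt(y)=2$, $\wt(x')=13$, so that $v(C_0)=26$. One computes $A_{(\bP^2,cC_0)}(v)=A_{\bP^2}(v)-c\,v(C_0)=(2+13)-26c=15-26c$, and $S_{(\bP^2,cC_0)}(v)=\frac{1}{(-K_{\bP^2}-cC_0)^2}\int_0^\infty \vol_{\bP^2}(-K_{\bP^2}-cC_0-tE)\,dt$; since $-K_{\bP^2}-cC_0\sim_{\bQ}(3-5c)H$ and the volume drop from the monomial valuation of weights $(2,13)$ is governed by the Newton-polytope/Okounkov-body computation, $S$ works out to a linear function of $c$ whose value at the wall is such that $A=S$ exactly at $c=\frac{8}{15}$. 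By Theorem \ref{thm:valuative}, $A_{(\bP^2,cC_0)}(v)<S_{(\bP^2,cC_0)}(v)$ for $c>\frac{8}{15}$, giving K-instability; this is essentially the content of a valuative computation of the type labeled Proposition \ref{prop:a12onlyif} in the excerpt. For $c<\frac{8}{15}$, K-semistability of $(\bP^2,cC_0)$ follows because $C_0$ is GIT stable (double-point quintic) hence $(\bP^2,cC_0)$ is K-semistable for $c$ in the GIT-equivalent range by Theorem \ref{thm:firstwallbefore}, and more refined interpolation (Proposition \ref{prop:k-interpolation}) propagates this up to the wall.

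Next I would construct the degeneration. The key identity is the embedding $(\bP^2,C_0)\hookrightarrow$ a $\bQ$-Gorenstein family over $\bA^1$ whose central fiber is $(X_{26},C_0')$ with $X_{26}=(xw=y^{13}+z^2)\subset\bP(1,2,13,25)$. Concretely, using the local normal form $x'^2=ay^{13}+\cdots$ of $C_0$ at the $A_{12}$-point, one builds the degeneration to the normal cone of that singularity with the weights $(1,2,13,25)$ assigned to coordinates $(x,y,z,w)$ (the $A_{12}$ picture globalizes precisely to the weighted hypersurface $X_{26}$, and $C_0\rightsquigarrow C_0'=(w=0)$). One must check: $X_{26}$ is a $\bQ$-Fano variety that $\bQ$-Gorenstein smooths to $\bP^2$ (so it is a Manetti surface, via Proposition \ref{prop:manetti} — one verifies it is a deformation of some $\bP(a^2,b^2,c^2)$), that $C_0'\sim_\bQ-\tfrac53 K_{X_{26}}$ and $(X_{26},\tfrac{3}{5}C_0')$ is klt so $(X_{26},\tfrac{8}{15}C_0')$ is a genuine log Fano pair, and that $\cL\sim_\bQ -l(K+\tfrac{8}{15}\cD)$ along the family making it a \emph{special} test configuration. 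This is the analogue of Proposition \ref{prop:a12deg}; the computations are concrete but require care with the weighted projective geometry and the index bound (Theorem \ref{thm:localindex}) to rule out stray singularities.

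Finally, and this is the main obstacle, I would prove $(X_{26},\tfrac{8}{15}C_0')$ is K-polystable. Here $X_{26}$ carries an effective $\bG_m$-action (it is a $T$-variety of complexity one: the torus acts with one-dimensional generic orbit quotient $\cong\bP^1$), so the right tool is the Ilten--Süß criterion \cite{IS17} for K-stability of complexity-one $T$-varieties — this is the analogue of Proposition \ref{prop:a12kps}. Concretely one writes down the polyhedral/Okounkov data of $(X_{26},C_0')$ with its torus action, computes the Futaki character and checks it vanishes (forced by the $\bG_m$-fixed structure), and then verifies the positivity/K-stability inequality of \cite{IS17} for all $\bG_m$-equivariant special degenerations; since the maximal torus in $\Aut(X_{26},C_0')$ has rank one, equivariant K-polystability upgrades to K-polystability by the equivariant criterion. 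Once K-polystability of the central fiber is established, Theorem \ref{thm:Kss-spdeg} gives K-semistability of $(\bP^2,\tfrac{8}{15}C_0)$ (completing the ``if'' direction at $c=\tfrac{8}{15}$), and uniqueness of K-polystable degenerations \cite{BX18} identifies $(X_{26},\tfrac{8}{15}C_0')$ as \emph{the} K-polystable replacement, finishing the theorem. The delicate point throughout will be the bookkeeping in the complexity-one computation: identifying the correct divisorial fan, the coefficients, and confirming the numerical inequality of \cite{IS17} without error.
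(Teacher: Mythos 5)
Your proposal is essentially the paper's argument: the ``only if'' bound via the $(13,2)$-weighted blowup valuation (Proposition \ref{prop:a12onlyif}), the explicit special degeneration of $(\bP^2,C_0)$ to $(X_{26},C_0')$ built from the weighted normal cone picture (Proposition \ref{prop:a12deg}), K-polystability of the central fiber via the Ilten--S\"u\ss/Altmann--Hausen complexity-one machinery (Proposition \ref{prop:a12kps}), then Theorem \ref{thm:Kss-spdeg} and interpolation to finish.

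One logical point in your write-up is out of order. You claim early on that for $c<\tfrac{8}{15}$ K-semistability ``follows because $C_0$ is GIT stable\dots and interpolation propagates this up to the wall.'' Interpolation (Proposition \ref{prop:k-interpolation}) cannot, by itself, propagate K-semistability \emph{upward} from the range $c<c_1=\tfrac{3}{7}$ to the range $c<\tfrac{8}{15}$: the proposition requires both endpoints of the interpolating segment to be known K-semistable, and the obvious upper endpoint $(\bP^2,\tfrac{3}{5}C_0)$ is not log canonical (the $A_{12}$ singularity has $\lct=\tfrac{15}{26}<\tfrac35$), so it cannot serve. The correct order, which the paper follows and which you in effect reach later in your sketch, is: first establish K-semistability at $c=\tfrac{8}{15}$ via the degeneration to $(X_{26},\tfrac8{15}C_0')$, Proposition \ref{prop:a12kps}, and Theorem \ref{thm:Kss-spdeg}; then interpolate between $\epsilon$ and $\tfrac{8}{15}$ to handle $c\in(0,\tfrac{8}{15})$. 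Aside from that reordering, and the fact that the degeneration to $X_{26}$ is not literally a degeneration to the normal cone (the paper runs a weighted blowup, a flop, and a divisorial contraction, with the exceptional $\bP(1,2,13)$ as an intermediate fiber), your plan is the paper's.
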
 

We prove this in steps. 

\begin{prop}\label{prop:a12onlyif}
If the log Fano pair $(\bP^2, cC_0)$ is K-semistable then $0 < c \leq \frac8{15}$. 
\end{prop}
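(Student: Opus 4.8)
\textbf{Proof proposal for Proposition \ref{prop:a12onlyif}.}

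The plan is to exhibit an explicit destabilizing divisorial valuation over $\bP^2$ and apply the valuative criterion (Theorem \ref{thm:valuative}(1)). The natural candidate is the valuation $v$ adapted to the analytic structure of the $A_{12}$ singularity of $C_0$. Concretely, in the affine chart $[x,y,1]$ near the singular point, after the change of coordinates $(x',y)$ described above in which $C_0$ has local equation $x'^2 = a y^{13} + (\text{higher order})$ with $a \neq 0$, I would take the monomial (quasi-monomial) valuation $v$ on $\bP^2$ with weights $\wt(x') = 13$ and $\wt(y) = 2$. This is a divisorial valuation $v = \ord_E$ (up to scaling) obtained by a weighted blow-up of the point, and its restriction to $C_0$ picks out the leading term $x'^2 - a y^{13}$ of weight $26$; thus $\ord_E(C_0) = 26$.

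The key computations are then: (i) the log discrepancy $A_{\bP^2}(\ord_E) = 13 + 2 = 15$ (the sum of the weights), so $A_{(\bP^2, cC_0)}(\ord_E) = 15 - 26c$; and (ii) the expected vanishing order $S_{(\bP^2, cC_0)}(\ord_E)$, which requires computing $\vol_{\bP^2}(-K_{\bP^2} - cC_0 - tE)$ as a function of $t$. Since $-K_{\bP^2} - cC_0 \sim_{\bQ} (3 - 5c) H$ where $H$ is the hyperplane class, one passes to the weighted blow-up $\mu \colon Y \to \bP^2$ with exceptional divisor $E$ (of weights $(13,2)$), writes $\mu^* H - tE$ and computes its volume via the Zariski-decomposition / intersection-theory on the weighted blow-up, taking into account the Nakayama--Zariski positive part up to the pseudoeffective threshold $t = (3-5c) \cdot (\text{pseudoeffective bound depending on the weights})$. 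This is a one-variable integral of a piecewise-quadratic function, and I would carry it out carefully but it is routine once the self-intersection $E^2$ and the intersection numbers $(\mu^*H \cdot E)$ on the weighted blow-up are recorded. The outcome should be a closed-form rational function of $c$; setting $A_{(\bP^2,cC_0)}(\ord_E) < S_{(\bP^2,cC_0)}(\ord_E)$ and solving should give exactly $c > \frac{8}{15}$ as the destabilizing range, while at $c = \frac{8}{15}$ one gets equality $A = S$ (consistent with K-semistability but not K-stability, matching the fact that there is a nontrivial degeneration to $(X_{26}, \frac{8}{15}C_0')$).

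The main obstacle I anticipate is the volume computation on the weighted blow-up: the surface $Y$ has a cyclic quotient singularity on $E$, so one must work with $\bQ$-divisors and be careful that for $t$ beyond a certain threshold the divisor $\mu^*H - tE$ is no longer nef and one must use its positive part in the Zariski decomposition (the negative part being supported on the strict transform of the relevant curve through the center). Getting the correct breakpoint and the correct quadratic pieces is where errors creep in; as a sanity check I would verify that $S_{(\bP^2, cC_0)}(\ord_E)$ has the right leading behavior (e.g. at $c \to 0$ it should reproduce the normalized volume/$S$-invariant of $E$ with respect to $-K_{\bP^2}$, and the threshold should match $15/26$ times the appropriate normalization). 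Once the formula is in hand, the inequality manipulation and the conclusion $0 < c \le \frac{8}{15}$ are immediate from Theorem \ref{thm:valuative}.
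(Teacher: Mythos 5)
Your plan is precisely the proof the paper gives: test K-semistability against the divisorial valuation $\ord_E$ from the $(13,2)$-weighted blow-up in the $6$-jet coordinates $(x',y)$, compute $A_{(\bP^2,cC_0)}(\ord_E)=15-26c$ and $\ord_E(C_0)=26$, and compare with $S_{(\bP^2,cC_0)}(\ord_E)$ obtained by integrating the piecewise-quadratic volume of $\pi^*H-tE$ on the blow-up (using $(E^2)=-\tfrac{1}{26}$, $\oC_0\sim 5\pi^*H-26E$, $(\oC_0^2)=-1$, with the nef threshold at $t=5$ and the pseudoeffective one at $t=\tfrac{26}{5}$), which yields $S=\tfrac{17}{5}(3-5c)$ and hence $c\le\tfrac{8}{15}$. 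You have not carried out the last integral, but every step you sketch is correct and is exactly what appears in the paper.
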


\begin{proof}
Suppose $(\bP^2, cC_0)$ is K-semistable. Let us perform the $(13,2)$-weighted blow up in the coordinates $(x',y)$, and denote the resulting surface and exceptional divisor by $(X,E)$.
Let $\pi:X\to \bP^2$ be the weighted blow up morphism. 
Straightforward computation shows that 
\[
 A_{(\bP^2,cC_0)}(E)=15-26c, \quad -K_{\bP^2}-cC_0\sim_{\bQ}(3-5c)H,
\]
where $H\sim\cO(1)$ is the hyperplane divisor on $\bP^2$.
If $\oC_0:=\pi_*^{-1}C_0\subset X$, then we know that $\oC_0$ is a smooth
rational curve in the smooth locus of $X$.
It is easy to see that $(E^2)=-\frac{1}{26}$, the curve $\oC_0\sim 5\pi^*H-26E$,
and $(\oC_0^2)=-1$. Thus the Mori cone of $X$ is generated
by $E$ and $\oC_0$. Hence $\pi^*H - tE$ is ample if and only if $0<t<5$, and big if and only if $0\leq t<\frac{26}{5}$. 
Then by computations, 
\[
 \vol_X(\pi^*H -tE)=\begin{cases}
                      1-\frac{t^2}{26} & \textrm{ if }0\leq t\leq 5;\\
                      \frac{(26-5t)^2}{26} & \textrm{ if }5\leq t\leq \frac{26}{5}.
                     \end{cases}
\]
Hence $S_{(\bP^2, cC_0)}(E)=\int_{0}^\infty \vol_X(\pi^*H-tE)dt=\frac{17}{5}(3-5c)$.
Since $(\bP^2, cC_0)$ is K-semistable, by the valuative criterion (Theorem \ref{thm:valuative}) we know that 
\[
15-26c=A_{(\bP^2, cC_0)}(E)\geq S_{(\bP^2, cC_0)}(E)=\frac{17}{5}(3-5c). 
\]
This is equivalent to $c\leq \frac{8}{15}$. 
\end{proof}

Now we construct a special degeneration. 

\begin{prop}\label{prop:a12deg}
 The log Fano pair $(\bP^2,cC_0)$ admits
 a special degeneration to $(X_{26}, cC_0')$
 where $C_0'$ is given by the equation 
 $(w=0)$ with $[x,y,z,w]$ being the projective
 coordinates of $X_{26}$.
\end{prop}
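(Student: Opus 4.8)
\textbf{Plan for the proof of Proposition \ref{prop:a12deg}.} The goal is to exhibit a one-parameter $\bQ$-Gorenstein degeneration of $(\bP^2, cC_0)$ whose central fiber is $(X_{26}, cC_0')$, where $X_{26} = (xw - y^{13} - z^2 = 0) \subset \bP(1,2,13,25)$ and $C_0' = (w = 0)$. The natural way to produce such a degeneration is to take the weighted blow-up $\pi: X \to \bP^2$ at the $A_{12}$-point with weights $(13,2)$ in the coordinates $(x',y)$ constructed in the proof of Proposition \ref{prop:a12onlyif}, and then to run a degeneration to the normal cone, contracting the strict transform $\oC_0$. Concretely, the first step is to identify the weighted blow-up $X$ together with the exceptional divisor $E$ and the strict transform $\oC_0$; from the Mori-cone analysis in Proposition \ref{prop:a12onlyif} we know $X = \langle E, \oC_0\rangle$ with $(\oC_0^2) = -1$ and $E$ a copy of $\bP(2,13)$. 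Contracting $\oC_0$ (which is a $(-1)$-curve in the smooth locus) produces a surface, and the point $[\oC_0]$ becomes a smooth point or a mild cyclic quotient; the divisor $E$ survives and carries the right polarization so that the anticanonical model of the resulting surface is $X_{26}$.

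The second step is to realize the degeneration explicitly via a $\bG_m$-action on $\bP(1,2,13,25)$. The embedding $X_{26} \hookrightarrow \bP(1,2,13,25)$ with defining equation $xw = y^{13} + z^2$ should be compared with $\bP^2$ using the same kind of comparison embedding used in the proof of Theorem \ref{thm:firstwall1}: one embeds both $\bP^2$ and $X_{26}$ into a common weighted projective space (or uses the relative Proj of the extended Rees algebra of the valuation $\ord_E$), and writes down a flat family $\cX_t$ over $\bA^1$ with $\cX_t \cong \bP^2$ for $t \neq 0$ and $\cX_0 \cong X_{26}$. The weights $1,2,13,25$ are forced: $13$ and $2$ are the weights of $x'$ and $y$, the $25$ comes from $26 = 13+13$ being the weight of the relevant generator and $25 = 26 - 1$ (reflecting the self-intersection $-1$ of $\oC_0$), and the relation $xw - y^{13} - z^2$ encodes precisely the $6$-jet normalization $x' = x - y^2 + y^5 - \tfrac12 y^6$ and the leading term $x'^2 = a y^{13} + \cdots$ of $C_0$ in the $(x',y)$-coordinates. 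One then checks that under the induced $\bG_m$-action the curve $C_0 = (x'^2 = ay^{13} + \text{h.o.t.})$ degenerates to the curve $(w = 0)$ on $X_{26}$: all higher-order terms have strictly larger weight and drop out in the limit, leaving exactly $C_0'$.

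The third step is to verify that this is a \emph{special} test configuration in the sense of the definitions recalled in Section \ref{sec:prelim}: one must check that $\cX_0 = X_{26}$ is klt, that $(\cX_0, c\cD_0) = (X_{26}, cC_0')$ is a log Fano pair, that $-(K_{\cX/\bA^1} + c\cD)$ is $\bQ$-Cartier and relatively ample, and that $(\cX, c\cD + \cX_0)$ is plt. The klt-ness of $X_{26}$ and the fact that $-K_{X_{26}}$ is ample follow from a direct computation with the quasi-smooth weighted hypersurface (it has two cyclic quotient singularities, at $[0,0,1,0]$ of type $\tfrac{1}{13}(2,25)$ and at $[0,1,0,0]$ of type $\tfrac12(1,1)$ or similar, both klt), and $C_0' = (w=0)$ is $\bQ$-linearly equivalent to $\tfrac53(-K_{X_{26}})$ so $(X_{26}, cC_0')$ is log Fano for $c < \tfrac35$. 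The plt-ness of the total space along the central fiber is the technical heart and the main obstacle: it amounts to checking that the degeneration is a \emph{qG}-degeneration with irreducible reduced central fiber and that the pair behaves well, which one does by exhibiting the explicit chart structure of $\cX$ near the singular points of $\cX_0$ and computing discrepancies, or alternatively by invoking that a special test configuration is characterized by $\cL \sim_\bQ -l(K_{\cX/\bA^1} + \cD)$ together with $\cX_0$ being klt (using inversion of adjunction). I expect the bookkeeping of weighted-projective charts and the verification that the strict transform of $C_0$ really limits to $(w=0)$ with no residual components to be the most delicate part; everything else is a sequence of standard intersection-theoretic and adjunction computations analogous to those already carried out in Proposition \ref{prop:a12onlyif} and in the first-wall analysis.
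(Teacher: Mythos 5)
Your plan starts in the right place---the $(13,2)$-weighted blow-up $\pi: X \to \bP^2$ from Proposition \ref{prop:a12onlyif}---but then goes wrong at the central step. You propose to contract $\oC_0$ in $X$ and take the anticanonical model of the resulting surface $X'$, claiming it is $X_{26}$. This cannot work: $K_X = \pi^*K_{\bP^2} + 14E$ gives $(K_X^2) = 9 + 196\cdot(-\tfrac{1}{26}) = \tfrac{19}{13}$, and contracting the $(-1)$-curve $\oC_0$ (with $K_X\cdot\oC_0 = -1$) yields $(K_{X'}^2) = \tfrac{32}{13}$, whereas any $\bQ$-Gorenstein degeneration of $\bP^2$ (in particular $X_{26}$) must have $K^2 = 9$. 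The degenerate surface is \emph{not} a modification of $X$ at all. It is a modification of the exceptional divisor $S = \bP(1,2,13)$ of the three-fold $(13,2,1)$-weighted blow-up $\pi: \cX \to \bP^2\times\bA^1$ in the coordinates $(x',y,t)$: the central fiber of $\cX$ is the reducible surface $S\cup X$, and the surface you want is the one you get after discarding the $X$-component. Concretely, $\cN_{\oC_0/\cX}\cong\cO(-1)\oplus\cO(-1)$, so one performs an Atiyah flop of $\oC_0$ in $\cX$ (which on the surface level replaces $S$ by $\hS = \mathrm{Bl}_p S$ where $p = E\cap\oC_0$), and then the $X$-component becomes a contractible divisor; contracting it amounts to contracting $h_*^{-1}E\subset\hS$, producing a surface $S'$ with a single $\tfrac{1}{25}(1,4)$ singularity and $(K_{S'}^2)=9$.

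Even granting the right birational picture, the identification $S'\cong X_{26}$ is a computation, not a piece of numerology. Your justification ("$25=26-1$ from $(\oC_0^2)=-1$") does not establish it. The paper derives the ambient $\bP(1,2,13,25)$ and the relation $xw = y^{13}+z^2$ by computing the section ring $\bigoplus_{m\ge 0}H^0(\hS,\cO_{\hS}(\lfloor mD\rfloor))$ for $D = F + \tfrac{26}{25}h_*^{-1}E$ (here $F$ the exceptional curve of $h:\hS\to S$): the generators are $x_1, x_2, x_3, x_3^2+x_2^{13}$ of degrees $1,2,13,25$, and the degree drop from $26$ to $25$ for the last generator happens precisely because $x_3^2+x_2^{13}$ vanishes to order $1$ along $h_*^{-1}E$; this is where the explicit jet normalization $x'^2 = ay^{13}+\cdots$ of $C_0$ enters. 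The identification of $C_0'$ as $(w=0)$ then follows by tracking $\pi_*^{-1}(C_0\times\bA^1)\cap S = (x_3^2 + x_2^{13}=0)$ through the flop and contraction. Your proposed shortcut of embedding $\bP^2$ and $X_{26}$ into a common weighted projective space (as in the $\bP^2\hookrightarrow\bP(1,1,1,2)\hookleftarrow\bP(1,1,4)$ picture from the first wall) does not have an obvious analogue here, and your appeal to the extended Rees algebra would need you to prove normality of the central fiber before it helps; the chain $\bP^2\times\bA^1\leftarrow\cX\dashrightarrow\cX^+\to\cZ$ sidesteps both problems and makes the special test configuration structure transparent without any separate plt check.
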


\begin{proof}

We construct the special degeneration. Consider the family 
$(\bP^2,C_0)\times\bA^1$ and perform the following birational transformations,
\[
\begin{tikzcd}
 & \cX\arrow{ld}{\pi}\arrow{rd}{g}\arrow[rr,dashed,"f"]& & \cX^+\arrow{ld}{h}\arrow{rd}{\psi}\\
 \bP^2\times\bA^1& & \cY & & \cZ
 \end{tikzcd}
\]
where in the central fiber we have
\[
 \begin{tikzcd}
 & S\cup X\arrow{ld}{\pi}\arrow{rd}{g}\arrow[rr,dashed,"f"]& & \hS\cup X'\arrow{ld}{h}\arrow{rd}{\psi}\\
 \bP^2& & S\cup X' & & S'
 \end{tikzcd}
\]
\begin{enumerate}
\item $\pi$ is the $(13,2,1)$-weighted blow up of $\bP^2\times\bA^1$ in the coordinates $(x',y,t)$ where $t$ is the parameter of $\bA^1$,
\item the surface $S=\bP(1,2,13)$ is the exceptional divisor of $\pi$,  
\item the map $g$ is the 
contraction of $\oC_0$ in $X\subset\cX_0$ where $\oC_0$ is the strict transform of $C_0$ in $X$, 
\item the map $f$ is the Atiyah flop of
the curve $\oC_0$ in $\cX_0$ (by computation the normal bundle
$\cN_{\oC_0/\cX}\cong\cO_{\oC_0}(-1)\oplus\cO_{\oC_0}(-1)$), and
\item $\psi$ is the divisorial contraction that contracts $X'$ to a point.
\end{enumerate}

Let us analyze the geometry of these birational maps. Suppose
$S$ has projective coordinates $[x_1,x_2,x_3]$ of weights $(1,2,13)$ respectively.
Then $S\cap X=E=(x_1=0)$, and $\oC_0\cap E=\{p\}$ is a smooth point
of $S$ and $X$. So $h:\hS=Bl_{p}S\to S$, the map $g:X\to X'$ contracts
the $(-1)$-curve $\oC_0$, and $\psi: \hS\to S'$ contracts
$h_*^{-1}(E)$. A simple analysis of the singularity of $S'$ shows that $S'$ has only one singularity of type $\frac{1}{25}(1,4)$.

Let $F$ be the exceptional divisor of $h:\hS\to S=\bP(1,2,13)$. We may look
at the $\bQ$-divisor $D:=F+\frac{26}{25}h_*^{-1}E$. It is clear that the ample model of $D$ on $\hS$ is exactly $S'$. The projective coordinate ring $\oplus_{m\geq 0} H^0(\hS,\cO_{\hS}(\lfloor mD\rfloor))$ has four distinguished generators in degree $1$, $2$, $13$ and $25$,
corresponding to $x_1$, $x_2$, $x_3$ and $x_3^2+x_2^{13}$
on $S$. If we denote $x,y,z,w$ as these four generators,
then their relation  is $xw=z^2+y^{13}$ which shows $S'\cong X_{26}$. It is clear that $\pi_*^{-1} C_0\times\bA^1\cap S$ has the equation $x_2^{13}+x_3^2=0$. Hence the degeneration of $C_0$ on $X_{26}$ is the strict transform of the curve $(x_2^{13}+x_3^2=0)$ which is exactly $(w=0)$. This finishes the proof.
\end{proof}

 In the appendix we use techniques of Ilten and S\"u{\ss} \cite{IS17} to show that $(X_{26},\frac{8}{15}C_0')$ is indeed K-polystable (see Proposition \ref{prop:a12kps}). We now prove Theorem \ref{thm:a12}.

\begin{proof}[Proof of Theorem \ref{thm:a12}]
By Proposition \ref{prop:a12deg}, the log Fano pair $(\bP^2, \frac8{15}C_0)$ admits a special degeneration to $(X_{26}, \frac8{15} C^\prime_0)$, which is K-polystable by Proposition \ref{prop:a12kps}. Therefore, the pair $(\bP^2, \frac8{15}C_0)$ is K-semistable by Theorem \ref{thm:Kss-spdeg}. We then conclude that $(\bP^2, cC_0)$ is K-semistable for any $c\in (0,\frac{8}{15})$ using Propositions \ref{prop:k-interpolation} and \ref{prop:a12onlyif}.
\end{proof}

\subsection{Proof of second wall crossing}

In this section, we prove Theorem \ref{thm:secondwall}. Before presenting its proof, we provide several results that are needed. First we limit the surfaces that can appear.

\begin{lem}\label{lem:quinticindex}
 If $[(X,cD)]\in \ocP_{c}^{\K}$ for some $c\in (0,\frac{3}{5})$, then $X$ is isomorphic to one of the following surfaces: $\bP^2$, $\bP(1,1,4)$, $X_{26}$, or $\bP(1,4,25)$. 
\end{lem}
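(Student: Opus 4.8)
The plan is to apply the index bound of Theorem~\ref{thm:localindex} with $d=5$, which gives us that any singular point $x\in X$ has Gorenstein index $\ind(x,K_X)\leq \min\{\lfloor\frac{3}{3-5c}\rfloor, 5\}$ since $3\nmid 5$. Because $c<\frac{3}{5}$ varies, this bound is not uniform, so we cannot immediately conclude. Instead, we should split into ranges of $c$: for $c$ close to $0$ the bound forces index $1$, hence $X\cong\bP^2$; as $c$ grows the possible indices increase through $2,3,4,5$, corresponding to weighted projective planes and their $\bQ$-Gorenstein smoothings. The key point is that a Manetti surface (Definition~\ref{def:manetti}) is a $\bQ$-Gorenstein deformation of $\bP(a^2,b^2,c^2)$ with $a^2+b^2+c^2=3abc$ by Proposition~\ref{prop:manetti}, and by \cite[Propositions 6.1, 6.2, \& Theorem 7.1]{Hac04} the singularities are cyclic quotient singularities of type $\frac{1}{n^2}(1,an-1)$ with $3\nmid n$, where $n$ is the local index.

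First I would enumerate the Markov-type triples $(a,b,c)$ with $a^2+b^2+c^2=3abc$ and determine which weighted projective planes $\bP(a^2,b^2,c^2)$ can carry a degree $2d=10$ boundary divisor $D\sim_{\bQ}-\frac{5}{3}K_X$ that is a $\bQ$-Gorenstein degeneration of a smooth plane quintic. The relevant surfaces that appear for small index are $\bP^2=\bP(1,1,1)$ (index $1$), $\bP(1,1,4)=\bP(1,1,2^2)$ (index $2$), $\bP(1,4,25)=\bP(1,2^2,5^2)$ (index $5$), and the smoothing $X_{26}$ of $\bP(1,9,25)=\bP(1,3^2,5^2)$ which has a unique singular point of index $5$ (of type $\frac{1}{25}(1,4)$, as computed in the proof of Proposition~\ref{prop:a12deg}). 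Then I would rule out the remaining candidates: for instance, index-$3$ surfaces would be smoothings of $\bP(1,1,9)$ or $\bP(1,4,9)$, and I should check using the index bound together with a volume estimate (Theorem~\ref{thm:local-vol-global}, as in the proof of Theorem~\ref{thm:firstwallbefore} for the $\bP(1,1,4)$ case) whether such a surface can be K-semistable with the required boundary; an index-$4$ point is automatically excluded since $3\nmid n$ fails only for $n\equiv 0\pmod 3$, wait—$n=4$ satisfies $3\nmid 4$, so index $4$ is a priori allowed by the congruence, but $\lfloor\frac{3}{3-5c}\rfloor\geq 4$ forces $c\geq\frac{33}{60}=\frac{11}{20}$ which is already past the relevant walls; more importantly there is no Markov triple giving a weight $4^2=16$, so I would argue via the classification that no such Manetti surface exists.

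The main obstacle will be showing that the surfaces of intermediate index (especially index $3$) genuinely cannot occur as $X$ for any $c<\frac{3}{5}$. The index bound alone allows index up to $\lfloor\frac{3}{3-5c}\rfloor$ which can exceed $3$ as $c\to\frac{3}{5}$, so I must supplement it with the finer analysis used in Theorem~\ref{thm:firstwallbefore}: for a candidate surface $X$ with a bad cyclic quotient singularity $x$, I would exhibit the Kollár component $E$ over $x$ and compare $A_{(X,cD)}(E)$ with $S_{(X,cD)}(E)$ via the valuative criterion (Theorem~\ref{thm:valuative}), deriving a contradiction with K-semistability outside the allowed range. A second, more bookkeeping-heavy issue is confirming that $X_{26}$ and $\bP(1,4,25)$ are the only index-$5$ possibilities compatible with a $\bQ$-Gorenstein smoothing to $\bP^2$ carrying a degenerating quintic; here I would invoke the deformation theory of Hacking (the unobstructedness from Proposition~\ref{prop:Zplanecurve}(2) and Proposition~\ref{prop:manetti}) together with the explicit degeneration constructed in Proposition~\ref{prop:a12deg} to pin down $X_{26}$, and a parallel explicit construction (to be carried out in Section~\ref{sec:quintics}) for $\bP(1,4,25)$. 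Finally I would assemble these into the stated list, noting that $\bP(1,1,4)$ appears after the first wall (Section~\ref{sec:firstwall}), $X_{26}$ after the second, and $\bP(1,4,25)$ in later walls, consistent with Table~\ref{table:quintic}.
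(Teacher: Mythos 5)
The central flaw is that you have misread the index bound. Theorem~\ref{thm:localindex} for $d=5$ (which is odd, so $3\nmid d$) gives $\ind(x,K_X)\leq \min\{\lfloor\frac{3}{3-5c}\rfloor,\,5\}$, and the minimum with $5$ is a \emph{uniform} bound over all $c\in(0,\tfrac{3}{5})$. There is no need to split into ranges of $c$ or to supplement the bound with valuative-criterion estimates: once you know every local Gorenstein index is at most $5$, the problem reduces purely to classification. The paper's proof is exactly this two-line reduction, and your proposed case analysis by $c$ is superfluous.

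The classification step also has concrete errors. You claim $X_{26}$ is a smoothing of $\bP(1,9,25)=\bP(1,3^2,5^2)$, but $(1,3,5)$ does not satisfy $a^2+b^2+c^2=3abc$ (here $35\neq 45$), so $\bP(1,9,25)$ is not a Manetti surface at all; $X_{26}$ is the partial $\bQ$-Gorenstein smoothing of $\bP(1,4,25)$ obtained by smoothing its $\tfrac{1}{4}(1,1)$ point, as one sees from the equation $(xw=y^{13}+z^2)\subset\bP(1,2,13,25)$ and the computation in Proposition~\ref{prop:a12deg}. Similarly, you propose $\bP(1,1,9)$ and $\bP(1,4,9)$ as index-$3$ candidates, but $(1,1,3)$ and $(1,2,3)$ are not Markov triples either, so these are not Manetti surfaces. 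More to the point, local Gorenstein indices of Manetti surfaces are \emph{never} divisible by $3$ — this is part of the statement of \cite[Prop.~6.1, 6.2, Thm.~7.1]{Hac04} that you yourself quote — so index $3$ is excluded a priori without any K-stability input. The correct enumeration is immediate: the Markov-type triples with all entries $\leq 5$ are $(1,1,1)$, $(1,1,2)$, $(1,2,5)$, giving $\bP^2$, $\bP(1,1,4)$, $\bP(1,4,25)$; their $\bQ$-Gorenstein partial smoothings yield only the additional surface $X_{26}$ (smoothing the $\tfrac14(1,1)$ point of $\bP(1,4,25)$; smoothing the $\tfrac{1}{25}(1,4)$ point instead lands back on $\bP(1,1,4)$). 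Index $4$ drops out not because of the bound on $c$ (your inequality $c\geq\tfrac{11}{20}$ is also miscomputed) but simply because no Markov triple has an entry equal to $4$.
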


\begin{proof}
By the index estimate (Theorem \ref{thm:localindex}), we know that the Gorenstein index of any singular point on $X$ is at most $5$. Thus from the classification of Manetti surfaces, we know that $X$ is either isomorphic to $\bP^2$, $\bP(1,1,4)$, $\bP(1,4,25)$ or isomorphic to some of their partial smoothings. It is clear (e.g. from \cite{Hac04}) that $X_{26}$ is the only new surface appearing which is a partial smoothing.
\end{proof}

Next, we discuss K-stability of curves on $X_{26}$.

\begin{prop}\label{prop:valcrit-X_26}
 Let $C$ be a curve on $X_{26}$ of degree $25$. If $(X_{26},cC)$ is K-semistable, then $c\geq \frac{8}{15}$. If in addition that $C$ passes through the singular point of $X_{26}$, then $(X_{26},cC)$ is K-unstable for any $c\in (0, \frac{3}{5})$.
\end{prop}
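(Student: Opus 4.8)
The plan is to apply the valuative criterion (Theorem \ref{thm:valuative}) using a single carefully chosen divisor $E$ over $X_{26}$, namely the one that reverses the degeneration of Proposition \ref{prop:a12deg}. Recall that $X_{26}$ has a unique singular point $p_0=[0:0:0:1]$, a cyclic quotient singularity of type $\frac1{25}(1,4)$, and that (in the notation of the proof of Proposition \ref{prop:a12deg}) $X_{26}=S'$ is obtained from $S:=\bP(1,2,13)$ by blowing up a smooth point $p\in S$ and contracting the strict transform $\widetilde E$ of the curve $(x_1=0)\subset S$. Let $\sigma\colon Y\to X_{26}$ be the corresponding weighted blow-up of $p_0$, with exceptional divisor $E:=\widetilde E$; thus $Y\cong\mathrm{Bl}_pS$ also carries the contraction $\tau\colon Y\to S$ of the $(-1)$-curve $F$ over $p$, and $E=\tau^*\cO_S(1)-F$ in $\mathrm{Pic}(Y)\otimes\bQ$. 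First I would record the numerical data of $E$: using $K_{X_{26}}=\cO_{X_{26}}(-15)$, $K_S=\cO_S(-16)$ and $\cO_{X_{26}}(1)^2=\tfrac1{25}$ one computes $\sigma^*\cO_{X_{26}}(1)=\tfrac{26}{25}\,\tau^*\cO_S(1)-\tfrac1{25}F$, whence
\[
A_{X_{26}}(E)=\tfrac35 .
\]

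Next I would compute the $S$-invariant. Since $-K_{X_{26}}-cC\sim_{\bQ}(15-25c)\,\cO_{X_{26}}(1)$ for \emph{every} degree-$25$ curve $C$, the quantity $S_{(X_{26},cC)}(E)$ depends on $C$ only through this class, and the degree-$2$ homogeneity of the volume gives $S_{(X_{26},cC)}(E)=(15-25c)\,S_H(E)$, where $S_H(E):=\tfrac{1}{\cO_{X_{26}}(1)^2}\int_0^\infty \vol_{X_{26}}\big(\cO_{X_{26}}(1)-tE\big)\,dt$. The surface $Y$ has Picard rank two with Mori cone $\overline{NE}(Y)=\bR_{\ge0}[E]+\bR_{\ge0}[F]$ (both curves have negative self-intersection), so the Zariski decomposition of $\sigma^*\cO_{X_{26}}(1)-tE$ is explicit: the class is nef for $0\le t\le\tfrac1{25}$ and acquires an $F$-component for $\tfrac1{25}\le t\le\tfrac{26}{25}$, and has vanishing volume thereafter. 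Carrying out the (elementary but lengthy) intersection-number bookkeeping yields $\int_0^\infty\vol_{X_{26}}(\cO_{X_{26}}(1)-tE)\,dt=\tfrac{9}{625}$, hence $S_H(E)=\tfrac9{25}$ and $S_{(X_{26},cC)}(E)=\tfrac{27}{5}-9c$. I expect this Zariski-decomposition computation to be the most laborious step, though it is entirely routine; as a sanity check, $S_H(E)=\tfrac9{25}$ is exactly the value consistent with the wall being located at $c=\tfrac8{15}$.

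Finally I would analyze $\ord_E(C)$. Writing a degree-$25$ curve on $X_{26}$ as $C=(aw+zg_{12}(x,y)+g_{25}(x,y)=0)$, one has $p_0\in C$ if and only if $a=0$. Since $\ord_E(x)=\tfrac{26}{25}$, $\ord_E(y)=\tfrac2{25}$, $\ord_E(z)=\tfrac{13}{25}$, $\ord_E(w)=0$ (read off from the description of $Y$), a short computation in the local ring at $p_0$ shows that $\ord_E$ equals $1$ on the minimal generator $zy^6/w$ of $\fm_{p_0}$, so $\ord_E(\fm_{p_0})=1$; consequently $\ord_E(C)=0$ when $a\ne0$ and $\ord_E(C)\ge1$ when $a=0$. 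Now assemble: for any degree-$25$ curve $C$ and any $c\in(0,\tfrac35)$ one always has $A_{(X_{26},cC)}(E)=\tfrac35-c\,\ord_E(C)\le\tfrac35$ while $S_{(X_{26},cC)}(E)=\tfrac{27}{5}-9c$, and $\tfrac35<\tfrac{27}{5}-9c$ exactly when $c<\tfrac8{15}$; by Theorem \ref{thm:valuative} this proves the first assertion. If moreover $p_0\in C$, then $\ord_E(C)\ge1$, so $A_{(X_{26},cC)}(E)\le\tfrac35-c<\tfrac{27}{5}-9c=S_{(X_{26},cC)}(E)$ for \emph{all} $c\in(0,\tfrac35)$, which is the second assertion. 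The only genuine subtlety beyond the computations is to describe $E$ precisely enough to pin down both $A_{X_{26}}(E)$ and the $\ord_E(C)$ dichotomy, which is why it matters that $E$ is exactly the divisor contracted in Proposition \ref{prop:a12deg}.
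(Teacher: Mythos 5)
Your proof is correct and uses the same key divisor $E$ as the paper (the $(2,13)$-weighted blow-up extraction over the $\frac{1}{25}(1,4)$ singular point), arriving at the same values $A_{X_{26}}(E)=\tfrac35$ and $S_{(X_{26},cC)}(E)=\tfrac{9}{25}(15-25c)$ and the same dichotomy on $\ord_E(C)$. The only difference is presentational: you read off the intersection theory from the birational model $Y\cong\mathrm{Bl}_p\,\bP(1,2,13)$ supplied by Proposition \ref{prop:a12deg} (your $(-1)$-curve $F$ coincides with the paper's $\overline{\Gamma}$, the strict transform of $(x=0)$), whereas the paper works directly on the blow-up of $X_{26}$ at the singular point and uses the orbifold chart $\bA^2_{(y,z)}\to X_{26}$ to compute the log discrepancy and the orders of vanishing.
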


\begin{proof}
 For simplicity we denote by $X:=X_{26}$.
 Let us consider the unique singular point $[0,0,0,1]$ on $X$. Denote by
 $[x,y,z,w]$ the projective coordinates where $X$
 is defined by $xw=y^{13}+z^2$. If we set $w=1$,
 then we have a cyclic quotient map $\pi:\bA_{(y,z)}^2\to X$
 defined by $\pi(y,z)=[y^{13}+z^2,y,z,1]$. Let $F$ be
 the exceptional divisor on $\bA_{(y,z)}^2$ given by the 
 $(2,13)$-weighted blow up. Let $E$ be the quotient of $F$
 over $X$. Then it is clear that $\ord_E=\pi_*\ord_F/25$,
 $(F^2)=-\frac{1}{26}$ and $(E^2)=-\frac{25}{26}$. Let $\Gamma$
 be the curve $x=0$ on $X$. Then $\ord_E(\Gamma)=\ord_F(\Gamma)/25=\frac{26}{25}$.
 Hence on the blow-up $\mu:Y\to X$ extracting $E$, the proper
 transform $\overline{\Gamma}$ of $\Gamma$ satisfies
 $\overline{\Gamma}=\mu^*\Gamma-\frac{26}{25}E$ and
 $(\overline{\Gamma}^2)=-1$. So the Mori cone of $Y$ 
 is generated by $E$ and $\overline{\Gamma}$. Computation
 shows
 \[
  \vol_X(\cO(1)-tE)=\begin{cases}
                     \frac{1}{25}-\frac{25}{26}t^2&\textrm{ if }0\leq t\leq \frac{1}{25}\\
                     \frac{1}{26}(\frac{26}{25}-t)^2&\textrm{ if }\frac{1}{25}\leq t\leq\frac{26}{25}
                    \end{cases}
 \]
 Thus 
 \[
 S_{(X,cC)}(\ord_E)=
 \frac{15-25c}{\vol_X(\cO(1))}\int_{0}^{\infty}\vol_X(\cO(1)-tE)dt  =\frac{9}{25}(15-25c).
 \]
 Since $A_X(\ord_E)=A_{\bA^2}(\ord_F)/25=15/25=3/5$ and $(X,cC)$ is K-semistable, the valuative criterion (Theorem \ref{thm:valuative}) yields 
 $\frac{3}{5}\geq A_{(X,cC)}(\ord_E)\geq S_{(X,cC)}(\ord_E)=\frac{9}{25}(15-25c)$
 which implies $c\geq \frac{8}{15}$.
 
 If $C$ passes through $[0,0,0,1]$, then its equation is given by $(f(x,y)z+g(x,y)=0)$ where $\deg f=12$ and $\deg g=25$. Let $\widetilde{C}$ be the preimage of $C$ under $\pi$. Then $\widetilde{C}$ has equation
 \[
 f(y^{13}+z^2, y)z+g(y^{13}+z^2, y)=0.
 \]
 Then by simple calculation we see that $\ord_E(C)=\ord_F(\widetilde{C})/25\geq 1$. Thus we have
 \[
 A_{(X,cC)}(\ord_E)\leq \frac{1}{5}(3-5c)<\frac{9}{25}(15-25c)=S_{(X,cC)}(\ord_E).
 \]
 This implies that $(X,cC)$ is always K-unstable for $c\in (0,\frac{3}{5})$ by the valuative criterion (Theorem \ref{thm:valuative}). The proof is finished.
\end{proof}

\begin{prop}\label{prop:onlykps-X_26}
 Let $C$ be a curve on $X_{26}$ of degree $25$. Then 
 $(X_{26}, \frac{8}{15}C)$ is K-semistable if and only if $C$ does not pass through the unique singular point of $X_{26}$. Moreover, $(X_{26}, \frac{8}{15}C)$ is K-polystable if and only if  $C\cong C_0'$ under an automorphism of $X_{26}$. 
\end{prop}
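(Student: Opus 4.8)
\textbf{Proof proposal for Proposition \ref{prop:onlykps-X_26}.}

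The plan is to combine the valuative estimate of Proposition \ref{prop:valcrit-X_26}, the special degeneration machinery, and an explicit $\bG_m$-action on $X_{26}$. First, the ``only if'' direction of the K-semistability statement is already contained in Proposition \ref{prop:valcrit-X_26}: if $C$ passes through the unique singular point $[0,0,0,1]$ of $X_{26}$, then $(X_{26},\frac{8}{15}C)$ is K-unstable (in fact $c$-K-unstable for all $c\in(0,\frac{3}{5})$). So I only need the ``if'' direction: if $C$ avoids the singular point, then $(X_{26},\frac{8}{15}C)$ is K-semistable. By the argument in the proof of Proposition \ref{prop:a12deg} together with Lemma \ref{lem:quinticindex}, every such $C$ is of the form $(w = g(x,y))$ with $\deg g = 25$ in the grading $(1,2)$ on $(x,y)$ and $g\neq 0$ (the coefficient of $w$ may be normalized to $1$ since $C$ does not meet $[0,0,0,1]$, hence $w$ appears linearly; monomials in $x,y,z$ of degree $25$ not involving $w$ can be reduced modulo the relation $xw = y^{13}+z^2$). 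I would then introduce the $\bG_m$-action $\lambda(t)\cdot[x,y,z,w] = [x, y, z, tw]$ on $X_{26}$, under which $C = (w=g(x,y))$ degenerates as $t\to 0$ to $C_0' = (w=0)$. This is a special test configuration (the total space is $\bQ$-Gorenstein, $\cL \sim_\bQ -l(K+c\cD)$, and the central fiber is plt since $C_0'$ is in the smooth locus and $(X_{26},\frac{8}{15}C_0')$ is klt), so by Theorem \ref{thm:Kss-spdeg} and the K-polystability of $(X_{26},\frac{8}{15}C_0')$ (Proposition \ref{prop:a12kps}, via Ilten--S\"uss), we conclude $(X_{26},\frac{8}{15}C)$ is K-semistable.

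For the K-polystability characterization, one direction is Proposition \ref{prop:a12kps}: $(X_{26},\frac{8}{15}C_0')$ is K-polystable. For the converse, suppose $(X_{26},\frac{8}{15}C)$ is K-polystable. Then in particular it is K-semistable, so by the first part $C$ avoids the singular point and $C = (w=g(x,y))$ with $g\neq 0$. Now I would use the $\bG_m$-action $\lambda$ above: unless $C$ is already $\lambda$-invariant, the degeneration $\lim_{t\to 0}\lambda(t)\cdot C = C_0'$ gives a non-product special test configuration of $(X_{26},\frac{8}{15}C)$ with K-semistable (indeed K-polystable) central fiber $(X_{26},\frac{8}{15}C_0')$; by the valuative or Futaki computation this test configuration has Futaki invariant $0$ (both $(X_{26},\frac{8}{15}C)$ and $(X_{26},\frac{8}{15}C_0')$ are K-semistable and the Futaki invariant is linear, with the reverse $1$-PS on the K-polystable limit giving a product test configuration of opposite weight), contradicting K-polystability of $(X_{26},\frac{8}{15}C)$ unless $C$ is $\lambda$-fixed. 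A $\lambda$-fixed curve of the form $(w=g(x,y))$ forces $g\equiv 0$, i.e. $C = C_0'$, unless there are \emph{other} torus directions in $\Aut(X_{26})$ that could move $g$. This is where I need to be careful.

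The main obstacle, and the step requiring real work, is handling the reductive part of $\Aut(X_{26},C)$ and the ``extra'' torus action. The surface $X_{26}=(xw=y^{13}+z^2)\subset\bP(1,2,13,25)$ carries a two-dimensional torus (complexity-one $T$-variety), and not every curve $(w=g(x,y))$ with $g\neq 0$ is K-polystable — the exceptional divisor of $\phi_2^+$ in Theorem \ref{thm:secondwall}(3) precisely parametrizes such curves $g$ modulo the remaining torus and modulo the excluded monomial $xy^{12}$. So the correct statement to prove is: among curves $(w=g(x,y))$, the pair $(X_{26},\frac{8}{15}C)$ is K-polystable if and only if $C$ is in the closed orbit, which is $C_0'$ (equivalently $g$ is, up to the torus, the zero polynomial — the orbit closure of every nonzero $g$ contains $g=0$). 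I would establish this by: (i) identifying the maximal torus $T\subset\Aut(X_{26})$ acting on the affine space $\bfA'$ of coefficients of $g$, (ii) showing every nonzero $g$ admits a one-parameter subgroup degenerating it to $0$ (so $C_0'$ is the unique closed orbit and all other $(X_{26},\frac{8}{15}C)$ with $g\neq 0$ are strictly K-semistable, degenerating to $(X_{26},\frac{8}{15}C_0')$), and (iii) invoking the uniqueness of K-polystable degeneration \cite{LWX18, BX18} to conclude. Step (ii) is the crux: I expect it to follow from a weight computation showing that the monomial $xy^{12}$ (the one excluded in Theorem \ref{thm:secondwall}(3)) is the unique monomial of critical weight, and that its coefficient can always be scaled away or absorbed, so that every $g$ lies in the negative-weight cone of some $1$-PS. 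Once (ii) holds, K-polystability of $(X_{26},\frac{8}{15}C)$ forces the orbit to be closed, hence $C = C_0'$, completing the proof.
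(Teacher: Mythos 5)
Your overall strategy matches the paper's (degenerate $C$ to $C_0'$, invoke openness of K-semistability under special degeneration and K-polystability of $(X_{26},\frac{8}{15}C_0')$, then use uniqueness of the K-polystable degeneration), but there are two concrete errors and one significant overcomplication.

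First, the $1$-PS $\lambda(t)\cdot[x,y,z,w]=[x,y,z,tw]$ is \emph{not} an automorphism of $X_{26}$: under it $xw - y^{13} - z^2 \mapsto txw - y^{13} - z^2$, which is not proportional to the original polynomial, so the hypersurface is not preserved. The weights $(a,b,c,d)$ defining a $\bG_m$-action on $X_{26}$ must satisfy $a+d = 13b = 2c$, and $(0,0,0,1)$ does not lie in this weight lattice even after twisting by the ambient $\bP(1,2,13,25)$-scaling $(1,2,13,25)$. The paper uses $[x,y,z,w]\mapsto[t^{26}x,t^2y,t^{13}z,w]$ (weights $(26,2,13,0)$, which do satisfy $26+0=13\cdot 2 = 2\cdot 13$), and then takes $t\to\infty$ to reach $C_0'$. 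This is a fixable but real error, since your stated test configuration is simply not a test configuration of the pair.

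Second, your normal form for $C$ is wrong as justified. A degree-$25$ curve avoiding $[0,0,0,1]$ has the form $(w = f(x,y)z + g(x,y))$ with $\deg f = 12$ and $\deg g = 25$. The relation $xw = y^{13}+z^2$ has degree $26$ and can only reduce \emph{even} powers of $z$; the linear-in-$z$ term $f(x,y)z$ cannot be eliminated by working modulo the relation. (It \emph{can} be eliminated by a unipotent automorphism $z\mapsto z+xh_1,\ w\mapsto w + 2zh_1+xh_1^2$, which is how the statement $(w=g(x,y))$ in Theorem \ref{thm:secondwall}(3) is reached, but that is a different argument from the one you give and is unnecessary here.) The paper keeps the general form $w = f(x,y)z + g(x,y)$ and the $1$-PS above kills both $f$ and $g$ in the limit, so no reduction is needed.

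Third, for the K-polystability characterization the paper's argument is a one-liner via uniqueness of K-polystable degenerations \cite{LWX18}: since any $C$ avoiding the singular point degenerates to $C_0'$ via the $1$-PS above and $(X_{26},\frac{8}{15}C_0')$ is K-polystable, uniqueness forces $C\cong C_0'$ whenever $(X_{26},\frac{8}{15}C)$ is itself K-polystable. Your proposed "crux step (ii)" — an independent weight-cone analysis showing every nonzero $g$ degenerates to $0$ — is superfluous: the required degeneration is already supplied by the very same $1$-PS used in the K-semistability step, so no new torus computation or discussion of the excluded monomial $xy^{12}$ is needed, and the worry about "other torus directions that could move $g$" is a red herring.
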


\begin{proof}
We first look at the K-semistable statement. The ``only if'' part holds by Proposition \ref{prop:valcrit-X_26}. For the ``if'' part, suppose $C$ does not pass through $[0,0,0,1]$. Hence the equation of $C$ is given by $w=f(x,y)z+g(x,y)$. Consider the $1$-PS in $\Aut(X_{26})$ defined by $[x,y,z,w]\mapsto [t^{26}x,t^{2}y, t^{13}z, w]$. It is clear that $C$ specially degenerates to $C_0'=(w=0)$ via this $1$-PS as $t\to\infty$. Hence $(X_{26},\frac{8}{15}C)$ is K-semistable by Theorem \ref{thm:Kss-spdeg} and the K-polystability of $(X_{26},\frac{8}{15}C_0')$ (see Proposition \ref{prop:a12kps}). The K-polystable statement follows by uniqueness of K-polystable degenerations \cite{LWX18}.
\end{proof}

Now we are ready to prove Theorem \ref{thm:secondwall}.

\begin{proof}[Proof of Theorem \ref{thm:secondwall}]
(1) Assume to the contrary that there are wall crossings within the interval $(\frac{3}{7}, \frac{8}{15})$. Let $c_2\in(\frac{3}{7}, \frac{8}{15})$ be the second wall. Then there exists a new K-polystable pair $(X, c_2 D)$ such that $(X,cD)$ is K-unstable for any $c\neq c_2$ by Proposition \ref{prop:openkps}. Thus  $X$ is not isomorphic to $X_{26}$ or $\bP(1,4,25)$ by Propositions \ref{prop:valcrit-X_26} and \ref{prop:valcrit-p1425} (the valuative criterion for curves on $\bP(1,4,25)$) since $c_2<\frac{8}{15}$. By Lemma \ref{lem:quinticindex}, we are left with two possibilities, i.e. $X$ is isomorphic to $\bP^2$ or $\bP(1,1,4)$. If $X\cong\bP^2$, then Proposition \ref{prop:k-interpolation} implies that $(X, \epsilon D)$ is also K-polystable which is a contradiction. Hence we may assume $(X,D)\cong (\bP(1,1,4),C)$.

Assume the equation of $C$ is given by $f(x,y)z^2+g(x,y)z+h(x,y)=0$. If $f(x,y)$ is a non-degenerate quadratic form, then it is clear that $C$ specially degenerates to $Q_5'$. Since $(\bP(1,1,4), \frac{3}{7} Q_5')$ is K-polystable by Lemma \ref{lem:Q_d'Kps}, we know that $(\bP(1,1,4),\frac{3}{7}C)$ is K-semistable by Theorem \ref{thm:Kss-spdeg}. But this is a contradiction since $(X,\frac{3}{7}D)$ is K-unstable. Thus $f(x,y)$ is a degenerate quadratic form. By Proposition \ref{prop:valcrit-p114} (the valuative criterion for curves on $\bP(1,1,4)$), we know that $c_2\geq \frac{6}{11}>\frac{8}{15}$, a contradiction. This finishes the proof of part (1).

(2) From Theorem \ref{thm:a12}, we know that $\phi_2^-$ replaces $[(\bP^2,C_0)]$ with $[(X_{26}, C_0')]$. By Proposition \ref{prop:onlykps-X_26}, we know that $(X_{26},\frac{8}{15}C_0')$ is the only new K-polystable pair appearing in $\oP_{\frac{8}{15}}^{\K}$. Hence to show $\phi_2^-$ is an isomorphism it suffices to show that the preimage of $[(X_{26}, C_0')]$ under $\phi_2^{-}$ is exactly $[(\bP^2,C_0)]$. Denote by $E_2^{\pm}$ the preimage of $[(X_{26}, C_0')]$ under the morphisms $\phi_2^{\pm}$.
Assume to the contrary that $E_2^-$ contains at least two points. Since all K-moduli spaces of plane curves are normal by Proposition \ref{prop:Zplanecurve}, we know that $E_2^-$ is connected hence has positive dimension. 
It is clear that $\Aut_0(X_{26},C_0')\cong\bG_m$. 
Let $U_W$ be the Luna slice at the point $z_0=\Hilb(X_{26},\frac{8}{15}C_0')\in Z_{\frac{8}{15}}^{\circ}$ satisfying Theorem \ref{thm:wallcrossingchart}. Hence we know that $z_0$ is the only $\bG_m$-invariant point in $U_W$. The smoothness of $Z_c^{\circ}$ (Proposition \ref{prop:Zplanecurve}) implies that $U_W$ is also smooth. Applying \cite[Theorem 0.2.5]{dolgachevhu} or \cite[Corollary 1.13]{thaddeus} to the local VGIT presentation  \eqref{eq:localVGIT} near $z_0$ implies that
\[
\dim(E_2^-)+\dim(E_2^+)+1=\dim(\oP_{\frac{8}{15}}^{\K})=12.
\]
In particular, we know that the locus $E_2^+$ has codimension at least two in the K-moduli space. However, we will show that this is not true.

Let $C$ be a curve on $X_{26}$ of the form $(w=g(x,y))$ where $g\neq 0$ and $g$ does not contain the term $x y^{12}$. Hence $C$ does not pass through the singular point $[0,0,0,1]$ of $X_{26}$. It is clear that $C$ is a smooth curve on $X_{26}$ for a general choice of $g$. Hence $(X_{26}, \frac{3}{5}C)$ is klt for a general choice of $g$. Since $(X_{26}, \frac{8}{15}C)$ is K-semistable by Proposition \ref{prop:onlykps-X_26}, we know that $(X_{26}, (\frac{8}{15}+\epsilon)C)$ is K-stable for a general $g$.
In the affine chart $x=1$, the equation of $C$ becomes
\begin{equation}\label{eq:hyperelliptic}
z^2=-y^{13}+a_{11}y^{11}+a_{10} y^{10}+\cdots+ a_0.
\end{equation}
Hence $C$ is a hyperelliptic curve of arithmetic genus six. Indeed, from the above discussion we see that any smooth hyperelliptic curve $C$ of genus six produces a K-stable pair $(X_{26},(\frac{8}{15}+\epsilon)C)$ in the K-moduli space $\oP_{\frac{8}{15}+\epsilon}^{\K}$. Since the moduli space of smooth hyperelliptic curves of genus six has dimension $11$, we know that $E_2^+$ has dimension at least $11$ which contradicts to the assumption that $\phi_2^-$ is not an isomorphism. This finishes the proof of part (2).

(3) From the local VGIT argument in part (2), we know that $\phi_2^+$ is a weighted blow up since $\phi_2^-$ is an isomorphism (see \cite[Theorem 0.2.5]{dolgachevhu} or \cite[Corollary 1.13]{thaddeus}). 
If $[(X,D)]$ is a point in $E_2^+$, then it admits a special degeneration to  $(X_{26},C_0')$. Thus $X$ is either $\bP^2$ or $X_{26}$. However, if $X\cong\bP^2$ then $(X,\frac{8}{15} D)$ is K-polystable as well by Proposition \ref{prop:k-interpolation}, a contradiction. Hence $X\cong X_{26}$ and $D$ does not pass through the singular point on $X$ by Proposition \ref{prop:valcrit-X_26}. Thus after a suitable change of coordinates we can put $D$ into the form in the statement. Note that $g\neq 0$ is because otherwise $(X,D)\cong (X_{26},C_0')$ is $(\frac{8}{15}+\epsilon)$-K-unstable.

To prove the rest of part (3), it suffices to show that  $(X_{26},(\frac{8}{15}+\epsilon)C)$ is K-polystable for any curve $C$ on $X_{26}$ described in the statement. Given such a curve $C$, we may find a family of smooth hyperelliptic curves $D_t$ on $X_{26}$ of the same form over a punctured smooth curve $T\setminus\{0\}$ such that $\lim_{t\to 0} D_t=C$. Let $(X,(\frac{8}{15}+\epsilon)D)$ be the K-polystable limit of $(X_{26}, (\frac{8}{15}+\epsilon)D_t)$ using Theorem \ref{thm:compactness}. Since the Gorenstein index of $X$ is a multiple of $X_{26}$ which is $5$, we know that $X$ can only be $X_{26}$ or $\bP(1,4,25)$. But $\bP(1,4,25)$ is impossible by Proposition \ref{prop:valcrit-p1425}. Hence $X\cong X_{26}$ and $D$ is a curve not passing through the singular point $[0,0,0,1]$. By an automorphism of $X_{26}$, we may assume that $D$ has the equation $(w=h(x,y))$ where $h$ is a homogeneous polynomial of degree $25$. Since $(X_{26},(\frac{8}{15}+\epsilon)C_0')$ is K-unstable by Proposition \ref{prop:openkps}, after a further change of coordinates we may assume that $h\neq 0$ and $h$ does not contain the term $xy^{12}$. Thus we conclude that $(X, D)\cong (X_{26}, C)$. The proof is finished.
\end{proof}

The following result follows easily from the proof of Theorem \ref{thm:secondwall} (see e.g. \eqref{eq:hyperelliptic}).

\begin{cor}\label{cor:hyperellipticgenus6}
The moduli stack of smooth hyperelliptic curves $C$ of genus six  with a marked Weierstrass point $p$ admits a locally closed embedding into the K-moduli stack $\ocP_{\frac{8}{15}+\epsilon}^{\K}$. Moreover, this embedding is stabilizer preserving and sends closed points to closed points. In particular, the coarse moduli space of such pairs $(C,p)$ admits a locally closed embedding into the K-moduli space $\oP_{\frac{8}{15}+\epsilon}^{\K}$ whose image closure is the exceptional divisor of $\phi_2^{+}$.
\end{cor}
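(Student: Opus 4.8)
The assertion is a repackaging of Theorem~\ref{thm:secondwall}(3) together with the Weierstrass normal form for hyperelliptic curves with a marked Weierstrass point. Write $\mathcal{H}$ for the moduli stack of pairs $(C,p)$ with $C$ a smooth genus six hyperelliptic curve and $p\in C$ a Weierstrass point. Choosing an affine coordinate $y$ on the base $\bP^1$ of the hyperelliptic map with the branch point below $p$ at $y=\infty$, the curve has a Weierstrass equation $z^2=f_{13}(y)$ with $f_{13}$ squarefree of degree $13$; since $\mathrm{char}=0$ we may translate $y$ to remove the coefficient of $y^{12}$ and rescale $z$ so that the leading coefficient is $-1$, bringing $C$ into the form \eqref{eq:hyperelliptic}. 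The remaining ambiguity is the $\bG_m$-action induced by $y\mapsto s^2y$, $z\mapsto s^{13}z$, under which $a_k\mapsto s^{2k-26}a_k$; this identifies $\mathcal{H}$ with $[(V\setminus\Delta)/\bG_m]$, where $V\cong\bA^{12}$ has coordinates $(a_0,\dots,a_{11})$ and $\Delta\subset V$ is the discriminant locus.

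First I would build a morphism $\Xi\colon\mathcal{H}\to\ocP^{\K}_{5,\frac{8}{15}+\epsilon}$. Over $V\setminus\Delta$ take the constant family $X_{26}\times(V\setminus\Delta)$ with the relative curve $\cC=\bigl(w=\textstyle\sum_{k=0}^{11}a_kx^{25-2k}y^k\bigr)$; in the chart $x=1$ its fibers are exactly the smooth curves \eqref{eq:hyperelliptic} of Theorem~\ref{thm:secondwall}(3), all missing the singular point of $X_{26}$, and by Propositions~\ref{prop:onlykps-X_26} and~\ref{prop:k-interpolation} each pair $(X_{26},(\tfrac{8}{15}+\epsilon)C)$ is uniformly K-stable. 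Since $-K_{X_{26}}$ is $\bQ$-Cartier of index $5$ and the Hilbert polynomial is the constant $\chi_0(k)=\chi(\bP^2,\cO(3k))$, this is a $\bQ$-Gorenstein smoothable K-semistable log Fano family, so the universal property of the K-moduli stack (Remark~\ref{rem:pseudo-functor}, Section~\ref{sec:stackstab}) produces a morphism $V\setminus\Delta\to\ocP^{\K}_{5,\frac{8}{15}+\epsilon}$. The $\bG_m$-action on $V$ is covered by the automorphism $[x,y,z,w]\mapsto[x,s^2y,s^{13}z,s^{26}w]$ of $X_{26}$, so this descends to $\Xi$, whose image is the open substack $\mathcal{U}:=\mathcal{E}_2^+\cap\{C\text{ smooth}\}$ of the (closed) exceptional substack $\mathcal{E}_2^+\subset\ocP^{\K}_{5,\frac{8}{15}+\epsilon}$ lying over $[(X_{26},C_0')]$; in particular $\mathcal{U}$ is locally closed.

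Next I would construct the inverse. The pointed curve is intrinsic to the embedded pair: $H^0(X_{26},\cO_{X_{26}}(1))$ is one-dimensional, so the hyperplane $\{x=0\}$ is canonical, and $C\cap\{x=0\}$ is a single reduced point, hence a ramification point of the (unique) $g^1_2$ on $C$; setting $p:=C\cap\{x=0\}$ and making this functorial in families via cohomology and base change gives a morphism $\mathcal{U}\to\mathcal{H}$ inverse to $\Xi$, so $\Xi$ is an isomorphism onto $\mathcal{U}$. For the remaining claims, $\Xi$ is stabilizer preserving because $\Aut(X_{26},C)\cong\Aut(C,p)$: restriction to $C$ is a faithful map $\Aut(X_{26},C)\hookrightarrow\Aut(C,p)$ — faithful since, by the description of $X_{26}$ as a complexity-one $T$-variety underlying the Appendix and \cite{IS17}, $\Aut(X_{26})$ acts faithfully on any such $C$ — while an automorphism of $(C,p)$ acts on $\bP^1$ fixing $y=\infty$, hence after our normalization is $y\mapsto\alpha y$ with $\alpha$ in the finite stabilizer of $(a_0,\dots,a_{11})$ in $\bG_m$, possibly composed with $z\mapsto-z$, and each of these extends to $X_{26}$. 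As the pairs are K-stable with finite automorphism groups they are closed points on both sides (the $\bG_m$-orbits in $V\setminus\Delta$ being closed), so $\Xi$ preserves closed points. Passing to coarse spaces, $\Xi$ then descends (via \cite[Proposition 6.4]{alper}) to a locally closed embedding of the coarse space of $\mathcal{H}$ into $\oP^{\K}_{5,\frac{8}{15}+\epsilon}$ with dense image in $E_2^+$, since both are irreducible of dimension $11$ (cf.\ Theorem~\ref{thm:secondwall}(2),(3) and the fact that $E_2^+$ is an exceptional divisor of a weighted blow-up at a point); hence the image closure is $E_2^+$.

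The step I expect to be the crux is the faithfulness and stabilizer-preservation in the third paragraph: one must pin down $\Aut(X_{26})$ precisely and verify both that every automorphism of the abstract pointed curve $(C,p)$ extends to $X_{26}$ and that distinct automorphisms of $X_{26}$ restrict to distinct automorphisms of $C$, i.e.\ that the embedded pair carries exactly the symmetry of the curve. This is precisely where the $T$-variety analysis of $X_{26}$ from the Appendix is needed; everything else is formal once Theorem~\ref{thm:secondwall} and the normal form are in hand.
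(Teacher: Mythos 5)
Your argument is correct and follows the paper's intended approach — the paper's own proof of this corollary is only a pointer to the proof of Theorem~\ref{thm:secondwall} and the Weierstrass normal form \eqref{eq:hyperelliptic}, and your write-up supplies the details left implicit there (the identification $\mathcal{H}\cong[(V\setminus\Delta)/\bG_m]$, the family producing the morphism, recovering $(C,p)$ from the embedded pair, and the dimension count giving density in $E_2^+$). The one place worth tightening is the faithfulness of the restriction $\Aut(X_{26},C)\to\Aut(C,p)$: the complexity-one $T$-variety analysis of the Appendix and \cite{IS17} establishes K-polystability, not this faithfulness, and a cleaner route is to note that $H^0(X_{26},\cO(k))\to H^0(C,\cO(k)|_C)$ is injective for $k<25$ because $H^0(X_{26},\cO(k-25))=0$, so any automorphism of $X_{26}$ fixing $C$ pointwise fixes $x,y,z$, and then the defining relation $xw=y^{13}+z^2$ together with the one-dimensional kernel of $H^0(\cO(25))\to H^0(\cO(25)|_C)$ forces it to fix $w$ as well.
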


Corollary \ref{cor:hyperellipticgenus6} is a strengthening of an earlier result of \cite{griffin}.  Although not explicitly stated, it is a consequence of \cite[Theorem 1, Theorem 1.A]{griffin} that a smooth hyperelliptic curve $C$ of genus six admits an embedding into $X_{26}$ coming from the marked Weierstrass point $p$.  Indeed, the author computes an embedding $C \hookrightarrow \PP(1,1,1,2,3,3)$ by 
\[ C \cong \Proj(R(C, \mathcal{O}(5p)) \cong \Proj~ \mathbb{C}[x_1,x_2,x_3,y,z_1,z_2]/I  \]
where $x_i$ has weight 1, $y$ weight 2, and $z_i$ weight 3 and $I$ depends on a uniquely determined degree $5$ polynomial $Q$.  One can show that $X_{26}$ admits an embedding into $\PP(1,1,1,2,3,3,5)$ such that, if $t$ is the variable of weight $5$, the curve $C \subset X_{26}$ is cut out by the equation $t = Q$.  

\subsection{Applications to higher degree}\label{sec:higherdegree}
It is natural to ask what can be said about wall crossing beyond the first wall in higher degree, and we address that now. The key observation is the following proposition (see Definition \ref{def:delta} for the definition of $\delta$).

\begin{prop}\label{prop:deltax26}
 We have $\delta(X_{26})=\frac{1}{9}$.
\end{prop}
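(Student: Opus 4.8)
The plan is to prove the two inequalities $\delta(X_{26})\le\frac19$ and $\delta(X_{26})\ge\frac19$ separately, using the divisorial valuation $E$ over the singular point of $X_{26}$ from the proof of Proposition \ref{prop:valcrit-X_26} for the first, and the K-polystability (hence K-semistability) of $(X_{26},\frac{8}{15}C_0')$ from Proposition \ref{prop:a12kps} for the second. Throughout we use that $X_{26}$ is a (quasi-smooth) degree $26$ hypersurface in $\bP(1,2,13,25)$, so $-K_{X_{26}}\sim\cO_{X_{26}}(15)$ and $(-K_{X_{26}})^2=\frac{15^2\cdot 26}{1\cdot 2\cdot 13\cdot 25}=9$, while $C_0'=(w=0)\sim\cO_{X_{26}}(25)$, so $C_0'\sim_{\bQ}\frac{5}{3}(-K_{X_{26}})=-\frac{d}{3}K_{X_{26}}$ with $d=5$. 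The numerical miracle that makes everything work is $1-\tfrac{8}{15}\cdot\tfrac{5}{3}=\tfrac19$.

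\emph{Upper bound.} Let $E$ be the prime divisor over $X_{26}$ centered at the singular point $[0,0,0,1]$ that was constructed in the proof of Proposition \ref{prop:valcrit-X_26} (the quotient over $X_{26}$ of the $(2,13)$-weighted blow-up exceptional divisor on the smooth cover $\bA^2_{(y,z)}$). There it is computed that $A_{X_{26}}(\ord_E)=\tfrac{15}{25}=\tfrac35$ and that, for a degree $25$ divisor $C$, $S_{(X_{26},cC)}(\ord_E)=\tfrac{9}{25}(15-25c)$; specializing to $c=0$ gives $S_{X_{26}}(\ord_E)=S_{(X_{26},0)}(\ord_E)=\tfrac{9}{25}\cdot 15=\tfrac{27}{5}$. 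Hence $\dfrac{A_{X_{26}}(\ord_E)}{S_{X_{26}}(\ord_E)}=\dfrac{3/5}{27/5}=\dfrac19$, and by Definition \ref{def:delta} this forces $\delta(X_{26})\le\tfrac19$.

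\emph{Lower bound.} By Proposition \ref{prop:a12kps} the log Fano pair $(X_{26},\frac{8}{15}C_0')$ is K-polystable, hence K-semistable, so the valuative criterion (Theorem \ref{thm:valuative}) gives $A_{(X_{26},\frac{8}{15}C_0')}(\ord_F)\ge S_{(X_{26},\frac{8}{15}C_0')}(\ord_F)$ for every prime divisor $F$ over $X_{26}$. Now $A_{(X_{26},\frac{8}{15}C_0')}(\ord_F)=A_{X_{26}}(\ord_F)-\tfrac{8}{15}\ord_F(C_0')\le A_{X_{26}}(\ord_F)$, while, because $-K_{X_{26}}-\tfrac{8}{15}C_0'\equiv(1-\tfrac{8}{15}\cdot\tfrac53)(-K_{X_{26}})=\tfrac19(-K_{X_{26}})$, the usual scaling of volumes (as in the computation in the proof of Proposition \ref{prop:k-interpolation}(3)) yields $S_{(X_{26},\frac{8}{15}C_0')}(\ord_F)=\tfrac19\,S_{X_{26}}(\ord_F)$. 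Combining these, $A_{X_{26}}(\ord_F)\ge\tfrac19 S_{X_{26}}(\ord_F)$ for all $F$, so $\delta(X_{26})\ge\tfrac19$. Together with the upper bound this proves $\delta(X_{26})=\tfrac19$.

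\emph{Main point of care.} There is essentially no geometric obstacle here once Proposition \ref{prop:a12kps} is granted; the proof reduces to two bookkeeping checks: (i) that the divisor $E$ over $[0,0,0,1]$ already analyzed in Section \ref{sec:2ndwallquintics} computes the ratio $\tfrac19$ exactly (which I would do simply by reading off the $c=0$ specialization of the $A$- and $S$-values recorded there, rather than redoing the volume integral $\int_0^\infty\vol_{X_{26}}(\cO(1)-tE)\,dt=\tfrac{9}{625}$), and (ii) that the coefficient $1-\tfrac{8}{15}\cdot\tfrac{d}{3}$ equals $\tfrac19$, which uses $C_0'\sim_{\bQ}-\tfrac{5}{3}K_{X_{26}}$. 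The only conceptual input is the elementary identity $S_{(X,cD)}(F)=(1-c\lambda)S_X(F)$ when $D\sim_\bQ-\lambda K_X$, applied with $\lambda=\tfrac53$, $c=\tfrac{8}{15}$.
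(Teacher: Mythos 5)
Your proof is correct, and your upper bound argument is identical to the paper's (same divisor $E$ over the singular point, same values $A_{X_{26}}(\ord_E)=\tfrac35$ and $S_{X_{26}}(\ord_E)=\tfrac{27}{5}$ read off from Proposition \ref{prop:valcrit-X_26}). For the lower bound the paper instead invokes \cite[Theorem 7.2]{BL18b}: applied with $\beta=\tfrac19$ and $D=\tfrac35 C_0'\sim -K_{X_{26}}$, that theorem says $\delta(X_{26})<\tfrac19$ would force $(X_{26},\tfrac{8}{15}C_0')$ to be K-unstable, contradicting Proposition \ref{prop:a12kps}. What you did is unpack that citation into a direct valuative-criterion argument (via Theorem \ref{thm:valuative}) using the two elementary identities $A_{(X,cD)}(\ord_F)\le A_X(\ord_F)$ and $S_{(X,cD)}(\ord_F)=(1-c\lambda)S_X(\ord_F)$ for $D\sim_{\bQ}-\lambda K_X$; in this special case that is exactly the content of the \cite{BL18b} result, so the underlying mathematics is the same and your version is merely more self-contained, relying only on tools the paper has already established.
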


\begin{proof}
 For simplicity denote by $X:=X_{26}$. We follow notation from Proposition \ref{prop:valcrit-X_26}. Consider the valuation $\ord_E$ centered over the unique singular point of $X$. Since $-K_X\sim_{\bQ}\cO_X(15)$, by Proposition \ref{prop:valcrit-X_26} we have
 \[
 A_X(\ord_E)=\frac{3}{5}, \quad S_X(\ord_E)= \frac{15}{\vol_X(\cO(1))}\int_{0}^{\infty}\vol_X(\cO(1)-tE)dt =\frac{27}{5}.
 \]
 Hence we have $ \delta(X)\leq A_X(\ord_E)/S(\ord_E)=\frac{1}{9}$.
 Assume to the contrary that $\delta(X)<\frac{1}{9}$. From Proposition \ref{prop:a12kps}, we know that $(X,\frac{8}{15}C_0')$ is K-polystable where $C_0'=(w=0)$.  On the other hand, \cite[Theorem 7.2]{BL18b} implies that $(X,\frac{8}{15}C_0')$ is K-unstable by taking $\beta=\frac{1}{9}$ and $D=\frac{3}{5}C_0'$. This is a contradiction. Therefore, $\delta(X)=\frac{1}{9}$.
\end{proof}

Using the above proposition, we can prove the following.

\begin{thm}\label{thm:firstwalls}
 Let $d\geq 4$ be an integer.
 \begin{enumerate}
  \item For any $c<\frac{8}{3d}$, the only surfaces
  appearing in the K-moduli stack
  $\ocP^\K_{d,c}$ are $\bP^2$ or $\bP(1,1,4)$.
  \item Suppose $5\mid d$, then we have the following wall crossing at $c=\frac{8}{3d}$: \[
      \oP^{\K}_{d,\frac{8}{3d}-\epsilon}\xrightarrow{\phi^-}
      \oP^{\K}_{d,\frac{8}{3d}}\xleftarrow{\phi^+}
      \oP^{\K}_{d,\frac{8}{3d}+\epsilon}
     \]
  where $\phi^-$ is an isomorphism near 
  $(\bP^2,\frac{d}{5}C)$ whose replacement is $(X_{26},\frac{d}{5}C_0')$,
  and $\phi^+$ is a weighted blow up at $(X_{26},\frac{d}{5}C_0')$.
  In particular, $\frac{8}{3d}$ is the second smallest wall extracting a divisor.
\end{enumerate}

\end{thm}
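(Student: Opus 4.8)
\textbf{Proof strategy for Theorem \ref{thm:firstwalls}.}

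For part (1), the plan is to combine the index bound with a valuative computation exactly as in the proof of Theorem \ref{thm:firstwallbefore}. Fix $[(X,cD)]\in\ocP_{d,c}^{\K}$ with $c<\frac{8}{3d}$, so that $\beta:=1-\frac{cd}{3}>\frac{7}{9}$. By Theorem \ref{thm:localindex} every singular point of $X$ has Gorenstein index at most $\lfloor\beta^{-1}\rfloor$; since $\beta^{-1}<\frac{9}{7}<2$ when $3\nmid d$ this already forces $X$ Gorenstein, hence $X\cong\bP^2$. When $3\mid d$ the bound gives index at most $\min\{\lfloor\beta^{-1}\rfloor,\frac{2d}{3}\}$, which is $\leq 1$ once $\beta>\frac12$, so again $X\cong\bP^2$; the only subtlety is small $d$ (e.g. $d=6$), where one instead invokes the already-established classification: the only surfaces of index $\leq 5$ in the relevant range are $\bP^2,\bP(1,1,4),X_{26},\bP(1,4,25)$, and one rules out $X_{26}$ and $\bP(1,4,25)$ by Propositions \ref{prop:valcrit-X_26} and \ref{prop:valcrit-p1425} since $\frac{8}{3d}<\frac{8}{15}$ fails only for $d<5$ — so in fact for $d\geq 4$ one uses $\delta(X_{26})=\frac19$ and $\delta(\bP(1,4,25))$ (the analogous statement) together with \cite[Theorem 7.2]{BL18b} to see these surfaces only enter the K-moduli at coefficient $\geq\frac{8}{3d}$. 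This is the key point where Proposition \ref{prop:deltax26} is used: if $c<\frac{8}{3d}$ then $\beta=1-\frac{cd}{3}>\frac{1}{9}=\delta(X_{26})$, so no log Fano pair $(X_{26},cD)$ with $D\sim_{\bQ}-\frac{d}{5}K_{X_{26}}$ can be K-semistable.

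For part (2), assume $5\mid d$ and write $d=5e$. The argument mirrors the second-wall analysis for quintics (Theorem \ref{thm:secondwall}) and its higher-degree template. First, at $c=\frac{8}{3d}$ we have $\beta=\frac{1}{9}=\delta(X_{26})$, the borderline value; by part (1) and the $\delta$-bounds the surface $X_{26}$ is exactly on the verge of appearing. One shows $(X_{26},\frac{d}{5}C_0')=(X_{26},eC_0')$ is K-polystable: since $-K_{X_{26}}-eC_0'\sim_{\bQ}(15-25\cdot\frac{e}{5}\cdot\frac{d}{\text{(suitable)}})\cdots$ is proportional to $-K_{X_{26}}-\frac{8}{15}C_0'$ up to scaling — more precisely $eC_0'$ and $\frac{8}{15}C_0'$ are both $\bQ$-proportional to $-K_{X_{26}}$, and $(X_{26},\frac{8}{15}C_0')$ is K-polystable by Proposition \ref{prop:a12kps} — so by the interpolation Proposition \ref{prop:k-interpolation} (applied with $D=0$, $\Delta=$ the relevant scaled divisor) one deduces K-polystability at the coefficient $e\cdot\frac{15}{d}=\frac35\cdot\frac{8}{?}$; in fact the clean statement is that $c\mapsto(X_{26},cC_0')$ K-polystable precisely at $c=\frac{8}{3d}$ (an ``edge'' of the K-polystable interval), since Proposition \ref{prop:valcrit-X_26}'s volume computation shows $A=S$ for $\ord_E$ exactly there. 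Then: (a) $\phi^-$ is an isomorphism near $[(\bP^2,eC)]$ with replacement $[(X_{26},eC_0')]$, proven by a degeneration-to-the-normal-cone / weighted blow-up construction lifting the quintic case — the curve $C$ of degree $d$ with an $A_{12e}$-type singularity degenerates through the same birational model $\cX\dashrightarrow\cX^+$ as in Proposition \ref{prop:a12deg}, now with the curve $(w=0)$ of degree $25\cdot\frac{d}{5}$ on $X_{26}$; (b) the fiber $E^-$ of $\phi^-$ over $[(X_{26},eC_0')]$ is a single point and $E^+$ has the expected dimension, verified via the local VGIT presentation \eqref{eq:localVGIT} near $z_0=\Hilb(X_{26},eC_0')$ (smoothness of $Z_c^{\circ}$ from Proposition \ref{prop:Zplanecurve}, $\Aut_0(X_{26},C_0')\cong\bG_m$, and the dimension formula $\dim E^-+\dim E^+ +1=\dim\oP_{d,\frac{8}{3d}}^{\K}$ of \cite[Theorem 0.2.5]{dolgachevhu} / \cite[Corollary 1.13]{thaddeus}); (c) since $\phi^-$ is then an isomorphism, $\phi^+$ is a weighted blow-up at $[(X_{26},eC_0')]$ whose exceptional divisor parametrizes curves $(w=g(x,y))$ on $X_{26}$ with $g\neq 0$, $g$ omitting the monomial forcing passage through the singular point, exactly as in Theorem \ref{thm:secondwall}(3). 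Finally, ``second smallest wall extracting a divisor'' follows from part (1): the first divisorial wall is $c_1$ (Theorem \ref{mthm:firstwall}), only $\bP^2$ and $\bP(1,1,4)$ occur below $\frac{8}{3d}$, and no divisor with $\bP(1,1,4)$-fibers alone is extracted in the open interval $(c_1,\frac{8}{3d})$ because any such wall would — by the valuative criterion Proposition \ref{prop:valcrit-p114} for $\bP(1,1,4)$-curves — have to occur at $c\geq\frac{6}{2d-3}>\frac{8}{3d}$ for $d$ large (and the low-$d$ cases $d\in\{4,6\}$ have no second wall at all by Theorem \ref{thm:quartsext}).

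The main obstacle will be establishing that $\phi^-$ is an isomorphism near $[(X_{26},eC_0')]$ — equivalently that $E^-$ is a reduced point — for all $d$ divisible by $5$ simultaneously. In the quintic case ($e=1$) this used the explicit $11$-dimensional family of genus-six hyperelliptic curves on $X_{26}$ to force $\dim E^+\geq 11$ and hence (by the VGIT dimension count) $\dim E^-=0$; for general $e$ one needs the analogous lower bound $\dim E^+\geq \dim\oP_{d,\frac{8}{3d}}^{\K}-1$, which should come from the family of curves $(w=g(x,y))$ with $\deg g=25e$ on $X_{26}$ modulo $\Aut(X_{26})$, but one must check this family is generically K-stable (via Proposition \ref{prop:k-interpolation}, needing $(X_{26},\frac{3}{d}(w=g))$ klt for general $g$ — true by Bertini since the linear system $|(w=g)|$ is base-point-free away from codimension $\geq 2$) and compute its dimension correctly against $\dim\oP_{d,\frac{8}{3d}}^{\K}=\binom{d+2}{2}-1-8$. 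A secondary technical point is the degeneration construction in (a) for higher $d$: one must verify that the curve on $\bP^2$ of degree $d$ acquiring the appropriate singularity degenerates through the same flop/divisorial-contraction tower, which is a direct but slightly involved generalization of Proposition \ref{prop:a12deg}.
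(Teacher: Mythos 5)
Your argument for part (1) has a serious arithmetic error that breaks the opening paragraph. You claim that $c<\frac{8}{3d}$ implies $\beta=1-\frac{cd}{3}>\frac{7}{9}$; the correct bound is $\beta>\frac{1}{9}$ (since $\frac{cd}{3}<\frac{8}{9}$). With $\beta>\frac{7}{9}$ you conclude $\beta^{-1}<2$, hence $X$ is Gorenstein, hence $X\cong\bP^2$ — but this contradicts the very statement you are proving, which allows $\bP(1,1,4)$, and indeed $\bP(1,1,4)$ already appears at $c=c_1+\epsilon<\frac{8}{3d}$ by Theorem \ref{mthm:firstwall}. The correct bound $\beta>\frac{1}{9}$ gives Gorenstein index $\leq 8$, which (by the classification of Manetti surfaces) leaves $\bP^2$, $\bP(1,1,4)$, $\bP(1,4,25)$, and $X_{26}$ in play, and one must then rule out the last two using $\delta$-bounds. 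You do eventually state this correct $\delta$-argument ($\beta>\frac{1}{9}=\delta(X_{26})$ rules out $X_{26}$ via \cite[Theorem~7.2]{BL18b}, and $\delta(\bP(1,4,25))=\frac{1}{10}$ rules out $\bP(1,4,25)$), which is exactly the paper's proof — but your proposal is internally inconsistent, asserting both $\beta>\frac{7}{9}$ and $\beta>\frac{1}{9}$ in the same paragraph.

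For part (2), your approach (VGIT dimension count via \cite{dolgachevhu,thaddeus} after exhibiting a divisor of $(X_{26},C)$-pairs on the $\phi^+$ side) matches the paper. Two clean-ups you should make: first, the boundary $\bQ$-divisor at the wall is literally the same as in the quintic case, since $\frac{8}{3d}\cdot\frac{d}{5}C_0'=\frac{8}{15}C_0'$; K-polystability of $(X_{26},\frac{8}{15}C_0')$ thus transfers directly from Proposition \ref{prop:a12kps} with no interpolation needed, and your detour through $\bQ$-proportionality of $eC_0'$ and $\frac{8}{15}C_0'$ is unnecessary noise. Second, for the final ``second smallest wall extracting a divisor'' claim you invoke Proposition \ref{prop:valcrit-p114}, which is stated only for degree-$10$ curves on $\bP(1,1,4)$ (i.e.\ $d=5$); it does not directly yield a bound for general $d$. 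The paper's argument for this part is simply that by (1) only $\bP^2$ and $\bP(1,1,4)$ occur below $\frac{8}{3d}$, the $\bP(1,1,4)$-locus is a single divisor already extracted at $c_1$, and the exceptional locus of $\phi^+$ at $\frac{8}{3d}$ is a new divisor (since $\dim\Aut(X_{26})=\dim\Aut(\bP^2)+1=9$, so curves on $X_{26}$ modulo automorphisms have the right dimension); no additional valuative estimate is needed.
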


\begin{proof}
(1) Let $(X,cD)$ be a K-semistable pair appearing in $\ocP^\K_{d,c}$ for some
$c<\frac{8}{3d}$. By Theorem \ref{thm:localindex}, we know that any local Gorenstein index of $X$ is at most $9$. Then from the classification of Manetti surfaces we know that $X$ is isomorphic to one of $\bP^2$, $\bP(1,1,4)$, $\bP(1,4,25)$ or $X_{26}$.
If $X\cong\bP(1,4,25)$, then $\delta(X)=\frac{1}{10}$
by \cite[Section 7]{BJ17}. Since $1-\frac{cd}{3}>\frac{1}{10}$,  by \cite[Theorem 7.2]{BL18b} the pair $(X,cD)$ is K-unstable by taking 
$\beta=1-\frac{cd}{3}$ . This is a contradiction.
If $X\cong X_{26}$, then $\delta(X)=\frac{1}{9}$ by Proposition \ref{prop:deltax26}.
Hence the same argument implies that $(X,cD)$ is K-unstable whenever $c<\frac{8}{3d}$, again a contradiction. 

(2) The statement essentially follows from the proof of Theorem \ref{thm:secondwall}. Indeed, if $C$ is a general curve on $X_{26}$ of degree $5d$, then $C$ does not pass through the singular point $[0,0,0,1]$ on $X_{26}$. Thus the same argument as the proof of Proposition \ref{prop:onlykps-X_26} implies that $(X,\frac{8}{3d}C)$ is K-semistable. Since a general $C$ is smooth, by Proposition \ref{prop:k-interpolation} we know that $(X_{26},cC)$ is K-stable for any $c\in (\frac{8}{3d}, \frac{3}{d})$. It is clear that $\dim\Aut(X_{26})=\dim\Aut(\bP^2)-1$, hence such pairs $(X_{26},C)$ form a divisor in the K-moduli space $\oP_{d,\frac{8}{3d}+\epsilon}^{\K}$ (see Section \ref{sec:contraction}). Hence the proof is finished by \cite[Theorem 0.2.5]{dolgachevhu} or \cite[Corollary 1.13]{thaddeus}.
\end{proof}

\section{Log Fano wall crossings for K-moduli spaces of plane quintics}\label{sec:quintics}

In this section, we discuss all wall crossings of K-moduli spaces of plane quintics in the log Fano region $c\in (0,\frac{3}{5})$. For simplicity, we again abbreviate $\oP_5^{\GIT}$, $\ocP_{5,c}^{\K}$, and  $\oP_{5,c}^{\K}$ to $\oP^{\GIT}$, $\ocP_{c}^{\K}$, and $\oP_{c}^{\K}$, respectively.
Thanks to Sections \ref{sec:firstwall} and \ref{sec:2ndwallquintics}, we have detailed descriptions of the first two wall crossings. The main results of this section, namely Theorems \ref{thm:3rdwall}, \ref{thm:4thwall}, \ref{thm:5thwall}, and \ref{thm:allwalls} will show that there are three more wall crossings after the first two walls. We begin by a description of $\oP^{\GIT}$.

\subsection{GIT of plane quintics}\label{sec:GIT5}
By Theorem \ref{mthm:firstwall}, we know that the K-moduli space $\oP_{\epsilon}^{\K}$ is isomorphic to the GIT quotient for plane quintics, so we begin with a description of the (classical) GIT quotient $\oP^{\GIT}$ for plane quintics. This was calculated by Mumford \cite[Chapter 4, Section 5]{MFK94}. A detailed description also appears in \cite{laza2009deformations}. Under the identification of $\oP_{\epsilon}^{\K}$ and $\oP^{\GIT}$, Proposition \ref{prop:lclastwall} provides open embeddings 
\[
P_{\epsilon}^{\klt}\hookrightarrow P_{\epsilon}^{\lc}
\hookrightarrow \oP^{\GIT}
\]
where $P_{\epsilon}^{\klt}$ and $P_{\epsilon}^{\lc}$ denote the loci in $\oP^{\GIT}$ parametrizing GIT polystable plane quintics  with lct $>\frac{3}{5}$ and $\geq \frac{3}{5}$, respectively. 

\begin{lemma}\label{lem:quinticgit}
The boundary $\oP^{\GIT} \setminus P_{\epsilon}^{\klt}$ is a disjoint union of the following locally closed strata:

Zero-dimensional loci
\begin{itemize}
\item $\Sigma_1=\{[Q_5]\}$, and
\item $\Sigma_2$ parametrizing a plane quintic curve with an $A_{12}$ singularity.
\end{itemize}

One-dimensional loci
\begin{itemize}
\item $\Sigma_3$ parametrizing a reducible plane quintic curve with an $A_{11}$ singularity, 
\item $\Sigma_4$ parametrizing an irreducible plane quintic curve with an $A_{11}$ singularity, and
\item $\Sigma_6$  parametrizing the union of two conics tangent at two distinct points and a line through them (two $D_6$ singularities), i.e.
$$
\left( z \left( xy-z^2 \right)\left( xy- az^2 \right)=0 \right)\quad \textrm{ where }a\neq 1.
$$
\end{itemize}

Two dimensional locus
\begin{itemize}
\item $\Sigma_5$ parametrizing a plane quintic curve with an $A_{10}$ singularity.
\end{itemize}

Three dimensional locus
\begin{itemize}
\item $\Sigma_7$ parametrizing a plane quintic curve with an $A_{9}$ singularity.
\end{itemize}
Moreover, the incidence of such strata is as follows:
 \begin{align*}
 \xymatrix{
\Sigma_7  & \Sigma_5 \ar@{->}[l]  & \Sigma_4   \ar@{->}[l]& \Sigma_2 \ar@{->}[l] & 
\\
 &                                                  &  \Sigma_3\ar@{->}[ul] & \Sigma_1  \ar@{->}[l]  \ar@{->}[ul]\ar@{->}[dl] &\\
 &&\Sigma_6
 }
 \end{align*}
 Here $\Sigma_i\to\Sigma_j$ means that $\Sigma_i$ is contained in the Zariski closure of $\Sigma_j$ in $\oP^{\GIT}$. The closure of the stratum $\oSigma_{6}=\Sigma_6\sqcup \Sigma_1$, is isomorphic to $\mathbb P^1$, and is the only strictly semistable stratum. The other strata are contained in the stable locus. In addition, $\oP^{\GIT}\setminus P_{\epsilon}^{\lc}=\bigsqcup_{i=1}^5 \Sigma_i$.
\end{lemma}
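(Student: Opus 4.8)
\textbf{Proof strategy for Lemma \ref{lem:quinticgit}.}

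The plan is to derive the entire stratification from Mumford's classical GIT analysis of plane quintics together with an explicit computation of log canonical thresholds. First I would recall the list of GIT semistable plane quintics from \cite[Chapter 4, Section 5]{MFK94} and the refined description in \cite{laza2009deformations}: a plane quintic $C$ is GIT stable precisely when its singularities are mild enough (in terms of the Newton polygon / multiplicity conditions Mumford spells out), and the only strictly semistable locus consists of curves whose closed orbit is the S-equivalence class described by $\oSigma_6$. From that source one reads off that the singularity types occurring on the GIT boundary that are worse than $A_8$ (equivalently, with $\lct(\bP^2;C) \le \tfrac35$ or with the distinguished extra structure like the double conic) are exactly $A_9, A_{10}, A_{11}$ (reducible and irreducible cases), $A_{12}$, the double conic $Q_5$, and the two-$D_6$ configuration; these correspond to $\Sigma_7,\Sigma_5,\Sigma_3,\Sigma_4,\Sigma_2,\Sigma_1,\Sigma_6$ respectively. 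The dimension counts are routine: one computes the dimension of the locally closed subvariety of $\bfP_5$ consisting of quintics with a prescribed singularity type (using the normal form / versal deformation of the singularity and subtracting the dimension of $\PGL(3)$), giving $0,0,1,1,2,3$ for $A_{12},Q_5,A_{11},A_{11}\text{-irred},A_{10},A_9$; the two-$D_6$ stratum $\Sigma_6$ is visibly a one-parameter family $(z(xy-z^2)(xy-az^2)=0)$ modulo the residual automorphisms, hence one-dimensional with closure $\cong\bP^1$ obtained by adding the limit $a\to 0$ (or $a\to\infty$), which is $[Q_5]$.

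Next I would establish the incidence relations. Each arrow $\Sigma_i \to \Sigma_j$ amounts to exhibiting a one-parameter degeneration inside $\bfP_5$ (equivalently a $1$-PS of $\PGL(3)$ or a path in moduli) specializing a curve with the milder singularity to one with the worse singularity: e.g. an $A_9$ curve can be deformed to acquire an $A_{10}$ point, an $A_{10}$ to an $A_{11}$, an $A_{11}$ (irreducible branch) to an $A_{12}$; separately a reducible $A_{11}$ quintic and the two-$D_6$ curve both specialize to the double conic $[Q_5]$, as does the $A_{11}$-reducible stratum. These are standard singularity adjacency facts (the $A_k$ chain) transported to the plane-curve setting; the only mild subtlety is checking that the specializations can be realized by GIT \emph{semistable} curves throughout, which follows because all the listed strata other than $\oSigma_6$ lie in the stable locus and stability is open. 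The claim that $\oSigma_6 = \Sigma_6 \sqcup \Sigma_1 \cong \bP^1$ and is the unique strictly semistable stratum is exactly Mumford's description of the unique closed strictly semistable orbit; the boundary point $a\to 0$ of this $\bP^1$ degenerates $z(xy-z^2)(xy-az^2)$ to $z^3(xy-z^2)$, whose S-equivalence class is that of the triple line plus conic, which in turn is S-equivalent to $Q_5 = \tfrac52 Q$ — this identification I would just cite from the GIT tables.

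Finally I would pin down the assertion $\oP^{\GIT}\setminus P^{\klt}_{\epsilon} = \bigsqcup_{i=1}^{7}\Sigma_i$ and $\oP^{\GIT}\setminus P^{\lc}_{\epsilon} = \bigsqcup_{i=1}^{5}\Sigma_i$. By definition $P^{\klt}_{\epsilon}$ (resp.\ $P^{\lc}_{\epsilon}$) is the locus of GIT polystable quintics with $\lct(\bP^2;C) > \tfrac35$ (resp.\ $\ge \tfrac35$); by Skoda's estimate and the standard lct computation for a plane curve with an $A_k$ singularity one has $\lct(\bP^2; A_k\text{-quintic}) = \min\{\tfrac35 + (\text{correction}), \tfrac{k+1}{?}\}$ — concretely the local lct of an $A_k$ singularity on a curve in a surface is $\tfrac{1}{2} + \tfrac{1}{k+1}$ if that is $\le$ the global constraint, and one checks $\tfrac12 + \tfrac1{k+1} > \tfrac35 \iff k \le 8$, with equality-type behavior at the boundary. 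Thus a GIT polystable quintic fails $\lct > \tfrac35$ exactly when it has a singularity of type $A_k$ with $k\ge 9$, or it is $Q_5$ (or the $\Sigma_6$ curves, whose $D_6$ points give $\lct = \tfrac12 < \tfrac35$); however $\Sigma_6\subset\oSigma_6$ and $Q_5$ already appear through $\Sigma_1$ — so the complement of $P^{\klt}_\epsilon$ is the union $\Sigma_1\cup\cdots\cup\Sigma_7$, noting $\Sigma_6$'s curves have $\lct=\tfrac12$ hence lie outside $P^{\lc}_\epsilon$ too but its closure point $[Q_5]=\Sigma_1$ is the S-equivalence representative counted once. For $P^{\lc}_\epsilon$: $\lct(\bP^2;C)\ge\tfrac35$ holds for $A_9$ curves (since $\tfrac12+\tfrac1{10} = \tfrac35$) but fails for $A_{10}, A_{11}, A_{12}, Q_5$, and for the $D_6$-type curves; matching up, the complement of $P^{\lc}_\epsilon$ is $\Sigma_1\cup\Sigma_2\cup\Sigma_3\cup\Sigma_4\cup\Sigma_5$ (the $A_9$ stratum $\Sigma_7$ re-enters $P^{\lc}_\epsilon$, and $\Sigma_6$ is contained in the closure of the already-listed strata — more precisely $\oSigma_6\subset \overline{\Sigma_3}$, so its interior contributes nothing new to the $i\le 5$ union because $\Sigma_6$ points have $\lct=\tfrac12$ and are accounted for set-theoretically, though one must be slightly careful: since $\Sigma_6$ is disjoint from $\Sigma_{1},\dots,\Sigma_5$ as a locally closed stratum, the clean statement is that the \emph{polystable} points outside $P^{\lc}_\epsilon$ form $\bigsqcup_{i=1}^5\Sigma_i$ after observing every point of $\Sigma_6$ is strictly semistable and hence not a polystable point of $\oP^{\GIT}$, so it simply does not contribute). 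I expect the main obstacle to be this last bookkeeping: reconciling which curves are \emph{polystable} (so genuinely parametrized by points of $\oP^{\GIT}$) versus merely semistable, and correctly handling the $\Sigma_6$ stratum whose only polystable representative is $[Q_5]=\Sigma_1$; everything else is a careful but mechanical assembly of Mumford's tables and $A_k$-singularity lct values.
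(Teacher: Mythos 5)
Your overall strategy matches the paper's proof almost exactly: cite Mumford/Laza for the GIT stratification, compute log canonical thresholds of the relevant singularity types, and use the jet normal forms (Proposition~\ref{prop:jetsummary}) for the dimension counts and incidence relations. However, there is a genuine gap in your handling of the $\Sigma_6$ stratum, where you make two errors that happen to cancel but leave the argument unsound.

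First, you claim $\lct(D_6) = \tfrac12$. This is incorrect. The $D_6$ plane curve singularity is $y(x^2 + y^4) = 0$ (three branches: a line transverse to two branches that are tangent to each other), which is quasi-homogeneous of weighted degree $5$ for the weights $(2,1)$ on $(x,y)$, so $\lct(D_6) = \min\{1, \tfrac{2+1}{5}\} = \tfrac35$, not $\tfrac12$. (More generally $\lct(D_k) = \tfrac{k}{2(k-1)}$, which is $\tfrac35$ at $k=6$.) Second, you argue that $\Sigma_6$ can be discarded from $\oP^{\GIT}\setminus P^{\lc}_\epsilon$ on the grounds that "every point of $\Sigma_6$ is strictly semistable and hence not a polystable point of $\oP^{\GIT}$." This conflates strictly semistable with non-polystable: the curves $z(xy-z^2)(xy-az^2)=0$, $a\neq 1$, have \emph{closed} orbits (with $1$-dimensional stabilizer) — they are strictly polystable — which is precisely why $\oSigma_6\cong\bP^1$ appears as an honest closed subvariety of $\oP^{\GIT}$, not merely a locus of non-closed semistable orbits upstairs. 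So $\Sigma_6$ very much does contribute points to $\oP^{\GIT}$.

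The correct accounting is therefore: $\Sigma_6$ curves are polystable and represent points of $\oP^{\GIT}$; their lct is exactly $\tfrac35$, so they satisfy $\lct\geq\tfrac35$ and lie \emph{inside} $P^{\lc}_\epsilon$ (like $\Sigma_7$, the $A_9$ stratum, which also has $\lct = \tfrac12 + \tfrac1{10} = \tfrac35$), while failing $\lct > \tfrac35$ and hence lying outside $P^{\klt}_\epsilon$. That is what gives $\oP^{\GIT}\setminus P^{\klt}_\epsilon = \bigsqcup_{i=1}^7\Sigma_i$ but $\oP^{\GIT}\setminus P^{\lc}_\epsilon = \bigsqcup_{i=1}^5\Sigma_i$. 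Your two errors conspire to produce the right indexing in the final displayed unions, but the reasoning by which you reached it would not survive scrutiny. (Minor additional note: you assert $\oSigma_6\subset\overline{\Sigma_3}$, but the incidence diagram actually records $\Sigma_1\subset\overline{\Sigma_6}$; the closure point $[Q_5]$ of $\Sigma_6$ is obtained by letting $a\to 0$ or $a\to\infty$, consistent with the $\bP^1$-structure, and $\Sigma_6$ itself is not contained in $\overline{\Sigma_3}$.)
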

\begin{proof}
By the GIT analysis in \cite{laza2009deformations}, we know that
if $C$ is a  a plane curve of degree $5$, then
\begin{itemize}
\item $C$ is GIT stable if and only $C$ is either smooth or has singularities 
of type $A_k$ with $1 \leq k \leq 12$, $D_4$ and $D_5$.  
\item $C$ is GIT strictly semistable (i.e. semistable but not stable) if and only if it has a singularity of type $D_k$ with $6 \leq k \leq 12$ such that if $k=9$, then $C$ is not the union of a nodal quartic and line. 
\item $C$ is GIT strictly polystable (i.e. polystable but not stable) if and only $C$ is the union of double conic and a transverse line or the union of two tangent conics and a line passing through their tangent points.
\end{itemize}
Therefore, the GIT quotient $\oP^{\GIT}$ is the union of the GIT stable locus and a smooth rational curve parametrizing the GIT strictly
polystable plane quintics. The statement follows by considering the log canonical thresholds of these singularities and the jet computations (see Proposition \ref{prop:jetsummary}).
\end{proof}

\subsection{Explicit wall crossings}\label{sec:explicitwalls}
As we saw in Theorem \ref{mthm:firstwall}, the GIT quotient $\oP^{\GIT}$ of plane quintics can be identified with the K-moduli space $\ocP^\K_{c}$ where $0 < c < 3/7$. In this section we discuss the subsequent wall crossings among the K-moduli spaces of plane quintic curves. The following diagram gives an overview of the K-moduli spaces for plane quintics based on results from Sections \ref{sec:firstwall}, \ref{sec:2ndwallquintics} and this section (see Table \ref{table:quintic} for a summary).

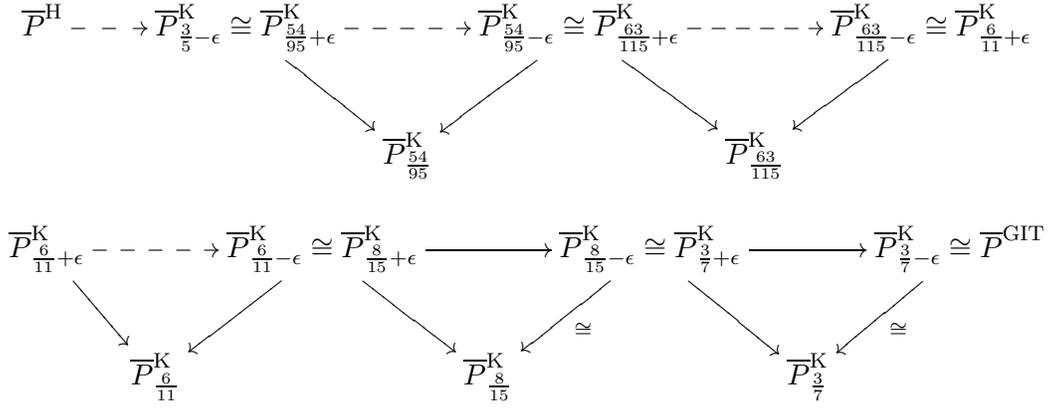
\begin{figure}[!htp]\caption{Log Fano wall crossings for K-moduli spaces of plane quintics}\label{fig:quintics}
\[\xymatrixcolsep{1em}
\xymatrix{ \oP^{\rm H}  \ar@{-->}[rr] & & \oP^{\K}_{\frac{3}{5}-\epsilon} \cong \oP^{\K}_{\frac{54}{95}+\epsilon}  \ar[rd] \ar@{-->}[rr] & & \oP^{\K}_{\frac{54}{95}-\epsilon} \cong \oP^{\K}_{\frac{63}{115} + \epsilon} \ar[ld] \ar@{-->}[rr] \ar[rd] & & \oP^{\K}_{\frac{63}{115} - \epsilon} \cong \oP^{\K}_{\frac{6}{11}+\epsilon} \ar[ld] \\ & & & \oP^{\K}_{\frac{54}{95}} & & \oP^{\K}_{\frac{63}{115}} }  \]

\[\xymatrixcolsep{1em}
\xymatrix{ \oP^{\K}_{\frac{6}{11}+\epsilon} \ar[rd] \ar@{-->}[rr] & & \oP^{\K}_{\frac{6}{11}-\epsilon} \cong \oP^{\K}_{\frac{8}{15}+\epsilon} \ar[ld] \ar[rd] \ar[rr] & & \oP^{\K}_{\frac{8}{15}-\epsilon} \cong \oP^{\K}_{\frac{3}{7} + \epsilon} \ar[ld]^\cong \ar[rr] \ar[rd] & & \oP^{\K}_{\frac{3}{7} - \epsilon} \cong \oP^{\textrm{GIT}} \ar[ld]^\cong \\ & \oP^{\K}_{\frac{6}{11}} & & \oP^{\K}_{\frac{8}{15}} & & \oP^{\K}_{\frac{3}{7}}}  \]\end{figure}




The following results describe the remaining three walls which occur after the first two walls for K-moduli spaces of plane quintics. Their proofs will be presented in Section \ref{sec:quintic345proof} with ingredients from calculations of Section \ref{sec:calculations}. If the birational map $\oP^{\GIT}\dashrightarrow \oP^{\K}_c$ is isomorphic
at the generic point of a locus $\Sigma_i$, then we denote by $\oSigma_{c,i}$ the Zariski closure of the proper transform of $\Sigma_i$ in $\oP^{\K}_c$.

\begin{theorem}[Third wall crossing]\label{thm:3rdwall} The third wall
is $c_3=\frac{6}{11}$.
\begin{enumerate}
 \item The birational morphism 
 $\phi_{3}^{-}:\oP^{\K}_{\frac{6}{11}-\epsilon}\to
 \oP^{\K}_{\frac{6}{11}}$ is an isomorphism 
 away from the locus $\oSigma_{\frac{6}{11}-\epsilon, 3}$. Moreover, $\phi_{3}^{-}$ contracts $\oSigma_{\frac{6}{11}-\epsilon, 3}$
  to a point $[(\bP(1,1,4), \frac{6}{11}(x^2 z^2+y^6 z=0))]$.
 
 \item The birational morphism 
 $\phi_{3}^{+}:\oP^{\K}_{\frac{6}{11}+\epsilon}\to
 \oP^{\K}_{\frac{6}{11}}$  is an isomorphism
 away from the point $[(\bP(1,1,4), \frac{6}{11}(x^2 z^2+y^6 z=0))]$.
  Moreover, the exceptional locus $E_{3}^{+}$ of $\phi_{3}^{+}$
  is of codimension $2$ and parametrizes curves on $\bP(1,1,4)$ of the form $(x^2z^2+y^6 z+g(x,y)=0)$ with $g\neq 0$.
\end{enumerate}
\end{theorem}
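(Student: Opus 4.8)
The strategy for the third wall crossing parallels the proofs of the first two walls (Theorems \ref{mthm:firstwall} and \ref{thm:secondwall}), following the by-now-standard ``replace-and-blow-up'' philosophy. The heart of the argument is a CM/valuative analysis at the distinguished point $[(\bP(1,1,4), \tfrac{6}{11}(x^2z^2+y^6z=0))]$ together with the local VGIT picture of Theorem \ref{thm:wallcrossingchart}. We first establish the claim that $c_3=\tfrac{6}{11}$ is a wall, i.e.\ that on the interval $(\tfrac{8}{15}, \tfrac{6}{11})$ there is no wall crossing: by the index bound (Theorem \ref{thm:localindex}) and the $\delta$-estimates for $\bP(1,4,25)$ and $X_{26}$ (Proposition \ref{prop:deltax26}), any new K-polystable surface appearing in $(\tfrac{8}{15},\tfrac{6}{11})$ must be $\bP(1,1,4)$ (the cases $\bP^2$ being impossible by Proposition \ref{prop:k-interpolation}, and $\bP(1,4,25)$, $X_{26}$ by the valuative criteria Propositions \ref{prop:valcrit-p1425}, \ref{prop:valcrit-X_26} since $c<\tfrac{8}{15}$). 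On $\bP(1,1,4)$ one writes the degree-$10$ curve as $f(x,y)z^2+g(x,y)z+h(x,y)=0$; the non-degenerate-$f$ case special-degenerates to $Q_5'$ and is handled by Lemma \ref{lem:Q_d'Kps}, while the degenerate-$f$ case is governed by the valuative criterion on $\bP(1,1,4)$ (Proposition \ref{prop:valcrit-p114} from the appendix), which pins the wall at exactly $c_3=\tfrac{6}{11}$.

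Next I would identify the K-polystable object on the wall. The reducible $A_{11}$-quintics parametrized by $\Sigma_3\subset\oP^{\GIT}$ should all special-degenerate to a single pair $(\bP(1,1,4), \tfrac{6}{11}(x^2z^2+y^6z=0))$, by an explicit degeneration-to-the-normal-cone computation analogous to Proposition \ref{prop:a12deg}; its K-polystability is verified either by the Ilten--Süß $T$-variety-of-complexity-one machinery (as in Proposition \ref{prop:a12kps}) or by exhibiting a conical Kähler--Einstein metric via the projective-cone/suspension criterion of \cite{LL16}. Combined with the uniqueness of K-polystable degenerations \cite{LWX18}, this shows that crossing $c_3$ replaces the entire (one-dimensional) closure $\oSigma_{\frac{6}{11}-\epsilon,3}$ by this single point, establishing that $\phi_3^-$ contracts $\oSigma_{\frac{6}{11}-\epsilon,3}$ to it and is an isomorphism elsewhere (away from that locus the birational map $\oP^{\K}_{\frac{6}{11}-\epsilon}\dashrightarrow\oP^{\K}_{\frac{6}{11}}$ is an isomorphism on a common big open subset and both spaces are normal proper, so Zariski's Main Theorem applies). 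For part (2), I apply the local VGIT presentation \eqref{eq:localVGIT} at the Hilbert point $z_0=\Hilb(\bP(1,1,4),\tfrac{6}{11}(x^2z^2+y^6z=0))$: since $\phi_3^-$ contracts a positive-dimensional locus, the dimension formula $\dim E_3^-+\dim E_3^++1=\dim\oP^{\K}_{\frac{6}{11}}=12$ (from \cite[Theorem 0.2.5]{dolgachevhu} or \cite[Corollary 1.13]{thaddeus}) forces $\dim E_3^-=1$, hence $\dim E_3^+=10$, so $E_3^+$ has codimension $2$. Its description as curves $(x^2z^2+y^6z+g(x,y)=0)$ with $g\neq0$ comes from: any point of $E_3^+$ special-degenerates to the wall pair, so by the index bound its surface is $\bP^2$ (excluded by Proposition \ref{prop:k-interpolation}) or $\bP(1,1,4)$; the valuative criterion on $\bP(1,1,4)$ forces the leading terms $x^2z^2+y^6z$ after a coordinate change; $g\neq 0$ because $g=0$ gives back the $(\tfrac{6}{11}+\epsilon)$-unstable wall pair; and conversely a general such $g$ gives a klt (in fact smooth-away-from-singular-point) pair, hence $(\tfrac{6}{11}+\epsilon)$-K-stable by Proposition \ref{prop:k-interpolation}, while an arbitrary such $g$ is K-polystable by a properness-plus-index-bound argument as in the proof of Theorem \ref{thm:secondwall}(3).

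\textbf{Main obstacle.} The technical crux is the verification of K-polystability of the wall pair $(\bP(1,1,4), \tfrac{6}{11}(x^2z^2+y^6z=0))$. Unlike $Q_5'$, this curve is not a cone over a K-polystable point in an obvious way, and the pair carries only a one-dimensional torus of automorphisms, so one genuinely needs the complexity-one $T$-variety techniques of Ilten--Süß \cite{IS17} (parallel to Proposition \ref{prop:a12kps} and deferred to the appendix). Establishing the special degeneration of \emph{every} reducible $A_{11}$-quintic to this single pair—rather than to several non-isomorphic limits—also requires care: one must check that the relevant Luna-type slice / VGIT chamber structure has the wall pair as its unique closed orbit, which amounts to a concrete stability computation on the slice $\bfA$ governing curves $(x^2z^2+y^6z+g(x,y)=0)$ under the residual torus action. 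These computations, being routine once set up, will be carried out in Sections \ref{sec:quintics} and \ref{sec:calculations}; the conceptual input is entirely contained in the framework of Sections \ref{sec:prelim}, \ref{sec:construction}, and the index bound Theorem \ref{thm:localindex}.
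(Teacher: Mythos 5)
Your overall strategy matches the paper's --- rule out all surfaces except $\bP(1,1,4)$ on the interval $(\tfrac{8}{15},\tfrac{6}{11})$, pin the threshold with Proposition~\ref{prop:valcrit-p114}, verify K-polystability of $(\bP(1,1,4),\tfrac{6}{11}(x^2z^2+y^6z=0))$ by Ilten--S\"u\ss, and then use the local VGIT dimension formula together with an explicit count of the $(\tfrac{6}{11}+\epsilon)$-K-stable curves on $\bP(1,1,4)$. However, there are two genuine gaps as written.

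First, your stated reason for ruling out $X_{26}$ in the interval $(\tfrac{8}{15},\tfrac{6}{11})$ --- namely ``by the valuative criteria since $c<\tfrac{8}{15}$'' --- is wrong on its face, because on this interval $c>\tfrac{8}{15}$. Proposition~\ref{prop:valcrit-X_26} only excludes $X_{26}$ for $c<\tfrac{8}{15}$; it does \emph{not} exclude it on the interval you actually need. The correct argument (as in the paper) is different in kind: if $X\cong X_{26}$ appears at a putative wall $c_3\in(\tfrac{8}{15},\tfrac{6}{11})$, then by Proposition~\ref{prop:valcrit-X_26} the curve $D$ misses the singular point, whence $(X,\tfrac{8}{15}D)$ is already K-\emph{semistable} by Proposition~\ref{prop:onlykps-X_26}, which contradicts the defining property of a new wall object (by Proposition~\ref{prop:openkps}, $(X,cD)$ must be K-unstable for all $c\neq c_3$, in particular at $c=\tfrac{8}{15}$). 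Similarly, the exclusion of $\bP(1,4,25)$ should be justified by $c<\tfrac{63}{115}$ (the bound from Proposition~\ref{prop:valcrit-p1425}), not $c<\tfrac{8}{15}$; the conclusion happens to be saved because $\tfrac{6}{11}<\tfrac{63}{115}$, but the stated reason is off.

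Second, and more substantively, the local VGIT dimension argument as you present it is circular. You write that since $\phi_3^-$ contracts a positive-dimensional locus, the formula $\dim E_3^-+\dim E_3^++1=12$ ``forces $\dim E_3^-=1$.'' But knowing $\dim E_3^-\geq 1$ together with the sum identity only gives $\dim E_3^+\leq 10$; it does not give you an upper bound on $\dim E_3^-$. The upper bound $\dim E_3^-\leq 1$ requires an \emph{independent} lower bound $\dim E_3^+\geq 10$, which the paper supplies by exhibiting the codimension-$2$ family of pairs $(\bP(1,1,4), (\tfrac{6}{11}+\epsilon)(x^2z^2+y^6z+g(x,y)=0))$ with general $g$ (these are K-stable by interpolation since their lct exceeds $\tfrac{3}{5}$), and only then invoking the VGIT formula together with irreducibility of $E_3^-$ to conclude $E_3^-=\oSigma_{\frac{6}{11}-\epsilon,3}$. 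In your proposal that construction appears only afterward, as a ``consequence'' of a dimension count you have not yet justified. Reordering so that the lower bound on $\dim E_3^+$ comes first, and only then applying the dimension formula to squeeze $\dim E_3^-$, would repair the logic and bring it in line with the paper.
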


\begin{theorem}[Fourth wall crossing]\label{thm:4thwall} The fourth wall
is $c_4=\frac{63}{115}$.
\begin{enumerate}
 \item The birational morphism 
 $\phi_{4}^{-}:\oP^{\K}_{\frac{63}{115}-\epsilon}\to
 \oP^{\K}_{\frac{63}{115}}$ is an isomorphism  away from the locus $\oSigma_{\frac{63}{115}-\epsilon,4}$.
  Moreover, $\phi_{4}^{-}$ contracts $\oSigma_{\frac{63}{115}-\epsilon,4}$  to a point $[(\bP(1,4,25), \frac{63}{115}(z^2+x^2 y^{12}=0))]$.
 \item The birational morphism 
 $\phi_{4}^{+}:\oP^{\K}_{\frac{63}{115}+\epsilon}\to
 \oP^{\K}_{\frac{63}{115}}$ is an isomorphism away from the point $[(\bP(1,4,25), \frac{63}{115}(z^2+x^2 y^{12}=0))]$.  
 Moreover, the exceptional locus $E_{4}^{+}$ of $\phi_{4}^{+}$
  is of codimension $2$ and parametrizes curves on $\bP(1,4,25)$ of the form $(z^2+x^2y^{12}+x^{10} g(x,y)=0)$ with $g\neq 0$.
\end{enumerate}
\end{theorem}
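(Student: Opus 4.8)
The plan is to follow the exact same strategy used for the second and third walls (Theorems \ref{thm:secondwall} and \ref{thm:3rdwall}), since the geometry near the fourth wall is entirely analogous. The key is that at $c_4 = \frac{63}{115}$ the relevant surface is $\bP(1,4,25)$, and we must show: (a) no new walls occur strictly between $c_3 = \frac{6}{11}$ and $c_4$; (b) $\phi_4^-$ is an isomorphism that only replaces a single point $[(\bP^2, A_{11}\textrm{-irreducible quintic})]$ (equivalently, the proper transform $\oSigma_{\frac{63}{115}-\epsilon,4}$ of $\Sigma_4$) by the point $[(\bP(1,4,25), \frac{63}{115}(z^2+x^2y^{12}=0))]$; and (c) $\phi_4^+$ is a weighted blow-up at that point whose exceptional divisor parametrizes the asserted curves on $\bP(1,4,25)$.

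First I would establish the valuative-criterion input: the auxiliary computation (Proposition \ref{prop:valcrit-p1425}, referenced in the excerpt and to be supplied in Section \ref{sec:calculations}) shows that if $(\bP(1,4,25), cD)$ is K-semistable with $D$ of degree $115$, then $c \geq \frac{63}{115}$, and moreover if $D$ passes through a specified singular point of $\bP(1,4,25)$ then the pair is K-unstable for all $c \in (0,\frac{3}{5})$. Combined with the index bound (Theorem \ref{thm:localindex}), which for quintics forces $X \in \{\bP^2, \bP(1,1,4), X_{26}, \bP(1,4,25)\}$ (Lemma \ref{lem:quinticindex}), and with the valuative criteria for $\bP(1,1,4)$ and $X_{26}$ (Propositions \ref{prop:valcrit-p114} and \ref{prop:valcrit-X_26}), this pins down which surface can appear at a hypothetical wall in the interval $(\frac{6}{11}, \frac{63}{115})$. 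As in the proof of Theorem \ref{thm:secondwall}(1), a new K-polystable pair $(X, c_*D)$ at a wall $c_* \in (\frac{6}{11}, \frac{63}{115})$ must have $\Aut$ of positive dimension larger than the generic fiber's, so $X \not\cong \bP^2$; the $\bP(1,4,25)$ and $X_{26}$ cases are excluded by the valuative criteria since $c_* < \frac{63}{115} < \frac{8}{15}$... (here one must be careful: $\frac{8}{15} < \frac{63}{115}$, so $X_{26}$ is already handled by the second wall, and the relevant constraint is that a $\bP(1,1,4)$-curve with degenerate leading term forces $c_* \geq \frac{63}{115}$ via the refined valuative estimate). This rules out a new wall, giving part (1).

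Next, for the wall itself: I would show $(\bP^2, \frac{63}{115}C)$ with $C$ an irreducible $A_{11}$-quintic specially degenerates to $(\bP(1,4,25), \frac{63}{115}(z^2+x^2y^{12}=0))$ via an explicit weighted blow-up and flip construction analogous to Proposition \ref{prop:a12deg} — performing a $(25,4)$-weighted blow-up at the $A_{11}$-point, flopping the proper transform of $C$, and contracting. One then verifies that $(\bP(1,4,25), \frac{63}{115}(z^2+x^2y^{12}=0))$ is K-polystable using the Ilten–Süß machinery for complexity-one $T$-varieties (the analogue of Proposition \ref{prop:a12kps}, to be supplied in Section \ref{sec:calculations}); note $\Aut_0$ of this pair is $\bG_m$. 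By Theorem \ref{thm:Kss-spdeg} and interpolation (Proposition \ref{prop:k-interpolation}) this gives K-semistability of $(\bP^2, cC)$ for $c \leq \frac{63}{115}$, and the valuative bound from Proposition \ref{prop:valcrit-p1425} gives the converse, so $\frac{63}{115}$ is exactly the upper K-semistable threshold. Uniqueness of K-polystable limits (\cite{LWX18}) then shows this is the only new K-polystable point, whence $\phi_4^-$ is injective away from $\Sigma_4$ and an isomorphism (using normality of all K-moduli spaces, Proposition \ref{prop:Zplanecurve}, and that the preimage is exactly one point — here one argues as in Theorem \ref{thm:secondwall}(2) that a positive-dimensional fiber $E_4^-$ would by local VGIT \cite[Theorem 0.2.5]{dolgachevhu} force $E_4^+$ to have too-large codimension, contradicting the explicit $11$-dimensional family of curves $(z^2+x^2y^{12}+x^{10}g(x,y)=0)$ on $\bP(1,4,25)$).

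Finally, for $\phi_4^+$: the local VGIT presentation \eqref{eq:localVGIT} at the point $z_0 = \Hilb(\bP(1,4,25), \frac{63}{115}(z^2+x^2y^{12}=0))$, together with the fact that $\phi_4^-$ is an isomorphism, forces $\phi_4^+$ to be a weighted blow-up (again by \cite[Theorem 0.2.5]{dolgachevhu} or \cite[Corollary 1.13]{thaddeus}). To identify the exceptional divisor, I would show every point of $E_4^+$ has underlying surface $X_{26}$ or $\bP(1,4,25)$ or $\bP^2$ (index bound), rule out $\bP^2$ (interpolation would make it $\frac{63}{115}$-K-polystable) and $X_{26}$ (its threshold is $\frac{8}{15} < \frac{63}{115}$), so $X \cong \bP(1,4,25)$, and by Proposition \ref{prop:valcrit-p1425} the curve avoids the bad singular point, hence has the form $(z^2+x^2y^{12}+x^{10}g(x,y)=0)$ after an automorphism; conversely a general such curve gives a K-stable pair for $c \in (\frac{63}{115}, \frac{3}{5})$, and properness (Theorem \ref{thm:compactness}) plus the valuative criteria identify the K-polystable limit of any such $g$-family as again of this form. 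I expect the main obstacle to be the K-polystability verification of $(\bP(1,4,25), \frac{63}{115}(z^2+x^2y^{12}=0))$ — checking that the Ilten–Süß criterion for this particular complexity-one $T$-variety with the given boundary and coefficient actually yields polystability requires a careful, somewhat delicate polytope/Futaki computation, exactly the kind of calculation deferred to Appendix \ref{sec:calculations}.
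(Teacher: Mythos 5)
Your overall strategy matches the paper's: rule out intermediate walls, identify the unique new K-polystable point on $\bP(1,4,25)$ via a special degeneration and Ilten--S\"u{\ss} K-polystability check, show $\phi_4^-$ is an isomorphism by a local-VGIT dimension count, and characterize $E_4^+$. But there are two concrete gaps.

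\textbf{No-new-walls argument.} You invoke a ``refined valuative estimate'' that a $\bP(1,1,4)$-curve with degenerate leading $z^2$-coefficient forces $c_* \geq \frac{63}{115}$. No such estimate exists: Proposition \ref{prop:valcrit-p114}(1) only gives $c_* \geq \frac{6}{11}$ for curves of the form $x^2z^2 + y^6z + g(x,y) = 0$, which does not exclude $c_* \in (\frac{6}{11}, \frac{63}{115})$. The paper closes this case differently: such a curve $C$ admits a special degeneration to $C_1 = (x^2z^2 + y^6z = 0)$, and since $(\bP(1,1,4), \frac{6}{11}C_1)$ is K-polystable (Proposition \ref{prop:a11redkps}), Theorem \ref{thm:Kss-spdeg} gives K-semistability of $(\bP(1,1,4), \frac{6}{11}C)$, contradicting that the hypothetical new pair must be K-unstable for every $c \neq c_*$ (Proposition \ref{prop:openkps}). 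Your valuative bound alone cannot replace this degeneration argument.

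\textbf{Degeneration construction.} You propose a $(25,4)$-weighted blow-up at the $A_{11}$-point followed by ``flopping the proper transform of $C$,'' modeled on Proposition \ref{prop:a12deg}. Both details are off. The $A_{11}$ singularity has local normal form $x'^2 = y^{12}$ (weights $(6,1)$), and the correct construction (Proposition \ref{prop:a11irrdeg}) is a $(6,1,1)$-weighted blow-up of $\bP^2 \times \bA^1$ with exceptional divisor $\bP(1,1,6)$; the curve being flipped is $\oQ$, the proper transform of the auxiliary smooth conic $Q = (x = y^2)$, not the proper transform of $C$. The $A_{12}$ case flops $\oC_0$ because the local equation $x'^2 = y^{13}$ has odd exponent, so $C_0$ itself meets the exceptional divisor at a smooth point with normal bundle $\cO(-1)\oplus\cO(-1)$; for $A_{11}$ the exponent is even, the geometry near the exceptional locus is different, and it is the conic (whose proper transform has normal bundle $\cO(-2)\oplus\cO(-1)$) that gets flipped. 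Modeling blindly on the $A_{12}$ case would not produce $\bP(1,4,25)$ with the correct curve $(z^2 + x^2y^{12} = 0)$.

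Once these two items are corrected, the rest of your outline (K-polystability via the complexity-one $T$-variety calculation, interpolation, $S$-equivalence via the $1$-PS $[x,y,z] \mapsto [x, ty, t^6z]$, uniqueness by \cite{LWX18}, and the Thaddeus/Dolgachev--Hu local-VGIT dimension count with $\dim E_4^- + \dim E_4^+ + 1 = 12$) is essentially the paper's proof.
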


\begin{theorem}[Fifth wall crossing]\label{thm:5thwall} The fifth wall
is $c_5=\frac{54}{95}$.
\begin{enumerate}
 \item The birational morphism 
 $\phi_{5}^{-}:\oP^{\K}_{\frac{54}{95}-\epsilon}\to
 \oP^{\K}_{\frac{54}{95}}$ is an isomorphism
 away from the locus $\oSigma_{\frac{54}{95}-\epsilon,5}$.
  Moreover, $\phi_{5}^{-}$ contracts $\oSigma_{\frac{54}{95}-\epsilon,5}$
  to a point $[(\bP(1,4,25), \frac{54}{95}(z^2+x^6 y^{11}=0))]$.
 \item The birational morphism  $\phi_{5}^+:\oP^{\K}_{\frac{54}{95}+\epsilon}\to \oP^{\K}_{\frac{54}{95}}$ is an isomorphism away from the point $[(\bP(1,4,25), \frac{54}{95}(z^2+x^6 y^{11}=0))]$.  
 Moreover, the exceptional locus $E_{5}^{+}$ of $\phi_{5}^{+}$  is of codimension $3$ and parametrizes  curves on $\bP(1,4,25)$ of the form $(z^2+x^6y^{11}+x^{14}g(x,y)=0)$ with $g\neq 0$.
\end{enumerate}
\end{theorem}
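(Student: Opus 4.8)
\textbf{Proof plan for the fifth wall crossing (Theorem \ref{thm:5thwall}).} The plan is to mirror the strategy used in the proofs of Theorems \ref{thm:secondwall}, \ref{thm:3rdwall}, and \ref{thm:4thwall}, which has three ingredients: (i) a valuative criterion computation that rules out the new surface $\bP(1,4,25)$ for $c<\tfrac{54}{95}$ and forces the $\bG_m$-degeneration structure of the relevant curves for $c\geq\tfrac{54}{95}$; (ii) an explicit special degeneration constructed by a sequence of weighted blow-ups, flops, and divisorial contractions of $(\bP^2\times\bA^1,C_0\times\bA^1)$ producing the pair $(\bP(1,4,25),\frac{54}{95}(z^2+x^6y^{11}=0))$; and (iii) a K-polystability verification of this central fiber using the Ilten--S\"uss criterion for complexity-one $T$-varieties, as in Proposition \ref{prop:a12kps} and its analogues in Appendix \ref{sec:calculations}. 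The local VGIT presentation of Theorem \ref{thm:wallcrossingchart} then converts these facts into the statement that $\phi_5^-$ contracts $\oSigma_{\frac{54}{95}-\epsilon,5}$ to the point $[(\bP(1,4,25),\frac{54}{95}(z^2+x^6y^{11}=0))]$ and $\phi_5^+$ is a weighted blow-up at that point.

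First I would set $\beta:=1-\tfrac{5c}{3}$ and, following Proposition \ref{prop:valcrit-p1425} (the valuative criterion for curves on $\bP(1,4,25)$, stated in Section \ref{sec:calculations}), run the valuative criterion with the divisor $E$ over the singular point $\tfrac{1}{25}(1,4)$ of $\bP(1,4,25)$ against the candidate curve $C_1:=(z^2+x^6y^{11}=0)$. The arithmetic should show that $A_{(X,cC_1)}(\ord_E)\geq S_{(X,cC_1)}(\ord_E)$ holds exactly when $c\geq \tfrac{54}{95}$, identifying $\tfrac{54}{95}$ as the threshold and identifying which monomials are allowed to appear in a $c$-K-semistable curve on $\bP(1,4,25)$ just below $\tfrac{3}{5}$. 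Combined with Theorem \ref{thm:localindex} (local Gorenstein index $\leq \lfloor 3/(3-5c)\rfloor\leq 5$ in this range) and Lemma \ref{lem:quinticindex}, this shows that the only surfaces available on the fifth wall are $\bP^2$, $\bP(1,1,4)$, $X_{26}$, and $\bP(1,4,25)$, and that the earlier walls $c_1,\dots,c_4$ have already handled the first three, so the only new K-polystable pair at $c_5$ lives on $\bP(1,4,25)$.

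Next I would construct the special degeneration of $(\bP^2,C_0)$ — where $C_0$ is the relevant $A_{10}$-quintic parametrizing a general point of $\Sigma_5$ — to $(\bP(1,4,25),C_1)$, by exhibiting it as the ample model of an explicit $\bQ$-divisor on a weighted blow-up/flop tower over $\bA^1$, exactly as in Proposition \ref{prop:a12deg} for the $A_{12}$ case; the degeneration of the curve is then the strict transform of the appropriate vanishing locus, giving the equation $(z^2+x^6y^{11}=0)$. Then I would verify K-polystability of $(\bP(1,4,25),\tfrac{54}{95}C_1)$ via Ilten--S\"uss \cite{IS17}, checking the relevant Futaki-type and special-test-configuration conditions in the appendix; $X_{\mathrm{26}}$-style boundedness (genus and hyperelliptic moduli dimension counts as in the proof of Theorem \ref{thm:secondwall}) then forces $\phi_5^-$ to be an isomorphism away from $\oSigma_{\frac{54}{95}-\epsilon,5}$ and contracting that locus to the single point, while the local VGIT dimension formula (from \cite[Theorem 0.2.5]{dolgachevhu} or \cite[Corollary 1.13]{thaddeus}) gives $\dim E_5^- + \dim E_5^+ + 1 = 12$; computing $\dim \oSigma_{\frac{54}{95}-\epsilon,5}=2$ pins down $\dim E_5^+$ and hence its codimension $3$. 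The description of $E_5^+$ as curves of the form $(z^2+x^6y^{11}+x^{14}g(x,y)=0)$ with $g\neq 0$ follows by putting a general such curve, not passing through the singular point and not projectively equivalent to $C_1$, into normal form under $\Aut(\bP(1,4,25))$ and applying Theorem \ref{thm:compactness} together with Proposition \ref{prop:openkps} to rule out $g=0$.

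The main obstacle I anticipate is step (iii): verifying K-polystability of $(\bP(1,4,25),\tfrac{54}{95}(z^2+x^6y^{11}=0))$. This surface has a nontrivial two-torus action making it complexity one, and the divisor $C_1$ is $T$-invariant but the stabilizer and the combinatorics of the divisorial fan (the polyhedral data feeding into \cite{IS17}) are more involved than in the $X_{26}$ case; ensuring that all special test configurations — including those not coming from one-parameter subgroups of the torus — have nonnegative generalized Futaki invariant, with equality only for product configurations, will require careful bookkeeping of the Duistermaat--Heckman measures and the $\bQ$-divisor contributions, and is where the bulk of the genuine work (collected in Appendix \ref{sec:calculations}) lies.
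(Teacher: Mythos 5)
Your high-level plan faithfully mirrors the paper's strategy for the earlier walls (valuative criterion, explicit degeneration, Ilten--S\"uss polystability check, local VGIT dimension count), and that is indeed what the paper does for the fifth wall, with the ingredients collected in Theorem \ref{thm:a10}, Propositions \ref{prop:a10deg}, \ref{prop:a10kps}, and \ref{prop:valcrit-p1425}. However, there is one concrete technical error in step (i): you propose to run the valuative criterion with a divisor $E$ over the $\tfrac{1}{25}(1,4)$ singular point $[0,0,1]$ of $\bP(1,4,25)$ against $C_1 := (z^2+x^6y^{11}=0)$. That curve does not pass through $[0,0,1]$: in the chart $z=1$ its local equation is $1+x^6y^{11}$, a unit. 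Any such $E$ therefore has $\ord_E(C_1)=0$, so $A_{(X,cC_1)}(\ord_E)=A_X(\ord_E)$ is independent of $c$ and the resulting inequality cannot isolate $\tfrac{54}{95}$. In the paper the threshold comes instead from the \emph{other} singular point: in Proposition \ref{prop:valcrit-p1425}(2) one takes the monomial valuation $v'$ of weights $(1,3)$ in $(x,z)$ centered at the $\tfrac{1}{4}(1,1)$ point $[0,1,0]$ (through which $C_1$ does pass, with $v'(C_1)\geq 6$), and on the $\bP^2$ side (Theorem \ref{thm:a10}) from the $(11,2)$-weighted blow-up at the $A_{10}$-singular point of the quintic. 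The $\tfrac{1}{25}(1,4)$ point only enters Proposition \ref{prop:valcrit-p1425}(3), to rule out curves of the form $(f(x,y)z+g(x,y)=0)$ that do pass through it.

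Two smaller points of phrasing. First, $\bP(1,4,25)$ is not ``new'' at $c_5$: it already appears at the fourth wall $c_4=\tfrac{63}{115}$ (Theorem \ref{thm:4thwall}); what is new at $c_5$ is the curve $(z^2+x^6y^{11}=0)$, whose K-semistable threshold is $\tfrac{54}{95}$, as opposed to the curves $(z^2+x^2y^{12}+\cdots=0)$ appearing at $c_4$. Second, the paper proves the codimension-$3$ statement by bounding from both sides --- $\dim E_5^-\geq 2$ from the inclusion of $\oSigma_{\frac{54}{95}-\epsilon,5}$, and $\dim E_5^+\geq 9$ from the explicit $9$-dimensional family of curves $(z^2+x^6y^{11}+x^{14}g=0)$ with $\lct\geq\tfrac{2}{3}>\tfrac{3}{5}$ --- and then the VGIT equality $\dim E_5^-+\dim E_5^++1=12$ forces both to be equalities. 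Your version, which starts by computing $\dim\oSigma_{\frac{54}{95}-\epsilon,5}=2$, presupposes $\oSigma_{\frac{54}{95}-\epsilon,5}=E_5^-$, which is actually a conclusion rather than an input; once you supply both lower bounds this becomes circular-free. With the valuation corrected and these points tightened, your outline matches the paper's proof.
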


\begin{theorem}[No other walls]\label{thm:allwalls} The five walls above are all walls occurring for K-moduli spaces of plane quintics in the log Fano region $0 < c < \frac{3}{5}$. In other words, for any $\frac{54}{95}<c<\frac{3}{5}$ we have an isomorphism $\ocP^\K_{c}\cong\ocP^\K_{\frac{54}{95}+\epsilon}$ between Artin stacks. 
\end{theorem}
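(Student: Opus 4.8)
\textbf{Proof plan for Theorem~\ref{thm:allwalls}.} The goal is to show that once $c$ exceeds $\frac{54}{95}$, no further wall is crossed in the log Fano region, i.e. the K-moduli stacks stabilize. By Proposition~\ref{prop:lclastwall}(2), it suffices to verify the hypothesis there with $c_0 = \frac{54}{95}+\epsilon$: namely, that every K-polystable point $[(X, cC)]\in\ocP^{\K}_{c}$ with $c\in(\frac{54}{95}, \frac{3}{5})$ satisfies $\lct(X;C)\geq \frac{3}{5}$. Equivalently, $P^{\lc}_{d,c} = \oP^{\K}_{d,c}$ for such $c$, and then Proposition~\ref{prop:lclastwall}(2) immediately yields $\ocP^{\K}_c\cong\ocP^{\K}_{\frac{54}{95}+\epsilon}$ for all $c$ in the range. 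So the entire content is the lct estimate for the (finitely many types of) K-polystable pairs that survive past the fifth wall.

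The first step is to enumerate the possible surfaces $X$. By Lemma~\ref{lem:quinticindex}, $X\in\{\bP^2, \bP(1,1,4), X_{26}, \bP(1,4,25)\}$. For each, I would use the index bound Theorem~\ref{thm:localindex} more sharply: for $c > \frac{54}{95}$ we have $\beta = 1 - \frac{5c}{3} < 1 - \frac{5}{3}\cdot\frac{54}{95} = 1 - \frac{18}{19} = \frac{1}{19}$, so $\lfloor\frac{3}{3-5c}\rfloor = \lfloor\beta^{-1}\rfloor$ can be up to $18$; this alone does not immediately eliminate surfaces, so I instead combine it with the valuative criterion computations already carried out in Propositions~\ref{prop:valcrit-X_26} and \ref{prop:valcrit-p1425} (and \ref{prop:valcrit-p114}): curves on $\bP(1,1,4)$ passing through the singular point, or on $X_{26}$/$\bP(1,4,25)$ passing through the singular point, are K-unstable for the relevant $c$, hence any K-semistable pair $(X,cC)$ with $X$ singular has $C$ avoiding $\mathrm{Sing}(X)$ and then $\lct_{\mathrm{Sing}(X)}(X;C) = \lct(\mathrm{Sing}(X)) > \frac{3}{5}$ automatically (since these are ADE or mild cyclic quotient singularities whose lct as a pair with a divisor missing the point equals the lct of the ambient singularity, which is $1$ at a smooth point and $>\frac35$ at the quotient singularities in question). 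This reduces the problem to bounding $\lct$ at smooth points of $X$, i.e. to a statement about the local singularities of the curve $C$ away from $\mathrm{Sing}(X)$.

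The second step is therefore: show that any K-polystable pair $(X, cC)$ with $c\in(\frac{54}{95}, \frac35)$ has $\lct_x(X;C)\geq \frac35$ at every smooth point $x\in X$. By the valuative/$\delta$ criterion (Theorem~\ref{thm:valuative}), K-semistability of $(X,cC)$ forces $\mathrm{mult}_x C$ and more refined jet invariants of $C$ to be controlled; this is exactly the kind of computation done throughout Sections~\ref{sec:2ndwallquintics} and \ref{sec:quintics}, where the walls $c_3,c_4,c_5$ were shown to be the places where curves with $A_{11},A_{10}$-type singularities (lct exactly $\frac{11}{12}\cdot\frac{2}{5}$-type thresholds, i.e. $<\frac35$ for $A_k$ with $k$ large) get replaced. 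Concretely, a singularity of type $A_k$ on a plane quintic has $\lct = \frac{3}{k+1}$ when... (I would instead cite Proposition~\ref{prop:jetsummary} and the stratification Lemma~\ref{lem:quinticgit}): the strata $\Sigma_1,\dots,\Sigma_5$ are precisely $\oP^{\GIT}\setminus P^{\lc}_\epsilon$, and by Theorems~\ref{thm:firstwallafter}, \ref{thm:secondwall}, \ref{thm:3rdwall}, \ref{thm:4thwall}, \ref{thm:5thwall} the proper transforms of $\Sigma_1,\Sigma_2,\Sigma_3,\Sigma_4,\Sigma_5$ have all been flipped/contracted by the time $c$ reaches $\frac{54}{95}+\epsilon$; the new loci $E_i^+$ introduced at each wall parametrize pairs $(X,C)$ with $X$ singular and $C$ avoiding $\mathrm{Sing}(X)$, hence of lct $\geq\frac35$ as above (for $X=\bP(1,4,25)$ the remaining loci consist of smooth-ish hyperelliptic-type curves with lct $>\frac35$ computed in Propositions~\ref{prop:valcrit-p1425} and the appendix). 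So after the fifth wall every K-polystable point has $\lct(X;C)\geq\frac35$, i.e. $P^{\lc}_{d,c} = \oP^{\K}_{d,c}$.

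\textbf{Main obstacle.} The delicate point is making the ``$C$ avoids $\mathrm{Sing}(X)$ $\Rightarrow$ lct $\geq \frac35$'' implication airtight for $X = \bP(1,4,25)$ and $X_{26}$: one must check not only that $C$ misses the worst singular point but that $C$ is mild enough at every other point, and that no \emph{new} bad locus (a curve acquiring an $A_{\geq 11}$-type singularity at a smooth point of a weighted projective plane, say) can be K-polystable for $c>\frac{54}{95}$. This is handled by the valuative criterion: such a curve would be destabilized by the divisor extracted from a weighted blow-up at that point, and the numerics (as in Propositions~\ref{prop:a12onlyif}, \ref{prop:valcrit-X_26}, \ref{prop:valcrit-p1425}) show the destabilizing threshold is $\leq\frac{54}{95}$, contradicting K-semistability. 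Assembling these finitely many local estimates — essentially a bookkeeping exercise over the strata of Lemma~\ref{lem:quinticgit} together with the $E_i^\pm$ descriptions from Theorems~\ref{thm:3rdwall}--\ref{thm:5thwall} — and then invoking Proposition~\ref{prop:lclastwall}(2) completes the proof.
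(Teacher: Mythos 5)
Your proposal takes a genuinely different route from the paper, so let me first describe the contrast and then flag a gap.

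The paper's argument proceeds by contradiction: suppose a sixth wall $c_6\in(\frac{54}{95},\frac35)$ exists. By Proposition~\ref{prop:openkps}, a \emph{new} K-polystable pair $(X,c_6 D)$ at a wall must be K-\emph{unstable} for every $c\neq c_6$. The paper then runs through the four surfaces from Lemma~\ref{lem:quinticindex} and, using the valuative computations (Propositions~\ref{prop:valcrit-p114}, \ref{prop:valcrit-X_26}, \ref{prop:valcrit-p1425}) plus interpolation, produces in each case a special degeneration of $D$ to one of $Q_5'$, $C_1$, $C_0'$, $C_2$, $C_3$; by Propositions~\ref{prop:a11redkps}, \ref{prop:onlykps-X_26}, \ref{prop:a11irrkps}, \ref{prop:a10kps} and Theorem~\ref{thm:Kss-spdeg}, this forces $(X,c_iD)$ K-semistable at one of the earlier walls $c_i<c_6$, contradicting K-unstability away from $c_6$. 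This never requires an lct computation. Your plan instead aims to verify $P^{\lc}_{5,c_0}=\oP^\K_{5,c_0}$ at $c_0=\frac{54}{95}+\epsilon$ and then invoke Proposition~\ref{prop:lclastwall}(2). That strategy is exactly what the paper uses for $d=4,6$ (Theorem~\ref{thm:quartsext} via Propositions~\ref{prop:lctquartic}, \ref{prop:lctsextic}), and it \emph{can} work for $d=5$, but it requires a careful enumeration of all K-polystable pairs in $\oP^\K_{\frac{54}{95}+\epsilon}$ (via Theorems~\ref{thm:firstwallps}, \ref{thm:secondwall}, \ref{thm:3rdwall}--\ref{thm:5thwall}) and an lct check for each stratum. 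The paper's route is shorter because it only needs K-semistability at a single earlier $c$, not an lct bound.

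The gap in your proposal is the claim that ``any K-semistable pair $(X,cC)$ with $X$ singular has $C$ avoiding $\mathrm{Sing}(X)$,'' from which you conclude the lct at singular points of $X$ is automatically $\geq\frac35$. This is false. On $\bP(1,1,4)$ every curve in $|\cO(10)|$ has equation $f_2(x,y)z^2+f_6(x,y)z+f_{10}(x,y)=0$ with $f_2$ a binary quadratic form, which vanishes at $[0,0,1]$; so \emph{every} such curve passes through the $\frac14(1,1)$ point, and Proposition~\ref{prop:valcrit-p114}(1) explicitly produces K-semistable pairs (with $c\geq\frac{6}{11}$) whose curve passes through it. Similarly, on $\bP(1,4,25)$ every curve in $|\cO(50)|$ passes through the $\frac14(1,1)$ point $[0,1,0]$ (since $50\not\equiv 0\bmod 4$), and the valuative bound in Proposition~\ref{prop:valcrit-p1425}(3) only forbids passing through the $\frac{1}{25}(1,4)$ point. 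So to run your strategy you must actually compute $\lct_x(X;C)$ at the singular points of $X$ that $C$ does pass through — e.g. check that $(z^2+x^6y^{11}+x^{14}g(x,y))$ at $[0,1,0]$ on $\bP(1,4,25)$ has local lct $\frac12+\frac16=\frac23>\frac35$, and analogously for the $\bP(1,1,4)$ and $X_{26}$ strata. Those checks do succeed, so your plan is repairable, but the reduction ``to smooth points of $X$'' as written does not hold. (A smaller slip: the index bound of Theorem~\ref{thm:localindex} is $\min\{\lfloor\beta^{-1}\rfloor,d\}$, which caps at $5$ for quintics independently of $c$; the ``up to $18$'' computation is moot, though you acknowledged it does not eliminate surfaces anyway.)
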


\subsection{Proofs}\label{sec:quintic345proof}

In this section we present proofs of Theorems \ref{thm:3rdwall}, \ref{thm:4thwall}, \ref{thm:5thwall}, and \ref{thm:allwalls}. Our strategy is quite similar to the proof of Theorem \ref{thm:secondwall}.

\begin{proof}[Proof of Theorem \ref{thm:3rdwall}]
(1) We first show that there is no wall crossing when $c\in (\frac{8}{15},\frac{6}{11})$. Assume to the contrary that the third wall $c_3\in (\frac{8}{15},\frac{6}{11})$. Then there exists a new K-polystable pair $(X,c_3 D)$ such that $(X,cD)$ is K-unstable for any $c\neq c_3$ by Proposition \ref{prop:openkps}. Thus $X$ is not isomorphic to $\bP^2$ or $\bP(1,4,25)$ by Propositions \ref{prop:k-interpolation} and \ref{prop:valcrit-p1425}. Hence by Lemma \ref{lem:quinticindex} we are left with two possibilities, i.e. $X$ is isomorphic to $\bP(1,1,4)$ or $X_{26}$. If $X\cong X_{26}$, then $D$ does not passes through the singular point on $X$ by Proposition \ref{prop:valcrit-X_26}. Hence $(X,\frac{8}{15}D)$ is K-semistable by Proposition \ref{prop:onlykps-X_26}, but this is a contradiction since $c_3\neq \frac{8}{15}$. Thus we may assume $(X,D)\cong(\bP(1,1,4),C)$ where the equation of $C$ is given by $f(x,y)z^2+g(x,y)z+h(x,y)=0$. If $f(x,y)$ is non-degenerate, then $(\bP(1,1,4),\frac{3}{7}C)$ is K-semistable since $C$ specially degenerates to $Q_5'$ which is a contradiction. If $f(x,y)$ is degenerate, then Proposition \ref{prop:valcrit-p114} implies that $(\bP(1,1,4),c_3 C)$ is K-unstable since $c_3<\frac{6}{11}$. Thus we have shown that no walls can appear in the interval $(\frac{8}{15},\frac{6}{11})$.

Next we show that $(\bP(1,1,4), \frac{6}{11}(x^2 z^2+y^6z=0))$ is the only new K-polystable pair in $\oP_{\frac{6}{11}}^{\K}$. Clearly this pair is K-polystable by Proposition \ref{prop:a11redkps}. If $(X,\frac{6}{11}D)$ is a new K-polystable pair, then from the argument above we see that $(X,D)\cong(\bP(1,1,4),C)$ and the defining equation of $C$ has a degenerate quadratic form in $(x,y)$ as the $z^2$-term coefficient. Then by Proposition \ref{prop:valcrit-p114}  we know that after a coordinate change the equation of $C$ has the form $x^2 z^2 +y^6 z+ g(x,y)=0$. Consider the $1$-PS in $\Aut(\bP(1,1,4))$ defined by $[x,y,z]\mapsto[t^3 x, ty, z]$. It is clear that $C$ specially degenerates to the curve $C_1:=(x^2 z^2+y^6z=0)$ via this $1$-PS as $t\to 0$. Since $(\bP(1,1,4), \frac{6}{11}C_1)$ is K-polystable, it follows that $(X,D)\cong (\bP(1,1,4), C_1)$ by \cite{LWX18}. 

So far we have shown that $c_3=\frac{6}{11}$. Next we analyze the wall crossing morphisms $\phi_3^{\pm}$. Since $(\bP(1,1,4), \frac{6}{11}C_1)$ is the only new K-polystable pair in $\oP_{\frac{6}{11}}^{\K}$, we know by Proposition \ref{prop:openkps} that $\phi_3^{\pm}$ are isomorphic over $\oP_{\frac{6}{11}}^{\K}\setminus\{[(\bP(1,1,4), \frac{6}{11}C_1)]\}$. Denote by $E_{3}^{\pm}$ the preimage of $[(\bP(1,1,4), \frac{6}{11}C_1)]$ under the morphisms $\phi_3^{\pm}$. It is clear that $\Aut_0(\bP(1,1,4), C_1)\cong\bG_m$.
Using the local VGIT presentation \eqref{eq:localVGIT} and applying \cite[Theorem 0.2.5]{dolgachevhu} or \cite[Corollary 1.13]{thaddeus}, we have
\begin{equation}\label{eq:3rdwall}
\dim (E_{3}^-)+\dim (E_3^+)+1=\dim(\oP_{\frac{6}{11}}^{\K}).
\end{equation}
Moreover, $E_3^{\pm}$ are weighted projective spaces quotient out some finite group action, so they are irreducible. By Theorem \ref{thm:a11red}, we know that $\oSigma_{\frac{6}{11}-\epsilon,3}\subset E_3^{-}$ which implies $\dim E_3^{-}\geq 1$. Let $C$ be a general curve on $\bP(1,1,4)$ of the form $x^2 z^2+y^6 z+ g(x,y)=0$. Then it is clear that $\lct(\bP(1,1,4);C)\geq \frac{2}{3}>\frac{3}{5}$. Hence $(\bP(1,1,4),(\frac{6}{11}+\epsilon)C)$ is K-stable by Proposition \ref{prop:k-interpolation}. It is easy to see that such pairs $(\bP(1,1,4), C)$ form a locally closed subset in the K-moduli space $\oP_{\frac{6}{11}+\epsilon}^{\K}$ of  codimension two. Hence $\dim(\oP_{\frac{6}{11}}^{\K})-\dim (E_3^+)\leq 2$ which implies $\dim(E_3^-)\leq 1$ by \eqref{eq:3rdwall}. Therefore, we know  $\dim(E_3^-)=1$ and hence $\oSigma_{\frac{6}{11}-\epsilon,3}= E_3^{-}$ by irreducibility of $E_3^-$. This finishes the proof of part (1).

(2) The proof of this part is basically the same as the proof of Theorem \ref{thm:secondwall}(3). Firstly, if $[(X,D)]$ is a point in $E_3^+$, then $(X,D)$ specially degenerates to $(\bP(1,1,4), C_1)$. Hence $X$ is either $\bP^2$ or $\bP(1,1,4)$. If $X\cong\bP^2$, then $(X,\frac{6}{11}D)$ is K-polystable by Proposition \ref{prop:k-interpolation}, a contradiction. Hence $(X,D)\cong(\bP(1,1,4),C)$. If $C$ has the form $xyz^2+f(x,y)z+h(x,y)=0$ after a suitable change of coordinates, then it admits a special degeneration to $Q_5$ which implies that $(X,\frac{3}{7}D)$ is K-semistable by Theorem \ref{thm:Kss-spdeg} and Lemma \ref{lem:Q_d'Kps}. Hence $(X,\frac{6}{11}D)$ is K-polystable by Proposition \ref{prop:k-interpolation}, again a contradiction. Then by Proposition \ref{prop:valcrit-p114}, we know that $C$ must have the form $x^2 z^2+y^6 z+g(x,y)=0$ after a suitable change of coordinates. Note that $g\neq 0$ because otherwise $(X,D)\cong (\bP(1,1,4), C_1)$ is $(\frac{6}{11}+\epsilon)$-K-unstable.

To prove the rest of part (2), it suffices to show that $(\bP(1,1,4),(\frac{6}{11}+\epsilon)C)$ is K-polystable for any curve $C$ on $\bP(1,1,4)$ described in the statement. We omit the rest of the proof here since it is the same as the proof of Theorem \ref{thm:secondwall}(3) by using properness of K-moduli spaces (Theorem \ref{thm:compactness}). 
\end{proof}
\medskip

\begin{proof}[Proof of Theorem \ref{thm:4thwall}]
(1) We first show that there is no wall crossing when $c\in (\frac{6}{11},\frac{63}{115})$. Assume to the contrary that the fourth wall $c_4\in (\frac{6}{11},\frac{63}{115})$. Then there exists a new K-polystable pair $(X,c_4 D)$ such that $(X,cD)$ is K-unstable for any $c\neq c_4$ by Proposition \ref{prop:openkps}. Thus similar argument to the proof of Theorem \ref{thm:3rdwall}(1) implies that $(X,D)$ has to be isomorphic to $(\bP(1,1,4),C)$ where the equation of $C$ is given by $f(x,y)z^2+g(x,y)z+h(x,y)=0$ with $f$ degenerate.  Then  Proposition \ref{prop:valcrit-p114} implies that the equation of $C$ has to have the form $x^2 z^2+ y^6 z+ h(x,y)=0$, so $C$ admits a special degeneration to the curve $C_1=(x^2 z^2+ y^6 z=0)$. Thus by Proposition \ref{prop:a11redkps} and Theorem \ref{thm:Kss-spdeg} we know that $(\bP(1,1,4),\frac{6}{11}C)$ is K-semistable, but this is a contradiction as $c_4\neq \frac{6}{11}$. Thus we have shown that no walls can appear in the interval $(\frac{6}{11},\frac{63}{115})$.

Next we show that $(\bP(1,4,25), \frac{63}{115}(z^2+x^2 y^{12}=0))$ is the only new K-polystable pair in $\oP_{\frac{63}{115}}^{\K}$. Clearly this pair is K-polystable by Proposition \ref{prop:a11irrkps}. If $(X,\frac{63}{115}D)$ is a new K-polystable pair, then from the argument above we see that $(X,D)\cong(\bP(1,4,25),C)$. By Proposition \ref{prop:valcrit-p1425}, the defining equation of $C$ has the form $z^2+x^2 y^{12} + x^6 g(x,y)=0$.  Consider the $1$-PS in $\Aut(\bP(1,4,25))$ defined by $[x,y,z]\mapsto[x, ty, t^6 z]$. It is clear that $C$ specially degenerates to the curve $C_2:=(z^2+ x^2 y^{12}=0)$ via this $1$-PS as $t\to 0$. Since $(\bP(1,4,25), \frac{63}{115}C_2)$ is K-polystable, it follows that $(X,D)\cong (\bP(1,4,25), C_2)$ by \cite{LWX18}. 

So far we have shown that $c_4=\frac{63}{115}$. Next we analyze the wall crossing morphisms $\phi_4^{\pm}$. Since $(\bP(1,4,25), \frac{63}{115}C_2)$ is the only new K-polystable pair in $\oP_{\frac{63}{115}}^{\K}$, we know by Proposition \ref{prop:openkps} that $\phi_4^{\pm}$ are isomorphic over $\oP_{\frac{63}{115}}^{\K}\setminus\{[(\bP(1,4,25), \frac{63}{115}C_2)]\}$. Denote by $E_{4}^{\pm}$ the preimage of $[(\bP(1,4,25), \frac{63}{115}C_2)]$ under the morphisms $\phi_4^{\pm}$. It is clear that $\Aut_0(\bP(1,4,25), C_2)\cong\bG_m$. By Theorem \ref{thm:a11irr}, we know that $\oSigma_{\frac{63}{115}-\epsilon,4}\subset E_4^{-}$ which implies $\dim(E_4^{-})\geq 1$.
Using a similar argument to the proof of Theorem \ref{thm:3rdwall}(1), it suffices to show that $E_4^+$ has codimension at most $2$ in the K-moduli space. Let $C$ be a general curve on $\bP(1,4,25)$ of the form $z^2+ x^2 y^{12}+ x^{10} g(x,y)=0$. Then it is clear that $\lct(\bP(1,4,25);C)\geq 1>\frac{3}{5}$.
Hence $(\bP(1,4,25),(\frac{63}{115}+\epsilon)C)$ is K-stable by Proposition \ref{prop:k-interpolation}. It is easy to see that such pairs $(\bP(1,4,25), C)$ form a locally closed subset in the K-moduli space $\oP_{\frac{63}{115}+\epsilon}^{\K}$ of  codimension $2$. Hence  $\oSigma_{\frac{63}{115}-\epsilon,4}= E_4^{-}$ by irreducibility of $E_4^-$ and local VGIT presentation. This finishes the proof of part (1).

(2) The proof of this part is basically the same as the proof of Theorem \ref{thm:secondwall}(3). Firstly, if $[(X,D)]$ is a point in $E_4^+$, then $(X,D)$ specially degenerates to $(\bP(1,4,25), C_2)$. If $X$ is isomorphic to $\bP^2$ or $X_{26}$, then $(X,\frac{63}{115}D)$ is K-polystable by Proposition \ref{prop:k-interpolation}, a contradiction. If $(X,D)\cong(\bP(1,1,4),C)$, then by Proposition \ref{prop:valcrit-p114} we know that the equation of $C$ has the form $xyz^2+ f(x,y)z+g(x,y)=0$ or $x^2 z^2+y^6 z+ h(x,y)$. From the proof of Theorem \ref{thm:3rdwall}, we know that the former curve is $\frac{3}{7}$-K-semistable, while the latter curve is $\frac{6}{11}$-K-semistable. Hence $(X,\frac{63}{115}D)$ is K-polystable by Proposition \ref{prop:k-interpolation}, again a contradiction. Therefore, $(X,D)\cong(\bP(1,4,25),C)$. By Proposition \ref{prop:valcrit-p1425}, we know that the equation of $C$ must be of the form $z^2 + x^2 y^{12}+ x^6 h(x,y)=0$. After a suitable change of coordinates the equation of $C$ can be even simplified to $z^2 + x^2 y^{12}+ x^{10} g(x,y)=0$. Note that $g\neq 0$ because otherwise $(X,D)\cong (\bP(1,4,25), C_2)$ is $(\frac{63}{115}+\epsilon)$-K-unstable. The rest of part (2) also follows from similar argument to proof of Theorem \ref{thm:secondwall}(3) by using properness of K-moduli spaces (Theorem \ref{thm:compactness}). 
\end{proof}
\medskip

\begin{proof}[Proof of Theorem \ref{thm:5thwall}]
(1) We first show that there is no wall crossing when $c\in (\frac{63}{115},\frac{54}{95})$. Assume to the contrary that the fifth wall $c_5\in (\frac{63}{115},\frac{54}{95})$. Then there exists a new K-polystable pair $(X,c_5 D)$ such that $(X,cD)$ is K-unstable for any $c\neq c_5$ by Proposition \ref{prop:openkps}. Thus similar argument to the proof of Theorems \ref{thm:3rdwall}(1) and \ref{thm:4thwall}(1) implies that $(X,D)$ has to be isomorphic to $(\bP(1,4,25),C)$.
By Proposition \ref{prop:valcrit-p1425},  the equation of $C$ must have the form $z^2+x^2 y^{12}+ x^6 g(x,y)=0$ since $c_5<\frac{54}{95}$. So $C$ admits a special degeneration to the curve $C_2=(z^2+x^2 y^{12}=0)$. Thus by Proposition \ref{prop:a11redkps} and Theorem \ref{thm:Kss-spdeg} we know that $(\bP(1,4,25),\frac{63}{115}C)$ is K-semistable, but this is a contradiction as $c_5\neq \frac{63}{115}$. Thus we have shown that no walls can appear in the interval $(\frac{63}{115},\frac{54}{95})$.

Next we show that $(\bP(1,4,25), \frac{54}{95}(z^2+x^6 y^{11}=0))$ is the only new K-polystable pair in $\oP_{\frac{54}{95}}^{\K}$. Clearly this pair is K-polystable by Proposition \ref{prop:a10kps}. If $(X,\frac{54}{95}D)$ is a new K-polystable pair, then from the argument above we see that $(X,D)\cong(\bP(1,4,25),C)$ such that $C$ admits no special degeneration to $C_2$. Thus Proposition \ref{prop:valcrit-p1425} implies that the defining equation of $C$ has the form $z^2+x^6 y^{11} + x^{10} g(x,y)=0$. Consider the $1$-PS in $\Aut(\bP(1,4,25))$ defined by $[x,y,z]\mapsto[x, t^2 y, t^{11} z]$. It is clear that $C$ specially degenerates to the curve $C_3:=(z^2+ x^6 y^{11}=0)$ via this $1$-PS as $t\to 0$. Since $(\bP(1,4,25), \frac{54}{95}C_3)$ is K-polystable, it follows that $(X,D)\cong (\bP(1,4,25), C_3)$ by \cite{LWX18}. 

So far we have shown that $c_5=\frac{54}{95}$. Next we analyze the wall crossing morphisms $\phi_5^{\pm}$. Since $(\bP(1,4,25), \frac{54}{95}C_3)$ is the only new K-polystable pair in $\oP_{\frac{54}{95}}^{\K}$, we know by Proposition \ref{prop:openkps} that $\phi_5^{\pm}$ are isomorphic over $\oP_{\frac{54}{95}}^{\K}\setminus\{[(\bP(1,4,25), \frac{54}{95}C_3)]\}$. Denote by $E_{5}^{\pm}$ the preimage of $[(\bP(1,4,25), \frac{54}{95}C_3)]$ under the morphisms $\phi_5^{\pm}$. It is clear that $\Aut_0(\bP(1,4,25), C_3)\cong\bG_m$. By Theorem \ref{thm:a10}, we know that $\oSigma_{\frac{54}{95}-\epsilon,5}\subset E_5^{-}$ which implies $\dim(E_5^{-})\geq 2$.
Using a similar argument to the proof of Theorem \ref{thm:3rdwall}(1), it suffices to show that $E_5^+$ has codimension at most $3$ in the K-moduli space. Let $C$ be a general curve on $\bP(1,4,25)$ of the form $z^2+ x^6 y^{11}+ x^{14} g(x,y)=0$. Then it is clear that $\lct(\bP(1,4,25);C)\geq \frac{2}{3}>\frac{3}{5}$.
Hence $(\bP(1,4,25),(\frac{54}{95}+\epsilon)C)$ is K-stable by Proposition \ref{prop:k-interpolation}. It is easy to see that such pairs $(\bP(1,4,25), C)$ form a locally closed subset in the K-moduli space $\oP_{\frac{54}{95}+\epsilon}^{\K}$ of  codimension $3$. Hence  $\oSigma_{\frac{54}{95}-\epsilon,5}= E_5^{-}$ by irreducibility of $E_5^-$ and local VGIT presentation. This finishes the proof of part (1).

(2) The proof of this part is basically the same as proofs of Theorems \ref{thm:secondwall}(3) and \ref{thm:4thwall}(2). Firstly, if $[(X,D)]$ is a point in $E_5^+$, then $(X,D)$ specially degenerates to $(\bP(1,4,25), C_3)$. By similar argument to the proof of Theorem \ref{thm:4thwall}(2), we know that $(X,D)\cong(\bP(1,4,25),C)$. 
If the equation of $C$ has the form $z^2 + x^2 y^{12}+ x^6 g(x,y)=0$, then it is $\frac{63}{115}$-K-semistable since it specially degenerates to $C_2$. Hence $(X, \frac{54}{95}D)$ is K-polystable by Proposition \ref{prop:k-interpolation}, a contradiction. Hence Proposition \ref{prop:valcrit-p1425} implies that the equation of $C$ must be of the form $z^2 + x^6 y^{11}+ x^{10} h(x,y)=0$. After a suitable change of coordinates the equation of $C$ can be even simplified to $z^2 + x^6 y^{11}+ x^{14} g(x,y)=0$. Note that $g\neq 0$ because otherwise $(X,D)\cong (\bP(1,4,25), C_3)$ is $(\frac{54}{95}+\epsilon)$-K-unstable. The rest of part (2) also follows from similar argument to proof of Theorem \ref{thm:secondwall}(3) by using properness of K-moduli spaces (Theorem \ref{thm:compactness}). 
\end{proof}
\medskip

\begin{proof}[Proof of Theorem \ref{thm:allwalls}]
Assume to the contrary that the sixth wall $c_6\in (\frac{54}{95},\frac{3}{5})$ exists. Then there exists a K-polystable log Fano pair $(X,c_6 D)$ in $\oP_{c_6}^{\K}$ such that $(X,cD)$ is K-unstable for any $c\neq c_6$. Then $X$ is isomorphic to $\bP^2$, $\bP(1,1,4)$, $X_{26}$, or $\bP(1,4,25)$ by Lemma \ref{lem:quinticindex}. If $X\cong\bP^2$, then $(X,\epsilon D)$ is K-semistable by Proposition \ref{prop:k-interpolation}, a contradiction. 
If $X\cong\bP(1,1,4)$, then Proposition \ref{prop:valcrit-p114} implies that the equation of $D$ has the form $xyz^2+f(x,y)z+g(x,y)=0$ or $x^2 z^2 + y^6 z+ h(x,y)=0$ after a suitable change of coordinates. Hence $D$ admits a special degeneration to either $Q_5'$ or $C_1=(x^2 z^2 + y^6 z=0)$. Thus Lemma \ref{lem:Q_d'Kps} and  Proposition \ref{prop:a11redkps} combined with Theorem \ref{thm:Kss-spdeg} imply that either $(X,\frac{3}{7}D)$ or $(X,\frac{6}{11}D)$ is K-semistable, a contradiction.
If $X\cong X_{26}$, then Proposition \ref{prop:valcrit-X_26} implies that $D$ does not pass through the singular point of $X$. Hence $(X,\frac{8}{15}D)$ is K-semistable by Proposition \ref{prop:onlykps-X_26}, a contradiction. 
If $X\cong\bP(1,4,25)$, then Proposition \ref{prop:valcrit-p1425} implies that $D$ admits a special degeneration to either $C_2=(z^2+x^2 y^{12}=0)$ or $C_3=(z^2+x^6 y^{11}=0)$. Thus Propositions \ref{prop:a11irrkps} and \ref{prop:a10kps} combined with Theorem \ref{thm:Kss-spdeg} imply that either $(X,\frac{63}{115}D)$ or $(X,\frac{54}{95}D)$ is K-semistable, again a contradiction. Since we rule out all four possibilities, the proof is finished.
\end{proof}

\newcommand{\cH}{\mathcal{H}}

\section{Projectivity, birational contractions, and the log Calabi-Yau wall crossing} \label{sec:questions}
In this final section we discuss some questions with incomplete answers that are interesting for future study. 

\subsection{Projectivity}\label{sec:projectivity}
In this section we will show that the for any $d\in\{4,5,6\}$ and any $c\in (0, \frac{3}{d})$, the CM $\bQ$-line bundle $\Lambda_c$ (see Proposition \ref{prop:k-cm-interpolation}) on the K-moduli space $\oP_{d,c}^{\K}$ is ample, which in particular implies the projectivity of $\oP_{d,c}^{\K}$. Our main tools are the work of Codogni and Patakfalvi \cite{CP18} and its generalization by Posva \cite{Pos19}, as well as the relative ampleness of CM line bundles under wall crossing (see Theroem \ref{thm:VGIT-CM-ample}).

\begin{thm}\label{thm:projectivity}
When $d\in \{4,5,6\}$, the CM $\bQ$-line bundle $\Lambda_c$ on $\oP_{d,c}^{\K}$ is ample for any $c\in (0, \frac{3}{d})$. 
\end{thm}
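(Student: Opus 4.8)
The plan is to prove ampleness of $\Lambda_c$ in two stages: first for coefficients $c$ close to the Calabi--Yau boundary $\frac{3}{d}$, where positivity input from the literature on CM/Hodge line bundles is strongest, and then propagate ampleness backward through each of the finitely many walls using the relative ampleness statement of Theorem~\ref{thm:VGIT-CM-ample}. Recall from Proposition~\ref{prop:k-cm-interpolation} the interpolation formula
\[
(1-cr)^{-n}\Lambda_{c',c}=(1-c'r)\Lambda_{c',0}+c'r(n+1)(-K_X)^n\Lambda_{c',\Hodge},\qquad r=\tfrac{d}{3},\ n=2,
\]
which expresses $\Lambda_c$ in terms of the ``Fano CM class'' $\Lambda_{c,0}$ and the Hodge class $\Lambda_{c,\Hodge}$. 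By the work of Codogni--Patakfalvi \cite{CP18} (and its extension by Posva \cite{Pos19} to the log setting), the CM $\bQ$-line bundle of a $\bQ$-Gorenstein flat family of K-semistable log Fano pairs over a proper base is nef, and is ample modulo the locus where fibers are isotrivial; applied to the universal family over $\oP_{d,c}^{\K}$ this shows $\Lambda_c$ is nef, and strictly positive on any curve in the moduli space along which the polarized pair genuinely varies. The only obstruction to ampleness is therefore the existence of a complete curve in $\oP_{d,c}^{\K}$ over which the family is isotrivial.

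\textbf{Key steps.} First I would record that $\Lambda_c$ is nef on $\oP_{d,c}^{\K}$ for all $c\in(0,\tfrac3d)$ by \cite{CP18, Pos19} applied to the universal family, and simultaneously that $\Lambda_{c,\Hodge}$ is nef (this is the Hodge line bundle of a family of pairs with $K_{\cX}+r^{-1}\cD\sim_\bQ 0$, hence semipositive by standard variation-of-Hodge-structure or by \cite[Theorem 1.2]{fujino2003}). Second, using the identification $\oP_{d,\tfrac3d-\epsilon}^{\K}\cong\oP_{d,c_{k-1}+\epsilon}^{\K}$ (Proposition~\ref{prop:lclastwall}) together with the explicit descriptions obtained in Sections~\ref{sec:lowdegree} and \ref{sec:quintics}, I would verify that for $c$ in the last chamber the pair $(\Lambda_{c,0},\Lambda_{c,\Hodge})$ is ample: concretely, in the chambers near $\tfrac3d$ the locus of isotrivial families is finite, so nefness plus strict positivity on all moving curves forces ampleness by the Nakai--Moishezon / Kleiman criterion (or by observing $\Lambda_c$ restricted to each stratum is ample). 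For $d=4,6$ this also follows from Theorems~\ref{thm:logcy4} and \ref{thm:sextic-Hodge}, which identify the ample model of $\Lambda_{\Hodge}$ with a Baily--Borel space and show $M$ is a big open set; for $d=5$ one argues directly on $\oP_{5,\tfrac35-\epsilon}^{\K}$. Third, I would run a descending induction on the walls $c_k=\tfrac3d>c_{k-1}>\cdots>c_1>c_0=0$: given $\Lambda_{c_i+\epsilon}$ ample on $\oP_{d,c_i+\epsilon}^{\K}$, Theorem~\ref{thm:VGIT-CM-ample} gives that $\Lambda_{c_i+\epsilon}$ is $\phi_i^{+}$-ample and that $\Lambda_{c_i+\epsilon,c_i}=(\phi_i^{+})^*\Lambda_{c_i}$, so $\Lambda_{c_i}$ is ample on $\oP_{d,c_i}^{\K}$; then $\Lambda_{c_i}=(\phi_i^{-})^*\Lambda_{c_i}$ pulls back and, combined with $\phi_i^{-}$-ampleness of $\Lambda_{c_i-\epsilon}$, yields ampleness of $\Lambda_{c_i-\epsilon}$ on $\oP_{d,c_i-\epsilon}^{\K}$. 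Since the $(\phi_i^\pm)$ are projective birational and the walls are finite in number (Proposition~\ref{prop:k-wall-finite}), this propagates ampleness all the way down through every chamber.

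\textbf{Main obstacle.} The crux is the base case: establishing that $\Lambda_c$ is genuinely ample (not merely nef and big) for $c$ in the final chamber. The subtlety is that \cite{CP18, Pos19} give ampleness only ``up to the isotrivial locus,'' so one must rule out complete isotrivially-fibered curves inside the K-moduli space. For $d=4,5,6$ this is handled by the explicit geometry: every boundary stratum is described in Lemma~\ref{lem:quinticgit}, Propositions~\ref{prop:lctquartic}, \ref{prop:lctsextic}, and the wall-crossing analysis, and one checks that the CM class restricts amply to each stratum and its closure — equivalently, that no positive-dimensional family of pairwise-isomorphic polarized log Fano pairs occurs. An alternative and cleaner route, which I would pursue if the stratum-by-stratum check is cumbersome, is to note that for $d\in\{4,6\}$ the Hodge line bundle already has ample model equal to $\oP^*_d$ with $M$ a big open subset (Theorems~\ref{thm:logcy4}, \ref{thm:sextic-Hodge}), so $\Lambda_{c,\Hodge}$ is big and semiample, and since $\Lambda_{c,0}$ is nef, the interpolation formula exhibits $\Lambda_c$ as a positive combination of a nef class and a big nef class, reducing ampleness to the finiteness of the non-ample locus, which one then clears with the wall-crossing relative ampleness; and for $d=5$ one does the same with the explicit exceptional loci $E_i^\pm$ of Table~\ref{table:quintic}, each of which is a weighted projective space (quotient) on which the restriction of the CM class is visibly ample. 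Combining the base case with the inductive propagation through the walls completes the proof.
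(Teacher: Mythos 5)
Your overall toolkit is right — nefness from \cite{CP18, Pos19}, the interpolation formula of Proposition~\ref{prop:k-cm-interpolation}, the relative ampleness from Theorem~\ref{thm:VGIT-CM-ample}, and ruling out isotrivial complete families via Nakai--Moishezon — but the architecture of your descending induction has a real gap, and the base case is only sketched.

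The flawed step is the passage from $\Lambda_{c_i+\epsilon}$ ample to $\Lambda_{c_i}$ ample. You write ``$\Lambda_{c_i+\epsilon,c_i}=(\phi_i^{+})^*\Lambda_{c_i}$, so $\Lambda_{c_i}$ is ample.'' This inference is invalid: what is ample is $\Lambda_{c_i+\epsilon}=\Lambda_{c_i+\epsilon,c_i+\epsilon}$, not $\Lambda_{c_i+\epsilon,c_i}$, and the latter is the pullback of $\Lambda_{c_i}$ along a projective birational morphism $\phi_i^{+}$ which (after the first wall) is typically a genuine blow-up with positive-dimensional fibers. The pullback of an ample class along such a morphism is nef, never ample, and conversely $(\phi_i^{+})^*\Lambda_{c_i}$ being a limit of ample classes on the source gives you only nefness of $\Lambda_{c_i}$ — you still must separately verify strict positivity of $\Lambda_{c_i}$ on the positive-dimensional subvarieties lying in the image of the exceptional locus, which is exactly where the pullback degenerates. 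In other words, ampleness propagates easily along a blow-up in the ascending direction ($(\phi_i^+)^*(\text{ample})+\epsilon\cdot(\phi_i^+\text{-ample})$ is ample) but not in the descending direction. This is precisely why the paper runs the induction from small $c$ upward: the base case is the GIT chamber where Proposition~\ref{prop:P^2-paultian} computes $\Lambda_c$ explicitly as the descent of a positive multiple of $\cO_{\bfP_d^{\rm ss}}(1)$, so ampleness is immediate; then for $d=4,6$ there is only one wall, one application of Theorem~\ref{thm:VGIT-CM-ample} crosses it, and interpolation with the nef Hodge class (Theorems~\ref{thm:logcy4}, \ref{thm:sextic-Hodge}) covers the final chamber.

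The second gap is the base case for $d=5$. You correctly identify that the crux is ruling out positive-dimensional isotrivial families, and you point to \cite{CP18,Pos19}, but the actual argument requires more than ``nef plus strictly positive on moving curves'': Nakai--Moishezon demands positivity of all top self-intersections on all subvarieties. The paper handles this in two pieces: (i) it shows $\Lambda_c$ restricted to the strictly K-polystable locus $S_c$ (which is the rational curve $\oSigma_{c,6}$, possibly together with an isolated point) is ample — using the fact that $\oSigma_{c,6}$ is unchanged by the wall crossings, so positivity there is inherited from the ample GIT chamber; (ii) for subvarieties meeting the K-stable locus it pulls back to a smooth proper DM stack via \cite{ER17}, then to a finite scheme cover via \cite{LMB00}, and only there applies the bigness part of \cite{Pos19}/\cite[Theorem 1.2(c)]{CP18}. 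This uniformization step is needed because the Codogni--Patakfalvi/Posva bigness is stated for families over a base scheme, not over the moduli space itself, and because it requires maximal variation — your proposal glosses over both points. So the plan is salvageable, but the sentence ``so $\Lambda_{c_i}$ is ample'' in the inductive step needs to be replaced by a genuine Nakai--Moishezon argument on $\oP_{d,c_i}^{\K}$, at which point the induction buys you little over the paper's direct attack.
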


\begin{proof}
We first treat the cases when $d=4$ or $6$. When $c<\frac{3}{2d}$, Theorem \ref{thm:firstwallbefore} implies that $\ocP_{d,c}^{\K}\cong \ocP_{d}^{\GIT}$. Moreover, Proposition \ref{prop:P^2-paultian} implies that $\Lambda_c$ is the descent of $3(3-cd)^2c\cO_{\bfP_d^{\rm ss}}(1)$. Hence $\Lambda_c$ is ample when $c<\frac{3}{2d}$. By Theorem \ref{thm:firstwallon} we know that $\phi^{-}:\oP_{d,3/(2d)}^{\K}\to\oP_d^{\GIT}$ is an isomorphism. Hence Theorem \ref{thm:VGIT-CM-ample} implies that $\Lambda_{3/(2d)}$ is the $\phi^+$-pull back of the descent of $3(3-c_1 d)^2 c_1\cO_{\bfP_d^{\rm ss}}(1)$ with $c_1=3/(2d)$. Hence $\Lambda_{3/(2d)}$ is ample.
By Theorem \ref{thm:VGIT-CM-ample} we know that $\Lambda_{3/(2d)+\epsilon}$ is ample for $0< \epsilon\ll 1$. We know that $\ocP_{d,c}^{\K}$ is independent of the choice of $c\in (\frac{3}{2d},\frac{3}{d})$ by Theorem \ref{thm:quartsext}.
Hence the ampleness of $\Lambda_c$ for $\frac{3}{2d}<c<\frac{3}{d}$ follows from the ampleness of $\Lambda_{3/(2d)+\epsilon}$, the nefness of $\Lambda_{c,\Hodge}$ (see Theorems \ref{thm:logcy4} and \ref{thm:sextic-Hodge}), and the interpolation formula \eqref{eq:k-cm-interpolation}.

Next we consider the case when $d=5$.
For simplicity we omit $d$ in the subscript of K-moduli stacks and spaces.  Similar to the above arguments, we know that $\Lambda_c$ is ample for $c\leq \frac{3}{7}+\epsilon$ with $0<\epsilon\ll 1$. Hence we will assume $c\in (\frac{3}{7},\frac{3}{5})$ in the rest of the proof. By \cite[Theorem 1.13]{CP18}, we know that $\Lambda_{c}$ is nef on $\oP_{c}^{\K}$. Denote by $U_c$ the Zariski open subset of $\oP_{c}^{\K}$ which parametrizes K-stable pairs. Denote by $S_c:=\oP_{c}^{\K}\setminus U_c$.
Then by the Nakai-Moishezon criterion it suffices to show the following statements:
\begin{enumerate}[label=(\roman*)]
    \item $\Lambda_{c}|_{S_c}$ is ample.
    \item For any generically finite morphism $f: V\to \oP_{c}^{\K}$ from a normal proper variety $V$, the pull-back $f^*\Lambda_c$ is big on $V$ whenever $f(V)$ intersects $U_c$. 
\end{enumerate}

For (i), from the description of $\oP_{c}^{\K}$ in Sections \ref{sec:2ndwallquintics} and \ref{sec:quintics} we know that $S_c$ is either $\oSigma_{c,6}$ (when $c$ does not lie on a wall) or $\oSigma_{c,6}$ union an isolated point as the exceptional locus of a wall crossing (when $c$ lies on a wall). Recall from Section \ref{sec:quintics} that $\oSigma_{c,6}$ is a rational curve precisely parametrizing $2D_6$-curves $\{(\bP^2,(z(xy-z^2)(xy-az^2)=0))\}_{a\neq 1}$ and $\{(\bP(1,1,4), (xy(z^2-x^4y^4)=0))\}$.
In particular, $\oSigma_{c,6}$ is not changed under every wall crossing for $c\in (\frac{3}{7},\frac{3}{5})$. Hence by the nefness of $\Lambda_c$, the ampleness of $\Lambda_{3/7+\epsilon}$, and the interpolation formula \eqref{eq:k-cm-interpolation}, we know that $\Lambda_c|_{S_c}$ is ample for any $c\in (\frac{3}{7},\frac{3}{5})$. 

The proof of (ii) is more involved. By Theorem \ref{thm:bddtestK}(4) we know that $U_c$ also parametrizes uniformly K-stable pairs in $\oP_{c}^{\K}$.
Let $\cU_c$ be the preimage of $U_c$ as a saturated open substack of $\ocP_c^{\K}$. By \cite{ER17}, there exists a birational morphism $\widetilde{\cP}_c\to \ocP_c^{\K}$ from a smooth proper Deligne-Mumford stack $\widetilde{\cP}_c$ that is an isomorphism over $\cU_c$. By \cite[Chapter 16]{LMB00}, there exists a finite surjective morphism $\widetilde{P}_c'\to  \widetilde{\cP}_c$ from a normal proper variety $\widetilde{P}_c'$. Let $\widetilde{V}$ be a projective resolution of the main component of $V\times_{\oP_{c}^{\K}} \widetilde{P}_c'$. Denote by $\tau:\widetilde{V}\to V$ and $\sigma:\widetilde{V}\to \ocP_c^{\K}$. Then from the above construction we see that $\tau$ is generically finite, and $\tau^*(f^*\Lambda_c)=\sigma^*\lambda_c$. Since the $\sigma$-pull-back of the universal family over $\widetilde{V}$ has K-semistable fibers where a general fiber is uniformly K-stable, and is of maximal variation, by \cite{Pos19} as a generalization of \cite[Theorem 1.2(c)]{CP18} to log Fano pairs we know that $\sigma^*\lambda_c$ is nef and big on $\widetilde{V}$. This implies that $f^*\Lambda_c$ is big on $V$. The proof is finished.
\end{proof}

We expect $\Lambda_c$ to be ample in any degree which is a special case of the projectivity part of the Fano K-moduli conjecture (see e.g. \cite[Conjecture 1.1(c)]{CP18} and \cite[(VI) on page 611]{BX18}). During the review process, this expectation was proved in \cite{XZ19, LXZ21} as part of the proof of the Fano K-moduli conjecture (see Remark \ref{rem:ps}). 
 
\begin{thm}[cf. \cite{XZ20, LXZ21}]\label{thm:cm-ample}
The CM $\bQ$-line bundle $\Lambda_c$ on $\oP_{d,c}^{\K}$ is ample for any $c\in (0,\frac{3}{d})$ and any $d\geq 3$.\footnote{If $d=2$ and $c\in (0, \frac{3}{4}]$, then the theorem is also true for trivial reasons as $\oP_{d,c}^{\K}$ is a point (see Example \ref{expl:deg123}).}
\end{thm}

\subsection{Birational contractions along wall crossings}\label{sec:contraction}

As we saw in Sections \ref{sec:lowdegree} and \ref{sec:quintics}, the birational map  $\oP_{d,c'}^{\K} \dashrightarrow \oP_{d,c}^{\K}$ is always a birational contraction for $d\in\{4,5,6\}$ and $0<c<c'<\frac{3}{d}$. It is natural to ask whether this holds true for all degrees.

\begin{question}\label{question:contraction1}
Is $\oP_{d,c'}^{\K} \dashrightarrow \oP_{d,c}^{\K}$ a birational contraction for all $0 < c < c'< \frac{3}{d}$ and all $d \ge 4$? \end{question}

As the CM line bundles are always ample by Theorem \ref{thm:cm-ample}, an affirmative answer to the above question would imply that the wall crossing of K-moduli spaces have similar behavior to the Hassett-Keel program for Deligne-Mumford moduli spaces $\oM_g$. 

\begin{thm}\label{thm:hassetkeel}
 Assume Question \ref{question:contraction1} is true for some $d\geq 4$. Then for any $c\in (0,\frac{3}{d})$ and a sufficiently divisible $l\in\bZ_{>0}$, we have 
 \[
 \oP_{d,c}^{\K}= \Proj\bigoplus_{j=0}^\infty H^0\big(\oP_{d,\frac{3}{d}-\epsilon}^{\K},  lj(\Lambda_{\frac{3}{d}-\epsilon,\Hodge}+\tfrac{3-cd}{27cd}\Lambda_{\frac{3}{d}-\epsilon, 0})\big).
 \]
\end{thm}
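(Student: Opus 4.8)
The plan is to deduce Theorem \ref{thm:hassetkeel} as a formal consequence of the variation-of-GIT picture established in Section \ref{sec:construction}, combined with the birational-contraction hypothesis and the ampleness of CM line bundles. The key point is to recognize the $\bQ$-line bundle appearing inside the $\Proj$ as a rescaling of the CM $\bQ$-line bundle $\Lambda_{\frac{3}{d}-\epsilon,c}$ transported to the last K-moduli space $\oP_{d,\frac{3}{d}-\epsilon}^{\K}$. Indeed, by the interpolation formula \eqref{eq:k-cm-interpolation} applied with $n=2$, $r=d/3$, and $c'=c$, one has (up to a positive rational multiple and after clearing the factor $(1-cd/3)^{-2}(-K_X)^2 = 9(1-cd/3)^{-2}$) that $\Lambda_{\frac{3}{d}-\epsilon,c}$ is proportional to $\Lambda_{\frac{3}{d}-\epsilon,\Hodge} + \frac{3-cd}{27cd}\Lambda_{\frac{3}{d}-\epsilon,0}$; I would carry out this elementary rearrangement of \eqref{eq:k-cm-interpolation} carefully as the first step, fixing signs and the normalization constant, so that the divisor class inside the $\Proj$ is identified with $\Lambda_{\frac{3}{d}-\epsilon,c}$ up to scaling and a sufficiently divisible $l$.

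Once that identification is in place, the strategy is: first, since $\oP_{d,\frac{3}{d}-\epsilon}^{\K}\dashrightarrow\oP_{d,c}^{\K}$ is a birational contraction (Question \ref{question:contraction1}), there is a common resolution $p:\widetilde{P}\to \oP_{d,\frac{3}{d}-\epsilon}^{\K}$, $q:\widetilde{P}\to\oP_{d,c}^{\K}$ with $p_*\mathcal{O}_{\widetilde{P}}=\mathcal{O}$ and such that every $p$-exceptional divisor is $q$-exceptional (or better, effective over the target), so that the section ring of $\Lambda_{\frac{3}{d}-\epsilon,c}$ on $\oP_{d,\frac{3}{d}-\epsilon}^{\K}$ agrees with that of $q^*\Lambda_{d,c}$ on $\widetilde{P}$; second, by Proposition \ref{prop:k-cm-interpolation} the CM $\bQ$-line bundle $\Lambda_{\frac{3}{d}-\epsilon,c}$ descends and its pushforward/pullback behaves functorially along the wall-crossing morphisms, so $q^*\Lambda_{d,c}$ is (a multiple of) the pullback of the ample $\bQ$-line bundle $\Lambda_{d,c}$ on $\oP_{d,c}^{\K}$; third, invoke Conjecture \ref{conj:cm-ample} to conclude $\Lambda_{d,c}$ is ample on $\oP_{d,c}^{\K}$, hence $\oP_{d,c}^{\K}=\Proj\bigoplus_j H^0(\oP_{d,c}^{\K},\Lambda_{d,c}^{\otimes lj})$; fourth, combine with the birational-contraction equality of section rings to get $\oP_{d,c}^{\K}=\Proj\bigoplus_j H^0(\oP_{d,\frac{3}{d}-\epsilon}^{\K},\Lambda_{\frac{3}{d}-\epsilon,c}^{\otimes lj})$, which after the scaling identification of the first step is exactly the claimed formula. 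The intermediate wall-crossing morphisms $\phi_i^\pm$ and Theorem \ref{thm:VGIT-CM-ample} guarantee that $\Lambda_{c_i\pm\epsilon,c}$ matches up across walls, so one can propagate the identification from $\frac{3}{d}-\epsilon$ down to $c$ wall by wall if a single common resolution is inconvenient.

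The main obstacle I anticipate is the second step: making precise that a birational contraction between these K-moduli spaces induces an \emph{equality} (not merely an inclusion) of the relevant section rings, i.e. that the $\bQ$-line bundle $\Lambda_{\frac{3}{d}-\epsilon,c}$ on the source is the proper transform / pullback of the ample $\Lambda_{d,c}$ on the target with no extra effective defect contributing sections. This requires knowing not just that the rational map is a birational contraction in the sense of having no exceptional divisors extracted, but that the CM $\bQ$-line bundle is exactly compatible — which should follow from the VGIT description and the relative ampleness in Theorem \ref{thm:VGIT-CM-ample}, since at each wall the morphisms $\phi_i^{\pm}$ pull back $\Lambda_{c_i,c}$ to $\Lambda_{c_i\pm\epsilon,c}$ by the last assertion of that theorem, and a contraction morphism $\phi$ with $\phi_*\mathcal{O}=\mathcal{O}$ satisfies $H^0(\Lambda)=H^0(\phi^*\Lambda)$. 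I would therefore structure the argument to avoid a global common resolution altogether, instead chaining the identities $H^0(\oP^{\K}_{c_{i+1}-\epsilon},\Lambda_{c_{i+1}-\epsilon,c}^{\otimes m}) = H^0(\oP^{\K}_{c_i},\Lambda_{c_i,c}^{\otimes m}) = H^0(\oP^{\K}_{c_i+\epsilon},\Lambda_{c_i+\epsilon,c}^{\otimes m})$ using the projection formula for the projective morphisms $\phi_i^{\pm}$ (valid once we know $\Lambda_{c_i\pm\epsilon,c}=(\phi_i^{\pm})^*\Lambda_{c_i,c}$ and $(\phi_i^\pm)_*\mathcal{O}=\mathcal{O}$, the latter holding because K-moduli spaces are normal by Proposition \ref{prop:Zplanecurve}), down to $i$ with $c_i \leq c < c_{i+1}$ where finally $\Lambda_{c,c}=\Lambda_c$ is ample by Conjecture \ref{conj:cm-ample}. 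A secondary, more bookkeeping-level obstacle is pinning down the precise rational constant $\frac{3-cd}{27cd}$ from \eqref{eq:k-cm-interpolation}; this is a direct substitution but must be done with care since $\Lambda$'s in the statement are the descended $\bQ$-line bundles $\Lambda_{c,0}$ and $\Lambda_{c,\Hodge}$ rather than the unnormalized $\lambda$'s.
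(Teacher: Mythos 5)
Your overall plan---identify the $\Proj$ target with the CM $\bQ$-line bundle $\Lambda_{\frac{3}{d}-\epsilon,c}$ via the interpolation formula, then use birational contraction plus ampleness of $\Lambda_c$ to conclude---matches the paper's proof at the level of structure. But the key mechanism you propose in the second step does not work, and the paper's proof does something genuinely different there.

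The problem is with the identity $\Lambda_{c_i\pm\epsilon,c}=(\phi_i^{\pm})^*\Lambda_{c_i,c}$ that you want to chain across walls. For $c\neq c_i$ the $\bQ$-line bundle $\Lambda_{c_i,c}$ does not exist on $\oP^{\K}_{d,c_i}$: Proposition \ref{prop:k-cm-interpolation} only guarantees descent of $\lambda_{c_i,c'}$ to the good moduli space under the hypothesis $Z_{c_i}^{\red}=Z_{c_i\pm\epsilon}^{\red}$, which fails precisely when $c_i$ is a wall. Concretely, a new K-polystable pair $(X,c_iD)$ appearing on the wall has a $1$-PS $\sigma\subset\Aut(X,D)$ with $\Fut(\cX_\sigma,c'\cD_\sigma;\cL_\sigma)$ linear in $c'$ and vanishing only at $c'=c_i$; by Proposition \ref{prop:Fut=CMwt} this nonzero $\sigma$-weight on $\lambda_{c_i,c'}|_{[z_0]}$ is exactly the obstruction to descent. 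Theorem \ref{thm:VGIT-CM-ample} only gives you $\Lambda_{c_i\pm\epsilon,c_i}=(\phi_i^\pm)^*\Lambda_{c_i}$, i.e. the identity at $c'=c_i$, not at the eventual coefficient $c$. So the projection-formula chaining breaks at each wall and the equality of section rings you want is not available in this form.

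The paper's proof avoids this by never forming $\Lambda_{c_i,c}$: it works on a common resolution $\widetilde{P}_d\to\oP^{\K}_{d,c_i\pm\epsilon}$ and establishes only a one-sided inequality. Starting from the equality $(\psi_i^+)^*\Lambda_{c_i+\epsilon,c_i}=(\psi_i^-)^*\Lambda_{c_i-\epsilon,c_i}$ (valid since $c_i\pm\epsilon$ are off the wall), one observes that $(\psi_i^+)^*\Lambda_{c_i+\epsilon}-(\psi_i^-)^*\Lambda_{c_i-\epsilon,c_i+\epsilon}$ is $\psi_i^-$-nef (ampleness of $\Lambda_{c_i+\epsilon}$) and $\psi_i^-$-exceptional (the birational-contraction hypothesis ensures $\psi_i^+$-exceptional divisors are $\psi_i^-$-exceptional), so the negativity lemma forces an inequality at $c'=c_i+\epsilon$; linearity in $c'$ then reverses that inequality for $c'\leq c_i$, and reverse induction on walls yields $p^*\Lambda_{\frac{3}{d}-\epsilon,c}-q^*\Lambda_c\geq 0$. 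This effective (and automatically $q$-exceptional) discrepancy is exactly what the definition of birational contraction needs to identify $\oP^{\K}_{d,c}$ as the ample model of $\Lambda_{\frac{3}{d}-\epsilon,c}$. If you want to repair your proposal, replace the chained equalities of $H^0$'s by this negativity-lemma inequality on a common resolution; the inequality, not the equality, is what is true and what suffices.
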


\begin{proof}
By Proposition \ref{prop:k-cm-interpolation} we know that $\Lambda_{\frac{3}{d}-\epsilon, c}$ is a positive multiple of $\Lambda_{\frac{3}{d}-\epsilon,\Hodge}+\tfrac{3-cd}{27cd}\Lambda_{\frac{3}{d}-\epsilon, 0}$. 
Denote by $\varphi:\oP_{d,\frac{3}{d}-\epsilon}^{\K}\dashrightarrow \oP_{d,c}^{\K}$ the birational contraction. By the functoriality of CM line bundles, we know that $\Lambda_c=\varphi_*\Lambda_{\frac{3}{d}-\epsilon, c}$ as cycles. Since $\Lambda_c$ is ample on $\oP_{d,c}^{\K}$ by Theorem \ref{thm:cm-ample}, it suffices to show that for a common resolution $(p,q): \widetilde{P}_d \to \oP_{d,\frac{3}{d}-\epsilon}^{\K}\times \oP_{d,c}^{\K}$, we have $p^*\Lambda_{\frac{3}{d}-\epsilon, c}-q^*\Lambda_c\geq 0$ (see \cite[Definition 2.3 and Remark 2.4]{KKL16}). 

Let $0=c_0<c_1<\cdots<c_k=\frac{3}{d}$ be the walls of K-moduli spaces $\oP_{d,\bullet}^{\K}$. By passing to a higher birational model we may assume that $\widetilde{P}_d$ is a resolution of $\oP_{d,c_i\pm\epsilon}^{\K}$ for any $1\leq i\leq k-1$ with birational morphisms $\psi_i:\widetilde{P}_d\to \oP_{d,c_i}^{\K}$ and $\psi_i^{\pm}: \widetilde{P}_d\to \oP_{d,c_i\pm\epsilon}^{\K}$. Since $\Lambda_{c_i+\epsilon}$ is ample by assumption, we know that 
$(\psi_{i}^+)^*\Lambda_{c_i+\epsilon}-(\psi_{i}^{-})^*\Lambda_{c_i-\epsilon,c_i+\epsilon}$ is $\psi_i^{-}$-nef and $\psi_i^{-}$-exceptional. Hence by negativity lemma we know that $(\psi_{i}^+)^*\Lambda_{c_i+\epsilon}\leq(\psi_{i}^{-})^*\Lambda_{c_i-\epsilon,c_i+\epsilon}$ is effective. By Theorem \ref{thm:VGIT-CM-ample} we know that $(\psi_i^+)^*\Lambda_{c_i+\epsilon,c_i}=(\psi_i^-)^*\Lambda_{c_i-\epsilon,c_i}$. Since a positive rescaling of $\Lambda_{c,c'}$ is linear in $c'$, we know that $(\psi_{i}^+)^*\Lambda_{c_i+\epsilon,c}\geq (\psi_{i}^{-})^*\Lambda_{c_i-\epsilon,c}$ whenever $c_i\geq c$. Hence by the reverse induction on $i$ for $c_i\in [c,\frac{3}{d})$ we conclude that $p^*\Lambda_{\frac{3}{d}-\epsilon, c}-q^*\Lambda_c\geq 0$. The proof is finished. 
\end{proof}

The goal of this section is to verify Question \ref{question:contraction1} when either $d$ is divisible by $3$ or $d$ is small. 

\begin{thm}\label{thm:contraction}
Question \ref{question:contraction1} is true when either $3 \mid d$, or $3 \nmid d$ and $d < 13$. 
\end{thm}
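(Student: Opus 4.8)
The plan is to recall the criterion for a birational map to be a birational contraction and then verify it at each wall crossing. Recall from \cite[Definition 2.3]{KKL16} or standard terminology that a birational map $g: Y \dashrightarrow Y'$ between normal proper varieties is a \emph{birational contraction} if $g^{-1}$ contracts no divisor, equivalently if for a common resolution $(p,q): \widetilde{Y} \to Y \times Y'$, every $q$-exceptional divisor is also $p$-exceptional. Since the composition of birational contractions is a birational contraction, and since the birational map $\oP_{d,c'}^{\K} \dashrightarrow \oP_{d,c}^{\K}$ factors through the finitely many walls $c_0 < c_1 < \cdots < c_k$ (Proposition \ref{prop:k-wall-finite}), it suffices to show that each elementary wall crossing $\oP_{d,c_i+\epsilon}^{\K} \dashrightarrow \oP_{d,c_i-\epsilon}^{\K}$ is a birational contraction, i.e. that the morphism $\phi_i^-: \oP_{d,c_i-\epsilon}^{\K} \to \oP_{d,c_i}^{\K}$ does \emph{not} contract any divisor, or more precisely that $\varrho_i := (\phi_i^-)^{-1} \circ \phi_i^+$ does not extract a divisor going from $c_i+\epsilon$ to $c_i-\epsilon$. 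Equivalently, I want to show that the exceptional locus $E_i^-$ of $\phi_i^-$ has codimension $\geq 2$ in $\oP_{d,c_i-\epsilon}^{\K}$.

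First I would set up the local VGIT picture from Theorem \ref{thm:wallcrossingchart}: near any closed point $[z_0] = [(X_0, c_i D_0)]$ lying on the wall, the wall crossing is étale-locally modeled on a VGIT wall crossing $U_W^-\sslash G_{z_0} \to U_W \sslash G_{z_0} \leftarrow U_W^+\sslash G_{z_0}$ where $G_{z_0} = \Aut(X_0, D_0)$ and $U_W$ is a smooth affine Luna slice. By \cite[Theorem 0.2.5]{dolgachevhu} (or \cite[Corollary 1.13]{thaddeus}), the exceptional loci $E_i^\pm$ of the two contractions satisfy
\[
\dim E_i^- + \dim E_i^+ + 1 = \dim \oP_{d,c_i}^{\K},
\]
provided both are nonempty. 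So $E_i^-$ has codimension $\geq 2$ precisely when $\dim E_i^+ \geq 1$, i.e. when the "$+$ side" of the wall extracts something of positive dimension. Thus the question reduces to showing: at every wall $c_i$, the new locus appearing on the $c_i+\epsilon$ side has dimension $\geq 1$ (or $E_i^-$ is empty, which would make $\phi_i^-$ an isomorphism near $[z_0]$ automatically). The point is that a new boundary stratum in $\oP_{d,c_i+\epsilon}^{\K}$ consists of pairs $(X, (c_i+\epsilon)D)$ where $X$ is a singular Manetti surface with some positive-dimensional automorphism group $\Aut_0(X)$, and $D$ varies in a family of curves on $X$ modulo $\Aut(X)$; the dimension of this new stratum is at least $\dim\{\text{curves}\} - \dim\Aut(X)$.

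The key input that makes the divisibility and small-degree hypotheses work is the index bound (Theorem \ref{thm:localindex}): as $c$ increases, the Gorenstein indices of singular points of $X$ appearing in $\ocP_{d,c}^{\K}$ are bounded by $\min\{\lfloor \frac{3}{3-cd}\rfloor, d\}$ (or $\frac{2d}{3}$ when $3 \mid d$), so only finitely many surface types $X$ can occur, and for each one the deformation space of pairs $(X, D)$ can be analyzed. When $3 \mid d$, one has $\frac{d}{3}K_X + D \sim 0$, which substantially constrains the curve $D$ (as in the proof of Proposition \ref{prop:Zplanecurve}(2) and the wall crossings of Section \ref{sec:higherdegree}); in particular one can run an argument parallel to Theorem \ref{thm:firstwalls} and the quintic computations of Section \ref{sec:quintics} to check that every newly-appearing stratum has positive dimension (for instance, degenerations to cones over the same one-dimensional data as in the $\oSigma_6$-type strata, or loci of hyperelliptic-type curves on weighted-hypersurface surfaces). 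When $3 \nmid d$ and $d \leq 12$, the index bound forces $X$ to be one of a short explicit list of Manetti surfaces, and for each such $X$ (and each wall value $c_i$) a direct dimension count of $|{-}\tfrac{d}{3}K_X|$ modulo $\Aut(X)$ shows the extracted locus on the $+$ side has dimension $\geq 1$; the degrees $4,5,6$ cases are already done in Sections \ref{sec:lowdegree} and \ref{sec:quintics}, so only $d \in \{7,8,10,11\}$ need new bookkeeping, with $d = 7$ handled by Theorem \ref{thm:firstwalls}.

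The main obstacle is the case analysis for $3 \nmid d$, $7 \leq d \leq 12$: one must enumerate all possible singular surfaces $X$ occurring at each wall via Theorem \ref{thm:localindex}, and for each pair $(X, \text{wall value } c_i)$ verify that the space of boundary pairs on the $c_i + \epsilon$ side is at least one-dimensional after modding out automorphisms. This is not conceptually hard but requires care: one has to rule out the degenerate situation where the $+$ side extracts only an isolated point (which would force $\phi_i^+$ to be a divisorial contraction and break the birational-contraction property), and the way to do so is exactly the strategy used for the quintic walls --- identify an explicit positive-dimensional family (typically curves of the shape $(w = g(x,y))$ or $(z^2 + \text{monomial} + x^{k}g(x,y))$ on a weighted hypersurface/weighted projective surface, with $g$ a varying binary form) whose generic member is K-stable for $c$ slightly above $c_i$, by combining the valuative criterion (Theorem \ref{thm:valuative}), the interpolation result (Proposition \ref{prop:k-interpolation}), and properness (Theorem \ref{thm:compactness}). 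For $3 \mid d$ the argument is cleaner because the relation $\frac{d}{3}K_X + D \sim 0$ pins down $D$ tightly, reducing matters to a uniform dimension estimate valid for all such $d$.
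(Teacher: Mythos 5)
Your strategy---reduce to showing that no wall-crossing morphism $\phi_i^-$ contracts a divisor, via the index bound and a dimension count at each wall---is in the right spirit, but it is genuinely different from, and weaker than, the paper's argument. The paper first proves Proposition~\ref{prop:contractionequiv}, which shows Question~\ref{question:contraction1} is \emph{equivalent} to Question~\ref{question:contraction}: for every Manetti surface $X$ with $\dim\Aut(X)=9$ that appears in some $\oP_{d,c}^{\K}$, is $\lct(X;D)\geq\tfrac{3}{d}$ for a general $D\in|-\tfrac{d}{3}K_X|$? The ``$\Rightarrow$'' direction uses Lemma~\ref{lem:autmanetti} ($h^0(X,-\tfrac{d}{3}K_X)=h^0(\bP^2,\cO(d))$) to see that the locus of curves on a fixed $X$ with $\dim\Aut(X)=9$ is exactly a divisor, which gets contracted by some $\phi_i^-$ precisely when the general curve on $X$ becomes K-unstable before $c=\tfrac{3}{d}$, i.e.\ when $\lct(X;D)<\tfrac{3}{d}$. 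This reformulation is what your proposal is missing. It converts the problem into a uniform lct computation carried out once per surface (Lemmas~\ref{lemma:3d}, \ref{lemma:P114}, \ref{lemma:P1425}, Proposition~\ref{prop:d=7}), rather than the wall-by-wall dimension bookkeeping you envision.

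Two concrete gaps in your version. First, the Thaddeus/Dolgachev--Hu identity $\dim E_i^-+\dim E_i^+ +1=\dim\oP_{d,c_i}^{\K}$ is only valid when the polystable central pair at the wall has connected automorphism group $\bG_m$ (and component by component on the flipping locus); you invoke it for every wall without establishing this, whereas the paper never needs it. Second, your estimate ``$\dim E_i^+\geq\dim\{\text{curves}\}-\dim\Aut(X)$'' is not correct in the form stated: the $+\epsilon$-side exceptional locus consists of pairs specially degenerating to the fixed polystable central pair, which is a much smaller stratum than all curves on $X$ modulo $\Aut(X)$; getting a lower bound of $1$ on it would require the kind of explicit degeneration analysis carried out for quintics (Sections~\ref{sec:2ndwallquintics}--\ref{sec:quintics}), and would need to be redone for every surface and every wall in the range $7\leq d\leq 12$, $3\nmid d$. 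You also misattribute the $d=7$ case: the paper handles it via Proposition~\ref{prop:d=7}, which shows $X_{26}$ and $\bP(1,4,25)$ simply never occur for $d=7$ (by combining the lct bound $\leq\tfrac{1}{4}$ from Lemma~\ref{lemma:P1425} with the index bound from Theorem~\ref{thm:localindex}), not via Theorem~\ref{thm:firstwalls}.
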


First, we rephrase the question in a form that is easier to verify. 

\begin{question}\label{question:contraction}
Suppose $X$ is a Manetti surface with the following properties: 

\begin{enumerate}
    \item $\dim \Aut(X) = \dim \Aut(\PP^2) +1 = 9$, and
    \item $(X, cD)$ is K-stable for some $0 < c < \frac{3}{d}$ and a general $D \in \lvert - \frac{d}{3} K_X \rvert$ (in particular, $C$ has local indices $\le d$ when $3 \nmid d$ or $\le \frac{2d}{3}$ when $3 \mid d$).  
\end{enumerate}

Then, is $\lct(X; D) \geq \frac{3}{d}$? \end{question}

\begin{lem}\label{lem:autmanetti}
Let $X$ be a singular Manetti surface. Then we have $h^0(X,\cO_X(-\frac{d}{3}K_X))=h^0(\bP^2,\cO(d))$ for any $d\in\bZ_{>0}$ and $\dim\Aut(X)>\dim\Aut(\bP^2)=8$. 
\end{lem}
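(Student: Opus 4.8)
The statement asserts two things about a singular Manetti surface $X$: first, that $h^0(X,\cO_X(-\tfrac{d}{3}K_X))=h^0(\bP^2,\cO(d))=\binom{d+2}{2}$ for all $d\in\bZ_{>0}$; and second, that $\dim\Aut(X)>8$. The plan is to deduce both from the existence of a $\bQ$-Gorenstein smoothing of $X$ to $\bP^2$ together with deformation-invariance of the relevant cohomology. For the first assertion, I would set up a $\bQ$-Gorenstein smoothing $\pi\colon \cX\to B$ over a smooth pointed curve $(0\in B)$ with $\cX_0\cong X$ and general fiber $\cX_t\cong \bP^2$, which exists by the definition of Manetti surface (Definition \ref{def:manetti}) and the unobstructedness in Proposition \ref{prop:manetti}. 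Since $dK_{\cX/B}$ is $\bQ$-Cartier and $-dK_{\cX/B}/3$ restricts to an ample divisor on each fiber (noting $3\mid d$ is not assumed, so one works with $-\tfrac{d}{3}K_{\cX/B}$ as a $\bQ$-divisor that is nonetheless $\bQ$-Cartier and Weil; when $3\mid d$ it is Cartier, and in general one can pass to $-dK_{\cX/B}$ and take an appropriate reflexive power), I would invoke Kawamata–Viehweg vanishing on the fibers: $H^i(\cX_t,\cO_{\cX_t}(-\tfrac{d}{3}K_{\cX_t}))=0$ for $i>0$ because $-\tfrac{d}{3}K-K_{\cX_t}=-(\tfrac{d}{3}+1)K_{\cX_t}$ is ample and $\cX_t$ is klt. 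Then cohomology-and-base-change (Grauert's theorem) forces $h^0$ to be constant in the family, so $h^0(X,\cO_X(-\tfrac{d}{3}K_X))=h^0(\bP^2,\cO_{\bP^2}(-\tfrac{d}{3}K_{\bP^2}))=h^0(\bP^2,\cO(d))$.

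For the second assertion, the cleanest route is via the Euler characteristic of the tangent sheaf and Riemann–Roch, or directly via upper-semicontinuity applied to the automorphism scheme. Concretely, I would consider the relative automorphism group scheme $\underline{\Aut}(\cX/B)\to B$, or more precisely use that $\dim\Aut(\cX_0)\geq \dim\Aut(\cX_t)$ would be the wrong inequality — automorphisms can only jump \emph{up} in special fibers by upper-semicontinuity of fiber dimension of the automorphism scheme (since $\Aut$ of a proper variety is represented by a group scheme of finite type, and $\dim$ of fibers of a morphism is upper-semicontinuous on the source). Hence $\dim\Aut(X)=\dim\Aut(\cX_0)\geq \dim\Aut(\bP^2)=8$. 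To get strict inequality, I would argue that $X$, being singular, cannot have $\Aut(X)$ of dimension exactly $8$: if $\dim\Aut(X)=8$ then $\Aut^0(X)$ would be an $8$-dimensional connected algebraic group acting on the singular surface $X$; one then runs the classification, since Manetti surfaces with large automorphism groups are very restricted (they are $\bQ$-Gorenstein deformations of $\bP(a^2,b^2,c^2)$ with $a^2+b^2+c^2=3abc$, by Proposition \ref{prop:manetti}), and the weighted projective planes $\bP(a^2,b^2,c^2)$ with $(a,b,c)\neq(1,1,1)$ have $\dim\Aut=9$ or more (the torus plus the large graded pieces), while any further $\bQ$-Gorenstein deformation that remains singular retains a $\bG_m$ or larger subgroup pushing the dimension strictly above $8$. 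Alternatively and more robustly, one computes $\chi(X,T_X)$ (or the orbifold tangent sheaf) via Riemann–Roch and deformation theory: $\dim\Aut(X)-\dim\mathrm{Def}(X)$ is deformation-invariant, equals $\dim\Aut(\bP^2)-\dim\mathrm{Def}(\bP^2)=8-0=8$, and since $X$ is a nontrivial $\bQ$-Gorenstein degeneration it has $\dim\mathrm{Def}(X)>0$ (the smoothing direction survives and there are typically extra directions), forcing $\dim\Aut(X)>8$.

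\textbf{Main obstacle.} I expect the genuinely delicate point to be the strict inequality $\dim\Aut(X)>8$, rather than the cohomological count, which is essentially a formal consequence of vanishing and base change. The subtlety is that ``$X$ is singular'' must be leveraged correctly: a priori a singular surface could have a small automorphism group, so the argument must use the specific structure of Manetti surfaces. I would make this precise either (i) by the classification: every Manetti surface is a $\bQ$-Gorenstein smoothing of some $\bP(a^2,b^2,c^2)$, and partial smoothings that remain singular still admit a $\bG_m$-action coming from the $\bG_m$ on the partial-smoothing family (the surface sits in a family with $\bG_m$-action whose general member is $\bP^2$ and special member is $\bP(a^2,b^2,c^2)$), hence $\dim\Aut\geq 8+1=9$; or (ii) by the $\chi$-invariance argument above, being careful that for klt surfaces one works with the reflexive tangent sheaf and that $H^2$ contributions vanish. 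Either way, the key input is Proposition \ref{prop:manetti} plus the observation that non-smooth points force extra infinitesimal automorphisms in the degeneration, and I would spell out the $\bG_m$-action explicitly as in the degeneration-to-the-normal-cone constructions used elsewhere in the paper (e.g. the degeneration of $(\bP^2,Q)$ to $(\bP(1,1,4),(z=0))$ in the proof of Lemma \ref{lem:Q_d'Kps}).
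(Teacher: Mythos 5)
Your cohomology argument matches the paper's essentially exactly: take a $\bQ$-Gorenstein smoothing, use klt to get correct restriction of the divisorial sheaf to the central fiber, apply Kawamata--Viehweg vanishing to kill higher cohomology, and conclude by flatness. The automorphism-dimension part, however, diverges substantially from the paper's argument. You correctly invoke upper semicontinuity of $\dim\Aut$ in the family to get $\dim\Aut(X)\geq 8$, but for the \emph{strict} inequality the paper does something quite different and self-contained: it parametrizes $\bQ$-Gorenstein smoothable K-stable pairs $(X,(\tfrac{1}{p}-\epsilon)D)$ with $D\in|-pK_X|$ general, producing an injection of the quotient $U/\Aut(X)$ into the K-moduli space $\oP_{3p,\frac{1}{p}-\epsilon}^{\K}$; since the image is a proper closed subset (the generic point parametrizes a smooth plane curve, not $X$), one gets $\dim U-\dim\Aut(X)<\dim\bfP_{3p}-8$, and the $h^0$ count from the first half (which gives $\dim U=\dim\bfP_{3p}$) then forces $\dim\Aut(X)>8$. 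The two halves of the lemma are thus tightly linked in the paper in a way they are not in your proposal.

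Both of your suggested routes to strict inequality have real gaps. Route (i) requires that every singular Manetti surface admits a $\bG_m$-action. This is plausible (and true for $\bP(a^2,b^2,c^2)$ and for $X_{26}$, which the paper verifies by hand), but a general partial $\bQ$-Gorenstein smoothing of a weighted projective plane need not have any obvious global $\bG_m$; the $\bG_m$ on the partial-smoothing family acts on the base as well, so it does not automatically induce a $\bG_m$ on the fixed fiber $X$ unless you carefully arrange that fiber to be over a $\bG_m$-fixed point of the base. Route (ii) asserts deformation-invariance of $\dim\Aut(X)-\dim\mathrm{Def}(X)$, but for klt surfaces with $T$-singularities the correct deformation functor is the $\bQ$-Gorenstein one, whose tangent space $T^1_{QG}$ is not $H^1(T_X)$, and the naive Euler-characteristic argument with $\chi(T_X)$ does not directly apply; you would need to build the analogous argument for the $\bQ$-Gorenstein cotangent complex, which is not something the paper has set up. By contrast, the paper's argument avoids both issues by leaning directly on the K-moduli machinery already constructed, at the cost of requiring the first half of the lemma as an input.
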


\begin{proof}
Let $\cX\to B$ over a smooth pointed curve $0\in B$ be a $\bQ$-Gorenstein smoothing of $X\cong \cX_0$. Let $\cL$ be the $\bQ$-Cartier Weil divisor on $\cX$ such that $3\cL\sim_{B}-dK_{\cX/B}$. Since $\cX$ has klt singularities, we know that $\cO_{\cX}(\cL)\otimes\cO_{\cX_0}\cong\cO_{\cX_0}(\cL_0)$. By Kawamata-Viehweg vanishing, we know that $H^i(\cX_0,\cL_0)=H^i(X,\cO_X(-\frac{d}{3}K_X))=0$ for any $i>0$. Hence the equation of $h^0$ follows from the flatness of $\cL$ over $B$. For the automorphism part, let $p$ be a sufficiently large positive integer such that $|-pK_X|$ is base point free. Then for a general curve $D\in |-pK_X|$ we know that $(X,\frac{1}{p}D)$ is a klt log Calabi-Yau pair. By \cite[Theorem 5.2]{LWX14} and Theorem \ref{thm:almostCYstabilize} we know that $(X,(\frac{1}{p}-\epsilon)D)$ is uniformly K-stable for $0<\epsilon\ll 1$. Let $U\subset |-pK_X|$ be the Zariski open locus parametrizing $D$ such that $(X,(\frac{1}{p}-\epsilon)D)$ is uniformly K-stable for some (or any) $0<\epsilon\ll 1$. Then $[U/\Aut(X)]$ is a Deligne-Mumford stack whose coarse moduli space $U/\Aut(X)$ admits an injection into $\oP_{3p, \frac{1}{p}-\epsilon}^{\K}$. Since a general point in the K-moduli space parametrize a smooth plane curve on $\bP^2$, we know that $\dim(U/\Aut(X))<\dim P_{3p}=\dim(\bfP_{3p}^{\rm sm}/\Aut(\bP^2))$ which implies $\dim\Aut(X)>\dim\Aut(\bP^2)$.
\end{proof}

\begin{prop}\label{prop:contractionequiv}
 Questions \ref{question:contraction1} and \ref{question:contraction} are equivalent to each other.
\end{prop}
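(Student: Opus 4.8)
\textbf{Proof proposal for Proposition \ref{prop:contractionequiv}.}

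The plan is to show the two implications separately, using the wall-crossing picture of Theorem \ref{thm:logFano-wallcrossing} together with the divisorial structure of the K-moduli spaces established in Sections \ref{sec:firstwall}--\ref{sec:quintics}. The essential point is that a birational map $\oP_{d,c'}^{\K}\dashrightarrow\oP_{d,c}^{\K}$ (for $c<c'$) fails to be a birational contraction precisely when some \emph{divisor} in $\oP_{d,c'}^{\K}$ is \emph{not} contracted but rather gets mapped birationally onto a divisor of $\oP_{d,c}^{\K}$ whose generic point lies outside the common open locus $P_d$; equivalently, a prime divisor $E\subset\oP_{d,c'}^{\K}$ whose generic point is \emph{not} in the indeterminacy locus of the map. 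Since all but finitely many walls are crossed and at each wall the local model is VGIT, the only way new divisors can appear going from $c$ to $c'$ is via the exceptional loci $E_i^{\pm}$ of the wall-crossing morphisms $\phi_i^{\pm}$.

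First I would reduce Question \ref{question:contraction1} to Question \ref{question:contraction}. Suppose Question \ref{question:contraction} holds. Fix $0<c<c'<\frac3d$ and let $c=c_{j}'<c_{j+1}'<\cdots<c_{k}'=c'$ be such that the subinterval contains all walls between $c$ and $c'$ (using Proposition \ref{prop:k-wall-finite}). By factoring the birational map through each wall, it suffices to check that each elementary wall crossing $\oP_{d,c_i+\epsilon}^{\K}\dashrightarrow\oP_{d,c_i-\epsilon}^{\K}$ is a birational contraction. By the local VGIT presentation \eqref{eq:localVGIT} and \cite[Theorem 0.2.5]{dolgachevhu} (or \cite[Corollary 1.13]{thaddeus}), near the center of each wall the map is either a flip (which is automatically a birational contraction in both directions), or one of $\phi_i^{\pm}$ is an isomorphism and the other is a weighted blow-up. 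In the flip case we are done; in the weighted-blow-up case, say $\phi_i^-$ is an isomorphism and $\phi_i^+$ a weighted blow-up with exceptional divisor $E_i^+$, we must show no prime divisor of $\oP_{d,c_i+\epsilon}^{\K}$ other than $E_i^+$ is mapped onto a divisor of $\oP_{d,c_i-\epsilon}^{\K}$ lying outside the identified loci. This is automatic since $\phi_i^-$ is an isomorphism and $\phi_i^+$ contracts only $E_i^+$. The remaining issue is the existence of new divisors \emph{not} coming from a single wall: a Manetti surface $X$ as in Question \ref{question:contraction}(1) gives rise, as in the proof of Theorem \ref{thm:firstwalls}(2) and Lemma \ref{lem:autmanetti}, to an irreducible locus of pairs $(X,cD)$ of codimension $\dim\Aut(\bP^2)-\dim\Aut(X)+\big(h^0(-\tfrac d3 K_X)-1-\dim\Aut(X)\big)$-corrected dimension; condition (1) $\dim\Aut(X)=9$ is exactly the condition that such pairs sweep out a \emph{divisor} $D_X$ in whichever K-moduli space they first become K-semistable, and condition (2) says this divisor is genuinely new (not already $\bP^2$-curves). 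If $\lct(X;D)\geq\frac3d$ for a general such $D$, then by Proposition \ref{prop:k-interpolation} the pair $(X,c''D)$ remains K-semistable for all $c\le c''<\frac3d$, so the divisor $D_X$ \emph{persists} up to the boundary $\frac3d$ and is not contracted when lowering $c$; hence lowering $c$ only contracts divisors, making the map a birational contraction. Conversely, if Question \ref{question:contraction} fails, i.e. $\lct(X;D)<\frac3d$ for some such $X$ and general $D$, then the divisor $D_X$ exists in $\oP_{d,c+\epsilon}^{\K}$ for $c$ just below $\lct(X;D)$ but the pair $(X,(\lct(X;D)+\delta)D)$ becomes K-unstable, so crossing the wall at $c=\lct(X;D)$ contracts $D_X$ — but running the wall crossing in the opposite direction (from small $c$ up to $\frac3d$) this divisor is \emph{created}, which produces a birational map that is \emph{not} a contraction in the direction $\oP_{d,\frac3d-\epsilon}^{\K}\dashrightarrow\oP_{d,\epsilon}^{\K}$, contradicting Question \ref{question:contraction1}. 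Thus the two questions are equivalent.

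I expect the main obstacle to be the careful bookkeeping of which divisors appear in which K-moduli space and verifying that condition (1) of Question \ref{question:contraction} (the automorphism-dimension count) is genuinely equivalent to the statement ``$D_X$ is a divisor.'' For this I would invoke Lemma \ref{lem:autmanetti} to normalize $h^0(X,\cO_X(-\frac d3 K_X))=h^0(\bP^2,\cO(d))$ so that the parameter space of divisors $D\in|-\frac d3 K_X|$ has the same dimension as $|\mathcal O_{\bP^2}(d)|$, and then the locus of pairs $(X,D)$ modulo $\Aut(X)$ inside the K-moduli space has codimension equal to $\dim\Aut(X)-\dim\Aut(\bP^2)=\dim\Aut(X)-8$ relative to the generic point; this is $1$ exactly when $\dim\Aut(X)=9$. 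Combined with the index bound of Theorem \ref{thm:localindex} (which forces $X$ to have bounded local Gorenstein indices and hence restricts to finitely many Manetti surface types in a given degree), this converts Question \ref{question:contraction1} into the finite check of Question \ref{question:contraction}. The remaining subtlety — that one must also rule out the pathology where two distinct non-$\bP^2$ divisors get identified or where the indeterminacy locus has a divisorial component — is handled by normality of the K-moduli spaces (Proposition \ref{prop:Zplanecurve}) and Zariski's main theorem, exactly as in the proofs in Sections \ref{sec:2ndwallquintics} and \ref{sec:quintics}.
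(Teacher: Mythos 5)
Your proposal uses the same essential ingredients as the paper's proof — Lemma~\ref{lem:autmanetti} for the automorphism-dimension count, Proposition~\ref{prop:k-interpolation} for interpolation, and the local VGIT structure of Theorem~\ref{thm:logFano-wallcrossing} — so the overall route is the same. However, there is a concrete error in your argument for the direction ``$\neg$Question~\ref{question:contraction} $\Rightarrow\neg$Question~\ref{question:contraction1}.'' You claim that if $\lct(X;D)<\frac{3}{d}$ for some Manetti surface $X$ with $\dim\Aut(X)=9$, then the map $\oP_{d,\frac{3}{d}-\epsilon}^{\K}\dashrightarrow\oP_{d,\epsilon}^{\K}$ fails to be a birational contraction because ``this divisor is created.'' But the divisor $D_X$ swept out by pairs $(X,D)$ only appears in $\oP_{d,c}^{\K}$ for $c$ in some intermediate interval $[c_1,c_2]$ with $c_1>0$ (by Theorem~\ref{thm:firstwallbefore}, only $\bP^2$ appears for small $c$) and $c_2<\frac{3}{d}$; for $c<c_1$ or $c>c_2$ the pair $(X,cD)$ is K-unstable and $D_X$ is absent. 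In particular $D_X$ does not appear in the target $\oP_{d,\epsilon}^{\K}\cong\oP_d^{\GIT}$, so there is no reason your chosen map fails to be a contraction — a divisor absent from the target cannot obstruct it. The correct move, as in the paper, is to look at the single wall crossing $\oP_{d,c_i+\epsilon}^{\K}\dashrightarrow\oP_{d,c_i-\epsilon}^{\K}$ at the K-semistable threshold $c_i$ of $(X,D)$: there $D_X$ is a divisor in the target contracted by $\phi_i^-$ to a codimension $\geq 2$ locus (the K-polystable replacements have continuous automorphisms), yet $D_X$ is not dominated by any divisor from the source since $(X,cD)$ is K-unstable for $c>c_i$. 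This single elementary wall crossing already violates Question~\ref{question:contraction1}. Two smaller points: your first codimension formula ``$\dim\Aut(\bP^2)-\dim\Aut(X)+(h^0(-\frac{d}{3}K_X)-1-\dim\Aut(X))$'' is garbled (it would give codimension $-1$ when $\dim\Aut(X)=9$), although you state the correct count $\dim\Aut(X)-8$ in the final paragraph; and the case-split into ``flip'' versus ``weighted blow-up with one side an isomorphism'' is not actually established in the generality you invoke it and is also unnecessary — the paper avoids this by working directly with whether $\phi_i^-$ contracts a divisor.
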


\begin{proof}
 For the ``$\Rightarrow$'' direction, let $X$ be a Manetti surface satisfying (1) and (2) of Question \ref{question:contraction}. Assume to the contrary that $\lct(X;D)<\frac{3}{d}$ for a general $D\in |-\frac{d}{3}K_X|$. Then $(X,(\frac{3}{d}-\epsilon)D)$ is K-unstable for $0<\epsilon\ll 1$. Let $c_i$ be the K-semistable threshold of $(X,D)$. Let $U$ be the open subset of $|-\frac{d}{3}K_X|$ parametrizing $D$ with $(X,(c_i-\epsilon)D)$ K-stable. Then $[U/\Aut(X)]$ is a Deligne-Mumford stack whose coarse space $U/\Aut(X)$ injects into $\oP_{d,c_i-\epsilon}^{\K}$ whose image closure $E$ is a divisor by Lemma \ref{lem:autmanetti}. Thus the wall-crossing morphism $\phi_i^-:\oP_{d,c_i-\epsilon}^{\K}\to \oP_{d,c_i}^{\K}$ contracts $E$ to a codimension $\geq 2$ locus since the $c_i$-K-polystable pairs replacing $(X,D)$ have continuous automorphisms. This contradicts the assumption on birational contractions.
 
 For the ``$\Leftarrow$'' direction, assume to the contrary that a wall-crossing morphism $\phi_i^-:\oP_{d,c_i-\epsilon}^{\K}\to \oP_{d,c_i}^{\K}$ contracts a divisor $E$. Since Manetti surfaces have no continuous moduli, a general point on $E$ parametrizes $(X,D)$ for the same Manetti surface $X$. For the same $U$ as above, we know that $U/\Aut(X)$ injects into $\oP_{d,c_i-\epsilon}^{\K}$ with image closure $E$. Hence $\dim\Aut(X)=\dim\Aut(\bP^2)+1$ by Lemma \ref{lem:autmanetti}. Then $\lct(X;D)\geq \frac{3}{d}$ for a general $D\in |-\frac{d}{3}K_X|$ which implies $(X,cD)$ is K-stable for any $c_i<c<\frac{3}{d}$ by Proposition \ref{prop:k-interpolation}. This contradicts to the assumption that $E$ is contracted under $\phi_i^{-}$. The proof is finished.
\end{proof}

Thanks to Proposition \ref{prop:contractionequiv} we only need to verify Question \ref{question:contraction} for either $3\mid d$ or $d\leq 13$. The following lemma proves the case when $3\mid d$. 

\begin{lemma}\label{lemma:3d}
If $3 \mid d$, then  $\lct(X; D) \ge \frac{3}{d}$ for $X$ and $D$ satisfying conditions in Question \ref{question:contraction}.
\end{lemma}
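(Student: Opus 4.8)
The plan is to exploit the very special structure of Manetti surfaces $X$ with $\dim\Aut(X)=9$ together with the constraint that $3\mid d$. First I would recall (Proposition~\ref{prop:manetti}) that $X$ is a $\bQ$-Gorenstein deformation of some $\bP(a^2,b^2,c^2)$ with $a^2+b^2+c^2=3abc$, and note that the condition $\dim\Aut(X)=\dim\Aut(\bP^2)+1=9$ is extremely restrictive: among singular Manetti surfaces only the weighted projective planes themselves (and possibly a one-parameter family of partial smoothings) have automorphism group of dimension $9$, since the dimension count $\dim\Aut(\bP(a^2,b^2,c^2))$ only reaches $9$ for the Markov triples $(1,1,1)$ (giving $\bP^2$) and $(1,1,2)$ (giving $\bP(1,1,4)$). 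The surface $\bP(1,1,4)$ has $\dim\Aut=9$, and one checks that its nontrivial $\bQ$-Gorenstein partial smoothings (such as $X_{26}$, $\bP(1,4,25)$, etc.) all have strictly smaller automorphism groups. Hence the only relevant surface is $X=\bP(1,1,4)$.

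Next I would reduce to an explicit computation on $\bP(1,1,4)$. Since $3\mid d$, write $d=3e$, so that $-\tfrac{d}{3}K_X=-eK_{\bP(1,1,4)}=\cO_{\bP(1,1,4)}(6e)$, and a general member $D\in|-\tfrac{d}{3}K_X|$ is a general curve of degree $6e$ on $\bP(1,1,4)$. By Bertini-type arguments (the linear system $|\cO(6e)|$ is base point free away from the singular point $[0,0,1]$ of type $\tfrac14(1,1)$, and near that point the general equation is $z^{\lfloor 6e/4\rfloor}\cdot(\text{unit})+\dots$ with a transverse leading term), such a general $D$ has at worst a node at $[0,0,1]$ and is smooth elsewhere; in particular $D$ passes through the singular point with multiplicity $1$ in the smooth local cover. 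The key step is then to compute $\lct(\bP(1,1,4);D)$ for such a general $D$ and verify it is $\geq\frac{3}{d}=\frac{1}{e}$. Away from the singularity $D$ is smooth, so the lct there is $1>\frac1e$; at the singular point, pulling back to the smooth double cover $\widetilde x\in\widetilde X=\bA^2$, the preimage $\widetilde D$ is a smooth curve through $\widetilde x$, so $\lct_{\widetilde x}(\widetilde X;\widetilde D)=1$, and by the finite degree/étale-local behavior of log canonical thresholds under quotients $\lct_{[0,0,1]}(\bP(1,1,4);D)\geq \tfrac12\cdot 1=\tfrac12$. Since $d\geq 4$ and $3\mid d$ forces $d\geq 6$, hence $e\geq 2$ and $\frac1e\leq\frac12$, we get $\lct(X;D)\geq \frac12\geq \frac1e=\frac3d$, as required.

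I expect the main obstacle to be the first step: rigorously pinning down that $X=\bP(1,1,4)$ is the \emph{only} Manetti surface with $\dim\Aut(X)=9$ (or at least the only one satisfying conditions (1) and (2) of Question~\ref{question:contraction} simultaneously). This requires either a case analysis of $\dim\Aut(\bP(a^2,b^2,c^2))$ along the Markov tree together with a semicontinuity argument for $\dim\Aut$ under $\bQ$-Gorenstein smoothing (automorphism group dimension can only drop under smoothing, and $\bP^2$ itself is excluded since we want $X$ singular), or invoking the index bound Theorem~\ref{thm:localindex} which, for $3\mid d$, forces local Gorenstein indices $\leq \frac{2d}{3}$ — but that bound alone is not enough to isolate $\bP(1,1,4)$, so the automorphism dimension hypothesis must be used essentially. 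Once that classification step is in place, the lct computation is the routine calculation sketched above, handled uniformly by the double-cover trick and Skoda-type estimates already used in the proof of Theorem~\ref{thm:localindex}.
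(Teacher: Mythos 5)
Your proposed reduction to $X\cong\bP(1,1,4)$ does not hold, and this is where the argument breaks. The claim that all nontrivial $\bQ$-Gorenstein partial smoothings of the singular Manetti weighted projective planes ``have strictly smaller automorphism groups'' is false: the surface $X_{26}$ (which is, incidentally, a partial smoothing of $\bP(1,4,25)$, not of $\bP(1,1,4)$) has $\dim\Aut(X_{26})=9$ — this is exactly why, in Theorem~\ref{thm:firstwalls} and throughout Sections~\ref{sec:2ndwallquintics}--\ref{sec:quintics}, pairs $(X_{26},C)$ sweep out a \emph{divisor} in the K-moduli spaces, a dimension count controlled by Lemma~\ref{lem:autmanetti}. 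For $3\mid d$ with $d$ large the index bound $\le \tfrac{2d}{3}$ admits many more Manetti surfaces with local Gorenstein indices in the allowed range, and the condition $\dim\Aut(X)=9$ does not single out $\bP(1,1,4)$; the classification step you flag as ``the main obstacle'' is indeed a real obstacle and your sketch of it is incorrect. Your lct computation on $\bP(1,1,4)$ is essentially right (and matches Lemma~\ref{lemma:P114}), except that the quotient-cover step gives equality $\lct_x(X;D)=\lct_{\tilde x}(\tilde X;\tilde D)$, not a factor of $\tfrac12$ — cf.\ the use of \cite[Lemma 8.12]{SingularitiesOfPairs} in the proof of Lemma~\ref{lemma:P1425}; but this does not rescue the argument, which covers only one of the required surfaces.

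The paper's proof takes a different and much more robust route precisely to avoid any such classification. Since $X$ is a $\bQ$-Gorenstein smoothing of some Markov weighted projective plane $\bP(a^2,b^2,c^2)$ and $3\mid d$, one degenerates $(X,D)$ to $\bigl(\bP(a^2,b^2,c^2),\,\tfrac{d}{3}\Delta\bigr)$ where $\Delta$ is the toric boundary divisor (which represents $-K$, so $\tfrac{d}{3}\Delta$ is in the correct linear system). The toric pair $\bigl(\bP(a^2,b^2,c^2),\Delta\bigr)$ is log canonical, hence $\lct\ge \tfrac{3}{d}$ on the central fiber, and lower semicontinuity of log canonical thresholds in flat families (\cite{DK}) gives $\lct(X;D)\ge \tfrac{3}{d}$ for the general $D$. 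This argument is uniform over all Manetti surfaces $X$, uses $3\mid d$ only to make $\tfrac{d}{3}\Delta$ a well-defined divisor in the linear system, and needs neither the automorphism hypothesis nor any case analysis. Your approach would require verifying the same lct bound on every Manetti surface in the admissible index range with nine-dimensional automorphisms, which for general $d$ is an open-ended task; the degeneration argument closes it in one stroke.
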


\begin{proof}
When $3 \mid d$, we can degenerate $(X,D)$ to a weighted projective plane and the $(d/3)$-th multiple of the toric boundary divisor.  Hence, the inequality $\lct(X;D) \ge 3/d$ is obtained by lower semicontinuity of lct (see e.g. \cite{DK}).
\end{proof}

Next we turn to the case $d<13$ and $3\nmid d$ which will be confirmed by careful study of the Manetti surfaces appearing in our K-moduli spaces. First note that by Theorem \ref{thm:localindex} and Proposition \ref{prop:manetti}, for $d < 13$ the only Manetti surfaces satisfying the conditions in Question \ref{question:contraction} appearing in $\oP_{d,c}^{\K}$ for $0 < c < \frac{3}{d}$ are $\PP(1,1,4)$ and $X_{26}$.  For $X = \PP(1,1,4)$, the canonical  $\calO_X(K_X) = \calO_X(-6)$, so the linear system $\lvert -\frac{d}{3}K_X \rvert$ parameterizes elements of $\calO_X(2d)$, hence we are interested in answering Question \ref{question:contraction} only for curves of even degree on $\PP(1,1,4)$.  

As we can degenerate $X_{26}$ to $\PP(1,4,25)$, by semicontinuity of lct, we can study curves on $\PP(1,4,25)$.  Provided that the general curve on $\PP(1,4,25)$ has the appropriate lct, we can reach the same conclusion for $X_{26}$.  If $X = \PP(1,4,25)$, then $\calO_X(K_X) = \calO_X(-30)$, so the linear system $\lvert -\frac{d}{3}K_X \rvert$ parameterizes elements of $\calO_X(10d)$, hence we are interested in answering Question \ref{question:contraction} only for curves of degree a multiple of $10$ on $\PP(1,4,25)$.  

\begin{lemma}\label{lemma:P114}
For a general curve $C$ of even degree on $\PP(1,1,4)$, we have $\lct(\PP(1,1,4), C) = 1$. 
\end{lemma}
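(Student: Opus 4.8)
The claim is that a general even-degree curve $C$ on $\bP(1,1,4)$ has $\lct(\bP(1,1,4),C)=1$. Write $\deg C = 2d$ (so $C \in |\cO_{\bP(1,1,4)}(2d)|$); with projective coordinates $[x,y,z]$ of weights $(1,1,4)$, a general such curve has equation
\[
z^{d/2}+z^{(d-2)/2}f_4(x,y)+\cdots+f_{2d}(x,y)=0 \quad (d \text{ even}),
\]
or, when $d$ is odd and $d\geq 5$, an equation involving $xy z^{(d-1)/2}$ as leading term as in the Luna slice computation of Lemma \ref{lem:firstwallluna2}; in either case Bertini away from the base locus of the linear system plus an explicit check at the unique singular point $[0,0,1]$ shows that $C$ is quasismooth and reduced, and in particular $C$ has at worst a node at the singular point $[0,0,1]$ of type $\frac14(1,1)$ and is smooth elsewhere. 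The plan is: (i) show $\lct(\bP(1,1,4),C)\leq 1$ for trivial reasons, and (ii) show $\lct(\bP(1,1,4),C)\geq 1$, i.e. that $(\bP(1,1,4),C)$ is log canonical, using that for a general $C$ the pair is in fact plt (even klt away from the quotient singularity) by the local analysis of the singularities.

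For (i): since $C$ is an effective Cartier (or $\bQ$-Cartier Weil) divisor and $(\bP(1,1,4),C)$ with coefficient $1$ on $C$ must satisfy the adjunction-type constraint — or more simply, at any smooth point $p\in C$ the pair $(\bP(1,1,4),tC)$ is lc iff $t\leq 1$ — we get $\lct\leq 1$ immediately. For (ii): first I would handle the smooth locus. Away from $[0,0,1]$ the surface $\bP(1,1,4)$ is smooth and $C$ is a smooth (hence snc) divisor for general choice, so $(\bP(1,1,4),C)$ is lc (indeed plt) there. The only remaining point is the quotient singularity $x_0:=[0,0,1]$ of type $\frac14(1,1)$. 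Pass to the smooth local cover $(\tilde x_0\in \tilde X)\cong (0\in \bA^2)$ with the $\mu_4$-action scaling both coordinates; let $\tilde C$ be the preimage of $C$. Since log canonicity can be checked on the cover (the pair $(X,tC)$ is lc at $x_0$ iff $(\tilde X, t\tilde C)$ is lc at $\tilde x_0$, because the cover is étale in codimension one and $A_{(X,tC)}$ scales by the degree as used in the proof of Theorem \ref{thm:localindex}), it suffices to show $(\bA^2, \tilde C)$ is lc at the origin, i.e. that a general $\tilde C$ has multiplicity $\leq 2$ at $\tilde x_0$ — equivalently, for a general curve $\tilde C$ has a node or smooth branch at the origin. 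From the defining equation, the lowest-degree part of $\tilde C$ at $\tilde x_0$ is governed by the coefficient of $z^{d/2}$ (normalized to $1$) together with the quadratic part $f_4$ (which pulls back to a degree-$2$ form in the local coordinates on the cover, by the weight-$4$ of $z$): for general $f_4$ this quadratic form is nondegenerate, so $\tilde C$ has a node at $\tilde x_0$. A plane curve with a node has $\lct=1$, so $(\bA^2,\tilde C)$ is lc (in fact its lct equals $1$) at the origin, giving $(\bP(1,1,4),C)$ lc at $x_0$.

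Combining (i) and (ii) yields $\lct(\bP(1,1,4),C)=1$ for general $C$ of even degree. The main obstacle — and the only place requiring genuine care rather than bookkeeping — is verifying that the general member of the linear system $|\cO(2d)|$ really does acquire only a node (and not a worse singularity, e.g. a tacnode or higher $A_k$) at the forced base point $[0,0,1]$; this is where one must examine the explicit form of the equation and use that the coefficient of $z^{d/2}$ and the form $f_4$ vary independently over the linear system, so that genericity of $f_4$ as a binary quadratic (equivalently, of its pullback as a plane quadratic cone) forces nondegeneracy. Once this local genericity statement is established, the rest is an application of the finite-cover criterion for log canonicity and the standard fact $\lct(\bA^2, \{\text{nodal curve}\})=1$, both of which are either recalled in the excerpt (the cover formula for log discrepancies appears in the proof of Theorem \ref{thm:localindex}) or entirely classical.
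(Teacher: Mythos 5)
Your strategy — Bertini away from $[0,0,1]$, then a direct check on the cyclic cover that the general member is nodal at the quotient singularity — is sound and takes a genuinely different route from the paper's. The paper degenerates the general curve of degree $k\equiv 2\pmod{4}$ to the union $C_{k-2}\cup C_2$, where $C_{k-2}$ is a general smooth curve of degree $k-2\equiv 0\pmod{4}$ missing $[0,0,1]$ and $C_2 = (ax^2+bxy+cy^2=0)$ is the pair of rulings, nodal at $[0,0,1]$; it checks $\lct=1$ for this reducible curve and then invokes lower semicontinuity of the log canonical threshold. Both approaches bottom out at the same local computation at the $\frac{1}{4}(1,1)$ point, but the paper's degeneration keeps that computation to the single degree-$2$ curve $C_2$, while yours requires reading off the general curve's lowest-order terms on the cover.

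That is where your proposal slips, although the conclusion survives. When $\deg C\equiv 2\pmod{4}$ the leading $z$-term of the general equation is $f_2(x,y)z^{(\deg C-2)/4}$, not $z^{\deg C/4}$ (that monomial is available only when $4\mid\deg C$, in which case $C$ misses $[0,0,1]$ and there is nothing to check), and the next coefficient is $f_6$, not $f_4$. More significantly, the cover $\bA^2\to U_z$ is the quotient by the \emph{linear} $\mu_4$-action on $(x,y)$, so a degree-$j$ binary form pulls back to a degree-$j$ form — there is no drop in degree ``by the weight-$4$ of $z$''. The true lowest-order piece of $\tilde C$ at the origin is the quadratic $f_2(x,y)$, nondegenerate for general coefficients, and that is your node. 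Two further cautions: ``multiplicity $\leq 2$'' is not equivalent to log canonical for a plane curve singularity (a cusp has multiplicity $2$ but $\lct=5/6$), so genericity must actually produce a node and not merely a double point; and the general curve of degree $\equiv 2\pmod{4}$ is not quasismooth — its affine cone is singular along the whole $z$-axis — so you cannot cite quasismoothness to get nodality; the explicit inspection of $f_2$ is what does the work.
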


\begin{proof} If the degree $d$ of the curve satisfies $d \equiv 0 \mod 4$, then the general curve $C_d$ is smooth and contained in the smooth locus of $\PP(1,1,4)$, hence $\lct(\PP(1,1,4); C_d) = 1$.  
Next, consider the case when $d \equiv 2 \mod 4$.  If $d = 2$, the general curve $C_2 := (ax^2 + bxy + cy^2 = 0)$ passes through the singular point of $\PP(1,1,4)$ and is nodal at that point.  A computation shows that $\lct(\PP(1,1,4); C_2) = 1$.  For any $d$ such that $d \equiv 2 \mod 4$, the curve $C_{d-2} \cup C_2$ is in the linear system $\lvert \calO(d) \rvert$, where $C_{d-2}$ is a general curve of degree $d - 2$.  As the general $C_{d-2}$ is smooth, contained in the smooth locus of $\PP(1,1,4)$, and intersects $C_2$ transversally away from the singular point of the surface, we have $\lct(\PP(1,1,4); C_{d - 2} \cup C_2) = 1$.  Therefore, by semicontinuity of lct, the general curve $C_d$ of degree $d$ also has $\lct(\PP(1,1,4); C_d) = 1$.   \end{proof}

\begin{remark}
The previous statement is false without the assumption on even degree.  If $C$ is a general curve of degree 3 (or, more generally, degree $d$ such that $d \equiv 3 \mod 4$), then $\lct(\PP(1,1,4); C) = \frac{2}{3}$.
\end{remark}

Now, we mimic the previous argument for curves of degree 10 on $\PP(1,4,25)$.  

\begin{lemma}\label{lemma:P1425}
For a general curve $D$ of degree $d$ such that $d \equiv 0, 30 \mod 50 $ on $X = \PP(1,4,25)$, $\lct(\PP(1,4,25), C') = 1$.  If $d \equiv 10 \mod 50$, $\lct(X;D) = \frac{1}{2}$.  If $d \equiv 20 \mod 50$, $\lct(X; D) = \frac{1}{4}$, and if $d \equiv 40 \mod 50$, $\lct(X; D) = \frac{1}{3}$.  
\end{lemma}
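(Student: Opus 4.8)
The plan is to compute $\lct(X;D)$ for a general member $D$ of $|\cO_X(d)|$ on $X=\bP(1,4,25)$ by analyzing the base locus of the linear system and the nature of the singularities $D$ acquires. Write $d=50q+s$ with $s\in\{0,10,20,30,40\}$. The key structural fact is that $H^0(\bP(1,4,25),\cO(d))$ is spanned by monomials $x^ay^bz^c$ with $a+4b+25c=d$, and $c\in\{0,1,2\}$ by the degree constraint together with $c=2$ only possible when $d\geq 50$, i.e. $c\leq 1$ when $q=0$ except that $z^2$ contributes exactly when $s=0$ and $q\geq 1$; in any case the relevant local behavior near the two singular points $p_y=[0,1,0]$ of type $\frac14(1,1)$ and $p_z=[0,0,1]$ of type $\frac1{25}(1,4)$ is governed by the lowest-order terms in suitable orbifold charts. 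First I would reduce to small $d$ using semicontinuity: as in Lemmas \ref{lemma:P114} and \ref{lemma:P1425}'s even-degree argument, a general $D_{d}$ degenerates to $D_{d-50}\cup D_{50}$ (or $D_{d-s}\cup D_s$ with $D_{d-s}$ a general curve of degree divisible by $50$), where $D_{50}$ can be taken to be a general member containing $z^2$ with a smooth transverse intersection with $D_{d-s}$ away from the singular points; since $\lct$ of a general curve of degree $\equiv 0\bmod 50$ is $1$ (it is smooth and misses or is nodal at the singular locus, cf.\ the $d\equiv 0\bmod 4$ case on $\bP(1,1,4)$), we get $\lct(X;D_d)=\min\{1,\lct(X;D_s)\}$ up to the contribution at the singular points, so it suffices to treat $s\in\{10,20,30,40\}$ and $s=0$ separately.

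The main computation is then the four base cases. For $s=10$: the only monomials of degree $10$ are $x^{10}$ and $x^6y$, so the general $D_{10}=(ax^{10}+bx^6y=0)=(x^6(ax^4+by)=0)$ is non-reduced — this is the subtle point, and I expect it to be the main obstacle: one must use the full linear system $|\cO(d)|$ for $d\equiv 10\bmod 50$ rather than literally $|\cO(10)|$, i.e.\ take $D=D_{d-50}\cup D_{10}'$ where $D_{10}'$ is the degeneration and compute the lct of the union, or more cleanly observe that for $d=60$ the general curve is $D_{60}=(z^2\cdot(\text{unit})+\cdots)$ but for $d=10$ itself one genuinely gets the non-reduced $x^6(ax^4+by)$; the stated answer $\tfrac12$ should come from the coefficient $6$ in front of the component $(x=0)$ forcing $\lct\leq\tfrac{1}{6}$ — so I would instead recompute: the general curve of degree $\equiv 10\bmod 50$ large enough is smooth away from $p_z$ where it looks like $z^2+(\text{stuff})$, giving an $A$-type or worse singularity with $\lct=\tfrac12$; for the degenerate small cases one checks by the orbifold chart at $p_z=[0,0,1]$, coordinates $(x,y)$ with $\Z/25$ acting by weights $(1,4)$, that $D$ pulls back to $\tilde x^2+\cdots$ (when $s=10$, $z$-coefficient is forced), so $\lct_{p_z}=\tfrac12$ and this is the minimum. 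Similarly for $s=20$ one gets $\tilde x^4$-type behavior at $p_z$ (the monomial $z^2$ is absent, $x^{20}$ and $x^{16}y$ survive giving a worse singularity, $\lct=\tfrac14$), for $s=40$ one gets $\tilde x^3$-type behavior ($\lct=\tfrac13$), and for $s=30$ the general curve is smooth at the singular points with $\lct=1$.

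The order of operations is thus: (i) state the chart descriptions of $X=\bP(1,4,25)$ at its two quotient singularities and record $K_X=\cO_X(-30)$ and the monomial basis of $|\cO_X(d)|$; (ii) prove the semicontinuity reduction $\lct(X;D_d)\geq \min\{1,\lct(X;D_s)\}$ with equality for general $D_d$, by exhibiting the degeneration $D_d\rightsquigarrow D_{d-50}\cup D_{50}$ and invoking lower semicontinuity of log canonical thresholds in flat families (as already used in Lemma \ref{lemma:3d} and the proof of Lemma \ref{lemma:P114}); (iii) for each residue $s$, pull back a general $D_s$ (or rather the relevant general $D_d$ with that residue, to avoid the spurious non-reduced phenomenon) to the orbifold cover at $p_z$, read off the lowest-weight term, and compute the local lct there and at $p_y$ and along the smooth locus, taking the minimum; (iv) conclude. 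The hard part will be bookkeeping in step (iii) to make sure one is working with a genuinely general curve in the appropriate linear system so that the singularity type at $p_z$ is exactly $\tilde x^{k}+\tilde y^{m}$ with the predicted $k$, and not accidentally worse because of forced vanishing of monomials — this is precisely the issue flagged in the Remark after Lemma \ref{lemma:P114}, and it is why the statement is phrased in terms of $d\bmod 50$ rather than a single small degree.

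\emph{Remark on the case $d\equiv 0\bmod 50$.} I would note that this splits according to whether $50\mid d$ with the curve containing $z^2$ (so smooth at $p_z$, $\lct=1$) — the statement lumps $s=0$ and $s=30$ together as $\lct=1$, consistent with the expectation that in both cases a general $D$ meets the singular locus in the mildest possible way. The only place where this could fail is $d=30$ itself if the monomials $x^{30}, x^{26}y,\dots, x^6y^6, x^5z, xyz$ (degree $30$) do not suffice to separate points/tangents at $p_z$; a direct check in the $\tfrac1{25}(1,4)$ chart shows the term $x z$ (weight forced) or $x^{6}y^{6}$ gives $\lct_{p_z}=1$, completing the argument.
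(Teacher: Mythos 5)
Your plan is the same as the paper's: work in the $\frac{1}{25}(1,4)$ (and $\frac14(1,1)$) orbifold charts, pull back the general member of $\lvert\cO_X(d)\rvert$ through the cyclic cover, and read off the log canonical threshold from the Newton polyhedron via Howald's formula, reducing large $d$ to small $d$ by degeneration and lower semicontinuity of lct. However the execution has a concrete computational error that derails the $s=10$ case. The monomial basis of $H^0(\PP(1,4,25),\cO(10))$ is $\{x^{10},\,x^6y,\,x^2y^2\}$, not $\{x^{10},\,x^6y\}$: you dropped $x^2y^2$ (degree $2+2\cdot4=10$). The general degree-$10$ curve is therefore $x^2(ax^8+bx^4y+cy^2)$, not $x^6(ax^4+by)$. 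So the multiplicity along $(x=0)$ is $2$, not $6$; your worry that $\lct\le\tfrac16$ is unfounded, and the advertised $\tfrac12$ drops out directly from the vertex $(2,2)$ of the Newton polyhedron of $ax^{10}+bx^6y+cx^2y^2$ in the $\frac{1}{25}(1,4)$ chart (the diagonal $(1/c,1/c)$ first enters the polyhedron at $(2,2)$). Your attempted ``recovery'' argument is also wrong: for $d\equiv 10\pmod{50}$ there is never a pure power $z^c$ of the right degree, because $25c\not\equiv 10\pmod{25}$ for any $c$, so the general curve always \emph{passes through} $p_z=[0,0,1]$. The statement ``looks like $z^2+(\textrm{stuff})$, an $A$-type singularity'' therefore does not describe what is happening at $p_z$; one must compute the Newton polyhedron of the full pullback $f_{d-50}(x,y)+f_{d-25}(x,y)+f_d(x,y)$ (in the chart $z=1$), and check the diagonal vertex is still $(2,2)$.

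Similar care is needed for the remaining residues. For $d\equiv 20$, the general degree-$20$ curve contains all of $x^{20},\,x^{16}y,\dots,y^5$ (not just $x^{20},\,x^{16}y$), so the Newton polyhedron in the $\frac{1}{25}(1,4)$ chart is bounded by the single segment $a+4b=20$ and the diagonal hits it at $(4,4)$, giving $\lct=\tfrac14$. For $d\equiv40$ the key monomial supplying the vertex $(3,3)$ is $x^3y^3z$ (degree $3+12+25=40$), hence $\lct=\tfrac13$. For $d=30$ the crucial monomial is $xyz$ (degree $1+4+25=30$), not $xz$ (which has degree $26$); $xyz$ pulls back to a node at both singular points and forces $\lct=1$, whereas $x^6y^6$ alone would give only $\lct=\tfrac13$ there. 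For $d\equiv 30\pmod{50}$ with $d>30$, the analogue is the monomial $xyz^{2k+1}$. Once the monomial bookkeeping is fixed, steps (ii) and (iii) of your plan carry through and match the paper's proof, including the semicontinuity reduction (degenerate $D_d\rightsquigarrow D_{d-50}\cup D_{50}$ and use that the general $D_{50}$ has a $z^2$ term so misses $p_z$). The final answers you quote are correct, but as written the proposal would not survive scrutiny because the claimed normal forms in the orbifold charts are wrong in three of the five residue classes.
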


\begin{proof}

If the degree $d$ of the curve is a multiple of 100, then the general curve $C_d$ is smooth and contained in the smooth locus of $\PP(1,4,25)$, hence $\lct(\PP(1,4,25); C_d) = 1$.  If the degree is a multiple of $50$, the general curve $C_d$ has a nonzero $z^2$ term, so misses the $\frac{1}{25}(1,4)$ singularity, and so the previous lemma implies the result.  

For $d = 10$, consider the general curve $C = (ax^{10}+bx^6y+cx^2y^2 = 0)$.  This has even degree through the $\frac{1}{4}(1,1)$ singular point, so the lct in a neighborhood of that point is $1$.  In a neighborhood of the $\frac{1}{25}(1,4)$ singular point, we compute the lct under the finite morphism $\pi: \mathbb{A}^2 \to \frac{1}{25}(1,4)$ where $\pi^*C$ is defined by the same equation.  By \cite[Lemma 8.12]{SingularitiesOfPairs}, $\lct(\frac{1}{25}(1,4); C) = \lct(\mathbb{A}^2; \pi^*C)$, and we compute $\lct(\mathbb{A}^2, \pi^* C) = \frac{1}{2}$ using \cite[Example 4, 5]{Howald}.  For a general curve of degree $d \equiv 10 \mod 50$, there are nonzero terms of the form $z^nf_{10}(x,y)$, where $f_{10}(x,y)$ is a degree 10 polynomial, so for $C_d$ a general curve of degree $d \equiv 10 \mod 50$, we have $\lct(\frac{1}{25}(1,4); C) = \frac{1}{2}$.

For $d \equiv 20 \mod 50$, consider a general curve $C_d$ of degree $d$.  Because this passes through the $\frac{1}{25}(1,4)$ singular point with high multiplicty, we do not expect that $\lct(\frac{1}{25}(1,4); C_d) = 1$.  Indeed, using the same method as above, we can compute $\lct(\frac{1}{25}(1,4), C_d) = \frac{1}{4}$.

For $d = 30$, the general curve has an $xyz$ term, so is at worst nodal at each singular point of the surface, so actually has lct equal to 1.  For $d \equiv 30 \mod 50$, the general curve has an $xyz^{2k+1}$ term, so is nodal in a neighborhood of the $\frac{1}{25}(1,4)$ singularity and has even degree in a neighborhood of the $\frac{1}{4}(1,1)$ singularity, so still has lct equal to 1. 

For $d = 40$, the general curve has a $y^{10}$ term, and so it misses the $\frac{1}{4}(1,1)$ singular point.  The general curve has a nonzero $x^3y^3z$ term, so in a neighborhood of the $\frac{1}{25}(1,4)$ singularity, a computation similar to that above shows that $\lct(\frac{1}{25}(1,4); C_d) = \frac{1}{3}$.  For $d \equiv 40 \mod 50$, the general curve could be nodal through the $\frac{1}{4}(1,1)$ singular point, but still has lct equal to 1 in a neighborhood of that point.  

Finally, for $d = 50$, the general curve has a $z^2$ term, and so it misses the $\frac{1}{25}(1,4)$ singular point and passes through the $\frac{1}{4}(1,1)$ singular point with even degree, so the discussion of $\PP(1,1,4)$ above shows $\lct(\PP(1,4,25); C_{50}) = 1$.  \end{proof}

Although the previous computation was done for $X = \PP(1,4,25)$, the lct computation is local, so the same result holds for curves of the appropriate degree on $X = X_{26}$. The computation shows that, for $d = 4, 5, 8, 10, $ and $11$, the answer to Question \ref{question:contraction} is yes.  For curves on $\PP(1,1,4)$, by Lemma \ref{lemma:P114}, we have that $\lct(X;D) = 1 > \frac{3}{d}$.  

For curves on $X_{26}$, we need a finer analysis.  No curves on $X_{26}$ appear for degree $d = 4$, but for $d = 5, 8, $ and $10$, Lemma \ref{lemma:P1425} shows $\lct(X;D) = 1 > \frac{3}{d}$, and for $d = 11$, Lemma \ref{lemma:P114} shows $\lct(X; D) = \frac{1}{2} > \frac{3}{11}$.   This leaves four exceptional cases: $d = 6, 7, 9, $ and $12$.  For degrees $6, 9,$ and $12$ we have $3 \mid d$, so Lemma \ref{lemma:3d} implies $\lct(X;D) \ge \frac{3}{d}$. Therefore we have contraction morphisms $\oP^{\K}_{d,c'} \dashrightarrow \oP^{\K}_{d,c}$ for all $0 < c < c'< \frac{3}{d}$.

This leaves only the case $d = 7$.  The desired result (an affirmative answer to Question \ref{question:contraction}) will follow from the next surprising proposition. 

\begin{prop}\label{prop:d=7}
For $d = 7$,  curves on $X_{26}$ or $\bP(1,4,25)$ are K-unstable in $\ocP^\K_{7,c}$ for all $c \in (0, \frac{3}{7})$. In other words, the only surfaces appearing in $\ocP^\K_{7,c}$ for some $c \in (0, \frac{3}{7})$ are $\bP^2$ and $\bP(1,1,4)$.
\end{prop}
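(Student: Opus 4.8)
\textbf{Proof proposal for Proposition \ref{prop:d=7}.}

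The plan is to rule out $X_{26}$ and $\bP(1,4,25)$ as surfaces in $\ocP^\K_{7,c}$ for every $c\in(0,\tfrac37)$ by a $\delta$-invariant estimate, exactly in the spirit of the proof of Theorem \ref{thm:firstwalls}. Recall that for $d=7$ the coefficient $c<\tfrac37$ gives $\beta:=1-\tfrac{cd}{3}>1-\tfrac{7}{3}\cdot\tfrac{1}{7}=\tfrac23$. By \cite[Theorem 7.2]{BL18b}, if $(X,cC)$ is K-semistable then $\delta(X)\geq\beta>\tfrac23$ (taking the weight-$\beta$ log Fano datum as in the proof of Theorem \ref{thm:firstwalls}(1)). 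So it suffices to show that $\delta(\bP(1,4,25))$ and $\delta(X_{26})$ are both strictly less than $\tfrac23$. For $\bP(1,4,25)$, the value $\delta(\bP(1,4,25))=\tfrac{1}{10}$ is already recorded in the excerpt (cited from \cite[Section 7]{BJ17} in the proof of Theorem \ref{thm:firstwalls}(2)), and $\tfrac{1}{10}<\tfrac23$. For $X_{26}$, Proposition \ref{prop:deltax26} gives $\delta(X_{26})=\tfrac19<\tfrac23$. Hence no curve on either surface can yield a K-semistable log Fano pair $(X,cC)$ with $c\in(0,\tfrac37)$.

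Next I would invoke the index bound to conclude that no other singular Manetti surfaces intervene. By Theorem \ref{thm:localindex}, for $d=7$ (so $3\nmid d$) and $c\in(0,\tfrac37)$, any singular point of $X\in\ocP^\K_{7,c}$ has Gorenstein index at most $\min\{\lfloor\tfrac{3}{3-7c}\rfloor,7\}\leq 7$. From the classification of Manetti surfaces (Proposition \ref{prop:manetti}) together with the discussion preceding Lemma \ref{lem:quinticindex} (or directly Lemma \ref{lem:quinticindex}, since the list of index $\le 7$ Manetti surfaces there is independent of $d$), the only possibilities are $\bP^2$, $\bP(1,1,4)$, $X_{26}$, and $\bP(1,4,25)$. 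Having just excluded the last two, we are left with $\bP^2$ and $\bP(1,1,4)$, which proves the proposition.

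The only point requiring a little care — and the part I expect to be the main obstacle — is making sure the $\delta$-estimate argument applies uniformly over the whole interval $c\in(0,\tfrac37)$ rather than at a single coefficient, and in particular that the threshold $\beta>\tfrac23$ really does dominate $\delta$ for both surfaces. This is not hard: for any fixed $c\in(0,\tfrac37)$ and any K-semistable $(X,cC)$ with $X\cong X_{26}$ or $\bP(1,4,25)$, write $D:=\tfrac d3C\sim_{\bQ}-K_X$, apply \cite[Theorem 7.2]{BL18b} with $\beta=1-\tfrac{cd}{3}$ to get $\delta(X)\geq\beta$; since $\beta>\tfrac23>\delta(X)$ for both $X$ (using $\delta(\bP(1,4,25))=\tfrac1{10}$ and Proposition \ref{prop:deltax26}), we obtain a contradiction. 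One should also note, as in the proof of Theorem \ref{thm:firstwalls}, that a divisor $E$ computing $\delta(X)$ on $X_{26}$ centered at the singular point does literally destabilize $(X_{26},\tfrac{8}{15}C_0')$ — but for the present statement we only need the clean inequality $\delta<\beta$, so no explicit test configuration is needed. Finally I would remark that the conclusion is consistent with (and in fact a key ingredient for) the $d=7$ case of Theorem \ref{thm:contraction} via Proposition \ref{prop:contractionequiv}, since it reduces Question \ref{question:contraction} for $d=7$ to curves on $\bP(1,1,4)$, handled by Lemma \ref{lemma:P114}.
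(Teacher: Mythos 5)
Your argument contains an arithmetic error that invalidates the approach. For $c<\tfrac37$ and $d=7$ one gets $\beta=1-\tfrac{7c}{3}>1-\tfrac73\cdot\tfrac37=0$, not $\beta>\tfrac23$: the line ``$\beta>1-\tfrac73\cdot\tfrac17=\tfrac23$'' silently substitutes $c<\tfrac17$ for $c<\tfrac37$. With the correct arithmetic, \cite[Theorem~7.2]{BL18b} only gives that K-semistability of $(X,cC)$ forces $\delta(X)\geq1-\tfrac{7c}{3}$, equivalently $c\geq\tfrac{3(1-\delta(X))}{7}$. Using $\delta(\bP(1,4,25))=\tfrac1{10}$ and $\delta(X_{26})=\tfrac19$, this rules out $c<\tfrac{27}{70}$ and $c<\tfrac{8}{21}$ respectively, but says nothing for $c\in[\tfrac{27}{70},\tfrac37)$ or $[\tfrac{8}{21},\tfrac37)$, so the $\delta$-bound by itself cannot prove the proposition over the whole interval. (This is exactly what you flagged as ``the part I expect to be the main obstacle'': the argument you model it on, the proof of Theorem~\ref{thm:firstwalls}(1), is explicitly limited to $c<\tfrac{8}{3d}$, which for $d=7$ is $\tfrac{8}{21}<\tfrac37$.)

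The paper instead pinches $c$ between two incompatible bounds. From Lemma~\ref{lemma:P1425} (and the observation that the lct computation at the $\tfrac1{25}(1,4)$ point is local, so it applies to $X_{26}$ as well as $\bP(1,4,25)$), every $D\in|-\tfrac73K_X|$ has $\lct(X;D)\leq\tfrac14$; since a K-semistable log Fano pair is klt, this forces $c<\tfrac14$. On the other hand, the index bound of Theorem~\ref{thm:localindex} applied to the index-$5$ singularity carried by both $X_{26}$ and $\bP(1,4,25)$ gives $5\leq 3/(3-7c)$, hence $c\geq\tfrac{12}{35}>\tfrac14$, a contradiction. Your (corrected) $\delta$-inequality could serve as the lower bound on $c$ in place of the index bound, since $\tfrac{27}{70}$ and $\tfrac{8}{21}$ both exceed $\tfrac14$; but in any version you must invoke the lct upper bound $c<\tfrac14$, which your proposal never does.
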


\begin{proof}
Let $X$ be either $X_{26}$ or $\bP(1,4,25)$. Assume to the contrary that $(X,cD)$ is K-semistable for some curve $D\in |-\frac{7}{3}K_X|$ and some $c\in (0,\frac{3}{7})$.
From the above discussion, we know that $\lct(X;D)\leq \frac{1}{4}$ which implies $c<\frac{1}{4}$. Since $X$ has a singularity of local Gorenstein index $5$, by Theorem \ref{thm:localindex} we know that $5\leq 3/(3-7c)$ hence $c\geq \frac{12}{35}$. This contradicts $c<\frac{1}{4}$. 
\end{proof}

\begin{proof}[Proof of Theorem \ref{thm:contraction}]
The result follows from Lemmas \ref{lemma:3d}, \ref{lemma:P114}, \ref{lemma:P1425}, and Proposition \ref{prop:d=7}. 
\end{proof}

\begin{remark}
For $d = 9$ or $12$, a valuative criterion computation shows that  $X_{26}$ cannot appear in the K-moduli spaces $\oP_{d,c}^{\K}$ for $0<c < \frac{3}{d}$. 
\end{remark}



\subsection{Quartics and sextics revisited}\label{sec:logcy}
Recall from Section \ref{sec:lowdegree}, we interpreted the K-moduli spaces of quartic and sextic plane curves via K3 surfaces. In this section, we revisit these moduli spaces and study them via their relation to Hacking's moduli space $\ocP^{\rm H}_d$, and give a log Calabi-Yau intepretation. 

\subsubsection{Quartics}\label{sec:quartics} Recall that Hacking's space $\oP^{\rm H}_4$ generically parametrizes plane curves with at worst cuspidal singularities. There is a divisor parametrizing curves in $\bP(1,1,4)$ which are at worst nodal at the singular point, and at worst cuspidal elsewhere. Finally, there is a codimension two locus parametrizing curves on the non-normal surface $\bP(1,1,2) \cup \bP(1,1,2)$ -- the curves are snc at the double locus, and at worst cuspidal elsewhere.

Hyeon-Lee's original motivation was to complete the log minimal model program on $\oM_3$. 
Let $\oM_3(\alpha)$ denotes $ \Proj \oplus_{m \geq 0} \Gamma(\oM_3, m(K_{\oM_3} + \alpha \delta))$, where $\delta$ is the boundary divisor of $\oM_3$. Hyeon-Lee produce the following diagram:


\begin{center}
    \begin{tikzcd}
    \oM_3(1)\cong\oM_3 \arrow[r, "T"] & \oM_3(\frac9{11})\cong  \oM_3^{\rm ps}\arrow[rr, dashrightarrow, "\vartheta"]\arrow[dr, "\Psi", end anchor={[yshift=2ex, xshift=-1em]}] & & \oM_3(\frac7{10}-\epsilon) \cong \oM_3^{\rm hs} \arrow[dl,swap, "\Psi^+",end anchor={[yshift=2ex, xshift=1em]}]\arrow[dr, "\Theta", end anchor={[yshift=2ex, xshift=-1em]}]&\\
    &&\mathclap{\oM_3(\frac{7}{10})\cong \oM_3^{\rm cs} \cong \oP^*_4}&&
    \mathclap{\oM_3(\frac{17}{28})\cong \oP^{\GIT}_4}
    \end{tikzcd}
\end{center}

The main results of their work can be summarized in the following. 

\begin{theorem}[Birational geometry of the moduli space of genus three curves \cite{hyeon2010log}] \leavevmode
\begin{itemize}
\item There is a contraction morphism $T: \oM_3 \to \oM_3(\frac9{11}) \cong \oM_3^{\rm ps}$ to Schubert's moduli space of pseudostable curves, given by contracting the locus of elliptic tails.
\item There is a small contraction $\Psi: \oM_3^{\rm ps} \to \oM_3^{\rm cs}$, to the GIT quotient of the Chow variety of bicanonical curves $\mathrm{Chow}_{3,2}\gquot \SL(6)$ given by contracting the locus of elliptic bridges. 
\item There is a flip $\Psi^+: \oM_3(\alpha) \to \oM_3(\frac7{10})$ for $17/28 < \alpha < 7/10$, where $\oM_3(\alpha) \cong \oM_3^{\rm hs}$, the GIT quotient of the Hilbert scheme of bicanonical curves $\mathrm{Hilb}_{3,2}\gquot \SL(6)$. 
\item There is a divisorial contraction $\Theta: \oM_3^{\rm hs} \to \oP_4^{\GIT}$ to the GIT quotient of plane quartics given by $\bP(\Gamma(\calO_{\PP^2}(4)))\gquot \SL(3)$. 
\end{itemize}
Furthermore, $\oM_3^{\rm ps}$ can be identified with $\oP^{\rm H}_4$, Hacking's moduli space of plane quartics. 
\end{theorem}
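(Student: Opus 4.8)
The assertion to prove is that $\oM_3^{\rm ps}$ (equivalently $\oM_3(\frac{9}{11})$, Schubert's space of pseudostable curves) is isomorphic to $\oP^{\rm H}_4$, Hacking's KSBA compactification of the moduli of plane quartics. The plan is to exhibit a bijective morphism between the two spaces and then invoke normality (both are normal proper varieties — $\oP^{\rm H}_4$ is normal since $3 \nmid 4$ gives a smooth stack by the theorem quoted from \cite{Hac04,Ale96}, and $\oM_3^{\rm ps}$ is normal as a quotient of a smooth Hilbert scheme locus) together with Zariski's main theorem to conclude it is an isomorphism. The cleanest route is to construct the morphism in the direction $\oM_3^{\rm ps} \to \oP^{\rm H}_4$: given a pseudostable genus three curve $C$ (nodes and cusps allowed, no elliptic tails), its bicanonical map realizes it either as a plane quartic — when $C$ is non-hyperelliptic — or as a hyperelliptic/degenerate object. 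One then needs a recipe that, to each such $C$, functorially assigns a Hacking stable pair $(X,D)$ with $X$ a $\bQ$-Gorenstein degeneration of $\bP^2$ and $D$ of degree $4$: for $C$ non-hyperelliptic take $(\bP^2, C)$ itself; for $C$ hyperelliptic the canonical model lands on a conic and one must instead take the natural degeneration $(\bP(1,1,4), D)$ with $D$ the corresponding degree $8$ curve; and for the most degenerate stratum (where $C$ acquires an elliptic bridge or a separating node) one uses the non-normal surface $\bP(1,1,2) \cup \bP(1,1,2)$. This stratification of pseudostable curves matches exactly the three strata of $\oP^{\rm H}_4$ described in the text.

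The key steps, in order, are: (1) identify on each of the three strata of $\oM_3^{\rm ps}$ the canonically attached KSBA pair, checking in each case that the pair is Hacking stable — that $(X,(\frac{3}{4}+\epsilon)D)$ is slc with $K_X + (\frac{3}{4}+\epsilon)D$ ample, that $4K_X + 3D \sim 0$, and that the pair $\bQ$-Gorenstein smooths to $(\bP^2, C_t)$; (2) organize this assignment into a family over (an atlas of) $\oM_3^{\rm ps}$, i.e. produce a $\bQ$-Gorenstein family of Hacking stable pairs over the relevant parameter space, which by the moduli property of $\ocP^{\rm H}_4$ yields a morphism $\oM_3^{\rm ps} \to \oP^{\rm H}_4$; (3) check this morphism is bijective on closed points — surjectivity because every Hacking stable pair of degree $4$ has as its "curve part" a genus three Gorenstein curve with at worst nodes and cusps, hence pseudostable, and injectivity because the surface $X$ and the embedding data are determined by $C$; (4) conclude via normality of the target and Zariski's main theorem (the same finiteness-plus-normality argument used for $\psi$ in the proof of Theorem \ref{thm:firstwall2}).

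The main obstacle I expect is step (1)–(2) along the \emph{boundary} strata, particularly the codimension-two locus: one must verify that the elliptic-bridge (or separating-node) pseudostable curves correspond precisely to pairs on the non-normal surface $\bP(1,1,2)\cup\bP(1,1,2)$, and that the gluing data on the double locus (an snc condition on the curve at the double curve) is exactly recovered from the pseudostable curve's combinatorics. Relatedly, one must rule out that the contraction $T$ of elliptic tails on $\oM_3$ interferes — i.e. that distinct pseudostable curves never map to the same Hacking pair — which amounts to checking that the elliptic-tail identification already performed in $\oM_3 \to \oM_3^{\rm ps}$ is the \emph{only} identification, and no further collapsing happens on the KSBA side. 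A secondary technical point is ensuring the assignment is a genuine morphism of stacks and not merely a set-theoretic map: this requires producing the universal family over a versal deformation space and checking it is $\bQ$-Gorenstein, for which one can lean on the unobstructedness of $\bQ$-Gorenstein deformations of Manetti surfaces (Proposition \ref{prop:manetti}) and the analogous statement for the pair $(X,D)$ proved in \cite[Theorem 3.12]{Hac04}. Once the morphism exists and is bijective, normality forces it to be an isomorphism, completing the proof.
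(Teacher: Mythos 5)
This theorem is stated in the paper purely as a citation of \cite{hyeon2010log} (with the last claim attributed to Hassett \cite{hassettquartic}); the paper does not supply a proof, so there is no ``paper's own argument'' to compare against, and your blind proposal has to stand on its own.

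Your overall strategy --- build a morphism $\oM_3^{\rm ps}\to\oP_4^{\rm H}$ stratum-by-stratum, check bijectivity, and conclude via normality and Zariski's main theorem --- is a defensible route, but it is not the one taken in the literature. Hyeon--Lee and Hassett identify $\oM_3^{\rm ps}$ with $\oP_4^{\rm H}$ by exhibiting both as the log canonical model $\Proj\bigoplus_m\Gamma(\oM_3,m(K_{\oM_3}+\tfrac9{11}\delta))$, i.e.\ as the \emph{same} step of the log MMP on $\oM_3$; the isomorphism then falls out of uniqueness of ample models rather than from a pointwise dictionary. That approach sidesteps entirely the delicate stratum matching you flag as ``the main obstacle,'' which is real work: you assert that elliptic-bridge pseudostable curves go to pairs on $\bP(1,1,2)\cup\bP(1,1,2)$, but elliptic bridges are actually only contracted at the later wall $\alpha=7/10$, and the identification of exactly which pseudostable curves correspond to the non-normal stratum (and which to the $\bP(1,1,4)$ divisor, namely hyperelliptic degenerations and tacnodal limits) needs an argument, not an assertion.

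There is also a concrete error that would derail a careful write-up. You say ``its bicanonical map realizes it either as a plane quartic --- when $C$ is non-hyperelliptic --- or as a hyperelliptic/degenerate object.'' This is wrong on both counts. For genus three, $2K_C$ has degree $8$ with $h^0(2K_C)=6$, and the bicanonical map embeds \emph{every} pseudostable genus three curve (hyperelliptic or not) into $\bP^5$ as a degree-$8$ curve; this is precisely why Hyeon--Lee work with $\Hilb_{3,2}$ and $\Chow_{3,2}$ with an $\SL(6)$-action. What realizes a non-hyperelliptic genus three curve as a plane quartic is the \emph{canonical} map via $|K_C|$, which has degree $4$ and $h^0=3$, and which collapses the hyperelliptic locus to double conics. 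So the dictionary you want to build between pseudostable curves and Hacking pairs goes through the canonical map, not the bicanonical one; fixing this also clarifies why the hyperelliptic divisor must be handled via $\bP(1,1,4)$ (the degeneration to the normal cone of the double conic), exactly as in the first-wall analysis of Section \ref{sec:firstwall}.
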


\begin{remark}\leavevmode
\begin{enumerate}
\item The contraction at $\alpha = 9/11$ was originally discovered by Hassett-Hyeon \cite{hassett2013log}. 
\item In the case of degree $d = 4$ the space of Hacking was independently constructed by Hassett (see \cite{hassettquartic}).
\item As we saw in Section \ref{sec:lowdegree}, we have an isomorphism $\oM_3^{\rm hs}\cong\oP_{4,\frac{3}{4}-\epsilon}^{\K}$.
\end{enumerate}
\end{remark}

The flip $\vartheta$ can be realized as flipping the codimension $2$ locus in $\oP^{\rm H}_4$ parametrizing the curves on $\bP(1,1,2) \cup \bP(1,1,2)$ to the curve in $\oP^{\rm K}_{4, \frac{3}{4}-\epsilon}$ parametrizing tacnodal curves. These flipping and flipped loci of $\vartheta$ are contracted (via $\Psi$ or $\Psi^+$) to a point as the unique $0$-cusp in $\oP^*_4$. 
It is thus natural to expect that $\oP^*_4$ serves as the conjectural (good) moduli space $\overline{P}^{\rm CY}_4$ which parametrizes certain log canonical log Calabi-Yau pairs 
which are $\Q$-Gorenstein degenerations of $(\bP^2, \frac{3}{4}C_4)$. Recall from Section \ref{sec:lowdegree} we showed that there was a large open set $M \subset \oP^*_4$ whose codimension inside  $\oP^{\K}_{4,\frac{3}{4} - \epsilon}$ is $\geq 2$. Using this, we proved (see Theorem \ref{thm:logcy4}) that the moduli space $\oP^*_4$ was the ample model of the Hodge line bundle on $\oP_{4, \frac{3}{4}-\epsilon}^\K$. Noting that the codimension of $M$ inside $\oP^{\rm H}_4$ is also $\geq 2$, the same proof gives the following.

 \begin{theorem}\label{thm:logcy4-hacking}The moduli space $\oP^*_4$ is the ample model of the Hodge line bundle on $\oP_{4}^{\rm H}$. \end{theorem}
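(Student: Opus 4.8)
The plan is to mimic almost verbatim the proof of Theorem \ref{thm:logcy4}, replacing the K-moduli space $\oP^{\K}_{4,\frac34-\epsilon}$ by Hacking's moduli space $\oP^{\rm H}_4$. First I would recall the open subset $M\subset \oP^*_4$ parametrizing ADE K3 surfaces of degree $4$ with $\bZ/4\bZ$-symmetry; by taking the $\bZ/4\bZ$-quotient, $M$ also parametrizes quartic plane curves and their degenerations with at worst $A_2$-singularities. The key geometric input is that $M$ embeds as an open subset of $\oP^{\rm H}_4$ whose complement has codimension $\geq 2$: indeed, a quartic curve with at worst $A_2$-singularities, or its degenerations to curves on $\bP(1,1,4)$ with at worst $A_2$ away from the singular point, are Hacking stable, so $M$ is an open subscheme of $\oP^{\rm H}_4$; the loci added in $\oP^{\rm H}_4$ (curves on $\bP(1,1,4)$ that are worse than $A_2$ at the singular point, and the codimension-two locus of curves on $\bP(1,1,2)\cup\bP(1,1,2)$) all have codimension $\geq 2$, which matches the codimension-two stratification of $\oP^*_4$ described by Hyeon--Lee.

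Next I would identify the Hodge line bundle restricted to $M$. On $M$, the Hodge $\bQ$-line bundle $\lambda_{\Hodge}$ on $\oP^{\rm H}_4$ (in the sense of Definition \ref{defn:hodge}, applied to the universal Hacking family with $r=\tfrac{4}{3}$) is pulled back, via the period map, from the Hodge line bundle on the relevant period domain $\mathbb D/\Gamma$, which is the descent of $\calO(1)$ and hence ample; here one uses the comparison between the Hodge bundle of the pair $(X,\tfrac34 C)$ and the Hodge bundle of the associated $\bZ/4\bZ$-cover K3 surface, as in \cite[Section 6.2]{huybrechts}. Since $\oP^*_4$ is the Baily--Borel compactification of $M$, the line bundle $\lambda_{\Hodge}|_M$ extends to the ample line bundle $\calO_{\oP^*_4}(1)$ on $\oP^*_4$.

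Then I would invoke \cite{CP18} (via Proposition \ref{prop:logCM2} and the nefness results quoted in the proof of Theorem \ref{thm:logcy4}) to conclude that $\lambda_{\Hodge}$ is nef on $\oP^{\rm H}_4$. Since $M$ is a big open subset of both $\oP^{\rm H}_4$ (codimension of complement $\geq 2$) and $\oP^*_4$, the section rings agree, so $\lambda_{\Hodge}$ is big and semiample on $\oP^{\rm H}_4$ and
\[
\oP^*_4\cong \Proj\Big(\bigoplus_{k=0}^{\infty}H^0\big(\oP^{\rm H}_4,\lambda_{\Hodge}^{\otimes k}\big)\Big)
\]
is the ample model of $\lambda_{\Hodge}$; this is exactly the asserted statement. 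As in Theorem \ref{thm:logcy4} I would remark that an alternative argument is available via \cite[Theorem 1.2]{fujino2003} together with functoriality of the Hodge line bundle.

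The main obstacle—really the only nontrivial point—is verifying that $M$ is genuinely an open subscheme of $\oP^{\rm H}_4$ with boundary complement of codimension $\geq 2$, and that the two boundary stratifications of $\oP^{\rm H}_4$ and $\oP^*_4$ match up compatibly with the period map. For $\oP^{\rm H}_4$ this requires checking that a degenerate quartic with at worst $A_2$-singularities (possibly living on $\bP(1,1,4)$) is stable in Hacking's sense, which follows from the slc and ampleness conditions in the definition of Hacking stable pairs, and that the ADE-ness condition is the $\bZ/4\bZ$-equivariant reformulation of $A_2$-ness; and that the remaining Hacking-stable pairs (non-normal surfaces, or worse singularities at the vertex of $\bP(1,1,4)$) form loci of codimension $\geq 2$, which is exactly the content of Hyeon--Lee's identification $\oP^*_4\cong \overline M_3^{\rm cs}$ together with the known stratification of $\oP^{\rm H}_4\cong \overline M_3^{\rm ps}$. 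Everything else is a routine transplant of the proof of Theorem \ref{thm:logcy4}.
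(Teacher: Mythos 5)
Your proposal matches the paper's proof in spirit: the paper's own proof is just the one-line observation that the codimension of $M$ inside $\oP^{\rm H}_4$ is also $\geq 2$, followed by ``the same proof [as Theorem~\ref{thm:logcy4}] gives the following,'' and your proposal is a faithful expansion of that. The one step that does \emph{not} transplant verbatim is the nefness of $\lambda_{\Hodge}$: the reference \cite{CP18} (Theorem~1.13 plus the interpolation formula of Proposition~\ref{prop:k-cm-interpolation}) proves nefness of the Hodge $\bQ$-line bundle on K-moduli spaces of $\bQ$-Gorenstein smoothable \emph{Fano} pairs. Hacking's space $\oP^{\rm H}_4$ parametrizes KSBA-stable pairs $(X,(\tfrac34+\epsilon)D)$ with $K_X+(\tfrac34+\epsilon)D$ ample, so it is not a K-moduli space of Fano pairs and \cite{CP18} does not apply to it. What one should invoke instead is a semi-positivity theorem for the Hodge bundle of a family of (s)lc log Calabi-Yau pairs over the normal proper base $\oP^{\rm H}_4$ --- exactly the Fujino result \cite[Theorem~1.2]{fujino2003} that the paper points to as the ``alternative proof'' in the final sentence of the proof of Theorem~\ref{thm:logcy4}, or alternatively the ampleness of the CM line bundle on KSBA moduli from \cite{PX17} together with a degenerate-coefficient limit as $\epsilon\to 0$. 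With this substitution, the rest of your argument (bigness of $M$, extension to $\calO_{\oP^*_4}(1)$, identification of section rings, semiampleness, $\Proj$) is exactly the intended argument. Your remark about verifying the openness of $M$ in $\oP^{\rm H}_4$ is fine, though the ADE condition on the vertex of $\bP(1,1,4)$ corresponds to the curve being nodal (type $A_1$) there, not $A_2$; the generic boundary divisor of $\oP^{\rm H}_4$ therefore does lie in $M$, which is what gives codimension $\geq 2$.
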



\subsubsection{Sextics}
Recall from Section \ref{sec:lowdegree}, we discussed the Kirwan desingularization of the GIT quotient of sextic curves (constructed by Shah), as well as the morphism from this moduli space to the Baily-Borel compactification of degree 2 K3 surfaces. By the work of \cite{AET} there is also a morphism $\oP^{\rm H}_6 \to \oP^*_6$, and so we obtain the following diagram.


\begin{center}
    \begin{tikzcd}
    \oP^{\rm H}_6 \arrow[dr] \arrow[rr, dashrightarrow] & & \widehat{P}^{\GIT}_6\cong\oP_{6,\frac{1}{2}-\epsilon}^{\K} \arrow[dl] \arrow[dr] \\
& \oP^*_6 & & \oP^{\GIT}_6 
    \end{tikzcd}
\end{center}

Again, we argue that it is natural to believe that the candidate for $\oP^{\rm CY}_6$ is $\oP^*_6$.
The locus contracted from $\oP^{\rm H}_6 \to \oP^*_6$ is divisorial, so the proof of Theorem \ref{thm:sextic-Hodge} does not imply the same result on the Hacking side as immediately as it did for degree four. In fact, \cite{AET} shows that there are actually several divisors contracted -- these divisors parametrize pairs whose double covers give K3 surfaces of Type II or Type III, in the sense of Kulikov degenerations. Valery Alexeev has suggested to us that the result is still true and can be proven by looking at the Kulikov degenerations of the relevant K3 surfaces.

\subsubsection{Log Calabi-Yau wall crossing}
In general, we can say the following.

\begin{theorem}\label{thm:comparison}
Let $\oP^{\rm H, \circ}_d$ denote the complement of the locus of non-normal pairs in $\oP^{\rm H}_d$. Let $\oP^{\K, \circ}_{d,\frac{3}{d} - \epsilon}$ denote the complement of the locus of pairs with $\lct = \frac{3}{d}$ inside $\oP^{\K}_{d,\frac{3}{d} - \epsilon}$. Then $\oP^{\rm H, \circ}_d \cong \oP^{\K,\circ}_{d,\frac{3}{d} - \epsilon}.$
\end{theorem}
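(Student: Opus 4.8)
\textbf{Proof strategy for Theorem \ref{thm:comparison}.} The plan is to produce mutually inverse open immersions between the two moduli spaces and then argue that they are in fact isomorphisms using the normality of both spaces together with a codimension estimate. First I would observe that a Hacking stable pair $(X,D)$ of degree $d$ lies in $\oP_d^{\rm H,\circ}$ exactly when $X$ is a (normal) Manetti surface, so that $(X,\frac{3}{d}D)$ is a log canonical log Calabi-Yau pair with $dK_X+3D\sim 0$; this is the content of the definition of Hacking stability together with Proposition \ref{prop:manetti} and the fact that Manetti surfaces are $\bQ$-Gorenstein smoothable to $\bP^2$. Conversely, a point of $\oP_{d,\frac{3}{d}-\epsilon}^{\K,\circ}$ parametrizes a $c$-K-polystable pair $(X,cD)$ with $c=\frac{3}{d}-\epsilon$ and $\lct(X;D)>\frac{3}{d}$, hence (by Proposition \ref{prop:k-interpolation}(3), applied with $\Delta=\frac{3}{d}D$) the pair $(X,c'D)$ is uniformly K-stable for every $c'\in(0,\frac{3}{d})$, and in particular $X$ is klt, so $X$ is a normal Manetti surface. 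Thus set-theoretically both spaces parametrize the same collection of objects: pairs $(X,D)$ with $X$ a Manetti surface and $(X,\frac{3}{d}D)$ klt log Calabi-Yau.

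Next I would upgrade this to a morphism of stacks (hence of good moduli spaces). On the K-moduli side, Proposition \ref{prop:lclastwall}(1) already gives an open immersion $P^{\klt}_{d,c}\hookrightarrow \oP_d^{\rm H}$ for any $c\in(0,\frac{3}{d})$, coming from the fact that a Hacking stable pair lies in $P^{\klt}_{d,c}$ iff it is uniformly $c$-K-stable (via Theorem \ref{thm:bddtestK}). Taking $c=\frac{3}{d}-\epsilon$, the locus $P^{\klt}_{d,\frac{3}{d}-\epsilon}$ is precisely $\oP_{d,\frac{3}{d}-\epsilon}^{\K,\circ}$: indeed $\lct(X;D)>\frac{3}{d}$ forces uniform K-stability for all smaller coefficients by the interpolation argument above, and conversely if $\lct(X;D)=\frac{3}{d}$ then for $c$ close to $\frac{3}{d}$ the log discrepancy inequality $A_{(X,cD)}\geq S_{(X,cD)}$ can only be non-strict (so the pair is strictly K-semistable, not uniformly K-stable); this is the point where one checks the two ``$\circ$'' loci really match. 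So we obtain an open immersion $\Phi:\oP_{d,\frac{3}{d}-\epsilon}^{\K,\circ}\hookrightarrow \oP_d^{\rm H}$ whose image lies in $\oP_d^{\rm H,\circ}$ since every pair in the image has normal (Manetti) underlying surface.

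The final step is to show $\Phi$ is surjective onto $\oP_d^{\rm H,\circ}$, i.e. that every Hacking stable pair with normal underlying surface lies in the image. Here the key input is that such a pair $(X,D)$ with $X$ Manetti and $(X,\frac{3}{d}D)$ slc (hence lc) has $K_X+(\frac{3}{d}+\epsilon)D$ ample, which combined with $dK_X+3D\sim 0$ shows $(X,\frac{3}{d}D)$ is a genuine klt log Calabi-Yau pair (klt because slc plus normal plus the stated discrepancy condition, after excluding the strictly lc boundary stratum which is exactly $\oP_d^{\rm H}\setminus\oP_d^{\rm H,\circ}$ being removed); then Proposition \ref{prop:k-interpolation}(3) makes $(X,(\frac{3}{d}-\epsilon)D)$ uniformly K-stable, placing $[(X,D)]$ in $\oP_{d,\frac{3}{d}-\epsilon}^{\K,\circ}$. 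This gives a set-theoretic inverse; since $\Phi$ is an open immersion between normal varieties (normality of $\oP^\K$ is Proposition \ref{prop:Zplanecurve}(3), normality of $\oP_d^{\rm H,\circ}$ follows from the smoothness of $\ocP_d^{\rm H}$ when $3\nmid d$ and more generally from the reducedness statement in the excerpt, or can be deduced from the K-side via $\Phi$) with image all of $\oP_d^{\rm H,\circ}$, it is an isomorphism. \textbf{The main obstacle} I anticipate is the delicate matching of the two exceptional loci: one must verify carefully that the non-normal locus removed on the Hacking side corresponds exactly, under the birational identification, to the $\lct=\frac{3}{d}$ locus removed on the K-moduli side — i.e. that a normal Hacking stable pair automatically has $\lct>\frac{3}{d}$ and, conversely, that a $\frac{3}{d}$-lc K-semistable pair that fails $\lct>\frac{3}{d}$ cannot be made Hacking stable with a normal surface. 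This requires knowing that Hacking stability forces the boundary divisor to be relatively ``positive enough'' so that the pair is actually klt rather than merely lc, which in turn uses the ampleness of $K_X+(\frac{3}{d}+\epsilon)D$ and the structure of Manetti surfaces; all the remaining steps are formal consequences of the results already established in Sections \ref{sec:prelim}--\ref{sec:generaldegree}.
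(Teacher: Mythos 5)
Your proposal is correct and follows essentially the same route as the paper: show set-theoretic equality of the two open loci (one direction using that $\lct > 3/d$ forces klt log Calabi-Yau hence Hacking stability with normal underlying surface; the other using interpolation from the klt log CY pair down to $c = 3/d - \epsilon$), then upgrade to an isomorphism via the open immersion from Proposition~\ref{prop:lclastwall}(1). The only substantive difference is a choice of reference: the paper invokes \cite[Theorem 5.2]{LWX14} for the direction $\oP^{\rm H, \circ}_d \subseteq \oP^{\K,\circ}_{d,\frac{3}{d} - \epsilon}$, where you use Proposition~\ref{prop:k-interpolation}(3); these encode the same interpolation-from-log-CY idea, and either works.
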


\begin{proof}
We first show that $\oP^{\K, \circ}_{d,\frac{3}{d} - \epsilon} \subseteq \oP^{\rm H, \circ}_d$. If $(X,D)$ is a pair parametrized by $\oP^{\K, \circ}_{d,\frac{3}{d} - \epsilon}$ then $\lct > \frac{3}{d}$ and so by definition $(X,D)$ is a pair parametrized by $\oP^{\rm H, \circ}_d$. Conversely, by \cite[Theorem 5.2]{LWX14}, if $(X,D)$ is parametrized by $\oP^{\rm H, \circ}_d$, then $(X,D)$ is parametrized by $\oP^{\K, \circ}_{d,\frac{3}{d}-\epsilon}$.  
\end{proof}

In particular, the above result says that if one looks at the K-moduli space for coefficient $\frac{3}{d} - \epsilon$, then the only difference between this space and the Hacking moduli space are the maximally lct pairs in the K-moduli space and the non-normal pairs in the Hacking moduli space. In particular, we conjecture that there is a proper good moduli space of log Calabi-Yau pairs which relates to the K-moduli and Hacking moduli spaces via the following conjectural picture.

\begin{conj}[Log Calabi-Yau wall crossings]\label{conj:logCY}
 There exists a proper good moduli space $\oP_{d}^{\rm CY}$
 which parametrizes $\rmS$-equivalence classes of
 \emph{semistable} log Calabi-Yau pairs $(X,\frac{3}{d}D)$
 where $X$ admits a $\bQ$-Gorenstein smoothing to $\bP^2$.
 Moreover, we have a log Calabi-Yau wall crossing diagram
\[
 \oP^{\K}_{d,\frac{3}{d}-\epsilon}\xrightarrow{\phi_{-}^{\rm CY}}
 \oP_{d}^{\rm CY}\xleftarrow{\phi_{+}^{\rm CY}}\oP_{d}^{\rm H}
\]
where $\oP_{d}^{\rm CY}$ is the common ample model of the Hodge line bundles on  $\oP^{\K}_{d,\frac{3}{d}-\epsilon}$ and $\oP_{d}^{\rm H}$.
\end{conj}

\subsection{Log Calabi-Yau quintics}\label{sec:logcyquintics}
 Recall that Hacking's moduli space $\oP^{\rm H}_5$ parametrizes $\bQ$-Gorenstein deformations of pairs $(\bP^2, cC)$ where $c = \frac{3}{5} + \epsilon$ for $\epsilon$ sufficiently small and $\deg(C)=5$. From Theorem \ref{thm:comparison}, we know that there is a birational map \[ \oP^{\rm H}_5 \dashrightarrow \oP^{\rm{K}}_{c}\] where $c \in \left( \frac{54}{95}, \frac{3}{5} \right)$,  that is an isomorphism over the locus of pairs $(X,D)$ where $X$ is normal and $\lct(X,D) > \frac{3}{5}$. Here we omit the degree $5$ in the subscript of K-moduli spaces and stacks. In other words, the pairs that will become unstable when increasing the weight from $\frac{3}{5} - \epsilon$ to $\frac{3}{5} + \epsilon$ are precisely the pairs with $\lct = \frac{3}{5}$ -- i.e. $A_9$ and $D_6$ singularities.  Note that the $A_9$ singularity can occur either at a smooth point of the surface $X$ or at a $\frac{1}{4}(1,1)$ singularity.  In this setting, we have the following results:
 \begin{enumerate}
     \item the stable replacement in $\oP^{\rm H}_5$ of a curve $C \subset X$ with $A_9$ singularity is either a curve on $\bP(1,1,5) \cup X_6$ or a curve on $\bP(1,1,5) \cup \bP(1,4,5)$, and
     \item the stable replacement in $\oP^{\rm H}_5$ of a curve $C \subset X$ with a $D_6$ singularity is a curve on $\bP(1,1,2) \cup \bP(1,1,2)$. 
 \end{enumerate}
 
These are precisely the non-normal pairs which appear in $\oP^{\rm H}_5$.  We conjecture that there is a projective variety $\oP_{5}^{\rm CY}$ parametrizing $\rmS$-equivalence classes of slc log Calabi-Yau pairs $(X,\frac{3}{5}D)$. 

\begin{conj}\label{conj:logCYquintics}
The ample models of the Hodge line bundles on $\oP^{\rm H}_5$ and $\oP^{\rm K}_{\frac{3}{5}-\epsilon}$ exist and coincide. We denote this common ample model by $\oP_{5}^{\rm CY}$. It parameterizes $\rmS$-equivalence classes of slc log Calabi-Yau pairs that are $\bQ$-Gorenstein deformations of $(\bP^2, \frac{3}{5}C)$. 

In particular, $\oP_{5}^{\rm CY}$ should serve as the base of a flip 
\[\xymatrix{ \oP^{\rm H}_{5} \ar[rd] \ar@{-->}[rr] & & \oP^{\K}_{3/5 - \epsilon} \ar[ld] \\
& \oP_{5}^{\rm CY} & }
\]
that realizes the rational map $\oP^{\rm H}_5 \dashrightarrow \oP^{\rm{K}}_{3/5-\epsilon}$.
\end{conj}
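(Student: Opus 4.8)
The plan is to run the familiar three–step template of this paper — comparison on a big open locus, a nefness/triviality analysis of the relevant polarization, and construction of a good moduli space as a projective target — now in the log Calabi--Yau régime. First I would pin down the common open locus. By Theorem \ref{thm:comparison} the birational map $\oP^{\rm H}_{5}\dashrightarrow\oP^{\K}_{3/5-\epsilon}$ restricts to an isomorphism over $\oP^{\circ}$, the locus of pairs $(X,\tfrac35 D)$ with $X$ normal and $(X,\tfrac35 D)$ klt. Outside $\oP^{\circ}$ the two spaces differ exactly as recorded in Section \ref{sec:logcyquintics}: on the K-side one removes the strictly lc pairs (boundary singularities of type $A_9$ or $D_6$, the boundary-free surface being $\bP^2$, $\bP(1,1,4)$, $X_{26}$ or $\bP(1,4,25)$), and on the Hacking side one removes the non-normal pairs supported on $\bP(1,1,5)\cup X_6$, $\bP(1,1,5)\cup\bP(1,4,5)$ and $\bP(1,1,2)\cup\bP(1,1,2)$. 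Using the wall-crossing descriptions of Section \ref{sec:quintics} together with the classification of the non-normal stable replacements, I would check that each removed locus has codimension $\ge 2$ in its ambient space; this is precisely what makes the diagram a flip rather than a divisorial contraction, and it is the first place careful bookkeeping is needed.

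Second, I would analyze the Hodge $\bQ$-line bundle $\lambda_{\Hodge}$. On both $\oP^{\rm H}_{5}$ and $\oP^{\K}_{3/5-\epsilon}$ it is nef, by Codogni--Patakfalvi \cite{CP18} and its log generalization due to Posva \cite{Pos19}; on $\oP^{\circ}$ the two restrictions are canonically identified; and on each removed locus $\lambda_{\Hodge}$ is trivial, because over such a locus the fibers are log Calabi--Yau degenerations of a fixed $\rmS$-polystable pair with $K_X+\tfrac35 D\sim_{\bQ}0$, so the descended class $K_{\cX}+\tfrac35\cD$ restricted there is a torsion boundary contribution. Since both spaces are normal and $\oP^{\circ}$ is a dense open whose complement has codimension $\ge 2$, for every $m$ the restriction maps $H^0(\oP^{\rm H}_{5},m\lambda_{\Hodge})\to H^0(\oP^{\circ},m\lambda_{\Hodge})$ and $H^0(\oP^{\K}_{3/5-\epsilon},m\lambda_{\Hodge})\to H^0(\oP^{\circ},m\lambda_{\Hodge})$ are isomorphisms, so the two section rings agree. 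Bigness is visible on the smooth-curve locus $P_5$, where $\lambda_{\Hodge}$ is the descent of $\tfrac35\,\cO_{\bfP_5}(1)$ and hence ample — one must note here that, although $K_{\bP^2}+\tfrac35 C\sim_{\bQ}0$ on a fixed $\bP^2$, the relative class over a non-isotrivial family of quintics is a positive multiple of the tautological polarization pulled back from $\bfP_5$. Finite generation of the common section ring would then follow either from semiampleness of $\lambda_{\Hodge}$, for which I would exploit that $\lambda_{\Hodge}$ is the $c\to\tfrac35$ limit of the ample CM classes of Proposition \ref{prop:k-cm-interpolation} together with the relative ampleness of Theorem \ref{thm:VGIT-CM-ample} and the Nakai--Moishezon argument of Theorem \ref{thm:projectivity}, or more directly from the explicit Mori-theoretic structure of these moduli spaces. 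Define $\oP^{\rm CY}_{5}$ to be the resulting $\Proj$; by construction it is simultaneously the ample model of $\lambda_{\Hodge}$ on $\oP^{\rm H}_{5}$ and on $\oP^{\K}_{3/5-\epsilon}$, and it carries contractions $\phi^{\rm CY}_{\pm}$ which are isomorphisms over $\oP^{\circ}$ and contract exactly the removed loci.

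Third, I would supply the modular interpretation of $\oP^{\rm CY}_{5}$. Following Section \ref{sec:construction}, one introduces the stack of semistable slc log Calabi--Yau pairs $(X,\tfrac35 D)$ that $\bQ$-Gorenstein deform to $(\bP^2,\tfrac35 C)$: boundedness follows by combining the log boundedness of the K-semistable such pairs (in the spirit of Corollary \ref{cor:bdd}) with Hacking's boundedness of slc stable pairs; openness of semistability and existence of $\rmS$-polystable representatives are checked étale-locally; and a proper good moduli space $\oP^{\rm CY}_{5}$ would be produced from Alper's criteria as in the proof of Theorem \ref{thm:lwxlog}, with $\lambda_{\Hodge}$ descending to an ample class. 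Universality of the good moduli space then yields morphisms $\phi^{\rm CY}_{-}\colon\oP^{\K}_{3/5-\epsilon}\to\oP^{\rm CY}_{5}$ and $\phi^{\rm CY}_{+}\colon\oP^{\rm H}_{5}\to\oP^{\rm CY}_{5}$, which must agree with the contractions of the previous step by the same section-ring comparison; this completes the flip diagram of Conjecture \ref{conj:logCYquintics}. Finally one records the set-theoretic picture by listing the $\rmS$-polystable slc log CY pairs — the normal ones on $\bP^2$, $\bP(1,1,4)$, $X_{26}$, $\bP(1,4,25)$ together with the three non-normal gluings — and matching an $A_9$- or $D_6$-pair on a normal surface with its non-normal stable replacement, which is exactly the assertion that they represent the same $\rmS$-equivalence class.

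The hard part will be the third step: there is presently no general good moduli theory for (log) Calabi--Yau pairs playing the role that K-stability and KSBA stability play in the log Fano and log general type settings, so constructing $\oP^{\rm CY}_{5}$ as an honest proper good moduli space — rather than merely as the $\Proj$ of a section ring equipped with a set-theoretic bijection onto $\rmS$-equivalence classes — is genuinely open, and is precisely what the authors defer to forthcoming work. Secondary obstacles are the unconditional finite generation/semiampleness of $\lambda_{\Hodge}$ on the K-moduli side, and the codimension bookkeeping in the first step required to guarantee that the diagram is a flip rather than merely a birational contraction; if that fails for one of the non-normal loci, the statement would have to be weakened to a generalized flip or flop.
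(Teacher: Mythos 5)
You correctly recognize that Conjecture~\ref{conj:logCYquintics} is stated in the paper as a conjecture, with only partial evidence supplied; the authors explicitly defer the construction of $\oP^{\rm CY}_{5}$ to forthcoming work. Your roadmap --- isomorphism over the common big open locus via Theorem~\ref{thm:comparison}, a Hodge-line-bundle nefness/bigness/triviality analysis, then a modular construction following Section~\ref{sec:construction} --- is faithful in outline to the evidence the paper presents, and you identify the same obstructions the authors name: there is currently no good moduli theory for log Calabi--Yau pairs, and the semiampleness and triviality statements for the Hodge class on the K-moduli side are open.

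Two places should nonetheless be sharpened. Your assertion that $\lambda_{\Hodge}$ is trivial on the removed loci ``because\ldots the descended class is a torsion boundary contribution'' is not an argument. The paper's Proposition~\ref{prop:quinticshodge} establishes triviality of $L_+$ on $\oSigma^{\rm H}_{6}$ and $\oSigma^{\rm H}_{7}$ only by an explicit coordinate computation: realizing the non-normal pairs as gluings of two plt components over a weighted projective parameter space, writing down a nowhere-vanishing section of $5(K_{\cX/T}+\cD)$ in local coordinates, and verifying its $\bG_m$-weight is zero. No analogous argument is presently available for $L_-$ on the K-side, and this is precisely the gap the authors flag. Second, the codimensions are asymmetric in a way worth noting: $\oSigma^{\rm H}_{6}$ and $\oSigma^{\rm H}_{7}$ are each $10$-dimensional (codimension $2$ in the $12$-dimensional Hacking space), whereas $\oSigma_{6,\frac{3}{5}-\epsilon}$ and $\oSigma_{7,\frac{3}{5}-\epsilon}$ are $1$- and $3$-dimensional, respectively. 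Both complements are small, so the section-ring comparison over $\oP^{\circ}$ goes through, but the proposed ``flip'' is not of the dimension-balancing type encountered in the local VGIT wall crossing of Theorem~\ref{thm:wallcrossingchart}, and your sketch should not implicitly import that structure. Finally, for the third step --- building $\oP^{\rm CY}_{5}$ as a proper good moduli space --- running the machinery of Section~\ref{sec:construction} requires first defining a suitable semistability notion for slc log Calabi--Yau pairs and then supplying a substitute for the analytic compactness input of Theorem~\ref{thm:lwx4.1}, which has no counterpart in the log Calabi--Yau regime; this is the genuinely open ingredient, as you correctly say.
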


In the following section, we provide some evidence for Conjecture \ref{conj:logCYquintics}.

\subsubsection{Evidence for the log Calabi-Yau conjecture}
First, we verify that any curve with an $A_9$ singularity admits a common degeneration to a unique curve on $\bP(1,1,5) \cup \bP(1,4,5)$ with an $A_9$ singularity in each component.  Similarly, any curve with a $D_6$ singularity admits a common degeneration to a unique curve on $\bP(1,1,2) \cup \bP(1,1,2)$ with a $D_6$ singularity in each component. 
\begin{prop}
All curves  in $\oP^{\K}_{3/5 - \epsilon}$ with an $A_9$ singularity and all curves in $\oP^{\rm H}_{5}$ on $X_6 \cup \bP(1,1,5)$ or $\bP(1,1,5) \cup \bP(1,4,5)$ admit a common degeneration to a unique curve on $\bP(1,1,5) \cup \bP(1,4,5)$ with log canonical threshold exactly $\frac{3}{5}$.
\end{prop}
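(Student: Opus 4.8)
The plan is to produce an explicit $\Q$-Gorenstein degeneration, over $\bA^1$, whose general fiber is the pair $(\bP^2, C)$ (with $C$ having an $A_9$ singularity) and whose central fiber is the non-normal pair $(\bP(1,1,5)\cup\bP(1,4,5), D_0)$ with prescribed curve $D_0$; then to show that the same degeneration can be built starting from a curve on $X_6\cup\bP(1,1,5)$ or on $\bP(1,1,5)\cup\bP(1,4,5)$; and finally to check that $D_0$ is uniquely determined and has $\lct=\tfrac35$. The construction should mirror the degeneration in Proposition \ref{prop:a12deg}: write the local equation of $C$ at the $A_9$ point in suitable analytic coordinates as $x'^2 = a y^{10}+(\textrm{higher order})$ with $a\neq0$, assigning weights so that the $(5,2,1)$-weighted blow up of $\bP^2\times\bA^1$ along $(x',y,t)$ extracts an exceptional surface $S$, perform the appropriate birational modification (contraction of the strict transform of $C$ in the $\bP^2$-component followed by a flop, exactly as in Proposition \ref{prop:a12deg}), and identify the resulting central fiber components. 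First I would record that $-K_{\bP^2}-cC\sim_{\Q}(3-5c)H$ and compute $A_{(\bP^2,cC)}(E)$, $S_{(\bP^2,cC)}(E)$ for the weighted-blow-up divisor $E$ over the $A_9$ point, which pins down that the relevant wall lives at $c=\tfrac35$, i.e. that the degeneration is a log Calabi-Yau phenomenon rather than a log Fano one. (Indeed an $A_9$ singularity has $\lct=\tfrac35$ by Skoda/explicit jet computation, so $(\bP^2,\tfrac35 C)$ is strictly log canonical.)

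\textbf{Key steps, in order.} (1) Normal form: classify the analytic (or formal) local equation of an $A_9$-singular quintic both at a smooth point of the ambient surface and at a $\tfrac14(1,1)$ point, producing in each case a weighted coordinate system with a distinguished weight vector; then show the weighted blow up in these coordinates produces, after the flop/contraction as in Proposition \ref{prop:a12deg}, a degeneration of $\bP^2$ (resp.\ of $\bP(1,1,4)$) whose central fiber is $\bP(1,1,5)\cup_{\bP^1}\bP(1,4,5)$, glued along a smooth rational curve. Here one computes the cyclic quotient singularity of the contracted surface $S$ exactly as in that proof: after contracting the $(-1)$-curve the exceptional surface $S'$ acquires a singularity of type $\tfrac{1}{25}(1,4)$, and its $\Q$-Gorenstein ample model is $\bP(1,4,5)$ (respectively the partial smoothing $X_6$ is replaced by $\bP(1,4,5)$ in the limit). (2) Comparison from the Hacking side: a curve on $X_6\cup\bP(1,1,5)$ or on $\bP(1,1,5)\cup\bP(1,4,5)$ can be placed into the same family by degenerating $X_6$ to $\bP(1,4,5)$ via the $\Q$-Gorenstein smoothing going the other direction (Proposition \ref{prop:manetti}), and then sliding the curve into the standard position via the $\bG_m$-action of $\Aut(\bP(1,4,5))$ on the relevant linear system, analogous to how Proposition \ref{prop:onlykps-X_26} uses a $1$-PS to degenerate an arbitrary curve on $X_{26}$ to $C_0'$. (3) Uniqueness and $\lct$: show that the limit curve $D_0$ on $\bP(1,1,5)\cup\bP(1,4,5)$ is forced to be snc along the double locus $\bP^1$ and to have an $A_9$ singularity in each normal component, and that up to automorphism of the non-normal surface this determines $D_0$ uniquely; then verify $\lct(\bP(1,1,5)\cup\bP(1,4,5); D_0)=\tfrac35$ by a local computation at the $A_9$ points (which, pulled back under the finite quotient $\pi:\bA^2\to\tfrac{1}{25}(1,4)$ as in Lemma \ref{lemma:P1425}, reduces to $\lct(\bA^2;\pi^*D_0)$), together with the observation that the pair is log canonical (not klt) there. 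The $D_6$ case is entirely parallel: an $A$-minimal-model computation produces $\bP(1,1,2)\cup_{\bP^1}\bP(1,1,2)$ with a $D_6$-singular curve in each component, and the $\lct$ of a $D_6$ singularity is also $\tfrac35$.

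\textbf{Main obstacle.} The hard part is step (1) and the first half of step (3): carrying out the explicit weighted blow up / flop / contraction and \emph{correctly} identifying the components of the central fiber as $\bP(1,1,5)$ and $\bP(1,4,5)$ (rather than some other pair of weighted projective planes), including the check that the non-normal locus is a single smooth rational curve along which the total family is (semi-)log-terminal, so that the limit pair is genuinely an slc log Calabi-Yau pair $\Q$-Gorenstein-deformation-equivalent to $(\bP^2,\tfrac35 C)$. The bookkeeping of discrepancies through the flop (mirroring the normal-bundle computation $\cN_{\oC_0/\cX}\cong\cO(-1)\oplus\cO(-1)$ in Proposition \ref{prop:a12deg}) and the identification of the graded ring of the ample model of the relevant $\Q$-divisor on the blown-up exceptional surface are where all the real content lies; the uniqueness and $\lct=\tfrac35$ claims then follow formally by semicontinuity of $\lct$ (as in Lemma \ref{lemma:P1425} and \cite{DK}) once the normal form is in hand. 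I would expect that, as in Section \ref{sec:2ndwallquintics}, the K-polystability / S-equivalence statement implicit in ``common degeneration'' reduces to the uniqueness of the limit, which follows from \cite{LWX18} applied to the log Calabi-Yau analogue once one knows such a limit exists.
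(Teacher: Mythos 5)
Your overall template---explicit $\Q$-Gorenstein degeneration via a weighted blow up and a flopping contraction, then a $\bG_m$-degeneration to normalize the limit, then uniqueness plus an $\lct$ check---does match what the paper does. But the mechanism you describe for the first stage is wrong in several ways that would derail the computation if you actually ran it, and they are worth flagging since this is where you yourself say ``all the real content lies.''

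First, the weight vector is $(5,1,1)$, not $(5,2,1)$: in the $6$-jet coordinates $(x',y)$ with $x'=x-y^2-py^5$, the local equation at the $A_9$ point reads $x'^2=(p^2-u)y^{10}+\cdots$, so $(x',y)$ should have weights $(5,1)$ (so that both sides sit in weight $10$); weights $(5,2)$ would correspond to an $A_4$-type normal form $x'^2=y^5$, which is not the situation here. Second, the curve that is contracted by the small flopping contraction $g$ is the strict transform $\oQ$ of the \emph{auxiliary conic} $Q=(x=y^2)$, not the strict transform of $C$. You are importing the $A_{12}$ picture of Proposition~\ref{prop:a12deg}, where indeed the strict transform of $C_0$ is the $(-1)$-curve being contracted, but here $C$ is irreducible of arithmetic genus~$6$ and would never give a $\bP^1$ with the right normal bundle; instead one verifies $\pi^*Q=\oQ+5E$, $(\oQ^2)=-1$, and $\cN_{\oQ/\cX}\cong\cO(-1)\oplus\cO(-1)$. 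Third---and this is the most serious confusion---the component that transforms is not the exceptional surface $S$. In the $A_{12}$ degeneration the contraction $\psi$ collapses the strict-transform component $X'$ to a point and \emph{only} the (modified) exceptional surface $S'$ survives as the limit. In the $A_9$ degeneration, by contrast, both components persist: $S\cong\bP(1,1,5)$ stays unchanged, and it is the strict transform $X$ of $\bP^2\times\{0\}$ that, after contracting $\oQ$, becomes $X'\cong X_6\subset\bP(1,2,3,5)$ (identified explicitly via the sections $x_0,x_1,x_2,x_3$ of multiples of $L=\pi^*\cO(1)-2E$). The central fiber after stage one is therefore $\bP(1,1,5)\cup X_6$; only then does a separate $1$-PS $\sigma$ acting on $\bP(1,2,3,5)$ degenerate $X_6$ to $\bP(1,4,5)$ (via $x_0x_3=t^3x_1^3+x_2^2\rightsquigarrow x_0x_3=x_2^2$), and the two weakly special degenerations are composed as in \cite[Proof of Lemma 3.1]{LWX18}. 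Your parenthetical ``the partial smoothing $X_6$ is replaced by $\bP(1,4,5)$ in the limit'' is a glimpse of this, but attaching the $\frac{1}{25}(1,4)$ singularity and an ample-model computation to the exceptional surface $S$ is not: $\bP(1,4,5)$ has a $\frac14(1,1)$ and a $\frac15(1,4)$ point but no $\frac{1}{25}(1,4)$ singularity (you are conflating it with $\bP(1,4,25)$ from the $A_{11}$/$A_{10}$ computations), and no ample-model argument on $S$ is needed since $S$ is already the final $\bP(1,1,5)$. Your steps (2) and (3) are essentially correct in outline, and the $D_6$ remark at the end is not wrong, but it belongs to the next proposition rather than this one.
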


\begin{proof}
By Proposition \ref{prop:jetsummary}, a plane quintic curve $C$ with an $A_9$ singularity has the equation $$(x - y^2)((x - y^2)(1 + sx) -x^2(2py + rx)) + ux^5=0$$ in the affine coordinates $[x,y,1]$ for some choice of  $(s, r, p, u)\in\bA^4$ satisfying $p^2\neq u$. We will first construct a weakly special degeneration of $(\bP^2,\frac{3}{5}C)$ to a pair $(X_6\cup\bP(1,1,5), \frac{3}{5}C_0)$. 

Take our family $(\bP^2,C)\times\bA^1$. We perform the following birational transformations:

\[
\bP^2\times\bA^1\xleftarrow{~~\pi~~} \cX \xrightarrow{~~g~~}\cY
\]
Where in the central fiber we have
\[
 \bP^2\xleftarrow{~~\pi~~} S\cup X\xrightarrow{~~g~~} S\cup X'
\]
Here $\pi$ is the $(5,1,1)$-weighted blow up of $\bP^2\times\bA^1$ in the local coordinates $(x',y,t)$ with $x':=x-y^2-py^5$, and $S=\bP(1,1,5)$ is the exceptional divisor of $\pi$. Let $Q=(x-y^2=0)$ be a smooth conic in $\bP^2$. Then it is clear that $\pi^* Q=\oQ + 5E$ where $E$ is the exceptional curve of $\pi: X\to\bP^2$ and $\oQ:=\pi_*^{-1} Q$. Since $(E^2)=-\frac{1}{5}$, we know that $(\oQ^2)=-1$ and $\oQ$ is a smooth rational curve contained in the smooth locus of $X$. By computation we know that $\cN_{\oQ/\cX}\cong\cO_{\oQ}(-1)\oplus\cO_{\oQ}(-1)$. Hence 
$g:\cX\to\cY$ is the small flopping contraction of $\oQ$. 

Next we show that $(X',\pi_*E)\cong (X_6, (x_1=0))$ where $X_6=(x_0 x_3=x_1^3+x_2^2)$ is a weighted hypersurface in $\bP(1,2,3,5)_{x_0,x_1,x_2,x_3}$. Let $L:=\pi^*\cO(1) -2 E$ be a divisor on $X$. Then $L$ induces a morphism $g':X\to \bP(1,2,3,5)$ defined by 
\begin{align*}
x_0&=\pi^* x- 2E\in H^0(X,L), \quad x_1=\pi^*(xz-y^2)-4E\in H^0(X,2L),\\
x_2&=\pi^*(y(xz-y^2))-6E\in H^0(X,3L),\quad x_3=\pi^*(z(xz-y^2)^2)-10E\in H^0(X,5L).
\end{align*}
It is easy to show that $g'$ is a birational map which only contracts $\oQ$, and the image of $g'$ is exactly $X_6$. Thus $g'=g$ and $\pi_* E=\pi_*(\oQ+E)=(x_1=0)$. Let $[y_0,y_1,y_2]$ be the projective coordinates of $\bP(1,1,5)$ as the projectivization of $(t,y,x')$. 
Thus $S\cup X\cong \bP(1,1,5)\cup X_6$ where the double locus $\pi_*E$ is $(y_0=0)\subset\bP(1,1,5)$ and $(x_1=0)\subset X_6$. The degeneration $C_0$ of $C$ has equations
\[
(y_2^2 = (p^2-u )y_1^{10})\subset\bP(1,1,5)\quad \textrm{ and }\quad(x_3+x_0x_1^2-2p x_0^2 x_2-r x_0^3 x_1+u x_0^5=0)\subset X_6.
\] 

Next we show that $(\bP(1,1,5)\cup X_6,\frac{3}{5}C_0)$ admits a weakly special degeneration to an slc log Calabi-Yau pair $(\bP(1,1,5)\cup \bP(1,4,5),\frac{3}{5}C_0')$ that is unique up to isomorphism. Consider the $1$-PS $\sigma:\bG_m\to\Aut(\bP(1,2,3,5))$ defined as  $\sigma(t)\cdot[x_0,x_1,x_2,x_3]=[x_0,t^{-1} x_1,x_2,x_3]$. Then we know that $\sigma(t)\cdot X_6$ has the equation $(x_0x_3=t^3 x_1^3+x_2^2)$ in $\bP(1,2,3,5)$. Hence $\lim_{t\to 0}\sigma(t)\cdot X_6=(x_0x_3=x_2^2)$. 
Indeed, we have an embedding from $\bP(1,4,5)_{z_0,z_1,z_2}$ to $\bP(1,2,3,5)$ as $[z_0,z_1,z_2]\mapsto[z_0^2, z_1, z_0z_2, z_2^2]$ whose image has equation $x_0 x_3=x_2^2$. Hence $\lim_{t\to 0}\sigma(t)\cdot X_6\cong\bP(1,4,5)$. By taking limit of the equation of $C_0\cap X_6$ under the action of $\sigma$, we know that $C_0'\cap \bP(1,4,5)$ has equation $(z_2^2 -2p z_0^5 z_2 + u z_0^{10}=0)$. Since $p^2\neq u$, after a suitable projective coordinate change the equation of $C_0'$ becomes $(y_2^2=y_1^{10})$ and $(z_2^2 = z_0^{10})$ in $\bP(1,1,5)\cup\bP(1,4,5)$. It is clear that $C_0'$ has an $A_9$-singularity in the smooth locus of $\bP(1,1,5)$ and an $A_3$-singularity at the $\frac{1}{4}(1,1)$-singularity of $\bP(1,4,5)$. 
Thus $\lct(\bP(1,1,5)\cup\bP(1,4,5); C_0')=\frac{3}{5}$.
Since $\sigma$ fixes $\pi_*E$ pointwisely, we may compose the two degenerations as in \cite[Proof of Lemma 3.1]{LWX18} to obtain a weakly special degeneration of $(\bP^2, \frac{3}{5}C_0)$ to $(\bP(1,1,5)\cup\bP(1,4,5),\frac{3}{5}C_0')$.

The computation above can be extended to include the case of curves with $A_9$-singularities on $\bP(1,1,4)$, $\bP(1,4,25)$ and $X_{26}$ in $\oP_{\frac{3}{5}-\epsilon}^{\K}$. 
For the Hacking moduli space, the construction is similar to the second step in the above degenerations with minor difference to consider $1$-PS in $\Aut(\bP(1,1,5))$ and $\Aut(\bP(1,4,5))$ as well. Thus the proof is finished. 
\end{proof}



\begin{prop}
 All curves in $\oP^{\K}_{3/5 - \epsilon}$ with a $D_6$ singularity and all curves in $\oP^{\rm H}_{5}$ lying on $\bP(1,1,2) \cup \bP(1,1,2)$ admit a common degeneration to a unique curve on $\bP(1,1,2) \cup \bP(1,1,2)$ with log canonical threshold exactly $\frac{3}{5}$.
\end{prop}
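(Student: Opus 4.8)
The strategy mirrors the proof of the analogous statement for $A_9$ singularities: we produce an explicit weakly special degeneration of $(\bP^2, \frac35 C)$ to a fixed slc log Calabi-Yau pair supported on the non-normal surface $\bP(1,1,2)\cup\bP(1,1,2)$, and then argue uniqueness of this common limit by a $1$-PS degeneration together with the normal form for $D_6$-quintics. By Proposition \ref{prop:jetsummary} (and the GIT analysis of Lemma \ref{lem:quinticgit}), a plane quintic with a $D_6$ singularity can be put into an explicit $1$-parameter normal form in suitable affine coordinates $[x,y,1]$ — up to projective equivalence the relevant curves are degenerations of $(z(xy-z^2)(xy-az^2)=0)$, so after fixing coordinates the family is governed by the single parameter $a\neq 1$, with a distinguished reducible quintic ``two tangent conics plus a line'' sitting at $a=0$ (resp. $a=\infty$).

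\textbf{Step 1: the birational model.} Take the trivial family $(\bP^2, C)\times\bA^1$ and perform a weighted blow-up $\pi:\cX\to\bP^2\times\bA^1$ centered at the $D_6$ point in appropriate local coordinates $(x',y,t)$, where $x'$ is the unique jet normalizing the equation of $C$ to the $D_6$ normal form (weights chosen so that the exceptional divisor $S$ of $\pi$ is $\bP(1,1,2)$). As in the $A_9$ case, the proper transform of the tangent conic(s) appearing in $C$ will be a $(-1)$-curve (or a flopping curve with normal bundle $\cO(-1)\oplus\cO(-1)$) in the total space; one then performs the corresponding small contraction $g:\cX\to\cY$. The point of the weight choice is that the central fiber becomes $S\cup X'$ where $X'$ is again $\bP(1,1,2)$, glued to $S=\bP(1,1,2)$ along a rational curve which is a line in each component. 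This realizes a weakly special degeneration of $(\bP^2,\tfrac35 C)$ to a pair $(\bP(1,1,2)\cup\bP(1,1,2),\tfrac35 C_0)$, and the restriction of $C_0$ to each component is an explicit quintic-type curve with a $D_6$ singularity; one checks directly from the equations that the limit is snc at the double locus and $D_6$ elsewhere, so $\lct = \frac35$.

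\textbf{Step 2: uniqueness of the limit.} Having landed on $\bP(1,1,2)\cup\bP(1,1,2)$, one runs a further $1$-PS degeneration $\sigma:\bG_m\to\Aut(\bP(1,1,2))$ acting on the relevant coordinate of each component, fixing the double locus pointwise, exactly as in \cite[Proof of Lemma 3.1]{LWX18}; taking the flat limit kills all the parameters except the essential one, and the remaining ambiguity is absorbed by a projective coordinate change, producing a single isomorphism class of slc log Calabi-Yau pair $(\bP(1,1,2)\cup\bP(1,1,2), \tfrac35 C_0')$. For the Hacking side, one observes that the non-normal Hacking-stable pairs supported on $\bP(1,1,2)\cup\bP(1,1,2)$ are already of this form up to the action of $\Aut(\bP(1,1,2))$ on each component, so applying the same $\sigma$-degeneration sends all of them to the same $(\bP(1,1,2)\cup\bP(1,1,2), \tfrac35 C_0')$. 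Composing the degenerations as before (using that $\sigma$ fixes the double locus pointwise) gives the common degeneration, and uniqueness of the flat limit together with $\lct = \frac35$ finishes the proof.

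\textbf{Main obstacle.} The delicate point is Step 1: one has to choose the weights of the blow-up and the normalizing jet $x'$ so that (a) the exceptional divisor is exactly $\bP(1,1,2)$, (b) the proper transform of the degenerate conic component(s) of $C$ is precisely a curve that can be contracted small (so the resulting total space stays $\bQ$-Gorenstein and the central fiber is reduced), and (c) the resulting surface pair is genuinely slc (the gluing along the double curve must be demicanonical, i.e. the different on each side matches). Getting the combinatorics of the $D_6$ singularity (which, unlike $A_9$, has a branch point of higher multiplicity) to cooperate with these three constraints simultaneously is the crux; once the model is set up correctly, the cohomology/base-change and negativity-lemma arguments are routine and parallel to those already carried out for the $A_9$ case.
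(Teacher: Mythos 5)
Your proposal takes a genuinely different route from the paper, and I don't think it can be made to work. The paper avoids any blow-up of $\bP^2\times\bA^1$; instead it writes down a single explicit $2$-parameter family
\[
\cX := (x_0 x_2 = r x_1^2 + s x_3) \subset \bP(1,1,1,2)_{x_0,x_1,x_2,x_3}\times\bA^2_{r,s},
\]
whose fibers over $\{s\neq 0\}$, $\{s=0,\ r\neq 0\}$, $\{r=s=0\}$ are $\bP^2$, $\bP(1,1,4)$, and $\bP(1,1,2)\cup\bP(1,1,2)$ respectively, together with the $\bG_m$-invariant divisor $\cD := (x_1(x_3-x_1^2)(x_3+x_1^2)=0)$; restrictions to affine lines $\{(as,s)\}$ and $\{(r,0)\}$ then give the needed weakly special degenerations to the unique limit $(x_1(x_3^2-x_1^4)=0)$. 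The $2$-dimensional base is the crucial structural feature, because the locus $\oSigma_{6,\frac{3}{5}-\epsilon}$ contains $D_6$-curves both on $\bP^2$ and on $\bP(1,1,4)$, and the family handles both simultaneously.

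There are concrete gaps in your plan. First, the flop step fails. In the $A_9$ case the conic $Q$ has order of contact $5$ at the $A_9$ point, so after the $(5,1,1)$-weighted blow-up $\overline{Q}^2 = 4 - 25\cdot\frac{1}{5} = -1$ and $\cN_{\overline{Q}/\cX}\cong\cO(-1)\oplus\cO(-1)$, which is what makes the Atiyah flop possible. For $D_6$ the right weights are $(2,1,1)$ (matching $y^2z + yz^3 + z^5$), but each conic branch has contact order only $2$, giving $\ord_E(Q_i)=2$ and $\overline{Q}_i^2 = 4 - 4\cdot\frac{1}{2} = 2 > 0$; in the $3$-fold $\deg\cN_{\overline{Q}_i/\cX} = 2 - (E\cdot\overline{Q}_i) = 2 - 1 = 1 > 0$, so there is nothing to contract. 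Second, the polystable $D_6$-quintic $(z(xy-z^2)(xy-az^2)=0)$ has \emph{two} $D_6$ singularities, at $[1,0,0]$ and $[0,1,0]$; blowing up at one leaves the other untouched, and the resulting central fiber $S\cup X$ bears no resemblance to $\bP(1,1,2)\cup\bP(1,1,2)$ (here $X$ is a weighted blow-up of $\bP^2$, not a degenerate cone). Third, a construction starting only from $(\bP^2,C)\times\bA^1$ cannot produce the degeneration of the pair $(\bP(1,1,4), (y_0y_1(y_2^2-y_0^4y_1^4)=0))$ that also lies in $\oSigma_{6,\frac{3}{5}-\epsilon}$. Your ``Main obstacle'' paragraph correctly senses that the $D_6$ combinatorics resist the $A_9$ template, but the actual breakdown happens at the self-intersection computation above, before one ever gets to the slc-gluing conditions you mention.
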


\begin{proof}
Consider the family $\pi:\cX\to \bA^2$ given by 
\[ \cX:=(x_0x_2 = rx_1^2 + sx_3) \subset \bP(1,1,1,2)_{x_0,x_1,x_2,x_3} \times \bA^2_{r,s}. \]
The fiber of $\pi$ above $r = 0, s = 0$ is isomorphic to $\bP(1,1,2) \cup \bP(1,1,2)$ with projective coordinates $[x_0,x_1,x_3]$ and $[x_1,x_2, x_3]$ respectively.  For $s = 0$ but $r \ne 0$, the fiber of $\pi$ is isomorphic to $\bP(1,1,4)_{y_0,y_1,y_2}$ where the isomorphism is given by $[y_0,y_1,y_2]\mapsto [ry_0^2, y_0y_1, y_1^2, y_2]$. For $s \ne 0$, the fiber of $\pi$ is isomorphic to $\bP^2_{x,y,z}$ where the isomorphism is given by $[x,y,z]\mapsto [x,y,z, s^{-1}(xz- ry^2)]$. Let $\sigma:\bG_m\to \Aut(\cX)$ be a $1$-PS defined as 
\[
\sigma(t)\cdot([x_0,x_1,x_2,x_3],(r,s)):=([tx_0, x_1, x_2, x_3], (tr,ts)).
\]
Then $\pi$ is $\bG_m$-equivariant with respect to $\sigma$ and the $\bG_m$-action $(r,s)\mapsto (tr,ts)$ on $\bA^2$.
Consider a divisor $\cD$ on $\cX$ defined by 
\[
\cD:=(x_1(x_3-x_1^2)(x_3+x_1^2)=0).
\]
It is clear that $\cD$ is $\bG_m$-invariant under the action of $\sigma$. 
We will show that suitable restrictions of the family $\pi:(\cX,\frac{3}{5}\cD)\to \bA^2$ give the desired weakly special degenerations of curves in $\oP_{\frac{3}{5}-\epsilon}^{\K}$ with $D_6$-singularities. 

From Section \ref{sec:quintics}, we know that the locus of curves with $D_6$-singularities in $\oP_{\frac{3}{5}-\epsilon}^{\K}$ is $\oSigma_{6, \frac{3}{5}-\epsilon}$ which is isomorphic to $\bP^1$. 
The pairs parametrized by $\oSigma_{6, \frac{3}{5}-\epsilon}$ consists of the following form:
\[ (\bP^2, (y( xz - (1+a)y^2 )( xz + (1-a)y^2) =0)) \textrm{ where }a\in\bA^1, \textrm{ and }(\bP(1,1,4), (y_0y_1(y_2^2-y_0^4 y_1^4)=0)). \]
Then one can check that the restriction of $\pi:(\cX,\frac{3}{5}\cD)\to \bA^2$ to the affine line $\{(as,s)\mid s\in\bA^1\}$ (resp. $\{(r,0)\mid r\in\bA^1\}$) gives weakly special degeneration of $D_6$-curves on $\bP^2$ (resp. $\bP(1,1,4)$).


A similar computation can be done for curves on $\bP(1,1,2) \cup \bP(1,1,2)$ in the Hacking moduli space. Note that the common degeneration has equation $(x_1(x_3^2-x_1^4)=0)$ in $\bP(1,1,2)\cup\bP(1,1,2)$.
\end{proof}

These two propositions essentially show that if the log Calabi-Yau moduli space $\oP_5^{\rm CY}$ exists, then there must be two distinct points in $\oP_5^{\rm CY}$ of $\rmS$-equivalence classes of slc log Calabi-Yau degenerations of pairs $(\bP^2, \frac{3}{5}C)$ parameterizing curves with $A_9$ or $D_6$ singularities, respectively.  In other words, the conjectural map $\oP^{\K}_{\frac{3}{5} - \epsilon} \to P$ must contract the disjoint loci $\oSigma_{6, \frac{3}{5}-\epsilon}$ and $\oSigma_{7, \frac{3}{5}-\epsilon}$ to two distinct points. Denote by $\oSigma_6^{\rm H}$ (resp. $\oSigma_7^{\rm H}$) the disjoint loci in $\oP^{\rm H}_5$ parametrizing curves on $\bP(1,1,2)\cup\bP(1,1,2)$ (resp. $\bP(1,1,5)\cup X_6$ or $\bP(1,1,5)\cup \bP(1,4,5)$). Then similarly
the conjectural map $\oP^{\rm H}_5 \to P$ must contract the disjoint loci $\oSigma_6^{\rm H}$ and $\oSigma_7^{\rm H}$ to the same set of two points.  

To form the projective variety $\oP_5^{\rm CY}$, we expect that the Hodge line bundles are semiample on $\oP^{\K}_{\frac{3}{5} - \epsilon}$ and $\oP^{\rm H}_5 $ and $\bQ$-trivial exactly on these contracted loci.  Indeed, the Hodge line bundle is the limit of the CM line bundle by Proposition \ref{prop:k-cm-interpolation}.  Moreover, the CM line bundle is known to be ample on $\oP^{\rm H}_5$ by \cite{PX17} and big and nef (and conjecturally ample) on $\oP^{\K}_c$ by \cite{LWX18, CP18, Pos19}.  Therefore, we expect some postivitity properties of the Hodge line bundle.  

As further evidence, we verify that the Hodge line bundle is trivial on the locus $\oSigma_6^{\rm H}\sqcup\oSigma_7^{\rm H}$ parameterizing curves on non-normal surfaces in $\oP^{\rm H}_5 $. Denote by $L_+$ and $L_-$ the Hodge $\bQ$-line bundles over $\oP_5^H$ and $\oP_{\frac{3}{5}-\epsilon}^{\K}$ respectively.


\begin{prop}\label{prop:quinticshodge}
 The restriction $L_+|_{\oSigma_i^{\rm H}}$ is $\bQ$-linearly trivial for $i=6,7$.
\end{prop}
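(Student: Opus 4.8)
The plan is to show that the Hodge $\bQ$-line bundle $L_+$ restricted to each of the strata $\oSigma_6^{\rm H}$ and $\oSigma_7^{\rm H}$ has degree zero on every curve, hence is $\bQ$-linearly trivial (these strata are at most one-dimensional). By Definition \ref{defn:hodge} and the adjunction-type relation $K_{\cX/T}+\frac{3}{5}\cD\sim_{\bQ}f^*\lambda_{\Hodge}$, computing $\deg L_+|_{\oSigma_i^{\rm H}}$ amounts to computing the self-intersection $(-K_{\cX/T}-\frac{3}{5}\cD)^3$ along the relevant one-parameter family, or equivalently intersecting $K_{\cX/T}+\frac{3}{5}\cD$ against a fiber. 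The key point is that, on the non-normal surfaces appearing in these strata, the pairs $(X,\frac{3}{5}D)$ are \emph{strictly log canonical log Calabi–Yau}: we have $5K_X+3D\sim 0$ by definition of Hacking stable pairs (Definition in Section \ref{sec:prelim}), so $K_X+\frac{3}{5}D\sim_{\bQ}0$ on each fiber. First I would make precise, following the construction in the two preceding propositions, an explicit proper family $(\cX,\cD)\to T$ (with $T\cong\bP^1$ or a finite cover of the base $\oSigma_i^{\rm H}$) whose fibers realize the curves parametrized by $\oSigma_i^{\rm H}$, arranged so that $K_{\cX/T}+\frac{3}{5}\cD$ is $\bQ$-Cartier.

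\textbf{Key steps.} (1) Construct or invoke the universal family over a finite cover $\widetilde{\Sigma}_i\to\oSigma_i^{\rm H}$; since $\oSigma_i^{\rm H}$ has a good moduli map from a Deligne–Mumford stack and the pairs have finitely many isomorphisms, this can be done after a base change, and $L_+$ pulls back to the Hodge bundle of the family. (2) Use $5K_{\cX/\widetilde{\Sigma}_i}+3\cD\sim_{\widetilde{\Sigma}_i}0$ fiberwise — this is the defining relation $dK_X+3D\sim 0$ for Hacking stable pairs extended over the base by taking Zariski closure, exactly as in the proof of Proposition \ref{prop:Zplanecurve}(2) — to conclude $K_{\cX/\widetilde{\Sigma}_i}+\frac{3}{5}\cD\sim_{\bQ}f^*M$ for some $\bQ$-divisor $M$ on $\widetilde{\Sigma}_i$, so that $f^*M=L_+$ pulled back. (3) Compute $\deg M$: since $\dim\widetilde{\Sigma}_i\le 1$, it suffices to show $(K_{\cX/\widetilde{\Sigma}_i}+\frac{3}{5}\cD)^3=0$, equivalently $\bigl(f^*M\bigr)^3=(\deg M)(\text{pt})^{\cdot}=0$ automatically when $\dim\widetilde{\Sigma}_i=1$ and $M$ is a divisor on a curve — wait, this needs care: rather, I would intersect $(K_{\cX/\widetilde{\Sigma}_i}+\frac{3}{5}\cD)^2$ with a general fiber $(X,\frac{3}{5}D)$. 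Because $K_X+\frac{3}{5}D\sim_{\bQ}0$ on the fiber $X$, the restriction $(K_{\cX/\widetilde{\Sigma}_i}+\frac{3}{5}\cD)|_X\sim_{\bQ}0$, and hence the normal-bundle argument gives $(K_{\cX/\widetilde{\Sigma}_i}+\frac{3}{5}\cD)^2\cdot X\sim_{\bQ}$ a pullback from $\widetilde{\Sigma}_i$ restricted to a point, which is numerically trivial. Therefore $\deg M=0$ and $L_+|_{\oSigma_i^{\rm H}}$ is $\bQ$-linearly trivial (using that numerically trivial divisors on a curve, or on a normal proper variety via the rigidity of the Hodge bundle, are $\bQ$-linearly trivial after a finite cover, then descending).

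\textbf{Main obstacle.} The delicate part is handling the non-normality of the total space $\cX$ and of the fibers in these strata: the relation $K_X+\frac{3}{5}D\sim_{\bQ}0$ must be interpreted with the correct (semi-)canonical divisor on a non-normal slc surface, and one must verify that the Hodge line bundle as defined for $\bQ$-Gorenstein families of $\bQ$-Fano varieties (Definition \ref{defn:hodge}) extends compatibly to the slc log Calabi–Yau boundary — this is precisely the setting of Hacking's moduli theory and the intersection-formula arguments of \cite{CP18}. Concretely I would either (a) use the explicit degenerations built in the two preceding propositions, where $\cX\to\bA^1$ (or $\bA^2$) is given by explicit equations in a weighted projective bundle, and directly compute that the relative log canonical class is a pullback from the base, or (b) argue via functoriality of the Hodge/CM line bundle: the map contracting $\oSigma_i^{\rm H}$ to a point in the conjectural $\oP_5^{\rm CY}$ should be induced by $L_+$, and since all fibers over $\oSigma_i^{\rm H}$ are $\rmS$-equivalent slc log Calabi–Yau pairs (they degenerate to a common limit, by the two propositions), the polystable reduction argument as in Proposition \ref{prop:k-cm-interpolation} forces the weight of $L_+$ along any one-parameter subfamily to vanish. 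Approach (b) is cleaner and avoids the non-normal intersection theory, so I would pursue it: the two preceding propositions supply exactly the common degeneration needed, and the vanishing of the generalized Futaki-type weight (here the weight of the Hodge bundle, which is the $c\to\frac{3}{d}$ limit of the CM weight) follows from the log Calabi–Yau condition $K_X+\frac{3}{5}D\sim_{\bQ}0$ making the relevant Futaki invariant identically zero.
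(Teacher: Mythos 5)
There is a genuine gap. First, a factual error that undermines the whole strategy: you assume that $\oSigma_6^{\rm H}$ and $\oSigma_7^{\rm H}$ are ``at most one-dimensional,'' but they are not. The paper identifies $\oSigma_7^{\rm H}\cong(\bP(2,3,\cdots,10)\times\bP(1,2,3))/G$ and $\oSigma_6^{\rm H}\cong(\bP(1,2,3,3,4,5)\times\bP(1,2,3,3,4,5))/G$ for finite groups $G$, so both are ten-dimensional. On a higher-dimensional base, ``degree zero on every curve'' gives only numerical triviality; to get $\bQ$-linear triviality one needs additional structure, which the paper supplies by recognizing each factor as a weighted projective space and checking a $\bG_m$-linearization. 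Second, your step (3) is vacuous. By definition $K_{\cX/T}+\frac{3}{5}\cD\sim_{\bQ}f^*L_+$, so its restriction to any fiber $X_0$ is automatically $\bQ$-trivial, and $(K_{\cX/T}+\frac{3}{5}\cD)^2\cdot X_0=(f^*L_+)^2\cdot X_0=0$ holds for trivial reasons regardless of what $L_+$ is. Intersecting a pullback from the base against a fiber cannot detect the degree of $L_+$; you would instead need to intersect against a surface dominating a curve in $T$.

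Your approach (b) is closer to what actually works, but the key claim is asserted rather than established. You say the weight of the Hodge bundle ``follows from the log Calabi--Yau condition $K_X+\frac{3}{5}D\sim_{\bQ}0$ making the relevant Futaki invariant identically zero,'' but the $\bG_m$-weight of $\lambda_{\Hodge}$ at a fixed point is the weight of $\bG_m$ acting on the one-dimensional vector space $H^0(X_0,\cO_{X_0}(m(K_{X_0}+\frac{3}{5}D_{0})))$, and there is no a priori reason this is zero for an slc log CY pair (the interpolation Proposition \ref{prop:k-cm-interpolation} requires K-polystability at neighboring coefficients, which fails at the log CY boundary, and in any case the Hodge weight is the leading coefficient of $(1-c'r)^{-n}\Fut$ as $c'\to r^{-1}$, not the log CY Futaki invariant itself). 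The paper circumvents this by a direct computation: for each weighted-projective factor of $\oSigma_i^{\rm H}$ it writes down an explicit nowhere-zero section $\tau$ of $(\pi_T)_*\cO_{\cX_T}(5(K_{\cX_T/T}+\cD_T))$ and verifies by hand that $\lambda_*\tau=\tau$ under the relevant $\bG_m$-action, i.e.\ the linearization is trivial. That cancellation (of the $y^{-5}$ factor against $(dy)^{\otimes 5}$) is a concrete coordinate computation, not an automatic consequence of the CY condition, and it is the step your proposal is missing.
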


\begin{proof}
We first look at the case of $\oSigma_7^{\rm H}$. Each Hacking stable pair
in $\oSigma_7^{\rm H}$ is uniquely determined by gluing two plt
pairs $(\bP(1,1,5),(y=0)+\frac{3}{5}C_1)$ and $(X,D)$ where
\begin{itemize}
 \item $C_1$ has the equation $z^2=x^{10}+a_2 x^8y^2+a_3 x^7 y^3+\cdots+a_{10}y^{10}$
 where $(a_2,a_3,\cdots,a_{10})\in\bA^9\setminus\{0\}$;
 \item $X$ is a weighted hypersurface in $\bP(1,2,3,5)$
 defined by the equation $(xw=ty^3+z^2)$;
 \item $D=(y=0)+\frac{3}{5}C_2$ where $C_2$ has the 
 equation $w=x^5+b_1 x^3y+b_2 xy^2$ such that $(b_1,b_2,t)\in \bA^3\setminus\{0\}$.
\end{itemize}
The double locus on both components is a $\bP^1$ with
three marked points: one of them is the index $5$ singularity
and the other two are intersections with curve $C_i$. Hence the gluing
is unique up to a $\mu_2$-action, and so we have $\oSigma_7^{\rm H}\cong(\bP(2,3,\cdots,10)\times\bP(1,2,3))/G$ 
where $G$ is a finite group acting on the weighted bi-projective space
identifying isomorphic fibers. Therefore, to show that
$L_+|_{\oSigma_7^{\rm H}}$ is $\bQ$-linearly trivial, it suffices to 
show that the $\bQ$-line bundle $L$ on each weighted projective space
is $\bQ$-linearly trivial.

Let us start with the weighted projective space
$\bP:=\bP(2,3,\cdots,10)$. Denote $T:=\bA^9\setminus\{0\}$.
Consider the family $\pi_T:(\cX_T,\cD_T)\to T$
where $\cX_T=\bP(1,1,5)\times T$ and $\cD_T=(y=0)+\frac{3}{5}\calC_1$
with $\calC_1=(z^2=x^{10}+a_2 x^8y^2+a_3 x^7 y^3+\cdots+a_{10}y^{10})$.
We have a $\bG_m$ action on $T$ given by $(a_i)\mapsto(\lambda^i a_i)$.
This action lifts to $\cX_T$ as $([x,y,z],(a_i))
\mapsto([x,\lambda^{-1}y,z],(\lambda^i a_i))$ so that
$\pi_T$ is $\bG_m$-equivariant. It is clear that
$\pi_T$ descends to a universal family of plt
log CY pairs over $[T/\bG_m]$ whose coarse space is $\bP$.
Hence it suffices to show that the $\bG_m$-linearized
line bundle $\cL_T^{\otimes 5}:=(\pi_T)_*\cO_{\cX_T}(5(K_{\cX_T/T}+\cD_T))$ on $T$
descends to a trivial line bundle on $\bP$.
We pick a nowhere zero section $\tau$ of $\cL_T^{\otimes 5}$
which has the  expression 
$ \tau(a_2,\cdots,a_{10})=y^{-5}(1-\sum_{i=2}^{10} a_i x^{10-i}y^i)^{-3} (dx\wedge dy)^{\otimes 5}$ in the affine chart $z=1$.
Hence for any $\lambda\in\bG_m$ we have
\begin{align*}
 (\lambda_{*}\tau)(\lambda^i a_i)& =(\lambda y)^{-5}(1-\sum_{i=2}^{10} a_i x^{10-i}(\lambda y)^i)^{-3} (dx\wedge \lambda dy)^{\otimes 5}
 =\tau(\lambda^i a_i).
\end{align*}
Thus the $\bG_m$-linearization on $\cL_T^{\otimes 5}$ is trivial.

Next we analyze the weighted projective space $\bP(1,2,3)$. 
Since it has Picard number $1$, it suffices to show
that $L_+$ restricting to the
curve $\bP(1,2)$ is $\bQ$-linearly trivial where 
$\bP(1,2)$ corresponds to $t=0$, i.e. the surface component
being $\bP(1,4,5)$. Using the projective coordinates $[x,y,z]$ of
$\bP(1,4,5)$, the divisor $D=(y=0)+\frac{3}{5}C_2$
where $C_2=(z^2=x^{10}+b_1 x^6 y+b_2 x^2y^2)$. Then
similar computations as in the case of $\bP(1,1,5)$
imply $L_+|_{\bP(1,2)}$ is $\bQ$-linearly trivial.
This finishes the proof of $\bQ$-linear triviality of
$L_+|_{\oSigma_7^{\rm H}}$.

The case of $\oSigma_6^{\rm H}$ is similar to $\oSigma_7^{\rm H}$. Since each surface
component is $\bP(1,1,2)$, we may just consider one component
given by the plt pair $(\bP(1,1,2),(y=0)+\frac{3}{5}C)$
where $C$ has the equation $(xz^2-x^5=a y^3 z+b_1 x^4 y+b_2 x^3y^2+\cdots+b_5 y^5)$
for $(a,b_1,\cdots,b_5)\in\bA^6\setminus\{0\}=:T$. 
Then the $\bG_m$-action on $\cX_T=\bP(1,1,2)\times T$
is given by $([x,y,z],a,b_i)\mapsto([x,\lambda^{-1}y,z],\lambda^3 a,\lambda^i b_i)$.
Hence the moduli space parametrizing $(\bP(1,1,2),(y=0)+\frac{3}{5}C)$
is given by $\bP:=\bP(1,2,3,3,4,5)$ and $\oSigma_6^{\rm H}\cong(\bP\times\bP)/G$
for some finite group $G$. Then similar computations show that
the $\bG_m$-linearization on $\cL_{T}^{\otimes 5}$ is trivial, hence
$L_+|_{\oSigma_6^{\rm H}}$ is $\bQ$-linearly trivial.
\end{proof}

To verify Conjecture \ref{conj:logCYquintics}, one must first show that the analogous statement of Proposition \ref{prop:quinticshodge} holds for $L_-$ and that $L_{\pm}$ is in fact ample away from the loci $\oSigma_7^{\rm H}$ and $\oSigma_6^{\rm H}$.  If this holds, the ample models of the Hodge line bundles on $\oP_5^{\rm H}$ and $\oP_{\frac{3}{5}-\epsilon}^{\K}$ would coincide set-theoretically with our notion of $\rm S$-equivalence classes.  However, putting a natural modular structure from the K-stability viewpoint on the log Calabi-Yau space and determining the right class of objects to parameterize prevent us from defining a moduli stack of these pairs in this paper. We will pursue this is forthcoming work.  

\subsection{Higher dimensional applications}\label{sec:highdim}

In this section we give some applications of our machinery  developed in Sections \ref{sec:prelim} and \ref{sec:construction}.
The following result improves \cite[Theorem 1.2]{GMGS} by removing their assumption on the Gap Conjecture and allowing small degree.

\begin{thm}\label{thm:highdim}
Let $n$ and $d\geq 2$ be positive integers. Then there exists a positive rational number $c_1=c_1(n,d)$ such that for any fixed $0<c<c_1$, a hypersurface $S\subset\bP^n$ of degree $d$ is GIT (poly/semi)stable if and only if the log Fano pair $(\bP^n, cS)$ is K-(poly/semi)stable.
\end{thm}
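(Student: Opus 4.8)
The plan is to mimic the strategy used for plane curves in Sections \ref{sec:generaldegree} and \ref{sec:firstwall}, replacing the index-bound argument (Theorem \ref{thm:localindex}) by a volume estimate that directly forces the underlying Fano to be $\bP^n$ when the coefficient is small. First I would set up the K-moduli stack and space $\cK\cM_{\chi_0, d/(n+1), c}$ for $(\bP^n, cS)$ using Theorem \ref{thm:lwxlog}, noting that $\bP^n$ is the unique smooth Fano of its Hilbert polynomial with volume $(n+1)^n$, and that $(\bP^n, cS)$ is K-stable for every smooth $S$ and every $c\in(0, \min\{1, (n+1)/d\})$ by Proposition \ref{prop:k-interpolation}. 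The key point is then: there is a gap in the set of normalized volumes $\widehat{\mathrm{vol}}(x,X)$ for $x$ a klt singularity of dimension $n$ below $(n+1)^n = \widehat{\mathrm{vol}}$ of a smooth point; more precisely, by \cite[Theorem 1.1]{LL16} (or the results cited as \cite[Proposition 4.10]{LL16}, \cite[Theorem 1.3]{LX17b}) the second-largest possible local volume is strictly smaller than $(n+1)^n$, and this is exactly the statement that is known unconditionally (it is the ``Gap Conjecture'' in low dimension, but the relevant inequality $\widehat{\mathrm{vol}}(x,X) \le (n+1)^n - \delta_n$ for singular $x$ holds for all $n$ by the normalized volume lower bound coming from K-semistability of the cone, together with the classification of the equality case $\bP^n$). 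Combining this with Theorem \ref{thm:local-vol-global}, which gives $(-K_X - cS)^n \le (1 + 1/n)^n \widehat{\mathrm{vol}}(x,X,cS) \le (1+1/n)^n \widehat{\mathrm{vol}}(x,X)$, and observing that $(-K_X - cS)^n = (1 - cd/(n+1))^n (-K_X)^n \to (-K_X)^n = (n+1)^n \cdot (n/(n+1))^n$-times-something as $c\to 0$, one concludes that for $c$ small enough any K-semistable $(X, cS)$ has $X$ with only smooth points, hence $X \cong \bP^n$ by the classification of $\bQ$-Gorenstein smoothable Fanos with maximal volume. I would package the threshold this forces as $c_1(n,d)$.

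Once the underlying variety is pinned down to be $\bP^n$ for $c < c_1$, the equivalence of stabilities follows the same pattern as Theorem \ref{thm:firstwallbefore}. In one direction, the Paul--Tian criterion (Theorem \ref{thm:paultian}) applied to the universal family of degree $d$ hypersurfaces $\pi: (\bP^n \times \bfP_d, c\calS) \to \bfP_d$ shows that K-(poly/semi)stability of $(\bP^n, cS)$ implies GIT (poly/semi)stability of $S$; here I would compute, exactly as in Proposition \ref{prop:P^2-paultian}, that $\lambda_{\CM, \pi, c\calS} = -\pi_*((-K_{\bP^n\times\bfP_d/\bfP_d} - c\calS)^{n+1})$ is a positive multiple of $\cO_{\bfP_d}(1)$ whenever $0 < c < (n+1)/d$, hence ample, so the hypotheses of Theorem \ref{thm:paultian} are met. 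For the converse, given a GIT semistable hypersurface $S$, I would degenerate it in a one-parameter family with smooth generic fiber, use properness of K-moduli (Theorem \ref{thm:compactness}) to produce a K-polystable limit $(\bP^n, cS_0')$ (the underlying variety being $\bP^n$ by the volume argument again), deduce that $S_0'$ is GIT polystable via Paul--Tian, and then invoke separatedness of the GIT quotient to conclude $S$ specially degenerates to a $\PGL(n+1)$-translate of $S_0'$; Theorem \ref{thm:Kss-spdeg} then gives K-semistability of $(\bP^n, cS)$. The polystable case is handled by the same S-equivalence bookkeeping as in the proof of Theorem \ref{thm:firstwallbefore}.

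The main obstacle I anticipate is making the volume gap argument completely clean and effective — that is, verifying that the quantitative inequality ``$\widehat{\mathrm{vol}}(x,X) < (n+1)^n$ with a uniform gap, for $x$ a non-smooth point of an $n$-dimensional klt singularity that is $\bQ$-Gorenstein smoothable to $\bP^n$'' is genuinely available without the Gap Conjecture, and extracting from it an explicit (or at least well-defined) value of $c_1(n,d)$. Concretely, one needs: (i) the rigidity statement that the only $n$-dimensional $\bQ$-Gorenstein smoothable Fano with $(-K_X)^n = (n+1)^n$ is $\bP^n$ (standard, via the characterization of $\bP^n$ among Fanos with large volume/pseudo-index, cf.\ the degeneration arguments and boundedness from Section \ref{sec:construction}); and (ii) the local volume upper bound for singular klt points which degenerate to $\bP^n$, which in the excerpt's setting reduces to the cited cases $n \le 3$ where the gap is known, but for which the general case requires only the *inequality* direction of the normalized-volume comparison, not the full gap conjecture. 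I would present (ii) as an appeal to \cite[Theorem 1.1]{LL16}, \cite[Theorem 2.7]{LX17b}, and then spell out how combining it with Theorem \ref{thm:local-vol-global} and the continuity of $(-K_X - cS)^n$ in $c$ yields the required $c_1(n,d)$. If in fact the unconditional gap is not available in all dimensions, the fallback is that for the conclusion one only needs the strict inequality at a single singular point together with boundedness, which suffices to rule out singular $X$ once $c$ is small, so the argument degrades gracefully.
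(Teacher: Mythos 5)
Your proposal diverges from the paper's proof precisely at the step where the authors explicitly advertise their improvement over \cite[Theorem 1.2]{GMGS}, and that step is where your argument has a genuine gap. You try to force $X \cong \bP^n$ for small $c$ by a quantitative upper bound $\widehat{\mathrm{vol}}(x,X) \le (n+1)^n - \delta_n$ at singular points $x$, combined with Theorem \ref{thm:local-vol-global}. But the uniform strict gap $\delta_n > 0$ is exactly the Gap Conjecture \cite[Conjecture 5.5]{SS17}, which is only known for $n \le 3$; your hedge that "the relevant inequality holds for all $n$ by K-semistability of the cone" only gives the non-strict bound $\widehat{\mathrm{vol}}(x,X) \le (n+1)^n$ (the cited equality-case results from \cite{LL16,LX17b} and lower semicontinuity from \cite{BL18a} give the closed condition, not a gap), and your boundedness fallback still needs a positive lower bound on $(n+1)^n - \sup_x \widehat{\mathrm{vol}}(x,X)$ over a bounded family of singular $X$, which is not spelled out and is not an immediate consequence of the cited results. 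The paper sidesteps the gap entirely: it first invokes Theorem \ref{thm:logFano-wallcrossing} (finiteness of walls, a structural moduli-theoretic statement, not a local-volume estimate) to obtain $c_1 = c_1(n,d) > 0$ such that the K-moduli stack $\cK\cM_{\chi_0,r,c}$ is unchanged for $c \in (0,c_1)$; then for any K-semistable $(X,cD)$ with $c < c_1$, the constancy implies $(X,c'D)$ is K-semistable for all $c' \in (0,c_1)$, and letting $c' \to 0^+$ together with linearity of generalized Futaki invariants in the coefficient gives that $X$ itself is K-semistable. Since $(-K_X)^n = (n+1)^n$, one concludes $X \cong \bP^n$ by \cite[Theorem 36]{Liu18} (the K-semistable equality case), without needing any gap.

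A second, independent gap: you assert that $(\bP^n, cS)$ is K-stable for every smooth $S$ and every $c \in (0, \min\{1,(n+1)/d\})$ "by Proposition \ref{prop:k-interpolation}," and this claim is needed both at the outset and in your converse direction (to make the K-semistable family available to the properness argument). Interpolation only gives this when $d \ge n+1$, because for $d \le n$ the log Calabi--Yau pair $(\bP^n, \frac{n+1}{d}S)$ has coefficient $\frac{n+1}{d} > 1$, so it is not log canonical and Proposition \ref{prop:k-interpolation} has no K-semistable endpoint to interpolate from. The paper handles $d \le n$ by a genuinely different input: existence of K\"ahler--Einstein metrics on smooth low-degree hypersurfaces (\cite{Tia00, AGP06}), a degeneration to the projective cone $C_p(S,\cO_S(d))$, and \cite[Proposition 3.3]{LL16} to conclude K-semistability of $(\bP^n, cS)$ at some positive $c$, whence K-polystability for all $0 < c < c_1$ by interpolation. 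Without that step your converse direction does not get off the ground for $d \le n$.

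Your proposed converse route — degenerate to a GIT polystable limit, take a K-polystable filling by properness, and compare via separatedness as in Theorem \ref{thm:firstwallbefore} — is a valid alternative to the paper's argument, which instead shows the map $\varphi:\cK\cM_{\chi_0,r,c} \to \cH_d^n$ is an open immersion between the stacks, then proves it is proper and quasi-finite on good moduli spaces and invokes \cite[Proposition 6.4]{alper} to get finiteness, hence an isomorphism. Both are fine once the two issues above are repaired.
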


\begin{proof}
Let $\chi_0$ be the Hilbert polynomial of $(\bP^n,\cO(n+1))$. Let $r:=\frac{d}{n+1}$. We consider the K-moduli stack $\cK\cM_{\chi_0,r,c}$ as in Definition \ref{defn:kmoduli}. By Theorem \ref{thm:logFano-wallcrossing}, there exists a positive rational number $c_1=c_1(n,d)$ such that $\cK\cM_{\chi_0,r,c}$ remains constant for any $c\in (0, c_1)$. Thus for any K-semistable pair $[(X,cD)]\in\cK\cM_{\chi_0,r,c}$ we have that $X$ is K-semistable and $(-K_X)^n=(-K_{\bP^n})^n=(n+1)^n$. Hence by \cite[Theorem 36]{Liu18} we know that $X\cong\bP^n$ and $D\subset X$ is a hypersurface of degree $d$. By the Paul-Tian criterion Theorem \ref{thm:paultian} and computations on CM line bundles similar to the proof of Proposition \ref{prop:P^2-paultian}, we know that K-(poly/semi)stability of $(\bP^n,cS)$ implies GIT (poly/semi)stability of $S$. 

We first show that there is a morphism of Artin stacks $\varphi: \cK\cM_{\chi_0,r,c}\to \cH_d^n$ where $\cH_d^n$ is the GIT quotient stack $[\bP(H^0(\bP^n,\cO(d)))^{\mathrm{ss}} /\PGL(n+1)]$ of degree $d$ hypersurfaces in $\bP^n$.
Indeed, the K-moduli stack $\cK\cM_{\chi_0, r, c}$ is defined to be the quotient $\left[Z^{\mathrm{red}}_{c,m} / \mathrm{PGL}(N_m+1)\right]$ for every $m$ sufficiently divisible.
Let 
\[
Z_m^{\GIT}:= \{(X,D)\in Z_m^{\klt}\mid (X,D)\cong (\bP^n, D')\textrm{ where }[D']\in \bP(H^0(\bP^n,\cO(d)))^{\mathrm{ss}}\}.
\]
Since fiber being $\bP^n$ and GIT semistability are both Zariski open conditions, we know that $Z_m^{\GIT}$  is a Zariski open subset of $Z_m^{\klt}$. We equip $Z_m^{\GIT}$ with the reduced scheme structure. 
Thus there is a morphism $\varphi_m$ from the K-moduli stack to the GIT stack $\cH_{d,m}^{n}:=\left[Z^{\GIT}_{m} / \mathrm{PGL}(N_m+1)\right]$. By Theorem \ref{thm:stabilization} the K-moduli stacks stabilize. By a similar argument to the last paragraph of the proof of Theorem \ref{thm:firstwallbefore},  the morphisms $\varphi_m$ stabilize to $\varphi: \cK\cM_{\chi_0,r,c}\to \cH_d^{n}$ for $m$ sufficiently divisible. 

To show that the morphism $\varphi$ is representable, we just need to show that the fiber product of a scheme and the K-moduli stack over  the GIT stack is a scheme. For simplicity, denote by $G:=\PGL(N_m+1)$. A map $T\to \cH_{d,m}^n$ from a scheme $T$ to the GIT stack is equivalent to the data of a $G$-torsor $\psi: P_T\to T$ together with a $G$-equivariant morphism $P_T\to Z_m^{\GIT}$. Since $Z_{c,m}^{\red} \hookrightarrow Z_m^{\GIT}$ is a $G$-equivariant open immersion, we know that $P':=P_T\times_{Z_m^{\GIT}} Z_{c,m}^{\red}$ is a $G$-invariant open subscheme of $P_T$. Since $\psi: P_T\to T$ is flat, we know that $T':= \psi(P')$ is an open subscheme of $T$. Thus $P'\to T'$ is a $G$-torsor as $P'$ admits a $G$-action and this map is surjective. As a result, we have $T'\cong T\times_{\cH_{d,m}^n} \cK\cM_{\chi_0,r,c}$, which implies that $\varphi$ is representable and an open immersion of Artin stacks.
So now it suffices to show that $\varphi$ is an isomorphism.

Next, we verify that $\cK\cM_{\chi_0,r,c}$ is non-empty. If $d\geq n+1$, then any smooth hypersurface $S$ satisfies $(\bP^n,\frac{n+1}{d}S)$ is a log canonical log Calabi-Yau pair which implies $[(\bP^n,cS)]\in \cK\cM_{\chi_0,r,c}$ by Proposition \ref{prop:k-interpolation}. If $d\leq n$, then we know that there exists a smooth hypersurface $S$ that admits K\"ahler-Einstein metrics (see e.g. \cite[Page 85-87]{Tia00} or \cite{AGP06}). By degeneration to normal cone of $S$, we know that $(\bP^n, cS)$ special degenerates to $(X_0, cS_\infty)$ where $X_0=C_p(S, \cO_S(d))$ is the projective cone (see \cite[Section 3.1]{Kol13} for a definition) and $S_\infty$ is the section at infinity. By \cite[Proposition 3.3]{LL16}, we know that $(X_0, (1-\frac{r^{-1}-1}{n})S_\infty)$
admits a conical K\"ahler-Einstein metric. Hence $(\bP^{n}, (1-\frac{r^{-1}-1}{n})S)$ is K-semistable by Theorem \ref{thm:Kss-spdeg}. Since $1-\frac{r^{-1}-1}{n}=\frac{(d-1)(n+1)}{dn}>0$, we know that $(\bP^n, \epsilon S)$ is K-polystable for $0<\epsilon\ll 1$ by Proposition \ref{prop:k-interpolation}. Hence $(\bP^n, cS)$ is K-polystable for any $0<c<c_1$. 

Finally we show that $\varphi$ is an isomorphism. By taking good moduli spaces, let $\varphi': KM_{\chi_0,r,c}\to H_d^n$ be the descent of $\varphi$ where $H_d^n:=  \bP(H^0(\bP^n,\cO(d)))^{\mathrm{ss}} \sslash \PGL(n+1)$. It follows that $\varphi'$ is an injective proper morphism which implies that $\varphi'$ is finite. Hence $\varphi$ is finite by \cite[Proposition 6.4]{alper}. This together with $\varphi$ being an open immersion implies that $\varphi$ is an isomorphism by Zariski's Main Theorem.
\end{proof}

\appendix

\section{Calculations of K-semistable thresholds and K-polystable replacements}\label{sec:calculations}
In this appendix, we calculate the K-semistable thresholds and K-polystable replacements of K-semistable pairs to determine the location of the walls for $d = 5$. By Proposition \ref{prop:lclastwall}, to understand the wall crossings occurring after the first wall, we must understand the curves parametrized by $\oP^{\GIT}_5 \setminus P^{\klt}$. In Section \ref{sec:GIT5}, we showed that the curves parametrized by this space (aside from the non-reduced conic which was discussed at length in Section \ref{sec:firstwall})  have $A_{12}$, $A_{11}$, $A_{10}$, $A_{9}$ and $D_6$ singularities. Therefore this section contains the relevant calculations used in Section \ref{sec:explicitwalls}, and the subsections are organized by the singularity type. 

Before proceeding, we state a result that will be used throughout. A standard jet computation shows that
if a quintic curve has a double point of type $A_k$ with $k\geq 9$ (not including $\infty$),
then after a suitable projective coordinate change we obtain
the following equation in the affine coordinate:
\begin{equation}\label{eq:quinticA_9}
 (x-y^2)\big((x-y^2)(1+sx)-x^2(2py+rx)\big)+ux^5=0.
\end{equation}
Here the double point is $(0,0)$ for any parameters
$(s,r,p,u)\in\bA^1\times(\bA^3\setminus\{0\})$.
Indeed, in the analytic coordinates $(x',y)$ where $x':=x-y^2-py^5$, the above equation \eqref{eq:quinticA_9} becomes
\[
x'^2=(p^2-u)y^{10}+ \textrm{higher order terms},
\]
where $(x',y)$ has weight $(5,1)$. 
When $r,p,u$ all vanish, we recover the curve $Q_5$.
If we rescale the coordinates as $(x,y)\mapsto(\lambda^{-2} x,\lambda^{-1} y)$,
then the coordinates change as $(s,r,p,u)\mapsto(\lambda^2 s,\lambda^4 r,\lambda^3 p, \lambda^6 u)$.
Note that the two curves defined by
$(s,r,\pm p,u)$ are projectively equivalent by $y\mapsto -y$.
Therefore, we may take the parameter space 
$\bA^4\setminus\{0\}$ and quotient out by projective equivalence to describe the locus of curves with an $A_k$ singularity ($k \ge 9$) including $Q_5$. 

Similarly, a standard computation shows that GIT polystable quintics with a $D_6$ singularity, up to a projective coordinate change, can be written as 
\[ 
y(x -t_1 y^2)(x - t_2y^2) = 0
\]
where $t_1\neq t_2$. Since $t_1$ and $t_2$ are symmetric, we may take the coordinate change given by $(s_1,s_2):= (t_1+t_2, (t_1-t_2)^2)$. Then $\Sigma_6$ corresponds to $s_2\neq 0$. 
The $\bG_m$-action scales the coordinates as $(t_1,t_2)\mapsto (\lambda t_1,\lambda t_2)$ and $(s_1,s_2)\mapsto (\lambda s_1,\lambda^2 s_2)$. It is clear that two curves with $D_6$-singularities are projectively equivalent if and only if their $(s_1,s_2)$ belong to the same $\bG_m$-orbit. Thus we may take the parameter space $\bA^2_{s_1,s_2} \setminus \{0\}$ and quotient by this projective equivalence.  

Combining the previous statements, following the notation of Lemma \ref{lem:quinticgit}, we obtain the following description of the loci $\Sigma_i$ in $\oP_5^{\GIT}$.  

\begin{prop}\label{prop:jetsummary}
 The Zariski closure $\oSigma_7$ of the $A_9$ locus in $\oP_5^{\GIT}$ is isomorphic to $\bP(1,2,3,3)$ with projective coordinates $[s,r,h,u]$ where $h:=p^2-u$. Moreover,
 \begin{itemize}
   \item $\Sigma_1$ corresponds to the point $[1,0,0,0]$;
  \item $\Sigma_2$ corresponds to the point $[0,0,0,1]$;
  \item $\oSigma_3$ corresponds to $h=u=0$;
  \item $\oSigma_4$ corresponds to $r=0$ and $h=0$;
  \item $\oSigma_5$ corresponds to $h=0$;
 \end{itemize}
 The Zariski closure $\oSigma_6$ of the $D_6$ locus in $\oP_5^{\GIT}$ is isomorphic to $\bP(1,2) \cong \bP^1$ with projective coordinates $[s_1,s_2]$.  Moreover, 
 \begin{itemize}
     \item $\Sigma_1$ corresponds to the point $[1,0]$ and
     \item $\oSigma_6$ and $\oSigma_7$ intersect only at the point $\Sigma_1$. 
 \end{itemize}
\end{prop}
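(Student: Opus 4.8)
\textbf{Proof strategy for Proposition \ref{prop:jetsummary}.}

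The plan is to use the two explicit parametrizations of quintic curves with $A_k$ ($k\ge 9$) and $D_6$ singularities that were recorded just before the proposition, together with the GIT stability description from Lemma \ref{lem:quinticgit}. For the $A_9$ (i.e.\ $\oSigma_7$) part: the equation \eqref{eq:quinticA_9} shows that in suitable analytic coordinates $(x',y)$ of weight $(5,1)$ the local form of the singularity is $x'^2=(p^2-u)y^{10}+\cdots$, so the singularity type depends on the vanishing order of $(p^2-u)$. The substitution $h:=p^2-u$ is the natural coordinate, and I would first verify that the parameter space $\bA^4\setminus\{0\}$ in coordinates $(s,r,p,u)$, after quotienting by $y\mapsto -y$ (which identifies $(s,r,p,u)$ with $(s,r,-p,u)$) and by the rescaling $(s,r,p,u)\mapsto(\lambda^2 s,\lambda^4 r,\lambda^3 p,\lambda^6 u)$, maps isomorphically onto $\bP(1,2,3,3)$ in coordinates $[s,r,h,u]$. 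The $\mu_2$-quotient in the $p$-direction turns the pair $(p,u)$ of weight $(3,6)$ into $(h,u)=(p^2-u,u)$ both of weight... wait---here I need to be careful: $h=p^2-u$ has weight $6$, not $3$. The correct bookkeeping is that $p$ itself is only defined up to sign, and $(p^2,u)$, both of weight $6$, gives coordinates on $\bA^2/\mu_2$; rescaling by $\lambda$ multiplies weight-$6$ quantities by $\lambda^6$, weight-$4$ by $\lambda^4$, weight-$2$ by $\lambda^2$. After dividing all weights by $2$ we get weights $(1,2,3,3)$ for $(s,r,p^2,u)$, hence the claimed $\bP(1,2,3,3)$ with last two coordinates $(h,u)$. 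I would write this out cleanly, checking that the morphism is well-defined, surjective, and injective on the quotient.

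Next, I would identify the strata. Setting $r=p=u=0$ recovers $Q_5=(x-y^2)^2(1+sx)=0$ after rescaling, giving the point $[s,0,0,0]=[1,0,0,0]=\Sigma_1$. Setting $s=r=p=0$ (so $h=-u\ne0$) recovers the $A_{12}$-curve, which after normalizing is the point $[0,0,0,1]=\Sigma_2$. The locus $h=u=0$ forces $p=u=0$, and from the local analysis the curve then has $x'^2=(\text{higher order, governed by the }r\text{-term})$, producing the reducible $A_{11}$-curves of $\oSigma_3$; the locus $r=h=0$ (with the extra vanishing it forces) gives the irreducible $A_{11}$-curves of $\oSigma_4$; and $h=0$ alone gives the $A_{10}$-curves of $\oSigma_5$. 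Each of these claims is a direct jet computation from \eqref{eq:quinticA_9}: substitute the vanishing conditions, compute the Newton polygon / vanishing order of $x'^2 - (\cdots)$, and read off the $A_k$ type, then match against the dimensions and incidences in Lemma \ref{lem:quinticgit}. I would present one representative computation in detail and assert the others follow identically.

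For the $D_6$ part (i.e.\ $\oSigma_6$): using the normal form $y(x-t_1y^2)(x-t_2y^2)=0$ with $t_1\ne t_2$, the symmetric coordinates $(s_1,s_2)=(t_1+t_2,(t_1-t_2)^2)$ of weights $(1,2)$ under the $\bG_m$-rescaling $(t_1,t_2)\mapsto(\lambda t_1,\lambda t_2)$ give an isomorphism from $(\bA^2\setminus\{0\})/\bG_m$ onto $\bP(1,2)\cong\bP^1$; I would check that two $D_6$-curves are projectively equivalent iff their $(s_1,s_2)$ lie in the same orbit, so that this is indeed the closure of $\Sigma_6$ in $\oP_5^{\GIT}$ (using that $\Sigma_6$ is the unique strictly semistable stratum and $\oSigma_6\cong\bP^1$ from Lemma \ref{lem:quinticgit}). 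The point $s_2=0$ is $t_1=t_2$, i.e.\ $y(x-ty^2)^2=0$, which degenerates to $Q_5$ (two tangent conics becoming a double conic plus the line through the tangent point) and so corresponds to $\Sigma_1=[1,0]$. Finally, $\oSigma_6\cap\oSigma_7$: both closed strata are irreducible and, by the incidence diagram in Lemma \ref{lem:quinticgit}, each contains $\Sigma_1$ in its closure but no other common stratum; since $\Sigma_1$ is a single point and the only stratum lying in both closures, $\oSigma_6\cap\oSigma_7=\{\Sigma_1\}$.

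\textbf{Main obstacle.} The routine-but-delicate part is the weight bookkeeping in the $A_9$ case: getting the $\mu_2$-quotient in the $p$-variable right so that $(s,r,p^2,u)$ genuinely gives $\bP(1,2,3,3)$ (rather than some other weighted projective space or a quotient thereof), and checking that the resulting map is an isomorphism onto its image and not merely a finite cover. Everything else is either a direct consequence of Lemma \ref{lem:quinticgit} (dimensions, incidences, which stratum is which) or a short Newton-polygon computation from the explicit equations.
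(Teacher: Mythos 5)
Your overall strategy is the right one (the paper states this proposition without proof, but the setup immediately preceding it clearly intends exactly the argument you lay out), and your handling of the ``main obstacle''---the $\mu_2$-quotient in the $p$-variable---is correct: the invariant ring of $\bC[s,r,p,u]$ under $p\mapsto -p$ is $\bC[s,r,p^2,u]$ with weights $(2,4,6,6)$, so the quotient of $\bP(2,4,3,6)$ is $\bP(2,4,6,6)\cong\bP(1,2,3,3)$, and $(h,u)=(p^2-u,u)$ is just a linear change of the two weight-$3$ coordinates. The $D_6$ half and the intersection claim via the incidence diagram are also fine.

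There is, however, a concrete error in your identification of $\Sigma_2$. You write: ``Setting $s=r=p=0$ (so $h=-u\ne0$) recovers the $A_{12}$-curve, which after normalizing is the point $[0,0,0,1]$.'' This is internally inconsistent and also computationally wrong. If $h=-u\neq 0$, the corresponding point of $\bP(1,2,3,3)$ is $[0,0,-u,u]=[0,0,-1,1]$, which is \emph{not} $[0,0,0,1]$ (these differ in the weight-$3$ plane and no rescaling identifies them). More seriously, with $s=r=p=0$ and $u\neq 0$, the $(5,1)$-jet of \eqref{eq:quinticA_9} reads $x'^2=(p^2-u)y^{10}+\cdots = -u\,y^{10}+\cdots$ with $-u\neq 0$, i.e.\ you land in the generic $A_9$ stratum $\Sigma_7$, not in $\Sigma_2$. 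The correct specialization for the $A_{12}$-curve is $s=r=0$ together with $p^2=u\neq 0$ (so $h=0$), which is exactly the normal form used in Section \ref{sec:a12} and in Appendix \ref{sec:A10} (there $p=-1$, $u=1$, $r=0$, and then $s=0$ forces the $y^{12}$-coefficient to vanish as well, pushing $A_{11}$ to $A_{12}$). This gives $[s,r,h,u]=[0,0,0,1]$, as claimed. Fix this one substitution and the rest of your stratum bookkeeping ($\oSigma_3=\{h=u=0\}$ forcing $p=0$, $\oSigma_4=\{r=h=0\}$, $\oSigma_5=\{h=0\}$) goes through as you describe, consistent with the explicit computations in Sections \ref{sec:a12}--\ref{sec:A10}.
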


\subsection{$A_{12}$}\label{sec:a12}
Recall that $X_{26}$ is given by $(xw - y^{13}-z^2=0) \subset \bP(1, 2, 13, 25).$
In this section, we verify the K-polystability of $(X_{26},\frac{8}{15}C_0')$, where $C_0' = (w=0)$,
using techniques of Ilten and S\"u{\ss} \cite{IS17}.

\begin{prop}\label{prop:a12kps}
 The log Fano pair $(X_{26}, \frac{8}{15}C_0')$ is K-polystable where $C_0'=(w=0)$.
\end{prop}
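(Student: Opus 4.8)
The plan is to verify K-polystability of $(X_{26}, \frac{8}{15}C_0')$ by realizing $X_{26}$ as a $T$-variety of complexity one, writing down the relevant polytopal/combinatorial data, and then running the Ilten--S\"u\ss\ criterion \cite{IS17} for K-polystability of log Fano pairs with torus action of complexity one. First I would fix the two-torus $T=\bG_m^2$ acting on $X_{26}$ induced by the grading on $\bP(1,2,13,25)$ restricted to the hypersurface $(xw=y^{13}+z^2)$; one checks $\dim X_{26}=2$ and $\dim T=1$ after quotienting out the subtorus acting trivially, so $X_{26}$ has complexity one. Next I would compute the Cox ring / homogeneous coordinate description and translate the pair $(X_{26}, \frac{8}{15}C_0')$ into the language of divisorial fans: identify the $\bP^1$ that is the Chow quotient, determine the coefficients of the boundary divisors on it, and write the polyhedral divisor $\mathcal{S}=\sum_{P} \mathcal{S}_P \otimes P$ representing $-K_{X_{26}} - \frac{8}{15}C_0'$. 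The three special points $P\in\bP^1$ correspond to the three ``legs'' coming from the weights $2$, $13$, $25$ (equivalently the singular point and the components of $C_0'$), and the vertical components contribute the divisorial part.

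The key computation is then the evaluation of the Donaldson--Futaki-type invariant via the Ilten--S\"u\ss\ formula: one must (a) check that the Futaki character vanishes, i.e. $(X_{26}, \frac{8}{15}C_0')$ is \emph{Futaki-semistable} with respect to the torus $T$, which reduces to a barycenter condition on the moment polytope twisted by the boundary coefficient $\frac{8}{15}$; and (b) verify that every special test configuration arising from an admissible choice of a vertex/ray in the divisorial fan (the ``vertical'' and ``horizontal'' deformations in the sense of \cite{IS17}) has strictly positive generalized Futaki invariant, with equality only for the product ones coming from the $T$-action itself. Because $\Aut_0(X_{26},C_0')\cong\bG_m$ as noted in the proof of Theorem \ref{thm:secondwall}, the only product test configurations are those induced by this one-dimensional torus, so after checking Futaki-semistability the remaining test configurations to rule out are genuinely non-product and the criterion of \cite{IS17} applies cleanly. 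I would organize the positivity check by the finitely many combinatorial types: the two horizontal divisorial degenerations (degenerating $X_{26}$ toward $\bP(1,4,25)$ or toward a surface with a worse quotient singularity) and the vertical ones supported over each of the three marked points, and in each case reduce the Futaki invariant to an explicit rational number via the volume/barycenter integrals over the moment polytope.

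The main obstacle I expect is the bookkeeping in step (b): getting the divisorial fan data for $X_{26}$ exactly right (the slopes of the polyhedral coefficients at the three marked points, coming from the weights $(1,2,13,25)$ and the equation $xw=y^{13}+z^2$), and then correctly incorporating the boundary $\frac{8}{15}C_0'$ into both the anticanonical class and the Futaki computation, so that the barycenter condition comes out exactly zero at $c=\frac{8}{15}$. This is consistent with the valuative computation in Proposition \ref{prop:a12onlyif}, which forces $c=\frac{8}{15}$ to be the threshold, and with Proposition \ref{prop:deltax26} giving $\delta(X_{26})=\frac19$; indeed a good internal consistency check on the combinatorial data is that the divisorial valuation $\ord_E$ over the singular point must realize $S_{X_{26}}(\ord_E)=\frac{27}{5}$ and $A_{X_{26}}(\ord_E)=\frac35$. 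Once the divisorial fan is pinned down and these sanity checks pass, the K-polystability assertion follows by direct application of the classification of special test configurations and the Futaki invariant formula in \cite[Section~4]{IS17}; I would therefore present the proof as: set up the $T$-action and divisorial fan, verify Futaki-semistability at $c=\frac8{15}$, enumerate the special test configurations, and compute that each non-product one has positive Futaki invariant, concluding K-polystability.
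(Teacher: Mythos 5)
Your proposal is correct and takes essentially the same route as the paper: both reduce to $T$-equivariant K-polystability of the complexity-one $T$-variety $X_{26}$ with boundary $\frac{8}{15}C_0'$ and apply the Ilten--S\"u\ss\ machinery to enumerate special test configurations and test their Futaki invariants. The paper implements this by working on the affine cone $Y=\Spec A$ over $X_{26}$ with a $\bG_m^2$-action (rather than directly on the surface), explicitly invoking \cite[Theorem 1.4]{LWX18} to justify restricting to $\bG_m$-equivariant test configurations, and computing Futaki invariants via the normalized-volume derivative from \cite{LX17}, packaged as the centroid criterion of Proposition \ref{prop:futvol}; these are the precise versions of the steps you describe informally as ``barycenter condition'' and ``positivity of the Futaki invariant,'' and making them rigorous is exactly the bookkeeping you correctly anticipate as the main work.
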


\begin{proof}
Consider the projective coordinate ring $A$ of $X_{26}$,
where 
\[
 A=\bC[x,y,z,w]/(xw-y^{13}-z^2).
\]
Consider the action of $\bG_m^2$ on $Y:=\Spec A$ given by 
$(x,y,z,w)\mapsto (\lambda^{26}\mu x, \lambda^{2}\mu^2 y, 
\lambda^{13}\mu^{13}z, \mu^{25}w)$. It is clear that this action descends to a $\bG_m$-action on $(X_{26}, \frac{8}{15}C_0')$. Thus by \cite[Theorem 1.4]{LWX18} it suffices to show $\bG_m$-equivariant K-polystability of $(X_{26}, \frac{8}{15}C_0')$.

Denote by $R:=\bC[x,y,z,w]$, and
$F=xw-y^{13}-z^{2}$, so $A=R/(F)$. The character
lattice $M=\bZ\langle(26,1),(1,1)\rangle\subset\bZ^2$,
and for every $(\alpha,\beta)\in M$, we know
\[
R_{(\alpha,\beta)}=\langle x^a y^b z^c w^d\mid a(26,1)+b(2,2)+c(13,13)+d(0,25)=(\alpha,\beta)\rangle.
\]
In the ring $A$, if we want to determine $A_{(\alpha,\beta)}$
it suffices to assume $c\in\{0,1\}$.
We also notice that the parities of $c$ and $\alpha$ are the same,
so if we assume $\alpha$ even then we only need to take $c=0$.
Then the equation becomes
\[
 A_{(\alpha,\beta)}=\langle x^a y^b w^d\mid a(26,1)+b(2,2)+d(0,25)=(\alpha,\beta)\rangle.
\]
By passing to a even larger multiple, we may assume that
$(\alpha,\beta)=e(26,1)+f(0,25)$. Then
$a(26,1)+b(2,2)+d(0,25)=e(26,1)+f(0,25)$ implies
\[
 a=e-g,~d=f-g,~b=13g\textrm{ where }0\leq g\leq \min\{e,f\}.
\]
The weight cone $\omega\subset M_{\bQ}$ is generated by $(26,1)$
and $(0,25)$.

Next, we follow the set-up by Altmann and Hausen on polyhedral divisors \cite{AH06}.
Recall that $M\subset\bZ^2$ is the sublattice generated by
$(25,0)$ and $(1,1)$. Let $N\supset\bZ^2$ be the dual
lattice of $M$ generated by $\frac{1}{25}(1,-1)$ and $(0,1)$.
For each $u=(\alpha,\beta)\in\omega\cap M$, we
decompose it as $u=e(26,1)+f(0,25)$. Here $e,f\in\frac{1}{26}\bZ_{\geq 0}$
and $e-f\in\bZ$. Then the polyhedral divisor $\fD$ on $\bP^1$
is given by 
\[
\fD(u)=\begin{cases}
        13f[1]-12f[0]& \textrm{ if }e\geq f\geq 0\\
        13f[1]-12f[0]+(e-f)[\infty]&\textrm{ if }f\geq e\geq 0
       \end{cases}
 \]
 The polyhedrals are given by 
 \[
  \fD_{[0]}=\frac{6}{325}(1,-26)+\sigma, \quad \fD_{[1]}=\frac{1}{50}(-1,26)+\sigma,\quad
  \fD_{[\infty]}  =\textrm{conv}((0,0),\frac{1}{25}(1,-1))+\sigma.
 \]
Here $\sigma=\omega^{\vee}\subset N_{\bQ}$ is spanned
by $(1,0)$ and $(-1,26)$. Denote the four vertices
by $x_0=\frac{6}{325}(1,-26)$, $x_1=\frac{1}{50}(-1,26)$,
$x_2=(0,0)$, and $x_3=(\frac{1}{25}(1,-1))$.
Then these vertical divisors correspond to 
\begin{align*}
 D_{[0],x_0}=(y=0), \quad D_{[1],x_1}=(z=0), \quad
 D_{[\infty],x_2}=(w=0), \quad D_{[\infty],x_3}=(x=0).
\end{align*}
We also denote the extremal ray by $\rho_1=\langle(1,0)\rangle$
and $\rho_2=\langle(-1,26)\rangle$.
Notice that 
\[
 \mu(x_0)=13, \quad\mu(x_1)=2,\quad\mu(x_2)=1,\quad
 \mu(x_3)=1.
\]
Then for any presentation $K_{\bP^1}=a_0[0]+a_1[1]+a_{2}[\infty]$,
we have
\[
 K_{Y}=(13a_0+12)D_{[0],x_0}+(2a_1+1)D_{[1],x_1}
 +a_2(D_{[\infty],x_2}+D_{[\infty],x_3}).
\]
For simplicity, let us choose $a_0=a_1=-1$, $a_2=0$.
Then $K_Y=-D_{[0],x_0}-D_{[1],x_1}$. Hence
\[
 -(K_Y+cD_{[\infty],x_3})=D_{[0],x_0}+D_{[1],x_1}-cD_{[\infty],x_2}.
\]

 Next, we will try to use $T$-varieties to
 study test configurations. We follow the notation
 of \cite{IS17}. Let us choose a point $Q\in\bP^1$,
 a natural number $m\in\bZ_{>0}$, and a vector
 $v\in \frac{1}{m}N$. Consider the lattices
 $\tM:=M\times\bZ$ and $\tN:=N\times\bZ$. Define 
 $\tsigma\subset\tN_{\bQ}$ as
 \[
  \tsigma:=\left\langle(v+\sum_{P\in\bP^1\setminus\{Q\}}\fD_P,\frac{1}{m}),(\sum_{P\in\bP^1}\fD_P,0)\right\rangle.
 \]
 Define the polyhedral divisor $\tfD$ by
 \[
  \tfD:=(\textrm{conv}((v,\frac{1}{m}),(\fD_Q,0))+\tsigma)\otimes Q+\sum_{P\in\bP^1\setminus\{Q\}}
  ((\fD_P,0)+\tsigma)\otimes P.
 \]
 Then we have a $\tM$-graded algebra
 \[
  \cA:=\bigoplus_{\tu\in\tsigma^{\vee}\cap\tM}
  H^0(\bP^1,\cO(\lfloor\tfD(\tu)\rfloor)).
 \]
 Let $\cY(Q,v,m):=\Spec\cA$. Then we see $\tfD(0,k)=0$,
 so we have a subring $\cA_0$ of $\cA$ which consists
 of $(0,k)$-graded pieces. Moreover,
 $\cA_0=\bC[t]$ where $t$ is the canonical section
 of $\cO(\lfloor\tfD(0,1)\rfloor)=\cO$.
 Thus we get a $T\times\bG_m$-equivariant morphism
 $\cY\to \bA^1$. 
 It is clear that 
 \[
  \tfD(u,k)=\min\{(v,u)+\frac{k}{m}, \fD_Q(u)\}\cdot Q
  +\sum_{P\in\bP^1\setminus\{Q\}}\fD_P(u)\cdot P.
 \]
 Hence when $k\gg 0$, we see $\tfD(u,k)=\fD(u)$. Thus
 the localization $\cA_t=A\otimes\bC[t,t^{-1}]$.
 
 Next, we analyze the central fiber $\cY_0(Q,v,m)=\Spec~\cA/(t)$.
 It is clear that 
 \[
  \cY_0(Q,v,m)=\Spec\bigoplus_{(u,k)\in
  \sigma^{\vee}\cap\tM}H^0(\bP^1,\cO(\lfloor\tfD(u,k)\rfloor))/H^0(\bP^1,
  \cO(\lfloor\tfD(u,k-1)\rfloor)).
 \]
 For computational purposes, consider the lattice
 automorphism $\phi:\tN\to \tN$ given by
 \[
  \phi(v',m'):=(v'-m'mv,m').
 \]
 The dual automorphism $\phi^{\vee}:\tM\to\tM$
 is given by $
  \phi^{\vee}(u,k)=(u,k-m(v,u))$.
 Hence
 \[
  \phi_*\tfD(u,k)=\tfD(\phi^{\vee}(u,k))=\min\{\frac{k}{m},\fD_Q(u)\}\cdot Q+
  \sum_{P\in\bP^1\setminus\{Q\}}\fD_P(u)\cdot P.
 \]
 In order for the $\phi^{\vee}(u,k)$-graded piece of $\cY_0$
 to be non-zero, we require two conditions:
 \begin{enumerate}
  \item $\lfloor\min\{\frac{k}{m},\fD_Q(u)\}\rfloor>
   \lfloor\min\{\frac{k-1}{m},\fD_Q(u)\}\rfloor$;
  \item $\deg\lfloor\phi_*\tfD(u,k)\rfloor\geq 0$.
 \end{enumerate}
 Condition (1) and (2) together are equivalent to $k\in m\bZ$ and 
 $\fD_Q(u)\geq \frac{k}{m}$, and
 \[
  \sum_{P\in\bP^1\setminus\{Q\}}\lfloor\fD_P(u)\rfloor+\frac{k}{m}\geq 0
 \]
 Denote by $\tau\subset\tN_{\bQ}$ the cone generated
 by $(\fD_Q,-\frac{1}{m})$ and $(\sum_{P\in\bP^1\setminus\{Q\}}\fD_{P},\frac{1}{m})$.
 Consider the sublattice of $\tM$ of index $m$, namely $\tM_m:=M\times m\bZ\subset\tM$. Consider the semigroup
 \[
  S:=\{(u,k)\in\tau^{\vee}\cap\tM_m\mid
    \sum_{P\in\bP^1\setminus\{Q\}}\lfloor\fD_P(u)\rfloor+\frac{k}{m}\geq 0\}
 \]
 Then we have
 \begin{enumerate}
  \item The central fiber $\cY_0$ is isomorphic
  to the affine toric variety $\Spec~\bC[S]$;
  \item The normalization of $\cY_0$ is isomorphic
  to the affine toric variety $\Spec~\bC[\tau^{\vee}\cap\tM_m]$.
 \end{enumerate}
 Hence $\cY_0$ is normal if and only if $S=\tau^{\vee}\cap\tM_m$,
 which is equivalent of saying that 
 the collection $\{\fD_P\}_{P\neq Q}$ is admissible.

 Next, we want to study the limit of boundary divisor $(w=0)$ in
 the central fiber $\cY_0$. We first realize $(w=0)$
 in $Y$ as a Cartier divisor. Let $f$ be a rational
 function on $\bP^1$ such that 
 $\mathrm{div}(f)=12[0]-13[1]+[\infty]$.
 Then we can check by \cite[Proposition 3.14]{PetersenSuss}
 that 
 \[
  \mathrm{div}(f\cdot\chi^{(0,25)})=D_{[\infty],x_2}=(w=0).
 \]
 By the admissibility condition, the only choice of $Q$
 will be $Q=[0]$ or $[1]$.
 \medskip
 
 \textbf{Case 1: $Q=[0]$.} For simplicity, we also set
 $v=0$ at the moment. Then
 in our test configuration $\cY([0],0,m)$, we can compute similarly
 that 
 \[
  \mathrm{div}(f\cdot\chi^{(0,25,k)})=(12m+k)D_{[0],(0,\frac{1}{m})}+D_{[\infty],(x_2,0)}.
  \]
 Also, $\mathrm{div}(\chi^{(0,0,1)})=D_{[0],(0,\frac{1}{m})}=\cY_0$. Hence we know that $  D_{[\infty],(x_2,0)}=\mathrm{div}(f\cdot\chi^{(0,25,-12m)})$.
 By carefully checking the quotient map $\cA\to\cA/(t)$,
 we find out that the restriction of $f\cdot\chi^{(0,25,-12m)}$
 is exactly the function $\chi^{(0,25,-12m)}$ on $\cY_0$.
 Hence we have
 $(w=0)|_{\cY_0}=\mathrm{div}(\chi^{(0,25,-12m)})$.
 So the computations are about the toric variety $\cY_0$ and
 its boundary divisor $\Delta_0:=c\cdot(w=0)|_{\cY_0}$. For simplicity,
 we may assume $m=1$. Then $\cY_0=\Spec~\bC[\tau^{\vee}\cap\tM]$.
 The primitive vectors of $\tau$ in $\tN$ is given by
 $n_1=(\frac{6}{25}(1,-26),-13)$, $n_2=(\frac{1}{25}(-1,26),2)$, and $n_3=(\frac{1}{25}(1,24),2)$.
 Let $\tu_0$ be the vector in $\tM_{\bQ}$ representing the anticanonical
 divisor $-K_{\cY_0}$, then $(\tu_0,n_i)=1$. Let $\tu_1$ be the 
 vector in $\tM_{\bQ}$ representing the divisor $(w=0)|_{\cY_0}$.
 Then computation shows
 \[
  \tu_0=(15,15,-7),\quad \tu_1=(0,25,-12).
 \]
 Thus for any toric valuation $v_{\xi}$ of $\bC(\cY_0)$ with $\xi\in \tau$, we have 
 \[
  A_{(\cY_0,\Delta_0)}(v_\xi)=(\tu_0-c\tu_1,\xi), \quad \vol_{\cY_0,o}(v_\xi)
  =6\vol(\tau^{\vee}\cap (\xi\leq 1)).
 \]
 From \cite{LX17}, we know that 
 \[
 \Fut(\cY,\Delta,\xi_0;\eta)=\left.\frac{d}{dt}\right|_{t=0}\hvol_{(\cY_0,\Delta_0),o}
 (v_{\xi_0-t\eta}).
 \]
 In our setting, $\xi_0=(0,1,0)$ is induced by the quotient map 
 $Y\setminus\{o\}\to X_{26}$, and $\eta=(-mv,m)$.
 We know that all such $\eta$ satisfy $(\eta,\eta_0^*)>0$
 where $\eta_0^*=(0,0,1)\in \tM$. We always have
 $(\xi_0,\eta_0^*)=0$. 
 
 \begin{prop}\label{prop:futvol}
  Under the above notation, we have $\Fut(\cY,\Delta,\xi_0;\eta)>0$
  for any $\eta$ satisfying $(\eta,\eta_0^*)>0$ if and only if
  the centroid $\tu_2$ of $\tau^{\vee}\cap (\xi_0=1)$
  satisfies
  \[
   \tu_2=a(\tu_0-c\tu_1)+b\eta_0^*
  \]
  for some $a,b>0$.
 \end{prop}
 
 \begin{proof}
  We first determine $a$ by $(\tu_2,\xi_0)=a(\tu_0-c\tu_1,\xi_0)$.
  Then the vector $a(\tu_0-c\tu_1)\in (\xi_0=1)$. Then we may
  choose $\eta$ in a way such that $(\tu_0-c\tu_1,\eta)=0$
  and $(\eta,\eta_0^*)>0$ since $\hvol$ is invariant under rescaling.
  Then $A_{(\cY_0,\Delta_0)}(v_{\xi_0-t\eta})=a^{-1}>0$. Moreover,
  computation shows
  \[
   \left.\frac{d}{dt}\right|_{t=0}\vol(\tau^\vee\cap((\xi_0-t\eta)\leq 1))
   =\vol(\tau^\vee\cap(\xi_0=1))\cdot (a(\tu_0-c\tu_1)-\tu_2,\eta).
  \]
  Hence $\Fut(\cY,\Delta,\xi_0;\eta)>0$ is equivalent to 
  $(a(\tu_0-c\tu_1)-\tu_2,\eta)>0$ for any $\eta$ satisfying
  $(\tu_0-c\tu_1,\eta)=0$ and $(\eta,\eta_0^*)>0$. This is equivalent
  to $a(\tu_0-c\tu_1)-\tu_2=a'(\tu_0-c\tu_1)+b\eta_0^*$
  for some $a'\in\bR$ and $b>0$. Since $(\xi,\eta_0^*)=0$, we get $a'=0$. Hence the
  proof is finished.
 \end{proof}

 By computation we have
 \[
  \tu_2=(9,1,-\frac{49}{150}),\quad
  \tu_0-c\tu_1=(15,15-25c,-7+12c),\quad
  \eta_0^*=(0,0,1).
 \]
 Hence the only $c$ satisfying the condition of Proposition \ref{prop:futvol}
 is $c=\frac{8}{15}$.
 \medskip
 
\textbf{Case 2: $Q=[1]$.}
As always, we want to first determine the polyhedral divisors.
We know that 
\begin{align*}
 \fD_{[0]}+\fD_{[\infty]}& =\mathrm{conv}(\frac{6}{325}(1,-26),\frac{1}{325}(19,-169))+\sigma,\\
 \fD_{[0]}+\fD_{[1]}+\fD_{[\infty]}& =\mathrm{conv}{(\frac{1}{650}(-1,26),
 \frac{1}{26}(1,0))}+\sigma.
\end{align*}
It is clear that $\cY([1],0,m)$ has five distinguished vertical divisors:
\[
 D_{[0],(x_0,0)},~ D_{[1],(x_1,0)},~D_{[1],(0,\frac{1}{m})},~
 D_{[\infty], (x_2,0)}, ~D_{[\infty],(x_3,0)}.
\]
We can compute that 
\[
 \mathrm{div}(f\cdot \chi^{(0,25,k)})=(-13m+k)D_{[1],(0,\frac{1}{m})}+D_{[\infty],(x_2,0)}.
\]
We also have $ \mathrm{div}(\chi^{(0,0,1)})=D_{[1],(0,\frac{1}{m})}=\cY_0$. 
Hence we have $D_{[\infty],(x_2,0)}=\mathrm{div}(f\cdot\chi^{(0,25,13m)})$.
Next we will analyze the cone $\tau\subset\tN_{\bQ}$. Assume $m=1$ 
for simplicity. Then the primitive vectors of $\tau$ in $\tN$ is $n_1=(\frac{1}{25}(-1,26),-2)$, $n_2=(\frac{6}{25}(1,-26),13)$, and $n_3=(\frac{1}{25}(19,-169),13)$.
Let $\tu_0$ and $\tu_1$ be vectors in $\tM_{\bQ}$ representing
$-K_{\cY_0}$ and $(w=0)|_{\cY_0}$, respectively. Then 
\[
 \tu_0=(15,15,7),\quad\tu_1=(0,25,13).
\]
We still have $\xi_0=(0,1,0)$ and $\eta_0^{*}=(0,0,1)$. Hence
\[
 \tu_2=(9,1,\frac{319}{975}), \quad
 \tu_0-c\tu_1=(15,15-25c, 7-13c),\quad \eta_0^*=(0,0,1).
\]
Hence the only $c$ satisfying the condition of Proposition
\ref{prop:futvol} is $c=\frac{8}{15}$.
\end{proof}

\subsection{$A_{11}$} For plane quintic curves with $A_{11}$-singularities, we have two cases: reducible and irreducible curves. We begin with the reducible case. 

\subsubsection{$A_{11}$ reducible}
Let $C$ be a reducible plane quintic curve with an $A_{11}$-singularity. Then after a projective transformation, in the affine coordinates $[x,y,1]$ 
we can write the equation of $C$ as (see Proposition \ref{prop:jetsummary})
\[
C = \big( (x-y^2)((x-y^2)(1+sx)-x^3)=0\big).
\]
In other words, we have $p=0$, $u=0$, and $r=1$ in \eqref{eq:quinticA_9}. Let us choose a $6$-jet $(x',y)$ at the origin by $x':=x-y^2-\frac{1}{2} y^6$. Then the equation of $C$ in $(x',y)$ becomes
\[
x'^2=\tfrac{1}{4}y^{12}+\textrm{higher order terms},
\]
where $(x',y)$ has weight $(6,1)$. 
The only parameter of $C$ here is $s\in\bA^1$. All these curves 
are GIT stable. When $s$ goes to infinity,
the unique GIT polystable limit will be $Q_5$, i.e. the double conic union
a transversal line.

\begin{theorem}\label{thm:a11red} Suppose $C \subset \bP^2$ is a reducible quintic curve with an $A_{11}$ singularity. Then the log Fano pair $(\bP^2, cC)$ is K-semistable if and only if $0 < c \leq \frac{6}{11}$. Moreover, $(\bP(1,1,4), \frac{6}{11}C_0)$ is the K-polystable degeneration of $(\bP^2, \frac{6}{11}C)$ where $C_0=(x^2 z^2+y^6 z=0)$. \end{theorem}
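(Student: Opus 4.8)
The plan is to follow the template set by Theorem~\ref{thm:a12} (and its companions for $A_{12}$), reducing everything to a valuative computation, an explicit special degeneration, and a K-polystability verification via $T$-varieties of complexity one. So I would split the statement into three parts: (i) if $(\bP^2,cC)$ is K-semistable then $c\leq\frac{6}{11}$; (ii) the log Fano pair $(\bP^2,\frac{6}{11}C)$ specially degenerates to $(\bP(1,1,4),\frac{6}{11}C_0)$ with $C_0=(x^2z^2+y^6z=0)$; (iii) $(\bP(1,1,4),\frac{6}{11}C_0)$ is K-polystable. Granting these, the interpolation result Proposition~\ref{prop:k-interpolation}, the openness/Zariski-density argument, and Theorem~\ref{thm:Kss-spdeg} (K-semistability descends under special degeneration) give the full ``if and only if'' together with the identification of the K-polystable replacement.

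For part (i), I would perform the weighted blow-up of $\bP^2$ at the origin with weights $(6,1)$ in the analytic coordinates $(x',y)$ where $x'=x-y^2-\tfrac12 y^6$, so that $C$ is tangent to a smooth branch to order matching the $A_{11}$ singularity. Call the exceptional divisor $E$ on the resulting surface $X$ and $\pi:X\to\bP^2$ the blow-up. A routine computation (exactly parallel to Proposition~\ref{prop:a12onlyif}) gives $A_{(\bP^2,cC)}(E)=7-11c$, while $-K_{\bP^2}-cC\sim_\bQ(3-5c)H$; one then computes the Zariski decomposition of $\pi^*H-tE$ using that the Mori cone of $X$ is spanned by $E$ and the strict transform $\overline C$, with the volume a piecewise-polynomial function of $t$, and integrates to obtain $S_{(\bP^2,cC)}(E)=\tfrac{17}{5}(3-5c)$ (or whatever the correct constant is after the computation). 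Setting $A\geq S$ via Theorem~\ref{thm:valuative} yields precisely $c\leq\frac{6}{11}$. For part (ii), I would construct the same sort of birational modification of the trivial family $(\bP^2,C)\times\bA^1$ used in Proposition~\ref{prop:a12deg}: weighted blow-up in coordinates $(x',y,t)$ with weights $(6,1,1)$, whose central-fiber exceptional surface is $\bP(1,1,6)$; then flop (or contract-and-extract) the strict transform of the conic branch; then a divisorial contraction of the original $\bP^2$-component. Tracking generators of the relevant graded ring shows the resulting surface is $\bP(1,1,4)$ (via the relation $x_2^6+x_3^2=\cdots$ giving weight $4$) and the limit curve is exactly $C_0=(x^2z^2+y^6z=0)$; this is the analogue of the $X_{26}$ computation.

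The main obstacle will be part (iii): verifying that $(\bP(1,1,4),\frac{6}{11}C_0)$ is genuinely K-\emph{poly}stable, not merely K-semistable. Here $C_0$ is $\bG_m$-invariant (indeed $\bG_m$ acts on $(\bP(1,1,4),C_0)$ via a two-dimensional torus when one includes the grading torus), so by \cite[Theorem 1.4]{LWX18} it suffices to check $\bG_m$-equivariant K-polystability, and I would use the Ilten--S\"u\ss\ machinery \cite{IS17} for complexity-one $T$-varieties exactly as in Proposition~\ref{prop:a12kps}. Concretely: write $\bP(1,1,4)$ with the boundary $\frac{6}{11}C_0$ as a complexity-one $T$-variety over $\bP^1$, compute its polyhedral divisor $\mathfrak D$, enumerate the vertical divisors and their multiplicities, and then — for each admissible choice of the ``special'' fiber $Q$ and degeneration vector — reduce the Futaki invariant of the induced special test configuration to a statement about the centroid of a polytope in the toric central fiber, as in Proposition~\ref{prop:futvol}. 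The computation should show that the only value of $c$ for which the centroid condition (centroid proportional to $\tu_0-c\tu_1$ modulo the distinguished character) holds is $c=\frac{6}{11}$, giving K-polystability at exactly that coefficient; for $c<\frac{6}{11}$ one gets K-stability of a generic smooth member of $\lvert\mathcal O(8)\rvert$ on $\bP(1,1,4)$ and then K-semistability of $C_0$ by interpolation plus openness, while for $c>\frac{6}{11}$ the pair becomes K-unstable (consistent with the later wall-crossing analysis). The bookkeeping of the polyhedral divisor and the lattice automorphism used to identify the central fiber is where the real work lies; everything else is a direct transcription of the $A_{12}$ argument.
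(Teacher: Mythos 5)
Your three-step decomposition — valuative criterion to bound $c$, explicit birational modification to produce the special degeneration, and Ilten--S\"u\ss\ machinery to certify K-polystability of the limit — matches the paper's proof exactly, and the surrounding logic (interpolation, openness, Theorem~\ref{thm:Kss-spdeg}) is applied as the paper does. However, your first step contains a genuine computational error that would break the argument if carried through. With the $6$-jet $x'=x-y^2-\tfrac12 y^6$ and the $(6,1)$-weighted blow-up, the curve $C$ has local equation $x'^2=\tfrac14 y^{12}+\cdots$, so $\ord_E(C)=12$ (both $x'^2$ and $y^{12}$ have weight $12$), giving $A_{(\bP^2,cC)}(E)=7-12c$, not $7-11c$. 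Moreover, in the reducible case $C=Q\cup C'$ the relevant negative curve generating the Mori cone of $X$ together with $E$ is $\oQ:=\pi_*^{-1}Q\sim 2\pi^*H-6E$ (with $(\oQ^2)=-2$), not the strict transform of the whole quintic, which is reducible and hence not extremal. The resulting volume computation gives $S_{(\bP^2,cC)}(E)=\tfrac53(3-5c)$ (the $\tfrac{17}{5}$ is the $A_{12}$ constant and does not carry over); with your displayed $7-11c$ and the $A_{12}$ constant, the inequality would not produce $c\le\tfrac{6}{11}$ at all.

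Two smaller inaccuracies: the modification of $\oQ$ is a genuine flip rather than an Atiyah flop, since $\cN_{\oQ/\cX}\cong\cO(-2)\oplus\cO(-1)$; correspondingly the paper identifies $\bP(1,1,4)$ by tracking singularities (the $(2,1)$-weighted blow-up creates an $A_1$ on $\hS$, and the contraction of $h_*^{-1}E$ creates a $\tfrac14(1,1)$), not by a graded-ring argument as in the $X_{26}$ case. Finally, quintics degenerate to curves of degree $10$ on $\bP(1,1,4)$, not degree $8$. None of these is a conceptual obstacle, and your outline of the polyhedral-divisor/centroid computation in part (iii) is faithful to what the paper does, but the log discrepancy and the choice of extremal curve in part (i) need to be corrected before the argument closes.
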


\begin{proof}
We first prove the ``only if'' part. Suppose $(\bP^2, cC)$ is K-semistable, and we want to show $c\leq \frac{6}{11}$.
 Let us perform the
 $(6,1)$-weighted blow up of $\bP^2$ in the coordinates
 $(x',y)$, and denote the resulting surface and exceptional
 divisor by $(X,E)$, with $\pi:X\to\bP^2$ the weighted blow up morphism.
 Let $Q=(x=y^2)$ be a smooth conic in $\bP^2$.
 We know that the weight of $x-y^2=x'+\frac{1}{2} y^6$
 is $6$, hence $\oQ:=\pi_*^{-1}Q\sim 2\pi^* H-6E$ is effective
 on $X$. It is easy to see $(E^2)=-\frac{1}{6}$ and $(\oQ^2)=-2$,
 hence the Mori cone of $X$ is generated by $E$ and $\oQ$.
 It is clear that
 \[
  A_{(\bP^2,cC)}(E)=7-12c,\quad -K_{\bP^2}-cC\sim_{\bQ} (3-5c)H.
 \]
 We also have $\pi^*H-tE$ is ample if and only if $0<t<2$, and 
 big if and only if $0\leq t<3$. Then by computation, we have
 \[
  \vol_X(\pi^*H-tE)=\begin{cases}
                      1-\frac{t^2}{6} & \textrm{ if }0\leq t\leq 2;\\
                      \frac{(3-t)^2}{3} & \textrm{ if }2\leq t\leq 3.
                     \end{cases}
 \]
 Hence $S_{(\bP^2,cC)}(E)=(3-5c)\int_0^\infty \vol_X(\pi^*H-tE)=(3-5c)\frac{5}{3}$.
 So the valuative criterion (Theorem \ref{thm:valuative}) implies
 \[
  7-12c=A_{(\bP^2,cC)}(E)\geq S_{(\bP^2,cC)}(E)= (3-5c)\frac{5}{3},
 \]
 which implies $c\leq \frac{6}{11}$.

 We now begin showing the ``if'' part. Similar to the proof of Theorem \ref{thm:a12}, we construct a special degeneration then later on use techniques of Ilten and S\"u{\ss} \cite{IS17} to show K-polystability of the degeneration. 

\begin{prop}\label{prop:a11reddeg}
 The log Fano pair $(\bP^2,cC)$ admits a special
 degeneration to $(\bP(1,1,4),cC_0)$
 where $C_0$ is given by the equation $x^2 z^2+y^6 z=0$.
\end{prop}

\begin{proof}
 Here is the construction of the special degeneration. Take our family
$(\bP^2,C)\times\bA^1$. We perform the following birational transformations:

\[
\begin{tikzcd}
 & \cX\arrow{ld}{\pi}\arrow{rd}{g}\arrow[rr,dashed,"f"]& & \cX^+\arrow{ld}{h}\arrow{rd}{\psi}\\
 \bP^2\times\bA^1& & \cY & & \cZ
 \end{tikzcd}
\]
Where in the central fiber we have
\[
 \begin{tikzcd}
 & S\cup X\arrow{ld}{\pi}\arrow{rd}{g}\arrow[rr,dashed,"f"]& & \hS\cup X'\arrow{ld}{h}\arrow{rd}{\psi}\\
 \bP^2& & S\cup X' & & S'
 \end{tikzcd}
\]
Here $\pi$ is the $(6,1,1)$-weighted blow up of $\bP^2\times\bA^1$ in the local coordinates $(x',y,t)$,
$S=\bP(1,1,6)$ is the exceptional divisor of $\pi$,
$g$ is the contraction of $\oQ$ in $X\subset\cX_0$, $f$ is the flip of
the curve $\oQ$ in $\cX_0$ (since by computation the normal bundle
$\cN_{\oQ/\cX}\cong\cO_{\oQ}(-2)\oplus\cO_{\oQ}(-1)$), and
$\psi$ is the divisorial contraction that contracts $X'$ to a point.

Let us analyze the geometry of these birational maps. Suppose
$S$ has projective coordinates $[x_1,x_2,x_3]$
of weights $(1,1,6)$ respectively.
Then $S\cap X=E=(x_1=0)$, and $\oQ\cap E=\{p\}$ is a smooth point
of $S$ and $X$. Since $\oQ$ has normal bundle
$\cO(-2)\oplus\cO(-1)$ in $\cX$, the surface
$\hS$ is a $(2,1)$-weighted blow up of $S$ at $p$.
Let $\oQ^+$ be the flipped curve in $\hS$,
then $\hS$ has an $A_1$-singularity at the unique intersection
of $h_*^{-1}E$ and $\oQ^+$. Then $\psi: \hS\to S'$ contracts
$h_*^{-1}E$ and creates a singularity of type $\frac{1}{4}(1,1)$. 
Thus $S'$ is $\bP(1,1,4)$. 

For the degeneration $C_0$ of $C$ in $S'$, note that $\pi_*^{-1}(C\times\bA^1)\cap S$ is the curve $C_0'=(x_3^2=\frac{1}{4}x_2^{12})$. In addition, we know that $p$ has coordinate $[0,1,-\frac{1}{2}]$ which is contained in $C_0'$. It is clear that $C_0'$ has an $A_{11}$-singularity at $[1,0,0]$ near where the birational map $S\dashrightarrow S'$ is an isomorphism. Thus $C_0$ has an $A_{11}$-singularity at a smooth point of $S'$ as well.  Since $Q$ is contained in $C$, we know that $\oQ':=\psi_* \oQ^+$ is contained in the degeneration $C_0$ of $C$. By a toric computation we know that $\oQ'=(z=0)$ in $S'=\bP(1,1,4)$. This implies that up to a projective transformation $C_0$ has the equation $(x^2 z^2+y^6 z=0)$ in $\bP(1,1,4)$.
\end{proof}

Now we return to the proof of the theorem. By Propositions \ref{prop:a11reddeg}, \ref{prop:a11redkps}, and Theorem \ref{thm:Kss-spdeg}, we know that $(\bP^2, \frac{6}{11}C)$ is K-semistable. Hence the proof is finished by Proposition \ref{prop:k-interpolation}.
\end{proof}

We verify K-polystability of $(\bP(1,1,4),\frac{6}{11}C_0)$ below.

\begin{prop}\label{prop:a11redkps}
 The log Fano pair $(\bP(1,1,4),\frac{6}{11}C_0)$ is K-polystable where $C_0=(x^2 z^2+y^6 z=0)$.
\end{prop}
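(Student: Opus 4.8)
\textbf{Proof strategy for Proposition \ref{prop:a11redkps}.}
The plan is to mimic the proof of Proposition \ref{prop:a12kps}, using the fact that $\bP(1,1,4)$ together with its curve admits a complexity-one torus action, and then apply the criterion of Ilten and S\"u\ss{} \cite{IS17}. First I would identify the $\bG_m$-action on $(\bP(1,1,4), C_0)$ with $C_0 = (x^2z^2 + y^6z = 0)$: the torus $\bG_m$ acting by $[x,y,z]\mapsto [t^3 x, ty, z]$ preserves $C_0$, so by \cite[Theorem 1.4]{LWX18} it suffices to check $\bG_m$-equivariant K-polystability. Next I would set up the polyhedral divisor (in the sense of Altmann–Hausen \cite{AH06}) describing the affine cone $Y := \Spec \bC[x,y,z]$ over $\bP(1,1,4)$ with respect to the two-torus $T$ acting diagonally; here $\bP(1,1,4)$ is toric so $Y$ is a toric surface cone, but we must track the \emph{non-toric} boundary divisor $C_0$ and write it as a principal divisor $\mathrm{div}(f\cdot\chi^{u})$ on $Y$ for an appropriate rational function $f$ on the base $\bP^1$ and character $u$.

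The core computation, following \cite{IS17} exactly as in the proof of Proposition \ref{prop:a12kps}, is then: for each choice of a point $Q\in\bP^1$, natural number $m$, and vector $v\in\frac1m N$, one forms the test configuration $\cY(Q,v,m)$ whose central fiber $\cY_0$ is an affine toric threefold $\Spec\bC[S]$; admissibility forces $Q$ to lie among finitely many points (the vertical divisors supporting the polyhedral divisor, i.e. the images of $y=0$, $z=0$, and the singular point). For each such $Q$ one computes the vectors $\tu_0$ (representing $-K_{\cY_0}$), $\tu_1$ (representing $(C_0)|_{\cY_0}$), the centroid $\tu_2$ of the appropriate cross-section $\tau^\vee\cap(\xi_0=1)$, and $\eta_0^*$, and then invokes Proposition \ref{prop:futvol}: $(\bP(1,1,4),\frac{6}{11}C_0)$ is $\bG_m$-equivariantly K-polystable precisely when $\tu_2 = a(\tu_0 - \frac{6}{11}\tu_1) + b\eta_0^*$ for some $a,b>0$, and the non-strict inequality degenerates exactly at the wall coefficient $c = \frac{6}{11}$. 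One needs this to hold simultaneously for \emph{all} admissible $Q$ (there are finitely many, likely two, as in Proposition \ref{prop:a12kps}), and also to check that product test configurations (those with $v=0$ and the torus quotient unchanged) have vanishing Futaki invariant, confirming polystability rather than mere semistability.

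The main obstacle I expect is purely computational bookkeeping: correctly realizing the non-reduced, non-toric curve $C_0 = (x^2z^2+y^6z=0)$ on the cone $Y$ as a principal $T$-divisor (this is the step where the structure of $C_0$ — a double line $z=0$ union a further sextic tangency — enters, and getting the character $u$ and the base rational function $f$ right is delicate), and then tracking which $T$-invariant divisor it limits to in each toric central fiber $\cY_0$. Once the linear-algebra data $(\tu_0,\tu_1,\tu_2,\eta_0^*)$ is assembled for each admissible $Q$, verifying the positivity condition of Proposition \ref{prop:futvol} at $c=\frac{6}{11}$ and strict failure for $c\neq\frac{6}{11}$ is a finite check. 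As a consistency test, the lower bound $c\le\frac{6}{11}$ already proven in Theorem \ref{thm:a11red} via the valuative criterion must match the wall value coming out of this computation; I would use that as a sanity check on the polyhedral divisor setup.
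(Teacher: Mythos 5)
Your approach is exactly the one the paper takes: reduce to $\bG_m$-equivariant K-polystability via \cite[Theorem 1.4]{LWX18}, present the affine cone over $\bP(1,1,4)$ as a $T$-variety of complexity one in the Altmann--Hausen formalism, express $C_0$ as a principal $T$-divisor $\mathrm{div}(fg\cdot\chi^{(12,10)})$, and run the Ilten--S\"u\ss\ test-configuration calculus together with Proposition \ref{prop:futvol} for each admissible marked point $Q\in\bP^1$ (here $Q=[0]$ and $Q=[\infty]$). One small correction to your description of the delicate step: $C_0=\bigl(z(x^2z+y^6)=0\bigr)$ is the \emph{reduced} union of $(z=0)$ and $(x^2z+y^6=0)$, not a double curve $(z=0)$ plus a sextic, and both components appear with multiplicity one in the factorization $\Delta = c\bigl(D_{[0],x_0}+D_{[1],0}\bigr)$ that the paper's computation uses.
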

 
\begin{proof} 
It is clear that the pair $(\bP(1,1,4),\frac{6}{11}C_0)$ admits a $\bG_m$-action, which can
be lifted to a $\bG_m^2$-action on $Y:=C(\bP(1,1,4),\cO_{\bP(1,1,4)}(1))\cong\bA^3_{(x,y,z)}$ as
\[
 (x,y,z)\mapsto (\mu x,\lambda\mu y,\lambda^6\mu^{4} z).
\]
Thus by \cite[Theorem 1.4]{LWX18} it suffices to show $\bG_m$-equivariant K-polystability of $(\bP(1,1,4),\frac{6}{11}C_0)$.

Denote by $N$ the lattice of $1$-PS's in $\bG_m^2$. 
Since $Y$ is a toric variety with the standard $\bG_m^3$-action, we denote by $N_Y$ the lattice of $1$-PS's in $\bG_m^3$. By \cite[Section 11]{AH06},  we have an embedding of lattices $F:N\to N_Y$ and a (non-canonical) surjective map $s: N_Y\to N$ with $s\circ F=\id$. 
Then the maps $F$ and $s$ can be chosen as 
\[
 F=\begin{bmatrix}
    0 & 1\\1 & 1\\6 & 4
   \end{bmatrix},\quad s=\begin{bmatrix}
   -1 & 1 & 0\\1 & 0 & 0
   \end{bmatrix}.
\]
Computation shows $\sigma\subset N_{\bQ}$ is spanned by 
$(-2,3)$ and $(1,0)$. The non-trivial polyhedral divisors 
are
\[
 \fD_{[0]}=\mathrm{conv}((0,0),(-\frac{1}{2},\frac{1}{2}))+\sigma,\quad \fD_{[\infty]}=(\frac{1}{6},0)+\sigma.
\]
Denote by $ x_0=(0,0),~ x_1=(-\frac{1}{2},\frac{1}{2}),~ x_2=(\frac{1}{6},0)$.
Then computation shows
\begin{align*}
 \mathrm{div}(\chi^{(0,1)})& =D_{[0],x_1}=(x=0)\\
 \mathrm{div}(\chi^{(1,1)})& =D_{[\infty],x_2}=(y=0)\\
 \mathrm{div}(f\cdot\chi^{(6,4))})& = D_{[0],x_0}=(z=0)
\end{align*}
Here $f$ is a rational function on $\bP^1$ with $\mathrm{div}(f)=[0]-[\infty]$.
Let us choose a rational function $g$ on $\bP^1$ such that
$\mathrm{div}(g)=[1]-[\infty]$. Then in a suitable coordinate
of $\bP^1$, we have
\[
 \mathrm{div}(g\cdot\chi^{(6,6)})=(x^2 z=y^6)=D_{[1],0}.
\]
Hence the boundary divisor is given by $\Delta=c\cdot\mathrm{div}(fg\cdot\chi^{(12,10)})
=c(D_{[0],x_0}+D_{[1],0})$.

Next we will look at the test configuration picture.

\textbf{Case 1: $Q=[0]$.} Computation shows that there are
four distinguished vertical divisors on $\cY$:
\[
 D_{[0],(x_1,0)}=(x=0),\quad D_{[\infty],(x_2,0)}=(y=0),
 \quad D_{[0],(x_0,0)}=(z=0),\quad D_{[0],(0,1)}=\cY_0. 
\]
By computation, we have
\[
 \mathrm{div}(fg\cdot\chi^{(12,10,k)})=D_{[0],(x_0,0)}+D_{[1],(0,0)}
 +(k+1)D_{[0],(0,1)}.
\]
Hence
$ (D_{[0],(x_0,0)}+D_{[1],(0,0)})|_{\cY_0}=\mathrm{div}(\chi^{(12,10,-1)})$.
We know that $\tau$ is spanned by $ n_1=(0,0,-1)$, $ n_2=(-1,1,-2)$, and $n_3=(1,0,6)$.
We still have $\xi_0=(0,1,0)$ and $\eta_0^*=(0,0,1)$.
Hence
\[
 \tu_0=(7,6,-1), \quad\tu_1=(12,10,-1),\quad \tu_2=\frac{1}{12}
 (10,12,-1).
\]
To satisfy the condition of Proposition \ref{prop:futvol},
the only $c$ is $c=\frac{6}{11}$.

\textbf{Case 2: $Q=[\infty]$.} Computation shows that there are
four distinguished vertical divisors on $\cY$:
\[
 D_{[0],(x_1,0)}=(x=0),\quad D_{[\infty],(x_2,0)}=(y=0),
 \quad D_{[0],(x_0,0)}=(z=0),\quad D_{[\infty],(0,1)}=\cY_0. 
\]
By computation, we have
\[
 \mathrm{div}(fg\cdot\chi^{(12,10,k)})=D_{[0],(x_0,0)}+D_{[1],(0,0)}
 +(k-2)D_{[0],(0,1)}.
\]
Hence $
 (D_{[0],(x_0,0)}+D_{[1],(0,0)})|_{\cY_0}=\mathrm{div}(\chi^{(12,10,2)})$.
We know that $\tau$ is spanned by  $n_1=(0,0,1)$, $n_2=(-1,1,2)$, and $n_3=(1,0,-6)$.
We still have $\xi_0=(0,1,0)$ and $\eta_0^*=(0,0,1)$.
Hence
\[
 \tu_0=(7,6,1), \quad\tu_1=(12,10,2),\quad \tu_2=\frac{1}{12}
 (10,12,1).
\]
To satisfy the condition of Proposition \ref{prop:futvol},
the only $c$ is $c=\frac{6}{11}$. 
\end{proof}

\subsubsection{$A_{11}$ irreducible}
Let $C$ be an irreducible plane quintic curve with an $A_{11}$-singularity. Then after a projective transformation, in the affine coordinate $[x,y,1]$ 
we can write the equation of $C$ as (see Proposition \ref{prop:jetsummary})
\[
C = \big( (x-y^2)((x-y^2)(1+sx)+2x^2 y)+x^5=0\big).
\]
In other words, we have $p=-1$, $u=1$, and $r=0$ in \eqref{eq:quinticA_9}. Let us choose a $6$-jet $(x',y)$ at the origin by $x':=x-y^2+y^5$. Then the equation of $C$ in $(x',y)$ becomes
\[
x'^2=-sy^{12}+\textrm{higher order terms},
\]
where $(x',y)$ has weight $(6,1)$. 
The only parameter here is $s\in\bA^1\setminus\{0\}$. All these curves are GIT stable. If $s=0$, we recover the GIT stable $A_{12}$ quintic curve discussed earlier. When $s$ goes to infinity, the unique GIT polystable limit will
be $Q_5$, i.e. the double conic union a transversal line.

\begin{theorem}\label{thm:a11irr}
Suppose $C \subset \bP^2$ is an irreducible quintic curve with an $A_{11}$ singularity. Then the log Fano pair $(\bP^2, cC)$ is K-semistable if and only if $0 < c \leq \frac{63}{115}$. Moreover, $(\bP(1,4,25), \frac{63}{115}C_0)$ is the K-polystable degeneration of $(\bP^2, \frac{63}{115}C)$ where $C_0=(z^2+x^2 y^{12}=0)$. 
\end{theorem}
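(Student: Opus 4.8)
\textbf{Proof proposal for Theorem \ref{thm:a11irr}.}
The plan is to mimic the structure of the $A_{12}$ case (Theorem \ref{thm:a12}) and the reducible $A_{11}$ case (Theorem \ref{thm:a11red}), splitting the statement into an ``only if'' direction (a valuative criterion computation giving the upper bound $c\le\frac{63}{115}$), an explicit special/weakly-special degeneration to $(\bP(1,4,25),C_0)$, and a K-polystability verification of the limit via the Ilten--S\"u{\ss} $T$-variety machinery. First, for the ``only if'' direction: assuming $(\bP^2,cC)$ is K-semistable, I would perform the $(6,1)$-weighted blow up $\pi\colon X\to\bP^2$ in the jet coordinates $(x',y)$ with $x'=x-y^2+y^5$, with exceptional divisor $E$. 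Then $A_{(\bP^2,cC)}(E)=7-12c$, and since the smooth conic $Q=(x=y^2)$ has strict transform $\oQ\sim 2\pi^*H-6E$, the Mori cone of $X$ is generated by $E$ and $\oQ$; one computes $(E^2)=-\tfrac16$ and $(\oQ^2)$ from $(\pi^*H)^2=1$, giving a piecewise formula for $\vol_X(\pi^*H-tE)$, hence $S_{(\bP^2,cC)}(E)$ as an explicit multiple of $(3-5c)$. The inequality $A\ge S$ of Theorem \ref{thm:valuative} should then reduce exactly to $c\le\frac{63}{115}$. (Unlike the reducible case, here the relevant divisor realizing the threshold will likely be a valuation $\ord_E$ centered over the singular point of $\bP(1,4,25)$ in the degenerate picture, analogous to Proposition \ref{prop:valcrit-p1425}, so I would cross-check with that computation.)

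Next, I would construct a special degeneration of $(\bP^2,cC)$ to $(\bP(1,4,25),cC_0)$. Following the template of Propositions \ref{prop:a12deg} and \ref{prop:a11reddeg}, I would take the trivial family $(\bP^2,C)\times\bA^1$, perform a weighted blow up (of weights $(6,1,1)$ or perhaps $(25,\dots)$ reflecting the index-$25$ singularity) along the jet locus over $0$, with exceptional divisor a weighted projective plane; then contract/flop the strict transform $\oQ$ of the conic (whose normal bundle in the total space should be $\cO(-2)\oplus\cO(-1)$ or similar), and finally contract the strict transform of the original $\bP^2$ to a point, producing $\bP(1,4,25)$ as the central fiber. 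I expect the degenerate curve to be the strict transform of $(z^2 = -s y^{12})$-type equation in the exceptional surface, which after a coordinate change becomes $C_0=(z^2+x^2y^{12}=0)$ on $\bP(1,4,25)$. This will require checking which weighted blow up and which sequence of contractions land on $\bP(1,4,25)$ rather than on $X_{26}$ or some other Manetti surface; the index-$25$ cyclic quotient singularity $\tfrac1{25}(1,4)$ must appear, which pins down the weights.

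Finally, I would verify that $(\bP(1,4,25),\frac{63}{115}C_0)$ is K-polystable. Since this pair carries a two-torus action (lifting to a $\bG_m^2$-action on the affine cone $Y=\bA^3$), by \cite[Theorem 1.4]{LWX18} it suffices to check $\bG_m$-equivariant K-polystability, which by the Ilten--S\"u{\ss} theory reduces to: (a) writing $\bP(1,4,25)$ as a complexity-one $T$-variety via a polyhedral divisor $\fD$ on $\bP^1$ with the appropriate vertices $x_i$ and degrees $\mu(x_i)$; (b) identifying the boundary divisor $C_0$ and the canonical class in terms of characters; (c) for each admissible choice of $Q\in\bP^1$, computing the toric central fiber $\cY_0$, the vectors $\tu_0$ (anticanonical), $\tu_1$ (boundary $C_0$), and the centroid $\tu_2$ of $\tau^\vee\cap(\xi_0=1)$; and (d) invoking Proposition \ref{prop:futvol} to see that $\tu_2$ lies in the positive cone spanned by $\tu_0-c\tu_1$ and $\eta_0^*$ precisely when $c=\frac{63}{115}$. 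Once K-polystability of the limit is established, Theorem \ref{thm:Kss-spdeg} gives K-semistability of $(\bP^2,\frac{63}{115}C)$, and Proposition \ref{prop:k-interpolation} extends this to all $0<c\le\frac{63}{115}$, completing the proof. The main obstacle I anticipate is the K-polystability verification: both getting the polyhedral divisor for $\bP(1,4,25)$ exactly right (the weights $1,4,25$ make the combinatorics heavier than in the $A_{11}$-reducible case) and ensuring the admissibility conditions restrict $Q$ to finitely many points so that the centroid computation in Proposition \ref{prop:futvol} can actually be carried out and yields a \emph{unique} value of $c$; a secondary subtlety is confirming that the degeneration genuinely lands on $\bP(1,4,25)$ rather than $X_{26}$.
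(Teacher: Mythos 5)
The overall architecture of your proposal matches the paper's proof: a Fujita--Li valuative bound for the ``only if'' direction, an explicit degeneration $(\bP^2,C)\rightsquigarrow(\bP(1,4,25),C_0)$ via weighted blow-up/flop/contraction, K-polystability of the limit via the Ilten--S\"u{\ss} $T$-variety machinery, and then Theorem \ref{thm:Kss-spdeg} plus interpolation to close. However, there is a concrete arithmetic error in your sketch of the ``only if'' direction that would produce the wrong threshold. You write $\oQ\sim 2\pi^*H-6E$, but that is the relation from the \emph{reducible} $A_{11}$ case. The jets differ: for the reducible curve the $6$-jet is $x'=x-y^2-\tfrac12 y^6$, so $x-y^2=x'+\tfrac12 y^6$ has weight $\min(6,6)=6$; here the $6$-jet is $x'=x-y^2+y^5$, so $x-y^2=x'-y^5$ has weight $\min(6,5)=5$, giving $\oQ\sim 2\pi^*H-5E$ and $(\oQ^2)=-\tfrac16$ (not $(\oQ^2)=-2$). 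Carrying your $-6E$ forward reproduces the reducible-case volume function and hence $S_{(\bP^2,cC)}(E)=\tfrac{5}{3}(3-5c)$, yielding $c\le\tfrac{6}{11}$ rather than the correct $S=\tfrac{49}{30}(3-5c)$ and $c\le\tfrac{63}{115}$. You must track how the jet changes across singularity types, since that single weight governs the position of $\oQ$ in the Mori cone and hence the entire volume computation.

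Two smaller points. Your parenthetical worry that the ``right'' divisor for the threshold might instead be a valuation on $\bP(1,4,25)$ (as in Proposition \ref{prop:valcrit-p1425}) is not needed: the paper computes $A\ge S$ directly with $\ord_E$ from the $(6,1)$-weighted blow-up of $\bP^2$, and the threshold $63/115$ already falls out there; Proposition \ref{prop:valcrit-p1425} serves a different role (ruling out other weights of $c$ for curves already on $\bP(1,4,25)$). And on the degeneration: the initial weighted blow-up is still $(6,1,1)$ with exceptional $S=\bP(1,1,6)$, same as the reducible case. What changes is the flop geometry: here $\oQ\cap E$ is the singular point of both $S$ (type $\tfrac16(1,1)$) and $X$ (type $A_5$), with $(E^2)=(\oQ^2)=-\tfrac16$, and the index-$25$ singularity arises only after the toric modification of that point. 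So the weights of the initial blow-up do \emph{not} need to encode $25$; your instinct to guess ``$(25,\dots)$'' is a natural but incorrect shortcut.
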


\begin{proof}
 We first prove the ``only if'' part. Suppose $(\bP^2, cC)$ is K-semistable, and we want to show $c\leq \frac{63}{115}$. 
 Let us perform the
 $(6,1)$-weighted blow up of $\bP^2$ in the coordinates
 $(x',y)$, and denote the resulting surface and exceptional
 divisor by $(X,E)$, with $\pi:X\to\bP^2$ the weighted blow up morphism.
 Let $Q=(x=y^2)$ be a smooth conic in $\bP^2$.
 We know that the weight of $x-y^2=x'-y^5$
 is $5$, hence $\oQ:=\pi_*^{-1}Q\sim 2\pi^* H-5E$ is effective
 on $X$. It is easy to see $(E^2)=(\oQ^2)=-\frac{1}{6}$,
 hence the Mori cone of $X$ is generated by $E$ and $\oQ$.
 We again use the valuative criterion of K-semistability by
 Fujita and Li (Theorem \ref{thm:valuative}), then 
 \[
  A_{(\bP^2,cC)}(E)=7-12c,\quad -K_{\bP^2}-cC\sim_{\bQ} (3-5c)H.
 \]
 We also have $\pi^*H-tE$ is ample if and only if $0<t<\frac{12}{5}$, and 
 big if and only if $0\leq t<\frac{5}{2}$. Then by computation, we have
 \[
  \vol_X(\pi^*H-tE)=\begin{cases}
                      1-\frac{t^2}{6} & \textrm{ if }0\leq t\leq \frac{12}{5};\\
                      (5-2t)^2 & \textrm{ if }\frac{12}{5}\leq t\leq \frac{5}{2}.
                     \end{cases}
 \]
 Hence $S_{(\bP^2,cC)}(E)=(3-5c)\int_0^\infty \vol_X(\pi^*H-tE)=(3-5c)\frac{49}{30}$.
 Since $(\bP^2,cC)$ is K-semistable, the valuative criterion (Theorem \ref{thm:valuative}) implies
 \[
  7-12c=A_{(\bP^2,cC)}(E)\geq S_{(\bP^2,cC)}(E)= (3-5c)\frac{49}{30},
 \]
 which implies $c\leq \frac{63}{115}$.

 We now begin showing the ``if'' part. Similar to the proof of Theorems \ref{thm:a12} and \ref{thm:a11red}, we construct a special degeneration then later on use techniques of Ilten and S\"u{\ss} \cite{IS17} to show K-polystability of the degeneration.
 
\begin{prop}\label{prop:a11irrdeg}
 The log Fano pair $(\bP^2,cC)$ admits a special
 degeneration to $(\bP(1,4,25),cC_0)$
 where $C_0$ is given by the equation $z^2+x^2y^{12}=0$.
\end{prop}

\begin{proof}

We follow notation from the first two diagrams of the proof of Proposition \ref{prop:a11reddeg}. 
Here $\pi$ is the $(6,1,1)$-weighted blow up of $\bP^2\times\bA^1$ in the local coordinates $(x',y,t)$,
$S=\bP(1,1,6)$ is the exceptional divisor of $\pi$,
$g$ is the contraction of $\oQ$ in $X\subset\cX_0$, $f$ is the flip of
the curve $\oQ$ in $\cX_0$, and
$\psi$ is the divisorial contraction that contracts $X'$ to a point.

Let us analyze the geometry of these birational maps. Suppose
$S$ has projective coordinates $[x_1,x_2,x_3]$
of weights $(1,1,6)$ respectively.
Then $S\cap X=E=(x_1=0)$, and $\oQ\cap E=\{p\}$ is the
unique singular point
of $S$ (type $\frac{1}{6}(1,1)$) and $X$ (type $A_5$). Inside the surface $X$, we have two smooth
rational curves $E$ and $\oQ$ intersecting at $p$, such that 
$(E^2)=(\oQ^2)=-\frac{1}{6}$.
Hence contracting $\oQ$ in $X$ yields a smooth surface
$X'$ which has to be $\bP^2$ by degree computation.
On the other hand, $\hS$ and $X'$ intersect along
the proper transform $\hE$ of $E$ which becomes a conic
curve in $X'\cong \bP^2$. Hence $(\hE^2)=-4$ in $\hS$. 
Moreover, $h:\hS\to S\cong\bP(1,1,6)$ is a partial resolution
of $p$ which only extracts the flipped curve $\oQ^+$.
By some combinatorial computation, we know that $\hS$
is a toric blow-up of $S$ at $p$ which creates a singularity
of type $\frac{1}{25}(1,4)$ away from $\hE$. Hence $S'$ is a toric surface
carrying two singularities of types $\frac{1}{25}(1,4)$
and $\frac{1}{4}(1,1)$. Thus $S'\cong\bP(1,4,25)$.

For the degeneration $C_0$ of $C$ on $S'$, note that $\pi_*(C\times\bA^1)\cap S$ is the curve $C_0'=(x_3^2+sx_2^{12}=0)$. In addition, we know that $p$ has coordinate $[0,0,1]$ which is not contained in $C_0'$. It is clear that $C_0'$ has an $A_{11}$-singularity at the point $[1,0,0]$ which is not $p$ and does not lie on $E$. Since $S\dashrightarrow S'$ is isomorphic around $[1,0,0]$, we know that $C_0'$ has an $A_{11}$-singularity at a smooth point of $\bP(1,4,25)$. Since the $\frac{1}{25}(1,4)$ singularity on $\hS$ does not lie on $\widehat{E}$, we know that $C_0$ does not pass through $[0,0,1]$ on $S'\cong\bP(1,4,25)$. Hence after a projective coordinates change we may write $C_0=(z^2+x^2 y^{12}=0)$. 
\end{proof}

Now we return to the proof of the theorem. By Propositions \ref{prop:a11irrdeg}, \ref{prop:a11irrkps}, and Theorem \ref{thm:Kss-spdeg}, we know that $(\bP^2,\frac{63}{115}C)$ is K-semistable. Hence the proof is finished by Proposition \ref{prop:k-interpolation}.
\end{proof}

We verify K-polystability of $(\bP(1,4,25),\frac{63}{115} C_0)$ below.

\begin{prop}\label{prop:a11irrkps}
 The log Fano pair $(\bP(1,4,25),\frac{63}{115} C_0)$ is K-polystable where $C_0=(z^2+x^2 y^{12}=0)$.
\end{prop}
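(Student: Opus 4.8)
The plan is to follow the exact strategy used in Propositions \ref{prop:a12kps} and \ref{prop:a11redkps}, namely to exploit the $T$-variety structure and reduce K-polystability to a combinatorial condition on the central fiber of a test configuration. First, observe that $(\bP(1,4,25), \frac{63}{115}C_0)$ carries a $\bG_m$-action coming from scaling, which lifts to a $\bG_m^2$-action on the affine cone $Y := C(\bP(1,4,25), \cO(1)) \cong \bA^3_{x,y,z}$; concretely one takes $(x,y,z) \mapsto (\mu x, \lambda \mu^4 y, \lambda^{6}\mu^{25} z)$ or a similar weighting so that $C_0 = (z^2 + x^2 y^{12} = 0)$ is semi-invariant. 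By \cite[Theorem 1.4]{LWX18} it then suffices to verify $\bG_m$-equivariant K-polystability, i.e.\ to test only on $\bG_m$-equivariant special test configurations, all of which are $T$-varieties of complexity one and can be described via polyhedral divisors on $\bP^1$ following \cite{AH06, IS17}.

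The concrete steps are: (1) write down the lattice embedding $F: N \to N_Y$ and a splitting $s: N_Y \to N$ as in Proposition \ref{prop:a11redkps}, compute the cone $\sigma \subset N_\bQ$ and the polyhedral divisors $\fD_{[0]}, \fD_{[\infty]}$ (and any further non-trivial one) describing $\bP(1,4,25)$ as a $T$-variety; (2) identify the vertices $x_0, x_1, \dots$ with the torus-invariant prime divisors $(x=0), (y=0), (z=0)$ and locate the degeneration divisor $C_0$ as $c \cdot \operatorname{div}(h \cdot \chi^{u})$ for an appropriate rational function $h$ on $\bP^1$ and character $u$; (3) for each admissible choice of point $Q \in \{[0], [\infty]\}$ (the admissibility condition on $\{\fD_P\}_{P\neq Q}$ constrains $Q$, exactly as in the earlier proofs), form the test configuration $\cY(Q, v, m)$, compute the toric central fiber $\cY_0 = \Spec \bC[\tau^\vee \cap \tM_m]$, the vectors $\tu_0$ (representing $-K_{\cY_0}$), $\tu_1$ (representing $C_0|_{\cY_0}$), and the centroid $\tu_2$ of $\tau^\vee \cap (\xi_0 = 1)$ where $\xi_0 = (0,1,0)$ is the Reeb-type vector induced by the quotient $Y \setminus \{o\} \to \bP(1,4,25)$; (4) apply Proposition \ref{prop:futvol}: K-polystability holds if and only if the only coefficient $c$ for which $\tu_2 = a(\tu_0 - c\tu_1) + b\eta_0^*$ with $a, b > 0$ is $c = \frac{63}{115}$, which I expect to drop out of the linear algebra just as $c=\frac{8}{15}$ and $c = \frac{6}{11}$ did in the previous cases.

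The main obstacle, as in the $A_{12}$ case, is not conceptual but bookkeeping: getting the polyhedral divisor presentation of $\bP(1,4,25)$ correct (two singular cyclic quotient points of types $\frac{1}{4}(1,1)$ and $\frac{1}{25}(1,4)$, which forces the polyhedra to have the right vertices with denominators $4$ and $25$), correctly tracking how the Cartier divisor $C_0 = (z^2 + x^2 y^{12} = 0)$ and the canonical class degenerate into $\cY_0$, and then verifying that the rational function $f$ on $\bP^1$ with the prescribed divisor restricts to the expected character on $\cY_0$. A secondary point is to double-check that every $\bG_m$-equivariant special degeneration of $(\bP(1,4,25), \frac{63}{115}C_0)$ really does arise from the $\cY(Q,v,m)$ construction with $Q$ among the admissible points; this is guaranteed by the general theory of \cite{IS17} once admissibility pins down the allowable $Q$, so it should require only a short verification. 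Once these ingredients are assembled, the final inequality $c = \frac{63}{115}$ being the unique solution to the centroid condition finishes the proof, and combined with Proposition \ref{prop:a11irrdeg} and Theorem \ref{thm:Kss-spdeg} this also completes the proof of Theorem \ref{thm:a11irr}.
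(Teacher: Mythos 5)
Your proposal follows exactly the strategy of the paper's proof: lift to the $\bG_m^2$-action on the affine cone, invoke \cite[Theorem 1.4]{LWX18} to reduce to $\bG_m$-equivariant test configurations, present $\bP(1,4,25)$ via polyhedral divisors, compute $\tu_0,\tu_1,\tu_2$ for the admissible choices of $Q$, and apply Proposition \ref{prop:futvol} to pin down $c=\frac{63}{115}$. The only minor deviation is your guess that the admissible $Q$ lie in $\{[0],[\infty]\}$; since here $\fD_{[\infty]}$ is the only non-trivial polyhedral divisor, every $Q$ is admissible and the paper accordingly splits into the cases $Q=[\infty]$ and $Q\neq[\infty]$, but this does not change the method or the outcome.
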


\begin{proof}
It is clear that the pair $(\bP(1,4,25),\frac{63}{115} C_0)$ admits a $\bG_m$-action, which can
be lifted to a $\bG_m^2$-action on $Y:=C(\bP(1,4,25),\cO_{\bP(1,4,25)}(1))\cong\bA^3_{(x,y,z)}$ as
\[
 (x,y,z)\mapsto (\mu x,\lambda\mu^4 y,\lambda^6\mu^{25} z).
\]
Thus by \cite[Theorem 1.4]{LWX18} it suffices to show $\bG_m$-equivariant K-polystability of $(\bP(1,4,25),\frac{63}{115} C_0)$.

Denote by $N$ the lattice of $1$-PS's in $\bG_m^2$. 
Since $Y$ is a toric variety with the standard $\bG_m^3$-action, we denote by $N_Y$ the lattice of $1$-PS's in $\bG_m^3$. By \cite[Section 11]{AH06},  we have an embedding of lattices $F:N\to N_Y$ and a (non-canonical) surjective map $s: N_Y\to N$ with $s\circ F=\id$. 
Then the maps $F$ and $s$ can be chosen as 
\[
 F=\begin{bmatrix}
    0 & 1\\1 & 4\\6 & 25
   \end{bmatrix},\quad s=\begin{bmatrix}
   -4 & 1 & 0\\1 & 0 & 0
   \end{bmatrix}.
\]
Computation shows $\sigma\subset N_{\bQ}$ is spanned by 
$(-4,1)$ and $(1,0)$. The only non-trivial polyhedral divisor 
is
\[
 \fD_{[\infty]}=\mathrm{conv}((-4,1),(\frac{1}{6},0))+\sigma.
\]
Denote by $x_0=(\frac{1}{6},0)$ and $x_1=(-4,1)$. Then we have
\[
 (x=0)=\mathrm{div}(\chi^{(0,1)})=D_{[\infty],x_0},\quad
 (y=0)=\mathrm{div}(\chi^{(1,4)})=D_{[\infty],x_1}.
\]
Moreover, for any $P\neq[\infty]$ we may take the function 
$f_P$ on $\bP^1$ such that $\mathrm{div}(f)=P-[\infty]$.
Then we may compute out that
\[
 \mathrm{div}(f_P\cdot\chi^{(6,25)})=D_{P,0}\sim D_{[\infty],x_0}+
 6D_{[\infty],x_1}.
\]
This implies that the curve $C$ induces a divisor 
on $Y$ which corresponding to $\mathrm{div}(f_{P_1}f_{P_2}\cdot\chi^{(12,50)})$
for some $P_1\neq P_2\in\bP^1\setminus\{\infty\}$.

Next we will look at the test configuration picture. 

\textbf{Case 1: $Q=[\infty]$.}
Computation shows that on $\cY$ there are three distinguished
vertical divisors:
\[
 D_{[\infty],(x_0,0)}=(x=0),\quad D_{[\infty],(x_1,0)}=(y=0),\quad D_{[\infty],(0,1)}=\cY_0.
\]
Again by computation, we have
\[
 \mathrm{div}(f_P\cdot\chi^{(6,25,k)})=D_{P,(0,0)}+(k-1)D_{[\infty],(0,1)}.
\]
Hence we need to take $k=1$, and we get 
$D_{P,(0,0)}|_{\cY_0}=\mathrm{div}(\chi^{(6,25,1)})$.
This implies that $\Delta_0=c\cdot\mathrm{div}(\chi^{(12,50,2)})$.
The central fiber $\cY_0=\Spec~\bC[\tau^{\vee}\cap\tM]$, where 
$\tau$ is spanned by $n_1=(-4,1,-1)$, $n_2=(1,0,-6)$, and $n_3=(0,0,1)$.
We still have $\xi_0=(0,1,0)$ and $\eta_0^*=(0,0,1)$.
Hence we have
\[
 \tu_0=(7,30,1),\quad \tu_1=(12,50,2),\quad \tu_2=\frac{1}{300}(49,300,4).
\]
To satisfy the condition of Proposition \ref{prop:futvol}, the only 
$c$ is $c=\frac{63}{115}$.

\textbf{Case 2: $Q\neq [\infty]$.} Computation shows that 
on $\cY$ there are three distinguished vertical divisors:
\[
  D_{[\infty],(x_0,0)}=(x=0),\quad D_{[\infty],(x_1,0)}=(y=0),\quad D_{Q,(0,1)}=\cY_0.
\]
Again by computation, we have
\[
 \mathrm{div}(f_{P}\cdot\chi^{(6,25,k)})=\begin{cases}
                                          D_{P,(0,0)}+kD_{Q,(0,1)}& \textrm{ if }P\neq Q\\
                                          D_{Q,(0,0)}+(k+1)D_{Q,(0,1)}& \textrm{ if }P= Q
                                         \end{cases}
\]
Hence we have 
\[
 D_{P,(0,0)}|_{\cY_0}=\begin{cases}
                       \mathrm{div}(\chi^{(6,25,0)})&\textrm{ if }P\neq Q\\
                       \mathrm{div}(\chi^{(6,25,-1)})&\textrm{ if }P=Q
                      \end{cases}
\]
This implies that $\Delta_0=c\cdot\mathrm{div}(\chi^{12,50,\delta})$
where $\delta=-1$ if $Q\in\{P_1,P_2\}$ and $\delta=0$ otherwise.
The central fiber $\cY_0=\Spec~\bC[\tau^{\vee}\cap \tM]$,
where $\tau$ is spanned by $n_1=(-4,1,1)$, $n_2=(1,0,6)$, and $n_3=(0,0,-1)$.
We still have $\xi_0=(0,1,0)$ and $\eta_0^*=(0,0,1)$.
Hence we have
\[
 \tu_0=(7,30,-1),\quad \tu_1=(12,50,\delta),\quad \tu_2=\frac{1}{300}(49,300,-4).
\]
To satisfy the condition of Proposition \ref{prop:futvol}, the only 
$c$ is $c=\frac{63}{115}$.
\end{proof}

\subsection{$A_{10}$}\label{sec:A10}
Let $C$ be an irreducible plane quintic curve with an $A_{10}$-singularity. Then after a projective transformation, in the affine coordinate $[x,y,1]$ 
we can write the equation of $C$ as (see Proposition \ref{prop:jetsummary})
\[
C = \big( (x-y^2)((x-y^2)(1+sx)+2x^2 y -rx^3)+x^5=0\big).
\]
In other words, we have $p=-1$ and $u=1$ in \eqref{eq:quinticA_9}. Let us choose a $5$-jet $(x',y)$ at the origin by $x':=x-y^2+y^5$. Then the equation of $C$ in $(x',y)$ becomes
\[
x'^2=-ry^{11}+\textrm{higher order terms},
\]
where $(x',y)$ has weight $(11,2)$. 
The parameter here is $(s,r)\in\bA^1\times(\bA^1\setminus\{0\})$. All these curves are GIT stable. If $r=0$, we recover the GIT stable irreducible $A_{11}$ quintic curve discussed earlier.






\begin{theorem}\label{thm:a10}
Suppose $C \subset \bP^2$ is a quintic curve with an $A_{10}$ singularity. Then the log Fano pair $(\bP^2, cC)$ is K-semistable if and only if $0 < c \leq  \frac{54}{95}$. Moreover, $(\bP(1,4,25),  \frac{54}{95}C_0)$ is the K-polystable degeneration of $(\bP^2,  \frac{54}{95}C)$ where $C_0=(z^2+x^6 y^{11}=0)$. 
\end{theorem}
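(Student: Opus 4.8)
The plan is to follow exactly the template established in Theorems~\ref{thm:a12}, \ref{thm:a11red}, and \ref{thm:a11irr}: split the statement into an ``only if'' direction proved by a valuative computation, and an ``if'' direction proved by constructing an explicit special degeneration to $(\bP(1,4,25), cC_0)$ and then verifying K-polystability of the limit by the complexity-one $T$-variety techniques of Ilten--S\"u\ss, invoked through Propositions~\ref{prop:a10deg} and \ref{prop:a10kps} (which will be stated separately, in parallel with the $A_{11}$ cases).

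For the ``only if'' part, I would take $C$ with an $A_{10}$ singularity in the normal form $(x-y^2)\big((x-y^2)(1+sx)+2x^2y-rx^3\big)+x^5=0$ with $p=-1$, $u=1$, $r\neq 0$, pass to the analytic $5$-jet $x'=x-y^2+y^5$ (so that the equation becomes $x'^2 = -ry^{11}+\cdots$ with weights $(11,2)$), and perform the $(11,2)$-weighted blow-up $\pi\colon X\to\bP^2$ in the coordinates $(x',y)$ with exceptional divisor $E$. The arithmetic to carry out is: compute $A_{(\bP^2,cC)}(E)=22+4-26c-\cdots$ (more precisely $A_X(E)=13$, so $A_{(\bP^2,cC)}(E)=13-22c$ after subtracting $c\cdot\mathrm{ord}_E(C)$; the exact value follows from $\mathrm{ord}_E(C)=22$), identify the proper transform $\overline{Q}$ of the conic $Q=(x=y^2)$ (whose leading term $x-y^2=x'-y^5$ has $E$-order $10$, so $\overline{Q}\sim 2\pi^*H-10E$), determine $(E^2)$, $(\overline{Q}^2)$ and hence the Mori cone, compute the piecewise-quadratic volume function $\vol_X(\pi^*H-tE)$ on the two chambers separated by the nef threshold, integrate to get $S_{(\bP^2,cC)}(E)$, and apply the valuative criterion Theorem~\ref{thm:valuative}; the inequality $A\ge S$ should simplify to $c\le\frac{54}{95}$. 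This is all routine once the jet is correct.

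For the ``if'' part I would construct the special degeneration following the diagrams in Proposition~\ref{prop:a11irrdeg}, i.e.\ take $(\bP^2,C)\times\bA^1$, perform the $(11,2,1)$-weighted blow-up $\pi$ in $(x',y,t)$ with exceptional $S=\bP(1,2,11)$, contract $\overline{Q}$ in the central fiber, flip, and then do the divisorial contraction $\psi$ contracting the proper transform $X'$. One checks that $X'\cong\bP^2$ (by a degree/Mori-cone computation on the surface $X$, where $E$ and $\overline{Q}$ are the two extremal rays), that the proper transform $\widehat S$ of $S$ acquires a $\frac{1}{25}(1,4)$ singularity away from $\widehat E$ while $\widehat E$ becomes a conic ($(\widehat E^2)=-4$) so the image surface $S'$ is $\bP(1,4,25)$, and that the limiting curve $C_0$ equals $(z^2+x^6y^{11}=0)$ up to coordinate change (the $z^2$ term survives since $u=1\neq 0$ keeps $C_0$ away from the $\frac{1}{25}(1,4)$ point; the $x^6y^{11}$ term records the $(11,2)$-weight and $r\neq 0$). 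Then $(\bP(1,4,25),\frac{54}{95}C_0)$ is K-polystable by Proposition~\ref{prop:a10kps}, hence $(\bP^2,\frac{54}{95}C)$ is K-semistable by Theorem~\ref{thm:Kss-spdeg}, and K-semistability for all $c\in(0,\frac{54}{95})$ follows from Proposition~\ref{prop:k-interpolation}. The proof then concludes: by Theorem~\ref{thm:a10} (the ``only if'' direction combined with the degeneration) the K-semistable threshold is exactly $\frac{54}{95}$ and the K-polystable replacement is as claimed.

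The main obstacle is Proposition~\ref{prop:a10kps}, the K-polystability of $(\bP(1,4,25),\frac{54}{95}C_0)$ where $C_0=(z^2+x^6y^{11}=0)$. As in the $A_{11}$ cases one lifts the $\bG_m$-action on the pair to a $\bG_m^2$-action on the affine cone $Y\cong\bA^3_{(x,y,z)}$, invokes \cite[Theorem 1.4]{LWX18} to reduce to $\bG_m$-equivariant K-polystability, sets up the Altmann--Hausen polyhedral divisor $\fD$ on $\bP^1$ with the appropriate lattice maps $F,s$ (here the weights are $(x,y,z)\mapsto(\mu x,\lambda^2\mu^4 y,\lambda^{11}\mu^{25}z)$, so one must recompute the cone $\sigma$, the polyhedral divisor $\fD_{[\infty]}$ with its vertices, and express the divisors $(x=0),(y=0),(z=0)$ and the two components of $C_0$ in terms of characters), and then for each admissible choice of special point $Q\in\bP^1$ runs the test-configuration analysis of \cite{IS17}, computing the vectors $\tilde u_0$ (anticanonical), $\tilde u_1$ (boundary), and $\tilde u_2$ (centroid of $\tau^\vee\cap(\xi_0=1)$), and checking via Proposition~\ref{prop:futvol} that the collinearity condition $\tilde u_2 = a(\tilde u_0 - c\tilde u_1)+b\eta_0^*$ with $a,b>0$ is met precisely at $c=\frac{54}{95}$. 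The bookkeeping here — getting the polyhedral divisor, the character identifications, and the cone $\tau$ for each $Q$ exactly right — is the delicate part; everything else is mechanical.
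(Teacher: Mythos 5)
Your overall strategy is exactly the one the paper uses: the ``only if'' direction by a valuative computation on the $(11,2)$-weighted blow-up (and your corrected value $A_{(\bP^2,cC)}(E)=13-22c$ with $\ord_E(C)=22$, $\oQ\sim 2\pi^*H-10E$ is right), and the ``if'' direction via an explicit special degeneration followed by an Ilten--S\"u\ss\ $T$-variety verification of K-polystability. However, the geometric bookkeeping of the degeneration that you sketch is \emph{copied verbatim from the $A_{11}$-irreducible case (Proposition~\ref{prop:a11irrdeg})} and several of the specific claims are false for the $A_{10}$ setting.

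Concretely: you assert ``one checks that $X'\cong\bP^2$'' and ``$\hE$ becomes a conic, $(\hE^2)=-4$.'' Neither holds here. In the $A_{11}$-irreducible case both extremal curves on $X$ satisfy $(E^2)=(\oQ^2)=-\frac{1}{6}$ and the intersection point $p$ is an $A_5$ point, so contracting $\oQ$ yields a smooth degree-$9$ surface, hence $\bP^2$. In the $A_{10}$ case the $(11,2)$-weighted blow-up gives $(E^2)=-\frac{1}{22}$, $(\oQ^2)=-\frac{6}{11}$, and $p$ is a $\frac{1}{11}(1,9)$ point of $X$ (and a $\frac{1}{11}(1,2)$ point of $S=\bP(1,2,11)$); contracting $\oQ$ produces a surface $X'$ with an $A_1$ and a $\frac{1}{6}(1,5)$ singularity, not $\bP^2$, and the proper transform $\hE$ has $(\hE^2)=-\frac{1}{3}$ on $\hS$, not $-4$. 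One still arrives at $S'\cong\bP(1,4,25)$, but only after re-doing the toric blow-up of $S$ at $p$ (which now inserts a $\frac{1}{25}(1,4)$ point away from $\hE$, with $\hE$ passing through an $A_1$ and a $\frac{1}{6}(1,1)$ point) and observing that contracting $\hE$ creates the $\frac{1}{4}(1,1)$ point. Running the $A_{11}$ combinatorics unchanged would lead you to the wrong surface.

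There is also a small gap in your identification of $C_0$. Tracking the weighted-blow-up geometry directly gives $C_0=(z^2+x^2(x^4-ay)y^{11}=0)$ for some $a$; the extra parameter $a$ is not killed by the weights alone. The paper eliminates it by observing that the limit of a special degeneration must have non-discrete automorphism group, forcing $a=0$. You should include that (or an equivalent) argument to pin down $C_0=(z^2+x^6y^{11}=0)$.

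None of this changes the architecture of the proof or the final answer $c\le\frac{54}{95}$, but the middle step as written contains false assertions and would not stand as a proof of Proposition~\ref{prop:a10deg} without recomputing the singularity and intersection data for the $A_{10}$ weights $(11,2)$.
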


\begin{proof}
 We first prove the ``only if'' part. Suppose $(\bP^2, cC)$ is K-semistable, and we want to show $c\leq \frac{54}{95}$.
 Let us perform the
 $(11,2)$-weighted blow up of $\bP^2$ in the coordinates
 $(x',y)$, and denote the resulting surface and exceptional
 divisor by $(X,E)$, with $\pi:X\to\bP^2$ the weighted blow up morphism.
 Let $Q=(x=y^2)$ be a smooth conic in $\bP^2$. We konw that the weight of $x-y^2=x'-y^5$ is $10$, hence $\oQ:=\pi_*^{-1}Q\sim 2\pi^*H-10 E$ is 
 effective on $X$. It is easy to see $(E^2)=-\frac{1}{22}$ and $(\oQ^2)=-\frac{6}{11}$,
 hence the Mori cone of $X$ is generated by $E$ and $\oQ$.
 It is clear that $A_{(\bP^2,cC)}(E)=13-22c$ and $-K_{\bP^2}-cC\sim_{\bQ}(3-5c) H$.
 We also have $\pi^*H-tE$ is ample if and only if $0<t<\frac{22}{5}$, and 
 big if and only if $\frac{22}{5} \leq t<5$. Then by computation, we have
 \[
  \vol_X(\pi^*H-tE)=\begin{cases}
                      \dfrac{1-t^2}{22} & \textrm{ if }0\leq t\leq \frac{22}{5};\\
                      \dfrac{(10-2t)^2}{12} & \textrm{ if }\frac{22}{5}\leq t <5.
                     \end{cases}
 \]
 Hence $S_{(\bP^2,cC))}(E)=(3-5c)\int_0^\infty \vol_X(\pi^*H-tE)=(3-5c)\frac{47}{15}$.
 Since $(\bP^2, cC)$ is K-semistable, the valuative criterion (Theorem \ref{thm:valuative}) implies
 \[
  13-22c=A_{(\bP^2,cC))}(E)\geq S_{(\bP^2,cC))}(E)= (3-5c)\frac{47}{15},
 \]
 which implies $c\leq \frac{54}{95}$.

 We now begin showing the ``if'' part. Similar to the proof of Theorems \ref{thm:a12}, \ref{thm:a11red}, and \ref{thm:a11irr}, we construct a special degeneration then later on use techniques of Ilten and S\"u{\ss} \cite{IS17} to show K-polystability of the degeneration.

\begin{prop}\label{prop:a10deg}
 The log Fano pair $(\bP^2,cC)$ admits
 a special degeneration to $(\bP(1,4,25), cC_0)$
 where $C_0$ is given by the equation 
 $(z^2 + x^6y^{11}=0)$.
\end{prop}

\begin{proof} 

We follow notation from the first two diagrams of the proof of Proposition \ref{prop:a11reddeg}. 
Here $\pi$ is the $(11,2,1)$-weighted blow up of $\bP^2\times\bA^1$ in the local coordinates $(x',y,t)$,
$S=\bP(1,2,11)$ is the exceptional divisor of $\pi$, $g$ is the contraction of $\oQ$ in $X\subset\cX_0$, $f$ is the flip of
the curve $\oQ$ in $\cX_0$, and
$\psi$ is the divisorial contraction that contracts $X'$ to a point.

Let us analyze the geometry of these birational maps. Suppose
$S$ has projective coordinates $[x_1,x_2,x_3]$
of weights $(1,2,11)$ respectively.
Then $S\cap X=E=(x_1=0)$, and $\oQ\cap E=\{p\}$ is a singular point 
of $S$ (type $\frac{1}{11}(1,2)$) and $X$ (type $\frac{1}{11}(1,9)$). Inside the surface $X$, we have two smooth
rational curves $E$ and $\oQ$ intersecting at $p$, such that $(E^2)=-\frac{1}{22}$ and $(\oQ^2)=-\frac{6}{11}$.
Hence contracting $\oQ$ in $X$ yields a surface
$X'$ with two singularities of types $A_1$ and $\frac{1}{6}(1,5)$.
On the other hand, $\hS$ and $X'$ intersect along
the proper transform $\hE$ of $E$  with $(\hE^2)=\frac{1}{3}$. Hence $(\hE^2)=-\frac{1}{3}$ in $\hS$. 
Moreover, $h:\hS\to S\cong\bP(1,2,11)$ is a partial resolution
of $p=[0,0,1]$ which only extracts the flipped curve $\oQ^+$.
By some combinatorial computation, we know that $\hS$
is a toric blow-up of $S$ at $p$ which creates a singularity
of type $\frac{1}{25}(1,4)$ away from $\hE$. Moreover, $\hE$ passes through two singularities of $\hS$ of types $A_1$ and $\frac{1}{6}(1,1)$. Hence $S'$ is a toric surface
carrying two singularities of types $\frac{1}{25}(1,4)$ 
and $\frac{1}{4}(1,1)$ (coming from contracting $\hE$). Thus $S'\cong\bP(1,4,25)$.

For the degeneration $C_0$ of $C$ on $S'$, note that $\pi_*(C\times\bA^1)\cap S$ is the curve $C_0'=(x_3^2+rx_2^{11}=0)$. In addition, we know that $p$ has coordinate $[0,0,1]$ which is not contained in $C_0'$. It is clear that $C_0'$ has an $A_{10}$-singularity at the point $[1,0,0]$ which is not $p$ and does not lie on $E$. Since $S\dashrightarrow S'$ is isomorphic around $[1,0,0]$, we know that $C_0'$ has an $A_{10}$-singularity at a smooth point of $\bP(1,4,25)$. Since the $\frac{1}{25}(1,4)$ singularity on $\hS$ does not lie on $\widehat{E}$, we know that $C_0$ does not pass through $[0,0,1]$ on $S'\cong\bP(1,4,25)$. Hence after a projective coordinates change we may write $C_0=(z^2+x^2(x^4-ay) y^{11}=0)$. Since $\Aut(\bP(1,4,25),C_0)$ is not discrete, we conclude that $a=0$  which finishes the proof.
\end{proof}




Now we return to the proof of the theorem. By Propositions \ref{prop:a10deg}, \ref{prop:a10kps}, and Theorem \ref{thm:Kss-spdeg}, we know that $(\bP^2,\frac{54}{95}C)$ is K-semistable. Hence the proof is finished by Proposition \ref{prop:k-interpolation}.
\end{proof}

We verify K-polystability of $(\bP(1,4,25),\frac{54}{95}C_0)$ below.

\begin{prop}\label{prop:a10kps}
 The log Fano pair $(\bP(1,4,25),\frac{54}{95} C_0)$ is K-polystable where $C_0=(z^2+x^6 y^{11}=0)$.
\end{prop}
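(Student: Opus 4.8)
The plan is to follow verbatim the strategy already used for Propositions \ref{prop:a12kps}, \ref{prop:a11redkps}, and \ref{prop:a11irrkps}: realize $(\bP(1,4,25),\tfrac{54}{95}C_0)$ as a $T$-variety of complexity one carrying a horizontal boundary divisor, and verify K-polystability by applying the Ilten--Süß criterion packaged in Proposition \ref{prop:futvol}. First I would identify the relevant torus. The curve $C_0=(z^2+x^6y^{11}=0)$ is invariant under a two-dimensional subtorus of the standard torus acting on the affine cone $Y:=C(\bP(1,4,25),\cO_{\bP(1,4,25)}(1))\cong\bA^3_{x,y,z}$, and this subtorus descends to a $\bG_m$-action on $(\bP(1,4,25),C_0)$. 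By \cite[Theorem 1.4]{LWX18} it then suffices to prove $\bG_m$-equivariant K-polystability, so we may work with the polyhedral description of $Y$.

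Next I would set up the combinatorial data following \cite[Section 11]{AH06}: the lattice $N$ of one-parameter subgroups of $\bG_m^2$, an embedding $F\colon N\to N_Y$ into the lattice of the ambient toric threefold together with a splitting $s$, and from these the cone $\sigma=\omega^\vee$ and the polyhedral divisor $\fD$ on $\bP^1$ describing $Y$. As in the $A_{11}$-irreducible case, I expect the only nontrivial $\fD_P$ to be supported at $[\infty]$, with the coordinate curves $(x=0)$ and $(y=0)$ corresponding to its two vertices, and with $C_0$ corresponding to a divisor of the form $\mathrm{div}(f_{P_1}f_{P_2}\cdot\chi^{u})$ for two points $P_1,P_2\in\bP^1\setminus\{[\infty]\}$ and a suitable character $u$. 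The admissibility constraint should force the special-fibre location $Q$ to be either $[\infty]$ or a general point $Q\neq[\infty]$.

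Then, for each admissible $Q$, I would construct the test configuration $\cY(Q,v,m)$, compute its central fibre $\cY_0=\Spec\,\bC[\tau^\vee\cap\tM]$ as an affine toric variety together with the limit boundary $\Delta_0=c\cdot(C_0)|_{\cY_0}$, and extract the three vectors: $\tu_0$ representing $-K_{\cY_0}$, $\tu_1$ representing $(C_0)|_{\cY_0}$, and $\tu_2$ the centroid of $\tau^\vee\cap(\xi_0=1)$, where $\xi_0$ is the Reeb vector induced by the quotient $Y\setminus\{o\}\to\bP(1,4,25)$. Invoking Proposition \ref{prop:futvol} with $\eta_0^*=(0,0,1)$, I expect the condition $\tu_2=a(\tu_0-c\tu_1)+b\eta_0^*$ with $a,b>0$ to single out $c=\tfrac{54}{95}$ in both cases; combined with the $\bG_m$-equivariant reduction this yields K-polystability of $(\bP(1,4,25),\tfrac{54}{95}C_0)$.

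The main obstacle is the bookkeeping rather than any conceptual point: correctly computing $F$, $s$, $\sigma$, and the polyhedral divisors for the relatively large weights $(1,4,25)$, tracking the two vertices of $\fD_{[\infty]}$ and the parity/divisibility distinctions between the subcases $Q\in\{P_1,P_2\}$ and $Q\notin\{P_1,P_2\}$, and finally checking that the centroid computation really produces the exact value $\tfrac{54}{95}$ predicted by the valuative estimate in Theorem \ref{thm:a10} rather than a nearby rational. Since the higher-order boundary term $x^6y^{11}$ makes the arithmetic heavier than in Proposition \ref{prop:a11irrkps}, I would organize the computation so that the two choices of $Q$ are handled in parallel, exactly as in the earlier proofs.
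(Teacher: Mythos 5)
Your plan is exactly the paper's: lift the $\bG_m$-action on $(\bP(1,4,25),C_0)$ to a $\bG_m^2$-action on the cone $Y\cong\bA^3_{x,y,z}$ (explicitly $(x,y,z)\mapsto(\mu x,\lambda^2\mu^4 y,\lambda^{11}\mu^{25}z)$), reduce to $\bG_m$-equivariant K-polystability via \cite[Theorem 1.4]{LWX18}, downgrade to the Altmann--Hausen polyhedral data, and decide positivity of the Futaki invariant for each admissible $Q$ via the centroid criterion of Proposition \ref{prop:futvol}. Your expected conclusion $c=\frac{54}{95}$, in both subcases, is also what the paper finds.

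Where your expectations go wrong is in modeling the combinatorics after Proposition \ref{prop:a11irrkps}. In the $A_{10}$ case the $\lambda$-weights on $(x,y,z)$ are $(0,2,11)$ rather than $(0,1,6)$, and this changes the downgrade in a way that makes the computation parallel Proposition \ref{prop:a11redkps}, not the irreducible $A_{11}$ case. Concretely: both $\fD_{[0]}=(-\tfrac12,0)+\sigma$ and $\fD_{[\infty]}=\mathrm{conv}(\tfrac16(1,1),(\tfrac{6}{11},0))+\sigma$ are non-trivial; the coordinate curve $(z=0)$ is a vertical divisor over $[0]$, not over $[\infty]$; the boundary $C_0$ comes out as $\mathrm{div}(f^{11}g\cdot\chi^{(22,50)})=D_{[1],0}$ for $\mathrm{div}(f)=[0]-[\infty]$ and $\mathrm{div}(g)=[1]-[\infty]$, so there is a single extra marked point $[1]$ rather than two generic points $P_1,P_2$; and the admissible special-fibre locations are $Q\in\{[0],[\infty]\}$, not $\{[\infty]\}\cup\{Q\textrm{ general}\}$. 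If you follow your written expectations you will set up the wrong $\tau$ and misplace the character encoding $\Delta_0$; once you take the correct polyhedral data the two centroid computations give $\tu_0=(13,30,\mp7)$, $\tu_1=(22,50,\mp11)$ or $(22,50,12)$, $\tu_2=\tfrac{1}{300}(94,300,\mp49)$, and Proposition \ref{prop:futvol} forces $c=\tfrac{54}{95}$ exactly as you predicted.
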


\begin{proof}
It is clear that the pair $(\bP(1,4,25),\frac{54}{95} C_0)$ admits a $\bG_m$-action, which can
be lifted to a $\bG_m^2$-action on $Y:=C(\bP(1,4,25),\cO_{\bP(1,4,25)}(1))\cong\bA^3_{(x,y,z)}$ as
\[
 (x,y,z)\mapsto (\mu x,\lambda^{2}\mu^4 y,\lambda^{11}\mu^{25} z).
\]
Thus by \cite[Theorem 1.4]{LWX18} it suffices to show $\bG_m$-equivariant K-polystability of $(\bP(1,4,25),\frac{54}{95} C_0)$.

Denote by $N$ the lattice of $1$-PS's in $\bG_m^2$. 
Since $Y$ is a toric variety with the standard $\bG_m^3$-action, we denote by $N_Y$ the lattice of $1$-PS's in $\bG_m^3$. By \cite[Section 11]{AH06},  we have an embedding of lattices $F:N\to N_Y$ and a (non-canonical) surjective map $s: N_Y\to N$ with $s\circ F=\id$. 
Then the maps $F$ and $s$ can be chosen as 
\[
 F=\begin{bmatrix}
    0 & 1\\2 & 4\\11 & 25
   \end{bmatrix},\quad s=\begin{bmatrix}
   1 & 6 & -1\\1 & 0 & 0
   \end{bmatrix}.
\]
Computation shows $\sigma\subset N_{\bQ}$ is spanned by 
$(-2,1)$ and $(1,0)$. The polyhedral divisor is
\[
 \fD_{[0]}=(-\frac{1}{2},0)+\sigma,\quad
 \fD_{[\infty]}=\mathrm{conv}(\frac{1}{6}(1,1),(\frac{6}{11},0))+\sigma.
\]
Denote by $x_0=(-\frac{1}{2},0)$, $x_1=\frac{1}{6}(1,1)$,  and $x_2=(\frac{6}{11},0)$.
Then computation shows
\begin{align*}
 \mathrm{div}(\chi^{(0,1)})&=D_{[\infty],x_1}=(x=0)\\
 \mathrm{div}(f\cdot\chi^{(2,4)})&=D_{[\infty],x_2}=(y=0)\\
 \mathrm{div}(f^6\cdot\chi^{(11,25)})&=D_{[0],x_0}=(z=0)
\end{align*}
Here $f$ is a rational function on $\bP^1$ such that $\mathrm{div}(f)=[0]-[\infty]$.
Let us choose a rational function $g$ on $\bP^1$ such that
$\mathrm{div}(g)=[1]-[\infty]$. Then in a suitable coordinate
of $\bP^1$, we have
\[
 \mathrm{div}(f^{11} g\cdot\chi^{(22,50)})=(z^2-x^6 y^{11}=0)=D_{[1],0}.
\]

Next we will look at the test configuration picture.

\textbf{Case 1: $Q=[0]$.} Computation shows that there are
four distinguished vertical divisors on $\cY$:
\[
 D_{[\infty],(x_1,0)}=(x=0),\quad D_{[\infty],(x_2,0)}=(y=0),
 \quad D_{[0],(x_0,0)}=(z=0),\quad D_{[0],(0,1)}=\cY_0.
\]
By computation, we have
\[
 \mathrm{div}(f^{11}g\cdot\chi^{(22,50,k)})=D_{[1],(0,0)}+(k+11)D_{[0],(0,1)}.
\]
Hence $ D_{[1],(0,0)}|_{\cY_0}=\mathrm{div}(\chi^{(22,50,-11)})$.
We know that $\tau$ is spanned by $n_1=(-1,0,-2)$, $ n_2=(1,1,6)$, and $n_3=(6,0,11)$.
We still have $\xi_0=(0,1,0)$ and $\eta_0^*=(0,0,1)$.
Hence
\[
 \tu_0=(13,30,-7), \quad\tu_1=(22,50,-11),\quad \tu_2=\frac{1}{300}(94,300,-49)
\]
To satisfy the condition of Proposition \ref{prop:futvol},
the only $c$ is $c=\frac{54}{95}$.

\textbf{Case 2: $Q=[\infty]$.}
Computation shows that there are
four distinguished vertical divisors on $\cY$:
\[
 D_{[\infty],(x_1,0)}=(x=0),\quad D_{[\infty],(x_2,0)}=(y=0),
 \quad D_{[0],(x_0,0)}=(z=0),\quad D_{[\infty],(0,1)}=\cY_0.
\]
By computation, we have
\[
 \mathrm{div}(f^{11}g\cdot\chi^{(22,50,k)})=D_{[1],(0,0)}+(k-12)D_{[\infty],(0,1)}.
\]
Hence $D_{[1],(0,0)}|_{\cY_0}=\mathrm{div}(\chi^{(22,50,12)})$.
We know that $\tau$ is spanned by $n_1=(-1,0,2)$, $ n_2=(1,1,-6)$, and $n_3=(6,0,-11)$.
We still have $\xi_0=(0,1,0)$ and $\eta_0^*=(0,0,1)$.
Hence
\[
 \tu_0=(13,30,7), \quad\tu_1=(22,50,12),\quad \tu_2=\frac{1}{300}(94,300,49)
\]
To satisfy the condition of Proposition \ref{prop:futvol},
the only $c$ is $c=\frac{54}{95}$.
\end{proof}

\subsection{Valuative criterion computations}

In this section, we provide several consequences of the valuative criterion (Theorem \ref{thm:valuative}). These results will be useful in proving wall crossings for K-moduli spaces of plane quintics.

\begin{prop}\label{prop:valcrit-p114}
 Let $C$ be a curve on $\bP(1,1,4)$ of degree $10$.
 \begin{enumerate}
     \item Assume the equation of $C$ has the form $x^2 z^2+y^6 z+g(x,y)=0$. If $(\bP(1,1,4),cC)$ is K-semistable, then $c\geq \frac{6}{11}$.
     \item Assume the equation of $C$ has the form $x^2 z^2+a x y^5 z+g(x,y)=0$ or $f(x,y)z+g(x,y)=0$. Then $(\bP(1,1,4),cC)$ is K-unstable for any $c\in (0, \frac{3}{5})$.
 \end{enumerate}
\end{prop}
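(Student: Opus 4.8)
\textbf{Proof proposal for Proposition \ref{prop:valcrit-p114}.}

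The plan is to apply the valuative criterion of Fujita--Li (Theorem \ref{thm:valuative}) to a single carefully chosen divisorial valuation over the $\frac14(1,1)$ singularity of $\bP(1,1,4)$, exactly as in the proofs of Theorems \ref{thm:a12}, \ref{thm:a11red}, \ref{thm:a11irr}, \ref{thm:a10}. Throughout write $X:=\bP(1,1,4)$ with projective coordinates $[x,y,z]$, let $p=[0,0,1]$ be the unique singular point (of type $\frac14(1,1)$), and recall $-K_X\sim_{\bQ}\cO_X(6)$, so $-K_X-cC\sim_{\bQ}(6-10c)H$ where $H\sim\cO_X(1)$ and $(H^2)=\tfrac14$. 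For part (1), the natural valuation to use is the one associated to a weighted blow-up over $p$ adapted to the curve $C=(x^2z^2+y^6z+g=0)$: in the smooth local chart of $p$ (or directly on $X$) perform a weighted blow-up so that the exceptional divisor $E$ picks up as much of $C$ as possible. One computes $A_X(\ord_E)$, the intersection numbers $(E^2)$ and $(\bar C^2)$ for the strict transform $\bar C\sim aH-bE$ (so that the Mori cone of the blow-up is generated by $E$ and $\bar C$), and then the volume function $\vol_X(H-tE)$ as a piecewise-quadratic function with the break point at $t=b/a$; integrating gives $S_X(\ord_E)=(6-10c)\int_0^\infty\vol_X(H-tE)\,dt/\vol_X(H)$. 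The inequality $A_{(X,cC)}(\ord_E)=A_X(\ord_E)-c\,\ord_E(C)\ge S_{(X,cC)}(\ord_E)$ required by K-semistability then rearranges to $c\ge \tfrac{6}{11}$; I expect the relevant weighted blow-up to be the $(2,3)$- or $(4,6)$-type blow-up dictated by the terms $x^2z^2$ and $y^6z$, whose orders along $E$ will force $\ord_E(C)$ to be the large value making the bound sharp.

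For part (2), the strategy is the same but now one expects the chosen valuation $E$ to \emph{violate} $A_{(X,cC)}(\ord_E)\ge S_{(X,cC)}(\ord_E)$ for \emph{all} $c\in(0,\tfrac35)$. In the first sub-case, $C=(x^2z^2+axy^5z+g=0)$: here $C$ passes through $p$ with even higher tangency in the $x$-direction, so a suitable weighted blow-up over $p$ (or the $(1,5)$-type blow-up reflecting the $xy^5z$ term) yields $\ord_E(C)$ large enough that $A_X(\ord_E)-c\,\ord_E(C)<S_X(\ord_E)$ on the whole interval — the key point being that both sides are affine-linear in $c$ with $A$ having the steeper negative slope, so the inequality fails uniformly once it fails at the endpoints $c=0$ and $c\to\tfrac35$. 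In the second sub-case, $C=(f(x,y)z+g(x,y)=0)$: such a curve contains the ruling structure forced by having $z$-degree one, and $C$ necessarily passes through $p$ (the point where $z$ is the dominant coordinate) with $\ord$ at least $1$ along the natural exceptional divisor $E$ over $p$; again $A_{(X,cC)}(\ord_E)\le \tfrac15(3-5c)<S_{(X,cC)}(\ord_E)$ by the same computation template used in Proposition \ref{prop:valcrit-X_26}. The conclusion of K-instability then follows immediately from Theorem \ref{thm:valuative}.

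The main obstacle is bookkeeping rather than conceptual: choosing the \emph{correct} weights for the blow-up in each case so that $\ord_E(C)$, $A_X(\ord_E)$, and the break point of $\vol_X(H-tE)$ line up to give the sharp threshold $\tfrac{6}{11}$ in part (1) and a uniform strict failure in part (2). One must verify that the strict transform $\bar C$ really is a negative curve (so that the Mori cone is two-dimensional and the volume computation is the naive piecewise-quadratic one), which requires checking $(\bar C^2)<0$; this is where a wrong choice of weights would be exposed. A secondary subtlety is handling the $\frac14(1,1)$ quotient singularity correctly — one should pass to the smooth double cover as in Theorem \ref{thm:localindex}, or equivalently use that $\ord_E=\tfrac14\pi_*\ord_F$ for the corresponding toric valuation $F$ upstairs — to get the discrepancy and self-intersection normalizations right. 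Once the weights are pinned down, each case reduces to the same short calculation already performed several times in Appendix \ref{sec:calculations}.
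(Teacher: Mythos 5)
Your high-level strategy — apply the valuative criterion (Theorem \ref{thm:valuative}) to a monomial valuation centered at the $\frac14(1,1)$ point $[0,0,1]$ of $\bP(1,1,4)$, chosen to balance the low-order terms of $C$ in the orbifold chart $z=1$ — is exactly what the paper does for part (1) and the first sub-case of part (2). However, your candidate weights are wrong, and in a systematic way: you have the roles of $x$ and $y$ reversed. For part (1) the local equation is $x^2+y^6+g(x,y)$ (with $\deg g=10$), so a balancing weight must satisfy $2w_x=6w_y$, i.e.\ $(w_x,w_y)=(3,1)$, whereas your $(2,3)$ and $(4,6)$ make $x^2$ and $y^6$ have weights $4$ vs.\ $18$ (resp.\ $8$ vs.\ $36$). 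Likewise for the first sub-case of (2), the equation $x^2+axy^5+g$ forces $w_x=5w_y$, so the paper uses $(5,1)$; your $(1,5)$ is the flip and again fails to balance. Since balancing is precisely what makes the bound sharp, a reader following your guess would not recover $\tfrac{6}{11}$. Also, your Mori-cone bookkeeping (``$\bar C\sim aH-bE$, Mori cone generated by $E$ and $\bar C$'') is the template for blow-ups at a \emph{smooth} point of $\bP^2$ (Theorems \ref{thm:a12}--\ref{thm:a10}); in the $\bP(1,1,4)$ computation the valuation is toric, the volume is read off from toric data, and the curve ending nefness is a torus-invariant ruling, not the strict transform of $C$.

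For the $f(x,y)z+g(x,y)=0$ sub-case, the paper does not run a valuative criterion at all. It uses two separate facts: since $\deg f=6$ and $\deg g=10$, the pullback of $C$ to the $\mu_4$-cover has multiplicity $\geq 6$ at the origin, so $\lct(X;C)\leq\frac13$ and hence $(X,cC)$ is K-unstable for $c\geq\frac13$; and by Theorem \ref{thm:firstwallbefore} the surface $\bP(1,1,4)$ does not appear in $\ocP_{5,c}^{\K}$ for $c<\frac37$, which covers the rest of $(0,\frac35)$. Your alternative — a direct valuative criterion with the Kawamata blow-up $E$ of the $\frac14(1,1)$ point — in fact works and is arguably cleaner, but the bound you quote, $A_{(X,cC)}(\ord_E)\le\tfrac15(3-5c)$, is lifted verbatim from Proposition \ref{prop:valcrit-X_26}, which concerns the $\frac{1}{25}(1,4)$ singularity of $X_{26}$. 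For $\frac14(1,1)$ one instead has $A_X(\ord_E)=\tfrac12$, $\ord_E(C)\geq\tfrac32$, hence $A_{(X,cC)}(\ord_E)\leq\tfrac12-\tfrac32 c$, and $S_{(X,cC)}(\ord_E)=1-\tfrac53 c$ using $\vol_X(\cO(1)-tE)=\max\{\tfrac14-4t^2,0\}$ from Lemma \ref{lem:Q_d'Kps}; the strict inequality $\tfrac12-\tfrac32 c<1-\tfrac53 c$ then holds for all $c<3$. So your conclusion is right, but the arithmetic needs to be done for the correct singularity rather than borrowed.
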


\begin{proof}
 For simplicity we denote by $X:=\bP(1,1,4)$.
 
 (1) Let us consider the point $[0,0,1]$ on $X$.
 If we set $z=1$, then we have a cyclic quotient map
 $\pi:\bA^2_{(x,y)}\to X$ defined by $\pi(x,y)=[x,y,1]$.
 Let $v:=\pi_*\tilde{v}$ where $\tilde{v}$ is the monomial
 valuation on $\bA^2_{(x,y)}$ of weights $(3,1)$.
 Since the equation of $C$ is given by $ax^2+g(x,y)+h(x,y)=0$
 where $\deg g=6$ and $\deg h=10$, we have $v(C)\geq 6$.
 Since $v$ is a toric valuation, computation shows that 
 \[
  \vol_X(\cO(1)-tv)=\begin{cases}
                     \frac{1}{4}(1-\frac{t^2}{3})&\textrm{ if }0\leq t\leq 1\\
                     \frac{1}{24}(3-t)^2 & \textrm{ if }1\leq t\leq 3
                    \end{cases}       
 \]
 Hence 
 $ S_{(X,cC)}(v)=\frac{6-10c}{\vol_X(\cO(1))}\int_0^{\infty}\vol_X(\cO(1)-tv)dt  =\frac{4}{3}(6-10c)$.
 Since $(X,cC)$ is K-semistable, by valuative criterion (Theorem \ref{thm:valuative}) we have
 \[
  4-6c\geq A_{(X,cC)}(v)\geq S_{(X,cC)}(v)= \frac{4}{3}(6-10c),
 \]
 which implies $c\geq \frac{6}{11}$.
 
 (2) First we assume that the equation of $C$ has the form $x^2 z^2+y^6 z+g(x,y)=0$. We again consider the affine chart $z=1$ and the cyclic quotient map $\pi$.
 Let $v':=\pi_*\tilde{v}'$ where $\tilde{v}'$ is the monomial valuation on $\bA_{(x,y)}^2$ of weights $(5,1)$. By the equation of $C$ we have $v'(C)\geq 10$. Since $v'$ is also a toric valuation, computation shows that 
  \[
  \vol_X(\cO(1)-tv')=\begin{cases}
                     \frac{1}{4}(1-\frac{t^2}{5})&\textrm{ if }0\leq t\leq 1\\
                     \frac{1}{80}(5-t)^2 & \textrm{ if }1\leq t\leq 5
                    \end{cases}       
 \]
 Hence $S_{(X,cC)}(v')=
  \frac{6-10c}{\vol_X(\cO(1))}\int_0^{\infty}\vol_X(\cO(1)-tv')dt  =2(6-10c)$.
 Since $(X,cC)$ is K-semistable, valuative criterion (Theorem \ref{thm:valuative}) implies that 
 \[
 6-10c\geq A_{(X,cC)}(v')\geq S_{(X,cC)}(v')  =2(6-10c).
 \]
 Thus $c\geq \frac{3}{5}$ but this is a contradiction. So $(X,cC)$ is always K-unstable for any $c\in (0,\frac{3}{5})$.
 
 Finally we treat the case where $C$ has equation $f(x,y)z+g(x,y)=0$. It is clear that the log canonical threshold of $(X;C)$ at $[0,0,1]$ is at least $\frac{1}{3}$ since $\deg f=6$ and $\deg g=10$. Thus $(X,cC)$ is K-unstable whenever $c\geq \frac{1}{3}$. On the other hand, by Theorem \ref{thm:firstwallbefore} we know that the surface $X=\bP(1,1,4)$ never appears in the K-moduli stack $\ocP_{5,c}^{\K}$ when $c< \frac{3}{7}$. Thus $(X,cC)$ is always K-unstable for any $c\in (0,\frac{3}{5})$. This finishes the proof.
\end{proof}

Next we will state results about curves on $\bP(1,4,25)$.

\begin{prop}\label{prop:valcrit-p1425}
 Let $C$ be a curve on $\bP(1,4,25)$ of degree $50$.
  \begin{enumerate}
  \item Suppose $C$ is defined by $z^2+x^2y^{12}+x^{6}g(x,y)=0$. If $(\bP(1,4,25),cC)$ is K-semistable, then $c\geq \frac{63}{115}$.
  \item Suppose $C$ is defined by $z^2+x^6y^{11}+x^{10}g(x,y)=0$. If $(\bP(1,4,25),cC)$ is K-semistable, then $c\geq \frac{54}{95}$.
  \item Suppose $C$ is defined by $z^2+x^{10}g(x,y)=0$ or $f(x,y)z+g(x,y)=0$. Then $(\bP(1,4,25),cC)$ is K-unstable for any $0<c<3/5$.
 \end{enumerate}
\end{prop}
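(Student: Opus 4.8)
The plan is to prove all three statements by the valuative criterion for K-(semi)stability (Theorem~\ref{thm:valuative}), in the spirit of Propositions~\ref{prop:valcrit-X_26} and~\ref{prop:valcrit-p114}. Write $X:=\bP(1,4,25)$ with weighted coordinates $[x,y,z]$, so that $-K_X\sim_\bQ\cO_X(30)$; a degree $50$ curve $C$ satisfies $C\sim_\bQ-\tfrac53 K_X$ and $-K_X-cC\sim_\bQ(1-\tfrac53 c)\,\cO_X(30)$. For each part I would exhibit one divisorial valuation $\ord_E$ over $X$, compute $A_{(X,cC)}(\ord_E)=A_X(\ord_E)-c\,\ord_E(C)$ and $S_{(X,cC)}(\ord_E)=(1-\tfrac53 c)\,S_X(\ord_E)$, and read off the inequality on $c$ forced by $A_{(X,cC)}(\ord_E)\ge S_{(X,cC)}(\ord_E)$ (for the K-semistable direction in (1) and (2)) or by $A_{(X,cC)}(\ord_E)<S_{(X,cC)}(\ord_E)$ (for the K-unstable conclusion in (3)). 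In every case $S_X(\ord_E)$ is obtained by integrating $\vol_X(\cO_X(1)-tE)$, which is piecewise quadratic in $t$ because the Mori cone of the surface extracting $E$ is generated by $E$ together with the proper transform of one distinguished rational curve on $X$.

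\textbf{Parts (1) and (2).} These are mirror images of the computations on the $\bP^2$-side in Theorems~\ref{thm:a11irr} and~\ref{thm:a10}. For (1), the curve $C=(z^2+x^2y^{12}+x^6g(x,y)=0)$ has an $A_{11}$-type singularity at the smooth point $[1,0,0]\in X$; taking $E$ to be the exceptional divisor of the $(6,1)$-weighted blow-up of $X$ adapted to this singularity, one gets $A_X(\ord_E)=7$ and $\ord_E(C)=12$ (this last uses only the shape of the defining equation, not $g$), and the volume/Mori-cone computation produces $S_X(\ord_E)$; the inequality $7-12c\ge(1-\tfrac53 c)S_X(\ord_E)$ then simplifies to $c\ge\tfrac{63}{115}$. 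For (2), the curve $C=(z^2+x^6y^{11}+x^{10}g(x,y)=0)$ has instead an $A_{10}$-type singularity at $[1,0,0]$; with $E$ the $(11,2)$-weighted blow-up valuation one finds $\ord_E(C)=11$ and, after the analogous integration, the semistability inequality collapses to $c\ge\tfrac{54}{95}$. It is reassuring that these thresholds coincide with the K-semistable thresholds of the $A_{11}$-irreducible and $A_{10}$ plane quintics (Theorems~\ref{thm:a11irr} and~\ref{thm:a10}), whose K-polystable limits are precisely the pairs $(X,C_0)$ appearing here; this gives a built-in consistency check on the constants.

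\textbf{Part (3).} There are two sub-cases. If $C=(z^2+x^{10}g(x,y)=0)$ with $g\ne0$, then in the orbifold chart at the cyclic quotient point $[0,1,0]\in X$ of type $\tfrac14(1,1)$ the equation of $C$ takes the form $z^2+x^{10}(\text{unit}+\cdots)$, i.e. $C$ acquires an $A_9$-type singularity there; applying the valuative criterion to the quotient of the $(5,1)$-weighted blow-up at that point (a valuation whose $S_X$ is inflated by the quotient singularity, reflecting the fact that $\delta(X)=\tfrac1{10}$ is small) one checks $\ord_E(C)=\tfrac52$ for every $g\ne0$ and concludes $A_{(X,cC)}(\ord_E)<S_{(X,cC)}(\ord_E)$ for all $c\in(0,\tfrac35)$, so $(X,cC)$ is K-unstable. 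If instead $C=(f(x,y)z+g(x,y)=0)$, then $C$ passes through the cyclic quotient point $[0,0,1]\in X$ of type $\tfrac1{25}(1,4)$ (because $f$ and $g$ have positive degree), and the argument is formally identical to the second half of the proof of Proposition~\ref{prop:valcrit-X_26}: the natural quotient valuation $E$ at $[0,0,1]$ has $A_X(\ord_E)=\tfrac35$, $S_X(\ord_E)=6$ and $\ord_E(C)\ge1$, whence $A_{(X,cC)}(\ord_E)\le\tfrac15(3-5c)<6-10c=S_{(X,cC)}(\ord_E)$ for every $c\in(0,\tfrac35)$; alternatively, for $c<\tfrac{12}{25}$ one may simply invoke the index bound (Theorem~\ref{thm:localindex}), since $X$ carries a point of Gorenstein index $5$.

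\textbf{Main obstacle.} As in Propositions~\ref{prop:valcrit-X_26} and~\ref{prop:valcrit-p114}, the technical heart is purely intersection-theoretic bookkeeping on $\bP(1,4,25)$: choosing each weighted blow-up so that $\ord_E(C)$ is as large as the defining equation permits, computing the (fractional) self-intersection numbers $(E^2)$ on the resulting surface, identifying the second extremal ray of its Mori cone, and thereby pinning down the break-point and the two quadratic pieces of $\vol_X(\cO_X(1)-tE)$. Once these constants are in hand, the resulting linear inequalities in $c$ are immediate, and the crucial numerical coincidences with Theorems~\ref{thm:a11irr} and~\ref{thm:a10} serve as a check that the weighted blow-ups have been chosen correctly.
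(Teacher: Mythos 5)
Your overall strategy — apply the valuative criterion by exhibiting one monomial valuation per case — is the right one, and it is what the paper does. But the specific valuations you propose in parts (1) and (2) are centered at the wrong point, and the argument fails there. You take $E$ to be (the pushforward of) a $(6,1)$- resp.\ $(11,2)$-weighted blow-up at the smooth point $[1,0,0]\in\bP(1,4,25)$, claiming that $\ord_E(C)=12$ resp.\ $11$ ``uses only the shape of the defining equation, not $g$.'' This is false. Working in the chart $x=1$ with affine coordinates $(u,v)=(y/x^4,z/x^{25})$, the equation of $C$ in part (1) becomes $v^2+u^{12}+g(1,u)=0$, and $g(1,u)$ can contain terms of any degree from $0$ to $11$ in $u$. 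In particular, if $g$ has an $x^{44}$-term then $g(1,0)\neq 0$ and $C$ does not even pass through $[1,0,0]$; and even when it does, the order of vanishing of $C$ there depends on $g$. The proposition must hold for \emph{every} $g$, so a valuation whose center is a point where $C$ has no forced multiplicity cannot give a uniform bound. The paper instead works at the $\tfrac14(1,1)$ quotient singularity $[0,1,0]$, with the quotient of the monomial valuation on $\bA^2_{(x,z)}$ of weights $(1,1)$, $(1,3)$, $(1,5)$ in the three parts. There the inequalities $v(C)\ge 2$, $6$, $10$ are forced purely by the weighted-degree-$50$ condition (the only monomials of low $(x,z)$-order in a degree-$50$ form are $z^2$, $xyz^6$ and $x^2y^{12}$, etc.), so the argument is insensitive to $g$.

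Part (3) also diverges from the paper. Your first sub-case is at least heading the right way, though the value $\ord_E(C)=\tfrac52$ only makes sense with an unstated $\tfrac14$-rescaling of the quotient valuation, and the appeal to $\delta(X)=\tfrac1{10}$ is a red herring — $\delta$ is a global invariant and plays no role in computing $S_X(\ord_E)$ for a single $E$. Your second sub-case imports $A_X(\ord_E)=\tfrac35$, $S_X(\ord_E)=6$ directly from Proposition~\ref{prop:valcrit-X_26}, but that computation was for the $\tfrac1{25}(4,13)$ singularity of $X_{26}$, whereas $[0,0,1]\in\bP(1,4,25)$ is a $\tfrac1{25}(1,4)$ singularity; the numbers do not transfer. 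The paper avoids this by simply bounding $\lct(X;C)\le\tfrac27$ via the order of vanishing of the preimage of $C$ in the orbifold chart, and then combining with Theorem~\ref{thm:firstwallbefore}, which already excludes $\bP(1,4,25)$ from $\ocP^{\K}_{5,c}$ for $c<\tfrac37$. That hybrid argument is cleaner than trying to find a single destabilizing valuation valid on all of $(0,\tfrac35)$.
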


\begin{proof}
 For simplicity we denote by $X:=\bP(1,4,25)$.
 
 (1) Let us consider the point $[0,1,0]$ corresponding
 to the $\frac{1}{4}(1,1)$ singularity in $\bP(1,4,25)$. 
 If we set $y=1$, then we have a cyclic quotient map
 $\pi:\bA_{(x,z)}^2\to X$ defined by $\pi(x,z)=[x,1,z]$.
 Let $v:=\pi_*\ord_0$, then $A_X(v)=2$, and 
 \[
  \vol_X(\cO(1)-tv)=\begin{cases}
                     \frac{1}{100}(1-25t^2) & \textrm{ if }
                     0\leq t\leq \frac{1}{25}\\
                     \frac{1}{96}(1-t)^2 & \textrm{ if }
                     \frac{1}{25}\leq t \leq 1
                    \end{cases}
 \]
 Hence computation shows
 $ S_{(X,cC)}(v)=\frac{30-50c}{\vol_X(\cO(1))}\int_{0}^{\infty}\vol_X(\cO(1)-tv)dt
  =\frac{26}{75}(30-50c)$.
 Since $C$ is of degree $50$, we have $v(C)\geq 2$
 because the lowest degree terms of $C$ at $[0,1,0]$ are
 $x^2y^{12}$, $xzy^{6}$, and $z^2$.
 Since $(X,cC)$ is K-semistable, by valuative criterion (Theorem \ref{thm:valuative}) we have
 \[
  2-2c\geq A_{(X,cC)}(v)\geq S_{(X,cC)}(v)
  =\frac{26}{75}(30-50c),
 \]
 which implies $c\geq \frac{63}{115}$.
 
 (2) We follow the similar set-up as (1), except that we consider the valuation $v'=\pi_*\tilde{v}'$ where $\tilde{v}'$ is the monomial valuation of weights $(1,3)$ in coordinates $(x,z)$. Thus $A_X(v')=4$, and \[
  \vol_X(\cO(1)-tv')=\begin{cases}
                     \frac{1}{100}-\frac{t^2}{12} & \textrm{ if }
                     0\leq t\leq \frac{3}{25}\\
                     \frac{1}{88}(1-t)^2 & \textrm{ if }
                     \frac{3}{25}\leq t \leq 1
                    \end{cases}
 \]
 Hence computation shows $S_{(X,cC)}(v')=
  \frac{30-50c}{\vol_X(\cO(1))}\int_{0}^{\infty}\vol_X(\cO(1)-tv')dt
  =\frac{28}{75}(30-50c)$.
 From the equation of $C$ it is clear that $v'(C)\geq 6$.  Since $(X,cC)$ is K-semistable, by valuative criterion (Theorem \ref{thm:valuative}) we have $
  4-6c\geq A_{(X,cC)}(v')\geq S_{(X,cC)}(v')
  =\frac{28}{75}(30-50c)$,
 which implies $c\geq \frac{54}{95}$.
 
 (3) First we consider the case where $C$ has equation $z^2+x^{10}g(x,y)=0$. We follow the similar set-up as (1), except that we consider the valuation $v''=\pi_*\tilde{v}''$ where $\tilde{v}''$ is the monomial valuation of weights $(1,5)$ in coordinates $(x,z)$. Thus $A_X(v'')=6$, and \[
  \vol_X(\cO(1)-tv'')=\begin{cases}
                     \frac{1}{100}(1-5t^2) & \textrm{ if }
                     0\leq t\leq \frac{1}{5}\\
                     \frac{1}{80}(1-t)^2 & \textrm{ if }
                     \frac{1}{5}\leq t \leq 1
                    \end{cases}
 \]
 Hence computation shows
 $S_{(X,cC)}(v'')=\frac{30-50c}{\vol_X(\cO(1))}\int_{0}^{\infty}\vol_X(\cO(1)-tv'')dt
  =\frac{2}{5}(30-50c)$.
 From the equation of $C$ it is clear that $v''(C)\geq 10$.  If $(X,cC)$ is K-semistable, then by valuative criterion (Theorem \ref{thm:valuative}) we have $
  6-10c\geq A_{(X,cC)}(v'')\geq S_{(X,cC)}(v'')
  =\frac{2}{5}(30-50c)$,
 which implies $c\geq \frac{3}{5}$. Hence $(X,cC)$ is always K-unstable for any $c\in (0,\frac{3}{5})$.
 
 Finally we consider the case where $C$ has equation $f(x,y)z+g(x,y)=0$. We consider the singular point $[0,0,1]$ of type $\frac{1}{25}(1,4)$. Let $\pi':\bA_{(x,y)}^2\to X$ be the cyclic quotient map. Denote by $\widetilde{C}'$ the preimage of $C$ under $\pi'$. Then it is easy to see that $\ord_{0}\widetilde{C}'\geq 7$, hence $\lct(X;C)\leq \lct(\bA^2;\tilde{C})\leq \frac{2}{7}$. Hence we have $c<\frac{2}{7}$ if $(X,cC)$ is K-semistable. However, by Theorem \ref{thm:firstwallbefore} we know that $\bP(1,4,25)$ never occurs in the K-moduli stack $\ocP_{c}^{\K}$ when $c<\frac{3}{7}$. Hence $(X,cC)$ is always K-unstable for any $c\in (0,\frac{3}{5})$. The proof is finished.
\end{proof}

\bibliographystyle{alpha}
\bibliography{quintics}

\end{document}